\newtheoremstyle{thmlike}
{8pt}
{3pt}
{\slshape}
{}
{\bfseries}
{.}
{1em}
{}
\newtheoremstyle{deflike}
{8pt}
{3pt}
{}
{}
{\bfseries}
{.}
{1em}
{}
\theoremstyle{thmlike}
\newtheorem*{theorem*}{Theorem}
\newtheorem{theorem}{Theorem}[section]
\newtheorem{proposition}[theorem]{Proposition}
\newtheorem{lemma}[theorem]{Lemma}
\newtheorem{corollary}[theorem]{Corollary}
\newtheorem{conjecture}[theorem]{Conjecture}
\theoremstyle{deflike}
\newtheorem{definition}[theorem]{Definition}
\newtheorem{remark}[theorem]{Remark}
\newtheorem{hypothesis}[theorem]{HYPOTHESIS}
\newcommand{\can}{\mathrm{can}}
\newcommand{\cusp}{\mathrm{cusp}}
\newcommand{\hol}{\mathrm{hol}}
\newcommand{\inv}{\mathrm{inv}}
\newcommand{\nil}{\mathrm{nil}}
\newcommand{\norm}{\mathrm{norm}}
\newcommand{\opp}{\mathrm{op}}
\newcommand{\ord}{\mathrm{ord}}
\newcommand{\Pord}{\text{$P$-ord}}
\newcommand{\Pword}{\text{$P_w$-ord}}
\newcommand{\Paord}{\text{$P$-a.ord}}
\newcommand{\Pwaord}{\text{$P_w$-a.ord}}
\newcommand{\Ser}{\mathrm{Ser}}
\newcommand{\sgl}{\mathrm{Sgl}}
\newcommand{\sub}{\mathrm{sub}}
\newcommand{\tor}{\mathrm{tor}}
\newcommand{\tp}[1]{\prescript{t}{}{#1}} 
\newcommand{\level}[2]{\prescript{}{#1}{#2}} 
\newcommand{\diag}{\operatorname{diag}}
\newcommand{\ehom}{\operatorname{End}}
\newcommand{\End}{\operatorname{End}}
\renewcommand{\ker}{\operatorname{ker}}
\renewcommand{\hom}{\operatorname{Hom}}
\newcommand{\id}{\operatorname{id}}
\newcommand{\isom}{\operatorname{Isom}}
\newcommand{\Span}{\operatorname{span}}
\newcommand{\Sh}{\operatorname{Sh}}
\newcommand{\spec}{\operatorname{Spec}}
\newcommand{\supp}{\operatorname{supp}}
\newcommand{\Gm}{\operatorname{\mathbb{G}_{m}}}
\newcommand{\Ga}{\operatorname{\mathbb{G}_{a}}}
\newcommand{\Gal}{\operatorname{Gal}}
\newcommand{\GL}{\operatorname{GL}}
\newcommand{\Her}{\operatorname{Her}}
\newcommand{\Lie}{\operatorname{Lie}}
\newcommand{\SL}{\operatorname{SL}}
\newcommand{\GU}{\operatorname{GU}}
\newcommand{\SU}{\operatorname{SU}}
\newcommand{\Mat}{\operatorname{Mat}}
\newcommand{\Char}{\operatorname{char}}
\newcommand{\cond}{\operatorname{cond}}
\renewcommand{\det}{\operatorname{det}}
\newcommand{\nm}{\operatorname{Nm}}
\newcommand{\rk}{\operatorname{rank}}
\newcommand{\tr}{\operatorname{tr}}
\renewcommand{\v}{\operatorname{\nu}} 
\newcommand{\vol}{\operatorname{Vol}}
\newcommand{\ad}{\operatorname{ad}}
\newcommand{\ind}[2]{\operatorname{\iota}_{#1}^{#2}}
\newcommand{\Ind}{\operatorname{Ind}}
\newcommand{\Rep}{\operatorname{Rep}}
\newcommand{\res}{\operatorname{Res}}
\renewcommand{\r}[2]{\operatorname{r}_{#1}^{#2}}
\newcommand{\cg}[1]{\widetilde{#1}} 
\renewcommand{\AA}{\mathbb{A}}
\newcommand{\Ab}{\mathcal{A}}
\newcommand{\CC}{\mathbb{C}}
\newcommand{\DD}{\mathbb{D}}
\newcommand{\EE}{\mathcal{E}}
\newcommand{\Eis}{\mathrm{Eis}}
\newcommand{\GG}{\mathfrak{G}}
\newcommand{\II}{\mathcal{I}}
\newcommand{\Ig}{\mathrm{Ig}}
\newcommand{\KK}{\mathcal{K}}
\newcommand{\LL}{\mathcal{L}}
\newcommand{\MM}{\mathrm{M}}
\newcommand{\MMM}{\mathcal{M}}
\newcommand{\OO}{\mathcal{O}}
\newcommand{\PP}{\mathcal{P}}
\newcommand{\QQ}{\mathbb{Q}}
\newcommand{\bQQ}{\overline{\QQ}}
\newcommand{\RR}{\mathbb{R}}
\newcommand{\SSS}{\mathcal{S}}
\newcommand{\TT}{\mathbb{T}} 
\newcommand{\bTT}{\mathbf{T}} 
\newcommand{\TTT}{\mathcal{T}}
\newcommand{\VV}{\mathcal{V}}
\newcommand{\WWW}{\mathcal{W}}
\newcommand{\ZZ}{\mathbb{Z}}
\newcommand{\bZZ}{\overline{\ZZ}}
\renewcommand{\i}{\iota}
\newcommand{\w}{\omega}
\newcommand{\bb}{\mathfrak{b}} 
\newcommand{\g}{\mathfrak{g}} 
\newcommand{\m}{\mathfrak{m}}
\newcommand{\p}{\mathfrak{p}} 
\newcommand{\s}{\mathfrak{s}}
\newcommand{\X}{\mathfrak{X}}
\renewcommand{\P}{\mathfrak{P}}
\renewcommand{\k}{\mathfrak{k}}
\renewcommand{\d}{\bold{d}} 
\newcommand{\Ss}{S_\square}
\newcommand{\dEis}{d\mathrm{Eis}}
\newcommand{\la}{\langle} 
\newcommand{\ra}{\rangle}
\newcommand{\incl}{\mathrm{incl}}
\newcommand{\pr}{\mathrm{pr}} 
\newcommand{\brkt}[2]{\la #1, #2 \ra}
\newcommand{\brktdotdot}{\brkt{\cdot}{\cdot}}
\newcommand{\absv}[1]{\left|#1\right|}
\newcommand {\ol}[1] {\overline{#1}}
\newcommand {\ul}[1] {\underline{#1}}
\newcommand {\wt}[1] {\widetilde{#1}}
\newcommand {\wh}[1] {\widehat{#1}}
\newcommand{\at}{\makeatletter @\makeatother}
\begin{document}
\pagestyle{fancy}
\fancyhf{}
\fancyhead[RO,LE]{\footnotesize\thepage}
\fancyhead[CE]{\footnotesize\leftmark}
\fancyhead[CO]{\footnotesize $p$-ADIC $L$-FUNCTIONS FOR $P$-ORDINARY FAMILIES}
\renewcommand{\headrulewidth}{0pt}

\title{$p$-adic $L$-functions for $P$-ordinary Hida families on unitary groups}
\author[D. Marcil]{David Marcil}
\date{\today}
\address{David Marcil, Department of Mathematics, Columbia University, 2990 Broadway, New York, NY 10027, USA}
\email{dmarcil\at math.columbia.edu}
\subjclass[2010]{
    Primary: 11F33, 11F55, 11F67, 11F70, 11F85, 11S40. 
    Secondary: 11F03, 11F30, 11G18, 11R23, 14G10
}

\keywords{Types, $P$-ordinary representations, Hida families, $p$-adic $L$-functions.}

\begin{abstract}
    We construct a $p$-adic $L$-function for $P$-ordinary Hida families of cuspidal automorphic representations on a unitary group $G$. The main new idea of our work is to incorporate the theory of Schneider-Zink types for the Levi quotient of $P$, to allow for the possibility of higher ramification at primes dividing $p$, into the study of ($p$-adic) modular forms and automorphic representations on $G$. For instance, we describe the local structure of such a $P$-ordinary automorphic representation $\pi$ at $p$ using these types, allowing us to analyze the geometry of $P$-ordinary Hida families. Furthermore, these types play a crucial role in the construction of certain Siegel Eisenstein series designed to be compatible with such Hida families in two specific ways : Their Fourier coefficients can be $p$-adically interpolated into a $p$-adic Eisenstein measure on $d+1$ variables and, via the doubling method of Garrett and Piatetski--Shapiro-Rallis, the corresponding zeta integrals yield special values of standard $L$-functions. Here, $d$ is the rank of the Levi quotient of $P$. Lastly, the doubling method is reinterpreted algebraically as a pairing between modular forms on $G$, whose nebentype are types, and viewed as the evaluation of our $p$-adic $L$-function at classical points of a $P$-ordinary Hida family.
\end{abstract}

\maketitle

\setcounter{tocdepth}{2}
\tableofcontents

\section*{Introduction}
In \cite{HLS}, Michael Harris, Jian-Shu Li and Christopher Skinner initiated a project to construct a $p$-adic $L$-function for ordinary Hida families on a unitary group of arbitrary signature. In \cite{EHLS}, together with Ellen Eischen, they completed this project $p$-adic $L$-function for ordinary families on unitary groups. This required the development of several technical results on $p$-adic differential operators, accomplished in great part by Eischen in \cite{Eis12}, to obtain a more general Eisenstein measure \cite{Eis15} than the one originally constructed in \cite{HLS}. Fundamental properties of their $p$-adic $L$-function for families are obtained by carefully computing local zeta integrals related to the doubling method \cite{GPSR87} as well as local coefficients of Siegel Eisenstein series \cite{Eis15}. The most technical calculations are for local factors at places above the fixed prime $p$. Moreover, a theorem of Hida in \cite{Hid98} establishing the uniqueness (up to scalar) of ordinary vectors plays a crucial role in their analysis.

In this article, we generalize the many steps involved in their construction to construct a $p$-adic $L$-function for a \emph{$P$-ordinary} Hida family on $G$. Here, $P$ is a parabolic subgroup of a product of general linear groups related to $G$. When $P$ corresponds to (a product of) upper triangular Borel subgroups, the notion of $\pi$ being ``$P$-ordinary'' coincides with the usual notion of being ``ordinary'' studied in \cite{EHLS}.

One advantage of working with $P$-ordinary families is that they are substantially more general than ordinary families. In particular, every cuspidal automorphic representation lies in a $P$-ordinary family for some choice of $P$. However, the dimension of families is inverse proportional to the size of the chosen parabolic $P$. Namely, if $d$ is the rank of the center of the Levi of $P$, then a $P$-ordinary family is $d$-dimensional.

In this paper, we therefore construct a $(d+1)$-variable $p$-adic $L$-function on a $P$-ordinary Hida family associated to a $P$-ordinary automorphic representation. Here the extra variable corresponds to the cyclotomic variable.

Note that our work actually considers \emph{(anti-holomorphic) $P$
-anti-ordinary} automorphic representations, however we discuss the technicalities of this notion in details later in this paper. 

\subsubsection*{Structure of this paper.} 

In Part \ref{part : Pord theory on unitary groups}, we introduce the necessary setup to discuss the geometry, representation theory and the complex analysis related to ($P$-ordinary) automorphic representations. 

The fundamental difference between working with a representation $\pi$ that is $P$-ordinary (at $p$) instead of ordinary representation is the necessity to consider local finite-dimensional representations, i.e. \emph{types}, instead of characters to study the structure at $p$ of $\pi$. Therefore, in Section \ref{sec:notation and conventions}, we discuss some of the work of Bushnell-Kutzko \cite{BusKut93, BusKut98, BusKut99} and Schneider-Zink \cite{SchZin99} to study smooth representations of local $p$-adic groups via types. 
we generalize this theorem of Hida to construct a canonical finite-dimensional subspace in the space of $P$-ordinary vectors for a $P$-ordinary representation $\pi$ on a unitary group $G$. 

In Section \ref{sec:PIwa subgps of unitary gps}, we study the geometry of the Shimura varieties associated to general unitary groups and vector bundles associated to a weight and a type. The automorphic representations of interest in this paper have a non-trivial contribution in the cohomology groups of such bundles. In particular, we introduce Iwahori subgroups $I_{P,r}^0$ and pro-$p$-Iwahori subgroup $I_{P,r}$, that depend on $P$, and whose quotient is a product of general linear group (and not a torus). The center of this quotient plays an important role to parameterize $P$-ordinary Hida families. Furthermore, a type (or a \emph{$P$-nebentypus}) is a smooth representation of $I_{P,r}^0$ that factors through $I_{P,r}$.

In Section \ref{sec:P(anti)ord aut representations}, we introduce the notion of a \emph{$P$-ordinary} representation $\pi$, namely a representation that is ordinary with respect to some parabolic subgroup $P$ of $G$. The main goal is to explain how, for our purpose, their theory is encapsulated by the information of a weight $\kappa$, a level $K_r$ and a $P$-nebentypus $\tau$. The weight holds archimedean information on $\pi_\infty$, the level holds information about ramified places and $p$, and $\tau$ holds information about $\pi_p$. We refer to the datum $(\kappa, K_r, \tau)$ has the \emph{$P$-weight-level-type} of $\pi$.

In Section \ref{sec:comp and comp bw PEL data}, we briefly recall the functors necessary to compare automorphic representations between the various unitary groups involved in the doubling method. Most of this work is well-established in \cite{EHLS}, however one needs to make minor modifications to consider the $P$-ordinary setting when comparing level subgroups on different unitary groups.

In Section \ref{sec:Pord padic mod forms}, we introduce $p$-adic modular forms with respect to the choice of parabolic $P$. To the best of the author's knowledge, this material has yet to be discussed in the literature when working with unitary groups.

We introduce two variations of such $p$-adic forms: scalar-valued ones and vector-valued ones. The first case, i.e. the scalar-valued case, is relatively similar to the usual notion of $p$-adic modular forms, as discussed in \cite{Hid04} and \cite{EHLS}. However, the global sections on the Igusa tower considered are not fixed by a the unipotent radical subgroup of a Borel but rather by the unipotent radical of $P$. This allows us to study the smooth action of the Levi of $P$ on this space of $p$-adic $L$-functions and decompose it as a direct sum over types. 

The second case, i.e. the vector-valued case, considers global sections over the Igusa tower of a non-trivial vector bundle. The vector bundles involved are closely related to the $P$-nebentypus introduced in the previous sections. After introducing the relevant notation, we then discuss some of the ``standard'' results of Hida theory, i.e. density, classicality and the vertical control theorem, in the $P$-ordinary setting. Note that some of these conjectures are later stated in Section \ref{sec:P-(anti-)ord Hida families}. 

As the goal of this paper is not to establish ``$P$-ordinary Hida theory'', we simply leave the necessary results as conjectures and the author plans to revisit each of these conjectures in a subsequent paper to complete the theory developped in this section.

In Part \ref{part:families of P(a)ord aut reps}, we dedicate Section \ref{sec:structure at p of P(a-)ord aut reps} to study of the local representation theory at $p$ of $P$-ordinary (and $P$-anti-ordinary) representations. The goal is to generalize the theorem of Hida mentioned above in the introduction to the $P$-ordinary setting and interpret it in various settings necessary for applications with the doubling method in later sections.

We obtain the uniqueness (up to scalar) of an embedding of certain ``Schneider-Zink types'' (discussed in Section \ref{sec:notation and conventions}) inside the space of $P$-ordinary vectors of a $P$-ordinary representations. In other words, although $P$-ordinary vectors are not unique (up to scalar) as in the ordinary case, we can construct a canonical subspace associated to a type whose dimension equals the dimension of this type. This represents the first main accomplishment of this paper.

In later section, it becomes clear that the construction of our $p$-adic $L$-function does not depend on the choice of a $P$-ordinary vector in this subspace associated to a type but only on the unique (up to scalar) embedding of that type. 

Note that to obtain this result, we impose a certain hypothesis on the supercuspidal support of the $P$-ordinary representations involved. The author expects that this hypothesis is completely superficial and can be removed. We use it to simplify the analysis of the filtration obtained by the Bernstein-Zelevinsky geometric lemma of local representations involved. However, the results and the remainder of the paper are phrased with as little dependency as possible to this hypothesis. The author plans to revisit this issue in a later paper to explain how the results of Section \ref{sec:structure at p of P(a-)ord aut reps} should still holds without this hypothesis.

We discuss in Sections \ref{sec:explicit choice of P-(anti-)ord vectors}-\ref{sec:P-(anti-)ord Hida families} the relation between such subspaces of $P$-ordinary vectors and analogous subspaces of $P$-ordinary modular forms and how these subspaces vary nicely over a $d$-dimensional $P$-ordinary family of representations over a weight space associated to $P$. Here, $d$ is the rank of the center of the Levi of $P$, as mentioned in the introduction above. This requires the study of certain Hecke algebras of infinite level at $p$ associated to a weight and $P$-nebentypus. In particular, this analysis demonstrates that the $P$-nebentypus cuts out a branch of an infinite dimensional $p$-adic space containing such a $P$-ordinary family. The $p$-adic $L$-function constructed in this paper is a function on such a branch.

In Part \ref{part:Pord family of Eis series}, we present all the necessary computations construct our $p$-adic $L$-function using the doubling method of Garrett and Piatetski--Shapiro-Rallis. This first requires the construction in Section \ref{sec:Sgl Eis series for dbl method} of certain Siegel Eisenstein series depending on various inputs, most importantly on the $P$-weight-level-type of a $P$-ordinary representation as well as a Hecke character. 

To accomplish this, we build the Eisenstein series from local Siegel-Weil sections, one for every place of $\QQ$. In the literature, such local sections have been well studied, especially at archimedean places and at unramified places. However our construction of local sections at $p$ in Section \ref{subsec:Local SW section at p} is considerably more involved and represents the core of the computations that follow.

The main goal for this task consists of finding a local section (at $p$), given a fixed type $\tau$, whose contribution to local zeta integrals yields the right Euler factors at $p$ of $L$-functions, see Theorem \ref{thm:main thm - comp of Zw above p}, and whose contribution to the Fourier coefficients of the corresponding Eisenstein series fit in a $p$-adic measure, see Proposition \ref{prop:global Fourier coeff formula}. Our construction generalizes the already complicated machinery developed in \cite[Section 4.3.1]{EHLS} (where $\tau$ is only allowed to be a character). However, the author hopes that the systematic use of types helps to simplify the exposition to some extent.

In Section \ref{sec:doubling method integral}, we then compute the local zeta integrals associated to the factorization of doubling method integral between the Siegel Eisenstein series constructed in the previous section and the test vectors constructed in Section \ref{sec:explicit choice of P-(anti-)ord vectors}.

This yields the second main accomplishment of this paper, briefly mentioned above, namely the calculations of local zeta integrals at $p$ in the $P$-ordinary setting. Our approach owes a great deal to the precise details explained in \cite[Section 4.3.6]{EHLS}. Nonetheless, our analysis requires to resolve many issues related to the dimension of the types. In particular, the ``ordinary characters'' involved in \emph{loc.cit} are replaced by $P$-nebentypes. The main novelty of our work is to use matrix coefficients of these $P$-nebentypus to compute the necessary integrals explicitly and relate them to special values of standard $L$-functions.

Then, in Section \ref{sec:Pord Eisenstein measure}, we $p$-adically interpolate the Siegel Eisenstein series previously constructed to obtain an \emph{Eisenstein measure} which generalizes an analogous construction in \cite{EFMV18}. The latter is also a generalization of analogous Eisenstein measure in various papers of Eischen and ultimately finds its roots in the seminal work of Katz \cite{Kat78}. Our approach is to $p$-adically interpolate the Fourier coefficient of Eisenstein series. Inspired by the computations presented in \cite{Eis15}, the third main accomplishment of this paper is the explicit computation of local Fourier coefficients at $p$ of our Siegel Eisenstein series, generalizing one of the main results in \emph{loc.cit.} Once more, the use of matrix coefficients leads to simple formulae for local Fourier coefficients.

Lastly, in Part \ref{part:padic Lfct on Pord families}, we reinterpret the Eisenstein measure constructed in previous sections as an element $\mathcal{L}$ of a certain Hecke algebra tensored with an Iwasawa algebra (related to the cyclotomic variable). This follows an approach, adapted to the $P$-ordinary setting, parallel to the one discussed in \cite[Section 7.4]{EHLS}. We interpret the results of previous sections algebraically to interpolate special values of standard $L$-functions as the evaluation of $\mathcal{L}$ at classical points of a $P$-ordinary Hida family.

\subsubsection*{Main result.} 
The main result of this paper is Theorem \ref{thm:main thm} which can be summarized as follows.

\begin{theorem*}
    For a general unitary group $G$ associated to a Hermitian vector space over a CM field $\KK$, fix a parabolic subgroup $P$ of $G(\ZZ_p)$ as in Section \ref{subsubsec:para subgp of G over Zp}. 
    
    Let $\pi$ be an (anti-)holomorphic, $P$-(anti-)ordinary cuspidal automorphic form on a general unitary group $G(\AA)$. Let $(\kappa, K_r, \tau)$ be its $P$-(anti-)weight-level-type, where $\tau$ is a certain (fixed) Schneider-Zink type of $\pi$. Assume that the ``standard conjectures of $P$-ordinary Hida theory'' hold and that $\pi$ satisfy various other standard hypotheses discussed in Sections \ref{sec:structure at p of P(a-)ord aut reps} and \ref{sec:P-(anti-)ord Hida families}.
    
    Let $\TT = \TT_{\pi, [\kappa, \tau]}$ be the $P$-ordinary Hecke algebra associated to $\pi$ as in Section \ref{subsubsec:indep of weights}, which only depends on $\kappa$ and $\tau$ up to ``$P$-parallel shifts'' as discussed in Sections \ref{subsubsec:P parallel weights} and \ref{subsubsec:P nebentypus} respectively. 

    Let $\Lambda_{X_p}$ denote the $\ZZ_p$-Iwasawa algebra of the ray class group $X_p$ of conductor $p^\infty$ over $\KK$.
    
    Given test vectors $\varphi \in \wh{I}_\pi$, $\varphi^\flat \in \wh{I}_{\pi^\flat}$ as in Section \ref{subsubsec:I pi and test vectors}, there exists a unique element
    \[
        L(\Eis^{[\kappa, \tau]}, \Pord; \varphi \otimes \varphi^\flat) 
            \in 
        \Lambda_{X_p, R} \wh{\otimes} \TT_{\pi}
    \]
    satisfying the following property :

    Let $\chi = ||\cdot||^{\frac{n-k}{2}}\chi_u : X_p \to R^\times$ be the $p$-adic shift of a Hecke character as in Section \ref{subsubsec:padic shifts of Hecke char}. Let $\pi' \in \SSS(K^p, \pi)$ be a classical point of the $P$-ordinary Hida family $\TT_\pi$ as in Section \ref{subsubsec:classical points of P-anti-ordinary families}. Let $\lambda_{\pi'}$ be the Hecke character of $\TT$ associated to $\pi'$ as in Sections \ref{subsec:lattices of Pord holo forms}--\ref{subsec:lattices of aholo Paord forms}.

    Then, $L(\Eis^{[\kappa, \tau]}, \Pord; \varphi \otimes \varphi^\flat)$ is mapped under the character $\chi \otimes \lambda_{\pi'}$ to
    \begin{align*}
        c(\pi',\chi)
        \Omega_{\pi', \chi}&(\varphi, \varphi^\flat)
        L_p
        \left(
            \frac{k-n+1}{2}, 
            \Pord,
            \pi',
            \chi_u
        \right) 
            \\ \times\,\,\,
        &L_\infty\left(
            \frac{k-n+1}{2};
            \chi_u, \kappa'
        \right)
        I_S
        \frac{
            L^S(
                \frac{k-n+1}{2}, 
                \pi', 
                \chi_u
            )
        }{
            P_{\pi', \chi}
        }\,,
    \end{align*}
    where $P_{\pi', \chi} = Q_{\pi', \chi}^{-1}$. Here, $c(\pi', \chi)$, $\Omega_{\pi', \chi}$ and $Q_{\pi', \chi}$ are algebraic numbers related to periods and congruence ideals associated to $\pi$ discussed in Section \ref{subsec:periods and cong ideals}. Furthermore, $L_p$, $L_\infty$, and $L^S$ are various Euler factors associated to standard $L$-functions discussed in Section \ref{sec:doubling method integral}. An explicit formula for $L_p$, one of the main accomplishment of this paper, is given in Theorems \ref{thm:main thm - comp of Zw above p}--\ref{thm:main local thm}. On the other hand, $I_S$ is a constant volume factor fixed to ``simplify'' the theory at ramified places.
\end{theorem*}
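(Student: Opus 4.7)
The plan is to construct $L(\Eis^{[\kappa, \tau]}, \Pord; \varphi \otimes \varphi^\flat)$ as the algebraic incarnation of the $P$-ordinary Eisenstein measure paired against the chosen test vectors, and to verify the interpolation formula at each classical point by running the doubling method of Garrett and Piatetski-Shapiro-Rallis in the $P$-ordinary setting, then factoring the resulting global zeta integral as an Euler product of local ones.

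For the construction, I would first invoke the $P$-ordinary Eisenstein measure built in Section \ref{sec:Pord Eisenstein measure}, obtained by $p$-adically interpolating the Fourier coefficients of the Siegel Eisenstein series of Section \ref{sec:Sgl Eis series for dbl method}. Viewing this measure as an element of the space of $p$-adic modular forms of Section \ref{sec:Pord padic mod forms} on the doubled group $G \times G^\flat$ (via the embedding used in the doubling method), and pairing it against $\varphi \otimes \varphi^\flat$ by the algebraic pairing from Part \ref{part:padic Lfct on Pord families}, one produces an element of $\Lambda_{X_p, R} \wh{\otimes} \TT_\pi$. Uniqueness of this element is a formal consequence of the density of classical points in $\TT_\pi$, which is part of the standard conjectures of $P$-ordinary Hida theory assumed in the hypothesis.

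For the interpolation formula, at a classical point $(\chi, \pi')$ one unfolds the pairing into the classical doubling-method integral of $\Eis^{[\kappa,\tau]}(\chi,\pi')$ against $\varphi \otimes \varphi^\flat$. By the standard unfolding of Garrett and Piatetski-Shapiro-Rallis, this integral factors as an Euler product $\prod_v Z_v$ of local zeta integrals. At places outside $S \cup \{p, \infty\}$ the local sections are spherical and standard calculations recover $L^S(\tfrac{k-n+1}{2}, \pi', \chi_u)/P_{\pi', \chi}$; at archimedean places one obtains $L_\infty$; at ramified places in $S$ the chosen Siegel-Weil sections are normalized precisely to produce the volume factor $I_S$; and at $p$ the main local computation of Theorems \ref{thm:main thm - comp of Zw above p} and \ref{thm:main local thm} yields $L_p$. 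The algebraic factors $c(\pi', \chi)$ and $\Omega_{\pi', \chi}$ arise from comparing the algebraic pairing used to define $\mathcal{L}$ with the automorphic pairing appearing in the doubling integral, using the periods and congruence ideals of Section \ref{subsec:periods and cong ideals}.

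The main obstacle will be the local calculation at $p$. The essential input is the uniqueness result of Section \ref{sec:structure at p of P(a-)ord aut reps}: although the $P$-ordinary subspace of $\pi_p$ is not one-dimensional, the embedding of the Schneider-Zink type $\tau$ into it is unique up to scalar. This reduces the local zeta integral to a pairing of matrix coefficients of $\tau$ against the $p$-adic Siegel-Weil section built in Section \ref{subsec:Local SW section at p}. Applying the explicit choice of $P$-(anti-)ordinary vectors from Section \ref{sec:explicit choice of P-(anti-)ord vectors} together with the orthogonality of matrix coefficients then isolates the Euler factor $L_p(\tfrac{k-n+1}{2}, \Pord, \pi', \chi_u)$. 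A final check, using the $P$-ordinary projector and the fact that the whole construction factors through $\TT_\pi$, confirms that $L(\Eis^{[\kappa,\tau]}, \Pord; \varphi \otimes \varphi^\flat)$ lies in $\Lambda_{X_p, R} \wh{\otimes} \TT_\pi$ and satisfies the required interpolation at every classical point.
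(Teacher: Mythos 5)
Your proposal follows essentially the same route as the paper: the element is obtained by reinterpreting the $P$-ordinary Eisenstein measure of Proposition \ref{prop:existence of Eis measure on V3} as an element of $\Lambda_{X_p}\wh{\otimes}\TT\otimes\ehom_{\OO}(I_{\pi^\flat})$ via the Garrett-map equivariance and the Gorenstein/minimality identifications of Section \ref{subsec:Eis measures and padic L fct}, then pairing with $\varphi\otimes\varphi^\flat$; the interpolation formula is exactly the unfolding of Theorem \ref{thm:main formula global zeta integral} combined with the local computation at $p$ of Theorems \ref{thm:main thm - comp of Zw above p}--\ref{thm:main local thm} and the period normalizations of Section \ref{subsec:periods and cong ideals}. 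No substantive deviation or gap.
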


\subsubsection*{Additional comments.}

As mentioned above, the author plans to establish the necessary results of $P$-ordinary theory on unitary groups in a subsequent paper. Similar results have been obtained and used when working with symplectic groups, for instance see \cite{Pil12} and \cite{LiuRos20}. However, in both cases, their work only considers a version of $P$-ordinary representations where all types involved are 1-dimensional. More precisely, the pro-$p$-Iwahori subgroup considered are larger than the one involved in this paper. Nonetheless, the geometry of the Igusa tower (and the relevant vector bundles) is relatively unaffected by the dimension of the types involved. Therefore, the author plans to adapt the proofs of \cite{Pil12} to unitary groups to prove the conjectures discussed in Section \ref{sec:Pord padic mod forms}.

\subsubsection*{Acknowledgments} 
I thank my advisor Michael Harris who suggested that I look at the work of \cite{EHLS} and adapt it to the $P$-ordinary setting for my doctoral thesis. His countless insights and comments greatly helped me to obtain the results of this article. His encouragements and endless support helped me tremendously to complete this project. I also thank Ellen Eischen, Zheng Liu, Christopher Skinner and Eric Urban for many helpful conversations about their work and the various complexities related to Hida theory and $p$-adic $L$-functions.

\part{$P$-(anti-)ordinary theory on unitary groups.} \label{part : Pord theory on unitary groups}
\section{Notation and conventions.} \label{sec:notation and conventions}
Let $\bQQ \subset \CC$ be the algebraic closure of $\QQ$ in $\CC$. For any number field $F \subset \bQQ$, let $\Sigma_F$ denote its set of complex embeddings $\hom(F, \CC) = \hom(F, \bQQ)$.

Throughout this article, we fix a CM field $\KK \subset \bQQ$ with ring of integers $\OO = \OO_{\KK}$. Let $\KK^+$ be the maximal real subfield of $\KK$ and denote its ring of integers as $\OO^+ = \OO_{\KK^+}$. Let $c \in \Gal(\KK/\KK^+)$ denote complex conjugation, the unique nontrivial automorphism. Given a place $w$ of $\KK$, we usually denote $c(w)$ as $\bar{w}$.

Given a representation $\rho$ of some group $G$, we always denote its contragredient representation by $\rho^\vee$. We write $\brktdotdot_\rho$ for the tautological pairing between $\rho$ and $\rho^\vee$.

We denote the kernel $\ZZ.(2\pi i) \subset \CC$ of the exponential map $\exp : \CC \to \CC^\times$ by $\ZZ(1)$. Given any commutative ring $R$, we set $R(1) := R \otimes \ZZ(1)$.

Given a map $R \to R'$ of commutative rings and an $R$-module $M$, we write $M_{R'}$ for the base change $M \otimes_R R'$ of $M$ to $R'$. 

Let $M$ be an $R$-module endowed with an action of some group $G$. For any representation $\tau$ of $G$, we denote the $\tau$-isotypic component of $M$ by $M[\tau]$. We say that $\tau$ occurs in $M$ if $M[\tau] \neq 0$.

\subsection{CM types and local places.} \label{subsec:CM types and local places}
Fix an integer prime $p$ that is unramified in $\KK$. Throughout this paper, we assume the following :

\begin{hypothesis}\label{hyp:above p split}
	Each place $v^+$ of $\KK^+$ above $p$ totally splits as $v^+ = w \bar{w}$ in $\KK$, for some place $w$ of $\KK$.
\end{hypothesis}

Fix an algebraic closure $\bQQ_p$ of $\QQ_p$ and an embedding $\incl_p : \bQQ \hookrightarrow \bQQ_p$. Define
\[
	\bZZ_{(p)} = \{z \in \bQQ : \v_p(\incl_p(z)) \geq 0 \} \ ,
\]
where $\v_p$ is the canonical extension to $\bQQ_p$ of the normalized $p$-adic valuation on $\QQ_p$. 

Let $\CC_p$ be the completion of $\bQQ_p$. The map $\incl_p$ yields an isomorphism between its valuation ring $\OO_{\CC_p}$ and the completion of $\bZZ_{(p)}$ which extends to an isomorphism $\i : \CC \xrightarrow{\sim} \CC_p$. In particular, $\CC$ is viewed as an algebra over $\ZZ_p$ (or even $\bZZ_{(p)}$) via $\iota$.

Fix an embedding $\i_\infty : \bQQ \hookrightarrow \CC$ such that $\incl_p = \i \circ \i_\infty$. Identify $\bQQ$ with its images $\i_\infty(\bQQ) \subset \CC$ and $\incl_p(\bQQ) \subset \CC_p$.

Given $\sigma \in \Sigma_{\KK}$, the embedding $\incl_p \circ \sigma$ determines a prime ideal $\p_\sigma$ of $\Sigma_\KK$. There may be several embeddings inducing the same prime ideal. Similarly, given a place $w$ of $\KK$, let $\p_w$ denote the corresponding prime ideal of $\OO$.

Under Hypotesis \ref{hyp:above p split}, for each place $v^+$ of $\KK^+$ above $p$, there are exactly two primes of $\OO$ above $v^+$. Fix a set $\Sigma_p$ containing exactly one of these prime ideals for each place $v^+ \mid p$. Moreover, let 
\begin{equation} \label{eq:def of Sigma wrt Sigma p}
    \Sigma = \{\sigma \in \Sigma_\KK \mid \p_\sigma \in \Sigma_p\}\,,
\end{equation}
a CM type of $\KK$, see \cite[p.202]{Kat78}.

\subsection{Local theory of types for smooth representations.} \label{subsec:local theory of types for smooth reps}
Let $F$ be a non-archimedean local field. Denote its ring of integers by $\OO_F$, and set $G = \GL_n(F)$ and $\mathcal{G} =\GL_n(\OO_F)$.

\subsubsection{Parabolic inductions.} \label{subsubsec:Parabolic inductions}
For any parabolic subgroup $P$ of $G$, let $P^u$ be its unipotent radical and $L = P/P^u$, its Levi factor. Let $\delta_P : P \to \CC^\times$ denote its modulus character.

Recall that $\delta_P$ factors through $L$. Moreover, if $P$ is the standard parabolic subgroup associated to the partition $n = n_1 + \ldots + n_s$, one has
\begin{equation}
	\delta_P(l) = \prod_{k=1, \ldots, s} \absv{\det(l_k)}^{-\sum_{i < k} n_i + \sum_{j > k} n_j}
\end{equation}
for any $l = (l_1, \ldots, l_s)$ in $L = \prod_{k=1}^s \GL_{n_k}(F)$.

Given a smooth representation $\sigma$ of $L$, we often consider $\sigma$ as a representation of $P$ without comments. Let $\Ind_P^{G} \sigma $ denote the classical parabolic induction functor from $P$ to $G$. Similarly, we let
\[
    \ind{P}{G} \sigma = \Ind_P^G (\sigma \otimes \delta_P^{1/2})
\]
denote the \emph{normalized} parabolic induction functor.

In our work (especially Sections \ref{sec:structure at p of P(a-)ord aut reps} and \ref{subsec:Local test vectors at p}), we prefer to work with the normalized version but the main calculations of Section \ref{subsec:Local zeta integral at p} can entirely be done with unnormalized parabolic induction as well.

\subsubsection{Supercuspidal support} \label{subsubsec:supercuspidal support}
A theorem of Jacquet (see \cite[Theorem 5.1.2]{Cas95}) implies that given any irreducible representation $\pi$ of $G$, one may find a parabolic subgroup $P$ of $G$ with Levi subgroup $L$ and a supercuspidal representation $\sigma$ of $L$ such that $\pi \subset \ind{P}{G} \sigma$.

The pair $(L, \sigma)$ is uniquely determined by $\pi$, up to $G$-conjugacy and one refers to this conjugacy class as the \emph{supercuspidal support} of $\pi$.

Consider two pairs $(L, \sigma)$ and $(L', \sigma')$ consisting of a Levi subgroup of $G$ and one of its supercuspidal representation. One says that they are \emph{$G$-inertially equivalent} if there exists some $g \in G$ such that $L' = g^{-1}Lg$ and some unramified character $\psi$ of $L'$ such that $\prescript{g}{}{\sigma} \cong \sigma' \otimes \psi$, where $\prescript{g}{}{\sigma}(x) = \sigma(gxg^{-1})$. We write $[L, \sigma]_G$ for the $G$-inertial equivalence class of $(L, \sigma)$.

For such an equivalence class $\s$, let $\Rep^\s(G)$ denote the full subcategory of $\Rep(G)$ whose objects are the representations such that all their irreducible subquotients have inertial equivalence class $\s$. The Bernstein-Zelevinsky geometric lemma, see \cite[subsection VI.5.1]{Ren10}, implies that $\ind{P}{G} \sigma \in \Rep^\s(G)$, where $\s = [L, \sigma]_G$.

\begin{definition}[\cite{BusKut98}] \label{def:s types}
    Let $J$ be a compact open subgroup of $G$ and $\tau$ be an irreducible represention of $J$. Let $\Rep_\tau(G)$ denote the full subcategory of $\Rep(G)$ whose objects are the representations generated over $G$ by their $\tau$-isotypic subspace. We say that $(J, \tau)$ is an $\s$-type if $\Rep_\tau(G) = \Rep^\s(G)$. 
\end{definition}

The work of Bushnell-Kutzko in \cite{BusKut99} constructs a type for every supercuspidal support. In fact, \cite{BusKut98} and \cite{BusKut99} establish the core theory of using \emph{types} to study the category of smooth complex representations of $G$. However, the fact that the compact group $J$ acting on a given type need not be maximal is inconvenient for the calculus in Section \ref{subsec:Local zeta integral at p}. Therefore, we prefer to work with Schneider-Zink types, which are refinements of Bushnell-Kutzko types, see \cite{SchZin99}.

\subsubsection{Schneider-Zink types} \label{subsubsec:SZ types}
Using the local Langlands correspondence, the types introduced by Schneider and Zink refines the ones of Bushnell-Kutzko by also studying the monodromy and the associated Weil-Deligne representations of a given smooth representation of $G$. 

Although we do not need the full depth of this point of view for our purposes, we use \cite[Theorem 6.5.3]{BC09} which imply that for each admissible irreducible representation $\sigma$ of $G$, there exists a smooth irreducible representation $\tau$ of $\mathcal{G}$ such that $\tau$ has multiplicity one in $\sigma|_{\mathcal{G}}$. The other properties of $\tau$ provided by \cite[Theorem 6.5.3]{BC09} (see also \cite[Theorem 2.5.4]{HLLM23}) play no role in our work and we omit them.

\begin{remark}
    We later use types (or more precisely, their inertial equivalence class) to construct ``branches $P$-ordinary Hida families'' associated to some particular automorphic representations, see Definition \ref{def:P-anti-ord Hida family of pi}. As mentioned above, we strictly use their multiplicity one property. However, it could be interesting to see how the additional properties of these types can be used to study these Hida families.
\end{remark}

\begin{remark} \label{rmk:SZ types as irred comp of BK types}
    These Schneider-Zink types are essentially constructed by studying irreducible components of $\Ind_J^{\mathcal{G}} \tau'$, where $(J, \tau')$ is some Bushnell-Kutzko type for the supercuspidal support of $\sigma$.
\end{remark}

Note that the above does not mention anything about the uniqueness of such a representation $\tau$ of $\mathcal{G}$. Therefore, for later purposes, we fix a choice of such a representation $\tau = \tau_\sigma$ for each $\sigma$ and refer to it as our \emph{fixed choice of Schneider-Zink type for $\sigma$}. We also say that $\tau$ is the (chosen) \emph{SZ-type} of $\sigma$.

\begin{remark} \label{rmk:SZ types of twists and contragredient}
    We choose them compatibly so that for given an unramified character $\psi$ of $G$, the SZ-types of $\sigma$ and $\sigma \otimes \psi$ satisfy $\tau_{\sigma \otimes \psi} = \tau_{\sigma} \otimes \psi$. We also choose them so that $\tau_{\sigma^\vee} = (\tau_\sigma)^\vee$.
\end{remark}


\section{Modular forms on unitary groups with $P$-Iwahoric level at $p$.}  \label{sec:PIwa subgps of unitary gps}
Let $V$ be an $n$-dimensional $\KK$-vector space, equipped with a non-degenerate Hermitian pairing $\brktdotdot_{V}$ with respect to the quadratic imaginary extension $\KK/\KK^+$ fixed in the previous section.

\subsection{Unitary PEL datum.} \label{subsec:unitary pel datum}
Let $\delta \in \OO$ be totally imaginary and prime to $p$. Define $\brktdotdot = \tr_{\KK / \QQ}(\delta \brktdotdot_{V})$. This choice of $\delta$ and our Hypothesis \ref{hyp:above p split} ensure the existence of an $\OO$-lattice $L \subset V$ such that the restriction of $\brktdotdot$ to $L$ is integral and yields a perfect pairing on $L \otimes \ZZ_p$.

For each $\sigma \in \Sigma_{\KK}$, let $V_{\sigma}$ denote $V \otimes_{\KK, \sigma} \CC$. Fix a $\CC$-basis diagonalizing the pairing $\brktdotdot$. We assume that the basis is chosen so that the corresponding diagonal matrix is $\diag(1, \ldots, 1, -1, \ldots, -1)$ with $a_\sigma$ entries equal to $1$ and $b_\sigma = n - a_\sigma$ entries equal to $-1$. Fixing such a basis, let $h_{\sigma} : \CC \to \End_{\RR}(V_{\sigma})$ be $h_{\sigma} = \diag(z 1_{a_{\sigma}}, \bar{z} 1_{b_{\sigma}})$.

Let $h = \prod_{\sigma \in \Sigma} h_{\sigma} : \CC \to \End_{\KK^+ \otimes \RR}(V \otimes \RR)$, using the canonical identification
\[
    \prod_{\sigma \in \Sigma}
        \End_{\RR}(V_{\sigma}) 
    = 
        \End_{\KK^+ \otimes \RR}(V \otimes \RR)
\]
provided by our fixed choice of CM type $\Sigma$ of $\KK$. The \emph{signature} of $h$ is defined as the collection of pairs $\{(a_\sigma, b_\sigma)\}_{\sigma \in \Sigma_K}$.

The signature of $h$ is naturally related to the pure Hodge structure of weight $-1$ on $V_\CC = L \otimes \CC$ determined by $h$. Namely, we have $V_\CC = V^{-1, 0} \oplus V^{0,-1}$ where $h(z)$ acts as $z$ on $V^{-1,0}$ and as $\bar{z}$ on $V^{0,-1}$. By definition, the complex dimension of $V^{-1,0} \otimes_{\OO \otimes \CC, \sigma} \CC$ is equal to $a_\sigma$ if $\sigma \in \Sigma$ and $b_{\sigma}$ if $\sigma \in \Sigma_K \setminus \Sigma$. 

Throughout this paper, we assume the following two hypothesis : 
\begin{hypothesis}[Standard hypothesis] \label{hyp:standard hypothesis}
    We assume that $h$ is standard, as defined in \cite[Section 2.3.2]{EHLS}. Namely, there is a $\KK$-basis of $V$ that simultaneously diagonalizes the matrix associated to $\brktdotdot_V$ as well as the image of $h_\sigma$ (with respect to the induced basis of $V \otimes_{\KK, \sigma} \CC$), for each $\sigma \in \Sigma$ .
\end{hypothesis}

\begin{hypothesis}[Ordinary hypothesis] \label{hyp:ordinary hypothesis}
	For all embeddings $\sigma, \sigma' \in \Sigma_{\KK}$, if $\p_{\sigma} = \p_{\sigma'}$, then $a_{\sigma} = a_{\sigma'}$.
\end{hypothesis}

Using the second hypothesis, given a place $w$ of $\KK$ above $p$, we can define $(a_{w}, b_{w}) := (a_{\sigma}, b_{\sigma})$, where $\sigma \in \Sigma_\KK$ is any embedding such that $\p_\sigma = \p_w$.

The tuple 
\[
    \PP = 
        (
            \KK, c, \OO, L, 
            2\pi\sqrt{-1}\brktdotdot, h
        )
\]
is a PEL datum of unitary type, as defined in \cite[Section 2.1-2.2]{EHLS}. One can associate a group scheme $G = G_\PP$ over $\ZZ$ to $\PP$ whose $R$-points are
\begin{equation} \label{eq:def sim uni gp}
	G(R) = \{ (g, \nu) \in \GL_{\OO \otimes R}(L \otimes R) \times R^\times \mid \brkt{gx}{gy} = \nu \brkt{x}{y}, \forall x, y \in L \otimes R \},
\end{equation}
for any commutative ring $R$. We define the signature of $G$ as the signature of the underlying homomorphism $h$.

Note that $G_{/\QQ}$ is a reductive group. Moreover, our assumptions on $p$ imply that $G_{/\ZZ_p}$ is smooth and $G(\ZZ_p)$ is a hyperspecial maximal compact of $G(\QQ_p)$. 

\begin{remark} \label{rmk:PEL datum for many Hermitian spaces}
    Here and in what follows, we only introduce the relevant theory for a PEL datum $\PP$ as above associated to a single Hermitian vector space. In later sections, we also need to consider more general PEL data (and the associated objects) obtained from a pair of Hermitian vector spaces, see $\PP_3$ in Section \ref{subsec:G1 G2 G3 G4}. The necessary modifications to construct the relevant objects for such PEL data are obvious, hence we do not address them explicitly to lighten our notation. See \cite[Section 2]{EHLS} for precise details on the theory of unitary PEL data associated to any (finite) number of Hermitian vectors spaces over $\KK$.
\end{remark}

\subsubsection{Unitary moduli spaces} \label{subsubsec:unitary moduli space}
Let $F = F_\PP$ be the reflex field of the PEL datum $\PP$ introduced above, as defined in \cite[1.2.5.4]{Lan13}. Let $\OO_F$ be its ring of integers and let $S_p = \OO_F \otimes \ZZ_{(p)}$.

\begin{remark} \label{rmk:integral away from p models}
    In later sections, we work with models of Shimura varieties that are integral away-from-$p$. The existence of such integral models over $S_p$ is obtained by restricting our attention to level subgroups at $p$ satisfying certain conditions, see Sections \ref{subsec:Shi var of P Iwa level at p} and \ref{subsec:P nbtp thy of mod forms}. 
    
    One may remove these conditions and consider more general level subgroups by working over $F$ instead. For more details about the differences (and similarities) between working over $S_p$ or $F$, see \cite[Section 2]{EHLS}. We prefer (and need) to work with models over $S_p$ as we only work with with level subgroups at $p$ satisfying the conditions briefly mentioned above.
\end{remark}

Let $K^p \subset G(\AA_f^p)$ be any open compact subgroup and set $K = G(\ZZ_p)K^p$. Define the moduli problem $\MM_{K} = \MM_{K}(\PP)$ as the functor that assigns, to any locally noetherian $S_p$-scheme $T$, the set of equivalence classes of quadruples $\ul{A} = (A, \lambda, \iota, \alpha K^p)$, where
\begin{enumerate}
    \item $A$ is an abelian scheme over $T$;
    \item $\lambda : A \to A^\vee$ is a prime-to-$p$ polarization;
    \item $\iota : S_p \hookrightarrow \End_T A \otimes \ZZ_{(p)}$ such that $\iota(b)^\vee \circ \lambda = \lambda^\vee \circ \iota(\ol{b})$;
    \item $\alpha K^p$ is a $K^p$-level structure, in the sense of \cite[Section 2.1]{EHLS}. Namely, $\alpha$ is a rule that assigns, to each connected component $T^\circ$ of $T$, an isomorphism
    \[
        \alpha_t 
            : 
        L \otimes \AA_f^p 
            \xrightarrow{\sim}
        H^1(A_t, \AA_f^p)
    \]
    over $\OO_\KK \otimes \AA_f^p$ such that the $K^p$-orbit $\alpha K^p$ is $\pi_1(T, t)$-stable (where $t$ is an arbitrary geometric point of $T^\circ$). Furthermore, it identifies the pairing $\brktdotdot$ with a $\AA_f^{p, \times}$-multiple of the symplectic pairing on $H^1(A_t, \AA_f^p)$ induced by the Weil pairing and the polarization $\lambda$;
    \item $\Lie_T A$ satisfies the Kottwitz determinant condition defined by $(L \otimes R, \brkt{\cdot}{\cdot}, h)$, see \cite[Definition 1.3.4.1]{Lan13};
\end{enumerate}
and two quadruples $(A, \lambda, \iota, \alpha)$ and $(A', \lambda', \iota', \alpha')$ are equivalent if there exists some prime-to-$p$ isogeny $f : A \to A'$ such that
\begin{enumerate}
    \item $\lambda$ and $f^\vee \circ \lambda' \circ f$ are equal, up to multiplication by some positive element in $\ZZ_{(p)}^\times$;
    \item $\iota'(b) \circ f = f \circ \iota(b)$, for all $b \in \OO_\KK$;
    \item $\alpha' K^p = f \circ \alpha K^p$.
\end{enumerate}

When $K^p$ is clear from context, we often denote the orbit $\alpha K^p$ simply by $\alpha$. Furthermore, for a generalization (over $F$ instead of $S_p$) of this moduli problem for all open compact subgroups $K \subset G(\AA_f)$, see \cite[Section 2.1]{EHLS}.

In this article, we always assume that $K$ is \emph{neat}, in the sense of \cite[Definition 1.4.1.8.]{Lan13}. Then, \cite[Corollary 7.2.3.10]{Lan13} implies that there is a smooth, quasi-projective $S_p$-scheme that represents this moduli problem $\MM_K$. By abuse of notation, we denote this scheme by $\MM_K$ again. If $K' = G(\ZZ^p)K'^{,p} \subset K$, there is a natural homomorphism $\MM_{K'} \to \MM_K$ induced by the ``forgetful map'' $\alpha K'^{,p} \mapsto \alpha K^p$. Similarly, given $g \in G(\AA_f^p)$, there is a canonical map $[g] : \MM_{gKg^{-1}} \to \MM_K$ induced by the functor $(A, \lambda, \iota, \alpha) \mapsto (A, \lambda, \iota, \alpha g)$.

\subsubsection{Toroidal compactifications.} \label{subsubsec:toroidal compactification}
We now briefly recall the existence of toroidal compactifications of the moduli spaces above constructed in \cite{Lan13}. These are associated to \emph{smooth projective polyhedral cone decompostions}, a notion whose exact definition plays no role later in this article. Hence, we do not introduce this notion precisely. 

The only properties relevant for this paper are that given such a polyhedral cone decomposition $\Omega$, there exists a smooth toroidal compactification $\MM^\tor_{K, \Omega}$ of $\MM_K$ over $S_p$, and that there exists a partial ordering on the set of such $\Omega$'s by \emph{refinements}. 

Given two polyhedral cone decompositions $\Omega$ and $\Omega'$, if $\Omega'$ refines $\Omega$, then there is a canonical proper surjective map $\pi_{\Omega', \Omega} : \MM_{K, \Omega'}^\tor \to \MM_{K, \Omega}^\tor$ which restricts to the identity on $\MM_K$. We denote the tower $\{\MM_{K, \Omega}^\tor\}_\Omega$ by $\MM_K^\tor$. 

\begin{remark}
    We often refer to the tower as if it were a single scheme and do not emphasize the specific compatible choices of $\Omega$ in some constructions. This is essentially justified by the K\"oecher's principle in many cases, see Remark \ref{rmk:Koecher principle}. See \cite[Section 2.4]{EHLS} for more details.
\end{remark}

Furthermore, if $K' \subset K$, the map $\MM_K \to \MM_{K'}$ extends canonically to maps $\MM_{K, \Omega}^\tor \to \MM_{K', \Omega}^\tor$, for each $\Omega$, and hence to a map $\MM_K^\tor \to \MM_{K'}^\tor$. Similarly, the maps $[g] : \MM_{gKg^{-1}} \to \MM_K$ also extend canonically to maps $[g] : \MM_{gKg^{-1}}^\tor \to \MM_K^\tor$, for all $g \in G(\AA_f^p)$. Hence, $G(\AA_f^p)$ acts on the tower (of towers) $\{\MM_{G(\ZZ_p)K^p}^\tor\}_{K^p \subset G(\AA_f^p)}$.

\subsection{Structure of $G$ over $\ZZ_p$.} \label{subsec:structure of G over Zp}
\subsubsection{Comparison to general linear groups.} \label{subsubsec:comp to gen linear groups}
For each prime $w \mid p$ of $\KK$, denote the localization of $\KK$ at $w$ by $\KK_w$ and its ring of integers by $\OO_w$.

The factorization $\OO \otimes \ZZ_p = \prod_{w \mid p} \OO_{w}$, over primes $w \mid p$, yields a decomposition $L \otimes \ZZ_p = \prod_{w \mid p} L_w$. Using Hypothesis \ref{hyp:above p split}, we fix identifications $\KK_w = \KK_{\bar{w}}$ and $\OO_w = \OO_{\bar{w}}$. We consider both $L_w$ and $L_{\bar{w}}$ as $\OO_w$-lattices. 

The above factorization of $L \otimes \ZZ_p$ corresponds to 
\begin{equation} \label{eq:prod GL(L otimes Zp)}
    \GL_{
        \OO \otimes \ZZ_p
    }(L \otimes \ZZ_p) 
        \xrightarrow{\sim} 
    \prod_{w \mid p} 
        \GL_{\OO_w}(L_w), 
            \ \ \ \ g \mapsto (g_{w})_{w \mid p} \,,
\end{equation}
a canonical $\ZZ_p$-isomorphism. From the above, one obtains the identification
\begin{equation} \label{eq:prod G over Zp}
    G_{/\ZZ_p} 
        \xrightarrow{\sim} 
    \Gm \times 
    \prod_{w \in \Sigma_p} 
        \GL_{\OO_w}(L_w), 
            \ \ \ \ (g, \nu) \mapsto (\nu, (g_{w})_{w \in \Sigma_p}) \,.
\end{equation}

Furthermore, our assumption above on the pairing $\brkt{\cdot}{\cdot}$ implies that for each $w \mid p$, there is an $\OO_w$-decomposition of $L_w = L_w^+ \oplus L_w^-$ such that 
\begin{enumerate}
	\item $\rk_{\OO_w}{L_w^+} = a_{w}$ and $\rk_{\OO_w}{L_w^-} = b_{w}$;
	\item Upon restricting $\brkt{\cdot}{\cdot}$ to $L_w \times L_{\ol{w}}$, the annihilator of $L_w^\pm$ is $L_{\ol{w}}^{\pm}$. Hence, one has a perfect pairing $L_w^+ \oplus L_{\ol{w}}^- \to \ZZ_p(1)$, again denoted $\brktdotdot$.
\end{enumerate}

Fix dual $\OO_w$-bases (with respect to the perfect pairing above) for $L_w^+$ and $L_{\ol{w}}^-$. They yield isomorphisms
\begin{equation} \label{eq:GL(Lw pm) basis}
    \begin{tikzcd}
            \GL_{a_w}(\OO_w) 
                \arrow[r, "\sim"] 
        &
            \GL_{\OO_w}(L_w^+) 
                \arrow[r, "dual"] 
        &
            \GL_{\OO_w}(L_{\ol{w}}^-) 
                \arrow[r, "\sim"] 
        &
            \GL_{b_{\ol{w}}}(\OO_{w})
    \end{tikzcd}
\end{equation}
such that the composition is the adjoint map $A \mapsto A^* = \tp{\ol{A}}$ on $\GL_{a_w}(\OO_w) = \GL_{b_{\ol{w}}}(\OO_w)$. Furthermore, this induces an identification $\GL_{\OO_w}(L_w) = \GL_{n}(\OO_w)$ such that the obvious map
\begin{equation}\label{eq:GL(Lw) basis}
    \GL_{\OO_w}(L_w^+) 
        \times 
    \GL_{\OO_w}(L_w^-) 
        \hookrightarrow 
    \GL_{\OO_w}(L_w)
\end{equation}
is simply the diagonal embedding of block matrices.

Let $L^\pm = \prod_{w \mid p} L_w^\pm$ and let $H := \GL_{\OO \otimes \ZZ_p}(L^+)$. The identification \eqref{eq:GL(Lw pm) basis} above induces a canonical isomorphism
\begin{equation} \label{eq:def H}
    H \cong
    \prod_{w \mid p}
        \GL_{a_{w}}(\OO_w) = 
    \prod_{w \in \Sigma_p}
        \GL_{a_{w}}(\OO_w) \times 
        \GL_{b_{w}}(\OO_w)
\end{equation}

\begin{remark} \label{rmk:notation conv for algebraic groups}
    Here, we view $H$ as an algebraic group over $\OO \otimes \ZZ_p$. Namely, for any algebra $S$ over $\OO \otimes \ZZ_p$, we have $H(S) = \GL_S(L^+ \otimes_{\OO \otimes \ZZ_p} S)$. This technically leads to the confusion in notation since $H(\OO \otimes \ZZ_p)$ is equal to the set $\GL_{\OO \otimes \ZZ_p}(L^+)$ (also denoted $H$ above). However, we keep this convention of denoting an algebraic group by its set of points over its base ring, ignoring this minor abuse in notation.

    For instance, the algebraic group denoted $\GL_{a_w}(\OO_w)$ above technically stands for $\GL(a_w)_{/\OO_w}$. We use such a convention in many instance in what follows without comments. The only exception is for $\Gm$ which we refrain from denoting $\GL_1(\OO_w)$ or $\OO_w^\times$.
\end{remark}

\subsubsection{Parabolic subgroups of $G$ over $\ZZ_p$.} \label{subsubsec:para subgp of G over Zp}
For $w \mid p$, let 
\begin{equation} \label{eq:def partitions dw}
	\d_{w} = \left( n_{w,1}, \ldots, n_{w, t_{w}} \right)
\end{equation}
be a partition of $a_{w} = b_{\ol{w}}$. Let $P_{\d_{w}} \subset \GL_{a_{w}}(\OO_w)$ denote the standard parabolic subgroup corresponding to $\d_{w}$. Define $P_H \subset H$ as the $\ZZ_p$-parabolic that corresponds to the products of all the $P_{\d_{w}}$ via the isomorphism \eqref{eq:def H}. We denote the unipotent radical of $P_H$ by $P_H^u$ and its maximal subtorus by $T_H$.

We identify the elements of the Levi factor $L_H = P_H / P_H^u$ of $P_H$ with collections of block-diagonal matrices, with respect to the partitions $\d_{w}$, via \eqref{eq:def H}. In other words, we embed $L_{\d_w} := \GL_{n_{w,1}}(\OO_w) \times \ldots \GL_{n_{w,t}}(\OO_w)$ in $\GL_{a_w}(\OO_w)$ diagonally and identify $L_H$ with $\prod_{w \mid p} L_{\d_w}$. 

Define $\det_{\d_w} : L_{\d_w} \to (\Gm)^{t_w}$ as the homomorphism taking determinant of each $\GL$-block of $L_{\d_w}$ individually (in the obvious order). Let $SL_{\d_w} \subset L_{\d_w}$ denote the kernel of $\det_{\d_w}$ and identify $SL_H = \prod_{w \mid p} \SL_{\d_w}$ as a subgroup of $H$ via \eqref{eq:def H}. We define $SP_H$ as the product $SL_H \cdot P_H^u$ in $P_H$ and identify $P_H / SP_H$ with $\prod_{w \mid p} (\Gm)^{t_w}$. 

Note that the center $Z_{L_H}$ of $L_H$ is also canonically isomorphic to $\prod_{w \mid p} (\Gm)^{t_w}$. The identity map between these two copies of $\prod_{w \mid p} (\Gm)^{t_w}$ yields an identification that sends an element $g = (g_w)_{w \mid p} \in P_H/SP_H$ such that $\det_{\d_w}(g_w) = (g_{w,1}, \ldots, g_{w,t_w})$ with
\[
    (
        \diag(
            g_{w,1}, \ldots, g_{w,1};
            g_{w,2}, \ldots, g_{w,2};
            \ldots;
            g_{w,t_w}, \ldots, g_{w,t_w}
        )
    )_{w \in \mid p}
        \in
    Z_{L_H}\,,
\]
where the entry $g_{w,i}$ appears $n_{w,i}$-times.

\begin{remark} \label{rmk:char of Z and P/SP}
    We use this identification later to view a character $\chi$ of $Z_{L_H}$ as a character of $P_H$ that factors through $\prod_{w \mid p} \det_{\d_w}$. 
    
    We can write such a character $\chi$ as a product $\prod_{w \mid p} \chi_w$ via the canonical identification $Z_{L_H} = \prod_{w \mid p} (\Gm)^{t_w}$. Then, the corresponding character $\chi'$ of $P_H/SP_H = \prod_{w \mid p} (\Gm)^{t_w}$ is
    \[
        \chi' = \prod_{w \mid p} \chi_w \circ \det_{\d_w}\,.
    \]

    In particular, the reader should keep in mind that the restriction of $\chi'$ to $Z_{L_H}$ is \emph{not} $\chi$. Nonetheless, by abuse of notation, we often denote $\chi'$ as $\chi$ again. We remind the reader of this convention when necessary to avoid confusion.
\end{remark}

Let $P^+ \subset G_{/\ZZ_p}$ be the parabolic subgroup that stabilizes $L^+$ and such that
\begin{equation} \label{eq:def P+}
    P^+ 
        \twoheadrightarrow 
    \Gm \times P_H \subset \Gm \times H
\end{equation}
is surjective, where the map to the first factor is the similitude character $\nu$ and the map to the second factor is projection to $H$. 

For $w \in \Sigma_p$, let $P_{w}$ be the parabolic subgroup of $\GL_{\OO_w}(L_w)$ given by
\begin{equation} \label{eq:def of local parabolic Pw}
    P_w = 
    \left\{
        \begin{pmatrix}
		A & B \\
		0 & D
        \end{pmatrix} 
            \in \GL_{n}(\OO_w) \mid
                A \in P_{\d_w}, 
                D \in P^{\opp}_{\d_{\ol{w}}}
    \right\} \ ,
\end{equation}
via the isomorphisms \eqref{eq:GL(Lw pm) basis} and \eqref{eq:GL(Lw) basis}.

We identify $P = \prod_{w \in \Sigma_p} P_{w}$ as a subgroup of $G_{/\ZZ_p}$ via \eqref{eq:prod G over Zp}. Our choices of bases above imply that under the isomorphisms \eqref{eq:prod G over Zp} and \eqref{eq:GL(Lw pm) basis}, $P^+$ corresponds to
\begin{equation} \label{eq:iso P+ with Gm x P}
	P^+ \xlongrightarrow{\sim} \Gm \times P\,.
\end{equation}

This induces an isomorphism $L_H \cong L_P := P/P^u$, where $P^u$ is the unipotent radical of $P$. We again identify $L_P$ as the subgroup of $P$ consisting of collections of block-diagonal matrices (the sizes of the blocks are determined by the partitions $\d_w$). 

Let $SL_P \subset L_P$ be the subgroup corresponding to $SL_H$ via this isomorphism $L_H \cong L_P$ and let $SP = SL_P \cdot P$. Proceeding as above, we obtain a natural identification between the center $Z_{L_P}$ of $L_P$ and the quotient $P/SP$.

\begin{remark} \label{rmk:trivial partition}
    The trivial partition of $a_{w}$ is $(1, \ldots, 1)$ (of length $t_{w} = a_{w}$). If the partitions fixed above are all trivial, we write $B_w$, $B$ and $B^+$ instead of $P_w$, $P$ and $P^+$. In this case, $L_B = B/B^u$ is equal to $Z_{L_B}$ and identified with the maximal torus subgroup of $\prod_{w \in \Sigma_p} \GL_n(\OO_w)$
\end{remark}

\begin{definition} \label{def:def of PIwahori of level r}
    We define the $P$-Iwahori subgroup of $G$ of level $r \geq 0$ as
    \[
        I_r^0 = I_{P,r}^0 :=
        \left\{
            g \in G(\ZZ_p) \mid 
                g \text{ mod } p^r 
                \in P^+(\ZZ_p/p^r \ZZ_p)
        \right\}
    \]
    and the pro-$p$ $P$-Iwahori subgroup $I_r = I_{P,r}$ of $G$ of level $r$ as 
    \[
        I_r = I_{P,r} :=
        \left\{
            g \in G(\ZZ_p) \mid 
                g \text{ mod } p^r 
                \in (\ZZ_p/p^r\ZZ_p)^\times 
                \times P^u(\ZZ_p/p^r \ZZ_p)
        \right\}.
    \]
\end{definition}

\begin{remark} \label{rmk:not parahoric}
    We refrain from referring to $I_r^0$ as a \emph{parahoric} subgroup of $G$. This terminology is usually reserved for stabilizers of points in Bruhat-Tits building. We make no attempt here to introduce our construction from the point of view of these combinatorial and geometric structures.
\end{remark}

The inclusion of $L_P(\ZZ_p)$ in $I_r^0$ yields a canonical isomorphism 
\begin{equation} \label{eq:iso LP mod pr with Ir0 mod Ir}
    L_P(\ZZ_p/p^r\ZZ_p) \xrightarrow{\sim} I_r^0/I_r \ .
\end{equation}

For each $w \in \Sigma_p$, one similarly defines $I_{w, r}^0$ and $I_{w, r}$ by replacing $P^+$ by $P_{w}$ and working in $\GL_{n}(\OO_w)$ instead of $G(\ZZ_p)$. Let
\begin{equation} \label{eq:facto PIwahori over Sigma p}
    I_r^{\GL} = 
    \prod_{w \in \Sigma_p}
        I_{w, r} 
        \ \ \ \text{and} \ \ \ 
    I_r^{0,\GL} = 
    \prod_{w \in \Sigma_p} 
        I_{w, r}^0 \,,
\end{equation}
so that $I_{r}$ and $I_{r}^0$ correspond to $\ZZ_p^\times \times I_{P,r}^{\GL}$ and $\ZZ_p^\times \times I_{P,r}^{0, \GL}$ respectively, via the isomorphisms \eqref{eq:prod G over Zp} and \eqref{eq:GL(Lw pm) basis}.

In subsequent sections, we study certain $P$-ordinary Hecke operators at $p$ associated to the parabolic subgroups introduced above. Therefore, for later purposes, let us define the following matrices :

Given $w \in \Sigma_p$ and $1 \leq j \leq n$, let $t_{w,j} \in \GL_{n}(\OO_w)$ denote the diagonal matrix
\begin{equation} \label{eq:def t w j}
    t_{w,j} = 
    \begin{cases}
        \diag(p1_j, 1_{n - j}), &
        \text{if } j \leq a_{w} \\
        \diag(p1_{a_{w}}, 1_{n - j}, p1_{j - a_{w}}), &
        \text{if } j > a_{w}
    \end{cases}
\end{equation}

It corresponds to an element of $G(\QQ_p)$ under \eqref{eq:prod G over Zp} and \eqref{eq:GL(Lw) basis}, which we denote $t^+_{w,j}$ (namely, all its other components are equal to 1). We set $t^-_{w,j} = (t^+_{w,j})^{-1}$.

Furthermore, let $r_w = t_w + t_{\ol{w}}$ and consider
\[
    \widetilde{\d}_w = 
    \left( 
        \widetilde{\d}_{w,1}, 
        \ldots,
        \widetilde{\d}_{w, t_w};
        \widetilde{\d}_{w,t_w+1}, 
        \ldots,
        \widetilde{\d}_{w, r_w}
    \right) :=
    \left( 
        n_{w, 1}, 
        \ldots, 
        n_{w, t_w} ; 
        n_{\ol{w}, t_{\ol{w}}}, 
        \ldots, 
        n_{\ol{w}, 1} 
    \right)\,,
\]
a partition of $n = a_w + b_w$. For $j=1, \ldots, r_w$, let $D_w(j)$ be the partial sum $\sum_{i=1}^j \widetilde{\d}_{w,i}$. We define
\begin{equation} \label{eq:definition of tP pm}
    t^\pm_{P, p} =
    \prod_{w \in \Sigma_p}
    \prod_{j = 1}^{r_w}
        t^\pm_{w,D_w(j)}
\end{equation}

By construction, $t_{P,p}^{\pm}$ lies in the center $Z_{L_P}(\QQ_p)$ of $L_P(\QQ_p)$.

\begin{remark} \label{rmk:t w vs t w i}
    The reader should not confuse $t_w$ and $t_{w,i}$ (or $t_{w, D_w(j)}$). The former is only ever used to denote an integer while the latter denotes an $n \times n$-matrix over $\OO_w$.
\end{remark}

\subsection{Structure of $G$ over $\CC$.} \label{subsec:structure of G over C}
Consider the pure Hodge decomposition $V_\CC = L \otimes \CC = V^{-1, 0} \oplus V^{0, -1}$ of weight $-1$, as in Section \ref{subsec:unitary pel datum}, for the $\OO$-lattice $L$ associated to $\PP$. By definition of the reflex field of $\PP$, the graded piece $W = V/V^{0,-1}$ of the corresponding Hodge filtration is defined over $F$. 

Fix an $S_p$-submodule $\Lambda_0$ of $W$ that is stable under the $\OO$-action and such that $\Lambda_0 \otimes_{S_p} \CC = W$. The module $\Lambda_0^\vee = \hom_{\ZZ_{(p)}}(\Lambda_0, \ZZ_{(p)}(1))$ has a natural $\OO \otimes S_p$-action via
\[
    (b \otimes s)f(x) = f(\ol{b}sx)\,,
\]
for all $b \in \OO$ and $s \in S_p$.

Define $\Lambda = \Lambda_0 \oplus \Lambda_0^\vee$ and
\begin{align*}
    \brkt{\cdot}{\cdot}_{can} : \Lambda \times \Lambda &\to \ZZ_{(p)}(1) \\
    \brkt{(f_1,x_1)}{(f_2,x_2)}_{can} &= f_2(x_1) - f_1(x_2)
\end{align*}
so that both $\Lambda_0$ and $\Lambda_0^\vee$ are isotropic submodules of $\Lambda$. One has $\brkt{bx}{y}_{can} = \brkt{x}{\ol{b}y}_{can}$, for $b \in \OO$.

The pair $(\Lambda, \brkt{\cdot}{\cdot}_{can})$ induces an $S_p$-group scheme $G_0$ whose $R$-points are given by
\[
    G_0(R) = 
    \left\{ 
        (g, \nu) \in \GL_{R}(\Lambda \otimes_{\Ss} R) \times R^\times \mid \brkt{gx}{gy}_{can} = \nu \brkt{x}{y}_{can}, x, y \in \Lambda \otimes R
    \right\} \ ,
\]
for any $S_p$-algebra $R$. Let $P_0 \subset G_0$ denote the parabolic subgroup that stabilizes $\Lambda_0$.

One readily checks that there is an isomorphism $V \cong \Lambda \otimes_{S_p} \CC$ of $\CC$-vector spaces that identifies $V^{-1,0}$ (resp. $V^{0, -1}$) with $\Lambda_0 \otimes_{S_p} \CC$ (resp. $\Lambda^\vee_0 \otimes_{S_p} \CC$) and the pairing $\brktdotdot$ with $\brktdotdot_{can}$. In other words, it yields an identification between $G_{/\CC}$ and $G_{0/\CC}$. Clearly, it identifies $P_0(\CC)$ with $P_h(\CC)$, where $P_h$ is the stabilizer of the Hodge filtration on $L \otimes \RR$.

\begin{remark} \label{rmk:Lambda0 vs V -1 0}
    The advantage to introduce $\Lambda_0$ is that it is well-defined over $S_p$, as opposed to $V^{-1,0}$. This is necessary to later view classical algebraic weights $p$-adically, see Section \ref{subsubsec:padic weights}.
\end{remark}

Let $H_0 \subset G_0$ be the stabilizer of the polarization $\Lambda = \Lambda_0 \oplus \Lambda_0^\vee$. The natural projection
\begin{equation} \label{eq:iso H0 with Gm x GL(Lambda0vee)}
    H_0 \to \Gm \times \GL_{S_p} (\Lambda_0^\vee)
\end{equation}
is an isomorphism, and the isomorphism between $G_{/\CC}$ and $G_{0/\CC}$ above identifies $H_0(\CC)$ with $C(\CC)$, where $C_{/\RR}$ is the centralizer of $h$ under the conjugation action of $G_{/\RR}$. We recall the classification of the algebraic representations of $H_0$ in the next section to later describe cohomological weights of automorphic representations. 

\subsubsection{Algebraic weights.} \label{subsubsec:alg weights}
Let $\KK'$ be the Galois closure of $\KK$ and $\p' \subset \OO_{\KK'}$ be the prime above $p$ determined by $\incl_p$. From \cite[Corollary 1.2.5.6]{Lan13}, $\KK'$ contains $F$. Therefore, we can view $S_0 := \OO_{\KK', (\p')}$ as an algebra over $S_p = \OO_{F, (p)}$.

By definition of $\KK'$, we have $\OO \otimes S_0 = \prod_{\sigma \in \Sigma_\KK} S_0$. This naturally induces decompositions $\Lambda_0 \otimes S_0 = \prod_{\sigma \in \Sigma_\KK} \Lambda_{0, \sigma}$ and $\Lambda_0^\vee \otimes S_0 = \prod_{\sigma \in \Sigma_\KK} \Lambda^\vee_{0, \sigma}$. Moreover, the identification \eqref{eq:iso H0 with Gm x GL(Lambda0vee)} yields an isomorphism
\begin{equation} \label{eq:prod H0 over S0}
    H_{0 /S_0} 
        \xrightarrow{\sim} 
    \Gm
        \times 
    \prod_{\sigma \in \Sigma_\KK}
        \GL_{\OO \otimes_{\OO, \sigma} S_0}(
            \Lambda_{0,\sigma}^\vee
        )
    \,.
\end{equation}

Since $S_0$ is a PID, one readily sees that $\Lambda_{0, \sigma}$ (resp. $\Lambda_{0,\sigma}^\vee$) is a free $S_0$-module of rank $a_\sigma$ (resp. $b_\sigma$). Furthermore, for each $\sigma \in \Sigma_\KK$, the pairing $\brktdotdot_{can}$ identifies $\Lambda^\vee_{0, \sigma c}$ with $\hom_{\ZZ_{(p)}}(\Lambda_{0, \sigma} ,\ZZ_{(p)}(1))$. Fix dual bases for $\Lambda_{0, \sigma}$ and $\Lambda^\vee_{0, \sigma c}$, so that \eqref{eq:prod H0 over S0} induces an identification
\begin{equation} \label{eq:H0 basis}
    H_{0\,/S_0} \xrightarrow{\sim} 
    \Gm \times 
    \prod_{\sigma \in \Sigma_\KK}
        \GL_{b_\sigma}(S_0) \ .
\end{equation}

Let $B_{H_0} \subset H_{0\,/S_0}$ be the Borel subgroup that corresponds to the product of the lower-triangular Borel subgroups via the isomorphism \eqref{eq:H0 basis}. Let $T_{H_0} \subset B_{H_0}$ denote its maximal subtorus and let $B^u_{H_0}$ denote its unipotent radical subgroup.

Given an $S_0$-algebra $R$, a character $\kappa$ of $T_{H_0}$ over $R$ is identified via the isomorphism \eqref{eq:H0 basis} with a tuple
\[
    \kappa = 
        (
            \kappa_0, 
            (
                \kappa_{\sigma}
            )_{
                \sigma \in \Sigma_\KK 
            }
        ) \,,
\]
where $\kappa_0 \in \ZZ$ and $\kappa_{\sigma} = (\kappa_{\sigma, j}) \in \ZZ^{b_{\sigma}}$. Namely, for 
\[
    t = 
        (
            t_0, 
            ( 
                \diag(
                    t_{\sigma, i, 1}, 
                    \ldots, 
                    t_{\sigma, i, b_{\sigma, i}}
                )
            )_{
                \sigma \in \Sigma_\KK
            }
        ) \in T_{H_0} \,,
\]
one has
\begin{equation} \label{eq:def kappa(t) formula}
    \kappa(t) = 
    t_0^{\kappa_0} 
    \prod_{\sigma \in \Sigma_\KK} 
    \prod_{j=1}^{b_{\sigma}}
        t_{\sigma, j}^{\kappa_{\sigma, j}} \,.
\end{equation}

We refer to $\kappa$ as a \emph{weight}. We say that $\kappa$ is \emph{dominant} if $\kappa_{\sigma, j-1} \geq \kappa_{\sigma, j}$ for all $\sigma \in \Sigma_\KK$, $1 < j \leq b_{\sigma}$, or equivalently if it is dominant with respect to the opposite Borel $B_{H_0}^\opp$ (of upper-triangular matrices).

We say that $\kappa$ is \emph{regular} if $\kappa_{\sigma, j-1} > \kappa_{\sigma, j}$ for all $\sigma \in \Sigma_\KK$, $1 < j \leq b_{\sigma}$. Furthermore, we say that $\kappa$ is \emph{very regular} if $\kappa$ is regular and $\kappa_{\sigma, b_\sigma} \gg 0$ for each $\sigma$. 

\begin{remark}
    Note that we do not include an explicit lower bound in the definition of \emph{very regular} weights above. This is because we only use this notion in conjectures, see Conjecture \ref{conj:classicality scalar padic forms}. The author plans to study this notion in more details in the future.
\end{remark}

Given a dominant character $\kappa$ of $T_{H_0}$ over an $S_0$-algebra $R$, extend it trivially to $B_{H_0}$. Define
\[
    W_\kappa = W_\kappa(R) = \Ind_{B_{H_0}}^{H_0} \kappa 
    = 
        \{
            \phi : H_{0_{/R}} \to \Ga \mid 
            \phi(bh) = \kappa(b)\phi(h), 
            \forall b \in B_{H_0} 
        \} \,.
\]
with its natural structure as a left $H_0$-module via multiplication on the right.

As explained in \cite[Part II. Chapter 2]{Jan03} and \cite[Section 8.1.2]{Hid04}, if $R$ is flat over $S_0$, this is an $R$-model for the highest weight representation of $H_0$ with respect to $(T_{H_0}, B_{H_0}^\opp)$ of weight $\kappa$. 

\subsubsection{$P$-parallel weights.} \label{subsubsec:P parallel weights}
Given $\sigma \in \Sigma_\KK$, let $w$ be the place of $\KK$ above $p$ such that $\p_\sigma = \p_w$. In this section, we write $\d_\sigma$ for the partition $\d_w = (n_{w, 1}, \ldots, n_{w, t_w})$ of $a_\sigma = a_w$ introduced in Section \ref{subsubsec:para subgp of G over Zp}, $t_\sigma$ for $t_w$ and $n_{\sigma, j}$ for $n_{w,j}$.

We denote the standard lower-triangular parabolic subgroup of $\GL_{b_\sigma}(S_0)$ corresponding to $\d_{\sigma c}$ by $P_{0, \d_{\sigma c}}$. Define $P_{H_0} \subset H_0$ as the $S_0$-parabolic subgroup corresponding to the product $\prod_{\sigma \in \Sigma_\KK} P_{0, \d_{\sigma c}}$ via \eqref{eq:H0 basis}. We denote its unipotent radical by $P_{H_0}^u$ and its Levi factor by $L_{H_0}$.

We identify $L_{H_0}$ with
\begin{equation} \label{eq:prod L H0 over S0}
    \Gm 
        \times 
    \prod_{\sigma \in \Sigma_\KK} 
    \prod_{i=1}^{t_\sigma} 
        \GL_{S_0}(n_{w,i})
\end{equation}
via \eqref{eq:prod H0 over S0} and the obvious block-diagonal embeddings (for each $\sigma \in \Sigma_\KK$). Let $SL_{H_0} \subset L_{H_0}$ be the kernel the \emph{block-by-block} determinant map (analogous to the definition of $SL_H \subset L_H$ in Section \ref{subsubsec:para subgp of G over Zp}).

We say that a weight $\kappa = (\kappa_0, (\kappa_\sigma)_{\sigma \in \Sigma_\KK})$ of $T_{H_0}$ is \emph{$P$-parallel} if $\kappa$ extends to a character of $L_{H_0}$ that factors through $L_{H_0}/SL_{H_0}$. Using the conventions set in Remark \ref{rmk:trivial partition}, we see that every weight is $B$-parallel.

For $k = 1, \ldots, t_{\sigma c}$, let $N_{\sigma, k}$ denote the partial sum $\sum_{j=1}^k n_{\sigma c, j}$ and define $N_{\sigma, 0} = 0$. By identifying each $\kappa_\sigma$ with a tuple in $\ZZ^{b_\sigma}$ as above, $\kappa$ is \emph{$P$-parallel} if and only if
\begin{equation}\label{eq:P parallel weights equalities}
    \kappa_{\sigma, 1+N_{\sigma, k}} = \kappa_{\sigma, 2+N_{\sigma, k}} = \ldots = \kappa_{\sigma, N_{\sigma, k+1}}\,,
\end{equation}
for all $\sigma \in \Sigma_\KK$ and $0 \leq k < t_{\sigma c}$.

The tuple $\kappa_{Z_0} = (\kappa_0, (\kappa_{N_{\sigma, 1}}, \kappa_{N_{\sigma, 2}}, \ldots, \kappa_{N_{\sigma, t_{\sigma_c}}})_{\sigma \in \Sigma_\KK})$ naturally corresponds to a character of the center $Z_0$ of $L_{H_0}$. However, note that $\kappa_{Z_0}$ is not the restriction of $\kappa$ from $T_{H_0}$ to $Z_0$ (see Remark \ref{rmk:char of Z and P/SP}).

For later purposes, let $B(L_{H_0})$ denote the $S_0$-group given by the intersection of $B_{H_0} \cap L_{H_0}$. Equivalently, $B(L_{H_0})$ is the Borel of $L_{H_0}$ corresponding to the product (over $\sigma \in \Sigma_\KK$, $1 \leq i \leq t_\sigma$) of standard lower-triangular Borel subgroups via \eqref{eq:prod L H0 over S0}. 

Let $\rho_\kappa$ denote the $L_{H_0}$-representation $\Ind_{B(L_{H_0})}^{L_{H_0}} \kappa$ and write $V_\kappa$ for the associated algebraic vector space. In particular, we have $W_\kappa = \Ind_{L_{H_0}}^{H_0} \rho_\kappa$. 

Now, let $\beta$ be some $P$-parallel weight of $T_{H_0}$ and denote its extension to a character of $L_{H_0}$ by $\beta$ again. Note that $\rho_{\kappa + \beta}$ is canonically isomorphic to $\rho_\kappa \otimes \beta$. 

Therefore, we view $V_\kappa$ as the vector space associated to the representation $\rho_{\kappa'}$ for every algebraic weight $\kappa'$ in the ``$P$-parallel lattice'' 
\begin{equation} \label{eq:def [kappa]}
    [\kappa] := \{\kappa + \theta \mid \theta \text{ is $P$-parallel}\}
\end{equation}
of algebraic weights containing $\kappa$. We sometimes write $V_\kappa$ as $V_{[\kappa]}$ to emphasize this fact.

\subsubsection{$p$-adic weights} \label{subsubsec:padic weights}
Let $\OO'$ be the ring of integers of the smallest field $\LL' \subset \bQQ_p$ containing the image of all embeddings $\KK \hookrightarrow \bQQ_p$. In particular, $\LL'$ contains $\incl_p(\KK')$, hence $\incl_p$ identifies $\OO'$ as an $S_0$-algebra (and as an $S_p$-algebra).

Consider the factorization $\OO_{(p)} = \prod_{w \mid p} \OO_w$. Then, we have
\[
    \OO_{(p)} \otimes \OO' 
        = 
    \prod_{w \mid p} \OO_w \otimes \OO' 
        \xrightarrow{\sim}
    \prod_{w \mid p} 
        \prod_{
            \substack{
                \sigma \in \Sigma_\KK \\ 
                \p_\sigma = \p_w
            }
        } 
    \OO' 
        = 
    \prod_{\sigma \in \Sigma_\KK} \OO'\,,
\]
by definition of $\OO'$. This identification, together with the choice of basis for $L^+$ in Section \ref{subsubsec:comp to gen linear groups}, yields a decomposition
\[
    L^+ \otimes \OO' 
        = 
    \prod_{w \mid p} L_w \otimes \OO' 
        = 
    \prod_{\sigma \in \Sigma_\KK} (\OO')^{a_w}\,.
\]

Similarly, the choice of basis for $\Lambda_0$ in Section \ref{subsubsec:alg weights} induces
\[
    \Lambda_0 \otimes_{S_p} \OO' 
        = 
    \prod_{\sigma \in \Sigma_\KK} \Lambda_{0, \sigma} \otimes_{S_0} \OO'
        =
    \prod_{\sigma \in \Sigma_\KK} (\OO')^{a_\sigma}\,.
\]

From the above, we obtain an identification $L^+ \otimes \OO' = \Lambda_0 \otimes_{S_p} \OO'$ over $\OO \otimes \OO' = \OO_{(p)} \otimes \OO'$. Therefore, using the duality between $\Lambda_0$ and $\Lambda_0^\vee$, we have an isomorphism $H_{0 /\OO'} \xrightarrow{\sim} \Gm \times H_{/\OO'}$ given by
\begin{equation} \label{eq:iso H0 with Gm x H}
    (
        \nu, 
        (g_\sigma)_{\sigma \in \Sigma_\KK}
    ) 
        \mapsto
    \left(
        \nu, 
        (
            \prod_{
                \substack{
                    \sigma \in \Sigma_\KK \\
                    \p_\sigma = \p_w
                }
            }
                \nu \cdot
                \tp{g}_{\sigma c}^{-1}
        )_{w \mid p}
    \right)
\end{equation}
using the isomorphisms \eqref{eq:GL(Lw pm) basis}, \eqref{eq:def H}, \eqref{eq:prod H0 over S0} and \eqref{eq:H0 basis}.

In particular, it induces a natural inclusion $L_H(\ZZ_p) \hookrightarrow L_{H_0}(\OO')$ and allows us to view $V_\kappa$ as a representation of $L_H(\ZZ_p)$. We write $\rho_{\kappa_p}$ instead of $\rho_\kappa$ when referring to $V_\kappa$ as an $L_H(\ZZ_p)$-module. For instance, given $l \in L_H(\ZZ_p)$, we write
\begin{equation}\label{eq:relation kappa vs kappa p on L H ZZp}
    \rho_\kappa(\tp{l}^{-1}) = \rho_{\kappa_p}(l)\,,
\end{equation}
where we abuse notation to denote the element of $L_{H_0}(\OO')$ corresponding to $l$ under the isomorphism \eqref{eq:iso H0 with Gm x H} by $\tp{l}^{-1}$.

Similarly, the identification \eqref{eq:iso H0 with Gm x H} induces an embedding $T_H(\ZZ_p) \hookrightarrow T_{H_0}(\OO')$. Given $t = (\diag(t_{w,1}, \ldots, t_{w, a_w})_{w \mid p}) \in T_H(\ZZ_p)$, its image in $T_{H_0}(\OO')$ is naturally identified with $x = (1, t^{-1})$ and we have
\begin{equation} \label{eq:relation kappa kappa p}
    \kappa(x) = \kappa_p(t)\,,
\end{equation}
where
\[
    \kappa_p(t) 
        = 
    \prod_{w \mid p}
    \prod_{
        \substack{
            \sigma \in \Sigma_\KK \\
            \p_\sigma = \p_w
        }
    }
    \prod_{j=1}^{a_\sigma}
        \sigma(t_{w,j})^{\kappa_{\sigma c, j}}\,.   
\]

We sometimes write
\begin{equation} \label{eq:tuple definition of kappa p}
    \kappa_p 
        = 
    (\kappa_{\sigma c})_{\sigma \in \Sigma_\KK} 
        \in 
    \prod_{\sigma \in \Sigma_\KK} \ZZ^{a_\sigma}\,,
\end{equation}
for convenience and refer to $\kappa_p$ as a \emph{$p$-adic weight}. 

We say that $\kappa_p$ is a \emph{$P$-parallel} if $\kappa$ is $P$-parallel. Clearly, $P$-parallel $p$-adic weights extend to characters of $L_H(\ZZ_p)$ (that factor through $L_H(\ZZ_p)/SL_H(\ZZ_p)$). 

If $\beta$ is an algebraic $P$-parallel weight and $\kappa$ is any algebraic weight, then $\rho_{\kappa_p + \beta_p}$ is canonically isomorphic to $\rho_{\kappa_p} \otimes \beta_p$. Thus, we again view $V_\kappa$ as the space on which $\rho_{\kappa_p + \beta_p}$ acts for all $P$-parallel $p$-adic weights $\beta_p$.

\begin{remark} \label{rmk:rho kappa p is kappa p if P is B}
    The representation $\rho_{\kappa_p}$ and the character $\kappa_p$ coincide with one another when $P = B$ as in Remark \ref{rmk:trivial partition}. This is what occurs in \cite[Section 2.9.4]{EHLS}. 
\end{remark}

\subsection{Shimura varieties of $P$-Iwahoric level at $p$.} \label{subsec:Shi var of P Iwa level at p}
We first recall the familiar theory of integral away-from-$p$ models of Shimura variety for the unitary group $G$. We use them to define holomorphic and anti-holomorphic automorphic representations of $G$.

\begin{remark}
In Section \ref{subsec:P nbtp thy of mod forms}, we introduce more general level structures at $p$ related to the $P$-Iwahori subgroups constructed in Section \ref{subsubsec:para subgp of G over Zp} and define the notion of \emph{$P$-nebentypus} for both modular forms and automorphic representations.
\end{remark}

Let $X = X_\PP$ denote the conjugacy class of $h$ via the natural action of $G(\RR)$ on $\End_{\KK^+ \otimes \RR}(V \otimes \RR)$. It is well-known that the pair $(G, X)$ defines a \emph{Shimura datum} in the usual sense whose reflex field is again $F$.

Let $K = G(\ZZ_p)K^p$ as in the beginning of Section \ref{subsubsec:unitary moduli space}. Let $Sh_K(G,X)$ be the canonical model of the Shimura variety of level $K$ over $F$ associated to $(G,X)$. Then, the moduli space $\MM_{K\,/F}$ is the union of finitely many copies of $Sh_K(G, X)$, see \cite[Section 8]{Kot92} for details.

More precisely, let $V^{(1)}, \ldots, V^{(k)}$ be representatives for the isomorphism classes of all hermitian vector spaces that are locally isomorphic to $V$ at every place of $\QQ$. As explained in \cite[Section 2.3.2]{CEFMV}, there are finitely many such classes, in fact $k = \absv{\ker^1(\QQ, G)}$, where 
\[
    \ker^1(\QQ, G) = \ker\left( H^1(\QQ, G) \to \prod_v H^1(\QQ_v, G) \right).
\]

The base change of $\MM_{K}$ over $F$ is the disjoint union of $F$-schemes $\MM_{K, V^{(j)}}$, naturally indexed by the $V^{(j)}$ and all isomorphic to $Sh_K(G, X)$. 

Assume that $V^{(1)} = V$. To work integrally (away-from-$p$), denote the scheme-theoretic closure of $\MM_{K, V}$ in $\MM_{K}$ by $\level{K}{\Sh}(V)$. When $V$ is clear from context, we simply write $\level{K}{\Sh}$.

It is well-known that $\level{K}{\Sh}$ is a smooth, quasi-projective $S_p$-scheme. We refer to $\level{K}{\Sh}$ as a \emph{Shimura variety} of level $K$ (associated to $\PP$) and $\MM_K$ as a \emph{moduli space}. Denote the natural inclusion $\level{K}{\Sh} \hookrightarrow \MM_K$ over $S_p$ by $s_K$.

Furthermore, to work with compactified Shimura varieties, let $\Omega$ be a polyhedral cone decomposition, as in Section \ref{subsubsec:toroidal compactification}, and denote the scheme-theoretic closure of $\level{K}{\Sh}$ in $\MM_{K, \Omega}^{\tor}$ by $\level{K}{\Sh}_{\Omega}^{\tor}$. This is the natural smooth toroidal compactification of $\level{K}{\Sh}$ discussed in \cite[Sections 3-4]{Lan12} and, over $F$, it recovers the usual toroidal compactification of $Sh_K(G, X)$. 

We often treat the tower $\level{K}{\Sh}^{\tor} := \{\level{K}{\Sh}_{\Omega}^{\tor}\}_\Omega$ as a single scheme. We denote the natural inclusions $\level{K}{\Sh}_{\Omega}^{\tor} \hookrightarrow \MM_{K, \Omega}^{\tor}$ and $\level{K}{\Sh}^{\tor} \hookrightarrow \MM_{K}^{\tor}$ by $s_{K, \Omega}$ and $s_K$ respectively.

Given a neat compact open subgroup $K'^{,p} \subset K^p$, let $K' = G(\ZZ_p)K'^{,p}$. The map $\MM_{K'} \to \MM_K$ is compatible with the inclusions $s_K$ and $s_{K'}$, hence induces an analogous homomorphism $\level{K'}{\Sh} \to \level{K}{\Sh}$. The latter extends canonically to a map (of towers) $\level{K'}{\Sh}^\tor \to \level{K}{\Sh}^\tor$ on toroidal compactifications. 

A similar statement holds true for $[g] : \level{gKg^{-1}}{\Sh} \to \level{K}{\Sh}$, given any $g \in G(\AA_f^p)$. This induces a natural action of $G(\AA_f^p)$ on the towers $\{\level{K}{\Sh}\}_{K^p}$ and $\{\level{K}{\Sh}^\tor\}_{K^p}$.

Lastly, we set $\Sh(V) := \varprojlim_K \level{K}{\Sh(V)}$ and $\Sh(V)^{\tor} := \varprojlim_K \level{K}{\Sh(V)}^{\tor}$ when working with the profinite Shimura variety of infinite level and its compactification.

\subsubsection{The canonical bundle} \label{subsubsec:canonical bundle}
The following section recalls some of the material of \cite[Sections 2.6 and 6.1]{EHLS}.

Let $\w$ be the $\OO_{\MM_K^{\tor}}$-dual of $\Lie_{\MM_K^{\tor}} \Ab^\vee$ over $S_p$. The Kottwitz determinant condition mentioned in the definition of the moduli problem $\MM_K(\PP)$ implies that $\w$ is locally isomorphic to $\Lambda_0^\vee \otimes_{S_p} \OO_{\MM_K^\tor}$ over $\OO \otimes \OO_{\MM_K^\tor}$. Define the canonical bundle $\EE$ as the scheme
\[
    \isom_{\OO_\KK \otimes \OO_{\MM_K^\tor}}
    ( 
        \OO_{\MM_K^\tor}(1),
        \OO_{\MM_K^\tor}(1)
    )
        \times
    \isom_{\OO_\KK \otimes \OO_{\MM_K^\tor}}
    ( 
        \w,
        \Lambda_0^\vee \otimes_{S_p} \OO_{\MM_K^\tor}
    )\,,
\]
over $\MM_K^\tor$.

The natural structure map $\pi : \EE \to \MM_K^\tor$ is an $H_0$-torsor and is defined over $S_p$ when $K$ is a neat open compact subgroup of $G(\AA_f)$ of the form $G(\ZZ_p)K^p$. Note that the first factor in the definition of $\EE$ is included to keep track of the action of the (similitude) $\Gm$-factor of $H_0$, however it does not play a significant role in the rest of the paper.

\subsubsection{Modular forms of weight $\kappa$} \label{subsubsec:mod forms of weight kappa}
Let $R$ be an algebra over $S_0 = \OO_{\KK', (\p')}$, and let $\kappa$ be a dominant character of $T_{H_0}$ over $R$ as in Section \ref{subsubsec:alg weights}. Consider the vector bundle
\[
    \w_\kappa = \w_{\kappa, \Omega} = s_{K, \Omega}^* \pi_* (\OO_{\EE}[\kappa]) \ ,
\]
above $\level{K}{\Sh}_\Omega$ defined over $S_0$. Here, we extend $\kappa$ to an algebraic character of $B_{H_0}$ trivially and $\OO_\EE[\kappa]$ denotes the $\kappa$-isotypic part of $\OO_\EE$. By taking limits over $K$ and $\Omega$, we often view $\w_\kappa$ over $\Sh(V)^\tor$ without comment.

\begin{remark} \label{rmk:aut vec bundles from reps}
    Recall that given an irreducible representation of $P_0$ over $\CC$ that factors through $H_0$, one can view it as a $G$-equivariant vector bundle on the compact dual $\wh{X}$ of $X$ and thus define an automorphic vector bundle $\w_W$ on $\Sh(V)_{/\CC}$ using the usual $\otimes$-functor
    \[
        \text{$G$--Bun}(\wh{X}) \to \text{Bun}(\Sh(V))\,,
    \]
    see \cite[Section 6.1.1]{EHLS} for further details.

    It is well-known that each such $\w_W$ has a canonical model over a number field $F(W)/F$ such that $F(W) \subset \KK'$. For instance, the base change of $\w_{W_\kappa}$ from $F(W_\kappa)$ to $\KK'$ is actually canonically isomorphic to the restriction from $\Sh(V)^{\tor}$ to $\Sh(V)$ of $\w_\kappa$.
\end{remark}

For each polyhedral cone decomposition $\Omega$, let $D_\Omega$ be the Cartier divisor $\level{K}{\Sh_\Omega^\tor} - \level{K}{\Sh}$ equipped with its structure of a reduced closed subscheme. Implicitly, we restrict our attention to choices of $\Omega$ for which this complement $D_\Omega$ is a divisor with normal crossing. Let $\w_\kappa(-D_\Omega)$ be the twist of $\w_\kappa$ by the ideal sheaf of the boundaries corresponding to $D_\Omega$.

Then, cuspidal cohomology (of degree $i$) of level $K$ with respect to $\Omega$ is defined as
\[
    H_!^i(\level{K}{\Sh(V)}_\Omega^{\tor}, \w_\kappa)
        :=
    \mathrm{Im}
    \left(
        H^i(\level{K}{\Sh(V)}_\Omega^{\tor}, \w_\kappa(-D_\Omega)) \to H^i(\level{K}{\Sh(V)}_\Omega^{\tor}, \w_\kappa)
    \right)\,,
\]
and we mainly work with
\[
    H_!^i(\Sh(V), \w_\kappa)
        =
    H_!^i(\Sh(V)^{\tor}, \w_\kappa)
        :=
    \varinjlim _{K, \Omega}
        H_!^i(\level{K}{\Sh(V)}_\Omega^{\tor}, \w_\kappa)\,,
\]
where the limit is restricted to subgroups $K$ of the form $G(\ZZ_p)K^p$, so that the above is defined over $S_0$. We first review the theory of degree $i = 0$ in what follows and discuss the middle degree cohomology in Section \ref{subsec:duality and integrality}.

\begin{remark} \label{rmk:Koecher principle}
    If the reflex field $F$ is different from $\QQ$ or the derived group $G^{\mathrm{der}}$ of $G$ (over $\QQ$) has no irreducible factor isomorphic to $\SU(1,1)$, then we can invoke the K\"ocher principle, namely 
    \[
        H^0(\level{K}{\Sh(V)}_\Omega^{\tor}, \w_\kappa)
            =
        H^0(\level{K}{\Sh(V)}, \w_\kappa)\,,    
    \]
    see \cite{Lan16}. Therefore, in that case, we can ignore the toroidal compactification and omit the limit over $\Omega$ in the definitions above.
    
    Otherwise, the toroidal compactifications are canonical; they are simply the minimal compactification. We ignore the details needed to treat this case and implicitly view the tower $\level{K}{\Sh(V)}^{\tor}$ as a single scheme, see the remarks at the end of Section \ref{subsubsec:toroidal compactification} (or \cite[Section 2.6.5]{EHLS} for more details and a similar treatment).
\end{remark}

The action of $G(\AA_f^p)$ on 
\[
    H^0(\Sh(V), \w_\kappa) 
        := 
    \varinjlim_{K, \Omega} 
        H^0(\level{K}{\Sh(V)}_\Omega^\tor, \w_\kappa)
\]
induced by its action on the tower $\{\level{K}{\Sh}^\tor\}_{K^p}$ stabilizes $H^0_!(\Sh(V), \w_\kappa)$.

The $R$-modules of $M_\kappa(K; R)$ and $S_\kappa(K; R)$ of modular forms and cusp forms of weight $\kappa$ and level $K = G(\ZZ_p)K^p$ are defined by taking $K^p$-fixed points of this action, namely
\[
    M_\kappa(K; R) 
        := 
    H^0(\Sh(V)_{/R}, \w_\kappa)^{K^p} 
        = 
    H^0(\level{K}{\Sh(V)}, \w_\kappa)
\]
and
\[
    S_\kappa(K; R) 
        := 
    H_!^0(\Sh(V)_{/R}, \w_\kappa)^{K^p} 
        = 
    H_!^0(\level{K}{\Sh(V)}, \w_\kappa)
    \,,
\]
respectively.

Via the moduli interpretation of $\MM_K$, we view a modular form $f \in M_\kappa(K; R)$ as a rule on the set of pairs $(\ul{A}, \varepsilon) \in \EE(S)$, for any $R$-algebra $S$, such that $f(\ul{A}, \varepsilon) \in S$, the rule is functorial in $S$, and
\[
    f(\ul{A}, b\varepsilon)
        =
    \kappa(b)
    f(\ul{A}, \varepsilon)\,,
\]
for all $b \in B_{H_0}(S)$.

\subsubsection{Hecke operators away from $p$.} \label{subsubsec:Hecke ops away from p}
Given $K = G(\ZZ_p)K^p$ as above, $g \in G(\AA_f^p)$ and an $S_0$-algebra $R$, the double coset $KgK$ naturally defines an operator
\[
    [KgK] : M_\kappa(K; R) \to M_\kappa(K; R)
\]
induced by viewing $KgK$ as a correspondence on $\MM_K$. More precisely, given $f \in M_\kappa(K; R)$ and writing $K^pgK^p$ as finite disjoint union $\bigsqcup_i g_i K^p$ of right cosets, we have
\begin{equation} \label{eq:def Hecke op Tg level Kp}
    ([KgK] f)(A, \lambda, \iota, \alpha, \varepsilon)
        =
    \sum_i
        f(A, \lambda, \iota, \alpha \circ g_i, \varepsilon)\,,
\end{equation}
which is obviously independent of the choice of representatives $g_i$. When the level $K$ is clear from context, we simply write $T(g)$ instead of $[KgK]$. One readily checks that $T(g)$ stabilizes $S_\kappa(K; R)$.

\subsection{$P$-nebentypus theory of modular forms.} \label{subsec:P nbtp thy of mod forms}
We now introduce a more general level structure at $p$ via covers of $\MM_K$ and $\MM_K^{\tor}$.

\subsubsection{Level subgroup $K_{P,r}$.} \label{subsubsec:lvl subgp KPr}
Let $\ul{\Ab} = (\Ab, \lambda, \iota, \alpha)$ be the universal abelian scheme over $\MM_K$. Using \cite[Theorem 6.4.1.1]{Lan13}, $\Ab$ can be extended to a semiabelian scheme over $\MM_K^\tor$ that is part of a degenerating family and which we still denote $\Ab$. 

By \cite[Theorem 3.4.3.2]{Lan13}, there exists a dual semiabelian scheme $\Ab^\vee$ together with homomorphisms $\Ab \to \Ab^\vee$, $S_p \to \End_{\MM_K^{\tor}} \Ab$ and a $K^{(p)}$-level structure on $\Ab$ that extend $\lambda$, $\iota$ and $\alpha$ respectively.

Define an $S_p$-scheme $\overline{\MM}_{K_r}$ over $\MM_K^{\tor}$ whose $S$-points is the set of $P^u_H(\ZZ_p)$-orbits of injections $\phi : L^+ \otimes \mu_{p^r} \hookrightarrow \Ab^\vee[p^r]_{/S}$ of group schemes over $\OO \otimes \ZZ_p$ such that the image of $\phi$ is an isotropic subgroup scheme. The natural action of $\LL_r = L_H(\ZZ_p/p^r\ZZ_p)$ on $L^+ \otimes \mu_{p^r}$ induces a structure of $\LL_r$-torsor on $\overline{\MM}_{K_r} \to \MM_K^{\tor}$.

Let $\MM_{K_r}$ denote the pullback of $\overline{\MM}_{K_r}$ over $\MM_K$, i.e. we have the Cartesian commutative diagram 
\begin{equation} \label{eq:def pullback M Kr}
    \begin{tikzcd}
        \MM_{K_r}  
            \arrow[r, hook] \arrow[d] &
        \ol{\MM}_{K_r} 
            \arrow[d] \\
        \MM_K
            \arrow[r, hook] &
        \MM^{\tor}_K
    \end{tikzcd}
\end{equation}
and the vertical arrows are $\LL_r$-torsors. We set $K_{P, r} := I_{P,r}K^p \subset G(\AA_f)$

\begin{remark} \label{rmk:moduli problem of level Kr over F}
    Recall that one can define an $F$-rational moduli problem  generalizing the one in Section \ref{subsubsec:unitary moduli space} for each neat open compact subgroup $K \subset G(\AA_f)$ (by essentially dropping all ``prime-to-$p$'' conditions). We again denote the corresponding moduli space by $\MM_K$. The $G(\AA_f^p)$-action on $\{\MM_K\}_{K^p}$ extends to an action of $G(\AA_f)$ on $\{\MM_K\}_K$. We do not include the exact details needed to modify the prior theory to $\MM_{K\,/F}$ and instead refer the reader to \cite[Section 2.1]{EHLS} or \cite[Corollary 7.2.3.10]{Lan13}.
    
    A choice of basis of $\ZZ_p(1)$ induces a natural isomorphism between the scheme $\MM_{K_r\, /F}$, defined as a pullback in \eqref{eq:def pullback M Kr}, and the moduli space $\MM_{I_r K^p\, /F}$ representing the $F$-rational moduli problem mentioned in the previous paragraph. Furthermore, this same choice identifies $\ol{\MM}_{K_r\, /F}$ with the normalization of $\MM_{K\, /F}^{\tor}$ in $\MM_{K_r\, /F}$. Therefore, we can write $K_{P,r} = K_r$ without risk of confusion when $P$ is clear from context.
\end{remark}

Over $S_p$, define $\level{K_r}{\Sh}$ (resp. $\level{K_r}{\ol{\Sh}}$) as the pullback of $\MM_{K_r}$ (resp. $\ol{\MM}_{K_r}$) via $s_K$. Hence, we have the commutative diagrams
\[
    \begin{tikzcd}
        \level{K_r}{\Sh}
            \arrow[r, hook] \arrow[d] &
        \MM_{K_r} 
            \arrow[d] & &
        \level{K_r}{\ol{\Sh}} 
            \arrow[r, hook] \arrow[d] &
        \ol{\MM}_{K_r} 
            \arrow[d] \\
        \level{K}{\Sh}
            \arrow[r, hook] &
        \MM_K & &
        \level{K}{\Sh^\tor} 
            \arrow[r, hook] &
        \MM^\tor_K
    \end{tikzcd}\,,
\]
and by abusing notation, we denote all four horizontal inclusions by $s_K$. All four vertical arrows are again $\LL_r$-torsors.

\begin{remark} \label{rmk:shi var of level Kr over F}
    As in Remark \ref{rmk:moduli problem of level Kr over F}, a choice of basis of $\ZZ_p(1)$ identifies $\Sh_{K_r\, /F}$ with $Sh_{I_rK^p}(G, X)_{/F}$ (the analogue of $Sh_K(G, X)$ introduced in Section \ref{subsec:Shi var of P Iwa level at p} for $K = I_rK^p$), and identifies $\level{K_r}{\ol{\Sh}}_{/F}$ with the normalization of $\level{K}{\Sh}_{/F}^{\tor}$ in $\level{K_r}{\Sh}_{/F}$.
\end{remark}

The action of $G(\AA_f^p)$ on the tower $\{\level{K}{\Sh}\}_{K^p}$ naturally induces an action on $\{\level{K_r}{\Sh}\}_{K^p}$. Analogous statements hold true for $\{\MM_{K_r}\}_{K^p}$, $\{\level{K_r}{\ol{\Sh}}\}_{K^p}$, and $\{\ol{\MM}_{K_r}\}_{K^p}$.

Furthermore, let $\EE_r = \EE \times_{\MM_K^\tor} \ol{\MM}_{K_r}$, so
\[
    \begin{tikzcd} 
        \EE_r 
            \arrow[r, "H_0"] \arrow[d, "\LL_r"] &
        \ol{\MM}_{K_r}
            \arrow[d, "\LL_r"] \\
        \EE
            \arrow[r, "H_0"] &
        \MM_K^\tor
    \end{tikzcd}
\]
and denote the structure map $\EE_r \to \ol{\MM}_{K_r}$ by $\pi_r$. 

Given a dominant weight $\kappa$ of $T_{H_0}$ over some $S_0$-algebra $R$, we define
\[
    \w_{\kappa, r} := s_{K_r}^*(\pi_r)_*(\OO_{\EE_r}[\kappa])
\]
as a sheaf over $\level{K_r}{\ol{\Sh}}_{/R}$. 

We define the space of modular forms on $G$ over $R$ of level $K_r$ and weight $\kappa$ as
\begin{equation} \label{eq:def of M kappa Kr R}
    M_\kappa(K_r; R) 
        := 
    H^0(
        \level{K_r}{\ol{\Sh}}, 
        \w_{\kappa, r}
    )
\end{equation}
and its subspace of cusp forms as
\begin{equation} \label{eq:def of S kappa Kr R}
    S_\kappa(K_r; R) 
        := 
    H^0_!(
        \level{K_r}{\ol{\Sh}},
        \w_{\kappa, r}
    )\,,
\end{equation}
where $H^0_!$ denotes cuspidal cohomology as in Section \ref{subsubsec:canonical bundle}.

It follows from Remarks \ref{rmk:moduli problem of level Kr over F} and \ref{rmk:shi var of level Kr over F} that $M_\kappa(K_r; S_p)$ (resp. $S_\kappa(K_r; S_p)$) is an $S_p$-integral structure of the usual space of modular (resp. cusp) forms over $F$ on $G$ of level $I_rK^p$ and weight $\kappa$.

We view a modular form $f \in M_\kappa(K_r; R)$ as a rule on the set of pairs $(\ul{A}, \phi, \varepsilon) \in \EE_r(S)$, for any $R$-algebra $S$, such that $f(\ul{A}, \phi, \varepsilon) \in S$, the rule is functorial in $S$, and
\[
    f(\ul{A}, \phi, b\varepsilon)
        =
    \kappa(b)
    f(\ul{A}, \phi, \varepsilon)\,,
\]
for all $b \in B_{H_0}(S)$. 

Given a $\bQQ_p$-valued multiplicative character $\psi_B$ of the maximal torus $T_H(\ZZ_p)$ of $H(\ZZ_p)$ that factors through $T_H(\ZZ_p/p^r\ZZ_p)$, let $S_p[\psi_B]$ denote the smallest ring extension of $S_p$ containing the values of $\psi_B$. Given an $S_p[\psi_B]$-algebra $R$, we define the $R$-module of modular forms over $R$, weight $\kappa$, level $K_r$, and \emph{(classical) nebentypus} $\psi_B$ as
\[
    M_\kappa(K_r, \psi_B; R)
        :=
    \{
        f \in H^0(
            \level{K_r}{\ol{\Sh}}, \w_{\kappa, r}
        )
            :
        t \cdot f
            =
        \psi_B(t) f\,, \forall t \in T_H(\ZZ_p)
    \}\,,
\]
and we define the analogous $R$-module of cusp forms $S_\kappa(K_r, \psi_B; R)$ similarly.

Given $g \in G(\AA_f)$, the formula \eqref{eq:def Hecke op Tg level Kp} can similarly be adapted to define on operator $M_\kappa(K_r; R)$ via
\begin{equation} \label{eq:def Hecke op Tg level Kr}
    ([K_rgK_r]f)(A, \lambda, \iota, \alpha, \phi, \varepsilon)
        =
    \sum_i
        f(A, \lambda, \iota, \alpha \circ g_i, \phi, \varepsilon)\,,
\end{equation}
using the same notation as in Section \ref{subsubsec:Hecke ops away from p}. By abuse of notation, we again denote this operator by $T(g)$ when $K_r$ is clear from context.

Furthermore, if $R$ contains the reflex field $F$, then $M_\kappa(K_r; R)$ is also obtained as the $K_r$-fixed points of the $R$-module
\[
    \varinjlim_{K \subset G(\AA_f)}
        H^0(\level{K}{\ol{\Sh}}, \w_{\kappa})\,,
\]
and the same holds true for $S_\kappa(K_r; R)$ upon replacing $H^0(-)$ by $H^0_!(-)$.

\subsubsection{$P$-nebentypus of modular forms.} \label{subsubsec:P nebentypus}
Let $\tau$ be a smooth irreducible representation of $L_H(\ZZ_p)$ acting on a module $\MMM_\tau$ over some $S_p$-algebra $S_p[\tau] \subset \CC$.

\begin{definition}
    We say that $\tau$ is a \emph{$P$-nebentypus of level $r$} if it factors through $\LL_r = L_H(\ZZ_p/p^r\ZZ_p)$. In this case, we can always assume $S_p[\tau]$ is finite over $S_p$, hence contained in $\bQQ$.
\end{definition}

We do not assume that $r$ is minimal for this property. In fact, if $\tau$ is of level $r$, it is obviously of level $r'$ for every $r' \geq r$, and therefore we sometimes write that $\tau$ is a ``$P$-nebentypus of level $r \gg 0$''.

Define $\EE_{r, \tau}$ as the $S_p[\tau]$-scheme over $\EE_r$ whose $R$-points are given by
\[
    \EE_{r, \tau}(R) 
        = 
    \EE_r(R) \times^{\tau} \MMM_{\tau, R}
        := 
    (
        \EE_r(R) \times \MMM_{\tau, R}
    )/{\sim^\tau}
\]
for any $S_p[\tau]$-algebra $R$, where the equivalence relation $\sim^\tau$ is
\[
    ( (\varepsilon, \phi), m) 
        \sim^\tau
    ( (\varepsilon, \phi \circ l), \tau(l)m) \ ,
\]
for all $(\varepsilon, \phi) \in \EE_r$, $m \in \MMM_{\tau, R}$ and $l \in L_H(\ZZ_p)$. We denote the structure map $\EE_{r, \tau} \to \ol{\MM}_{K_r}$ by $\pi_{r, \tau}$.

Let $S_0[\tau] \subset \bQQ$ be the compositum of $S_p[\tau]$ and $S_0 = \OO_{\KK', (\p')}$. Given a dominant weight $\kappa$ of $T_{H_0}$ over an $S_0[\tau]$-algebra $R$, we define
\[
    \w_{\kappa, r, \tau} = 
        s_{K}^* (\pi_{r, \tau})_*
        (
            \OO_{\EE_{r, \tau}}[\kappa]
        )
\]
as a sheaf on $\level{K_r}{\ol{\Sh}}$ over $R$. We denote its restriction to $\level{K_r}{\Sh}$ by $\w_{\kappa, r, \tau}$ as well.

\begin{definition} \label{def:M kappa(Kr, tau, R)}
    We define the space of modular forms on $G$ over $R$, level $K_r$, weight $\kappa$ and \emph{$P$-nebentypus} $\tau$ as
    \[
        M_\kappa(K_r, \tau; R) 
            := 
        H^0(
            \level{K_r}{\ol{\Sh}}, 
            \w_{\kappa, r, \tau}
        )
    \]
    and its subspace of cusp forms as
    \[
        S_\kappa(K_r, \tau; R) 
            := 
        H^0_!(
            \level{K_r}{\ol{\Sh}},
            \w_{\kappa, r, \tau}
        )\,,
    \]
    where $H^0_!$ again denotes cuspidal cohomology as in Section \ref{subsubsec:canonical bundle}.
\end{definition}

\begin{remark} \label{rmk:B-nebentypus is nebentypus}
    Classically, the nebentypus of a modular form is a finite-order character of the maximal torus $T_H(\ZZ_p)$ of $H$. In our terminology, see Remark \ref{rmk:trivial partition}, this is equivalent to a $B$-nebentypus.
\end{remark}

\begin{remark}
    The reader should note that in this notation $\tau$ is always a \emph{$P$-nebentypus}, i.e. a smooth finite-dimensional representation of the Levi subgroup of $P_H(\ZZ_p)$. On the other hand, when writing $M_\kappa(K_r, \psi_B; R)$, we always use the symbol $\psi_B$ for a character of the maximal torus $T_H(\ZZ_p)$ of the Borel subgroup $B_H(\ZZ_p)$. The subscript $B$ is to remind the reader of the relation between $\psi_B$ and $B_H$ and help distinguish between the \emph{similar yet different} spaces $M_\kappa(K_r, \psi_B; R)$ and $M_\kappa(K_r, \tau; R)$. The two notions overlap exactly when $P=B$, as in Remark \ref{rmk:trivial partition}, in which case the notation is not ambiguous.
\end{remark}

A modular form $f \in M_{\kappa}(K_r, \tau; R)$ can be interpreted as a functorial rule that assigns to a tuple $(\ul{A}, \phi, \varepsilon) \in \EE_r(S)$, over an $R$-algebra $S$, an element 
\[
    f(\ul{A}, \phi, \epsilon) 
        \in 
    \hom_{S}(
        \MMM_{\tau, S},
        S
    )
        =
    \MMM_{\tau, S}^\vee
\]
    such that
\[
    f(\ul{A}, \phi \circ l^{-1}, b \epsilon)
        = 
    \kappa(b)
    \tau^\vee(l)
        f(\ul{A}, \phi, \epsilon)
\]
for all $b \in B_{H_0}(S)$ and $l \in L_H(\ZZ_p)$.

Equivalently, using Frobenius reciprocity, $f$ can be interpreted as a functorial rule such that
\[
    f(\ul{A}, \phi, \epsilon) 
        \in 
    V_{\kappa, S} \otimes \MMM_{\tau, S}^\vee
\]
and
\[
    f(\ul{A}, \phi \circ l^{-1}, l_0 \epsilon)
        = 
    f(\ul{A}, \phi \circ l^{-1}, l_0 \epsilon)(v)
        = 
    (
        \rho_\kappa(l_0)
            \otimes
        \tau^\vee(l)
    )
    f(\ul{A}, \phi, \epsilon)
\]
for all $l_0 \in L_{H_0}(S)$ and $l \in L_H(\ZZ_p)$.

Given $g \in G(\AA_f^p)$, one can again define a Hecke operator $T(g)$ on $M_\kappa(K_r, \tau; R)$ which stabilizes the subspace of cusp forms via \eqref{eq:def Hecke op Tg level Kr}.

More generally, view $\MMM = \MMM_\tau$ simply as a module over $S_p[\MMM] = S_p[\tau]$, forgetting the representation $\tau$ momentarily. 

We define $\EE_{r, \MMM}$ as the $S_p[\MMM]$-scheme over $\EE_r$ whose $R$-points are given by
\[
    \EE_{r, \MMM}(R) 
        = 
    \EE_r(R) \times \MMM_{R}
\]
for any $S_p[\MMM]$-algebra $R$, without any equivalence relation. We denote the structure map $\EE_{r, \MMM} \to \ol{\MM}_{K_r}$ by $\pi_{r, \MMM}$.

Let $S_0[\MMM] \subset \bQQ$ be the compositum of $S_p[\tau]$ and $S_0 = \OO_{\KK', (\p')}$. Given a dominant weight $\kappa$ of $T_{H_0}$ over an $S_0[\MMM]$-algebra $R$, we define
\[
    \w_{\kappa, r, \MMM} = 
        s_{K}^* (\pi_{r, \MMM})_*
        (
            \OO_{\EE_{r, \MMM}}[\kappa]
        )
\]
as a sheaf on $\level{K_r}{\ol{\Sh}}$ over $R$. We denote its restriction to $\level{K_r}{\Sh}$ by $\w_{\kappa, r, \MMM}$ as well.

\begin{definition} \label{def:M kappa(Kr, [tau],R)}
    For any $S_0[\MMM]$-algebra $R$, we define the space of modular forms over $R$ on $G$ of weight $\kappa$, level $K_r$ and \emph{$P$-type $\MMM$} as 
    \[
        M_\kappa(K_r, \MMM; R) 
            := 
        H^0(
            \level{K_r}{\ol{\Sh}}, 
            \w_{\kappa, r, \MMM}
        )
    \]
    and its subspace of cusp forms as
    \[
        S_\kappa(K_r, \MMM; R) 
            := 
        H^0_!(
            \level{K_r}{\ol{\Sh}},
            \w_{\kappa, r, \MMM}
        )\,.
    \]
    
    In particular, $f \in M_\kappa(K_r, \MMM; R)$ can be viewed as a functorial rule on the set of tuples $(\ul{A}, \phi, \epsilon) \in \EE_r(S)$, for any $R$-algebra $S$, such that
    \[
        f(\ul{A}, \phi, \epsilon) \in V_{\kappa, S} \otimes \MMM^\vee_{S}
    \]
    and
    \[
        f(\ul{A}, \phi, l_0\epsilon) 
            = 
        \rho_\kappa(l_0)f(\ul{A}, \phi, \epsilon)\,.
    \]
\end{definition}

\begin{remark} \label{rmk:class of B-nbtp is usual theory}
    When working with $P = B$, as in Remark \ref{rmk:trivial partition}, then $M_\kappa(K_r, [\tau]; R) = M_\kappa(K_r; R)$ and $S_\kappa(K_r, [\tau]; R) = S_\kappa(K_r; R)$.
\end{remark}

Going back to the representation $\tau$ on $\MMM = \MMM_\tau$, consider an algebra $R$ over $S_0[\tau] := S_0[\MMM_\tau]$. Naturally, $M_\kappa(K_r, \MMM; R)$ contains $M_\kappa(K_r, \tau; R)$ but it also contains $M_\kappa(K_r, \tau'; R)$ for any representation $\tau'$ on $\MMM_{\tau, R}$.

In this work, we are mostly concern with twists of $\tau$ by finite-order characters of $\LL_r$,  all viewed as acting on the same module $\MMM$ (over a sufficiently large ring). This leads to the following definition.

\begin{definition} \label{def:equiv class of P nbtp of lvl r}
    We say that two $P$-nebentype $\tau$ and $\tau'$ of level $r$ are \emph{equivalent}, and write $\tau \sim_r \tau'$, if $\tau = \tau' \otimes \psi$ for some finite-order character $\psi$ of $\LL_r$. We let $[\tau]_r$ denote the (finite) equivalence class of $\tau$ as a $P$-nebentypus of level $r$. This notion obviously depends on $r$ but we sometimes write $[\tau]$ when $r$ is clear from the context.
\end{definition}

For each $r \gg 0$, fix a ring $S_r[\tau]$ large enough to contain $S_0[\tau']$ for all $\tau' \sim_r \tau$. After base change, if necessary, we view $\MMM_\tau$ as the $S_r[\tau]$-module on which $\tau'$ acts, for all $\tau' \sim_r \tau$. To emphasize this convention, we now refer to $\MMM_\tau$ as $\MMM_{[\tau]}$. Similarly, given any $S_r[\tau]$-algebra $R$, we set $\MMM_{[\tau], R} = \MMM_{\tau, R}:= \MMM_\tau \otimes_{S_r[\tau]} R$. Note that the contragredient module $\MMM_{[\tau]}^\vee = \MMM_\tau^\vee$ and the tautological pairing $(\cdot, \cdot)_\tau = (\cdot, \cdot)_{[\tau]}$ on $\MMM_\tau \otimes \MMM_\tau^\vee$ are both well-defined up to equivalence of $P$-nebentype.

Therefore, one readily sees that
\begin{equation} \label{eq:direct sum of twists of a type}
    M_\kappa(K_r, [\tau]; R) := \bigoplus_{\tau' \in [\tau]_r} M_\kappa(K_r, \tau'; R)\,.
\end{equation}
is a subspace of $M_\kappa(K_r, \MMM; R)$.

\begin{remark}
    One similarly defines $S_\kappa(K_r, [\tau]; R)$ and $\w_{\kappa, r, [\tau]}$. We refer to $f \in M_\kappa(K_r, [\tau]; R)$ (resp. $S_\kappa(K_r, [\tau]; R)$) as a modular (resp. cusp) form over $R$ on $G$ of weight $\kappa$, level $K_r$ and \emph{$P$-type class} $[\tau]$.
\end{remark}

\begin{remark}
    In general, $M_\kappa(K_r, \MMM; R)$ is strictly larger than $M_\kappa(K_r, [\tau]; R)$. Indeed, if $\psi$ and $\psi'$ are two characters of $\LL_r$ that are congruent modulo $p$, and $f \in M_\kappa(K_r, \tau; R)$, then
    \begin{equation} \label{eq:detecting congruences}
        \frac{1}{p}(f \otimes \psi - f \otimes \psi')
    \end{equation}
    lies in $M_\kappa(K_r, [\tau]; R)$ but not in the direct sum of \eqref{eq:direct sum of twists of a type}.
\end{remark}

\begin{remark}
    In all that follows, we almost exclusively work with $M_\kappa(K_r, [\tau]; R)$. Effectively, in Section \ref{sec:P-(anti-)ord Hida families}, this leads us to consider \emph{$P$-ordinary Hida families}, viewed as closed subschemes of the spectrum of certain \emph{$P$-ordinary Hecke algebras}, containing a dense set of classical points. This set of classical points corresponds to \emph{$P$-ordinary automorphic representations} whose \emph{$P$-nebentypus at $p$} are members $\tau' \in [\tau]$ that all congruent modulo $p$. Although there are additional details omitted in this comment, the notions above are all defined properly later in the text. See \eqref{eq:def SSS Kp pi} for a concrete description of this set of classical points.

    It is certainly interesting to work with $M_\kappa(K_r, \MMM; R)$ instead. In this case, one obtains larger Hida family whose dense set of classical points corresponds to all \emph{$P$-ordinary automorphic representations} whose \emph{$P$-nebentypus at $p$} are all representations $\tau'$ on $\MMM$ that are congruent modulo $p$. These families are sensitive to the existence of congruences as in \eqref{eq:detecting congruences}. 
    
    However, our computations in this paper are only worked out when the types are all in the same $P$-class, i.e. twists of each other by finite-order characters. The author hopes to generalize the necessary computation in later work to consider these larger families.
\end{remark}

\subsection{Complex Uniformization.} \label{subsec:complex unif}
The coherent cohomology group defining the various spaces of algebraic modular forms introduced in the previous sections can be computed with Lie algebra cohomology groups, at least over $\CC$. 

\subsubsection{Complex structure.} \label{subsubsec:complex structure}
Recall that $X$ denotes the $G(\RR)$-conjugacy class of $h$. Let $C \subset G_{/\RR}$ denote the centralizer of $h$, so that there is a natural identification $G(\RR)/C(\RR) \xrightarrow{\sim} X$. In particular, this induces a structure of a real manifold on $X$. In what follows, we set $U_\infty := C(\RR)$.

Furthermore, recall that under the identification of $G_\CC$ with $G_{0\,/\CC}$ from Section \ref{subsec:structure of G over C}, $P_h(\CC) \subset G(\CC)$ corresponds to $P_0(\CC)$, and $C(\CC)$ corresponds to $H_0(\CC)$. It is well-known that $X$ then corresponds to an open subspace of $G_0(\CC)/P_0(\CC)$ and hence also admits the structure of a complex manifold.

Let $r \geq 0$ and $K^p \subset G(\AA_f^p)$ be a neat open compact subgroup. Let $\Sh = \Sh(V)$ be the pro-finite tower of Shimura varieties associated to $G$.

Given $(h', g) \in X \times G(\AA_f)$, with $g_p \in G(\ZZ_p)$, one can naturally define a tuple 
\[
    X_{h', g} = (A_{h'}, \lambda_{h'}, \iota_{h'}, \alpha_{g}, \phi_g) 
        \in 
    \level{K_r}{\Sh}(\CC)
\]
as well as an $\OO \otimes \CC$-isomorphism $\varepsilon_{h'} : \w_{A_{h'}^\vee} \xrightarrow{\sim} \Lambda_0 \otimes_{S_0} \CC$. The precise descriptions of $X_{h', g}$ and $\varepsilon_{h'}$ plays no role in what follows, see \cite[Sections 2.7.1-2.7.2]{EHLS} for details.

In fact, the map $(h', g) \to X_{h', g}$ provides a bijection
\begin{equation} \label{complex points of Sh(V)}
    G(\QQ) 
        \backslash 
    G(\RR) \times G(\AA_f) 
        / 
    U_\infty K_r
        =
    G(\QQ) 
        \backslash 
    X \times G(\AA_f)
        / 
    K_r
        \xrightarrow{\sim}
    \level{K_r}{\Sh}(\CC) 
\end{equation}
which identifies the complex analytic structures on both side. In particular, the dimension $d$ of $\Sh(V)$ is just the $\CC$-dimension of $X$, i.e.
\[
    d 
        = 
    \sum_{\sigma \in \Sigma_\KK} 
        a_\sigma b_\sigma\,.
\]

\subsubsection{Complex modular forms.} \label{subsubsec:complex modular forms}
Similarly, there is an identification
\begin{equation} \label{complex points of EE r}
    G(\QQ) 
        \backslash 
    G(\RR) \times H_0(\CC) \times G(\AA_f) 
        / 
    U_\infty K_r
        \xrightarrow{\sim}
    \EE_r(\CC)
\end{equation}
given by sending $(h', h_0, g) \in X \times H_0 \times G(\AA_f)$ to $(X_{h', g}, (h_0 \cdot \varepsilon_{h'}, \nu(h_0)))$.

Hence, according to \eqref{eq:def of M kappa Kr R}, given an dominant character $\kappa$ of $T_{H_0}(\CC)$, a modular form $\varphi \in M_\kappa(K_r; \CC)$ is a smooth holomorphic $\CC$-valued function on $G(\RR) \times H_0(\CC) \times G(\AA_f) = G(\AA) \times H_0(\CC)$ such that
\[
    \varphi(\gamma g u k, b h_0 u) = \kappa(b)\varphi(g, h_0)\,,
\]
for all $\gamma \in G(\QQ)$, $g \in G(\AA)$, $u \in U_\infty$, $k \in K_r$, $b \in B_{H_0}(\CC)$ and $h_0 \in H_0(\CC)$. 

Similarly, let $(\tau, \MMM_\tau)$ be a $P$-nebentypus of level $r$ over $\CC$ and view it as a representation of $K_r^0$ that factors through $K_r$. A modular form $\varphi \in M_\kappa(K_r, \tau; \CC)$ can be viewed as a smooth holomorphic function $\varphi : G(\AA) \times H_0(\CC) \to \MMM_\tau^\vee$ such that
\[
    \varphi(\gamma g u k, b h_0 u) = \kappa(b) \tau^\vee(k) \varphi(g, h_0)\,,
\]
for all $k \in K_r^0$, $\gamma \in G(\QQ)$, $g \in G(\AA)$, $u \in U_\infty$, $k \in K_r$, $b \in B_{H_0}(\CC)$ and $h_0 \in H_0(\CC)$. 

\subsubsection{Lie algebra cohomology} \label{subsubsec:Lie algebra cohomology}
We now reinterpret the above using the algebraic representations $W_\kappa$ of $H_0$ associated to $\kappa$ as in Section \ref{subsubsec:alg weights}. 

Let $\g = \Lie(G(\RR))_{\CC} = \Lie(G(\CC))$ and write
\[
    \g = \p_h^- \oplus \k_h \oplus \p_h^+
\]
for the Harish-Chandra decomposition corresponding to the $-1$, $0$ and $1$ eigenspaces of the involution $\ad h(\sqrt{-1})$ respectively. 

Then, $\k_h = \Lie(U_\infty)$ and $\P_h = \p_h^- \oplus \k_h = \Lie(P_h(\RR))_{\CC} = \Lie(P_h(\CC))$. Note that a function $\varphi$ as above is holomorphic (with respect to the complex structure on $G(\RR)/U_\infty$) if and only if it vanishes under the action of $\p_h^-$.

In what follows, when considering Lie algebra cohomology, we write $K_h$ for $U_\infty = C(\RR)$ so that $\k_h = \Lie(K_h)$. 

The Borel-Weil theorem states that the set of $\CC$-points of $W_\kappa$ is
\[
    W_\kappa(\CC)
        =
    \{
        \phi : H_0(\CC) \to \CC
            \mid
        \phi \text{ is holomorphic and } \phi(bx) = \kappa(b) \phi(x),\,\forall\,x \in B_{H_0}(\CC)
    \}\,,
\]
which we view as a $(\P_h, K_h)$-module, under the identification of $P_0(\CC)$ with $P_h(\CC)$ and $H_0(\CC)$ with $C(\CC)$.

It is well-known that over $\CC$, one has a natural $G(\AA_f)$-equivariant isomorphism
\begin{equation} \label{eq:Lie alg coh for H!i Sh V kappa}
    H^i(\P_h, K_h; \Ab_0(G) \otimes W_\kappa) 
        = 
    H_!^i(\Sh(V), \w_\kappa)\,, \text{ for } i = 0 \text{ or } d\,,
\end{equation}
where $\Ab_0(G)$ denotes the space of complex-valued cusp forms on $G(\AA)$.

Taking $i = 0$ and $K_r$-equivariance on both sides of \eqref{eq:Lie alg coh for H!i Sh V kappa}, one obtains 
\begin{equation} \label{eq:Lie alg coh for S kappa Kr CC}
    H^0(\P_h, K_h; \Ab_0(G) \otimes W_\kappa)^{K_r} = S_\kappa(K_r; \CC)\,,
\end{equation}
which identifies $\varphi \in S_\kappa(K_r; \CC)$ as a above with a function $f : G(\AA) \to W_\kappa(\CC)$ such that $f(\gamma g u k) = u^{-1}f(g)$, for all $\gamma \in G(\QQ)$, $g \in G(\AA)$, $u \in K_h$ and $k \in K_r$. The correspondence is given by $f(g)(x) = \varphi(g, x)$, for all $(g, x) \in G(\AA) \times H_0(\CC)$.

Similarly, taking tensor with $\MMM_\tau^\vee$ over $\CC$ (momentarily forgetting the action of $\LL_r$), we obtain an isomorphism
\begin{equation} \label{eq:Lie alg coh for H!i Sh V kappa r [tau]}
    \hom_{\CC}(
        \MMM_{[\tau]},
        H^i(\P_h, K_h; \Ab_0(G) \otimes W_\kappa) 
    )
        = 
    H_!^i(\Sh(V)_{/\CC}, \w_{\kappa, [\tau]})\,,
\end{equation}
and taking tensor over $(\LL_r, \tau^\vee)$ instead, we obtain
\begin{equation} \label{eq:Lie alg coh for H!i Sh V kappa r tau}
    \hom_{\LL_r}(
        \MMM_\tau,
        H^i(\P_h, K_h; \Ab_0(G) \otimes W_\kappa) 
    )
        = 
    H_!^i(\Sh(V)_{/\CC}, \w_{\kappa, \tau})\,,
\end{equation}

Since $L_H(\ZZ_p)$ normalizes $K_r$, taking $i=0$ as well as $K_r$-equivariance, we obtain
\begin{equation} \label{eq:Lie alg coh for S kappa Kr [tau] CC}
    \hom_{\CC}(
        \MMM_{[\tau]},
        H^0(\P_h, K_h; \Ab_0(G) \otimes W_\kappa)
    )
        = 
    S_\kappa(K_r, [\tau]; \CC)\,,
\end{equation}
and
\begin{equation} \label{eq:Lie alg coh for S kappa Kr tau CC}
    \hom_{K_r^0}(
        \MMM_\tau,
        H^0(\P_h, K_h; \Ab_0(G) \otimes W_\kappa)
    )
        = 
    S_\kappa(K_r, \tau; \CC)\,.
\end{equation}

Then as above, $\varphi_{[\tau]} \in S_\kappa(K_r, [\tau]; \CC)$ and $\varphi_\tau \in S_\kappa(K_r, \tau; \CC)$ corresponds via \eqref{eq:Lie alg coh for S kappa Kr tau CC} to functions $f_{[\tau]}, f_\tau : G(\AA) \to W_\kappa(\CC) \otimes_{\CC} \MMM_\tau^\vee$, respectively, such that
\[
    f_{[\tau]}(\gamma g u k) = u^{-1}f(g)
        \;\;\;\text{and}\;\;\;
    f_\tau(\gamma g u k_0) = \tau^\vee(k)(u^{-1}f(g))\,,
\]
for all $\gamma \in G(\QQ)$, $g \in G(\AA)$, $u \in K_h$, $k \in K_r$, and $k_0 \in K^0_r$.

\subsection{Duality and integrality.} \label{subsec:duality and integrality}
In the previous sections, we mostly dealt with holomorphic modular forms, i.e. degree $0$ cohomology. For our purposes however, following the approach of \cite[Sections 6-7]{EHLS}, it is necessary to deal with anti-holomorphic modular forms as well, i.e. degree $d$ cohomology where $d = \sum_\sigma a_\sigma b_\sigma$.

\subsubsection{Convention on measures} \label{subsubsec:conv on measures}
In the following sections, we introduce pairings between modular forms via integration over $G(\AA)$. We first need to set various conventions.

We fix a Haar measure $dg = \prod_{l \leq \infty} dg_l$ on $G(\AA)$, where the product runs over all places of $\QQ$, such that the following properties hold :
\begin{enumerate}
    \item Given a finite prime $l$ such that $G$ is unramified at $l$, $dg_l$ is the normalized Haar measure on $G(\QQ_l)$ assigning volume 1 to any hyperspecial maximal compact subgroup.
    
    \item Given a finite prime $l$ such that $G$ splits over $l$ (e.g. when $l = p$), i.e. $G(\QQ_l) = \prod_{i = 1}^k \GL_{n_i}(F_{v_i})$ where $F_{v_i}$ is a finite extension of $\QQ_l$ with ring of integer $\OO_{v_i}$, then $dg_l$ is normalized so that $\prod_{i = 1}^k \GL_{n_i}(\OO_{v_i})$ has volume 1. In this case, we further write $dg_l = \prod_{i=1}^k dg_{v_i}$, where $dg_{v_i}$ is the standard Haar measure on $\GL_{n_i}(F_{v_i})$ with the obvious normalization.
    
    \item At all finite primes $l$, the volume of any compact open subgroup of $G(\QQ_l)$ with respect to $dg_l$ is rational.
    
    \item For $l = \infty$, $dg_\infty$ is Tamagawa measure on $G(\RR)$. We write 
    \[
        dg_\infty = dk_h \times dx \times dt/t\,,
    \]
    where $d k_h$ is the unique measure on $K_h$ with total mass equal to 1, $dx$ is a differential form on $\p_h$ and $dt/t$ is the Lebesgue measure on the center $Z_G(\RR) \simeq \RR^\times$ of $G(\RR)$.
\end{enumerate}

\subsubsection{Unnormalized and normalized Serre duality.} \label{subsubsec:unnorm and norm Serre duality}
We first work with $R = \CC$ and introduce an integral version of Serre duality afterward.

Let $\kappa = (\kappa_0, (\kappa_\sigma)_\sigma)$ be a dominant weight of $T_{H_0}$, as in Section \ref{subsubsec:alg weights}. Define
\[
    a(\kappa) 
        := 
    2\kappa_0 + 
    \sum_\sigma
    \sum_{j=1}^{b_\sigma}
        \kappa_{\sigma, j}
    \ \ ; \ \
    \kappa_0^*
        :=
    -\kappa_0 + a(\kappa)
    \ \ ; \ \
    \kappa_\sigma^*
        :=
    (
        -\kappa_{\sigma, b_\sigma},
        \ldots,
        -\kappa_{\sigma, 1}
    )\,.
\]

The highest weight representation $W_{\kappa^*}$ corresponding to the dominant weight $\kappa^* = (\kappa_0^*, (\kappa_\sigma^*)_{\sigma})$ is a representation of $H_0(\CC)$ such that
\begin{equation} \label{eq:def W kappa *}
    W_{\kappa^*} \cong W_\kappa^\vee \otimes \nu^{a(\kappa)}\,,  
\end{equation}
where we recall that $\nu$ denotes the similitude character of $G$.

As briefly mentioned above, we later work with (Lie algebra) cohomology in degree $d$, hence we are most interested in the $H_0(\CC)$-representation
\[
    \hom_\CC(\wedge^d \p_h^+, W_{\kappa^*})\,,
\]
where the notation is as in Section \ref{subsec:complex unif}.

Since $\hom_\CC(\wedge^d \p_h^+, \CC)$ is the highest weight representation associated to the dominant weight
\[
    \kappa_h^+ := 
    (
        -d, (\kappa_{h, \sigma}^+)_\sigma
    )\,,
\]
where $\kappa_{h, \sigma}^+ = (2a_\sigma, \ldots, 2a_\sigma)$, it follows that
\[
    \hom_\CC(\wedge^d \p_h^+, W_{\kappa^*}) 
        =
    \hom_\CC(\wedge^d \p_h^+, \CC)
        \otimes
     W_{\kappa^*}
\]
is canonically isomorphic to $W_{\kappa^D}$, where
\[
    \kappa^D = \kappa^* + \kappa_h^+\,.
\]

The natural contraction $W_\kappa \otimes W_\kappa^\vee \to \CC$ induces a pairing
\begin{equation} \label{eq:pairing kappa kappa D}
    W_\kappa \otimes W_{\kappa^D} 
        \to
    \hom_\CC(
        \wedge^d \p_h^+, 
        \CC
    )
        \otimes
    \nu^{a(\kappa)}\,,
\end{equation}
by definition of $W_{\kappa^D}$.

Let $L(\kappa)$ denote the automorphic line bundle over $\Sh(V)$ associated to the character $\nu^{a(\kappa)}$, as in Remark \ref{rmk:aut vec bundles from reps}. Namely, $L(\kappa)$ is topologically isomorphic to $\OO_{\Sh(V)}$ but the action of $G(\AA_f)$ on $L(\kappa)$ is given by multiplication via $\nu^{a(\kappa)}$. Then, \eqref{eq:pairing kappa kappa D} induces a map
\[
    \w_\kappa \otimes \w_{\kappa^D}
        \to
    \Omega_{\Sh(V)}^d
        \otimes 
    L(\kappa)\,.
\]

Naturally, one can descend this pairing to $\level{K}{\Sh}$. Furthermore, this pairing extends over any toroidal compactification $\level{K}{\Sh}_{\Omega}$ of $\level{K}{\Sh}$, provided either automorphic vector bundle is replaced by its subcanonical vector bundle. Namely, we have
\[
    \w_\kappa^{\sub} \otimes \w_{\kappa^D}
        \to
    \Omega_{\level{K}{\Sh}_\Omega}^d
        \otimes 
    L(\kappa)
    \;\;\;\text{and}\;\;\;
    \w_\kappa \otimes \w_{\kappa^D}^{\sub}
        \to
    \Omega_{\level{K}{\Sh}_\Omega}^d
        \otimes 
    L(\kappa)\,.
\]

Then, the \emph{unnormalized} Serre duality pairing is the composition of
\[
    H_!^0(\Sh(V), \w_\kappa)
        \otimes
    H_!^d(\Sh(V), \w_{\kappa^D})
        \to
    \varinjlim_{K, \Omega}
        H^d(
            \level{K}{\Sh}_\Omega,
            \Omega_{\level{K}{\Sh}_\Omega}^d
                \otimes 
            L(\kappa)
        )
\]
with the isomorphism
\[
    \varinjlim_{K, \Omega}
        H^d(
            \level{K}{\Sh}_\Omega,
            \Omega_{\level{K}{\Sh}_\Omega}^d
                \otimes 
            L(\kappa)
        )
            \xrightarrow{\sim}
    \varinjlim_{K, \Omega}
        H^d(
            \level{K}{\Sh}_\Omega,
            \Omega_{\level{K}{\Sh}_\Omega}^d
        )
\]
given by multiplication by the global section $g \mapsto ||\nu(g)||^{a(\kappa)}$ of $L(\kappa)^\vee$, and the maps
\[
    \varinjlim_{K, \Omega}
        H^d(
            \level{K}{\Sh}_\Omega,
            \Omega_{\level{K}{\Sh}_\Omega}^d
        )
            \xrightarrow{\sim}
    C(\pi_0(V))
        \to
    \CC\,,
\]
where $C(\pi_0(V))$ is the space of functions on the compact space $\pi_0(V)$ of similitude components of $\Sh(V)$, the isomorphism is the trace map, and the last map is integration over $\pi_0(V)$ with respect to an invariant measure of rational total mass.

The resulting map
\begin{equation} \label{eq:def pairing Ser kappa}
    \brktdotdot_\kappa^{\Ser}
        :
    H_!^0(\Sh(V), \w_\kappa)
        \otimes
    H_!^d(\Sh(V), \w_{\kappa^D})
        \to
    \CC
\end{equation}
is a canonical perfect pairing by \cite[Corollary 2.3]{Har90}. As explained above, we can replace either $H^0_!$ or $H^d_!$ by $H^0$ and $H^d$ respectively, but not both at once.

From Definition \ref{def:M kappa(Kr, [tau],R)} and our discussion in Section \ref{subsec:complex unif}, we see that given a $P$-nebentypus $\tau$, the tautological pairing $(\cdot, \cdot)_{[\tau]} : \MMM_{[\tau]} \otimes \MMM_{[\tau]}^\vee \to \CC$ yields a map
\begin{align*}
    H_!^0(\Sh&(V), \w_{\kappa, [\tau]})
        \otimes_\CC
    H_!^d(\Sh(V), \w_{\kappa^D, [\tau^\vee]})
        \\ &\xrightarrow{(\cdot, \cdot)_{[\tau]}}
    H_!^0(\Sh(V), \w_{\kappa})
        \otimes_\CC
    H_!^d(\Sh(V), \w_{\kappa^D}),.
\end{align*}

Hence, composition of the above with $\brktdotdot_\kappa^\Ser$ induces a duality
\begin{equation} \label{eq:def pairing Serre kappa [tau]}
    \brktdotdot_{\kappa, [\tau]}^\Ser
        :
    H_!^0(\Sh(V), \w_{\kappa, [\tau]})
        \otimes_\CC
    H_!^d(\Sh(V), \w_{\kappa^D, [\tau^\vee]})
        \to
    \CC\,.
\end{equation}

Upon restriction to holomorphic and anti-holomorphic modular forms of $P$-nebentypus $\tau$ and $\tau^\vee$ respectively, we similarly obtain a perfect pairing
\begin{equation} \label{eq:def pairing Serre kappa tau}
    \brktdotdot_{\kappa, \tau}^\Ser
        :
    H_!^0(\Sh(V), \w_{\kappa, \tau})
        \otimes_\CC
    H_!^d(\Sh(V), \w_{\kappa^D, \tau^\vee})
        \to
    \CC\,.
\end{equation}

For our purposes, it is important to compute $\brktdotdot_{\kappa}^{\Ser}$ above in terms of automorphic forms using the isomorphism \eqref{eq:Lie alg coh for H!i Sh V kappa}. Let $\varphi \in H^0(\P_h, K_h; \Ab_0(G) \otimes W_\kappa)$ and $\varphi' \in H^d(\P_h, K_h; \Ab_0(G) \otimes W_{\kappa^D})$. For every $g \in G(\QQ)Z_G(\RR) \backslash G(\AA)$, we have $\varphi(g) \in W_\kappa$ and 
\[
    \varphi'(g) 
        \in 
    \hom_\CC(
        \wedge^d \p_h^-,
        W_{\kappa^D}
    )
        = 
    \hom_\CC(
        \wedge^{2d} \p_h,
        W_\kappa^\vee
            \otimes
        \nu^{a(\kappa)}
    )\,,
\]
where $\p_h = \p_h^- \oplus \p_h^+$. 

Fix a differential form $dx$ on $\p_h$ as in Section \ref{subsubsec:conv on measures}, i.e. a basis of $(\wedge^{2d} \p_h)^\vee$. Using $dx$, we identify this space with $\CC$ and obtain a natural map
\[
    [\cdot, \cdot]_{dx} 
        : 
    W_\kappa
        \otimes
    \hom_\CC(
        \wedge^d \p_h^-,
        W_{\kappa^D}
    )
        \to
    \CC(\nu^{a(\kappa)})
\]

Then, the Serre pairing $\brktdotdot_\kappa^{\Ser}$ can be normalized (i.e. the invariant measure on $\pi_0(V)$ can be normalized) so that
\begin{equation} \label{eq:Serre pairing aut forms}
    \brkt{\varphi}{\varphi'}_{\kappa}^{\Ser}
        =
    \int_{G(\QQ)Z_G(\RR) \backslash G(\AA)}
        [\varphi(g), \varphi'(g)]_{dx}
        ||\nu(g)||^{-a(\kappa)}
    dg\,,
\end{equation}
where $dg$ is as in Section \ref{subsubsec:conv on measures}. 

\begin{remark}
    We later use this formula when $\varphi$ is essentially a value of the Eisenstein measure from Proposition \ref{prop:existence of Eis measure on V3}, as a modular form on $G_3$, and $\varphi'$ is the tensor product of two $P$-anti-ordinary cusp forms, one on $G_1$ and the other on $G_2$. The groups $G_1$, $G_2$ and $G_3$ are defined in Section \ref{sec:comp and comp bw PEL data}.
\end{remark}

To define a normalized version of \eqref{eq:def pairing Ser kappa}, fix a compact open subgroup $K_r = I_rK^p \subset G(\AA_f)$. Denote the volume of $K_r^0 = I_r^0K_p$ with respect to the Tamagawa measure $dg$ above by $\vol(I^0_r)$. 

Then, the \emph{normalized} Serre pairing is the perfect pairing
\begin{equation} \label{eq:def pairing kappa Kr}
    \brktdotdot_{\kappa, K_r}
        :
    H^0_!(\level{K_r}{\Sh(V)}, \w_\kappa)
        \otimes
    H^d_!(\level{K_r}{\Sh(V)}, \w_{\kappa^D})
        \to \CC
\end{equation}
defined via $\brktdotdot_{\kappa, K_r} = \vol(I_r)^{-1} \brktdotdot_{\kappa}^{\Ser}$. Similarly, we set 
\[
    \brktdotdot_{\kappa, K_r, \tau} 
        = 
    \vol(I_r)^{-1} \brktdotdot_{\kappa, \tau}^{\Ser}
        \;\;\;\text{and}\;\;\; 
    \brktdotdot_{\kappa, K_r, [\tau]} 
        = 
    \vol(I_r)^{-1} \brktdotdot_{\kappa, [\tau]}^{\Ser}\,.
\]

The advantage of this normalized pairing is that it commutes with change of level maps. Namely, fix $r' \geq r$, $\varphi \in H_!^0(\level{K_r}{\Sh(V)}, \w_\kappa)$ and $\varphi' \in H_!^d(\level{K_{r'}}{\Sh(V)}, \w_{\kappa^D})$. The trace map from level $K_{r'}$ to level $K_r$ maps $\varphi'$ to
\begin{equation} \label{eq:def of trace map from level r' to r}
    \tr_{K_r/K_{r'}}(\varphi') 
        := 
    \frac{
        \#(I_r^0/I_r)
    }{
        \#(I_{r'}^0/I_{r'})
    } 
    \sum_{\gamma \in K_r/K_{r'}}
        \gamma \cdot \varphi'
        \in
    H_!^d(\level{K_r}{\Sh(V)}, \w_{\kappa^D})
\end{equation}

By definition, we have $I_r^0/I_r \simeq K_r^0/K_r$ and
\[
    \vol(I_r^0)
        =
    \vol(I_{r'}^0)
        \cdot
    \#(I_r^0/I_{r'}^0)
        =
    \vol(I_{r'}^0)
        \cdot
    \frac{
        \#(I_r^0/I_r)
    }{
        \#(I_{r'}^0/I_{r'})
    }
        \cdot
    \#(K_r/K_{r'})\,,
\]
therefore one readily obtains
\begin{equation} \label{eq:norm Serre pairing trace map stable}
    \brkt{
        \varphi
    }{
        \varphi'
    }_{\kappa, K_{r'}}
        =
    \brkt{
        \varphi
    }{
        \tr_{K_r/K_{r'}}(\varphi')
    }_{\kappa, K_r}\,,
\end{equation}
as well as an analogous formula when $\varphi$ has level $K_{r'}$ and $\varphi'$ has level $K_r$.

From \eqref{eq:Lie alg coh for H!i Sh V kappa r tau} and the fact that $L_{H(\ZZ_p)}$ normalizes $K_r$, one readily sees that the formula \eqref{eq:def of trace map from level r' to r} is also well-defined on $H_!^d(\level{K_{r'}}{\Sh(V)}, \w_{\kappa^D, r, \tau^\vee})$ and $H_!^d(\level{K_{r'}}{\Sh(V)}, \w_{\kappa^D, r, [\tau^\vee]})$, for $r' > r$, and yields a trace maps
\begin{equation} \label{eq:norm Serre pairing trace map stable with types}
    \tr_{K_r/K_{r'}}
        :
    H_!^d(\level{K_{r'}}{\Sh(V)}, \w_{\kappa^D, r', \tau^\vee})
        \to
    H_!^d(\level{K_r}{\Sh(V)}, \w_{\kappa^D, r, \tau^\vee})\,.
\end{equation}
and
\begin{equation} \label{eq:norm Serre pairing trace map stable with [types]}
    \tr_{K_r/K_{r'}}
        :
    H_!^d(\level{K_{r'}}{\Sh(V)}, \w_{\kappa^D, r', [\tau^\vee]})
        \to
    H_!^d(\level{K_r}{\Sh(V)}, \w_{\kappa^D, r, [\tau^\vee]})\,.
\end{equation}

It follows from \eqref{eq:norm Serre pairing trace map stable} that the pairings $\brktdotdot_{\kappa, K_r, \tau}$ and $\brktdotdot_{\kappa, K_r, [\tau]}$ are again stable under trace maps.

\subsubsection{Integral structures on (anti-)holomorphic modular forms.} \label{subsubsec:integral struct on aholo mod forms}
Recall that $S_0 = \OO_{\KK', (\p')}$ as in Section \ref{subsubsec:alg weights}. Naturally, we define $S_0$-integral structure of the $\CC$-vector space $H^0_!(\level{K_r}{\Sh(V)}_{/\CC}, \w_\kappa)$ as $S_\kappa(K_r, S_0)$. More generally, for any $S_0$-algebra $R$, we set its $R$-structure to be $S_\kappa(K_r, R)$. This is obviously the structure induced by the $R$-structure of $\level{K_r}{\Sh(V)}$.

On the other hand, we do not define the $R$-integral structure for the space of \emph{anti-holomorphic} forms as the one induced by the $R$-structure of the underlying schemes. This is to avoid the singularities of the special fibers of $\level{K_r}{\Sh(V)}_{S_0}$ as $r$ grows. We instead use duality with respect to $\brktdotdot_{\kappa, K_r}$, following the approach of \cite[Section 6.4.2]{EHLS}.

Firstly, motivated by the identification \eqref{eq:Lie alg coh for H!i Sh V kappa}, we refer to
\[
    \wh{S}_\kappa(K_r; \CC) 
        := 
    H^d_!(\level{K_r}{\Sh(V)}_{/\CC}, \w_{\kappa^D})
\]
as the space of \emph{anti-holomorphic} cusp forms on $G$ of weight $\kappa$ and level $K_r$ over $\CC$ (note the twist by $\kappa^D$). Similarly, given a $P$-nebentypus $\tau$ of level $r$, we set
\[
    \wh{S}_\kappa(K_r, \tau; \CC) 
        := 
    H^d_!(\level{K_r}{\Sh(V)}_{/\CC}, \w_{\kappa^D, r, \tau^\vee})\,.
\]

Then, by definition of $\brktdotdot_{\kappa, K_r}$ and $\brktdotdot_{\kappa, K_r, \tau}$, we have perfect pairings
\[
    S_\kappa(K_r; \CC) 
        \otimes 
    \wh{S}_{\kappa}(K_r; \CC) 
        \to 
    \CC
        \;\;\;\text{and}\;\;\;
    S_\kappa(K_r, \tau; \CC) 
        \otimes 
    \wh{S}_{\kappa}(K_r, \tau; \CC) 
        \to 
    \CC\,,
\]
and 

We define the $S_0$-integral structure $\wh{S}_{\kappa}(K_r; S_0)$ of $H^d_!(\level{K_r}{\Sh}_{/\CC}, \w_{\kappa^D})$ as the $S_0$-dual of $S_\kappa(K_r; S_0)$ via the pairing \eqref{eq:def pairing kappa Kr}. Similarly, we define the $S_0[\tau]$-integral structure $\wh{S}_{\kappa}(K_r, \tau; S_0[\tau])$ of $H^d_!(\level{K_r}{\Sh}_{/\CC}, \w_{\kappa^D, r, \tau^\vee})$ as the $S_0[\tau]$-dual of $S_\kappa(K_r, \tau; S_0[\tau])$ via the pairing $\brktdotdot_{\kappa, K_r, \tau}$.

Given any $S_0$-algebra or $S_0[\tau]$-algebra $R$, let $\wh{S}_{\kappa}(K_r; R) := \wh{S}_{\kappa}(K_r; S_0) \otimes_{S_0} R$ and $\wh{S}_{\kappa}(K_r, \tau; R) := \wh{S}_{\kappa}(K_r, \tau; S_0[\tau]) \otimes_{S_0[\tau]} R$. This yields identifications
\[
    \wh{S}_{\kappa}(K_r; R)
        =
    \hom_{S_0}(
        S_{\kappa}(K_r; S_0),
        R
    )
\]
and
\[
    \wh{S}_{\kappa}(K_r, \tau; R)
        =
    \hom_{S_0[\tau]}(
        S_{\kappa}(K_r, \tau; S_0[\tau]),
        R
    )\,,
\]
via $\brktdotdot_{\kappa, K_r}$ and $\brktdotdot_{\kappa, K_r, \tau}$, respectively. The $R$-integral structure $\wh{S}_\kappa(K_r, [\tau]; R)$ of $\wh{S}_\kappa(K_r, [\tau]; \CC)$ is defined similarly using the pairing $\brktdotdot_{\kappa, K_r, [\tau]}$.

\subsection{$P$-(anti-)ordinary modular forms} \label{subsec:Pord mod forms}
We return to the notation of Section \ref{subsubsec:para subgp of G over Zp}. For instance, let $t_{w, D_w(j)} \in \GL_n(\OO_w)$, for $w \in \Sigma_p$ and $j=1, \ldots, r_w$, be the matrix defined in \eqref{eq:def t w j}, let $t_{w, D_w(j)}^+$ be the corresponding element of $G(\QQ_p)$, and let $t_{w, D_w(j)}^- = (t_{w, D_w(j)}^+)^{-1}$.

Given any $r \geq 1$, we consider the double coset operators
\[
    U_{w, D_w(j)} = [K_r t^+_{w,D_w(j)} K_r]
        \;\;\;\text{and}\;\;\;
    U^-_{w, D_w(j)} = [K_r t^-_{w,D_w(j)} K_r]\,.
\]

One can easily write down a set of right coset representatives for $K_r t^+_{w,D_w(j)} K_r$ (resp. $K_r t^-_{w,D_w(j)} K_r$) that does not depend on $r$, (see Section \ref{subsubsec:explicit coset representatives} for instance). This partly motivates why we omit $r$ from the notation $U_{w, D_w(j)}$ (resp. $U^-_{w, D_w(j)}$).

If $R$ is an $S_0$-algebra in which $p$ is invertible and $\kappa$ be a dominant character of $T_{H_0}$ over $R$, then both $U_{w, D_w(j)}$ and $U^-_{w, D_w(j)}$ define Hecke operators on $M_\kappa(K_r; R)$ and $S_\kappa(K_r; R)$ via \eqref{eq:def Hecke op Tg level Kr} by proceeding as in Section \ref{subsubsec:Hecke ops away from p}. Naturally, they also both define Hecke operators on $M_\kappa(K_r, \tau; R)$ and $S_\kappa(K_r, \tau; R)$ (if $R$ is an $S_0[\tau]$-algebra).

One usually normalize these operators as follows : First, define
\begin{equation} \label{eq:def kappa norm sigma}
    \kappa_{\norm, \sigma} 
        = 
    (
        \kappa_{\sigma, 1} - b_{\sigma}, 
        \ldots, 
        \kappa_{\sigma, b_{\sigma}} - b_{\sigma}
    ) \in \ZZ^{b_\sigma}\,,
\end{equation}
and consider the character $\kappa_{\norm} = (\kappa_0, (\kappa_{\norm, \sigma})_{\sigma \in \Sigma_\KK})$ of $T_{H_0}$. Let $\kappa'$ denote the $p$-adic weight associated to $\kappa_{\norm}$ as in \eqref{eq:tuple definition of kappa p}, i.e. $\kappa' = (\kappa_{\norm})_p$. 

We define the $j$-th normalized Hecke operators at $p$ of weight $\kappa$ as
\begin{equation} \label{definition u w j}
    u_{w,j}^\pm = u_{w,j,\kappa}^\pm
        := 
    \kappa'(t_{w,j}^\pm)
    U_{w,j}^\pm \ ,
\end{equation}
and the Hecke operators at $p$ of weight $\kappa$ with respect to $P$ as
\begin{equation} \label{definition uP}
    u_{P, p}^\pm = u_{P, p, \kappa}^\pm
        :=
    \prod_{w \in \Sigma_p}
    \prod_{j = 1}^{r_w}
        u_{w,D_w(j),\kappa}^\pm\,.
\end{equation}

\begin{remark}
    When working with $t_{w, j}^+$, we often omit the $+$ superscript in the notation above and simply write $u_{w,j}$ and $u_{P,p}$.
\end{remark}

\begin{remark}
    The operators $u_{w,D_w(j),\kappa}$ can be interpreted as correspondences on the Igusa tower associated to $G$, see \cite[Section 2.9.5]{EHLS}, \cite[Section 8.3.1]{Hid04} or \cite{SU02}. We recall this more formally later when introducing $p$-adic modular forms with respect to $P$, see Section \ref{sec:Pord padic mod forms}. This plays a crucial role to $p$-adically interpolate all operators $u_{w,D_w(j),\kappa}$ as $\kappa$ varies.
\end{remark}

In later section, we also consider the action of the center $Z_P$ of $L_P(\ZZ_p)$ on $M_\kappa(K_r; R)$. Namely, any $t \in Z_P$ naturally induces a correspondence on Shimura varieties via the double coset operator $U_p(t) := [K_r t K_r] = [tK_r]$. Clearly, this action factors through the center $Z_{P, r} = Z_P/p^rZ_P$ of $L_P(\ZZ_p/p^r\ZZ_p)$. As above, we normalize these operators by setting $u_{p, \kappa}(t) := \kappa'(t) U_p(t)$.

\subsubsection{$P$-ordinary case} \label{subsubsec:Pord mod forms case}
The $P$-ordinary subspace of a module is later defined as the subspace on which $u_{P,p}$ acts via a generalized eigenvalue which a $p$-adic unit (when this action is well-defined). This feature can be detected using the following $P$-ordinary Hecke projector.

Assume the ring $R$ above is also a $p$-adic ring, i.e. $R = \varprojlim_{i} R/p^iR$. Then, the action of the limit
\begin{equation} \label{definition eP}
    e_P = e_{P, \kappa} 
        := 
    \varinjlim \limits_{n} 
        u_{P, p, \kappa}^{n!}\,.
\end{equation}
on $M_\kappa(K_r; R)$ induced by the action of $u_{P,p,\kappa}$ is well-defined. Since $u_{P, p, \kappa}$ commutes with $G(\AA^p)$ and $L_P(\ZZ_p)$, it stabilizes $S_\kappa(K_r, \tau)$, $M_\kappa(K_r, [\tau]; R)$ and $S_\kappa(K_r, [\tau]; R)$.

\begin{remark}
    In later sections, we also consider the case where $R \subset \CC$ is a localization of a finite $S_0$-algebra at the maximal prime determined by $\incl_p$ or the completion of such a ring. In this situation, the limit operator $e_P$ is again well-defined.
\end{remark}

It is well-known that the eigenvalues of the generalized eigenspaces for each $u_{w, j, \kappa}$ is a $p$-adic integer. Hence, by definition, $e_P$ acts as the identity on the generalized eigenspace of $M_\kappa(K_r; R)$ associated to eigenvalues with $p$-adic valuation 1 and is 0 on all other generalized eigenspaces. 

We write
\[
    M_\kappa^{\Pord}(K_r; R) := e_{P, \kappa} M_\kappa(K_r; R)
\]
and define $S_\kappa^{\Pord}(K_r; R)$, $M_\kappa^{\Pord}(K_r, [\tau]; R)$ and $S_\kappa^{\Pord}(K_r, [\tau]; R)$ similarly.

\subsubsection{$P$-anti-ordinary case.} \label{subsubsec:Paord mod forms case}
An easy computation shows that
\[
    \brkt{U_{w, D_w(j), \kappa} \varphi}{\varphi'}_{\kappa, K_r}
        =
    \brkt{\varphi}{U^-_{w, D_w(j), \kappa^D} \varphi'}_{\kappa, K_r}\,,
\]
for all $\varphi \in S_\kappa(K_r; R)$, $\varphi' \in \wh{S}_\kappa(K_r; R)$, $w \in \Sigma_p$ and $1 \leq j \leq D_w(j)$. 

Therefore, the kernel of the projection
\[
    \wh{S}_{\kappa}(K_r; R) \to e^-_{P, \kappa^D} \wh{S}_{\kappa}(K_r; R)
\]
is exactly the annihilator (in $\wh{S}_\kappa(K_r; R)$) of $S_\kappa^\Pord(K_r; R)$ via $\brktdotdot_{\kappa, K_r}$. In other words, \eqref{eq:def pairing kappa Kr} induces a perfect pairing
\[
    S_\kappa^\Pord(K_r; R) \otimes \wh{S}^{\Paord}_{\kappa}(K_r; R) \to R\,,
\]
where
\[
    \wh{S}^{\Paord}_{\kappa}(K_r; R) 
        := 
    e^-_{P, \kappa^D} \wh{S}_{\kappa}(K_r; R)
\]
is the \emph{$P$-anti-ordinary} subspace of $\wh{S}_{\kappa}(K_r; R)$. Similarly, one can view 
\[
    \wh{S}^{\Paord}_{\kappa}(K_r, \tau; R) 
        := 
    e^-_{P, \kappa^D} \wh{S}_{\kappa}(K_r, \tau; R)
\]
as the $R$-dual of $S^\Pord_{\kappa}(K_r, \tau; R)$ via $\brktdotdot_{\kappa, K_r, \tau}$.

If $R$ is an $S_r[\tau]$-algebra, a similar statement holds for $\wh{S}_\kappa^{\Paord}(K_r, 
[\tau]; R) = e_{P, \kappa^D}^-\wh{S}_\kappa(K_r, [\tau]; R)$ and $S_\kappa^{\Pord}(K_r, [\tau]; R)$ with respect to the pairing $\brktdotdot_{\kappa, K_r, [\tau]}$.


\section{$P$-(anti-)ordinary (anti-)holomorphic automorphic representations.} \label{sec:P(anti)ord aut representations}

In this section, we frequently use \eqref{eq:Lie alg coh for H!i Sh V kappa} to pass between the language of automorphic forms in $\Ab_0(G)$ and modular forms as global sections on Shimura varieties. We recall the following convenient notions of holomorphic and anti-holomorphic automorphic representations, following \cite[Section 6.5]{EHLS}. We then define $P$-ordinary and $P$-anti-ordinary automorphic representations and their $P$-nebentypus, motivated by Sections \ref{subsec:P nbtp thy of mod forms}--\ref{subsec:Pord mod forms}.

\subsection{(Anti-)holomorphic automorphic representations.} \label{subsec:(A-)holo auto reps}
We continue with the notation of Section \ref{subsec:complex unif}. Recall that we denote the space of cusp forms on $G$ by $\Ab_0(G)$. 

We refer to the irreducible $(\g, K_h) \times G(\AA_f)$-subrepresentations of $\Ab_0(G)$ as \emph{cuspidal automorphic representations} of $G$. In particular, we always assume that a cuspidal automorphic representation $\pi$ is irreducible.

Furthermore, we write $\pi = \pi_\infty \otimes \pi_f$ where $\pi_\infty$ is an irreducible $(\g, K_h)$-module and $\pi_f$ is an irreducible $G(\AA_f)$ admissible representation. 

Let $\kappa$ be a dominant character of $T_{H_0}$, as in Section \ref{subsubsec:alg weights}.

\begin{definition} \label{def:(anti)holo aut reprn}
    We say that $\pi$ is \emph{holomorphic of weight $\kappa$} if
    \[
        H^0(\P_h, K_h; \pi \otimes W_\kappa) \neq 0
    \]
    and say that $\pi$ is \emph{anti-holomorphic of weight $\kappa$} if
    \[
        H^d(\P_h, K_h; \pi \otimes W_{\kappa^D}) \neq 0
    \]
    instead. Clearly, $\pi$ cannot be both holomorphic and anti-holomorphic (except possibly if $d = 0$).    
\end{definition}

Equivalently, $\pi$ is holomorphic of weight $\kappa$ if and only if 
\[
    H^0(\P_h, K_h; \pi_\infty \otimes W_\kappa) \neq 0\,,
\]
in which case the latter is 1-dimensional. Similarly, $\pi$ is anti-holomorphic of weight $\kappa$ if and only if 
\[
    H^d(\P_h, K_h; \pi_\infty \otimes W_{\kappa^D}) \neq 0
\]
is 1-dimensional over $\CC$.

\begin{remark} \label{rmk:E pi rational}
    As mentioned in \cite[Remark 6.5.2]{EHLS}, if $\pi$ is holomorphic or anti-holomorphic, then $\pi_f$ is defined over some number field $E(\pi)$. One can choose $E(\pi)$ to be a CM field. See \cite{BHR94} for more details. Enlarging $E(\pi)$ if necessary, we always assume it contains $\KK'$.
\end{remark}

\subsubsection{Ramified places away from $p$} \label{subsubsec:ramified places away from p}
Let $\pi = \pi_\infty \otimes \pi_f$ be any cuspidal automorphic representation of $G$. Let $K \subset G(\AA_f)$ be any open compact subgroup such that $\pi_f^K \neq 0$. We sometimes say that $\pi_f$ (or $\pi$) has level $K$ in this case.

Let $l \neq p$ be any prime of $\QQ$ and consider the set $\PP_l$ of all primes of $\KK^+$ above $l$. Write $\PP_l = \PP_{l,1} \coprod \PP_{l,2}$, where $\PP_{l,1}$ is the subset of such primes that split in $\KK$ and $\PP_{l,2}$ is the complement. Therefore, one naturally has an identification
\[
    G(\QQ_l) = \prod_{v \in \PP_{l,1}} \GL_n(\KK_v^+) \times G_{l,2} \ ,
\]
where $G_{l,2}$ is the subgroup of elements $((x_w), t) \in \prod_{w \in \PP_{l,2}} \GL_n(\KK_w) \times \QQ_l^\times$ such that each $x_w$ preserve the Hermitian form on $V \times_{\KK} \KK_w$ with the same similitude factor $t$.

Let $S_l = S_l(K_l)$ be the subset of $\PP_l$ consisting of all places at which $K_l$ does not contain a hyperspecial subgroup. Let $S_{l, i} = S_l \cap \PP_{l, i}$ and define
\[
    G(\QQ_l)^{S_l} = 
    \begin{cases}
        \prod_{
            v \in 
                \PP_{l, 1} \backslash S_{l,1}
        } 
            \GL_n(\KK_v^+) \times G_{l, 2} \,, 
        & 
            \text{if } S_{l, 2} = \emptyset \\
        \prod_{
            v \in 
                \PP_{l, 1} \backslash S_{l,1}
        } 
            \GL_n(\KK_v^+) \,, 
        & 
            \text{otherwise.}
    \end{cases}
\]

In particular, we simply have $G(\QQ_l)^{S_l} = G(\QQ_l)$ if $S_l$ is empty. Then, let $S = S(K^p)$ be the set of primes $l \neq p$ such that $S_l$ is nonempty. We define 
\begin{equation} \label{eq:def G(A f S)}
    G(\AA_f^S)  = \prod_{l \notin S} G(\QQ_l) \times \prod_{l \in S} G(\QQ_l)^{S_l}\,,
\end{equation}
and set $S_p = S_p(K^p) := S \cup \{p\}$ (not to be confused with the ring $S_p$ from Section \ref{subsubsec:unitary moduli space} which not consider in what follows).

\subsubsection{Spherical vectors} \label{subsubsec:Spherical vectors}
By definition, for each $l \notin S_p$, $\pi$ contains a $K_l$-spherical vector. We fix such a choice $0 \neq \varphi_{l,0} \in \pi_f^{K_l}$ and consider the corresponding factorization
\begin{equation} \label{eq:facto pi f}
    \pi_f \xrightarrow{\sim} \wh{\bigotimes}_{l}\, \pi_l\,,
\end{equation}
where the restricted tensor product is with respect to our choice of $\varphi_{l,0}$ for each $l \notin S_p$. In particular, $\pi_f^{K}$ is identified with
\begin{equation} \label{eq:facto pi f K fixed}
    \pi_p^{K_p} \otimes \pi_S^{K_S}\,.
\end{equation}

\begin{remark} \label{rmk:E pi rational spherical vectors}
    If $\pi$ is holomorphic or anti-holomorphic, then we may assume that each $\varphi_{l,0}$ is $E(\pi)$-rational, see Remark \ref{rmk:E pi rational}.
\end{remark}

\subsubsection{Contragredient representations and pairings} \label{subsubsec:contragredient rep and pairings}
Let $\pi^\vee$ be the contragredient representation of $\pi$, and write $\pi^\vee = \pi_\infty^\vee \otimes \pi_f^\vee$ for its decomposition as a $(\g, K_h) \times G(\AA_f)$-module.

It is well-known that $\pi^\vee$ is isomorphic to a twist of the complex conjugate $\ol{\pi}$ of $\pi$ (see \eqref{eq:def pi flat} for instance). Therefore, $\pi^\vee$ is again a cuspidal automorphic representation of $G$.

We identify the tautological pairing $\brktdotdot_\pi : \pi \times \pi^\vee$ of contragredient representation with
\[
    \brkt{\varphi}{\varphi^\vee}_\pi
        =
    \int_{Z \cdot G(\QQ) \backslash G(\AA)} \varphi(g) \varphi^\vee(g) dg\,,
        \;\;\;\text{for }
    \varphi \in \pi,\,\varphi^\vee \in \pi^\vee\,,
\]
where $dg$ is the Haar measure on $G(\AA)$ introduced in Section \ref{subsubsec:conv on measures} and $Z$ is the group of real points of the maximal $\QQ$-split subgroup of the center of $G$.

Suppose that $\pi$ has level $K$ and let $S_p = S(K^p) \cup \{p\}$ denote the set of ramified places of $\pi$, as in Section \ref{subsubsec:ramified places away from p}. Then, both $\pi$ and $\pi^\vee$ contain a $K_l$-spherical vector for each $l \notin S_p$. Fix such vectors $\varphi_{l,0} \in \pi$ and $\varphi_{l, 0}^\vee \in \pi^\vee$ for each $l \notin S_p$.

Consider factorization $\pi_f \xrightarrow{\sim} \wh{\otimes}_{l} \pi_l$ and $\pi_f^\vee \xrightarrow{\sim} \wh{\otimes}_{l} \pi_l^\vee$ into restricted tensor products over the finite places of $\QQ$, with respect to the vectors $\varphi_{l,0}$ and $\varphi_{l, 0}^\vee$, as in \eqref{eq:facto pi f}.

For each place $l$ of $\QQ$, we identify $\pi_l^\vee$ as the contragredient of $\pi_l$. Namely, we fix a $G(\QQ_l)$-equivariant perfect pairing $\brktdotdot_{\pi_l} : \pi_l \otimes \pi_l^\vee \to \CC$. We normalize such pairings so that $\brkt{\varphi_{l,0}}{\varphi_{l,0}^\vee} = 1$ for all finite place $l \notin S_p$.

There exists a constant $C$ (depending on all the choices made above) such that for each pure tensor vectors $\varphi = \otimes_l \varphi_l \in \pi$ and $\varphi^\vee = \otimes_l \varphi_l^\vee \in \pi^\vee$, we have
\begin{equation} \label{eq:facto inner product pi}
    \brkt{\varphi}{\varphi^\vee}_\pi
        =
    C 
    \prod_l 
        \brkt{\varphi_l}{\varphi_l^\vee}_{\pi_l}\,,
\end{equation}
where the product is over all places $l$ of $\QQ$.

\subsection{$P$-(anti-)ordinary automorphic representations.} \label{subsec:Paord automorphic representations}

The identifications \eqref{eq:prod G over Zp}, \eqref{eq:GL(Lw pm) basis} and \eqref{eq:GL(Lw) basis} induce an isomorphism
\begin{equation} \label{eq:facto G(Qp)}
    G(\QQ_p) \xrightarrow{\sim} \QQ_p^\times \times \prod_{w \in \Sigma_p} G_w \ ,
\end{equation}
where $G_w = \GL_n(\KK_w)$. Consequently, given any automorphic representation $\pi$, its $p$-factor $\pi_p$ decomposes as
\begin{equation} \label{eq:facto pi p}
    \pi_p 
        \cong 
    \mu_p 
        \otimes 
    \left(
        \bigotimes_{w \in \Sigma_p} 
            \pi_w
    \right)\,,
\end{equation}
where $\mu_p$ is a character of $\QQ_p^\times$ and $\pi_w$ is an irreducible admissible representation of $G_w$.

Consider the groups
\begin{align*}
    P \xrightarrow{\sim} 
        \prod_{w \in \Sigma_p} P_w 
    \ \ \ ; \ \ \
    I^0_r \xrightarrow{\sim} 
        \ZZ_p^\times 
        \times \prod_{w \in \Sigma_p} I_{w, r}^0 
    \ \ \ ; \ \ \
    I_r \xrightarrow{\sim} 
        \ZZ_p^\times \times 
        \prod_{w \in \Sigma_p} I_{w, r}
\end{align*}
constructed in Section \ref{subsubsec:para subgp of G over Zp}.

Fix a compact open subgroup $K_r = I_rK^p \subset G(\AA_f)$ as in Section \ref{subsec:Shi var of P Iwa level at p} such that $\pi_f^{K_r} \neq 0$, i.e. $\pi_f$ has level $K_r$. Then, $\pi_p^{I_r} \neq 0$ and in particular, $\mu_p$ is unramified. 

\subsubsection{$P$-ordinary case}\label{subsubsec:Pord rep cases}
Assume that $\pi$ is holomorphic of weight $\kappa$. Recall that $\kappa$ is a character of $T_{H_0}$ identified with a tuple $(\kappa_0, (\kappa_\sigma)_{\sigma \in \Sigma_\KK})$ such that $\kappa_0 \in \ZZ$ and $\kappa_\sigma \in \ZZ^{b_\sigma}$. Let $\kappa' = (\kappa_\norm)_p$ be the normalized $p$-adic weight related to $\kappa$ as in Section \ref{subsec:Pord mod forms}, see \eqref{eq:def kappa norm sigma}.

To lighten the notation, for every $w \in \Sigma_p$, $1 \leq j \leq r_w$, we set 
\[
    k_w(j) 
        := 
    \kappa'(t^+_{w, D_w(j)}) 
        = 
    \kappa'(t^-_{w, D_w(j)})^{-1}
        =
    \absv{
        \kappa'(t^+_{w, D_w(j)})
    }_p^{-1}\,,
\]
where $t^+_{w, D_w(j)}, t^-_{w, D_w(j)} \in L_P(\QQ_p)$ are introduced at the end of Section \ref{subsubsec:para subgp of G over Zp}, and are both related to the diagonal matrix $t_{w, D_w(j)} \in \GL_n(\OO_w)$ from \eqref{eq:def t w j}.

The normalized Hecke operator $u_{w, D_w(j)}$ defined in \eqref{definition u w j} naturally acts on $\pi_f^{K_r}$ via the action of $k_w(j) [I_r t_{w,D_w(j)} I_r]$ on $\pi_p^{I_r}$. The factorization in \eqref{eq:facto pi p} clearly indicates that this corresponds to the action of $k_w(j) U_{w, D_w(j)}^{\GL}$ on $\pi_w^{I_{w,r}}$, where $U_{w, D_w(j)}^{\GL} = [I_{w,r} t_{w,D_w(j)} I_{w,r}]$.

By abusing notation, we denote all of these normalized double coset operators by $u_{w, D_w(j)}$ (or $u_{w, D_w(j), \kappa}$). It is well-known that the generalized eigenvalues of all the operators $u_{w, D_w(j)}$ on $\pi_p^{I_r}$ are $p$-adically integral. Therefore, the limit in \eqref{definition eP} again induces an operator
\[
    \varinjlim_{n}
    \left(
        \prod_{w \in \Sigma_p}
        \prod_{j=1}^{r_w}
            u_{w, D_w(j)}
    \right)^{n!}
\]
on $\pi_p^{I_r}$, which we still denote $e_P$. It projects $\pi_p^{I_r}$ onto its subspace spanned by generalized eigenspaces associated to generalized eigenvalues that are $p$-adic units.

\begin{definition} \label{def:pi is Pord of level r}
    We say that $\pi$ as above is \emph{$P$-ordinary of level $r$} if 
    \[
        \pi_p^{(\Pord, r)} := e_{P, \kappa} (\pi_p^{I_r}) \neq 0\,.
    \]
\end{definition}

\begin{remark} \label{rmk:Hida thm of unique ordinary vectors}
    When $P = B$, a result of Hida (see \cite[Corollary 8.3]{Hid98} or \cite[Theorem 6.6.9]{EHLS}) implies that the space of $B$-ordinary vectors (or simply \emph{ordinary} vectors) is at most 1-dimensional and does not depend on $r \gg 0$. This is no longer true for general parabolic subgroups $P$. However, Theorem \ref{thm:Pord type tau structure thm} yields an analogous result for $P$-ordinary subspaces.
\end{remark}

By working locally at $w \in \Sigma_p$, we see that equivalently, the limit
\begin{equation} \label{eq:def e P w kappa}
    e_{P, w, \kappa} = e_{P,w}
        := 
    \varinjlim \limits_{n} 
        \left(
            \prod_{j = 1}^{r_w}
                u_{w,D_w(j),\kappa}
        \right)^{n!}\,.
\end{equation}
defines an operator on $\pi_w^{I_{w,r}}$. We refer to $e_{P_w}$ as the \emph{$P_w$-ordinary projection} operator. 

We see that $\pi$ is $P$-ordinary if and only if there exists some $r \gg 0$ such that, for all $w \in \Sigma_p$, there exists $0 \neq \phi_w \in \pi_w^{I_{w,r}}$ satisfying $e_{P, w} \phi_w = \phi_w$. Such a vector $\phi_w$ must then also satisfy $u_{w, D_w(j)} \phi_w = c_{w, D_w(j)} \phi_w$ for some $p$-adic unit $c_{w, D_w(j)}$. 

We say that $\pi_w$ is \emph{$P_w$-ordinary of level $r$} and that $\phi_w$ is a $P_w$-ordinary vector of level $r$. We let 
\[
    \pi_w^{(\Pword, r)} := e_{P, w, \kappa} (\pi_w^{I_{w,r}})
\]
denote the space of all $P_w$-ordinary vectors of level $r$.

By definition, $\pi$ is $P$-ordinary of level $r$ if and only if $\mu_p$ is unramified and $\pi_w$ is $P_w$-ordinary of level $r$ for all $w \in \Sigma_p$.

\subsubsection{$P$-anti-ordinary case} \label{subsubsec:Paord rep cases}

For the dual notion, assume $\pi$ is anti-holomorphic of weight $\kappa$. We retain the assumption that $\pi_f^{K_r} \neq 0$.

For each $w \in \Sigma_p$ and $1 \leq j \leq r_w$, the natural action of the operator $u^-_{w, D_w(j)}$ from Section \ref{subsec:Pord mod forms} on $\pi_f^{K_r}$ factors through the action of $k_w(j)^{-1} [I_r t_{w,D_w(j)}^- I_r]$ on $\pi_p^{I_r}$. Once more, this action is induced by the one of $k_w(j)^{-1} U_{w, D_w(j)}^{\GL, -}$ on $\pi_w^{I_{w,r}}$, where $U_{w, D_w(j)}^{\GL, -} = [I_{w,r} t^-_{w,D_w(j)} I_{w,r}]$. Again, by abuse of notation, we denote all these normalized double coset operators by $u^-_{w, D_w(j)}$.

As in the $P$-ordinary case, the generalized eigenvalues of $u^-_{w, D_w(j)}$ are $p$-adic integers, hence the limits
\begin{equation} \label{eq:def e P -}
    e^-_{P, \kappa} 
        = 
    \varinjlim_{n}
    \left(
        \prod_{w \in \Sigma_p}
            \prod_{j=1}^{r_w} u^-_{w, D_w(j)}
    \right)^{n!}
        \ \ \ \text{and} \ \ \
    e^-_{P,w, \kappa} 
        = 
    \varinjlim_{n}
    \left(
        \prod_{j=1}^{r_w} u^-_{w, D_w(j)}
    \right)^{n!}
\end{equation}
yield well-defined projection operators on $\pi_p^{I_r}$ and $\pi_{w}^{I_{w,r}}$ respectively.

\begin{definition} \label{def:pi is Paord of level r}
    Let $\pi$ as above be an anti-holomorphic cuspidal automorphic representation of weight $\kappa$ such that $\pi_f^{K_r} \neq 0$. We say that $\pi$ as above is \emph{$P$-anti-ordinary of level $r$} if $\pi_p^{(\Paord, r)} := e^-_{P,\kappa} (\pi_p^{I_r}) \neq 0$. Similarly, we say that $\pi_w$ is \emph{$P_w$-anti-ordinary of level $r$} if $\pi_w^{(\Pwaord, r)} := e_{P, w, \kappa}^- (\pi_w^{I_{w,r}}) \neq 0$. 
\end{definition}

\subsection{$P$-(anti-)weight-level-type} \label{subsec:P-(anti-)WLT}
Let $\pi$ be any cuspidal automorphic representation of $G$. Assume that $\pi$ is holomorphic (resp. anti-holomorphic) of weight $\kappa$ and $P$-ordinary (resp. $P$-anti-ordinary) of level $r$. Let $\Pi_r$ denote $\pi_p^{(\Pord, r)}$ (resp. $\pi_p^{(\Paord, r)}$).

Since $I_r$ is normal in the $P$-Iwahori subgroup $I_r^0 = I^0_{P,r} \subset G(\ZZ_p)$ of level $r$ and the matrices $t^\pm_{w, D_w(j)}$, for $w \in \Sigma_p$ and $1 \leq j \leq r_w$, commute with $I_r^0 / I_r = L_P(\ZZ_p/p^r\ZZ_p)$, we see that the action of $I_r^0$ on $\pi_p$ stabilizes $\Pi_r$.

In particular, we can decompose $\Pi_r$ as a direct sum of isotypic components over the (finite-dimensional) irreducible representations of $L_P(\ZZ_p/p^r\ZZ_p)$.

\begin{definition} \label{def:pi has P-(anti-)WLT}
    Let $\pi$ be any cuspidal automorphic representation of $G$ such that $\pi_f^{K_r} \neq 0$, for some $r \gg 0$. Let $\tau$ be some smooth irreducible representation of $L_P(\ZZ_p)$ factoring through $L_P(\ZZ_p/p^r\ZZ_p)$. Let $\kappa$ be some dominant weight of $T_{H_0}$.
    
    We say that $\pi$ has \emph{$P$-weight-level-type} $(\kappa, K_r, \tau)$ if $\pi$ is holomorphic of weight $\kappa$, $\pi_f$ has level $K_r$ and is $P$-ordinary, and the $\tau$-isotypic component $\pi_p^{(\Pord, r)}[\tau]$ of $\pi_p^{(\Pord, r)}$ is nonzero. We often say that $\pi$ has \emph{$P$-WLT} $(\kappa, K_r, \tau)$.

    For the dual notion, we say that $\pi$ has \emph{$P$-anti-weight-level-type} $(\kappa, K_r, \tau)$ if $\pi$ is anti-holomorphic of weight $\kappa$, $\pi_f$ has level $K_r$ and is $P$-anti-ordinary, and the $\tau^\vee$-isotypic component $\pi_p^{(\Paord, r)}[\tau^\vee]$ of $\pi_p^{(\Paord, r)}$ is nonzero. We often say that $\pi$ has \emph{$P$-anti-WLT} $(\kappa, K_r, \tau)$.
\end{definition}

\begin{remark} \label{rmk:I P r vs I SP r}
    In Definition \ref{def:def of PIwahori of level r}, one could replace $I_{P,r}$ with the collection of $g \in G(\ZZ_p)$ such that $g \mod p^r$ is in $(\ZZ_p/p^r\ZZ_p)^\times \times SP(\ZZ_p/p^r\ZZ_p)$. Here, $SP$ is the derived subgroup of $P$ as introduced above Definition \ref{def:def of PIwahori of level r}. Let us write the corresponding group by $I_{SP, r}$ momentarily, so that we have $I_{P, r} \subset I_{SP, r} \subset I_{P,r}^0$.
    
    Then, one can define $P$-ordinary representations of $G$ using $I_{SP, r}$ instead of $I_{P,r}$. By doing so, the space of $P$-ordinary vectors decomposes a direct sum over all $P$-nebentypus of $\tau$ that factor through $\det : L_P(\ZZ_p) \to \ZZ_p^\times$. Doing so is obviously less general but has the advantage of simplifying the theory as only characters of $L_P(\ZZ_p)$ occur as types of $P$-ordinary vectors. On the other hand, systematically developing the more general theory (with $P^u$ instead of $SP$) has the advantage that any holomorphic cuspidal representation $\pi$ of $G$ is trivially $\GL(n)$-ordinary. We discussed our motivation to study this more general notion in the introduction of this paper.
\end{remark}


\section{Compatibility and comparison between PEL data.}
\label{sec:comp and comp bw PEL data}
\subsection{Unitary groups for the doubling method.} \label{subsec:G1 G2 G3 G4}
In what follows, we introduce the unitary group opposite to $G$ and briefly review the changes necessary in the previous sections for this group. We then introduce comparison results for cohomology of Shimura varieties and automorphic representations by adapting the material of \cite[Section 6.2]{EHLS} to our situation.

\subsubsection{Theory for $G_1$ and $G_2$.} \label{subsubsec:theory for G1 and G2}
Let
$
    \PP_1 
        := 
    \PP
        = 
    (
        \KK, c, \OO, L, 
        2\pi\sqrt{-1}\brkt{\cdot}{\cdot}, h
    )
$ 
be the PEL datum constructed in Section \ref{subsec:unitary pel datum}. We often write $L_1 := L$, $\brktdotdot_1 := 2\pi \sqrt{-1}\brktdotdot$ and $h_1 := h$.

Define
\[
    \PP_2 
        =
    (
        \KK, c, \OO, L_2, \brktdotdot_2, h_2
    )
        :=
    (
        \KK, c, \OO, L, 
        -2\pi\sqrt{-1}\brkt{\cdot}{\cdot}, h(\bar{\cdot})
    )\,,
\]
again a PEL datum of unitary. 

The signature of $\PP_2$ at $w \in \Sigma_p$ is $(b_w, a_w) = (a_{\ol{w}}, b_{\ol{w}})$. We typically write $G_i$ for the similitude unitary group corresponding to $\PP_i$ when we want to distinguish the underlying PEL datum. 

Note that $\PP_1$ and $\PP_2$ are both associated to the same $\KK$-vector space $L \otimes \QQ$ but with opposite Hermitian forms. By abusing notation, we denote the vector space associated to $\PP_1$ by $V$ and the one for $\PP_2$ by $-V$.

Consider the natural decomposition $L \otimes \ZZ_p = L^+ \oplus L^-$ obtained in Section \ref{subsubsec:comp to gen linear groups}. We now write $L_1^\pm := L^\pm$. Considering the signature of $\PP_2$, the analogous decomposition $L_2 \otimes \ZZ_p = L_2^+ \oplus L_2^-$ is given by taking $L_2^\pm = L_1^\mp$.

Obviously, we have $L_2^\pm = \prod_{w \mid p} L_{2,w}^\pm$, where $L_{2,w}^\pm := L_{1,w}^\mp$. The choice of basis for $L_{1,w}^\pm$ therefore naturally determines a choice of basis for each $L_{2,w}^\pm$ and we can proceed as in Section \ref{subsubsec:comp to gen linear groups} for $\PP_2$ to obtain identifications analogous to \eqref{eq:GL(Lw pm) basis} and \eqref{eq:GL(Lw) basis}. However, together with \eqref{eq:prod G over Zp} for $\PP_2$, the corresponding identification of $G_2(\QQ_p)$ with $\QQ_p^\times \times \prod_{w \in \Sigma_p} \GL_n(\OO_w)$ is different than the one for $G_1(\QQ_p)$, although there is a natural identification $G_1(\QQ_p) = G_2(\QQ_p)$.

In general, there is a canonical identification $G_1(\AA) = G_2(\AA)$. Therefore, instead of modifying the identification $\GL_{\OO_w}(L_w)$ with $\GL_n(\OO_w)$ for $\PP_1$ and $\PP_2$, we use the same identification twice (i.e. the one for $\PP_1$).

This is harmless for the theory of Section \ref{subsec:structure of G over Zp}. The only significant change is that the parabolic subgroup $P_2$ of $G_{2\,/\ZZ_p}$ corresponding to the partitions $\d_w$ is equal to $\tp{P_1}$, where $P_1 = P$ is the parabolic subgroup in \eqref{eq:iso P+ with Gm x P}.

In other words, via the identification \eqref{eq:GL(Lw) basis} for $\PP_1$, the parabolic subgroup $P_{2,w}$ of $\GL_n(\OO_w)$ corresponds to $\tp{P}_{1,w}$. Therefore, in what follows we always work with $P_w = P_{1,w}$ when consider $G_1$ and with $\tp{P}_w$ when considering $G_2$.

\begin{remark} \label{rmk:P Iwa vs tp P Iwa}
    This leads to an ambiguity in our notation. For instance, we should refer to $P$-(anti-)ordinary forms on $G_2$ as $\tp{P}$-(anti-)ordinary forms. We avoid this issue and refer to objects on $G_2$ as $P$-(anti-)ordinary.
\end{remark}

\subsubsection{Theory for $G_3$ and $G_4$.} \label{subsubsec:theory for G3 and G4}
For $i=3,4$, define similar PEL datum 
\[
    \PP_i 
        = 
    (\KK, c, \OO, L_i, \la \cdot, \cdot \ra_i, h_i) 
        \;\;\;\text{together with}\;\;\;
    L_i \otimes \ZZ_p = L_i^+ \oplus L_i^-\,,
\]
again of unitary type in the sense of \cite[Section 2.2]{EHLS}, where
\begin{align*}
    \PP_3 &:= (\KK \times \KK, c \times c, \OO \times \OO, L_1 \oplus L_2, \la \cdot, \cdot \ra_1 \oplus \la \cdot, \cdot \ra_2, h_1 \oplus h_2), L_3^\pm := L_1^\pm \oplus L_2^\pm \\
    \PP_4 &:= (\KK, c, \OO, L_3, \la \cdot, \cdot \ra_3, h_3), L_4^\pm := L_3^\pm
\end{align*}

Denote the similitude unitary group in \eqref{eq:def sim uni gp} associated to $\PP_i$ by $G_i$. Similarly, let $\nu_i$ be the similitude character of $G_i$ and $U_i = \ker \nu_i$.

\subsubsection{Compatibility of level structures.} \label{subsubsec:comp of lvl structures}
One readily sees that the reflex field of $\PP_1$, $\PP_2$ and $\PP_3$ are all equal. However, since $\PP_4$ has signature $(n,n)$ at all archimedean places, its reflex field is $\QQ$.

Therefore, all of the theory introduced in the previous sections can be adapted for $G_3$ and $G_4$ over $\OO_F \otimes \ZZ_{(p)}$ and $\ZZ_{(p)}$, respectively.

Note that some of the notation must be adapted for $\PP_3$ since the PEL datum is associated to two copies of $\KK$ instead of a single one. Therefore, all of the associated objects must be adapted to consider two lattices $L_3 = L_1 \oplus L_2$, two vector spaces $V_3 = V_1 \oplus V_2$, and so on, with two idempotent projections $e_1$ and $e_2$ relating the objects with ones on $\PP_1$ and $\PP_2$ respectively. The modifications are mostly obvious, hence we omit precise formulation here. For more details, see \cite[Section 2]{EHLS}.

We have $H_3 = \GL_{(\OO_{\KK} \times \OO_{\KK}) \otimes \ZZ_p}(L_3^+)$ and $H_4 = \GL_{\OO_\KK \otimes \ZZ_p}(L_4^+)$. Furthermore, the obvious inclusions $G_3 \hookrightarrow G_4$ and $G_3 \hookrightarrow G_1 \times G_2$ induce the canonical inclusions $H_3 \hookrightarrow H_4$ and $H_3 \hookrightarrow H_1 \times H_2$. 

The choice of an $\OO_w$-basis of $L_{1,w}^\pm = L_w^\pm$ as in Section \ref{subsubsec:comp to gen linear groups} naturally induces a choice of basis of $L_{i, w}^\pm$ for $i=$ 2, 3 and 4 as well. In fact, we obtain isomorphisms
\begin{equation} \label{eq:identification Gi with GL}
    G_{i\, /\ZZ_p}
        \xrightarrow{\sim}
    \Gm
        \times
    \prod_{w \in \Sigma_p}
    \begin{cases}
        \GL_n(\OO_w)\,, & \text{if } i = 1, 2, \\
        \GL_n(\OO_w) \times \GL_n(\OO_w)\,, & \text{if } i = 3 \\
        \GL_{2n}(\OO_w)\,, & \text{if } i = 4
    \end{cases}
\end{equation}
as well as
\begin{equation} \label{eq:identification Hi with GL}
    H_{i\, /\ZZ_p}
        \xrightarrow{\sim}
    \Gm
        \times
    \prod_{w \mid p}
    \begin{cases}
        \GL_{a_w}(\OO_w)\,, & \text{if } i = 1, \\
        \GL_{b_w}(\OO_w)\,, & \text{if } i = 2, \\
        \GL_{a_w}(\OO_w) \times \GL_{b_w}(\OO_w)\,, & \text{if } i = 3 \\
        \GL_{n}(\OO_w)\,, & \text{if } i = 4
    \end{cases}
\end{equation}

Let $P_1 = P_H \subset H_1 = H$ be the parabolic subgroup introduced in Section \ref{subsubsec:para subgp of G over Zp} associated to partitions $\d_w = (n_{w, 1}, \ldots, n_{w, t_w})$ and $\d_{\ol{w}} = (n_{\ol{w}, 1}, \ldots, n_{\ol{w}, t_{\ol{w}}})$ of the signature $a_w$ and $b_w$ respectively. 

Recall that under the identification $G_1(\AA) = G_2(\AA)$ and our conventions between $\PP_1$ and $\PP_2$, the corresponding parabolic subgroup of $H_2$ is $P_2 = \tp{P_H}$. Adapting the theory of Section \ref{subsubsec:para subgp of G over Zp} for $G_3$ (with the same choice of partitions) amounts to defining the corresponding parabolic $P_3 \subset H_3$ as the preimage of $P_1 \times P_2$ via $H_3 \hookrightarrow H_1 \times H_2$. 

Similarly, consider the two partitions
\begin{equation} \label{eq:partition of n n for G4}
    (n_{w, 1}, \ldots, n_{w, t_w}, n_{\ol{w}, 1}, \ldots, n_{\ol{w}, t_{\ol{w}}})
        \ \ \ \text{and} \ \ \
    (n_{\ol{w}, 1}, \ldots, n_{\ol{w}, t_{\ol{w}}}, n_{w, 1}, \ldots, n_{w, t_w})\,,
\end{equation}
viewed as a partition of $(n,n)$. Let $P_4 \subset H_4$ be the corresponding parabolic, following the approach of Section \ref{subsubsec:para subgp of G over Zp} for $G_4$. Then, the inclusion $G_3 \hookrightarrow G_4$ above induces the canonical inclusion $P_3 \hookrightarrow P_4$.

Let $L_{H, i}$ be the Levi factor of $P_i$. Then, $L_{H, 4} = L_{H, 3} = L_{H, 1} \times L_{H, 2}$. In particular, a $P_4$-nebentypus $\tau$ of level $r$ is also a $P_3$-nebentypus of level $r$. It corresponds to a tensor product $\tau_1 \otimes \tau_2$, where $\tau_i$ is a $P_i$-nebentypus of level $r$ (for $i$ = 1 and 2).

Let $P_i$ is one of those four parabolic subgroup. Let $I_{i, r} := I_{P_i, r}$ and $I_{i, r}^0 := I_{P_i, r}^0$ be the corresponding pro-$p$ $P$-Iwahoric subgroup and $P$-Iwahoric subgroup respectively. By abuse of notation, we still use the terminology ``$P$-Iwahoric'' as opposed to ``$P_i$-Iwahoric''.

Given a compact open subgroup $K_i^p \subset G_i(\AA_f^p)$, let $K_{i, r} = I_{i, r}K_i^p$. We denote the moduli space associated to $\PP_i$ of level $K_i$ by $\MM_{i, K_{i, r}} = \MM_{K_{i, r}}(\PP_i)$ and the corresponding Shimura variety by $\level{K_{i, r}}{\Sh(G_i)}$. If $V_i$ is the vector space associated to the PEL datum $\PP_i$, we sometimes write $\level{K_{i, r}}{\Sh(V_i)}$ instead.

If $K_{3, r} \subset (K_{1,r} \times K_{2,r}) \cap G_3(\AA_f)$ and $K_{3,r} \subset K_{4, r} \cap G_3(\AA_f)$, there are natural maps 
\begin{equation} \label{eq:def embeddings i 3 and i 1 2}
    i_3 : \MM_{3, K_{3, r}} \to \MM_{4, K_{4, r}}
        \;\;\; \text{and} \;\;\;
    i_{1,2} : \MM_{3, K_{3, r}} \to \MM_{1, K_{1, r}} \times \MM_{2, K_{2, r}}
\end{equation}
over $S_p = \OO_{F, (p)}$. For the exact maps at the level of points of moduli problems, see \cite[(37)--(38)]{EHLS}.

All of the above remains compatible if we restrict to Shimura varieties or extend to toroidal compactifications. Furthermore, if $K_{3,r} = (K_{1,r} \times K_{2,r} \cap G_3(\AA_f)$ then the Shimura varieties on both sides, as canonical connected components are identifies, hence we obtain an isomorphism
\begin{equation} \label{eq:iso Sh 3 with Sh 1 x Sh 2}
    i_{1,2} : \level{K_{3,r}}{\Sh(V_3)}\to \level{K_{1,r}}{\Sh(V_1)} \times \level{K_{2,r}}{\Sh(V_2)}
\end{equation}
as well as an analogous isomorphism for toroidal compactifications.

\subsubsection{Compatibility of canonical bundles} \label{subsubsec:comp of canonical bundles}
Recall that in Section \ref{subsubsec:canonical bundle}, we defined a canonical bundle $\EE = \EE_1$ on the toroidal compactification of the moduli space for $\PP_1$. Moreover, for all dominant weight $\kappa$ of the maximal torus $T_{H_0, 1}$ of $H_{0,1}$, we introduced the associated automorphic vector bundle $\w_\kappa$.

Let $V_{1, \CC} = L_1 \otimes \CC$ with its Hodge decomposition $V_{1, \CC} = V_1^{-1, 0} \oplus V_1^{0, -1}$ fixed in Section \ref{subsec:unitary pel datum}. The above is associated to a specific choice of $\OO_F \otimes \ZZ_{(p)}$-module $\Lambda_{0, 1} = \Lambda_0$ in the graded module $W_1 = V_{1, \CC}/V_1^{0,-1}$. Proceeding as in Section \ref{subsec:structure of G over C}, we obtain the groups $G_{0, 1} = G_0$ and $H_{0,1} = H_0$.

For $\PP_2$, consider $V_{2, \CC} = L_2 \otimes \CC$. The corresponding Hodge structure is reversed, i.e. $V_2^{-1, 0} = V_1^{0, -1}$ and $V_2^{0, -1} = V_1^{-1, 0}$. Therefore, one must choose a module in $W_2 = V_{2, \CC}/V_2^{0, -1} = V_{1, \CC}/V_1^{-1,0}$. Using the identifications $\Lambda_1 = \Lambda_{0,1} \oplus \Lambda_{1,0}^\vee$, $V_{1, \CC} = V_{2, \CC} = \Lambda_1 \otimes \CC$ and $\Lambda_{1, 0}^\vee \otimes \CC = V_1^{-1,0}$, we choose $\Lambda_{2, 0}$ to be the image of $\Lambda_{1,0}^\vee$ in $W_2$, and $\Lambda_2 = \Lambda_{2,0} \oplus \Lambda_{2,0}^\vee \cong \Lambda_1$.

Similarly, for $\PP_3$ and $\PP_4$, we pick $\Lambda_{3,0} = \Lambda_{4, 0} = \Lambda_{1,0} \oplus \Lambda_{2,0}$ and $\Lambda_3 = \Lambda_4 = \Lambda_1 \oplus \Lambda_2$. These compatible choices induce obvious inclusions 
\begin{equation} \label{eq:embeddings Hi0}
    H_{3,0} \hookrightarrow H_{4,0}
        \ \ \ \text{and} \ \ \
    H_{3,0} \hookrightarrow H_{1,0} \times H_{2,0}\,.
\end{equation}

Let $\pi_i : \EE_i \to \MM_{K_i}$ be the corresponding canonical bundle for $\PP_i$. Then, the above also induces natural maps
\begin{equation} \label{eq:embeddings EE i}
    i_3 : \EE_3 \to \EE_4
        \ \ \ \text{and} \ \ \
    i_{1,2} : \EE_3 \to \EE_1 \times \EE_2\,,
\end{equation}
compatible with the maps on moduli spaces in \eqref{eq:def embeddings i 3 and i 1 2}. These maps also extend to maps between the $\LL_H(\ZZ_p/p^r\ZZ_p)$-torsors $\EE_{i, r} \to \EE_i$, for all $r \gg 1$.

The choice of basis over $S_0 = \OO_{\KK', (\p')}$ of $\Lambda_{1,0, \sigma}$ and $\Lambda_{1,0, \sigma}^\vee$, for each $\sigma \in \Sigma_\KK$ as in Sections \ref{subsubsec:alg weights}, naturally induces bases for $\Lambda_{i,0}$ and $\Lambda_{i,0}^\vee$ for $i$ = 2, 3 and 4 as well. We obtain identifications
\[
    H_{i, 0\, /S_0}
        \xrightarrow{\sim}
    \Gm
        \times
    \prod_{\sigma \in \Sigma_\KK}
    \begin{cases}
        \GL_{b_\sigma}(S_0)\,, & \text{if } i = 1, \\
        \GL_{a_\sigma}(S_0)\,, & \text{if } i = 2, \\
        \GL_{b_\sigma}(S_0) \times \GL_{a_\sigma}(S_0)\,, & \text{if } i = 3, \\
        \GL_n(S_0)\,, & \text{if } i = 4,
    \end{cases}
\]
as in \eqref{eq:H0 basis}.

Observe that the definition of $H_0$ in Section \ref{subsec:structure of G over C} yields an obvious identification $H_{1,0} = H_{2,0}$ (by switching the roles of $\Lambda_0$ and $\Lambda_0^\vee$). With respect to the identifications above, this corresponds to the automorphism
\begin{equation} \label{eq:iso H1,0 with H2,0}
    (
        h_0, 
        (
            h_\sigma
        )_{\sigma \in \Sigma_\KK}
    )
        \mapsto
    (
        h_0, 
        (
            h_0 \tp{h_{\sigma c}}^{-1}
        )_{\sigma \in \Sigma_\KK}
    )
\end{equation}
of $H_0 = H_{1,0} = H_{2,0}$ over $S_0$.

Furthermore, the embeddings in \eqref{eq:embeddings Hi0} are the obvious ones with respect to the identifications above. Similar identifications can be made about the Borel subgroups $B_{H_0, i}$, maximal torus $T_{H_0, i}$ and parabolic subgroups $P_{H_0, i}$ introduced in Sections \ref{subsubsec:alg weights} and \ref{subsubsec:P parallel weights}.

In particular, $T_{H_0, 4} = T_{H_0, 3} = T_{H_0, 1} \times T_{H_0, 2}$ and a dominant weight $\kappa$ of $T_{H_0, 4}$ corresponds to a dominant weight of $T_{H_0, 3}$. Similarly, a pair $(\kappa_1, \kappa_2)$ consisting of a dominants weight $\kappa_1$ for $T_{H_0, 1}$ and a dominant weight $\kappa_2$ of $T_{H_0, 2}$ also corresponds to a dominant weight of $T_{H_0, 3}$.

\subsubsection{Restriction of algebraic modular forms.} \label{subsubsec:res of alg modular forms}
Let $r \geq 1$ and let $\tau$ be a $P_4$-nebentypus (for $G_4$) of level $r$. Let $R$ be an $S_0[\tau]$-algebra. Let $K_{i,r}$ be an open compact subgroup of $G_i(\AA_f)$ such that $K_{3,r} \subset K_{4, r} \cap G_3(\AA_f)$ and $K_{3, r} \subset (K_{1,r} \times K_{2,r}) \cap G_3(\AA_f)$, as in Section \ref{subsubsec:comp of lvl structures}.

If $\kappa$ is an $R$-valued dominant weight of $T_{H_0, 4}$, then pullback $i_3$ in \eqref{eq:embeddings EE i} induces a restriction map
\begin{equation} \label{eq:restriction classical modular forms G4 to G3}
    \res_3 = (i_3)^* : M_\kappa(K_{4,r}, \tau; R) \to M_\kappa(K_{3,r}, \tau; R)\,.
\end{equation}

Similarly, let $\kappa_1$ (resp. $\kappa_2$) and $\tau_1$ (resp. $\tau_2$) be an $R$-valued dominant weight of $T_{H_0, 1}$ (resp. $T_{H_0, 2}$) and a $P_1$-nebentypus (resp. $P_2$-nebentypus) of level $r$, respectively. Then, set $\kappa = (\kappa_1, \kappa_2)$ and $\tau = \tau_1 \otimes \tau_2$ be the corresponding weight of $T_{H_0, 3}$ and $P_3$-nebentypus. As above, pullback along the second map in \eqref{eq:embeddings EE i} induces a restriction
\begin{equation} \label{eq:restriction classical modular forms G1,2 to G3}
    \res_{1,2} = (i_{1,2})^* 
        : 
    M_\kappa(K_{1,r}, \tau_1; R)
        \otimes
    M_\kappa(K_{2,r}, \tau_2; R)
        \to
    M_\kappa(K_{3,r}, \tau; R)\,.
\end{equation}

Naturally, these maps have analogues when considering modular forms without fixed $P$-nebentype. In situation of \eqref{eq:iso Sh 3 with Sh 1 x Sh 2}, the map $\res_{1,2}$ is an isomorphism.

\subsection{Comparisons between $G_1$ and $G_2$.}  \label{subsec:comp between G1 and G2}
In this section, we discuss various involutions that allow us to compare spaces of holomorphic forms with spaces of anti-holomorphic forms on both $G_1$ or $G_2$. 

Namely, the goal of this section is to explain the functors
\[
\begin{tikzcd}
    \{
        \text{holomorphic AR on $G_1$}
    \}
        \arrow[r, "c_B"]
        \arrow[d, "F^\dagger"]
        \arrow[rd, "F_\infty" above] &
    \{
        \text{anti-holomorphic AR on $G_1$}
    \}
        \arrow[d, "F^\dagger"] \\
    \{
        \text{holomorphic AR on $G_2$}
    \}
        \arrow[r, "c_B"]
        \arrow[ru] &
    \{
        \text{anti-holomorphic AR on $G_2$}
    \}
\end{tikzcd}\,,
\]
where ``AR'' stands for automorphic representations, adapting \cite[Section 6.2]{EHLS} to the $P$-ordinary setting (notably $F^\dagger$), as well as the effect of each arrow on weights, levels and types.

\subsubsection{The involution $c_B$ on $G_1$.}  \label{subsubsec:the involution cB}
Let $\pi$ be a holomorphic cuspidal automorphic representation for $G = G_1$ of weight $\kappa$. All of the following holds for $G_2$ with the obvious modifications. As explained in \cite[Section 6.2.1]{EHLS}, there is a $c$-semilinear, $G(\AA_f)$-equivariant isomorphism
\[
    c_B : 
    H^0(\P_h, K_h; \pi \otimes W_\kappa) 
        \to
    H^d(\P_h, K_h; \ol{\pi} \otimes W_{\kappa^D}) 
\]
induced by the Killing form on $\g$ and the complex conjugation $\pi \to \ol{\pi}$, $\varphi \mapsto \ol{\varphi}$ on $\Ab_0(G)$, where $\ol{\varphi}(g) = \ol{\varphi(g)}$. Equivalently, we have a $c$-semilinear, $G(\AA_f)$-equivariant isomorphism
\begin{equation} \label{eq:iso cB holo to anti holo on G1}
    c_B :
    H_!^0(\Sh(V), \w_\kappa)
        \to
    H_!^d(\Sh(V), \w_{\kappa^D})
\end{equation}
over $\CC$ via \eqref{eq:Lie alg coh for H!i Sh V kappa}.

\begin{remark}
    From Definition \ref{def:(anti)holo aut reprn}, the isomorphism $c_B$ shows that an automorphic representation $\pi$ of $G$ is holomorphic of weight $\kappa$ if and only if $\ol{\pi}$ is anti-holomorphic of weight $\kappa^D$.
\end{remark}

\subsubsection{The involution $F_\infty$.} \label{subsubsec:the involution Foo}
To compare automorphic representations on $G_1$ and $G_2$, we first compare their associated locally symmetric spaces. Let $h$ (resp. $h^c$) be the homomorphism associated to $G_1$ (resp. $G_2$) inducing a Hodge structure on $V \otimes \CC$ and denote the $G(\RR)$-conjugacy class of $h$ (resp. $h^c$) by $X_h$ (resp. $X_{h^c}$). 

Note that the stabilizer $K_h \subset G(\RR)$ of $h$ is also the stabilizer of $h^c$. In this section, we write $U_\infty := K_h$ and identify $X = G(\RR)/U_\infty$ with both $X_h$ and $X_{h^c}$. However, note that the complex structures induced by $h$ and $h^c$ respectively are opposite. Namely, the pullback of these two complex structures to $X$ are conjugate. 

In other words, the natural map $X_h \to X_{h^c}$ given by $ghg^{-1} \mapsto gh^cg^{-1}$ is anti-holomorphic and provides an anti-holomorphic map $\Sh(V)(\CC) \to \Sh(-V)(\CC)$. 

\begin{remark} \label{rmk: holomorphic vs anti holomorphic X h to X h c}
    In Section \ref{subsubsec:the involution dagger}, we study a different map $X_h \to X_{h^c}$ instead. To help distinguish the two, one does not need to apply complex conjugation anywhere in the definition of $F_\infty$. This involution is simply a natural consequence of the relation between the complex structures on $X_h$ and $X_{h^c}$.
\end{remark}

\begin{remark}
    From now on, we use the notation of Section \ref{subsec:G1 G2 G3 G4}. However, we replace the subscripts $i=1$ and $2$ by $V$ and $-V$. In particular, $H_{0, V} := H_{1,0}$ and $H_{0, -V} := H_{2,0}$.
\end{remark}

Let $\kappa = (\kappa_0, (\kappa_\sigma))$ be a dominant character of $T_{H_{0, V}}$. Observe that the notion of a \emph{dominant} character is ``flipped'' as $B_{H_0, -V} = \tp{B_{H_0, V}}$ under the identification \eqref{eq:iso H1,0 with H2,0}. 

Namely, the weight of $T_{H_{0, -V}}$ given by
\[
    \kappa^\flat := (\kappa_0, (\kappa_{\sigma c}))
\]
is dominant. To understand the Lie algebra cohomology of the highest weight representation $W_{\kappa^\flat, -V}$ of $H_{0, -V}$, observe that the Harish-Chandra decomposition of $\g$ induced by $h^c$ is
\[
    \g = \p^-_{h^c} \oplus \k_{h^c} \oplus \p^+_{h^c}\,,
\]
where $\p^\pm_{h^c} = \p^{\mp}_{h^c}$ and $\k_{h^c} = \k_h$.

Hence, given a $(\g, K_h) \times G(\AA_f)$-module $\pi$, we need to consider 
\[
    (
        \pi^{\p_{h^c}^-} 
            \otimes 
        W_{\kappa^\flat, -V}
    )^{K_{h^c}}
        =
    (
        \pi^{\p_h^+} 
            \otimes 
        W_{\kappa^\flat, -V}
    )^{K_h}
\]
and understand $W_{\kappa^\flat, -V}$ as a representation of $H_{0, V}$, via pullback through \eqref{eq:iso H1,0 with H2,0}.

In fact, via \eqref{eq:iso H1,0 with H2,0}, $W_{\kappa^\flat, -V}$ is the irreducible highest weight representation of highest weight $\kappa^*$ for $T_{H_{0, V}}$, where $\kappa^*$ is as in \eqref{eq:def W kappa *}. See \cite[Eq. (121)]{EHLS} for an explicit $H_{0, V}$-isomorphism $W_{\kappa^\flat, -V} \xrightarrow{\sim} W_{\kappa^*, V}$.

Furthermore, by definition of $\kappa^D$, there is a natural map
\[
    i_{\kappa^*}
        :
    W_{\kappa^*, V}
        \hookrightarrow
    \hom(
        \wedge^d \p_h^- 
            \otimes_\CC
        \wedge^d \p_h^+,
        W_{\kappa^*, V}
    )
        =
    \hom(
        \wedge^d \p_h^-,
        W_{\kappa^D, V}
    )
\]
where the first map is induced by the $H_0(\CC)$-equivariant pairing
\[
    \wedge^d \p_h^- 
        \otimes_\CC
    \wedge^d \p_h^+
        \to
    \CC
\]
obtained from the Killing form on $\g$.

Therefore, the above yields
\[
    (
        \pi^{\p_{h^c}^-} 
            \otimes 
        W_{\kappa^\flat, -V}
    )^{K_{h^c}}
        \xrightarrow{\text{id} \otimes i_{\kappa^*}}
    \hom(
        \wedge^d \p_h^- ,
        \pi
            \otimes
        W_{\kappa^D, V}
    )
\]
which induces a $G(\AA_f)$-equivariant isomorphism
\begin{equation} \label{eq:def iso Foo on pi}
    F_\infty
        :
    H^0(
        \P_{h^c}, K_{h^c},
        \pi 
            \otimes 
        W_{\kappa^\flat, -V}
    )
        \xrightarrow{\sim}
    H^d(
        \P_h, K_h,
        \pi 
            \otimes 
        W_{\kappa^D, V}
    )
\end{equation}
over $\CC$.

The case $\pi = \Ab_0(G)$ yields the $G(\AA_f)$-equivariant isomorphism
\begin{equation} \label{eq:iso holo flat on G2 with anti holo on G1}
    F_\infty
        :
    H_!^0(
        \Sh(-V),
        \w_{\kappa^\flat, -V}
    )
        \xrightarrow{\sim}
    H_!^d(
        \Sh(V),
        \w_{\kappa^D, V}
    )
\end{equation}
over $\CC$.

\begin{remark}
    Considering the composition of $c_B$ and $F_\infty$, we see that if $\pi$ is holomorphic (resp. anti-holomorphic) of weight $\kappa$ on $G_1$, then $\ol{\pi}$ is holomorphic (resp. anti-holomorphic) of weight $\kappa^\flat$ on $G_2$
\end{remark}

\subsubsection{The involution $(-)^\flat$} \label{subsubsec:the involution flat}
Let $\pi$ be a cuspidal automorphic representation for $G = G_1$. If $\pi$ is holomorphic of weight $\kappa$, then $\xi_{\pi, \infty}(t) = t^{a(\kappa)}$ for all $t \in Z_G(\RR) \cong \RR^\times$, where $\xi_\pi$ denote the central character of $\pi$. 

It follows that
\[
    \pi \otimes |\xi_\pi \circ \nu|^{-1/2} = \pi \otimes ||\nu||^{-a(\kappa)/2}
\]
is unitary. Hence, considering its conjugate, we see that $\ol{\pi}$ is isomorphic to
\begin{equation} \label{eq:def pi flat}
    \pi^\flat 
        := 
    \pi^\vee \otimes \absv{\xi_\pi \circ \nu}\,.
\end{equation} 

The material of Sections \ref{subsubsec:the involution cB}--\ref{subsubsec:the involution Foo} then implies that $\pi^\flat$ is anti-holomorphic of weight $\kappa^D$ for $G_1$, or equivalently, holomorphic of weight $\kappa^\flat$ for $G_2$.

Since $\pi^\flat$ is a twist of $\pi^\vee$, the pairings $\brktdotdot_\pi$ and $\brktdotdot_{\pi_l}$ from Section \ref{subsubsec:contragredient rep and pairings} induce pairings $\pi \times \pi^\flat \to \CC$ and $\pi_l \otimes \pi_l^\flat \to \CC$, for each place $l$ of $\QQ$, which we again denote $\brktdotdot_\pi$ and $\brktdotdot_{\pi_l}$ respectively.

\begin{remark}
    The necessity of working with $\pi^\flat$ in this paper is due to the doubling method, see Section \ref{subsubsec:zeta integrals}, which requires the integration of an Eisenstein series with the product of a \emph{test} vector $\varphi$ in $\pi$ and a \emph{test} vector $\varphi^\flat$ in a twist of $\pi^\flat \cong \ol{\pi}$ (or equivalently, $\pi^\vee$). It is natural to view the Eisenstein series as a holomorphic modular form on $G_3$ (or rather the restriction to $G_3$ of a modular form on $G_4$) and, dually, $\pi$ and $\pi^\flat$ as anti-holomorphic representations on $G_1$ and $G_2$ respectively.

    Moreover, the advantage of $\pi^\flat$ over $\ol{\pi}$ is that its direct relation with $\pi^\vee$ facilitates the transition between $P$-ordinary properties of $\pi$ and $P$-anti-ordinary properties of $\pi^\flat$ over $G_1$, see Lemma \ref{lma:Pword and Pwaord duality} and Theorem \ref{thm:canonical Pwaord vec of type tau w for G1}.
\end{remark}

\begin{remark}\label{rmk:pi flat instead of pi bar}
    Starting in Section \ref{subsec:lattices of Pord holo forms}, we assume that $\pi$ satisfies the multiplicity one hypothesis (see Hypothesis \ref{hyp:mult one hyp (I)}). In that case, the subspaces of $\Ab_0(G)$ associated to $\pi^\flat$ and $\ol{\pi}$ are in fact equal.
\end{remark}

\subsubsection{The involution $(-)^\dagger$ for level structures.} \label{subsubsec:the involution dagger}
We introduce one last involution ``$\dagger$''. The main feature is to compare level structures between $G_1$ and $G_2$ (and not just weights, as we have done above). Although our approach, arguments and notation follow \cite[Section 6.2.3]{EHLS}, the reader should keep in mind that the results presented here generalize \emph{loc. cit.} by considering $P$-Iwahoric level structures at $p$ for all parabolic subgroups $P$.

As we are assuming Hypothesis \ref{hyp:standard hypothesis}, there exists a $\KK$-basis $\mathcal{B}_h$ of $V$ that diagonalizes the Hermitian pairing $\brktdotdot_V$ associated to $V$. Furthermore, if we write $\mathcal{B}_h^c$ for the image of $\mathcal{B}_h$ under complex conjugation, then $h$ takes values in the space of diagonal matrices under the identification
\[
    \End_{\RR}(V \otimes_{\KK, \sigma} \CC) = \Mat_{2n \times 2n}(\RR)\,.
\]
induced by the $\KK^+$-basis $\mathcal{B}_h \cup \mathcal{B}^c_h$ of $V$, for each $\sigma \in \Sigma$. Clearly, $h^c$ is obtained by conjugating $h$ with the change-of-basis endomorphism interchanging $\mathcal{B}_h$ with $\mathcal{B}_h^c$.

Let $D = \diag(d_1, \ldots, d_n)$, $d_1, \ldots, d_n \in \KK^+$, be the diagonal Hermitian matrix representing $\brktdotdot_V$ with respect to $\mathcal{B}_h$.

Let $L$ be the $\OO$-lattice from Section \ref{subsec:unitary pel datum} associated to $\PP$. As explained in \cite[Section 6.2.3]{EHLS}, using Hypothesis \ref{hyp:ordinary hypothesis}, we can assume that $\mathcal{B}_h$ induces a basis of $L \otimes \ZZ_{(p)}$ such that $D$ is also the diagonalization of the perfect Hermitian pairing on $L \otimes \ZZ_{(p)}$ obtained from $\brktdotdot_V$.

The advantage of $\mathcal{B}_h$ is that it provides a holomorphic map $\Sh(V) \to \Sh(-V)$ (as opposed to the anti-holomorphic one in Section \ref{subsubsec:the involution Foo}). Indeed, first identify $G_{/\QQ}$ as a subgroup of $\res_{\KK/\QQ} \GL_n(\KK)$ using $\mathcal{B}_h$ and consider the automorphism $g \mapsto \ol{g}$ of $G_{\QQ}$ induced by the action of $c$ on $\KK$.

Observe that $\ol{g} = IgI$, where $I : V \to V$ is the $\KK^+$-involution that interchanges $\mathcal{B}_h$ and $\mathcal{B}_h^c$ by sending any vector $v \in \mathcal{B}_h$ to $v^c \in \mathcal{B}_h^c$ and \emph{vice versa}. Note that $I$ stabilizes $L \otimes \ZZ_{(p)}$ and its action on $L \otimes \ZZ_p$ interchanges $L^+$ and $L^-$. Moreover, our explanation above implies that $h^c = \ol{h}$, hence it maps $U_\infty$ to $U_\infty$ and yields an automorphism of $X$.

The composition
\[
    \begin{tikzcd}
        X_h 
            \arrow[r, "\sim"] &
        X 
            \arrow[r, "g \mapsto \ol{g}"] &
        X 
            \arrow[r, "\sim"] &
        X_{h^c} 
            \\
        ghg^{-1} 
            \arrow[rrr] &
            &
            &
        \ol{g}h^c\ol{g}^{-1}
    \end{tikzcd}
\]
is holomorphic and provides a holomorphic map $\Sh(V)(\CC) \to \Sh(-V)(\CC)$ as claimed. Given $K^p \subset G(\AA^p_f)$ and $K_{r, V} = I_{r, V}K^p$ as in Section \ref{subsubsec:lvl subgp KPr}, it provides a holomorphic map between $\level{K_{r,V}}{\Sh(V)}(\CC) \to \level{\ol{K_{r,V}}}{\Sh(-V)}(\CC)$. However, $\ol{K_{r,V}} \neq I_{r, -V} \ol{K^p}$ or equivalently, $\ol{I_{r,V}} \neq I_{r, -V}$.

To resolve this issue, observe that the basis $\mathcal{B}_h$ of $V$ naturally induces a basis $\mathcal{B}_{h, w}$ of $V_w := V \otimes_{\KK} \KK_w$ for any $w \in \Sigma_p$. It would be too restrictive to assume that the basis $\mathcal{B}^w$ of $V_w$ induced by the $\OO_w$-bases of $L_w^\pm$, for $w \in \Sigma_p$, leading to the identifications in \eqref{eq:GL(Lw pm) basis}, is the same as $\mathcal{B}_{h, w}$. In other words, there is no need to assume that all the bases $\mathcal{B}^w$, as $w$ varies in $\Sigma_p$, are all induced by a basis of $V$.

Instead, consider the identification $\GL_{\KK_w}(V_w) = \GL_n(\KK_w)$ induced by $\mathcal{B}_{h, w}$ and let $\beta_w \in \GL_n(\KK_w)$ be the change-of-basis matrix that maps $\mathcal{B}^w$ to $\mathcal{B}_{h,w}$.

\begin{remark}
    This matrix $\beta_w \in \GL_n(\KK_w)$ is the inverse of the analogous change-of-basis matrix, also denoted $\beta_w$, introduced in \cite[Section 6.2.3]{EHLS}.
\end{remark}

Let
\[
    \delta_w = D \cdot \tp{\beta}_w \cdot \beta_w \in \GL_n(\KK_w)\,
\]
and define $\delta_p = (1, (\delta_w)_{w \in \Sigma_p}) \in \QQ_p^\times \times \prod_{w \in \Sigma_p} \GL_n(\KK_w) = G(\QQ_p)$. This identification is with respect to $\mathcal{B}_h$ (and may be different than the identification obtained from \eqref{eq:prod G over Zp} and \eqref{eq:GL(Lw pm) basis}).

Then, one readily checks that
\[
    \ol{\delta}_p = \delta_p^{-1} 
        \ \ ; \ \
    \delta_p^{-1} \ol{G(\ZZ_p)} \delta_p = G(\ZZ_p)
        \ \ ; \ \
    \delta_p^{-1} \ol{I_{r, V}^0} \delta_p = I_{r, -V}^0
        \ \ ; \ \
    \delta_p^{-1} \ol{I_{r, V}} \delta_p = I_{r, -V}\,.
\]

Therefore, by defining an automorphism $(-)^\dagger$ of $G(\AA)$ as 
\[
    g^\dagger := \nu(g)^{-1} \delta_p^{-1} \ol{g} \delta_p\,,
\]
we see that given any $K^p \subset G(\AA_f)$, $K = G(\ZZ_p)K^p$ and $K_{r, V} = I_{r, V}K^p$, we have
\[
    K^\dagger = G(\ZZ_p)\ol{K^p}
        \ \ \ ; \ \ \
    (K_{r, V})^\dagger = (K^\dagger)_{r, -V} = K^\dagger_{r, -V} \,.
\]

In conclusion, the obvious analogue of \cite[Proposition 6.2.4]{EHLS} also holds in our context (namely, one simply changes the meaning of the level $r$ structure at $p$ from ``Iwahoric'' to ``$P$-Iwahoric''). In other words, the holomorphic map $\level{K_{r,V}}{\Sh(V)}(\CC) \to \level{K_{r,-V}^\dagger}{\Sh(-V)}(\CC)$ via $g \mapsto \ol{g} \delta_P$ is well-defined over $S_0 = \OO_{\KK', (\p')}$, i.e. it is induced from an isomorphism 
\begin{equation} \label{eq:iso Sh K r V and K dagger r -V}
    \level{K_{r,V}}{\Sh(V)} 
        \xrightarrow{\sim} 
    \level{K_{r,-V}^\dagger}{\Sh(-V)}
\end{equation}
over $S_0$ comparing $P$-Iwahori level structure on $G_1$ and $G_2$ (see Remark \ref{rmk:P Iwa vs tp P Iwa}).

To understand this involution in terms of modular forms, consider a dominant character $\kappa$ of $T_{H_{0, V}}$. Let $\kappa^\dagger = \kappa^\flat - \ul{a(\kappa)}$, a dominant character of $T_{H_{0, -V}}$, where $\ul{a(\kappa)}$ is the scalar weight corresponding to the character $||\nu(-)||^{a(\kappa)}$.

By definition of $\kappa^\flat$, the natural automorphism $(h_0, (h_\sigma)) \to (h_0, (h_{\sigma c}))$ of $H_0$ induces an isomorphism $W_{\kappa, V} \to W_{\kappa^\flat, -V}$. By twisting by $\nu^{-a(\kappa)}$, we obtain an isomorphism $W_{\kappa, V} \xrightarrow{\sim} W_{\kappa^\dagger, -V}$ given by $\phi \mapsto \phi^\dagger$, where
\[
    \phi^\dagger(
        h_0, (h_\sigma)
    )
        :=
    \phi(
        h_0, (h_{\sigma c})
    )
    h_0^{-a(\kappa)}\,.
\]

One readily sees that this isomorphism is $\dagger$-equivariant for the action of $K_h = K_{h^c}$. Therefore, the isomorphism $g \mapsto g^\dagger$ induces an isomorphism between $\w_{\kappa, V}$ and the pullback to $\w_{\kappa^\dagger, -V}$ from $\Sh(V)$ to $\Sh(-V)$.

In other words, the above induces an isomorphism
\begin{equation} \label{eq:F dagger map level p oo}
    F^\dagger
        : 
    H^i_!(\Sh(V), \w_{\kappa, V})
        \xrightarrow{\sim}
    H^i_!(\Sh(-V), \w_{\kappa^\dagger, -V})\,,
\end{equation}
over $\CC$, that is $\dagger$-equivariant for the action of $G(\AA_f)$. 

\begin{remark} \label{rmk:iso cohomology -V kappa flat and -V kappa dagger}
    Observe that multiplication by the global section $g \mapsto ||\nu(g)||^{a(\kappa)}$ yields an isomorphism
    \[
        H^0_!(\Sh(-V), \w_{\kappa^\flat, -V})
            \xrightarrow{\sim}
        H^0_!(\Sh(-V), \w_{\kappa^\dagger, -V})\,,
    \]
    which can be useful to compare the above with the results of Sections \ref{subsubsec:the involution Foo}--\ref{subsubsec:the involution flat}.
\end{remark}

Using the discussion, \eqref{eq:iso Sh K r V and K dagger r -V} and \eqref{eq:F dagger map level p oo} induce an isomorphism
\begin{equation} \label{eq:F dagger map level K r}
    F^\dagger
        : 
    H^i_!(\level{K_r}{\Sh(V)}, \w_{\kappa, V})
        \xrightarrow{\sim}
    H^i_!(\level{K_r^\dagger}{\Sh(-V)}\,, \w_{\kappa^\dagger, -V})
\end{equation}
over any $S_0$-algebra $R$ and any $r \gg 0$.

Similarly, given a $P$-nebentypus $\tau$ of level $r$, the map $g \mapsto g^\dagger$ induces an isomorphism between $\w_{\kappa, r, \tau, V}$ and the pullback of $\w_{\kappa^\dagger, r, \tau^\vee, -V}$ from $\level{K_r}{\Sh(V)}$ to $\level{K_r^\dagger}{\Sh(-V)}$. Therefore, we also have an isomorphism
\[
    F^\dagger
        : 
    H^i_!(
        \level{K_r}{\Sh(V)}, 
        \w_{\kappa, r, \tau, V}
    )
        \xrightarrow{\sim}
    H^i_!(
        \level{K_r^\dagger}{\Sh(-V)}\,, \w_{\kappa^\dagger, r, \tau^\vee, -V}
    )
\]
over any $S_0[\tau]$-algebra $R$. We now set $\tau^\dagger := \tau^\vee$, motivated by the isomorphism above.

Lastly, to understand this involution in terms of automorphic forms via \eqref{eq:Lie alg coh for H!i Sh V kappa}, let $\pi$ be an arbitrary $(\g, K_h) \times G(\AA_f)$-subrepresentations of $\Ab_0(G)$. The map $g \mapsto g^\dagger$ induces a map $\pi \to \pi^\dagger$, where
\[
    \pi^\dagger
        :=
    \{
        \varphi^\dagger(g) := \varphi(g^\dagger)
            \mid
        \varphi \in \pi
    \}
        \subset
    \Ab_0(G)\,.
\]

As explained in \cite[Section 6.2.3]{EHLS}, the isomorphism
\[
    (\pi \otimes W_{\kappa, V})^{K_h}
        \to
    (\pi^\dagger \otimes W_{\kappa^\dagger, -V})^{K_{h^c}}\,,
\]
given by $\varphi \otimes \phi \mapsto \varphi^\dagger \otimes \phi^\dagger$ is $\dagger$-equivariant for the action of $(\g, K_h) \times G(\AA_f)$. Therefore, one obtains
\[
    F^\dagger
        :
    H^i(
        \P_h, K_h,
        \pi \otimes W_{\kappa, V}
    )
        \xrightarrow{\sim}
    H^i(
        \P_{h^c}, K_{h^c},
        \pi^\dagger \otimes W_{\kappa^\dagger, -V}
    )
\]
over $\CC$. The case $\pi = \Ab_0(G)$ recovers the map \eqref{eq:F dagger map level K r} over $\CC$.

\begin{remark}
    Suppose $\pi$ is (anti-)holomorphic and $P$-(anti-)ordinary such that its $P$-(anti-)WLT $(\kappa, K_r, \tau)$ for $G_1$, then $\pi^\dagger$ is (anti-)holomorphic, $P$-(anti-)ordinary of $P$-(anti-)WLT $(\kappa^\dagger, K^\dagger_r, \tau^\dagger)$ for $G_2$.
\end{remark}

As explained in \cite[Section 6.5.3]{EHLS}, if $\pi$ satisfies the strong multiplicity one hypothesis, then $\pi^\dagger$ and $\pi^\vee$ are equal as subspaces of $\Ab_0(G)$. In that case, one further obtains $\pi^\flat = \pi^\dagger \otimes ||\nu||^{a(\kappa)}$. 

Assume that $\pi$ is (anti-)holomorphic, $P$-(anti-)ordinary of $P$-(anti-)WLT $(\kappa, K_r, \tau)$ for $G_1$. Then, setting 
\[
    K_r^\flat := K_r^{\dagger} \text{ and } \tau^\flat := \tau^\dagger = \tau^\vee\,
\]
we have that $\pi^\flat$ is (anti-)holomorphic, $P$-(anti-)ordinary of $P$-(anti-)WLT $(\kappa^\flat, K_r^\flat, \tau^\flat)$ for $G_2$ (by definition of $\kappa^\flat$).


\section{$P$-ordinary $p$-adic modular forms.} \label{sec:Pord padic mod forms}
Fix a neat open compact subgroup $K^p \subset G(\AA_f^p)$. In what follows, we use the notation of Sections \ref{subsubsec:canonical bundle} and \ref{subsubsec:lvl subgp KPr} freely. 

In particular, write $K = G(\ZZ_p)K^p$ and $K_r = I_rK^p$ for all $r \geq 0$. Furthermore, recall that $\MM_K^{\tor}$ is the smooth toroidal compactification of $\MM_K$ over $S_p = \OO_{F, p}$ (a tower viewed as a single scheme, see \ref{subsubsec:toroidal compactification}), and $\Ab$ is the universal semiabelian scheme (with extra structure) over $\MM_K^{\tor}$. The dual semiabelian scheme is denoted $\Ab^\vee$ and we write $\w$ for the $\OO_{\MM_K^\tor}$-dual of $\Lie_{\MM_K^\tor} \Ab^\vee$.

In this section, we use the fact that the completion of $\incl_p(S_p)$ is $\ZZ_p$ to view (compactified) moduli spaces and Shimura varieties of level $K$ (and $K_r$) over $\ZZ_p$.

\subsection{Igusa tower.} \label{subsec:Igusa tower}
\subsubsection{Ordinary locus and Igusa cover} \label{subsubsec:Ordinary locus and Igusa cover}
Given $m \geq 1$, let $\SSS_m$ denote the nonvanishing locus of the Hasse invariant on $\level{K}{\Sh^\tor}$ over $\ZZ_p/p^m\ZZ_p$. Let $\SSS_m^0$ be the open subscheme obtained from the intersection of $\SSS_m$ and $\level{K}{\Sh}$. Note that $\SSS_1$ is dense in the special fiber of $\level{K}{\Sh^\tor}$, see \cite[Section 2.8]{EHLS}.

Given $r \geq m$, let $\TTT_{r,m}$ denote the finite étale cover of $\SSS_m$ such that for any $\SSS_m$-scheme $S$
\begin{equation} \label{eq:def level r Igusa cover}
    \TTT_{r,m}(S) 
        = 
    \isom_S(
        L^+ \otimes \mu_{p^r}, 
        \Ab^\vee[p^r]^\circ
    )\,,
\end{equation}
where the superscript $\circ$ denotes the identity component and the isomorphisms are of finite flat group schemes over $S$ endowed with $\OO \otimes \ZZ_p$-actions. One readily sees that $\TTT_{r,m}$ is a closed subscheme of $\level{K_r}{\ol{\Sh}}_{/(\ZZ_p/p^m\ZZ_p)}$.

Furthermore, $\TTT_{r,m} / \SSS_m$ is a Galois cover whose Galois group is canonically isomorphic to $H(\ZZ_p/p^r\ZZ_p)$. We refer to $\TTT_m = \{\TTT_{r,m}\}_r$ as the \emph{Igusa tower over $\SSS_m$}.

Let $\SSS$ denote the non-vanishing locus in $\level{K}{\Sh}^\tor$ of a lift of the Hasse invariant to characteristic 0. This depends on the choice of lift, however its reduction modulo $p$ is isomorphic to $\SSS_1$ and the formal completion $\SSS^\ord$ of $\SSS$ along $\SSS_1$ does not depend on the choice of lift. Then, the \emph{Igusa tower} $\Ig = \varinjlim_m \varprojlim_r \TTT_m$ is a pro-étale cover of $\SSS^\ord$ with Galois group $H(\ZZ_p)$. If we want to emphasize the choice of level away from $p$, we write $\level{K^p}{\Ig}$ instead of $\Ig$.

By taking pullback of $\SSS$ via the $\LL_r$-torsor $\level{K_r}{\ol{\Sh}} \to \level{K}{\Sh}^\tor$, we can similarly define the ordinary locus $\level{K_r}{\SSS}$ of $\level{K_r}{\ol{\Sh}}$. By taking formal completion, we obtain $\level{K_r}{\SSS^\ord}$. Given any dominant weight $\kappa$, restricting a modular form $f \in M_\kappa(K_r, R)$ to this ordinary locus defines an element of $H^0(\level{K_r}{\SSS^\ord}, \w_{\kappa, r})$.

\subsubsection{Embeddings of Igusa towers.} \label{subsubsec:embeddings of Igusa towers}
The above is set with the PEL datum $\PP = \PP_1$. More generally, for $1 \leq i \leq 4$, fix a neat open compact subgroup $K_i^p \subset G_i(\AA_f^p)$ and let $\SSS_{m,i}$ be the analogue of $\SSS_m$ associated to the moduli problem associated to $\PP_i$. Let $\TTT_{r,m, i} \to \SSS_{m,i}$ be the corresponding finite étale cover given by \eqref{eq:def level r Igusa cover}.

If $K_3^p \subset K_4^p \cap G_3(\AA_f^p)$ and $K_3^p \subset (K_1^p \times K_2^p) \cap G_3(\AA_f)$, the maps from \eqref{eq:def embeddings i 3 and i 1 2} extend to embeddings
\begin{equation} \label{eq:embeddings of Igusa towers}
    \TTT_{r, m, 3} \hookrightarrow \TTT_{r, m, 4}
        \ \ \ \text{and} \ \ \
    \TTT_{r, m, 3} \hookrightarrow \TTT_{r, m, 1} \times_{\ZZ_p} \TTT_{r, m, 2}\,,
\end{equation}
see \cite[Equations (42)-(43)]{EHLS}.

However, as explained in \cite[Section 2.1.11]{HLS} and \cite[Remark 3.4.1]{EHLS}, at the level of complex points, the inclusion $\level{K_3^p}{\Ig_3} \hookrightarrow \level{K_4^p}{\Ig_4}$ induced by the first map above is not a restriction of the natural embedding $i_3 : \level{K_{3,r}}{\Sh(V_3)} \hookrightarrow \level{K_{4,r}}{\Sh(V_4)}$

In fact, the first inclusion in \eqref{eq:embeddings of Igusa towers} corresponds to the composition of $i_3$ with the shifted inclusion $G_3(\AA) \hookrightarrow G_4(\AA)$ given by $g \mapsto g \cdot \gamma_p$, where $\gamma_p$ corresponds to the element of $G_4(\AA)$ whose component away from $p$ is trivial and whose component at $p$ is
$
    (1,(\gamma_w)_{w \in \Sigma_p}) 
        \in 
    G_4(\QQ_p) 
        = 
    \Gm 
        \times 
    \prod_{w \in \Sigma_p} \GL_n(\KK_w)
$, where
\[
    \gamma_w 
        = 
    \begin{pmatrix}
        1_{a_w} & 0 & 0 & 0 \\
        0 & 0 & 0 & 1_{b_w} \\
        0 & 0 & 1_{a_w} & 0 \\
        0 & 1_{b_w} & 0 & 0
    \end{pmatrix}\,,
\]
via the identification \eqref{eq:identification Gi with GL} for $i=4$. The reader should keep in mind that this shift by $\gamma_p$ plays an important role in Sections \ref{subsubsec:construction f w s +}--\ref{subsubsec:Local integrals at places above p} for the computation of local zeta integrals at $p$.

Then, we have an inclusion
\begin{equation} \label{eq:incl Ig 3 into Ig 4}
    \gamma_p \circ i_3
        :
    \level{K_3^p}{\Ig_3}
        \hookrightarrow
    \level{K_4^p}{\Ig_4}
\end{equation}
as described above. On the other hand, we obtain an inclusion
\begin{equation} \label{eq:incl Ig 3 into Ig 1, 2}
    i_{1,2}
        :
    \level{K_3^p}{\Ig_3}
        \hookrightarrow
    \level{K_1^p}{\Ig_1}
        \times
    \level{K_2^p}{\Ig_2}
\end{equation}
induced by the second map in \eqref{eq:embeddings of Igusa towers} without any shifts involved.

\subsection{Scalar-valued $p$-adic modular forms with respect to $P$} \label{subsec:scalar valued padic mod forms with respect to P}
In this section, we introduce a slightly unconventional definition of scalar-valued $p$-adic modular forms, generalizing the usual notion (see \cite[Section 2.9]{EHLS}). The key idea is to replace the role of the unipotent radical of some standard Borel subgroup with the unipotent radical of the fixed parabolic $P$, see \eqref{eq:def of scalar valued padic mod forms} below. We recover the usual notion when $P = B$ as in Remark \ref{rmk:trivial partition}. In Section \ref{subsec:padic mod forms valued in loc alg reps}, we introduce another notion that allows us to consider vector-valued $p$-adic modular forms.

In Section \ref{sec:Pord Eisenstein measure}, the goal is to construct a $p$-adic family of such scalar-valued $p$-adic modular forms on $G_4$ from the Eisenstein series constructed in Section \ref{sec:Sgl Eis series for dbl method}.

\subsubsection{Global section over the Igusa tower} \label{subsec: global sections over the Igusa tower}
Fix a $p$-adic ring $R$, i.e. $R = \varprojlim_m R/ p^m R$. Assume that $R$ contains the ring $\OO'$ introduced in Section \ref{subsubsec:padic weights}.

For each $r \geq m \geq 0$, let $D_{r,m}$ be the preimage of $D_m = \SSS_m - \SSS_m^0$ (with its reduced closed subscheme structure) in $\TTT_{r,m}$. Then, define
\[
    \VV_{r,m}(R) 
        = 
    H^0(\TTT_{r,m_{/R}}, \OO_{\TTT_{r,m\,{/R}}})
        \ \ \ \text{and} \ \ \
    \VV^{\cusp}_{r,m}(R) 
        = 
    H^0(\TTT_{r,m_{/R}}, \OO_{\TTT_{r,m\,{/R}}}(-D_{r,m}))\,.
\]

Clearly, there is a natural action of $H(\ZZ_p/p^m\ZZ_p) = \Gal(\TTT_{r,m}/\SSS_m)$ on each of these $R$-modules. 

We define the spaces of scalar-valued $p$-adic modular forms (for $G$) of level $K^p$ over $R$ as
\begin{equation} \label{eq:def of scalar valued padic mod forms}
    \VV(K^p; R)
        =
    \varprojlim_m
    \varinjlim_r
        \VV_{r,m}(R)^{P_H^u(\ZZ_p)}
\end{equation}
and its submodule of $p$-adic cuspidal forms as
\begin{equation} \label{eq:def of scalar valued padic cusp forms}
    \VV^\cusp(K^p; R)
        =
    \varprojlim_m
    \varinjlim_r
        \VV^\cusp_{r,m}(R)^{P_H^u(\ZZ_p)}\,.
\end{equation}

\begin{remark}
    We sometimes write $\VV(G, K^p; R)$ and $\VV^\cusp(G, K^p; R)$ to emphasize the underlying reductive group if there is any risk of confusion.
\end{remark}

\begin{remark}
    When $P = B$ as in Remark \ref{rmk:trivial partition}, these spaces agree with the usual spaces of $p$-adic modular and cuspidal forms (see \cite[Section 2.9]{EHLS}). 
\end{remark}

Naturally, these spaces admit an action by $L_H(\ZZ_p) = P_H(\ZZ_p)/P_H^u(\ZZ_p)$. Consider the maximal torus $T_H(\ZZ_p) \subset L_H(\ZZ_p)$ and let $\kappa_p$ be a $p$-adic weight of $T_H(\ZZ_p)$, as in Section \ref{subsubsec:padic weights}. 

Let $\psi_B$ denote a $\bQQ_p$-valued finite-order character of $T_H(\ZZ_p)$. Let $\OO'[\psi_B] \subset \bQQ_p$ denote the smallest ring extension of $\OO'$ containing the values of $\psi_B$.

For any ring $R$ containing $\OO'[\psi_B]$, we set
\begin{equation} \label{eq:def V kappa p Kp psiB R}
    \VV_{\kappa_p}(K^p, \psi_B; R)
        :=
    \{
        f \in \VV(K^p; R) 
            \mid
        t \cdot f = \psi_B(t) \kappa_p(t) f,\,\forall t \in T_H(\ZZ_p)
    \}\,,
\end{equation}
and we define its submodule $\VV^\cusp_{\kappa_p}(K_r, \psi_B; R)$ similarly.

\subsubsection{Classical to $p$-adic modular forms : scalar case.} \label{subsubsec:classical to padic scalar case}
We now adapt the usual map sending classical forms to $p$-adic forms, see \cite[Section 2.9.4]{EHLS}, to our setup.

For all $n \geq 1$, write $\mu_{p^n} = \spec(\ZZ[x, x^{-1}])/(x^{p^n} - 1)$ and identify $\Lie_{\ZZ_p}(\mu_{p^n})$ with a free $\ZZ$-module of rank 1 generated by $x \frac{d}{dx}$. Given any $m \geq 1$ and any $\ZZ/p^m\ZZ$-scheme $S$, this allows us to view $\Lie_S(\mu_{p^n})$ as the structure sheaf $\OO_S$ of $S$ (compatibly as $n$ varies).

Fix a test object $(\ul{A}, \phi) \in \TTT(S)$ over a $p$-adic $R$-algebra $S$. Here, we write $\phi = (\phi_{n,m})_{n \geq m}$ with $\phi_{n,m} \in \TTT_{n,m}$ and consider the subsequence $(\phi_{m,m})_m$. For any $1 \leq r \leq m$, the map $\phi_{m,m}$ induces the isomorphism
\[
    \phi_{m,m,r} 
        : 
    L^+ \otimes \mu_{p^r} 
        \xrightarrow{\sim}
    \Ab_{/S}^\vee[p^r]^\circ\,.  
\]

Furthermore, using the discussion above, the latter also induces an isomorphism
\[
    \Lie(\phi_{m,m, r}) 
        : 
    L^+ \otimes \OO_S 
        = 
    L^+ \otimes \Lie_S(\mu_{p^r})
        \xrightarrow{\sim}
    \Lie_S(\Ab^\vee_{/S}[p^r]^\circ)
        =
    \Lie_S \Ab^\vee_{/S}\,.
\]

Therefore, using the identification $\Lambda_0 \otimes \ZZ_p = L^+$ from Section \ref{subsubsec:padic weights}, we conclude that the tuple 
\[
    (\ul{A_m}, \phi_{m,m,r}, (\Lie(\phi_{m,m,r})^\vee, \id))
\]
lies in $\EE_r(S)$.

Let $\kappa$ be any dominant weight of $T_{H_0}$. For any $r \geq 1$, this yields a map
\begin{equation} \label{eq:def p adification map of scalar valued forms}
    \Omega_{\kappa, r}
        : 
    M_\kappa(K_r; R) 
        \to 
    \VV(K^p; R)\,.
\end{equation}
which sends a modular form $f \in M_\kappa(K_r; R)$ to
\begin{equation} \label{eq:def classical to padic map}
    \Omega_{\kappa, r}(f)(\ul{A}, \phi) 
        :=
    \varprojlim_m f(
        \ul{A}_m, 
        \phi_{m, m, r}, 
        \left( 
            \Lie(\phi_{m, m, r})^\vee, 
            \id 
        \right)
    )
        \in
    \varprojlim_m S/p^mS = S\,.
\end{equation}

The map $\Omega_{\kappa, r}$ is injective by density of $\level{K_r}{\SSS}_{/(\ZZ/p\ZZ)}$ in $\level{K_r}{\ol{\Sh}}_{/(\ZZ/p\ZZ)}$. This follows immediately from the fact that $\SSS_1$ is dense in $\level{K}{\Sh^{\tor}_{/(\ZZ/p\ZZ)}}$. Considering all dominant weights $\kappa$, we define 
\[
    \Omega_{r} 
        :=
    \bigoplus_{\kappa} \Omega_{\kappa, r}\,.
\]

For $R$ sufficiently large, one readily checks that the restriction of $\Omega_{\kappa, r}$ to $M_\kappa(K_r, \psi_B; R)$ factors through the inclusion
\[
    \VV_{\kappa_p}(K_r, \psi_B; R)
        \hookrightarrow
    \VV(K^p; R)\,,
\]
for any $\kappa_p$ and $\psi_B$ as in \eqref{eq:def V kappa p Kp psiB R}.

Hence, we obtain a map $\Omega_{\kappa, r} : M_\kappa(K_r, \psi_B; R) \to \VV_{\kappa_p}(K_r, \psi_B; R)$, where $\kappa_p$ is the $p$-adic weight associated to $\kappa$ as in \eqref{eq:tuple definition of kappa p}.

In fact, the section $f$ on $\level{K_r}{\Sh}^\tor$ only needs to be defined on the ordinary locus for the formula \eqref{eq:def classical to padic map} to be well-defined. In other words, $\Omega_{\kappa, r}$ naturally extends to a map 
\[
    \Omega_{\kappa, r}
        :
    H^0(\level{K_r}{\SSS^{\ord}}_{/R}, \w_{\kappa, r})
        \to 
    \VV_{\kappa_p}(K_r; R)\,.
\]

\begin{conjecture} \label{conj:density conj scalar padic forms}
    The space
    \[
        \left(
            \varinjlim_r
            \Omega_r\left(
                \bigoplus_{\kappa}
                    H^0(\level{K_r}{\SSS^{ord}}_{/R}, \w_{\kappa, r})
            \right)
        \right)[1/p]
            \cap
        \VV(K^p; R)
    \]
    is $p$-adically dense in $\VV(K^p; R)$.
\end{conjecture}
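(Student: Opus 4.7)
The plan is to prove density by reducing to a surjectivity statement modulo $p^m$ for each $m$, and then realizing each character component of a $p$-adic form under the $T_H(\ZZ_p)$-action as the reduction of a classical modular form of appropriately chosen algebraic weight. The strategy parallels Hida's density theorem for ordinary $p$-adic modular forms in \cite{Hid04}, adapted to the $P$-parabolic setting where the unipotent invariants are taken with respect to $P_H^u(\ZZ_p)$ rather than the unipotent radical of a Borel subgroup.

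First I would fix $m \geq 1$ and, by the defining limits of $\VV(K^p; R)$, a representative $\bar f \in H^0(\TTT_{r,m/R}, \OO_{\TTT_{r,m}})^{P_H^u(\ZZ_p)}$ of a class in $\VV(K^p; R/p^m R)$. Since $P_H^u$ is normal in $P_H$, the quotient $L_H(\ZZ_p)$ acts on this space of invariants, and restricting the action to $T_H(\ZZ_p/p^r\ZZ_p)$ decomposes $\bar f$ into a finite sum $\bar f = \sum_\chi \bar f_\chi$ over characters $\chi$. For each such $\chi$, I would invoke the $p$-adic density of algebraic characters $\kappa_p$ (from Section \ref{subsubsec:padic weights}) in the continuous dual of $T_H(\ZZ_p)$ to produce a dominant, $P$-parallel algebraic weight $\kappa$ with $\kappa_p \equiv \chi \pmod{p^{r'}}$ for some $r' \geq r$; algebraic characters form a $\ZZ$-lattice of maximal rank in the continuous dual, so they surject onto every finite quotient. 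Using the trivialization of $\w$ on the Igusa tower afforded by the canonical map $\phi \mapsto \Lie(\phi)^\vee$ from \eqref{eq:def classical to padic map}, $\bar f_\chi$ can be reinterpreted as a section of $\w_{\kappa, r'}$ over the ordinary locus $\level{K_{r'}}{\SSS^{ord}}$ modulo $p^m$; this locus is affine, so there is no cohomological obstruction to extending. Lifting from $R/p^m R$ to $R$ by flatness then produces a classical modular form whose $p$-adic avatar reduces to $\bar f_\chi$ modulo $p^m$, which assembled across $\chi$ yields the desired approximation of $\bar f$.

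The main obstacle is making rigorous the descent from a $P_H^u(\ZZ_p)$-invariant, $T_H$-isotypic function on $\TTT_{r',m}$ to a classical section of $\w_{\kappa, r'}$, especially the compatibility of the full $L_H(\ZZ_p)$-transformation beyond $T_H$. For $P$-parallel $\kappa$, the representation $\rho_{\kappa_p}$ factors through $L_H/SL_H$ as discussed in Section \ref{subsubsec:P parallel weights}, so the $L_H(\ZZ_p)$-action on the classical form reduces to the character $\kappa_p$ on $T_H$ and the descent is straightforward. For non-$P$-parallel weights the analysis is more subtle, but restricting to $P$-parallel $\kappa$ already suffices for density since these are themselves $p$-adically dense in the continuous character group of $T_H(\ZZ_p)$. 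Verifying all of this rigorously — together with the Hasse-invariant techniques needed to bridge weights and a careful treatment of the affinity of $\TTT_{r', m}$ in the formal limit — forms part of the $P$-ordinary Hida theory the author plans to develop in subsequent work.
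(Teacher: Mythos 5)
The paper does not prove this statement: it is deliberately left as Conjecture \ref{conj:density conj scalar padic forms}, with the author noting that the $P=B$ case is Hida's theorem and that the general case is to be established in later work by adapting the methods of Pilloni (\cite{Pil12}) to unitary groups. So there is no proof in the paper to compare against, and your proposal must be judged on its own. Your template — reduce mod $p^m$, use affineness of the ordinary Igusa variety to lift, and match eigencomponents with classical weights — is indeed the standard Hida argument and is the right starting point, but as written it has a gap precisely at the step that distinguishes general $P$ from $P=B$.

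The gap is the descent step. The space $\VV(K^p;R)$ consists of $P_H^u(\ZZ_p)$-invariants, and decomposing such a function under $T_H(\ZZ_p/p^r\ZZ_p)$ is harmless; moreover the character-matching is not actually the obstacle (any eigencharacter mod $p^{r'}$ is already a finite-order character of $T_H(\ZZ_p)$, realizable as a nebentypus $\psi_B$, so one does not even need to approximate by algebraic weights, let alone $P$-parallel ones). The real issue is that for $P \neq B$ the bundle $\w_{\kappa,r}$ has rank $\dim W_\kappa > 1$, and the $P_H^u$-invariant subspace of $W_\kappa$ is $V_\kappa = \Ind_{B(L_{H_0})}^{L_{H_0}}\kappa$, which is higher-dimensional unless $\kappa$ is $P$-parallel. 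The map $\Omega_{\kappa,r}$ evaluates a section at the specific trivialization $(\Lie(\phi)^\vee,\id)$, i.e.\ records only one matrix coefficient of the section, so the assertion that a given $P_H^u$-invariant $T_H$-eigenfunction mod $p^m$ ``can be reinterpreted as a section of $\w_{\kappa,r'}$'' is exactly the nontrivial surjectivity statement one must prove; affineness of $\TTT_{r,m}$ gives vanishing of $H^1$ for lifting mod $p^m$ to characteristic zero, but it does not by itself produce the section whose highest-weight coefficient is the prescribed function. Your fallback to $P$-parallel $\kappa$, where $V_\kappa$ is a character and the descent genuinely is easy, does not rescue the argument: $P$-parallel weights satisfy the block-constancy constraints \eqref{eq:P parallel weights equalities} and form a proper sublattice of the character lattice, so they are \emph{not} $p$-adically dense in the continuous character group of $T_H(\ZZ_p)$, and the corresponding classical forms cannot approximate the components of $\VV(K^p;R)$ on which $L_H(\ZZ_p)$ acts through a higher-dimensional smooth type. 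Closing this gap requires the representation-theoretic and geometric input (the structure of $W_\kappa|_{P_H^u}$ over the partial Igusa tower, in the style of Pilloni) that both you and the author defer to future work — so the proposal should be regarded as a plan rather than a proof.
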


\begin{remark}
    When $P = B$, this density result is a well-known result. See \cite[Proposition 8.2, Theorem 8.3]{Hid04} or \cite[Theorem 2.6.1]{EFMV18}.
\end{remark}

\subsubsection{$P$-ordinary $p$-adic modular forms : scalar case.} \label{subsubsec:Pord padic mod forms scalar case}
For $w \in \Sigma_p$ and $1 \leq j \leq r_w$, let $t_{w, D_w(j)} = t_{w, D_w(j)}^+ \in G(\QQ_p)$ be the matrix introduced in Section \ref{subsubsec:para subgp of G over Zp}, see \eqref{eq:def t w j}.

It is well-known that the double coset $I_r t_{w, D_w(j)} I_r$ can be written as a disjoint union of right cosets with representatives independent of $r$ (for instance, see the calculations in Section \ref{subsubsec:explicit coset representatives}). Note that $\cap_{r \geq 1} I^{\GL}_r = P_H^u(\ZZ_p)$, hence one can use these same representatives for the double coset $P_H^u(\ZZ_p) t_{w, D_w(j)} P_H^u(\ZZ_p)$.

In \cite[8.3.1]{Hid04}, Hida demonstrates that $u_{w, D_w(j)} = P_H^u(\ZZ_p) t_{w, D_w(j)} P_H^u(\ZZ_p)$ can be interpreted as a correspondence on the Igusa tower. See \cite[Section 2.9.5]{EHLS} as well for further details. This naturally induces an action of $u_{w, D_w(j)}$ on $\VV(K^p; R)$ which stabilizes both $\VV^\cusp(K^p; R)$ and $\VV_{\kappa^p}(K^p, \psi_B; R)$. We set
\[
    u_{P,p} 
        =
    \prod_{w \in \Sigma_p}
    \prod_{j=1}^{r_w}
        u_{w, D_w(j)}
    \;\;\;\text{and}\;\;\;
    e_P = e_{P,p}
        =
    \varinjlim_n
        u_{P,p}^{n!}\,,
\]
and define the space of \emph{$P$-ordinary} $p$-adic modular forms and cuspidal forms as
\[
    \VV^{\Pord}(K^p; R) := e_P\VV^{\Pord}(K^p; R)
        \;\;\;\text{and}\;\;\;
    \VV^{\Pord, \cusp}(K^p; R) := e_P\VV^{\Pord, \cusp}(K^p; R)\,,
\]
respectively. We have similar definitions when fixing a $p$-adic weight $\kappa_p$ and a character $\psi_B$.

In Section \ref{sec:Pord Eisenstein measure}, we use the following conjecture (which is known to hold when $P=B$ as in Remark \ref{rmk:trivial partition}) to compare $p$-adic and classical Eisenstein series.
\begin{conjecture} \label{conj:classicality scalar padic forms}
    Let $\kappa$ be a very regular dominant weight and $\psi_B$ be as in \eqref{eq:def V kappa p Kp psiB R}. The restriction
    \[
        \Omega^{\Pord}_{\kappa, r} 
            : 
        S^{\Pord}_\kappa(K_r, \psi_B; R) 
            \to 
        \VV^{\Pord, \cusp}_{\kappa_p}(K_r, \psi_B; R)
    \]
    of $\Omega_{\kappa, r}$ to $P$-ordinary cusp forms is an isomorphism.    
\end{conjecture}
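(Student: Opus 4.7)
The plan is to adapt the classicality strategy of Hida \cite{Hid04} (for $P=B$) and Pilloni \cite{Pil12} (for symplectic groups) to the unitary PEL setting with arbitrary parabolic $P$. Injectivity of $\Omega_{\kappa, r}^{\Pord}$ is already essentially free: any classical cusp form that vanishes on the Igusa tower vanishes on the ordinary locus $\level{K_r}{\SSS^\ord}$, and since $\SSS_1$ is dense in the special fiber of $\level{K}{\Sh}^\tor$ (Section \ref{subsubsec:Ordinary locus and Igusa cover}), such a form must vanish identically. So the content lies entirely in surjectivity.

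For surjectivity, I would factor $\Omega_{\kappa,r}^\Pord$ through the $P$-ordinary part of $H^0(\level{K_r}{\SSS^\ord}_{/R}, \w_{\kappa, r})$ and split the problem into two stages. First stage: show that $e_P H^0(\level{K_r}{\SSS^\ord}_{/R}, \w_{\kappa, r})$ is exactly the image of $\Omega_{\kappa,r}^\Pord$, i.e. every $P$-ordinary section on $\SSS^\ord$ extends to a section of $\w_{\kappa, r}$ on the whole $\level{K_r}{\ol\Sh}$. The key mechanism is that the Hecke operator $u_{P,p}$ admits an interpretation as a correspondence on the Igusa tower whose right-coset representatives (cf.\ Section \ref{subsubsec:explicit coset representatives}) move a given point a controlled $p$-adic distance closer to the ordinary locus. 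Iterating and inverting on the generalized unit-eigenspace of $u_{P,p}$ yields analytic continuation of the section from a neighborhood of the ordinary locus to a formal neighborhood of all non-ordinary strata. Second stage: promote this formal extension to an algebraic section. Here one invokes formal GAGA for the (possibly compactified) Shimura variety together with the assumption that $R$ is $p$-adically complete, exactly as in \cite[Sections 5--6]{Pil12}.

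The hard part is the first stage, and this is where the hypothesis that $\kappa$ is \emph{very regular} enters. One needs a vanishing theorem of the form $H^i(\level{K_r}{\ol\Sh}, \w_{\kappa, r}(-D)) = 0$ for $i>0$ to ensure that the analytically continued section is not obstructed when passing from $\SSS^\ord$ to $\level{K_r}{\ol\Sh}$; this is a Kodaira-type vanishing statement which holds for sufficiently positive (very regular) weights, and which in the $P=B$ case reduces to standard results of Lan--Suh. For general $P$, one should combine a BGG-type resolution of $\w_{\kappa, r}$ by $\w_{\kappa', r}$ with $\kappa'$ running through $P$-dominant weights, reducing the vanishing to the $B$-case; the very regularity hypothesis guarantees that all intermediate weights remain in the cone where vanishing holds.

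The main obstacle I expect is the passage from the Borel case to general parabolic $P$, precisely the point where \cite{Pil12} and \cite{LiuRos20} restrict to one-dimensional types. In the scalar-valued setting considered here this is less severe than in the vector-valued setting of Section \ref{subsec:padic mod forms valued in loc alg reps}, but one still has to check that the $P$-ordinary projector $e_P$ defined from the product $\prod_{w,j} u_{w, D_w(j)}$ interacts correctly with analytic continuation, since unlike the $B$-case it is a product of several commuting operators that each only enforce partial improvement towards the ordinary locus. A careful bookkeeping argument using the partition $\widetilde{\d}_w$ and the explicit matrices $t^+_{w, D_w(j)}$ should show that the combined operator $u_{P,p}$ still strictly contracts onto $\SSS^\ord$, which would close the proof.
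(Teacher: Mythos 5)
There is nothing in the paper to compare your argument against: the statement you have been asked to prove is Conjecture \ref{conj:classicality scalar padic forms}, which the paper explicitly leaves unproven. The author states in the introduction and in the remarks surrounding Section \ref{sec:Pord padic mod forms} that the ``standard conjectures of $P$-ordinary Hida theory'' (Conjectures \ref{conj:density conj scalar padic forms}, \ref{conj:classicality scalar padic forms}, \ref{conj:classicality vector padic forms}, \ref{conj:indep of weight of padic Hecke alg}, \ref{conj:VCT}) are assumed throughout and will be addressed in a subsequent paper by adapting the methods of \cite{Pil12} to unitary groups. So your proposal is an attempt at an open step of the program, not a reproof of an established result.

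On its own terms, your outline follows exactly the route the author announces (injectivity from density of $\SSS_1$ in the special fiber; surjectivity via the contraction dynamics of the $U_p$-correspondence plus a vanishing theorem), so it is at least consistent with the intended strategy. Two points would need real work before this could be called a proof. First, the Kodaira-type vanishing $H^i(\level{K_r}{\ol{\Sh}}, \w_{\kappa,r}(-D)) = 0$ for $i>0$ is invoked at level $K_r$, which has deep level at $p$; the Lan--Suh vanishing theorems you are implicitly relying on are proved for integral models with hyperspecial (or at least prime-to-$p$) level, and $\level{K_r}{\ol{\Sh}}$ over $S_p$ has singular special fiber for $r\geq 1$ (the paper itself avoids these singularities when defining integral structures on anti-holomorphic forms in Section \ref{subsubsec:integral struct on aholo mod forms}). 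You would need either to descend the vanishing to level $K_0$ after applying $e_P$, or to prove a vanishing statement on the Igusa tower directly. Second, the claim that $u_{P,p}$ ``strictly contracts onto $\SSS^{\ord}$'' for a general parabolic $P$ is precisely the point you identify as the obstacle, and it is not a bookkeeping exercise: each $u_{w,D_w(j)}$ only controls one step of the Hodge--Newton filtration, and for $P \neq B$ the product does not separate all the Newton strata, which is why the locus one can analytically continue to is a union of $P$-adapted strata rather than the full non-ordinary locus. Identifying exactly which strata are reached, and showing that the complement has large enough codimension for the extension to be unobstructed, is the substantive content that remains open.
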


\begin{remark}
    Although this is not the focus of this paper, the author expects that one can weaken that assumption that $\kappa$ is very regular for a condition that is sensitive to our choice of $P$. In this setting, the assumption of being very regular should be strong enough to hold for all choices of parabolic subgroups $P$.
\end{remark}

\subsubsection{Restrictions of $p$-adic forms.} \label{subsubsec:restrictions of padic forms}~
Let $\PP_i$ be one of the PEL datum introduced in Section \ref{subsubsec:theory for G1 and G2}--\ref{subsubsec:theory for G3 and G4}, and let $(G_i, X_i)$ be the associated Shimura datum. For $1 \leq i \leq 4$, fix a neat open compact subgroup $K_i^p \subset G_i(\AA_f^p)$ and assume that $K_3^p$ is contained in both $(K_1^p \times K_2^p) \cap G_3(\AA_f)$ and $K_4^p \cap G_3(\AA_f)$. Then, we obtain restriction maps
\[
    \res_3
        :
    \VV(G_4, K_4^p; R)
        \to
    \VV(G_3, K_3^p; R)
\]
and
\[
    \res_{1,2}
        :
    \VV(G_1, K_1^p; R)
        \wh{\otimes}
    \VV(G_2, K_2^p; R)
        \to
    \VV(G_3, K_3^p; R)
\]
induced by the embeddings in \eqref{eq:embeddings of Igusa towers}, where $\wh{\otimes}$ is the complete tensor product of the $p$-adic ring $R$. Note that $\res_3$ is induced by restricting forms along $\gamma_p \circ \iota_3$, see \eqref{eq:incl Ig 3 into Ig 4}.

\subsubsection{Evaluation at ordinary points} \label{subsubsec:evaluation at ordinary points}
Now, fix $\PP = \PP_i$ for any $1 \leq i \leq 4$ and set $(G, X) := (G_i, X_i)$. Let $(J_0', h_0) \to (G, X)$ be the embedding of Shimura datum, where $J_0'$ is a torus, from \cite[Section 2.3.2]{EHLS}. In particular, $(J_0', h_0)$ defines a CM Shimura subvariety of $\Sh(J_0', h_0)$ of $\Sh(G, X)$. Given a dominant weight $\kappa$ of $T_{H_0}$ and a level subgroup $K = K_r \subset G(\AA_f)$, we obtain a restriction map
\[
    \res_{J_0', h_0} 
        : 
    M_\kappa(G, K_r; R) 
        \to 
    M_\kappa((J_0', h_0); R)\,,
\]
where the modular forms on $(J_0', h_0)$ are defined with respect to an appropriate level subgroup.

As in \cite[Section 3.2.4]{EHLS}, we say that $(J_0', h_0)$ is \emph{ordinary} if at the level of points of moduli problems, the image of $\Sh(J_0', h_0) \to \Sh(G, X)$ only consists of ordinary abelian varieties (with extra structures). In this case, for all $r \geq m \geq 0$, one can similarly define an Igusa variety $\TTT_{r,m}(J_0', h_0)$ as in \eqref{eq:def level r Igusa cover} for $(J_0', h_0)$. 

The embedding of Shimura datum above similarly induces a map $\TTT_{r,m}(J_0', h_0) \to \TTT_{r,m}(G, X)$ on Igusa varieties, with the obvious notation. We write $\VV_{\kappa_p}((G, X), K^p; R)$ for the space of $p$-adic modular forms of weight $\kappa_p$, level $K^p$ and coefficient $R$ associated to $(G, X)$. For the analogous space on $(J_0', h_0)$, we write $\VV_{\kappa_p}((J_0', h_0); R)$ without specifying any level structure (note that there is no need to specify a parabolic subgroup in \eqref{eq:def of scalar valued padic mod forms} as $J_0'$ is a torus). As above, we obtain a restriction map
\[
    \res_{p, J_0', h_0} 
        : 
    \VV_{\kappa_p}((G, X), K^p; R) 
        \to 
    \VV_{\kappa_p}((J_0', h_0); R)\,.
\]

As in Section \ref{subsubsec:classical to padic scalar case}, we obtain embeddings $\Omega_{\kappa, r}$ for both $(G, X)$ and $(J_0', h_0)$. To distinguish both, we write $\Omega_{\kappa, r, G, X}$ (resp. $\Omega_{\kappa, r, J_0', h_0}$) for the map \eqref{eq:def p adification map of scalar valued forms} with respect to $G$ and $X$ (resp. $J_0'$ and $h_0)$). Therefore, by definition, we obtain the following proposition.

\begin{proposition}
    Using the same notation as above, the following hold :
    \begin{enumerate}
        \item The diagram
        \[
        \begin{tikzcd}
            M_\kappa(G, K_r; R)
                \arrow[rr, "\Omega_{\kappa, r, G, X}"]
                \arrow[d, "\res_{J_0', h_0}"] & &
            \VV_{\kappa_p}((G, X), K^p; R)
                \arrow[d, "\res_{p, J_0', h_0}"] \\
            M_\kappa((J_0', h_0); R)
                \arrow[rr, "\Omega_{\kappa, r, J_0', h_0}"] & &
            \VV_{\kappa_p}((J_0', h_0); R)
        \end{tikzcd}
        \]
        is commutative
        \item Let $f \in \VV^{\Pord}_{\kappa_p}((G,X), K^p; R)$. Suppose that $\res_{p, J_0', h_0}(f) = 0$ for every ordinary CM pair $(J_0', h_0)$ mapping to $(G,X)$. Then, $f = 0$.
    \end{enumerate}
\end{proposition}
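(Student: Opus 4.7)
For part (i), the plan is to check the commutativity directly at the level of test objects in the Igusa tower. Given a test object $(\ul{A}, \phi) \in \TTT(J_0', h_0)(S)$ over a $p$-adic $R$-algebra $S$, the embedding of Shimura data $(J_0', h_0) \hookrightarrow (G, X)$ induces a canonical test object $(\ul{A}', \phi')$ in $\TTT(G, X)(S)$, together with a compatible map on the $\LL_r$-torsors $\EE_r$. The construction in Section \ref{subsubsec:classical to padic scalar case} produces from each $(\ul{A}, \phi)$, $(\ul{A}', \phi')$ an element of $\EE_r$ for the respective datum, and by naturality of the Lie functor and the isomorphisms $\Lie(\phi_{m,m,r})$, these elements correspond to one another. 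Hence both $\Omega_{\kappa, r, J_0', h_0}(\res_{J_0', h_0}(f))(\ul{A}, \phi)$ and $\res_{p, J_0', h_0}(\Omega_{\kappa, r, G, X}(f))(\ul{A}, \phi)$ equal $f$ evaluated at this common $\EE_r$-object, giving the commutativity.

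For part (ii), the main ingredient is the density of ordinary CM points in the ordinary locus $\SSS^\ord$ of $\Sh(G, X)$, a classical result (see e.g.\ \cite{Hid04}). More precisely, the plan is to use that the union, over all ordinary CM pairs $(J_0', h_0) \to (G,X)$, of the images in $\TTT(G, X)$ of the Igusa towers $\TTT(J_0', h_0)$ is Zariski dense in $\TTT_{r,m}$ for every $r \geq m$. Granting this, the vanishing hypothesis $\res_{p, J_0', h_0}(f) = 0$ for all such pairs forces $f$ to vanish at a dense set of test objects modulo $p^m$ for every $m$, hence $f = 0$ in $\VV(K^p; R)$.

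The density statement can be verified as follows: the images of the Igusa varieties associated to CM pairs cover all ordinary CM points of $\Sh(G, X)$; by Chai's density theorem for ordinary Hecke orbits (or equivalently, by the Serre--Tate deformation theory at an ordinary point combined with the existence of many CM lifts), the set of ordinary CM points is Zariski dense in $\SSS_m$ for each $m$. Taking étale covers to the Igusa tower preserves density, and passing to $P_H^u(\ZZ_p)$-invariants is harmless since the action is through a covering group. The main obstacle in this step is therefore the careful bookkeeping showing that the images of Igusa varieties of $(J_0', h_0)$ really hit enough geometric points to constitute a dense subset of $\TTT_{r,m}$; this is where the $P$-ordinary hypothesis enters, ensuring via the density and classicality results (cf.\ Conjecture \ref{conj:density conj scalar padic forms} and Conjecture \ref{conj:classicality scalar padic forms}) that $f$ can be approximated $p$-adically by classical $P$-ordinary forms, for which vanishing on a dense set of ordinary CM points forces vanishing on the whole ordinary locus. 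Combining these ingredients then gives $f = 0$.
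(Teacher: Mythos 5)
Your overall strategy matches the paper's: the paper's proof consists of the single remark that the argument is identical to that of \cite[Proposition 3.2.5]{EHLS}, which is exactly the two-step argument you give — part (i) by unwinding the definitions of the maps $\Omega_{\kappa,r,\bullet}$ and the restriction maps on test objects, part (ii) by Zariski density of ordinary CM points in the Igusa tower. Part (i) of your write-up is fine.

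The genuine problem is the last third of your argument for (ii). The density input is purely geometric: the geometric points of $\TTT_{r,m}$ lying over ordinary CM points — together with the full fibres of the Igusa tower above them, which are exhausted by the images of the Igusa towers of the CM pairs $(J_0',h_0)$ themselves — are Zariski dense in the reduced scheme $\TTT_{r,m}$, so a section of the (trivial) coefficient sheaf vanishing at all of them is zero; this is applied level by level and modulo $p^m$ for every $m$, and requires neither the $P$-ordinarity of $f$ nor any approximation of $f$ by classical forms. Your appeal to Conjectures \ref{conj:density conj scalar padic forms} and \ref{conj:classicality scalar padic forms} in order to ``approximate $f$ $p$-adically by classical $P$-ordinary forms'' is therefore not only unnecessary but harmful: it would render the proposition conditional on those conjectures, whereas it is stated and used unconditionally (and the classicality conjecture only concerns very regular weights in any case, so it could not cover a general $f \in \VV^{\Pord}_{\kappa_p}$). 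Delete that detour and conclude directly from the density of the CM fibres, and the proof is correct.
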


\begin{proof}
    The proof is identical to the one of \cite[Proposition 3.2.5]{EHLS}.
\end{proof}

\subsection{$p$-adic modular forms valued in locally algebraic representations} \label{subsec:padic mod forms valued in loc alg reps}

In this section, we introduce a different notion of $p$-adic modular forms by considering non-trivial vector bundles over the Igusa tower. The goal is to develop the necessary material to study Hida theory in the context of $P$-ordinary Hecke algebras acting on $P$-ordinary automorphic representations.

In Section \ref{sec:P-(anti-)ord Hida families}, we use the material discussed here and assume certain results (see Conjectures \ref{conj:indep of weight of padic Hecke alg} and \ref{conj:VCT}), to describe the geometry of $P$-ordinary Hida families of automorphic representations. Furthermore, in Section \ref{subsec:Eis measures and padic L fct}, we again rely on these conjectures to adapt the formalism developed in \cite[Section 7.4]{EHLS} to our situation and construct a $p$-adic $L$-function from the Eisenstein measure of Proposition \ref{prop:existence of Eis measure on V3}.

\subsubsection{Locally algebraic coefficient rings.} \label{subsubsec:locally algebraic coeff rings}
Fix a weight dominant $\kappa$ of $T_{H_0}$ and consider the $P$-parallel lattice $[\kappa]$ passing through $\kappa$ as in \eqref{eq:def [kappa]}. Similarly, fix a $P$-nebentypus $\tau$ of level $r$ and consider the $P$-nebentypus equivalence class $[\tau] = [\tau]_r$ of $\tau$. Fix a $p$-adic ring $R$ as in Section \ref{subsec:scalar valued padic mod forms with respect to P} and further assume that $R$ contains $\incl_p(S_r[\tau])$.

Let $V_\kappa$ and $\MMM_{\tau}$ be the $L_H(\ZZ_p)$-representations over $R$ associated to $\kappa$ and $\tau$ (or equivalently, to $[\kappa]$ and $[\tau]$) respectively, as in Section \ref{subsubsec:padic weights}. We view the $R$-module $\hom_R(\MMM_\tau, V_\kappa)$ as a \emph{locally algebraic} representation of $L_H(\ZZ_p)$.

Let $\ul{[\kappa_p, \tau]}$ denote the trivial vector bundle over $\TTT_{r,m\,/R}$ associated to $\hom_R(\MMM_\tau, V_\kappa)$. We include ``$\kappa_p$'' in the notation here instead of $\kappa$ to emphasize the fact that $V_\kappa$ is viewed as a representation of $L_H(\ZZ_p)$ (and not $L_{H_0}(R)$).

Define
\[
    \VV_{r,m}([\kappa_p, \tau]; R) 
        = 
    H^0 (\TTT_{r,m_{/R}}, \ul{[\kappa_p, \tau]} )
\]
and
\[
    \VV^{\cusp}_{r,m}([\kappa_p, \tau]; R) 
        = 
    H^0 (\TTT_{r,m_{/R}}, \ul{[\kappa_p, \tau]}(-D_{r,m}) )\,.
\]

\begin{definition} \label{def:def padic forms loc alg coeff}
    The space of $p$-adic modular forms (for $G$) of level $K^p$ and coefficient $\hom_R(\MMM_\tau, V_\kappa)$ over $R$ is defined as
    \[
        \VV(K^p, [\kappa_p, \tau]; R)
            =
        \varprojlim_m
        \varinjlim_r
            \VV_{r,m}(
                [\kappa_p, \tau]; R
            )^{P_H^u(\ZZ_p)}
    \]
    and its submodule of $p$-adic cuspidal forms is defined as
    \[
        \VV(K^p, [\kappa_p, \tau]; R)^\cusp
            =
        \varprojlim_m
        \varinjlim_r
            \VV^{\cusp}_{r,m}(
                [\kappa_p, \tau]; R
            )^{P_H^u(\ZZ_p)}\,.
    \]
\end{definition} 

\begin{remark}
    The space $\VV(K^p, [\kappa_p, \tau]; R)$ agrees with $\VV(K^p; R)$ exactly when $\kappa_p$ is a scalar weight and $\tau$ is a character.
\end{remark}

These spaces are again naturally equipped with an action of $L_H(\ZZ_p)$ induced by the action of $P_H(\ZZ_p) \subset H(\ZZ_p)$ on Igusa towers.

We view a $p$-adic modular form $f \in \VV(K^p, [\kappa_p, \tau]; R)$ as vector-valued. More precisely, we view $f$ as a functorial rule such that on each $p$-adic ring $S$ over $R$, a test object $(\ul{A}, \phi)$ is assigned by $f$ to an element of $\hom_S(\MMM_{\tau, S}, V_{\kappa, S})$, where $\ul{A} = (\ul{A_m})_m \in \varprojlim_{m} \SSS_m(S)$ and $\phi = (\phi_{r,m}) \in \varinjlim_m \varprojlim_r \TTT_{r, m}(S)$ with $\phi_{r, m}$ over $\ul{A_m}$ for all $r \gg 0$.

\subsubsection{Classical to $p$-adic modular forms : locally algebraic case.} \label{subsubsec:classical to padic mod forms: vector case}
One can adapt the material of Section \ref{subsubsec:classical to padic scalar case} for vector-valued $p$-adic modular forms as well. Indeed, define
\[
    \VV_{\kappa_p}(K^p, \tau; R) 
        := 
    \{
        f \in V(K^p, [\kappa_p, \tau]; R)
            \mid
        l \cdot f 
            = 
        ((\tau \otimes \rho_{\kappa_p})(l))(f)
    \}\,.
\]

Using the fact that $\Lie(\phi \circ l)^\vee = \tp{l}^{-1} \circ \Lie(\phi)^\vee$, for all $l \in L_H(\ZZ_p)$, as well as the relation \eqref{eq:relation kappa vs kappa p on L H ZZp}, one readily checks that given $f \in M_\kappa(K_r, \tau; R)$, the formula
\begin{equation} \label{eq:def Theta kappa tau}
    \Theta_{\kappa, \tau}(f)(\ul{A}, \phi) 
        :=
    \varprojlim_m f(
        \ul{A}_m, 
        \phi_{m, m, r}, 
        \left( 
            \Lie(\phi_{m, m, r})^\vee, 
            \id 
        \right)
    )
\end{equation}
from \eqref{eq:def classical to padic map} similarly yields an injective map
\[
    \Theta_{\kappa, \tau} 
        : 
    M_\kappa(K_r, \tau; R) 
        \to 
    \VV_{\kappa_p}(K^p, \tau; R)\,.
\]

\subsubsection{$P$-ordinary $p$-adic modular forms : locally algebraic case.} \label{subsubsec:Pord padic mod forms locally algebraic case}
As in Section \ref{subsubsec:Pord padic mod forms scalar case}, for a $p$-adic domain $R$ in which $p$ is nonzero, this action stabilizes the image of $\Omega_{\kappa, \tau}$, and given $f \in M_\kappa(K_r, \tau; R)$, we have
\[
    u_{w, D_w(j)} \Omega_{\kappa, \tau}(f) 
        = 
    \kappa'(t_{w, D_w(j)}) U_{w, D_w(j)} f \,,
\]
where $\kappa' = (\kappa_\norm)_p$ is as in Section \ref{subsec:Pord mod forms}. In other words, these operators agree with the operators denoted $u_{w, D_w(j)}$, for $w \in \Sigma_p$ and $1 \leq j \leq r_w$, from Section \ref{subsec:Pord mod forms}. In particular, the operator $u_{w, D_w(j)}$ on (vector-valued) $p$-adic modular forms only depends on $\kappa$ through the $P$-parallel lattice $[\kappa]$.

Once more, we set
\[
    u_{P,p} 
        =
    \prod_{w \in \Sigma_p}
    \prod_{j=1}^{r_w}
        u_{w, D_w(j)}
    \;\;\;\text{and}\;\;\;
    e_P = e_{P,p}
        =
    \varinjlim_n
        u_{P,p}^{n!}\,,
\]
as operators on $\VV(K^p, [\kappa_p, \tau]; R)$. We define the space of \emph{$P$-ordinary} $p$-adic modular forms with coefficients $\hom_R(\MMM_\tau, V_\kappa)$ over $R$ as
\[
    \VV^{\Pord}(K^p, [\kappa, \tau]; R) := e_P\VV^{\Pord}(K^p, [\kappa, \tau]; R)\,,
\]
and we have similar definitions for $\VV^{\Pord, \cusp}(K^p, [\kappa, \tau]; R)$, $\VV_{\kappa_p}^{\Pord}(K^p, \tau; R)$ and $\VV_{\kappa_p}^{\Pord, \cusp}(K^p, \tau; R)$.

\begin{conjecture} \label{conj:classicality vector padic forms}
    Let $\kappa$ be a very regular dominant weight. Let $\tau$ be a $P$-nebentypus of level $r \geq 1$. The restriction
    \[
        \Omega_{\kappa, \tau}
            : 
        S_\kappa^\Pord(K_r, \tau; R) 
            \to 
        \VV^{\Pord, \cusp}_{\kappa_p}(K^p, \tau; R)
    \]
    of $\Omega_{\kappa, \tau}$ to $P$-ordinary cusp forms is an isomorphism.
\end{conjecture}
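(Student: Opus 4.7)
The map $\Omega_{\kappa, \tau}$ is injective by the density of the ordinary locus $\level{K_r}{\SSS}_{/(\ZZ/p\ZZ)}$ in the special fiber $\level{K_r}{\ol{\Sh}}_{/(\ZZ/p\ZZ)}$, a density argument which goes through in the vector-valued setting for exactly the same reason as in the scalar case (see Section \ref{subsubsec:classical to padic scalar case}). The nontrivial content is therefore surjectivity, and the strategy is to reduce the vector-valued statement to Conjecture \ref{conj:classicality scalar padic forms} via Frobenius reciprocity applied to the type $\tau$.

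Since $\tau$ is an irreducible smooth representation of the finite group $L_H(\ZZ_p/p^r\ZZ_p)$, choose any character $\psi_B$ of $T_H(\ZZ_p)$ appearing in $\MMM_\tau|_{T_H(\ZZ_p)}$, enlarging $r$ if necessary to some $r' \geq r$ so that $\psi_B$ factors through $T_H(\ZZ_p/p^{r'}\ZZ_p)$. Frobenius reciprocity yields an $L_H(\ZZ_p)$-equivariant embedding $\MMM_\tau \hookrightarrow \Ind_{T_H(\ZZ_p)}^{L_H(\ZZ_p)} \psi_B$, which globalizes to $L_H(\ZZ_p)$-equivariant injections
\[
    j^{\mathrm{cl}}:
    S_\kappa(K_{r'}, \tau; R)
    \hookrightarrow
    \Ind_{T_H(\ZZ_p)}^{L_H(\ZZ_p)} S_\kappa(K_{r'}, \psi_B; R)
\]
and
\[
    j^{p}:
    \VV^{\cusp}_{\kappa_p}(K^p, \tau; R)
    \hookrightarrow
    \Ind_{T_H(\ZZ_p)}^{L_H(\ZZ_p)} \VV^{\cusp}_{\kappa_p}(K^p, \psi_B; R).
\]
Crucially, both injections commute with the Hecke operators $u_{w, D_w(j)}$: these depend only on the $P$-parallel class $[\kappa]$ (Section \ref{subsubsec:Pord padic mod forms locally algebraic case}) and commute with $L_H(\ZZ_p)$ because the matrices $t_{w, D_w(j)}^{\pm}$ lie in the center $Z_{L_P}(\QQ_p)$. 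They are also intertwined with $\Omega_{\kappa, \tau}$ and with $\Omega_{\kappa, r', \psi_B}$ applied componentwise in the induced space.

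Applying Conjecture \ref{conj:classicality scalar padic forms} to $\kappa$ and $\psi_B$ yields that $\Omega^{\Pord}_{\kappa, r', \psi_B}$ is an isomorphism, and hence so is its induction from $T_H(\ZZ_p)$ to $L_H(\ZZ_p)$. Since $\MMM_\tau$ appears as a direct summand of $\Ind_{T_H(\ZZ_p)}^{L_H(\ZZ_p)} \psi_B$ cut out by an idempotent $e_\tau$ in the group algebra of $L_H(\ZZ_p/p^{r'}\ZZ_p)$, applying $e_\tau$ on both sides and then descending from level $K_{r'}$ back to level $K_r$, using the normality of $I_{r'}$ in $I_r^0$ together with the $K_r$-invariance built into forms of type $\tau$, produces the desired isomorphism $\Omega^{\Pord}_{\kappa, \tau}$.

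The main obstacle is the careful verification that the image of $j^{\mathrm{cl}}$ matches, under the induced scalar classicality isomorphism, the image of $j^{p}$. This amounts to the $L_H(\ZZ_p)$-equivariance of the classical-to-$p$-adic map $\Omega_\kappa$, which follows from the identity $\Lie(\phi \circ l)^\vee = \tp{l}^{-1} \circ \Lie(\phi)^\vee$ exploited in Section \ref{subsubsec:classical to padic mod forms: vector case}, combined with the fact that the $L_H(\ZZ_p)$-action on classical forms via change of $P$-Iwahoric level structure agrees with the Galois action on the Igusa tower. Tracking these two actions coherently through both inductions is the only delicate point of the argument; once checked, the scalar conjecture delivers the vector-valued conjecture.
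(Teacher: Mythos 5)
The first thing to say is that the paper does not prove this statement: it is deliberately left as Conjecture \ref{conj:classicality vector padic forms}, one of the ``standard conjectures of $P$-ordinary Hida theory'' that the author explicitly defers to a later paper (by adapting the methods of \cite{Pil12} to unitary groups). So there is no proof in the paper to compare yours against; what you have written is a proposed reduction of the vector-valued conjecture to the scalar-valued Conjecture \ref{conj:classicality scalar padic forms}, which is itself unproven here. Even granting that, the reduction as written has a genuine gap.

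The fatal step is ``$\MMM_\tau$ appears as a direct summand of $\Ind_{T_H(\ZZ_p)}^{L_H(\ZZ_p)}\psi_B$ cut out by an idempotent $e_\tau$ in the group algebra of $L_H(\ZZ_p/p^{r'}\ZZ_p)$.'' The conjecture is a statement over a $p$-adic ring $R$, and $p$ divides the order of $\LL_{r'} = L_H(\ZZ_p/p^{r'}\ZZ_p)$ (already for $r'=1$ when some block has size $\geq 2$, and always for $r' \geq 2$). Hence $R[\LL_{r'}]$ is not semisimple, the isotypic idempotents do not exist integrally, and the Frobenius-reciprocity splitting $\Ind_{T_H}^{L_H}\psi_B \cong \MMM_\tau \oplus (\cdots)$ you rely on is only valid after inverting $p$. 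This is precisely the integrality phenomenon the paper flags around \eqref{eq:detecting congruences}: the direct sum $\bigoplus_{\tau'} M_\kappa(K_r,\tau';R)$ is a \emph{proper} sublattice of $M_\kappa(K_r,\MMM;R)$ because of mod-$p$ congruences between isotypic pieces. Your argument would therefore at best prove the conjecture after tensoring with $R[1/p]$, which is not what is asserted. Two further points are underjustified: the $T_H(\ZZ_p)$-eigenspace $S_\kappa(K_{r'},\psi_B;R)$ is not stable under $\LL_{r'}$, so ``applying $e_\tau$ on both sides'' of the induced scalar isomorphism is not a well-defined operation on the spaces you have built; and the final descent from level $K_{r'}$ back to $K_r$ requires a trace map whose behaviour on the $P$-ordinary part (injectivity/surjectivity between the two levels) is exactly the kind of control-theorem input one cannot take for granted — it is part of what the conjecture is meant to encode. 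The injectivity half of your argument is fine, and the observation that the $u_{w,D_w(j)}$ commute with $L_H(\ZZ_p)$ because $t_{w,D_w(j)}^{\pm}$ is central in $L_P$ is correct; but surjectivity over $R$ does not follow from the scalar case by the representation-theoretic formalities you invoke.
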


\subsection{Hecke operators on $p$-adic modular forms.} \label{subsec:Hecke ops on padic mod forms}
Given $g \in G(\AA_f^p)$, the double coset $T(g) := [KgK]$ natural acts on the space of $p$-adic modular forms $\VV^{\cusp}(K_r; R)$. Namely, one easily adapts \eqref{eq:def Hecke op Tg level Kp} for test objects on the Igusa tower instead of the classical Shimura variety. 

Given $f \in M_\kappa(K_r; R)$, we obviously have $T(g) \Omega_{\kappa, r}(f) = \Omega_{\kappa, r}(T(g)f)$. Moreover, this extends to define an operator $T(g)$ on the spaces $\VV^{\cusp}(K_r; R)$, $\VV(K^p, [\kappa_p, \tau]; R)$ and $\VV^\cusp(K^p, [\kappa_p, \tau]; R)$.

Furthermore, given a matrix $t$ in the center $Z_P$ of $L_H(\ZZ_p)$ and $f \in M_\kappa(K_r, \tau; R)$,
\[
    t \cdot \Omega_{\kappa, \tau}(f)
        =
    \kappa'(t) \w_{\tau}(t) f\,,
\]
where $\w_\tau$ is the central character of $\tau$. More generally, $t \cdot \Theta_{\kappa, \tau}(f) = \kappa'(t) (t \cdot f)$, for all $f \in M_\kappa(K_r, [\tau]; R)$.

Namely, we can again view the operator $u_p(t) = u_{p, \kappa}(t)$ introduced in Section \ref{subsec:Pord mod forms} as an endomorphism of $\VV(K^p, [\kappa_p, \tau]; R)$ via the natural action of $P_H^u(\ZZ_p) t P_H^u(\ZZ_p) = tP_H^u(\ZZ_p)$.

In Section \ref{sec:P-(anti-)ord Hida families}, we study the Hecke algebras generated by the operators above and the endomorphisms $u_{w, D_w(j)}$, for $w \in \Sigma_p$ and $1 \leq j \leq r_w$. We use the compatibility between these endomorphisms on classical forms and on $p$-adic forms on several occasions implicitly.


\part{Families of $P$-(anti-)ordinary automorphic representations.} \label{part:families of P(a)ord aut reps}

\section{Structure at $p$ of $P$-(anti-)ordinary automorphic representations.} \label{sec:structure at p of P(a-)ord aut reps}

The main results of this section are Theorems \ref{thm:canonical Pword vec of type tau w for G1} and \ref{thm:canonical Pwaord vec of type tau w for G1}. The idea is to describe the space of $P$-ordinary vectors and $P$-anti-ordinary vectors via types.

We first study the case of $G = G_1$. Then, taking into account the conventions set in Section \ref{subsubsec:comp of lvl structures}, all statements are adapted for $G_2$ in Sections \ref{subsec:Pord theory on G2} and \ref{subsec:Paord theory on G2}.

\subsection{$P$-ordinary theory on $G_1$.} \label{subsec:Pord theory on G1}
In what follows, we use the notation of Section \ref{subsec:Paord automorphic representations} freely. In particular, we work with a cuspidal automorphic representation $\pi$ for $G = G_1$ and write $\pi_p = \mu_p \otimes \left( \otimes_{w \in \Sigma_p} \pi_w \right)$ for its $p$-component. 

Assume that $\pi$ is holomorphic and that its weight $\kappa$ satisfies the inequality :
\begin{equation} \label{eq:ineq kappa sigma}
    \kappa_{\sigma, b_\sigma} + \kappa_{\sigma c, a_\sigma} \geq n, \forall \sigma \in \Sigma_\KK \,.
\end{equation}

\subsubsection{Explicit coset representatives} \label{subsubsec:explicit coset representatives}
To clarify arguments in later proofs, we now describe explicit right coset representatives for $U_{w,D_w(j)}^{\GL} = [I_{w,r} t_{w, D_w(j)} I_{w,r}]$. For simplicity, we only compute the right coset representatives when $j \leq t_w$. The same conclusion applies for $j > t_w$ but writing down the matrices is simply more cumbersome. The reader should keep in mind that $t_w$ only denotes an integer while $t_{w, D_w(j)}$ denotes an element of $\GL_n(\OO_w)$, see Remark \ref{rmk:t w vs t w i}.

Fix $j \leq t_w$ and write $i = D_w(j)$ (making the dependence on $j$ implicit). Fix a uniformizer $\varpi \in \p_w$. Given any matrix $X \in I_{w,r}$, write it as
\[
	X = \begin{pmatrix}
		A & B  \\
		\varpi^r C & D
	\end{pmatrix}
\]
where $A \in \GL_{i}(\OO_w)$, $D \in \GL_{i}(\OO_w)$ and $B \in M_{i \times (n-i)}(\OO_w)$ and $C \in M_{(n-i) \times i}(\OO_w)$.

Fix a set $S_w$ of representatives in $\OO_w$ for $\OO_w/p\OO_w$. Let $B', B'' \in M_{i \times (n-i)}(\OO_w)$ be the unique matrices such that $B'$ has entries in $S_w$ and $BD^{-1} = B' + p B''$. Then, we have
\[
	X = 
	\begin{pmatrix}
		1_j & B' \\
		0 & 1_{n-j}
	\end{pmatrix}
	\begin{pmatrix}
		A - \varpi^r B'C & p B''D \\
		\varpi^r C' & D
	\end{pmatrix} =: X' X''
\] 

In particular, $t_{w,i}^{-1}X''t_{w,i}$ is in $I_{w,r}$. Therefore,
\[
	I_{w,r} t_{w,i} I_{w,r} = \bigsqcup_{x \in M_j} xt_{w,i} I_{w,r}
\]
where $M_j \subset \GL_n(\KK_w)$ is the subset of matrices $\begin{pmatrix} 1_{i} & B' \\ 0 & 1_{n-i}\end{pmatrix}$ such that the entries of $B'$ are in $S_w$.

In particular, this set of representative does not depend on $r$ and one obtains the same result by replacing $I_{w,r}$ with $N_w = \cap_r I_{w,r} = P_w^u(\KK_w) \cap \GL_n(\OO_w)$. As mentioned above, one readily sees that the calculations above still apply for $t_w < j \leq r_w$.

Let $V_w$ be the $\KK_w$-vector space associated to $\pi_w$. By continuity, its $N_w$-invariant subspace $V_w^{N_w}$ is equal to $\cup_r V_w^{I_{w,r}}$. 

\begin{lemma} \label{lma:explicit U_j action on N invariants}
	There is a decomposition $V_w^{N_w} = V^{N_w}_{w, \inv} \oplus V^{N_w}_{w, \nil}$ such that, for $1 \leq j \leq r_w$, $U_{w, D_w(j)}^{\GL}$ is invertible on $V^{N_w}_{w, \inv}$ and nilpotent on $V^{N_w}_{w, \nil}$. Moreover, $U_{w, D_w(j)}^{\GL} = I_{w,r} t_{w, D_w(j)} I_{w,r}$ acts as $\delta_{P_w}(t_{D_w(j)})^{-1} t_{D_w(j)}$ on $V_{w, \inv}^{N_w}$.
\end{lemma}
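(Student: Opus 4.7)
The plan is to combine Fitting's lemma with Casselman's theory of Jacquet modules, using the explicit coset representatives established in Section~\ref{subsubsec:explicit coset representatives}. Since $\pi_w$ is admissible and $N_w = \bigcap_{r \geq 1} I_{w,r}$, the space $V_w^{N_w} = \bigcup_{r \geq 1} V_w^{I_{w,r}}$ is a direct limit of finite-dimensional subspaces, each stable under the commuting family $\{U_{w,D_w(j)}^{\GL}\}_{j=1,\ldots, r_w}$. Applying Fitting's lemma to the product $u_{P,w,0} := \prod_{j=1}^{r_w} U_{w,D_w(j)}^{\GL}$ on each $V_w^{I_{w,r}}$ splits it into its maximal $u_{P,w,0}$-invertible and $u_{P,w,0}$-locally-nilpotent parts. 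These decompositions are compatible with the inclusions as $r$ grows, and assemble into the direct sum decomposition $V_w^{N_w} = V_{w,\inv}^{N_w} \oplus V_{w,\nil}^{N_w}$. Because each $U_{w,D_w(j)}^{\GL}$ commutes with $u_{P,w,0}$, each of them preserves this decomposition; invertibility on $V_{w,\inv}^{N_w}$ and nilpotence on $V_{w,\nil}^{N_w}$ of the individual factors will then follow once the explicit formula below is established.

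Next, I would identify $V_{w,\inv}^{N_w}$ with a subspace of the unnormalized Jacquet module $J(\pi_w) := \pi_w / \pi_w(P_w^u)$ via the canonical projection, which by Casselman's theorem restricts to an isomorphism on $V_{w,\inv}^{N_w}$ with kernel $V_{w,\nil}^{N_w}$. Using the coset decomposition from Section~\ref{subsubsec:explicit coset representatives}, for any $v \in V_w^{N_w}$ one has
\[
    U_{w,D_w(j)}^{\GL} v \;=\; \sum_{x \in M_j} x \, t_{w,D_w(j)}\, v.
\]
Since $M_j \subset P_w^u$, every summand reduces to $t_{w,D_w(j)} v$ modulo $\pi_w(P_w^u)$, so the image of $U_{w,D_w(j)}^{\GL} v$ in $J(\pi_w)$ is $|M_j| \cdot t_{w,D_w(j)} v$. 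Transporting this identity back via Casselman's isomorphism yields the operator equality $U_{w,D_w(j)}^{\GL} = |M_j| \cdot t_{w,D_w(j)}$ on $V_{w,\inv}^{N_w}$; since $t_{w,D_w(j)}$ acts invertibly on $J(\pi_w)$ (it is central in its Levi), this simultaneously confirms invertibility of each $U_{w,D_w(j)}^{\GL}$ on the invertible part.

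The final step is the combinatorial identity $|M_j| = \delta_{P_w}(t_{w,D_w(j)})^{-1}$. Writing $i = D_w(j)$, the set $M_j$ has cardinality $q^{i(n-i)}$, where $q = |\OO_w/\p_w|$. A direct evaluation of $\delta_{P_w}$ via the formula of Section~\ref{subsubsec:Parabolic inductions}, after decomposing $t_{w,D_w(j)}$ according to the block partition underlying $P_w$, gives $\delta_{P_w}(t_{w,D_w(j)}) = q^{-i(n-i)}$ (the cross-terms from the sums over the partition blocks telescope to $-i(n-i)$). Both ranges $j \leq t_w$ and $j > t_w$ of \eqref{eq:def t w j} are handled identically, the coset-representative analysis being symmetric in the two blocks. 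The main technical subtlety is ensuring that the Fitting decomposition coincides with the canonical splitting supplied by Casselman's theorem so that the identity valid modulo $\pi_w(P_w^u)$ lifts to a genuine operator equality on $V_{w,\inv}^{N_w}$; this follows from the characterization of $V_{w,\inv}^{N_w}$ as the maximal subspace on which $u_{P,w,0}$ is invertible, combined with the fact that the invertible part maps isomorphically onto its image in $J(\pi_w)$.
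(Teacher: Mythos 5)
Your argument follows the same route as the paper's: a Fitting-type decomposition for the first claim, the identification of $V_{w,\inv}^{N_w}$ with the (unnormalized) Jacquet module, the coset representatives $M_j$ from Section \ref{subsubsec:explicit coset representatives} to compute the action on the Jacquet module, and the count $|M_j| = |p|_w^{-i(n-i)} = \delta_{P_w}(t_{w,i})^{-1}$. The ``Moreover'' clause — which is the part the paper actually proves rather than cites — is handled correctly and identically (your reduction $x\,t_{w,D_w(j)}v \equiv t_{w,D_w(j)}v$ using $x \in P_w^u$ is the same observation as the paper's conjugation $t^{-1}xt \in P_w^u$).

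One step, however, does not follow as you claim. You define $V_{w,\inv}^{N_w} \oplus V_{w,\nil}^{N_w}$ by applying Fitting's lemma to the single product $u_{P,w,0} = \prod_j U_{w,D_w(j)}^{\GL}$ and assert that nilpotence of each \emph{individual} factor on $V_{w,\nil}^{N_w}$ ``will follow once the explicit formula is established.'' The explicit formula lives entirely on $V_{w,\inv}^{N_w}$ and says nothing about $V_{w,\nil}^{N_w}$; and nilpotence of a product of commuting operators does not imply nilpotence of each factor (e.g.\ $\diag(0,1)$ and $\diag(1,0)$). The genuine content here is that $V_{w,\nil}^{N_w}$ coincides with $V_w^{N_w} \cap V_w(N_w)$ and that on this subspace \emph{every} $U_{w,D_w(j)}^{\GL}$ is nilpotent — which requires Jacquet's lemma applied to each $t_{w,D_w(j)}$ separately (each of these is only dominant for the corresponding maximal parabolic, so the relevant kernels $V_w(N_{w,j})$ are a priori incomparable). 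This is exactly what the paper outsources to \cite[Section 5.2]{Hid98}; your write-up should either do the same or supply that argument rather than derive it from the formula on the invertible part.
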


\begin{proof}
    We keep writing $i = D_w(j)$ in this proof and omit the subscript $w$ in what follows. 
    
    The first part is a consequence of the explanations in \cite[Section 5.2]{Hid98}. Moreover, \cite[Proposition 5.1]{Hid98} shows that the natural projection from $V$ to its $P$-Jacquet module $V_P$ induces an isomorphism $V_{\inv}^N \cong V_P$ that is equivariant for the action of all the $U^{\GL}_i$ operators.
    
    From our explicit computations above, it is clear that $U^{\GL}_i$ acts on $V_P$ via $|M_j|t_{i}$, where $|M_j|$ is the cardinality of $M_j$. To see this, simply note that given any $x \in M_j$, $t_{i}^{-1} x t_{i} \in P_w^u(\KK_w)$ fixes $V_P$. Therefore, the result follows since $M_j$ contains exactly $|p|_w^{-i(n-i)} = \delta_P(t_i)^{-1}$ elements.
\end{proof}

It is clear from Lemma \ref{lma:explicit U_j action on N invariants} that any $P_w$-ordinary vector $\phi \in V_w^{N_w}$ lies in $V^{N_w}_{w,\inv}$ and $\pi_w(t_{w, D_w(j)})$ acts on $\phi$ via multiplication by
\begin{equation} \label{eq:s_j eigenvalue} 
    \kappa'(t^{-1}_{w, D_w(j)})
    \delta_{P_w}(t_{w, D_w(j)})
    c_{w, D_w(j)}
    \,,
\end{equation}
where $c_{w, D_w(j)}$ is its $u^{\GL}_{w,D_w(j)}$-eigenvalue (a $p$-adic unit), and $\kappa' = (\kappa_{\norm})_p$ is related to $\kappa$ as in Section \ref{subsec:Pord mod forms}. In particular, $\phi$ is a simultaneous eigenvector under the action of $\pi_w$ for all matrices $t_{w,D_w(j)} \in G(\QQ_p)$.

\subsubsection{Bernstein-Zelevinsky geometric lemma for $P_w$-ordinary representations.} \label{subsubsec:BZ geom lemma for Pword rep}
In Section \ref{subsubsec:strucutre thm for Pord rep of G1}, we obtain results about the structure of the $P_w$-ordinary subspace of $\pi_w$ via its relation to its $P_w$-Jacquet module, see the proof of Lemma \ref{lma:explicit U_j action on N invariants}. To understand further the $P_w$-Jacquet module of $\pi_w$, we use a version of the Bernstein-Zelevinsky geometric lemma (see \cite[Section VI.5.1]{Ren10} or \cite[Theorem 6.3.5]{Cas95}) that is adapted to our setting, see Lemma \ref{lma:P-Jacquet filtration}. However, we first need to introduce some notation.

\begin{lemma}\label{lma:Jacquet Lemma}
    Let $\pi_w$ be a $P_w$-ordinary representation of $G_w$. There exists a parabolic subgroup $Q_w \subset P_w$ of $G_w$ and a supercuspidal representation $\sigma_w$ of $Q$ such that $\pi_w \subset \ind{Q_w}{G_w} \sigma_w$.
\end{lemma}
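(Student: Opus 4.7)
The plan is to combine Jacquet's theorem on supercuspidal support with the Bernstein-Zelevinsky geometric lemma, using the $P_w$-ordinary hypothesis as the selection mechanism that forces the standard parabolic supporting $\pi_w$ to sit inside $P_w$.

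First, by Jacquet's theorem recalled in Section \ref{subsubsec:supercuspidal support}, there exist some parabolic $Q_w' \subset G_w$, with Levi decomposition $Q_w' = L_{Q_w'} \cdot Q_w'^{u}$, and a supercuspidal representation $\sigma_w'$ of $L_{Q_w'}$ such that $\pi_w \subset \ind{Q_w'}{G_w} \sigma_w'$, with $(L_{Q_w'}, \sigma_w')$ determined up to $G_w$-conjugacy. The next step is to certify that the $P_w$-Jacquet module $(\pi_w)_{P_w}$ is non-zero. This follows from the proof of Lemma \ref{lma:explicit U_j action on N invariants}: the canonical projection $V_w^{N_w} \to V_{w, P_w}$ restricts to an isomorphism on $V_{w, \inv}^{N_w}$, which is non-trivial precisely because $\pi_w$ has a $P_w$-ordinary vector.

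The exactness of the Jacquet functor then gives an injection $(\pi_w)_{P_w} \hookrightarrow (\ind{Q_w'}{G_w} \sigma_w')_{P_w}$. I would now apply the Bernstein-Zelevinsky geometric lemma (\cite[Section VI.5.1]{Ren10}) to the right-hand side: it admits a finite filtration indexed by a set of representatives $\{g\}$ of $P_w \backslash G_w / Q_w'$, whose associated graded pieces are, up to a normalizing twist, of the form
\[
\ind{L_{P_w} \cap g Q_w' g^{-1}}{L_{P_w}} \bigl( {}^g \sigma_w' \bigr)\,.
\]
In particular, there must be at least one representative $g_0$ for which $(\pi_w)_{P_w}$ has non-zero image in the corresponding subquotient.

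The key mechanism selecting $g_0$ is the $P_w$-ordinary condition. On $V_{w, \inv}^{N_w}$, the matrices $t_{w, D_w(j)}$ act with prescribed eigenvalues, see \eqref{eq:s_j eigenvalue}, that are $p$-adic units after the normalization implicit in the definition of $u_{w, D_w(j)}^{\GL}$. On the subquotient attached to $g_0$, the same eigenvalues are governed by the central character of ${}^{g_0} \sigma_w'$ together with the modulus character coming from the Bernstein-Zelevinsky normalization, which both depend explicitly on the position of $g_0 L_{Q_w'} g_0^{-1}$ relative to $L_{P_w}$. Under the hypothesis on the supercuspidal support mentioned in the paper's introduction (which rules out accidental cancellations between these two contributions), the requirement that the resulting product be a $p$-adic unit forces $g_0 L_{Q_w'} g_0^{-1} \subset L_{P_w}$, and hence $g_0 Q_w' g_0^{-1} \subset P_w$. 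Setting $Q_w := g_0 Q_w' g_0^{-1}$ and $\sigma_w := {}^{g_0} \sigma_w'$ yields a supercuspidal $\sigma_w$ of $L_{Q_w}$ and an isomorphism $\ind{Q_w'}{G_w} \sigma_w' \xrightarrow{\sim} \ind{Q_w}{G_w} \sigma_w$, producing the desired embedding.

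The main obstacle is the eigenvalue bookkeeping in the penultimate step: one needs to verify carefully that for every representative $g$ with $g Q_w' g^{-1} \not\subset P_w$, no generalized eigenvalue of the $t_{w, D_w(j)}$ on $\ind{L_{P_w} \cap g Q_w' g^{-1}}{L_{P_w}}({}^g \sigma_w')$ (after the relevant modulus twist) can be a $p$-adic unit. Absent the supercuspidal-support hypothesis, a combination of Frobenius numbers of ${}^g \sigma_w'$ and partial modulus factors could conspire to produce a unit, and a more delicate case analysis (which the author defers to future work) would be required; the hypothesis is precisely what makes this matching argument a one-line computation.
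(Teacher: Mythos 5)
Your proof has a genuine gap at its central step. The entire argument hinges on the claim that $P_w$-ordinarity forces the double coset representative $g_0$ to satisfy $g_0 Q_w' g_0^{-1} \subset P_w$, and you justify this by appealing to ``the hypothesis on the supercuspidal support mentioned in the paper's introduction.'' But that hypothesis (Hypothesis \ref{hyp:Qw equals Pw}) is formulated \emph{in terms of the conclusion of this lemma} — it asserts that the parabolic $Q_w$ produced by the lemma equals $P_w$ — and is only imposed afterwards, in Section \ref{subsubsec:strucutre thm for Pord rep of G1}. The lemma itself must hold unconditionally, so you cannot invoke it here; doing so is circular. Moreover, even granting some such hypothesis, the eigenvalue bookkeeping you describe (showing that for every $g$ with $g Q_w' g^{-1} \not\subset P_w$ no generalized eigenvalue on the corresponding graded piece is a unit) is precisely the hard content you would need to supply, and you acknowledge you have not done so. There is also a secondary issue: containment of the conjugated Levi in $L_{P_w}$ would not by itself give containment of the conjugated \emph{parabolic} in $P_w$, nor would it automatically transport the embedding $\pi_w \subset \ind{Q_w'}{G_w}\sigma_w'$ to an embedding into induction from the new parabolic, since parabolic induction depends on the choice of parabolic and not just on its Levi.

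The paper's proof sidesteps all of this with a soft argument that you were one step away from. You correctly observe that $P_w$-ordinarity gives $\r{P_w}{G_w}\pi_w \neq 0$. From there, instead of returning to a supercuspidal support of $\pi_w$ on all of $G_w$ and trying to relocate it inside $P_w$, one notes that $\r{P_w}{G_w}\pi_w$ is admissible and finitely generated, hence admits an irreducible quotient $\rho$; Frobenius reciprocity and the irreducibility of $\pi_w$ then give $\pi_w \subset \ind{P_w}{G_w}\rho$ directly. Applying Jacquet's theorem to $\rho$ \emph{as a representation of the Levi $L_w$} produces a parabolic $Q_L \subset L_w$ and a supercuspidal $\sigma_w$ with $\rho \subset \ind{Q_L}{L_w}\sigma_w$, and transitivity of parabolic induction yields the desired embedding with $Q_w \subset P_w$ by construction. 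No geometric lemma, no eigenvalue analysis, and no auxiliary hypothesis are needed.
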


\begin{proof}
    The following is a minor modification of the proof of a theorem of Jacquet, see \cite[Theorem 5.1.2]{Cas95}. We omit the subscript $w$ to lighten the notation.
	
    The fact that $\pi$ is $P$-ordinary implies that $\r{P}{G} \pi \neq 0$. By \cite[Theorem 3.3.1]{Cas95}, the latter is both admissible and finitely generated so it admits an irreducible admissible quotient $\rho$ as a representation of $L$. 
	
    By Frobenius reciprocity \cite[Theorem 2.4.1]{Cas95} and the irreducibility of $\pi$, it follows that $\pi \subset \ind{P}{G} \rho$. Then, it is a theorem of Jacquet \cite[Theorem 5.1.2]{Cas95} that there exists a parabolic $Q_L \subset L$ and a supercuspidal representation $\sigma$ of its Levi factor such that $\rho \subset \ind{Q_L}{L} \sigma$. By transitivity of parabolic induction, the result follows.
\end{proof}

Fix an embedding $\pi_w \hookrightarrow \ind{Q_w}{G_w} \sigma_w$ with the notation as in Lemma \ref{lma:Jacquet Lemma}. Let $M_w$ and $Q_w^u$ denote the Levi factor and unipotent radical of $Q_w$. 

Moreover, let $B_w$ denote the Borel subgroup of $G_w$ corresponding to the trivial partitions, as in Remark \ref{rmk:trivial partition}. Let $T_w$ denote the Levi factor of $B_w$. In particular, $T_w$ is the maximal torus of $G_w$.

Let $W$ be the Weyl group of $G_w$ with respect to $(B_w, T_w)$ and consider
\[
    W(P_w, Q_w) = 
        \{x \in W \mid 
            x^{-1}(L_w \cap B_w)x
                \subset B_w, 
            x(M_w \cap B_w)x^{-1}  
                \subset B_w
        \} \ .
\]

According to \cite[Section V.4.7]{Ren10}, for each $x \in W(P_w, Q_w)$,  $xP_wx^{-1} \cap M_w$ is a parabolic subgroup of $M_w$ with Levi factor equal to $xL_wx^{-1} \cap M_w$. Similarly, the Levi factor of the parabolic subgroup $L_w \cap x^{-1}Q_wx \subset L_w$ is $L_w \cap x^{-1}M_wx$.

Denote the natural \emph{conjugation-by-$x$} functor that sends a representation of $xLx^{-1} \cap M_w$ to a representation of $L_w \cap x^{-1}M_wx$ by $(\cdot)^x$. Moreover, let $W(L_w, M_w)$ be the subset of $x \in W(P_w, Q_w)$ such that $xL_wx^{-1} \cap M_w = M_w$, and so $L_w \cap x^{-1}M_wx = x^{-1}M_wx$. Note that this does not imply that $L_w \cap x^{-1}Q_wx$ is equal to $x^{-1}Q_wx$ but rather that its Levi subgroup is $x^{-1}M_wx$.

The following is a version of \cite[Theorem 6.3.5]{Cas95} that is adapted to our setting and notation.

\begin{lemma} \label{lma:P-Jacquet filtration}
    Let $Q_w \subset P_w$ denote standard parabolic subgroups of $G_w$ as above and let $\sigma_w$ be an irreducible supercuspidal representation of $M_w$.
    
    There exists a filtration, indexed by $W(L_w, M_w)$, of $\r{P_w}{G_w} \ind{Q_w}{G_w} \sigma_w$ as a representation of $L_w$ such that the subquotient corresponding to $x \in W_{L_w}$ is isomorphic to $\ind{L_w \cap x^{-1}Q_wx}{L_w} \sigma_w^x$. One can order the filtration so that subquotient corresponding to $x=1$ is a subrepresentation.
\end{lemma}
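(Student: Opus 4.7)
The plan is to apply the Bernstein-Zelevinsky geometric lemma, in the form of \cite[Section VI.5.1]{Ren10} or \cite[Theorem 6.3.5]{Cas95}, directly to $\r{P_w}{G_w} \ind{Q_w}{G_w} \sigma_w$ and then cut down the resulting filtration using the supercuspidality of $\sigma_w$. In its standard form, the lemma produces a filtration indexed by the full set $W(P_w, Q_w)$ of reduced double coset representatives for $P_w \backslash G_w / Q_w$, with the subquotient at $x$ isomorphic to
\[
    \ind{L_w \cap x^{-1}Q_wx}{L_w}
    \left(
        \left(
            \r{xP_wx^{-1} \cap M_w}{M_w} \sigma_w
        \right)^x
    \right)\,.
\]

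The key observation will be that $xP_wx^{-1} \cap M_w$ is a parabolic subgroup of $M_w$ with Levi factor $xL_wx^{-1} \cap M_w$, and this parabolic is equal to all of $M_w$ precisely when $x \in W(L_w, M_w)$, by the very definition of the latter set. For any other $x \in W(P_w, Q_w)$, it is a proper parabolic of $M_w$, so its Jacquet functor annihilates the supercuspidal $\sigma_w$ and the associated subquotient vanishes. Discarding those zero contributions leaves a filtration indexed by $W(L_w, M_w)$ whose surviving subquotient at $x$ simplifies to $\ind{L_w \cap x^{-1}Q_wx}{L_w} \sigma_w^x$, as claimed.

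For the ordering assertion, I would appeal to the standard geometric construction of this filtration, in which subquotients are attached to the $P_w$-orbits on $G_w/Q_w$ and ordered by Bruhat closure. Because $Q_w \subset P_w$, the double coset $P_w \cdot 1 \cdot Q_w = P_w$ is the closed stratum of minimal dimension, so its subquotient sits at the bottom of the filtration as a subrepresentation; moreover $1 \in W(L_w, M_w)$, since $Q_w \subset P_w$ forces $M_w \subset L_w$. The only point that will require some care is matching conventions (left versus right cosets, direction of conjugation) between the chosen reference and the paper's notation, but both \cite{Ren10} and \cite{Cas95} treat this case explicitly, so no substantive obstacle is anticipated.
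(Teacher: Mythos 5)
Your argument for the existence of the filtration and the identification of its nonzero graded pieces is exactly the paper's: both apply the Bernstein--Zelevinsky geometric lemma over $W(P_w,Q_w)$, observe that $xP_wx^{-1}\cap M_w$ is a parabolic subgroup of $M_w$ with Levi $xL_wx^{-1}\cap M_w$, and use supercuspidality of $\sigma_w$ to discard every $x\notin W(L_w,M_w)$, after which the surviving pieces simplify as you state.

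The weak point is your justification of the ordering assertion. In the support filtration of $\ind{Q_w}{G_w}\sigma_w$ by $P_w$-orbits on $G_w/Q_w$, sections supported on an \emph{open} union of orbits form a subrepresentation, while restriction to a \emph{closed} orbit is a quotient map. Since $Q_w\subset P_w$, the orbit $P_wQ_w/Q_w=P_w/Q_w$ is indeed the closed stratum of minimal dimension, but for that very reason the $x=1$ constituent is canonically the \emph{top quotient} of $\r{P_w}{G_w}\ind{Q_w}{G_w}\sigma_w$, realized by $f\mapsto f(1)$, i.e. by the counit of the adjunction between $\r{P_w}{G_w}$ and $\ind{P_w}{G_w}$; it is the open (big-cell) stratum that yields the canonical subrepresentation. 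So the inference ``closed stratum $\Rightarrow$ bottom of the filtration $\Rightarrow$ subrepresentation'' does not go through as written. The paper sidesteps this by citing \cite[Section VI.5.1]{Ren10} and \cite[Theorem 6.3.5]{Cas95} for the ordering without further argument; note also that in the downstream application (the proof of Theorem \ref{thm:canonical Pword vec of type tau w for G1}) what is actually used is the evaluation-at-$1$ map onto the $x=1$ constituent, for which the quotient description already suffices. If you want to keep a geometric justification, you must either swap sub and quotient in your reasoning or verify carefully that the convention of the reference you invoke really delivers the $x=1$ piece as a submodule.
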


\begin{proof}
    As in the previous proofs, we drop the subscript $w$ below.
    
    The Bernstein-Zelevinsky geometric lemma (see \cite[Section VI.5.1]{Ren10}) states that there exits a filtration of $\r{P}{G} \ind{Q}{G} \sigma$ such that the corresponding graded pieces are isomorphic to
    \[
	\ind{L \cap x^{-1}Qx}{L} \left( \r{xPx^{-1} \cap M}{M} \sigma \right)^x
    \]
    as $x$ runs over all elements of $W(P,Q)$. Moreover, one can order the filtration so that the factor corresponding to $\sigma$ (i.e. the graded piece corresponding to $x = 1$) is a subrepresentation of $\r{P}{G} \ind{Q}{G} \sigma$.

    Since $\sigma$ is supercuspidal, the graded piece corresponding to $x \in W(P,Q)$ is nonzero if and only if $xLx^{-1} \cap M = M$, i.e. $x \in W(L, M)$. For such an $x$, the graded piece is clearly isomorphic to $\ind{L \cap x^{-1}Qx}{L} \sigma^x$.
\end{proof}

\subsubsection{Structure theorem for $P$-ordinary representations of $G_1$.} \label{subsubsec:strucutre thm for Pord rep of G1}
For simplicity, we assume that $\pi_p$ satisfies the following hypothesis :
\begin{hypothesis} \label{hyp:Qw equals Pw}
    The parabolic subgroup $Q_w$ for $\pi_w$ from Lemma \ref{lma:Jacquet Lemma} is equal to $P_w$ for all $w \in \Sigma_p$. In particular $\sigma_w$ is a supercuspidal representation of $L_w$.
\end{hypothesis}

\begin{remark} \label{rmk:sc support assumption}
    This hypothesis is certainly restrictive in our context. For instance, if $\pi_p$ is $B$-ordinary, then Lemma \ref{lma:Jacquet Lemma} implies that all local factors $\pi_w$ lie in a principal series. Furthermore, if $\pi_p$ is $B$-ordinary (i.e. \emph{ordinary} in the usual sense) then it follows immediately from our definitions that it is also $P$-ordinary. Therefore, the case $Q_w \neq P_w$ can certainly occurs.
    
    One can argue that this is not a major issue since in the situation above, if $\pi_p$ is $B$-ordinary than there is little interest in considering its structure as a $P$-ordinary representation. One only obtains less information this way. However, if $\pi_p$ is a general $P$-ordinary representation whose local factors $\pi_w$ lie in a principal series, it is not necessarily true that $\pi_p$ is also $B$-ordinary. In general, if $\pi_p$ is $P$-ordinary and the supercuspidal support of all $\pi_w$ is $Q_w$, then $\pi_p$ might not be $Q$-ordinary, where $Q = \prod_w Q_w$. Therefore, the hypothesis above restricts us to study certain $P$-ordinary representations that are not $Q$-ordinary with respect to any smaller parabolic $B \subset Q \subsetneq P$.
    
    In subsequent work, the author plans to adapt the proof Theorem \ref{thm:canonical Pword vec of type tau w for G1} to remove this hypothesis.
\end{remark}

\begin{theorem} \label{thm:canonical Pword vec of type tau w for G1}
    Let $\pi$ be a holomorphic $P$-ordinary representation as above satisfying Hypothesis \ref{hyp:Qw equals Pw} such that its weight $\kappa$ satisfies Inequality \eqref{eq:ineq kappa sigma}. Let $\pi_w \subset \ind{P_w}{G_w} \sigma_w$ be its component at $w \in \Sigma_p$ as above, a $P_w$-ordinary representation.
    \begin{enumerate}
        \item[(i)] For $r \gg 0$, let $\phi, \phi' \in \pi_w^{I_r}$ be $P_w$-ordinary vectors. Let $\varphi$ and $\varphi'$ be their respective image in $\ind{P_w}{G_w} \sigma_w$. If $\phi \neq \phi'$, then $\varphi(1) \neq \varphi'(1)$.
        \item[(ii)] For $r \gg 0$, let $\phi \in \pi_w^{I_r}$ be a simultaneous eigenvector for the $u_{w, D_w(j)}$-operators that is not $P_w$-ordinary. Let $\varphi$ be its image in $\ind{P_w}{G_w} \sigma_w$. Then, $\varphi(1) = 0$.
        \item[(iii)] Let $\tau_w$ be a smooth irreducible representation of $L_w(\OO_w)$. Assume there exists an embedding $\tau_w \hookrightarrow \sigma_w$ over $L_w(\OO_w)$. Let $X_w$ be the vector space associated to $\tau_w$, viewed as a subspace of the one associated to $\sigma_w$. 
        
        Then, given $\alpha \in X_w$, there exists some $r \gg 0$ such that $\tau_w$ factors through $L_w(\OO_w/\p_w^r\OO_w)$ and some (necessarily unique) $P_w$-ordinary $\phi_{r,\alpha} \in \pi_w^{I_r}$ such that $\varphi_{r,\alpha}(1) = \alpha$, where $\varphi_{r,\alpha}$ is the image of $\phi_{r,\alpha}$ in $\ind{P_w}{G_w} \sigma_w$. Furthermore, the support of $\varphi_{r,\alpha}$ contains $P_w I_{w,r}$. The map $\alpha \mapsto \phi_{r,\alpha}$ yields an embedding of $L_w(\OO_w)$-representations 
        \[
            \tau_w \hookrightarrow \pi_{w}^{(\Pword, r)} \ .
        \]
    \end{enumerate}
\end{theorem}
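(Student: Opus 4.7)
The strategy combines the Bernstein--Zelevinsky geometric lemma (Lemma \ref{lma:P-Jacquet filtration}) applied to $\r{P_w}{G_w}\ind{P_w}{G_w}\sigma_w$ with the Casselman-type description of $V_{w,\inv}^{N_w}$ from Lemma \ref{lma:explicit U_j action on N invariants}. Applying $\r{P_w}{G_w}$ to the embedding $\pi_w \hookrightarrow \ind{P_w}{G_w}\sigma_w$ and invoking Lemma \ref{lma:P-Jacquet filtration} (with $Q_w=P_w$, so $M_w=L_w$) filters the target by subquotients $\ind{L_w\cap x^{-1}P_wx}{L_w}\sigma_w^x$, indexed by $x\in W(L_w,L_w)$, the $x=1$ piece being $\sigma_w$ realized as a subrepresentation. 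The first technical input is the claim that, using \eqref{eq:ineq kappa sigma} together with the fact that a nontrivial Weyl translate $x$ rearranges the block structure of $t_{w,D_w(j)}$ in an essential way, the normalized Hecke eigenvalues of $u_{w,D_w(j)}$ on any $x\neq 1$ subquotient cannot simultaneously be $p$-adic units. Combined with the isomorphism $V_{w,\inv}^{N_w}\xrightarrow{\sim} V_P$ of Lemma \ref{lma:explicit U_j action on N invariants}, this places every $P_w$-ordinary vector, as well as every simultaneous $u_{w,D_w(j)}$-eigenvector in $V_{w,\inv}^{N_w}$, inside the $x=1$ subrepresentation $\sigma_w$ of the filtration.

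For parts (i) and (ii), the key point is to relate the Jacquet projection to evaluation at $1$. Using the coset decomposition of Section \ref{subsubsec:explicit coset representatives}, one checks that
\[
\operatorname{ev}_1 : \bigl(\ind{P_w}{G_w}\sigma_w\bigr)^{I_{w,r}} \longrightarrow \sigma_w,\qquad \varphi \mapsto \varphi(1),
\]
is $L_w(\OO_w)$-equivariant (since $L_w(\OO_w)$ normalises $I_{w,r}$), and that on each simultaneous $u_{w,D_w(j)}$-eigenvector lying in $V_{w,\inv}^{N_w}$ it agrees, up to a $p$-adic unit, with the projection to the $x=1$ piece of the Jacquet module identified above. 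The contributions from non-identity Bruhat cells in the explicit Hecke sum vanish on such eigenvectors, as in the calculation preceding \eqref{eq:s_j eigenvalue}. Part (i) then follows: a $P_w$-ordinary vector is determined by its projection to the $x=1$ piece, which is detected by $\operatorname{ev}_1$. Part (ii) follows in the same way: a non-ordinary simultaneous eigenvector must project trivially to the $x=1$ piece, since only that piece supports ordinary eigenvalues.

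For part (iii), fix $\alpha \in X_w \subset \sigma_w$ and choose $r$ large enough that $\tau_w$ factors through $L_w(\OO_w/\p_w^r)$. Define $\varphi_\alpha \in \ind{P_w}{G_w}\sigma_w$ by
\[
\varphi_\alpha(pk) := \delta_{P_w}^{1/2}(p)\,\sigma_w(p)\,\alpha, \qquad p\in P_w,\ k\in I_{w,r},
\]
extended by zero outside $P_w I_{w,r}$. Well-definedness follows because any $m\in P_w\cap I_{w,r}$ decomposes as $m=l_mn_m$ with $n_m\in P_w^u$ (on which $\sigma_w$ acts trivially by inducedness) and $l_m\in L_w(\OO_w)$ congruent to $1$ modulo $\p_w^r$ (on which $\tau_w$, and hence $\sigma_w|_{X_w}$, acts trivially by the choice of $r$). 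Thus $\varphi_\alpha$ is $I_{w,r}$-fixed with $\varphi_\alpha(1)=\alpha$; applying the projector $e_{P,w}$ from \eqref{eq:def e P w kappa} produces a $P_w$-ordinary vector $\phi_{r,\alpha}:= e_{P,w}\varphi_\alpha$ in $\ind{P_w}{G_w}\sigma_w$, whose image $\varphi_{r,\alpha}$ satisfies $\varphi_{r,\alpha}(1)=c\cdot\alpha$ for some $p$-adic unit $c$ (by the intertwining property used for (i)--(ii)). The main obstacle is to verify that $\phi_{r,\alpha}$ actually lies in the subrepresentation $\pi_w$. Here Hypothesis \ref{hyp:Qw equals Pw} enters essentially: the first paragraph shows that the $P_w$-ordinary subspace of $\ind{P_w}{G_w}\sigma_w$ projects isomorphically onto an $L_w(\OO_w)$-stable subspace of $\sigma_w$, and the multiplicity-one property of the Schneider--Zink type $\tau_w$ inside $\sigma_w|_{L_w(\OO_w)}$ (Section \ref{subsubsec:SZ types}) forces the corresponding $\tau_w$-isotypic preimage to be contained in $\pi_w^{(\Pword,r)}$. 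The assignment $\alpha\mapsto c^{-1}\phi_{r,\alpha}$ is then the desired $L_w(\OO_w)$-equivariant embedding $\tau_w\hookrightarrow \pi_w^{(\Pword,r)}$, and the support assertion $P_wI_{w,r}\subset \supp\varphi_{r,\alpha}$ is a consequence of $\varphi_{r,\alpha}(1)\neq 0$ together with $P_w$-equivariance.
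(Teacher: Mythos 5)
Your treatment of parts (i) and (ii) is essentially the paper's argument: the Bernstein--Zelevinsky filtration of $\r{P_w}{G_w}\ind{P_w}{G_w}\sigma_w$ indexed by $W(L_w,L_w)$, the regularity of $|\kappa'|^{-1}\delta_{P_w}^{-1/2}$ forced by \eqref{eq:ineq kappa sigma} isolating the $x=1$ piece as the only one with unit eigenvalues, and the identification of the composite $V_{w,\inv}^{N_w}\xrightarrow{\sim}V_P\to\sigma_w\delta_{P_w}^{1/2}$ with $v\mapsto f_v(1)$. No issues there.

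For part (iii), however, your construction genuinely diverges from the paper's and contains a gap at its crux. You build $\varphi_\alpha$ in the \emph{induced} representation and set $\phi_{r,\alpha}=e_{P,w}\varphi_\alpha$, so you must explain why this vector lies in the subrepresentation $\pi_w$. Your justification --- multiplicity one of the Schneider--Zink type $\tau_w$ in $\sigma_w|_{L_w(\OO_w)}$ --- does not work for two reasons. First, the theorem's $\tau_w$ is an \emph{arbitrary} smooth irreducible representation of $L_w(\OO_w)$ admitting an embedding into $\sigma_w$; no multiplicity-one property is assumed, so it is not available. Second, even when $\tau_w$ is the SZ-type, multiplicity one inside $\sigma_w|_{L_w(\OO_w)}$ is a statement about $\sigma_w$ and says nothing about whether a given $P_w$-ordinary vector of $\ind{P_w}{G_w}\sigma_w$ descends to the subobject $\pi_w$. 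The argument that is actually needed (and that makes Hypothesis \ref{hyp:Qw equals Pw} enter) is: since $\sigma_w$ is supercuspidal, hence irreducible as an $L_w$-representation, and since $\pi_w$ is $P_w$-ordinary, the $L_w$-subrepresentation $V_P\cap\sigma_w\delta_{P_w}^{1/2}$ of the irreducible $\sigma_w\delta_{P_w}^{1/2}$ is nonzero and therefore equals all of $\sigma_w\delta_{P_w}^{1/2}$; consequently the $P_w$-ordinary subspaces of $\pi_w$ and of the full induced representation coincide, and your $\phi_{r,\alpha}$ does lie in $\pi_w$. The paper avoids this descent problem altogether by working inside $\pi_w$ from the start: it uses the containment $\sigma_w\delta_{P_w}^{1/2}\subset V_P$ just described and lifts $\alpha$ directly via the section $s_{P}\colon V_P\xrightarrow{\sim}V_{w,\inv}^{N_w}$ of the Jacquet projection, with $P_w$-ordinarity of $s_P(\alpha)$ read off from the equivariance of $\pr_P$. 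Your route is repairable, but as written the key step is unjustified; you should also note that the identity $(e_{P,w}\varphi_\alpha)(1)=\alpha$ requires checking that evaluation at $1$ intertwines the $u_{w,D_w(j)}$-operators with the scalar $\beta_1(t_{w,D_w(j)})$ on all $I_{w,r}$-fixed vectors of the induced representation (via Frobenius reciprocity and the Jacquet module), not merely on eigenvectors.
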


\begin{proof}
    This proof is inspired by the one of \cite[Lemma 8.3.2]{EHLS} which is itself inspired by arguments in \cite[Section 5]{Hid98}. By abuse of notation, we will always write $L$ when we mean $L(\KK_w)$. However, we still write $L(\OO_w)$ when referring to its maximal compact subgroup. From now on, we omit the subscript $w$ in this proof.
	
    From Lemma \ref{lma:explicit U_j action on N invariants} (and its proof), we know the space of $P$-ordinary vector is contained in $V^N_{\inv}$ and $\pr_P : V \to V_P$ induces an isomorphism on $V^N_{\inv} \xrightarrow{\sim} V_P$ which is equivariant for the action of $L(\OO)$ and the $u^{\GL}_{D(j)}$-operators. Let $s_P : V_P \to V^N_{\inv}$ denote its inverse.

    Consider the natural inclusion $V \hookrightarrow \ind{P}{G} \sigma$ and the corresponding embedding $V_P \hookrightarrow (\ind{P}{G} \sigma)_P$ as representations of $L$, using the fact that the $P$-Jacquet module functor is exact. Note here that we are using the unnormalized version of the $P$-Jacquet functor (as opposed to the normalized $\r{P}{G}$).

    Consider the filtration indexed by $W(L, L)$ of $(\ind{P}{G} \sigma)_P$ from Lemma \ref{lma:P-Jacquet filtration}. We use a version with unnormalized $P$-Jacquet functor, hence the graded piece corresponding to $x \in W(L, L)$ is isomorphic to $\sigma^x \delta_P^{1/2}$.
 
    First, we claim that $\pr_P$ maps any simultaneous eigenvector for the $u_{D(j)}$-operators whose eigenvalues are all $p$-adic units inside the subrepresentation $\sigma \delta_P^{1/2}$ corresponding to $x=1$.

    One readily checks that $x \in W(P,P)$ is in $W(L,L)$ if and only if it simply permutes the $\GL_{n_k}(\KK_w)$-blocks of $L$ of the same size. In particular, exactly one such $x \in W(L,L)$ acts trivially on the center $Z(L)$ of $L$, namely $x = 1$, while any other $1 \neq x \in W(L,L)$ stabilizes but acts non-trivially on $Z(L)$.
	
    Using the explicit representatives from Section \ref{subsubsec:explicit coset representatives}, one readily checks that the operator $u^{\GL}_{D(j)}$ acts on $\sigma^x \delta_P^{1/2}$ via multiplication by
    \[
        \beta_x(s_j) 
            := 
        \kappa' (s_j)
        \delta_P^{-1/2}(s_j) 
        \omega_{\sigma}^x(s_j)
            =
        \absv{\kappa' (s_j)}_p^{-1}
        \delta_P^{-1/2}(s_j) 
        \omega_{\sigma}^x(s_j)
    \]
    where $s_j = t_{D(j)}$, $\omega_{\sigma} : Z(L) \to \CC^\times$ is the central character of $\sigma$, and $\w_\sigma^x(-) = \w_\sigma(x (-) x^{-1})$ is the central character of $\sigma^x$.

    These $\beta_x$ define unramified characters of $Z(L)$. The $P$-ordinarity assumption implies that $\beta_1(s_j)$ is a $p$-adic unit for all $1 \leq j \leq t + r$ and therefore $\beta_1(s)$ is a $p$-adic unit for all $s \in Z(L)$. We claim that given any $x \in W(L, L)$, the values of $\beta_x$ on $Z(L)$ are all $p$-adic units if and only if $x = 1$.

    By recalling that $\delta_P$ and $\delta_B$ agree on $Z(L)$ and proceeding exactly as in the proof of \cite[Lemma 8.3.2]{EHLS}, one uses Inequality \eqref{eq:ineq kappa sigma} to show that 
    \[
	\theta = |\kappa'|^{-1} \delta_P^{-1/2}
    \]
    is a regular character of $Z(L)$ and $\beta_x$ satisfies the above property if and only if $\theta^x = \theta$. By regularity, this only occurs when $x = 1$.

    The argument above shows that under the natural map
    \begin{equation} \label{eq:V N inv to rho delta_P}
        V_{\inv}^N \hookrightarrow 
            V \twoheadrightarrow 
                V_P \hookrightarrow 
                    (\ind{P}{G} \sigma)_P \ ,
    \end{equation}
    the subspace of $P$-ordinary vector of $V$ injects into the subrepresentation $\sigma \delta_P^{1/2}$ of $(\ind{P}{G} \sigma)_P$, as desired.

    This map is exactly the composition of $V_{\inv}^N \xrightarrow{\sim} V_P$ with the map $i : V_P \to \sigma \delta_P^{1/2}$ corresponding under the Frobenius reciprocity to the inclusion $v \mapsto f_v$ of $V$ into $\ind{P}{G} \sigma$. In other words, this map is $v \mapsto f_v(1)$. Therefore, a $P$-ordinary vector $v \in V^N$ is uniquely determined by $f_v(1)$. This shows part (i). 

    For part (ii), pick a simultaneous eigenvector $v \in V^N_{\inv}$ for the $u^{\GL}_{D(j)}$-operators that is not $P$-ordinary. Then, as above, the composition $V^N_{\inv} \xrightarrow{\sim} V_P \to \sigma \delta_P^{1/2}$ sends $v$ to $f_v(1)$. By equivariance of the action of the $u^{\GL}_{D(j)}$-operators on both sides, we must have $f_v(1) = 0$.

    To show part (iii), consider $\alpha$ as an element of the vector space associated to $\sigma$, which is also the one associated to $\sigma \delta_P^{1/2} \subset V_P$. Let $\phi = s_P(\alpha) \in V_{\inv}^N$. In particular, $\phi \in \pi^{I_r}$ for some $r \gg 0$. We may assume that $r$ is sufficiently large so that $\tau$ factors through $L(\OO/\p^r\OO)$.

    Finally, since $\pr_P$ is equivariant under the action of the $u^{\GL}_{D(j)}$-operators and these act on $\pr_P(\phi) = \alpha$ via multiplication by the $p$-adic unit $\beta(s_j)$, one concludes that $\phi$ is $P$-ordinary. Proceeding as in the proof of part (i), we obtain $\varphi(1) = \pr_P \phi = \alpha$, where $\varphi \in \ind{P}{G} \sigma$ is the function corresponding to $\phi$. 

    Therefore, $\phi_{r,\alpha} := \phi$ is the desired vector, necessarily unique by part (i). The last statement holds because $s_P$ is $L(\OO_w)$-equivariant.
\end{proof}

\begin{remark} \label{rmk:eigenvalues of Pword representations}
    As a consequence of the proof for part (i) above, we see that $\pi_w$ is $P_w$-ordinary  (of level $r \gg 0$) if and only if
    \begin{equation} \label{eq:central character condition}
        \beta(s) = 
            \absv{\kappa'(s)}_p^{-1}
            \delta_{P_w}^{-1/2}(s)
            \w_\sigma(s)
    \end{equation}
    is a $p$-adic unit for all $s \in Z(L_w(\KK_w))$. In other words, not all supercuspidal representation $\sigma_w$ can occur. Furthermore, when $\pi_w$ is $P_w$-ordinary (of level $r \gg 0$), then all $P_w$-ordinary vectors share the same $u_{w, D_w(j), \kappa}^{\GL}$-eigenvalue, namely $\beta(t_{w, D_w(j)})$.
\end{remark}

\begin{remark} \label{rmk:covers of types to P-Iwahori}
    We now view $\tau_w$ as as a representation of $I_{w, r}^0$ via the identity $I_{w,r}^0 / I_{w,r} = L_w(\OO_w/\p_w^r\OO_w)$. Clearly, the embedding constructed in Theorem \ref{thm:canonical Pword vec of type tau w for G1} (iii) is an embedding of $I_{w,r}^0$-representations. 
    
    This shows $\pi_w$ contains a cover of $\tau_w$ from $L_w$ to $I_{w,r}^0$, in the sense of \cite{BusKut98, BusKut99}, in its subspace $\pi_{w,r}^{\Pword}[\tau_w]$ of $P_w$-ordinary vectors of type $\tau_w$. However, we do not use this point of view explicitly in this paper.
\end{remark}

Recall that in Section \ref{subsubsec:SZ types}, we fixed an ``SZ-types'' $\tau_w$, namely a smooth irreducible representation of $L_w(\OO_w)$ such that $\sigma_w|_{L_w(\OO_w)}$ contains $\tau_w$ with multiplicity one. Such a representation exists but is not necessarily unique.

We sometimes refer to $\tau_w$ as the SZ-type of $\pi_w$. Let $\tau$ be the representation of $L_P(\ZZ_p)$ corresponding to $\otimes_{w \in \Sigma_p} \tau_w$ under the natural identification $L_P = \prod_{w \in \Sigma p} L_w$ induced by \eqref{eq:facto G(Qp)}. We refer to $\tau$ as the \emph{(fixed choice of) SZ-type of $\pi_p$.}

\begin{theorem} \label{thm:Pord type tau structure thm}
    Let $\pi$ be a holomorphic $P$-ordinary representation as above such that its weight $\kappa$ satisfies Inequality \eqref{eq:ineq kappa sigma}. Let $\tau$ be the SZ-type of $\pi_p$. Then,
    \[
        \hom_{L_P(\ZZ_p)}
        (
            \tau, 
            \pi_p^{(\Pord, r)}
        )
    \]
    is 1-dimensional for all $r \gg 0$. In other words, $\pi$ is of $P$-WLT $(\kappa, K_r, \tau)$ for all $r \gg 0$, the space $\pi_{p}^{(\Pord,\tau)} := \pi_{p}^{(\Pord, r)}[\tau]$ of $P$-ordinary vectors of type $\tau$ is independent of $r \gg 0$ and
    \[
        \dim 
        \left(
            \pi_p^{(\Pord, \tau)}
        \right) 
            = 
        \dim \tau\,.
    \]
\end{theorem}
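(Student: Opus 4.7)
The plan is to reduce the global statement to a purely local problem at each $w \in \Sigma_p$, and then to leverage Theorem \ref{thm:canonical Pword vec of type tau w for G1} together with the multiplicity-one property built into the SZ-type. Using the factorization $\pi_p \cong \mu_p \otimes (\bigotimes_{w \in \Sigma_p} \pi_w)$ and the compatible decomposition $L_P(\ZZ_p) = \prod_{w \in \Sigma_p} L_w(\OO_w)$, we obtain
\[
    \pi_p^{(\Pord, r)}
        \cong
    \bigotimes_{w \in \Sigma_p} \pi_w^{(\Pword, r)}
\]
as $L_P(\ZZ_p)$-representations (note $\mu_p$ is unramified and does not affect the type). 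Since $\tau = \bigotimes_w \tau_w$ with each $\tau_w$ irreducible, Schur's lemma reduces the theorem to showing that $\hom_{L_w(\OO_w)}(\tau_w, \pi_w^{(\Pword, r)})$ is one-dimensional for all $w \in \Sigma_p$ and all $r \gg 0$.

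For the lower bound, recall that $\tau_w$ was fixed in Section \ref{subsubsec:SZ types} so that it embeds with multiplicity one in $\sigma_w|_{L_w(\OO_w)}$. Fix such an embedding $\tau_w \hookrightarrow \sigma_w$, with image $X_w$. Theorem \ref{thm:canonical Pword vec of type tau w for G1}(iii) then produces, for $r \gg 0$ large enough so that $\tau_w$ factors through $L_w(\OO_w/\p_w^r)$, an $L_w(\OO_w)$-equivariant embedding
\[
    \tau_w \hookrightarrow \pi_w^{(\Pword, r)}, \qquad \alpha \mapsto \phi_{r,\alpha}\,.
\]

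For the upper bound, the composition \eqref{eq:V N inv to rho delta_P} occurring in the proof of Theorem \ref{thm:canonical Pword vec of type tau w for G1} gives an injection
\[
    \mathrm{ev}_1 \colon \pi_w^{(\Pword, r)} \hookrightarrow \sigma_w \delta_{P_w}^{1/2}\,,\qquad \phi \mapsto \varphi_\phi(1)\,,
\]
by parts (i)--(ii) of that theorem. Since $\delta_{P_w}$ is trivial on the compact group $L_w(\OO_w)$, the map $\mathrm{ev}_1$ is $L_w(\OO_w)$-equivariant. Applying $\hom_{L_w(\OO_w)}(\tau_w, -)$ yields
\[
    \hom_{L_w(\OO_w)}(\tau_w, \pi_w^{(\Pword, r)})
        \hookrightarrow
    \hom_{L_w(\OO_w)}(\tau_w, \sigma_w)\,,
\]
and the right-hand side is one-dimensional by the multiplicity-one defining property of the SZ-type. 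Combined with the lower bound, this gives equality, proving $\dim \pi_p^{(\Pord, r)}[\tau] = \prod_w \dim \tau_w = \dim \tau$.

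Finally, independence from $r$ follows because the embedding in Theorem \ref{thm:canonical Pword vec of type tau w for G1}(iii) is produced by applying the section $s_{P_w}$ of the $P_w$-Jacquet quotient map to a fixed subspace $X_w \subset \sigma_w$ that does not depend on $r$; uniqueness (the upper bound above) then forces $\pi_w^{(\Pword, r)}[\tau_w]$ to coincide with this fixed image for every sufficiently large $r$. The main obstacle, and the reason one cannot simply invoke Theorem \ref{thm:canonical Pword vec of type tau w for G1} as a black box, is establishing the upper bound: one must verify that the ``evaluation at $1$'' map is genuinely $L_w(\OO_w)$-equivariant (requiring the vanishing of $\delta_{P_w}$ on the compact subgroup) and that it captures \emph{every} $P_w$-ordinary vector, not just those built from $X_w$ via $s_{P_w}$. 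Both points are supplied by Hypothesis \ref{hyp:Qw equals Pw} together with the Bernstein–Zelevinsky analysis carried out in Lemma \ref{lma:P-Jacquet filtration} and exploited in the proof of Theorem \ref{thm:canonical Pword vec of type tau w for G1}.
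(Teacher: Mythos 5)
Your proof is correct and follows essentially the same route as the paper's: reduce to each place $w \in \Sigma_p$ and identify $\hom_{L_w(\OO_w)}(\tau_w, \pi_w^{(\Pword, r)})$ with the one-dimensional space $\hom_{L_w(\OO_w)}(\tau_w, \sigma_w)$ via Theorem \ref{thm:canonical Pword vec of type tau w for G1}. The paper's (very terse) proof simply cites part (iii) of that theorem for this ``natural isomorphism''; your version usefully makes explicit that the inverse map is evaluation at $1$, whose injectivity and $L_w(\OO_w)$-equivariance are exactly what parts (i)--(ii) and the triviality of $\delta_{P_w}$ on $L_w(\OO_w)$ supply.
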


\begin{proof}
    Fix $w \in \Sigma_p$ and consider $\pi_w^{(\Pword,r)} = e_w \pi_w^{I_{w,r}}$ for $r \gg 0$. By Theorem \ref{thm:canonical Pword vec of type tau w for G1} (iii), there is a natural isomorphism
    \[
        \hom_{L_w(\OO_w)}(\uptau_w, \sigma_w) = \hom_{L_w(\OO_w)}(\uptau_w, \pi_{w}^{(\Pword,r)}[\uptau_w]) \ ,
    \]
    where $\uptau_w$ is any smooth irreducible representation of $L_w(\OO_w)$. The result follows by applying the above to $\uptau_w = \tau_w$. 
\end{proof}

\subsection{$P$-anti-ordinary theory on $G_1$.} \label{subsec:Paord theory on G1}
Let $\pi$ be an anti-holomorphic cuspidal representation on $G = G_1$ of weight $\kappa$ such that $\pi_f^{K_r} \neq 0$. Recall that $\pi$ is $P$-anti-ordinary of level $r$ if $\pi_w$ is $P_w$-anti-ordinary of level $r$, for all $w \in \Sigma_p$.

Recall that according to our conventions set in Section \ref{sec:notation and conventions}, given any representation $\rho$, we denote its contragredient representation by $\rho^\vee$. 

\begin{lemma} \label{lma:Pword and Pwaord duality}
    The representation $\pi_w$ is $P_w$-anti-ordinary of level $r \geq 0$ if and only if $\pi_w^\vee$ is $P_w$-ordinary of level $r$. In that case, $\pi_w$ is $P$-anti-ordinary of all level $r \gg 0$.
\end{lemma}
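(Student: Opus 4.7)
The plan is to exploit the standard Hecke-algebra duality between a smooth representation and its contragredient. For any $g \in G_w$ and any compact open subgroup $J \subset G_w$, the double coset operator $[JgJ]$ on $\pi_w^J$ is transpose, with respect to the tautological pairing $\brktdotdot_{\pi_w} : \pi_w \otimes \pi_w^\vee \to \CC$ restricted to $J$-invariants, to the operator $[Jg^{-1}J]$ on $(\pi_w^\vee)^J$. First I would apply this with $J = I_{w,r}$ and $g = t^+_{w,D_w(j)}$, and note that $(t^+_{w,D_w(j)})^{-1} = t^-_{w,D_w(j)}$ while the normalisation factors satisfy $\kappa'(t^+_{w,D_w(j)}) \cdot \kappa'(t^-_{w,D_w(j)}) = 1$. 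This shows that $u^-_{w, D_w(j), \kappa}$ on $\pi_w^{I_{w,r}}$ and $u^+_{w, D_w(j), \tilde\kappa}$ on $(\pi_w^\vee)^{I_{w,r}}$ are transposes of each other, for an appropriate choice of ``dual weight'' $\tilde\kappa$ on $\pi_w^\vee$ characterised by $\tilde\kappa'(t^+_{w,D_w(j)}) = \kappa'(t^-_{w,D_w(j)})$.

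Since finite-rank transpose operators share the same spectrum, in particular the same $p$-adic unit eigenvalues, it follows that the generalised eigenspaces appearing in the formation of $e^-_{P,w,\kappa}$ on $\pi_w^{I_{w,r}}$ correspond bijectively to those appearing in the formation of $e_{P,w,\tilde\kappa}$ on $(\pi_w^\vee)^{I_{w,r}}$, with matching dimensions. The operators in the product defining $e^-_{P,w,\kappa}$ commute with one another (and similarly on the dual side), so taking the product over $1 \leq j \leq r_w$ is harmless, and I would conclude that the $P_w$-anti-ordinary subspace of $\pi_w^{I_{w,r}}$ is nonzero if and only if the $P_w$-ordinary subspace of $(\pi_w^\vee)^{I_{w,r}}$ is nonzero; in fact the two have equal $\CC$-dimension. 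This proves the ``if and only if''.

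For the second assertion, the plan is to invoke Theorem \ref{thm:Pord type tau structure thm} applied to the holomorphic representation $\pi_w^\vee$: once its $P_w$-ordinary subspace of SZ-type $\tau_w^\vee$ appears at some level $r$, it appears at all levels $r' \gg 0$ and stabilises. After checking that $\pi_w^\vee$ inherits Hypothesis \ref{hyp:Qw equals Pw} (the supercuspidal support passes to the contragredient in the obvious way) and the relevant bound \eqref{eq:ineq kappa sigma} on its weight, this stability in $r$ transports across the transpose duality of the previous two paragraphs to yield the analogous stability for the $P_w$-anti-ordinary subspace of $\pi_w$.

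The main technical point in executing this plan will be the careful bookkeeping of the normalisation factors $\kappa'(t^\pm)$: concretely, identifying the dual weight $\tilde\kappa$ on $\pi_w^\vee$ in terms of $\kappa$ (and of $\kappa^D$) so that the transpose identity holds with no spurious scalar, and verifying that the resulting match of eigenvalues preserves $p$-adic valuations. This is a bookkeeping exercise rather than a conceptual obstacle, but it is the one step that demands care before the abstract duality argument can be applied cleanly.
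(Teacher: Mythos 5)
Your first two paragraphs are exactly the paper's argument: the proof given in the text simply defers to the adjointness of $U_{w,D_w(j)}$ and $U^-_{w,D_w(j)}$ under the tautological pairing (the EHLS Lemma 8.3.6(i) argument), together with the commutativity of the operators in the product defining $e^\pm_{P,w}$, so the spectral matching you describe is the intended proof of the equivalence. The normalisation bookkeeping you flag is real but harmless, since the scalar relating the two normalised operators is a fixed power of $p$ determined by $\kappa'$, and the paper's own convention (see the computation in Section \ref{subsubsec:Paord mod forms case}, where $u_{w,D_w(j),\kappa}$ is adjoint to $u^-_{w,D_w(j),\kappa^D}$) is precisely your $\tilde\kappa$.

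One caution on the last assertion: routing it through Theorem \ref{thm:Pord type tau structure thm} imports Hypothesis \ref{hyp:Qw equals Pw} and the weight inequality \eqref{eq:ineq kappa sigma}, neither of which is assumed in the lemma. There is a more elementary argument that avoids this: the right-coset representatives for $I_{w,r}t^\pm_{w,D_w(j)}I_{w,r}$ are independent of $r$ (Section \ref{subsubsec:explicit coset representatives}), so the operators at level $r'\geq r$ restrict to the operators at level $r$ on $\pi_w^{I_{w,r}}\subset\pi_w^{I_{w,r'}}$, and a nonzero $P_w$-anti-ordinary vector of level $r$ remains one of every level $r'\geq r$. I would use that in place of the appeal to the structure theorem.
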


\begin{proof}
    This is a simple generalization of \cite[Lemma 8.3.6 (i)]{EHLS}. The proof goes through verbatim by replacing the pro-$p$ Iwahori subgroup (also denoted $I_{w,r}$) by $I_{P_w, w, r}$ and only considering the Hecke operators $u_{w, D_w(j)}^{\GL, -}$ and $u_{w, D_w(j)}^{\GL}$, for $1 \leq j \leq r_w$. The key part is that all these operators commute with one another.
\end{proof}

\subsubsection{Conventions on contragredient pairings.} \label{subsubsec:conv on contragredient pairings}
Let $\sigma_w$ be an admissible irreducible supercuspidal representation of $L_w(\KK_w)$. Its contragredient $\sigma_w^\vee$ is again an admissible irreducible supercuspidal representation of $L_w(\KK_w)$.

Let $\brkt{\cdot}{\cdot}_{\sigma_w} : \sigma_w \times \sigma_w^\vee \to \CC$ be the tautological pairing on a pair of contragredient representations. Define
\begin{align*}
    \brkt{\cdot}{\cdot}_{w} 
        &: 
    \ind{P_w}{G_w} \sigma_w 
        \times 
    \ind{P_w}{G_w} \sigma_w^\vee
        \to
    \CC 
        \\
    \brkt{\varphi}{\varphi^\vee}_{w} 
        &=
    \int_{G_w(\OO_w)}
        \brkt{\varphi(k)}{\varphi^\vee(k)}_{\sigma_w}
    dk \,,
\end{align*}
a perfect $G_w(\KK_w)$-equivariant pairing. Here $dk$ is the Haar measure on $G_w(\OO_w)$ that such that $\vol(G_w(\OO_w)) = 1$ with respect to $dk$. Then $\brkt{\cdot}{\cdot}_{w}$ naturally identifies $\ind{P_w}{G_w} \sigma_w^\vee$ as the contragredient of $\ind{P_w}{G_w} \sigma_w$.

Let $\pi_w$ be the constituent at $w \in \Sigma_p$ of $\pi_p$ as above. From now on, we assume $\pi_w$ is the unique irreducible quotient $\ind{P_w}{G_w} \sigma_w \twoheadrightarrow \pi_w$. Equivalently, $\pi_w^\vee$ is the unique irreducible subrepresentation $\pi_w^\vee \hookrightarrow \ind{P_w}{G_w} \sigma_w^\vee$, see Remark \ref{rmk:sc support assumption}. If one restricts the second argument of $\brkt{\cdot}{\cdot}_{w}$ to $\pi_w^\vee$, then the first argument factors through $\pi_w$. In other words, $\brkt{\cdot}{\cdot}_{w}$ induces the tautological pairing $\brkt{\cdot}{\cdot}_{\pi_w} : \pi_w \times \pi_w^\vee \to \CC$ and
\[
    \brkt{\phi}{\phi^\vee}_{\pi_w} =
        \int_{G_w(\OO_w)}
            \brkt{\varphi(k)}{\varphi^\vee(k)}_{\sigma_w}
        dk \ , \ \ \ \forall \phi \in \pi_w, \phi^\vee \in \pi_w^\vee \ ,
\]
where $\varphi$ is any lift of $\phi$ and $\varphi^\vee$ is the image of $\phi^\vee$.

Let $(\tau_w, X_w)$ be the SZ-type of $\sigma_w$, a representation of $L_w(\OO_w)$. Then, its contragredient $(\tau_w^\vee, X_w^\vee)$ is the SZ-type of $\sigma_w^\vee$. One can find $L_w(\OO_w)$-embeddings $\tau_w \hookrightarrow \sigma_w$ and $\tau_w^\vee \hookrightarrow \sigma_w^\vee$ (both unique up to scalar) such that for all $\alpha \in X_w$, $\alpha^\vee \in X_w^\vee$,
\[
    \brkt{\alpha}{\alpha^\vee}_{\sigma_w}
        =
    \brkt{\alpha}{\alpha^\vee}_{\tau_w} \ .
\]

More generally, upon restriction of $\sigma_w$ and $\sigma_w^\vee$ to representations of $L_w(\OO_w)$, there are direct sum decompositions
\[
    \sigma_w = \bigoplus_{\uptau_w} \sigma_w[\uptau_w]
    \ \ \ \text{and} \ \ \
    \sigma_w^\vee = \bigoplus_{\uptau_w} \sigma_w^\vee[\uptau_w]
\]
where $\uptau_w$ runs over all smooth irreducible representations of $L_w(\OO_w)$ and the square brackets $[\cdot]$ denote isotypic subspaces. The restriction of $\brkt{\cdot}{\cdot}_{\sigma_w}$ to 
$
    \sigma_w[\uptau_w]
        \times
    \sigma_w^\vee[\uptau'_w]
$
is identically zero if $\uptau'_w \not\cong \uptau_w^\vee$. On the other hand, its restriction to
$
    \sigma_w[\uptau_w]
        \times
    \sigma_w^\vee[\uptau^\vee_w]
$
is a perfect $L_w(\OO_w)$-invariant pairings.

\subsubsection{Structure theorem for $P$-anti-ordinary representations of $G_1$.} \label{subsubsec:strucutre thm for Paord rep of G1}
\begin{theorem} \label{thm:canonical Pwaord vec of type tau w for G1}
    Let $\pi$ be an anti-holomorphic $P$-anti-ordinary representation such that its weight $\kappa$ satisfies Inequality \eqref{eq:ineq kappa sigma}. Let $w \in \Sigma_p$ and $\pi_w$ be a constituent of $\pi$, a $P_w$-anti-ordinary representation of level $r \gg 0$. Assume $\pi_w$ is the unique irreducible quotient $\ind{P_w}{G_w} \sigma_w \twoheadrightarrow \pi_w$ for some supercuspidal $\sigma_w$ and let $(\tau_w, X_w)$ be the SZ-type of $\sigma_w$. 
    
    Given any $\alpha \in X_w$, let $\varphi_{w,r}^{\Pwaord} \in \ind{P_w}{G_w} \sigma_w$ be the unique vector with support $P_wI_{w,r}$ such that $\varphi_{w,r}^{\Pwaord}(1) = \alpha$ and $\varphi_{w,r}^{\Pwaord}$ is fixed by $I_{w,r}$.
    
    Then, the image $\phi_{w,r}^{\Pwaord} \in \pi_w^{I_{w,r}}$ of $\varphi_{w,r}^{\Pwaord}$ is $P_w$-anti-ordinary of level $r \gg 0$. Furthermore, it satisfies :
    \begin{enumerate}[label=(\roman*)]
        \item Let $\phi^\vee \in \pi_{w}^{\vee, I_{w,r}}$ and denote its image in $\ind{P_w}{G_w} \sigma_w$ by $\varphi^\vee$. Then, 
        \[
            \brkt{
                \phi_{w,r}^{\Pwaord}
            }{
                \phi^\vee
            }_{\pi_w} 
            = 
                \vol(I_{w,r}^0)
                \brkt{\alpha}{\varphi^\vee(1)}_{\sigma_w}
        \] 

        In particular, $\brkt{\phi_{w,r}^{\Pwaord}}{\phi^\vee}_{\pi_w} \neq 0$ if and ony if $\phi^\vee$ is $P_w$-ordinary and the component of $\varphi^\vee(1)$ in $\sigma_w^\vee[\tau_w^\vee]$ is non-zero.
        
        \item The vector $\phi_{w,r}^{\Pwaord}$ lies in the $\tau_w$-isotypic space of $\pi_w^{I_{w,r}}$. Moreover, any other $P_w$-anti-ordinary vector of type $\tau_w$ is obtained as above for some other choice $\alpha' \in X_w$.
        
        \item One can pick different choices of $\alpha$ for each $r' \geq r$ so that
        \begin{equation} \label{eq:trace of canonical Pwaord vectors for G1}
            \sum_{
                \gamma \in I_{w,r}/(I_{w,r'}^0 \cap I_{w,r})
            }
                \pi_w(\gamma) \phi_{w,r'}^{\Pwaord} = 
                \phi_{w,r}^{\Pwaord}
        \end{equation}
    \end{enumerate}
\end{theorem}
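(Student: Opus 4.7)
The overall strategy is to reduce everything to the structure theorem of the $P_w$-ordinary case (Theorem \ref{thm:canonical Pword vec of type tau w for G1}) applied to $\pi_w^\vee$, which is $P_w$-ordinary by Lemma \ref{lma:Pword and Pwaord duality}. The key bridge will be the pairing formula in (i), from which the remaining parts flow naturally.

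\textbf{Part (i).} I would compute $\brkt{\phi_{w,r}^{\Pwaord}}{\phi^\vee}_{\pi_w}$ directly via the integral formula from Section \ref{subsubsec:conv on contragredient pairings}. Since the support $P_w I_{w,r}$ of $\varphi_{w,r}^{\Pwaord}$ intersects $G_w(\OO_w)$ exactly in $I_{w,r}^0 = P_w(\OO_w)I_{w,r}$ (using the Iwasawa decomposition and that $P_w \cap G_w(\OO_w) = P_w(\OO_w)$), the integration reduces to an integral over $I_{w,r}^0$. For $k \in I_{w,r}^0$, writing $k = p \cdot i$ with $p \in P_w(\OO_w)$ and $i \in I_{w,r}$, and using $\delta_{P_w}|_{P_w(\OO_w)} = 1$ together with the trivial action of $P_w^u$ via $\sigma_w$, we obtain $\varphi_{w,r}^{\Pwaord}(k) = \tau_w(\bar k)\alpha$, where $\bar k$ denotes the image of $k$ in $L_w(\OO_w/\varpi^r) = I_{w,r}^0/I_{w,r}$. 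The same computation applied to $\varphi^\vee$ gives $\varphi^\vee(k) = \tau_w^\vee(\bar k)\varphi^\vee(1)$. The $L_w(\OO_w)$-invariance of $\brktdotdot_{\sigma_w}$ then yields the stated formula. The iff statement follows by combining this with the orthogonality between isotypic components of $\sigma_w|_{L_w(\OO_w)}$ and Theorem \ref{thm:canonical Pword vec of type tau w for G1}(ii).

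\textbf{$P_w$-anti-ordinarity and Part (ii).} To prove $\phi_{w,r}^{\Pwaord}$ is $P_w$-anti-ordinary, I would note that $u_{w,D_w(j)}^{\GL,-}$ on $\pi_w$ is adjoint to $u_{w,D_w(j)}^{\GL}$ on $\pi_w^\vee$, hence the generalized eigenspace decomposition of $\pi_w^{I_{w,r}}$ pairs nondegenerately with the corresponding decomposition of $\pi_w^{\vee,I_{w,r}}$ (matching eigenvalues). It therefore suffices to show $\phi_{w,r}^{\Pwaord}$ pairs trivially with the non-$P_w$-ordinary part of $\pi_w^{\vee,I_{w,r}}$. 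By (i), this reduces to showing $\varphi^\vee(1) = 0$ for every $\phi^\vee$ in a non-ordinary generalized eigenspace; I would extend the proof of Theorem \ref{thm:canonical Pword vec of type tau w for G1}(ii) to generalized eigenvectors by observing that the map $v \mapsto f_v(1)$ factors through the $x = 1$ graded piece $\sigma_w^\vee \delta_{P_w}^{1/2}$ in Lemma \ref{lma:P-Jacquet filtration}, on which $u_{w,D_w(j)}^{\GL}$ acts by the single scalar $\beta_1(s_j)$, forcing generalized eigenvectors of different eigenvalues to have trivial image. For part (ii), a direct computation shows that for $l \in L_w(\OO_w)$, $\pi_w(l)\phi_{w,r}^{\Pwaord}$ is the analogous vector constructed from $\tau_w(l)\alpha$ (using that $l$ normalizes $I_{w,r}$ and lies in $I_{w,r}^0$); thus $\{\phi_{w,r}^{\Pwaord} : \alpha \in X_w\}$ spans an $L_w(\OO_w)$-subrepresentation isomorphic to $\tau_w$. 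The second assertion of (ii) then follows by a dimension count: Theorem \ref{thm:Pord type tau structure thm} applied to $\pi_w^\vee$ gives $\dim \pi_w^{\vee,(\Pword,r)}[\tau_w^\vee] = \dim \tau_w$, and duality via (i) forces $\pi_w^{(\Pwaord,r)}[\tau_w]$ to have the same dimension $\dim \tau_w = \dim X_w$.

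\textbf{Part (iii).} The key observation is that for $r$ sufficiently large (namely $r \geq r_\tau$, where $r_\tau$ is the level through which $\tau_w$ factors), the kernel $K := \ker(L_w(\OO_w/\varpi^{r'}) \to L_w(\OO_w/\varpi^r))$ acts trivially on all of $X_w$; equivalently, $\varphi_{w,r'}^{\Pwaord}$ is fixed not just by $I_{w,r'}$ but by the larger subgroup $I_{w,r'}^0 \cap I_{w,r}$ (whose image in $L_w(\OO_w/\varpi^{r'})$ is exactly $K$). This makes the sum over $I_{w,r}/(I_{w,r'}^0 \cap I_{w,r})$ well-defined independently of the choice of coset representatives. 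Choosing $\alpha_r = \alpha_{r'}$, I would then verify the equality pointwise: evaluating both sides at $g = p_0 i_0$ with $p_0 \in P_w$, $i_0 \in I_{w,r}$, the support analysis shows that for each $i_0$ exactly one coset representative $\gamma_i$ contributes (namely the unique one with $i_0\gamma_i \in I_{w,r'}^0 \cap I_{w,r}$), and the contribution is $\sigma_w(p_0)\delta_{P_w}^{1/2}(p_0)\tau_w(\overline{i_0\gamma_i})\alpha_{r'}$. Since $\overline{i_0\gamma_i} \in K$ acts trivially on $\alpha_{r'}$, this recovers $\varphi_{w,r}^{\Pwaord}(g)$ precisely. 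The main technical obstacle will be the coset analysis verifying that $P_w I_{w,r'} \cap I_{w,r} = I_{w,r'}^0 \cap I_{w,r}$, which is the essential fact underlying both the support computation in (i) and the sum decomposition in (iii).
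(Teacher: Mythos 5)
Your proposal is correct and follows essentially the same route as the paper's proof: part (i) via the support computation that reduces the pairing integral to $I_{w,r}^0 = L_w(\OO_w)I_{w,r}$ combined with Theorem \ref{thm:canonical Pword vec of type tau w for G1}(ii); anti-ordinarity and part (ii) via adjointness of the operators $u_{w,D_w(j)}^{\GL,\pm}$, nondegeneracy of $\brktdotdot_{\pi_w}$ on $I_{w,r}$-invariants, and the dimension count against the $P_w$-ordinary type-$\tau_w^\vee$ subspace of $\pi_w^\vee$; and part (iii) by checking the trace identity already in $\ind{P_w}{G_w}\sigma_w$. If anything you are more explicit than the paper in two places it glosses over — the extension of Theorem \ref{thm:canonical Pword vec of type tau w for G1}(ii) from eigenvectors to generalized eigenvectors (needed to conclude $\varphi^\vee(1)=0$ on the non-ordinary summands) and the coset analysis behind \eqref{eq:trace of canonical Pwaord vectors for G1} — while your direct evaluation of the constant integrand in (i) (note the harmless typo: $\varphi^\vee(k)=\sigma_w^\vee(\bar k)\varphi^\vee(1)$, not $\tau_w^\vee(\bar k)$) bypasses the paper's preliminary eigenspace decomposition.
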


\begin{proof}
    Write $\phi_{w,r}$ and $\varphi_{w,r}$ instead of $\phi_{w,r}^{\Pwaord}$ and $\varphi_{w,r}^{\Pwaord}$ respectively. We first show that property (i) holds. By Lemma \ref{lma:Pword and Pwaord duality}, $\pi_w^\vee$ is $P_w$-ordinary of level $r$. Write
    \[
        \pi_w^{\vee, I_{w,r}} = \bigoplus_{a=1}^A V_a \ ,
    \]
    where each $V_a$ is a simultaneous generalized eigenspace for the Hecke operators $u_{w,D_w(j)}^{\GL}$. 
    
    From the proof of Theorem \ref{thm:canonical Pword vec of type tau w for G1} and the remark that follows, exactly one $V_a$ has generalized eigenvalues that are all $p$-adic units. We may assume that this holds true for $V_1$. The exact eigenvalue of $u_{w, D_w(j)}^{\GL}$ is given by Equation \eqref{eq:central character condition}, denote it $\beta_{w, D_w(j)}$. For $1 < a \leq A$, at least one generalized eigenvalue for $V_a$ is not a $p$-adic unit.
    
    Given $\phi^\vee \in \pi_w^{\vee, I_{w,r}}$, write it as a sum
    \[
        \phi^\vee = 
            \sum_{a=1}^A \phi_a^\vee \ , 
    \]
    with $\phi_a^\vee \in V_a$. Let $\varphi_a^\vee$ denote the images of $\phi_a^\vee$ in $\ind{P_w}{G_w} \sigma_w^\vee$. Then,
    \[
        \brkt{\phi_{w,r}}{\phi^\vee}_{\pi_w} 
        = 
            \sum_{a=1}^A
                \brkt{
                    \phi_{w,r}
                }{
                    \phi_a^\vee
                }_{\pi_w} 
        = 
            \sum_{a=1}^A
            \int_{G_w(\OO_w)}
                \brkt{
                    \varphi_{w,r}(k)
                }{
                    \varphi_a^\vee(k)
                }_{\sigma_w}
            dk
    \]
    
    Recall that the support of $\varphi_{w,r}$ is $P_wI_{w,r}$. Also, the intersection of $P_wI_{w,r}$ with $G_w(\OO_w)$ is equal to $I_{w,r}^0$ and by Theorem \ref{thm:canonical Pword vec of type tau w for G1} (ii), $\varphi_a^\vee(I_{w,r}^0) = 0$ for all $a \neq 1$. Therefore,
    \[
        \brkt{\phi_{w,r}}{\phi^\vee}_{\pi_w} 
        =
            \int_{I_{w,r}^0}
                \brkt{
                    \varphi_{w,r}(k)
                }{
                    \varphi_1^\vee(k)
                }_{\sigma_w}
            dk
    \]

    Since $I_{w,r}^0 = L_w(\OO_w) I_{w,r}$ and $\varphi_{w,r}^{\Pwaord}$, $\varphi_1^\vee$ are both fixed by $I_{w,r}$, one obtains
    \begin{align*}
        \brkt{\phi_{w,r}}{\phi^\vee}_{\pi_w} 
        =
            \int_{I_{w,r}^0}
                \brkt{
                    \varphi_{w,r}(1)
                }{
                    \varphi_1^\vee(1)
                }_{\sigma_w}
            dk
        =
            \vol(I_{w,r}^0)
                \brkt{
                    \alpha
                }{
                    \varphi_1^\vee(1)
                }_{\sigma_w} \ .
    \end{align*}

    The desired relation holds by noting that $\varphi_1^\vee(1) = \varphi^\vee(1)$. The second part of (i) follows immediately from the discussion about isotypic subspaces at the end of Section \ref{subsubsec:conv on contragredient pairings}.

    As a consequence of property (i), we immediately obtain $\brkt{\phi_{w,r}}{V_a}_{\pi_w} = 0$ for all $a > 1$. Furthermore, for all $\phi^\vee \in V_1$, we have
    \[
        \brkt{
            u_{w, D_w(j)}^{\GL, -} 
            \phi_{w,r}
        }{
            \phi^\vee
        }_{\pi_w}
            =
        \brkt{
            \phi_{w,r}
        }{
            u_{w, D_w(j)}^{\GL} 
            \phi^\vee
        }_{\pi_w}
            =
        \beta_{w, D_w(j)}
        \brkt{
            \phi_{w,r}
        }{
            \phi^\vee
        }_{\pi_w} \ .
    \]

    By combining these two facts, we obtain
    \[
        \brkt{
            u_{w, D_w(j)}^{\GL, -} 
            \phi_{w,r}
        }{
            \phi^\vee
        }_{\pi_w}
            =
        \beta_{w, D_w(j)}
        \brkt{
            \phi_{w,r}
        }{
            \phi^\vee
        }_{\pi_w} \ .
    \]
    for all $\phi^\vee$ in $\pi_w^{\vee, I_{w,r}}$. In other words, $\phi_{w,r}$ is $P_w$-anti-ordinary.

    Furthermore, note that the argument above implies that the subspace of $P_w$-anti-ordinary vectors of type $\tau_w$ in $\pi_w^{I_{w,r}}$ is dual to the subspace of $P_w$-ordinary vectors of type $\tau_w^\vee$. From Theorem \ref{thm:canonical Pword vec of type tau w for G1} (iii), they both have dimension $\dim \tau_w = \dim \tau_w^\vee$. 
    
    In particular, the space generated by the action of $L_w(\OO_w/\p_w^r\OO_w)$ on $\phi_{w,r}$, which is of dimension $\dim \tau_w$, is exactly the subspace of $P_w$-anti-ordinary vectors of type $\tau_w$. Therefore, any other $P_w$-anti-ordinary vector $\phi_{w,r}'$ of type $\tau_w$ is equal to $\pi_w(l) \phi_{w,r}$, for some $l \in L_w(\OO_w/\p_w^r\OO_w)$. One readily sees that it obtained by picking $\alpha' = \tau_w(l)\alpha$ in $X_w$ instead of $\alpha$. This proves the second sentence of part (ii).

    Finally, part (iii) follows immediately from the fact that the analogous properties hold for $\varphi_{w,r}$. 
\end{proof}

Keeping the assumption and notation of Theorem \ref{thm:canonical Pwaord vec of type tau w for G1}, fix a vector $\alpha \in X_w$. From Lemma \ref{lma:Pword and Pwaord duality}, we know $\pi_w^\vee \hookrightarrow \ind{P_w}{G_w} \sigma_w^\vee$ is $P_w$-ordinary. 

Let $(\tau_w^\vee, X^\vee)$ be the SZ-type of $\pi_w^\vee$ and fix any $\alpha^\vee \in X^\vee$ such that $\brkt{\alpha}{\alpha^\vee}_{\tau_w} = 1$. Let $\phi_{w,r}^{\vee, \Pword}$ be the $P_w$-ordinary vector associated to $\alpha^\vee$ obtained from Theorem \ref{thm:canonical Pword vec of type tau w for G1} (iii). 

In fact, as $r$ increases, one may pick compatible choices of $\alpha$ so that \eqref{eq:trace of canonical Pwaord vectors for G1} holds and compatible choices of $\alpha^\vee$ such that $\brkt{\alpha}{\alpha^\vee}_{\tau_w} = 1$ for all $r \gg 0$. Then, as a consequence of Theorem \ref{thm:canonical Pwaord vec of type tau w for G1} (i),
\[
    \frac{
        \brkt
        {
            \phi_{w,r}^{\Pwaord}
        }{
            \phi_{w,r}^{\vee, \Pword}
        }_w 
    }{
        \vol(I_{w,r}^0)
    }
    = 
        \brkt{\alpha}{\alpha^\vee}_{\sigma_w}
    =
        \brkt{\alpha}{\alpha^\vee}_{\tau_w} = 1
\]  
is independent of $r \gg 0$.

Furthermore, one readily obtains a result analogous to Theorem \ref{thm:Pord type tau structure thm} from Theorem \ref{thm:canonical Pwaord vec of type tau w for G1}. Namely, let $\tau = \bigotimes_{w \in \Sigma_p} \tau_w$ be the SZ-type of $\pi_p$, using the identification \eqref{eq:facto G(Qp)}.

\begin{corollary} \label{cor:canonical Paord vec of type tau for G1}
    Let $\pi$ be an anti-holomorphic cuspidal representation of $G$ of weight $\kappa$. Suppose $\kappa$ satisfies Inequality \eqref{eq:ineq kappa sigma}. Then, $\pi$ is $P$-anti-ordinary if and only if $\pi^\vee$ is $P$-ordinary. 
    
    In that case, assume $\pi^\vee$ satisfies Hypothesis \ref{hyp:Qw equals Pw} and let $\tau = \bigotimes_{w \in \Sigma_p} \tau_w$ be the SZ-type of $\pi$. There exists a unique (up to the action of $L_P(\ZZ_p)$) $P$-anti-ordinary vector $\phi_{r}^{\Paord}$ of level $r$ and type $\tau$ in $\pi_{p}^{I_{P, r}}$. Furthermore, for each $w \in \Sigma_p$, there exists $P_w$-ordinary vectors $\phi_{w, r}^{\Pwaord}$ of level $r$ and type $\tau_w$ as in Theorem \ref{thm:canonical Pwaord vec of type tau w for G1} such that, under the identification $\pi_p = \mu_p \otimes \bigotimes_{w \in \Sigma_p} \pi_w$, we have $\phi_{r}^{\Paord} = \bigotimes_{w \in \Sigma_p} \phi_{w, r}^{\Pwaord}$.
\end{corollary}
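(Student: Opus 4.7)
The approach is to reduce the global statement at $p$ to the local statements at each $w \in \Sigma_p$, using the tensor-product factorization of Section \ref{subsec:Paord automorphic representations}. Concretely, \eqref{eq:facto G(Qp)}--\eqref{eq:facto pi p} give $\pi_p = \mu_p \otimes \bigotimes_{w \in \Sigma_p} \pi_w$, and \eqref{eq:facto PIwahori over Sigma p} gives $I_{P,r} = \ZZ_p^\times \times \prod_{w \in \Sigma_p} I_{w,r}$. Under these decompositions, the operator $e_{P,\kappa}^-$ of \eqref{eq:def e P -} factors as the commuting product of the local projectors $e_{P,w,\kappa}^-$, acting trivially on the $\mu_p$-factor since $\mu_p$ is unramified. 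I would first establish this compatibility explicitly, since it underlies the rest of the argument.

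For the biconditional, I would argue that $\pi$ is $P$-anti-ordinary of level $r$ precisely when each $\pi_w$ is $P_w$-anti-ordinary of level $r$, and then invoke Lemma \ref{lma:Pword and Pwaord duality} at every $w$ to obtain the equivalence with each $\pi_w^\vee$ being $P_w$-ordinary, which amounts to $\pi^\vee$ being $P$-ordinary (noting that $\mu_p^\vee$ is also unramified). This local-to-global reduction also transfers Hypothesis \ref{hyp:Qw equals Pw} for $\pi^\vee$ down to each $w$, so that Theorem \ref{thm:canonical Pwaord vec of type tau w for G1} becomes applicable at every place of $\Sigma_p$.

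Next, I would construct the vector $\phi_r^{\Paord}$ by applying Theorem \ref{thm:canonical Pwaord vec of type tau w for G1} at each $w \in \Sigma_p$ to produce $\phi_{w,r}^{\Pwaord} \in \pi_w^{I_{w,r}}$ of type $\tau_w$ from a choice of $\alpha_w$ in the underlying space $X_w$, and then set $\phi_r^{\Paord} := \bigotimes_{w \in \Sigma_p} \phi_{w,r}^{\Pwaord}$ (implicitly tensored with a generator of the one-dimensional $\mu_p$-factor). By construction this is $P$-anti-ordinary of level $r$ and transforms under $L_P(\ZZ_p)$ according to $\tau$. For uniqueness, Theorem \ref{thm:canonical Pwaord vec of type tau w for G1} (ii) identifies each local space of $P_w$-anti-ordinary vectors of type $\tau_w$ with $\tau_w$ itself as an $L_w(\OO_w)$-module of dimension $\dim \tau_w$; taking tensor product realizes the global space of $P$-anti-ordinary vectors of type $\tau$ as a single irreducible $L_P(\ZZ_p)$-representation isomorphic to $\tau$, of dimension $\prod_w \dim \tau_w = \dim \tau$. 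Consequently, any $P$-anti-ordinary vector of type $\tau$ lies in the $L_P(\ZZ_p)$-span of $\phi_r^{\Paord}$, which is the precise meaning of ``unique up to the action of $L_P(\ZZ_p)$''.

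The main delicate point will be the opening factorization step: one must verify that the global $\tau$-isotypic component of $\pi_p^{(\Paord,r)}$ is actually the tensor product of the local $\tau_w$-isotypic components of the $\pi_w^{(\Pwaord,r)}$, and not something larger. This is essentially a formal consequence of the fact that the operators $u_{w,D_w(j)}^-$ and the $L_w(\OO_w)$-actions at distinct places commute and act on distinct tensor factors of $\pi_p$, so no new ideas beyond Theorem \ref{thm:canonical Pwaord vec of type tau w for G1} and Lemma \ref{lma:Pword and Pwaord duality} should be needed; the care is purely in tracking the factorization consistently.
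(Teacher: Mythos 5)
Your proposal is correct and follows essentially the same route as the paper: the paper offers no separate proof of Corollary \ref{cor:canonical Paord vec of type tau for G1}, presenting it as an immediate consequence of Theorem \ref{thm:canonical Pwaord vec of type tau w for G1} and Lemma \ref{lma:Pword and Pwaord duality} via the factorization $\pi_p = \mu_p \otimes \bigotimes_{w \in \Sigma_p} \pi_w$, which is exactly the reduction you carry out (including the dimension count $\prod_w \dim \tau_w = \dim \tau$ for uniqueness up to the $L_P(\ZZ_p)$-action).
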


\subsection{$P$-ordinary theory on $G_2$.} \label{subsec:Pord theory on G2}
In this section, we proceed to compare the theory of $P$-(anti-)ordinary representations on $G_1$ and $G_2$, where $G_i$ is the unitary group associated to the PEL datum $\PP_i$ introduced in Section \ref{subsubsec:theory for G1 and G2}. We add a subscript $V$ (resp. $-V$) in our notation whenever we want to emphasize that we are working with $G_1$ (resp. $G_2$).

\subsubsection{Comparison between representations of $G_1$ and $G_2$.}
Recall that there is a canonical identification $G_1(\AA) = G_2(\AA)$. Furthermore, the identification from isomorphism \eqref{eq:prod G over Zp} remains the same for both $G_1$ and $G_2$. However, the opposite choices of $\OO_\KK \otimes \ZZ_p$-lattices $L_1^\pm = L_2^\mp$ introduce many changes in the notation. 

For instance, under the identification $G_1(\AA) = G_2(\AA)$, the group $H_{0, -V} = H_0$ for $G_1$ corresponds to $H_{0, -V}$ (by switching the roles of $\Lambda_0$ and $\Lambda_0^\vee$.) However, the identification from isomorphism \eqref{eq:prod H0 over S0} interchanges the role of $\sigma \in \Sigma_\KK$ and $\sigma c$ (where $c$ denotes complex conjugation). 

Given a dominant weight $\kappa$ of $T_1 := T_{H_0, V}$, it is identified with a tuple $(\kappa_0, (\kappa_\sigma)_\sigma)$ where $\kappa_0 \in \ZZ$ and $\kappa_{\sigma} \in \ZZ^{b_\sigma}$. The torus $T_2 := T_{H_0, -V}$ is equal to $T_1$ but the corresponding isomorphism \eqref{eq:prod H0 over S0} for $G_2$ identifies $\kappa$ with $(\kappa_0, (\kappa_{\sigma c})_\sigma)$. We denote the latter by $\kappa^\flat$. In particular, $\kappa_{\sigma c} \in \ZZ^{a_\sigma} = \ZZ^{b_{\sigma c}}$ and $\kappa^\flat$ is dominant with respect to $B_{H_0, -V}^\opp$.

As explained in \cite[Sections 6.2.1-6.2.2]{EHLS}, if $\pi$ is a cuspidal (anti-)holomorphic automorphic representation for $G_1$ of weight $\kappa$, then $\pi^\flat = \pi^\vee \otimes ||\nu||^{a(\kappa)}$ (as in Section \ref{subsubsec:the involution flat}) is naturally a cuspidal (anti-)holomorphic automorphic representation for $G_2$ of weight $\kappa^\flat$.

Furthermore, by choosing the same partitions $\d_w$ introduced in Section \ref{subsubsec:para subgp of G over Zp}, the parabolic subgroup $P_w \subset \GL_n(\OO_w)$ for $G_1$ corresponding to $w \in \Sigma_p$ is replaced by the opposite parabolic subgroups, which in our case is simply its transpose $\tp{P}_w \subset \GL_n(\OO_w)$, when working with $G_2$. Similarly, $P$ is replaced by $\tp{P}$ and the (resp. pro-$p$) $P$-Iwahori subgroup of level $r$ is replaced by the (resp. pro-$p$) $\tp{P}$-Iwahori subgroup of level $r$.

In particular, if $\pi_p \cong \mu_p \otimes \bigotimes_{w \in \Sigma_p} \pi_w$ is the identification obtained from \eqref{eq:facto G(Qp)} for $G_1$, the corresponding factorization on $G_2$ induces
\[
    \pi_p^\flat \cong \mu_p^\flat \otimes \bigotimes_{w \in \Sigma_p} \pi_w^\flat \ ,
\]
where $\pi_w^\flat = \pi_w^\vee$ and $\mu_p^\flat = \mu_p^{-1}|\nu|_p^{a(\kappa)}$, by definition of $\pi^\flat$.

\subsubsection{Structure theorem for $P$-ordinary representations of $G_2$.}  \label{subsubsec:strucutre thm for Pord rep of G2}
The discussion above shows that $\pi_w$ is $P_w$-ordinary of level $r \gg 0$ (for $G_1$) if and only if $\pi_w^\flat$ is $P_w$-ordinary of level $r \gg 0$ (for $G_2$). As explained in Section \ref{subsubsec:comp of lvl structures}, adapting the definitions for $P$-ordinary theory from $G_1$ to $G_2$ requires to change $P_w$ for $\tp{P}_w$ and the double coset operators $U_{w,j}^{\GL}$ for $U_{w,j}^{\flat, \GL} = \tp{I}_{w,r} t_{w,j}^{-1} \tp{I}_{w,r}$.The analogue of Theorem \ref{thm:canonical Pword vec of type tau w for G1} is the following.

\begin{lemma} \label{lma:canonical Pword vec of type tau w for G2}
    Let $\pi$ be an holomorphic cuspidal representation of $G_1$. Suppose its weight $\kappa$ satisfies Inequality \eqref{eq:ineq kappa sigma}. Assume that $\pi_w$ is $P_w$-ordinary of level $r \gg 0$ (for $G_1$) and that it is the unique irreducible subrepresentation of $\ind{P_w}{G_w} \sigma_w$ for some irreducible supercuspidal $\sigma_w$. Let $(\tau_w, X_w)$ be the SW-type of $\pi_w$.

    \begin{enumerate}
        \item[(i)] The unique irreducible quotient of $\ind{P_w}{G_w} \sigma_w^\vee$ is isomorphic to $\pi_w^\flat$.
        
        \item[(ii)] Let $(\tau_w^\vee, X_w^\vee)$ be the contragredient of $(\tau_w, X_w)$, the BK-type of $\sigma_w^\vee$. Consider $X_w^\vee$ as a subspace of the vector space associated to $\sigma_w^\vee$, via a fix embedding (unique up to scalar) $\tau_w^\vee \hookrightarrow \sigma_w^\vee$. 
        
        For any $\alpha^\vee \in X_w^\vee$, let $\varphi^\flat_w \in \ind{P_w}{G_w} \sigma_w^\vee$ be the unique function with support $P_w\tp{I}_{w,r}$ (for all $r \gg 0$) such that $\varphi^\flat_{w}(1) = \alpha^\vee$ and $\varphi^\flat_{w}$ is fixed by $\tp{I}_{w,r}$ (for all $r \gg 0$). Let $\phi^\flat_w$ denote its image in $\pi_w^\flat$.

        Then, $\phi^\flat_w$ is $P_w$-ordinary of type $\tau_w^\vee = \tau_w^\flat$ of level $r \gg 0$. In particular, $\pi_w^\flat$ is $P_w$-ordinary of level $r \gg 0$ (for $G_2$).
        
        This induces a natural isomorphism between $\tau_w^\flat$ and the subspace of $\pi_w^\flat$ of $P_w$-ordinary vectors of type $\tau_w^\flat$ of level $r \gg 0$. The latter is independent of $r \gg 0$ and has dimension $\dim \tau_w^\flat = \dim \tau_w$.
    \end{enumerate}
\end{lemma}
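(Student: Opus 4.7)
The plan is to bootstrap both parts of the lemma from the corresponding results for $G_1$ (Theorem \ref{thm:canonical Pword vec of type tau w for G1} and Theorem \ref{thm:Pord type tau structure thm}) together with the comparison dictionary between $G_1$ and $G_2$ laid out at the beginning of Section \ref{subsec:Pord theory on G2} and in Remark \ref{rmk:P Iwa vs tp P Iwa}. The key point is that under the identification $G_1(\QQ_p) = G_2(\QQ_p)$, the parabolic used on the $G_2$-side is $\tp{P}_w = P_w^\opp$, and $\pi_w^\flat$ differs from $\pi_w^\vee$ only by an unramified twist. Taking contragredients and opposite parabolics will translate the $G_1$ statements into the desired $G_2$ statements.

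For part (i), I first dualize the given inclusion $\pi_w \hookrightarrow \ind{P_w}{G_w} \sigma_w$. Using that normalized parabolic induction is self-dual, i.e. $(\ind{P_w}{G_w}\sigma_w)^\vee \cong \ind{P_w}{G_w}\sigma_w^\vee$, the inclusion becomes a surjection $\ind{P_w}{G_w} \sigma_w^\vee \twoheadrightarrow \pi_w^\vee$ and uniqueness of the irreducible submodule on the $G_1$-side dualizes to uniqueness of the irreducible quotient. To pass from $P_w$ to $P_w^\opp$ (the parabolic actually used on $G_2$), I invoke the standard fact that for a supercuspidal $\sigma$ on the common Levi, the inductions $\ind{P}{G}\sigma$ and $\ind{P^\opp}{G}\sigma$ share the same Jordan--H\"older constituents and that the unique irreducible quotient of one coincides with the unique irreducible submodule of the other (realized via the standard intertwining operator). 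Finally, the unramified twist distinguishing $\pi_w^\flat$ from $\pi_w^\vee$ commutes with parabolic induction, so it can be absorbed into the supercuspidal datum on the inducing Levi.

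For part (ii), I apply the $G_2$-analogue of Theorem \ref{thm:canonical Pword vec of type tau w for G1}(iii) to $\pi_w^\flat$: its proof is formally identical, with $P_w$, $I_{w,r}$ and $\tau_w$ replaced throughout by $P_w^\opp = \tp{P}_w$, $\tp{I}_{w,r}$ and $\tau_w^\vee$. Part (i) supplies the required presentation of $\pi_w^\flat$ as the unique irreducible quotient of a parabolic induction of an appropriate supercuspidal from $\tp{P}_w$; the SZ-type of this supercuspidal is $\tau_w^\vee = \tau_w^\flat$ by Remark \ref{rmk:SZ types of twists and contragredient}; and the dominance inequality \eqref{eq:ineq kappa sigma} is stable under $\kappa \mapsto \kappa^\flat$. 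The vector $\phi_w^\flat$ described in the lemma is then precisely the canonical $P_w^\opp$-ordinary lift of $\alpha^\vee \in X_w^\vee$, and the resulting isomorphism between $\tau_w^\flat$ and the space of $P_w^\opp$-ordinary vectors of that type (for $r \gg 0$), as well as its independence of $r$, is the $G_2$-analogue of Theorem \ref{thm:Pord type tau structure thm}.

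The main obstacle is bookkeeping: carefully verifying that the simultaneous interchange of $(P_w, \tau_w, \sigma_w)$ on $G_1$ with $(P_w^\opp, \tau_w^\vee, \sigma_w^\vee \otimes \mathrm{twist})$ on $G_2$ preserves Hypothesis \ref{hyp:Qw equals Pw} (so the intertwining argument in part (i) indeed applies to $\pi_w^\flat$) and that the normalizations of pairings fixed in Section \ref{subsubsec:conv on contragredient pairings} carry over to the $G_2$-side, so that $\alpha^\vee \in X_w^\vee$ plays for $\pi_w^\flat$ exactly the role that $\alpha \in X_w$ played for $\pi_w$.
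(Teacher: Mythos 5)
Your proposal is correct and follows essentially the same route as the paper: both reduce to Theorem \ref{thm:canonical Pword vec of type tau w for G1} applied with $(P_w,\sigma_w)$ replaced by $(\tp{P}_w,\sigma_w^\vee)$, and both use the standard intertwining operator $\ind{P_w}{G_w}\sigma_w^\vee \to \ind{\tp{P}_w}{G_w}\sigma_w^\vee$ to pass between the two induction models. The only cosmetic difference is in part (i): the paper realizes $\pi_w^\flat$ as the unique irreducible subrepresentation of $\ind{\tp{P}_w}{G_w}\sigma_w^\vee$ via the explicit map $\phi \mapsto \phi(\tp{g}^{-1})$, whereas you dualize the inclusion using $(\ind{P_w}{G_w}\sigma_w)^\vee \cong \ind{P_w}{G_w}\sigma_w^\vee$; both are fine (and at $w \in \Sigma_p$ one has $\pi_w^\flat = \pi_w^\vee$ exactly, so no twist needs to be absorbed). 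Two small points to tighten. First, the input for the $\tp{P}_w$-analogue of Theorem \ref{thm:canonical Pword vec of type tau w for G1}(iii) is the realization of $\pi_w^\flat$ as the unique irreducible \emph{sub}representation of the induction \emph{from $\tp{P}_w$} --- which you do establish in your part (i) discussion --- not the quotient realization from $P_w$ that you cite; as written, your part (ii) invokes the wrong half of that dictionary. Second, since the vector $\varphi_w^\flat$ of the statement lives in $\ind{P_w}{G_w}\sigma_w^\vee$ (where $\pi_w^\flat$ is a quotient) while the canonical ordinary vector produced by the theorem lives in $\ind{\tp{P}_w}{G_w}\sigma_w^\vee$ (where it is a sub), you still need to check, as the paper does, that the intertwining operator carries the former to the latter; this is a short support computation comparing $P_w\tp{I}_{w,r}$ with $\tp{P}_w\tp{I}_{w,r}$, but it is the step that actually transfers $P_w$-ordinarity to the specific vector $\phi_w^\flat$ named in the lemma.
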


\begin{proof}
    Consider the composition of $\pi_w \hookrightarrow \ind{P_w}{G_w} \sigma_w$ with the map (of vector spaces)
    \begin{align*}
        \ind{P_w}{G_w} \sigma_w 
            &\to 
                \ind{\tp{P}_w}{G_w} \sigma^\vee_w \\
        \phi 
            &\mapsto
                \phi^\vee(g) := \phi(\tp{g}^{-1})
    \end{align*}

    Its image is $\pi_w^\flat = \pi_w^\vee$ and the above realizes $\pi_w^\flat$ as the unique irreducible subrepresentation of $\ind{\tp{P}_w}{G_w} \sigma^\vee_w$. In particular, all the consequences of Theorem \ref{thm:canonical Pword vec of type tau w for G1} hold for $\pi_w^\flat$ by replacing $P_w$ by $\tp{P}_w$ and $\sigma_w$ by $\sigma_w^\vee$. 
    
    Given $\alpha^\vee \in X_w^\vee$ as above, let $\phi_w^\vee \in \pi_w^{\vee, I_{w,r}}$ and $\varphi_w^\vee \in \ind{\tp{P}_w}{G_w} \sigma^\vee_w$ be the vectors obtained from Theorem \ref{thm:canonical Pword vec of type tau w for G1} (iii) associated to $\alpha^\vee$. In particular, $\phi_w^\vee$ is a $P_w$-ordinary vector of type $\tau_w^\vee$ and the subspace generated by the action of $L_w(\OO_w)$ on $\phi_w^\vee$ is exactly the space of all $P_w$-ordinary vectors of type $\tau_w^\vee$. In particular, the latter is independent of $r \gg 0$ and isomorphic to $\tau_w^\vee$ as a representation of $L_w(\OO_w)$.

    Lastly, consider the standard intertwining operator $\ind{P_w}{G_w} \sigma^\vee_w \to \ind{\tp{P_w}}{G_w} \sigma^\vee_w$. Its image is $\pi_w^\flat$, hence identifies $\pi_w^\flat$ as the unique irreducible quotient of $\ind{P_w}{G_w} \sigma^\vee_w$.
    
    One readily checks that under this intertwining operator, the vector $\varphi_w^\flat \in \ind{P_w}{G_w} \sigma^\vee_w$ described in the statement of the lemma maps to $\varphi_w^\vee$. Therefore, the desired properties for $\phi_w^\flat$ can be verified through $\varphi_w^\vee$.
\end{proof}

\subsection{$P$-anti-ordinary theory on $G_2$.} \label{subsec:Paord theory on G2}
Going back to the discussion of Section \ref{subsec:Pord theory on G2}, we know that $\pi_w$ is $P_w$-anti-ordinary of level $r \gg 0$ (for $G_1$) if and only if $\pi_w^\flat$ is $P_w$-anti-ordinary of level $r \gg 0$ (for $G_2$). As explained in Section \ref{subsubsec:comp of lvl structures}, adapting the definitions for $P$-anti-ordinary theory from $G_1$ to $G_2$ requires to change $P_w$ for $\tp{P}_w$ and the double coset operators $U_{w,j}^{\GL, -}$ for $U_{w,j}^{\flat, \GL, -} = \tp{I}_{w,r} t_{w,j} \tp{I}_{w,r}$. The analogue of Theorem \ref{thm:canonical Pwaord vec of type tau w for G1} is the following.

\begin{lemma} \label{lma:canonical Pwaord vec of type tau w for G2}
    Let $\pi$ be an anti-holomorphic cuspidal representation of $G_1$. Suppose its weight $\kappa$ satisfies Inequality \eqref{eq:ineq kappa sigma}. Assume that $\pi_w$ is $P_w$-anti-ordinary of level $r \gg 0$ (for $G_1$) and that it is the unique irreducible quotient of $\ind{P_w}{G_w} \sigma_w$ for some irreducible supercuspidal $\sigma_w$. Let $(\tau_w, X_w)$ be the SZ-type of $\pi_w$.

    \begin{enumerate}
        \item[(i)] The unique irreducible subrepresentation of $\ind{P_w}{G_w} \sigma_w^\vee$ is isomorphic to $\pi_w^\flat$.
        
        \item[(ii)] Let $(\tau_w^\vee, X_w^\vee)$ be the contragredient of $(\tau_w, X_w)$, the SZ-type of $\sigma_w^\vee$. Consider $X_w^\vee$ as a subspace of the vector space associated to $\sigma_w^\vee$, via a fix embedding (unique up to scalar) $\tau_w^\vee \hookrightarrow \sigma_w^\vee$. 
        
        For each $r \gg 0$ and $\alpha \in X_w^\vee$, there exists some unique $P_w$-anti-ordinary $\phi^\flat_{w,r} \in \pi_w^{\flat, \tp{I}_r}$ of type $\tau_w^\vee$ and level $r$ such that $\varphi^\flat_{w,r}(1) = \alpha$, where $\varphi^\flat_{w,r}$ is the image of $\phi^\flat_{w,r}$ in $\ind{P_w}{G_w} \sigma^\vee_w$, and the support of $\varphi^\flat_{w,r}$ contains $P_w \tp{I}_{w,r}$. In particular, $\pi_w^\flat$ is $P_w$-anti-ordinary of level $r \gg 0$ (for $G_2$).

        \item[(iii)] For $r' > r \gg 0$, one can choose $\alpha$, $\alpha' \in X_w^\vee$ such that the vectors $\phi^\flat_{w,r}$ and $\phi^\flat_{w,r'}$ corresponding to $\alpha$ and $\alpha'$ respectively satisfy
        \[
            \sum_{
                \gamma \in \tp{I}_{w,r}/(\tp{I}_{w,r'}^0 \cap \tp{I}_{w,r})
            }
                \pi^\flat_w(\gamma) \phi_{w,r'}^\flat = 
                \phi_{w,r}^\flat
        \]
    \end{enumerate}
\end{lemma}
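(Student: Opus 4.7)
The proof will parallel that of Lemma \ref{lma:canonical Pword vec of type tau w for G2}, interchanging the roles of sub-representation and quotient throughout to pass from the holomorphic $P$-ordinary setting to the anti-holomorphic $P$-anti-ordinary one.

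For part (i), I use that normalized parabolic induction commutes with taking contragredients: dualizing the surjection $\ind{P_w}{G_w} \sigma_w \twoheadrightarrow \pi_w$ yields an injection $\pi_w^\vee \hookrightarrow \ind{P_w}{G_w} \sigma_w^\vee$, and the uniqueness of the irreducible subrepresentation on the right is dual to the uniqueness of the irreducible quotient on the left. Since $\pi_w^\flat = \pi_w^\vee$ as local components at $w$ (the twist by $|\xi_\pi \circ \nu|$ being unramified at $p$), this realizes $\pi_w^\flat$ as the unique irreducible subrepresentation of $\ind{P_w}{G_w}\sigma_w^\vee$.

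For part (ii), I mirror the device used in the proof of Lemma \ref{lma:canonical Pword vec of type tau w for G2}: the vector-space assignment $\phi \mapsto \phi^\vee(g) := \phi(\tp{g}^{-1})$ is an isomorphism from $\ind{P_w}{G_w}\sigma_w$ to $\ind{\tp{P}_w}{G_w}\sigma_w^\vee$ that intertwines the $G_w$-actions after the appropriate twist and sends $I_{w,r}$-invariant vectors to $\tp{I}_{w,r}$-invariant vectors. It thus transports the quotient map $\ind{P_w}{G_w}\sigma_w \twoheadrightarrow \pi_w$ to a surjection $\ind{\tp{P}_w}{G_w}\sigma_w^\vee \twoheadrightarrow \pi_w^\flat$. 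Applying Theorem \ref{thm:canonical Pwaord vec of type tau w for G1} to this latter realization, with $\tp{P}_w$, $\sigma_w^\vee$, $\tau_w^\vee$, $\tp{I}_{w,r}$ in place of $P_w$, $\sigma_w$, $\tau_w$, $I_{w,r}$, produces for every $\alpha \in X_w^\vee$ a canonical vector $\tilde{\varphi}_{w,r} \in \ind{\tp{P}_w}{G_w}\sigma_w^\vee$ with support $\tp{P}_w \tp{I}_{w,r}$ and $\tilde{\varphi}_{w,r}(1) = \alpha$, whose image $\phi^\flat_{w,r} \in \pi_w^\flat$ is $\tp{P}_w$-anti-ordinary, i.e. $P_w$-anti-ordinary for $G_2$ under the conventions of Section \ref{subsubsec:theory for G1 and G2}.

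The remaining task is to translate this construction through the subrepresentation embedding of part (i): I must verify that the image $\varphi^\flat_{w,r} \in \ind{P_w}{G_w}\sigma_w^\vee$ of $\phi^\flat_{w,r}$ satisfies $\varphi^\flat_{w,r}(1) = \alpha$ and has support containing $P_w \tp{I}_{w,r}$. Uniqueness of $\phi^\flat_{w,r}$ among $\tp{I}_{w,r}$-fixed $P_w$-anti-ordinary vectors of type $\tau_w^\vee$ with a prescribed value at $1$ then follows by duality from the analogue of Theorem \ref{thm:canonical Pword vec of type tau w for G1}(i). Finally, part (iii) is obtained by applying Theorem \ref{thm:canonical Pwaord vec of type tau w for G1}(iii) to produce a compatible family of choices $\alpha, \alpha' \in X_w^\vee$ for which the averaging identity holds on the $\tilde{\varphi}_{w,r'}$'s, then transporting this identity through the same identifications. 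The main obstacle is the bookkeeping in this last step: although conceptually analogous to the closing argument in the proof of Lemma \ref{lma:canonical Pword vec of type tau w for G2}, one must carefully match the characterization of a vector by its support in the quotient realization $\ind{\tp{P}_w}{G_w}\sigma_w^\vee \twoheadrightarrow \pi_w^\flat$ with the corresponding characterization in the subrepresentation realization $\pi_w^\flat \hookrightarrow \ind{P_w}{G_w}\sigma_w^\vee$, which is ultimately governed by the explicit form of the standard intertwining operator relating these two induced representations.
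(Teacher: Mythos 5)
Your proposal is correct and follows essentially the same route as the paper: realize $\pi_w^\flat$ as the unique irreducible quotient of $\ind{\tp{P}_w}{G_w}\sigma_w^\vee$ via $\phi \mapsto \phi(\tp{g}^{-1})$, apply Theorem \ref{thm:canonical Pwaord vec of type tau w for G1} in that realization, and transport the resulting vectors through the standard intertwining operator into the subrepresentation realization $\pi_w^\flat \hookrightarrow \ind{P_w}{G_w}\sigma_w^\vee$, where the support and evaluation-at-$1$ properties are checked by direct computation. The only cosmetic difference is that you establish part (i) by dualizing the surjection rather than by identifying the image of the intertwining operator, but both identifications of the unique irreducible subrepresentation agree.
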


\begin{proof}
    As in the proof of Lemma \ref{lma:canonical Pword vec of type tau w for G2}, the map
    \begin{align*}
        \ind{\tp{P}_w}{G_w} \sigma^\vee_w 
            &\to 
                \ind{P_w}{G_w} \sigma_w \\
        \phi 
            &\mapsto
                \phi^\vee(g) := \phi(\tp{g}^{-1}) \ ,
    \end{align*}
    realizes $\pi_w^\flat = \pi_w^\vee$ as the unique irreducible quotient of $\ind{\tp{P}_w}{G_w} \sigma^\vee_w$. 
    
    In particular, all the consequences of Theorem \ref{thm:canonical Pwaord vec of type tau w for G1} hold for $\pi_w^\flat$ by replacing $P_w$ by $\tp{P}_w$ and $\sigma_w$ by $\sigma_w^\vee$. Given $\alpha \in X_w^\vee$ as above, let $\varphi_{w,r}' \in \ind{\tp{P}_w}{G_w} \sigma^\vee_w$ be the vectors obtained from Theorem \ref{thm:canonical Pwaord vec of type tau w for G1} associated to $\alpha$.
    
    Furthermore, consider the standard intertwining operator 
    $
        \ind{\tp{P_w}}{G_w} \sigma^\vee_w
            \xrightarrow{\sim} 
        \ind{P_w}{G_w} \sigma^\vee_w 
    $. Its image is both the unique irreducible quotient of $\ind{\tp{P_w}}{G_w} \sigma^\vee_w$, namely $\pi_w^\flat$, and the unique irreducible subrepresentation of $\ind{P_w}{G_w} \sigma^\vee_w$. This proves part (i).
    
    To conclude, let $\phi_{w,r}^\flat$ (resp. $\varphi_{w,r}^\flat$) be the image of $\varphi_{w,r}'$ in $\pi_w^\vee$ (resp. $\ind{P_w}{G_w} \sigma^\vee_w$) via this intertwining operator. The fact that $\phi_{w,r}^\flat$ is $\tp{P}_w$-anti-ordinary of type $\tau_w^\vee$ and level $r$ follows from Theorem \ref{thm:canonical Pwaord vec of type tau w for G1} (ii). Similarly, part (iii) follows from Theorem \ref{thm:canonical Pwaord vec of type tau w for G1} (iii) (upon making the appropriate adjustments between $G_1$ and $G_2$). The properties of $\varphi_{w,r}'$ are obtained from an easy computation using the definition of $\varphi_{w,r}'$ and the exact formula for the intertwining operator above.
\end{proof}


\section{Explicit choice of $P$-(anti-)ordinary vectors.} \label{sec:explicit choice of P-(anti-)ord vectors}
In what follows, we freely use the notation from Sections \ref{subsubsec:ramified places away from p} and \ref{subsubsec:contragredient rep and pairings}. In particular, let $\pi = \pi_\infty \otimes \pi_f$ be a cuspidal automorphic representation for $G_1$ of level $K \subset G_1(\AA_f)$ and unramified away from $S = S(K^p)$ and $p$, and let $\pi^\vee$ denote its contragredient.

The goal of this section is to single out a set of \emph{test vectors} in a $P$-anti-ordinary anti-holomorphic cuspidal automorphic representation $\pi$ on $G_1$. Our strategy is to construct local test vectors $\varphi_l \in \pi_l$ for all places $l$ of $\QQ$ and consider $\varphi = \otimes_l \varphi_l \in \pi$ via \eqref{eq:facto pi f}.

Then, we use the involutions in Section \ref{subsec:comp between G1 and G2} to obtain a compatible space of test vectors for $\pi^\flat$ on $G_2$. Recall that $\pi^\flat$ is defined as a twist of $\pi^\vee$, hence it suffices specify a space of test vectors in $\pi^\vee$. 

Throughout this section, we assume that $\pi$ is anti-holomorphic of a certain weight $\kappa$, hence $\pi_f$ (and $\pi_f^\flat$) is defined over some number field $E(\pi)$, see Remark \ref{rmk:E pi rational}. Recall that we always assume that $E(\pi)$ contains $\KK'$. We further assume that $\pi$ (resp. $\pi^\flat$) is $P$-anti-ordinary of level $r \gg 0$ for $G_1$ (resp. for $G_2$).

We work with the $G(\QQ_l)$-equivariant perfect pairing $\brktdotdot_{\pi_l} : \pi_l \times \pi_l^\vee \to \CC$, for each place $l \leq \infty$ of $\QQ$, as in Section \ref{subsubsec:contragredient rep and pairings}.

\begin{remark}
    Using the involution $F_\infty$ from Section \ref{subsubsec:the involution Foo}, this leads to test vectors for holomorphic, $P$-ordinary cuspidal automorphic representations.
\end{remark}

\subsection{Local test vectors at places away from $p$ and $\infty$.} \label{subsec:Local test vectors at places away from p and oo}

\subsubsection{Local test vectors at unramified places.} \label{subsubsec:Local test vectors at finite unramd places}
For each finite prime $l \notin S \cup \{p\}$, we fix $E(\pi)$-rational $K_l$-spherical vectors $\varphi_{l, 0} \in \pi_l$ and $\varphi_{l, 0}^\vee \in \pi_l^\vee$ such that $\brkt{\varphi_{l,0}}{\varphi_{l,0}^\vee}_{\pi_l} = 1$, as in Section \ref{subsubsec:contragredient rep and pairings}.

\subsubsection{Local test vectors at ramified places.} \label{subsubsec:Local test vectors at finite ramd places}
On the other hand, the choice of local test vectors at $l \in S$ is non-canonical. We adapt the same conventions as in \cite[Section 4.2.2]{EHLS}.

Given $l \in S$, fix an arbitrary irreducible $U_1(\QQ_l)$-subrepresentation $\ul{\pi}_l$ of $\pi_l$. The dual $\ul{\pi}_l^\vee$ of $\ul{\pi}_l$ occurs as an irreducible $U_1(\QQ_l)$-subrepresentation of $\pi_l^\vee$. Furthermore, the bilinear $\brktdotdot_{\pi_l} : \pi_l \times \pi_l^\vee \to \CC$ induces a perfect $U_1(\QQ_l)$-equivariant pairing between $\ul{\pi}_l$ and $\ul{\pi}_l^\vee$, again denoted $\brktdotdot_{\pi_l}$.

Since $U_1$ is the restriction of scalar of a reductive group from $\KK^+$ to $\QQ$, we have $U_1(\QQ_l) = \prod_{v \mid l} U_{1, v}$, where the product is over the places $v$ of $\KK^+$ above $l$ and $U_{1, v}$ is the set of $\KK_v^+$-points of a unitary group over $\KK^+$. Similarly, we obtain
\begin{equation} \label{eq:facto pi l ramified}
    \ul{\pi}_l \cong \bigotimes_{v \mid l} \ul{\pi}_v
        \;\;\;\text{and}\;\;\;
    \ul{\pi}_l^\vee \cong \bigotimes_{v \mid l} \ul{\pi}_v^\vee
\end{equation}
for irreducible admissible representations $\ul{\pi}_v$ and $\ul{\pi}_v^\vee$ of $U_{1,v}$. 

Naturally, there are $U_{1,v}$-equivariant perfect pairings $\brktdotdot_{\pi_v} : \ul{\pi}_v \times \ul{\pi}_v^\vee \to \CC$, identifying $\ul{\pi}_v^\vee$ as the contragredient of $\ul{\pi}_v$, such that $\brktdotdot_{\pi_l} = \prod_{v \mid l} \brktdotdot_{\pi_v}$. 

Fix \emph{any} nonzero vectors $\varphi_v \in \ul{\pi}_v$ and $\varphi_v^\vee \in \ul{\pi}_v^\vee$ such that $\brkt{\varphi_v}{\varphi_v^\vee}_{\pi_v} = 1$. Our choice of local test vectors $\varphi_l \in \pi_l$ and $\varphi_l^\vee \in \pi_l^\vee$ are
\[
    \varphi_l 
        := 
    \otimes_{v \mid l} \varphi_v
        \;\;\;\text{and}\;\;\;
    \varphi_l^\vee 
        := 
    \otimes_{v \mid l} \varphi_v^\vee\,,
\]
via \eqref{eq:facto pi l ramified}. In Section \ref{subsubsec:I pi and test vectors}, we restrict our attention slightly and choose an integral structure for such local test vectors.

\begin{remark} \label{rmk:test vectors, SW section and volume}
    In Section \ref{subsec:Local SW section away from p and oo}, we use this naive choice of test vectors at $l \in S$ suffices to obtain non-zero constant local zeta integrals, essentially volume factors, that are insensitive to the variation of $\pi$ in a $p$-adic family. This approach is standard in the literature, see \cite[Section 4.2.2]{EHLS}.
\end{remark}

\subsection{Local test vectors at $p$} \label{subsec:Local test vectors at p}
In this section, we choose test vectors at $p$ following the strategy developed in \cite[Section 4.3.4]{EHLS}. However, to generalize their results, we need to work out various extra details due to the fact that spaces of $P$-ordinary vectors are not 1-dimensional in general, see Theorem \ref{thm:canonical Pwaord vec of type tau w for G1} and Lemma \ref{lma:canonical Pwaord vec of type tau w for G2}. The theory of types of $P$-(anti-)ordinary vectors here is used as a substitute for the lack of ``ordinary nebentypus'', see \cite[Section 6.6.6]{EHLS}, in the general $P$-(anti-)ordinary setting.

\subsubsection{Local representations over CM type at $p$.} \label{subsubsec:Local rep over CM type at p}
Let $w \in \Sigma_p$ and set $G_w := \GL_n(\KK_w)$. As in Section \ref{subsec:Paord automorphic representations}, the isomorphisms \eqref{eq:prod G over Zp} and \eqref{eq:GL(Lw) basis} induce an identification $G(\QQ_p) = \QQ_p^\times \times \prod_{w \in \Sigma_p} G_w$ as well as an isomorphism
\begin{equation} \label{eq:pi p iso mu p otimes mu w}
    \pi_p \cong \mu_p \otimes \left( \bigotimes_{w \in \Sigma_p} \pi_w \right) \,,
\end{equation}
where $\mu_p$ is some character of $\QQ_p$ and $\pi_w$ 
is an irreducible admissible representations of $G_w$. Since $\pi$ is $P$-anti-ordinary of level $r \gg 0$, we know $\mu_p$ is unramified and
\[
    \pi_p^{I_{P, r}} \cong \bigotimes_{w \in \Sigma_p} \pi_w^{I_{w, r}} \neq 0 \ .
\]

Similarly, for the contragredient $\pi^\vee$ of $\pi$, we have
\begin{equation}  \label{eq:pi p flat iso mu p flat otimes mu w flat}
    \pi^\vee_p \cong \mu_p^{-1} \otimes \left( \bigotimes_{w \in \Sigma_p} \pi^\vee_w \right)
\end{equation}
and $\left( \pi^\vee_w \right)^{\tp{I_{w, r}}} \neq 0$ for each $w \in \Sigma_p$. Note that $\pi_w^\vee = \pi_w^\flat$ for each $w \in \Sigma_p$.

\subsubsection{Compatibility of parabolic subgroups.} \label{subsubsec:Comp of parabolic subgroups}
For each $w \in \Sigma_p$ and integer $d \geq 1$, let $G_w(d)$ denote the algebraic group $\GL(d)$ over $\OO_w = \OO_{\KK_w}$. However, when $d = n$, we still write $G_w$ instead of $G_w(n)$. Let $(a_w, b_w)$ be the signature at $w \in \Sigma_p$ associated to the PEL datum $\PP = \PP_1$, as in Section \ref{subsec:unitary pel datum}. 

Proceeding as in Section \ref{subsubsec:para subgp of G over Zp}, let $P_{a_w} \subset G_w(a_w)$, $P_{b_w} \subset G_w(b_w)$ and $P_{a_w, b_w} \subset G_w$ be the standard upper triangular parabolic subgroups associated to partitions
\[
    \d_{a_w} = 
        (n_{w, 1}, \ldots, n_{w, t_w}) 
    \ \ \ ; \ \ \ 
    \d_{b_w} = 
        (n_{w, t_w+1}, \ldots, n_{w, r_w}) 
    \ \ \ ; \ \ \
    \d_{w} = 
        (a_w, b_w) 
\]
of $a_w$, $b_w$ and $n$, respectively. We also work with the parabolic subgroup $P_w \subset G_w$ constructed in \eqref{eq:def of local parabolic Pw}. Note that $P_w \subset P_{a_w, b_w} \subset G_w$.

For any one of these parabolic subgroup $P_\bullet$, let $L_\bullet$ denote its standard Levi subgroup consisting of block-diagonal matrices (corresponding to the decomposition defining $P_\bullet$). Similarly, consider the pro-$p$ Iwahori subgroup $I_{\bullet, r}$ of level $r$ associated to $P_\bullet$ consisting of invertible matrices $g$ (of the appropriate size) over $\OO_w$ such that $g$ mod $\p_w^r$ is in $P_\bullet^u(\OO_w/\p_w^r\OO_w)$, see Definition \ref{def:def of PIwahori of level r} and \eqref{eq:facto PIwahori over Sigma p}.

Let $K_\bullet = L_\bullet(\OO_w)$ and $I_{\bullet, r}^0 = K_\bullet I_{\bullet, r}$. Setting $K_{w,j} = \GL_{n_{w,j}}(\OO_w)$, we have
\[
    K_{a_w} = 
        \prod_{j=1}^{t_w}
            K_{w,j} \ \ \ ; \ \ \
    K_{b_w} = 
        \prod_{j=t_w+1}^{r_w}
            K_{w,j} \ \ \ ; \ \ \
    K_{w} = 
        K_{a_w} \times K_{b_w} \ ,
\]
where the products take place in $G_w(a_w)$, $G_w(b_w)$ and $G_w$, respectively.

\subsubsection{Compatibility of local representations} \label{subsubsec:comp of local representations}
Since $\pi$ is $P$-anti-ordinary, we may assume (see Lemma \ref{lma:Jacquet Lemma}, Lemma \ref{lma:Pword and Pwaord duality} and Section \ref{subsubsec:conv on contragredient pairings}) that there exists an admissible irreducible representation $\sigma_w$ of $L_w$ such that $\pi_w$ is the unique irreducible quotient of $\ind{P_w}{G_w} \sigma_w$. Equivalently, $\pi^\vee_w$ is the unique irreducible subrepresentation of $\ind{P_w}{G_w} \sigma^\vee_w$.

\begin{remark}
    We do not assume that $\sigma_w$ is supercuspidal.
\end{remark}

Write $\sigma_w = \boxtimes_{j=1}^{r_w} \sigma_{w,j}$ and consider the representations  
\[
    \sigma_{a_w} = 
        \boxtimes_{j=1}^{t_w}
            \sigma_{w,j} \ \ \ ; \ \ \
    \sigma_{b_w} = 
        \boxtimes_{j=t_w+1}^{r_w}
            \sigma_{w,j}
\]
of $L_{a_w}$ and $L_{b_w}$. Let $\pi_{a_w}$ and $\pi_{b_w}$ be the unique irreducible quotients
\begin{equation} \label{def pi aw and pi bw}
    \ind{P_{a_w}}{G_w(a_w)} \sigma_{a_w} 
        \twoheadrightarrow \pi_{a_w}
    \ \ \ \text{and} \ \ \
    \ind{P^{\opp}_{b_w}}{G_w(b_w)} \sigma_{b_w} 
        \twoheadrightarrow \pi_{b_w} \ ,
\end{equation}
and set $\pi_{a_w, b_w} := \pi_{a_w} \boxtimes \pi_{b_w}$. Under the canonical isomorphism
\begin{equation}\label{iso induced rep}
    \ind{P_w}{G_w} \sigma_w 
        \xrightarrow{\sim}
    \ind{P_{a_w, b_w}}{G_w} 
    \left(
        \ind{
            P_{a_w} \times P_{b_w}^{\opp}
        }{
            G_w(a_w) \times G_w(b_w)
        }
        \sigma_{a_w} \boxtimes \sigma_{b_w}
    \right) \ ,
\end{equation}
given by $\phi \mapsto (g \mapsto (h \mapsto \phi(hg))$, $\pi_w$ is the unique irreducible quotient
\begin{equation} \label{pi aw x pi bw to pi w}
    \ind{P_{a_w,b_w}}{G_w}
    \left(
        \pi_{a_w, b_w} 
    \right)
        \twoheadrightarrow 
    \pi_w \ .
\end{equation}

\subsubsection{Conventions on local pairings above $p$} \label{subsubsec:Conventions on local pairings above p}
In this section, we refine the conventions on pairings set in Section \ref{subsubsec:conv on contragredient pairings} to local places above $p$. This follows the approach of \cite[Section 4.3.3]{EHLS}.

Let $\la \cdot, \cdot \ra_{\sigma_{w,j}}$ be the tautological pairing between $\sigma_{w,j}$ and its contragredient $\sigma^\vee_{w,j}$. Then, define $(\cdot, \cdot)_{a_w} = \otimes_{i=1}^{t_w} \la \cdot, \cdot \ra_{\sigma_{w,j}}$ so that
\begin{align*}
    \la \cdot, \cdot \ra_{a_w} &: 
        \left( 
            \ind{P_{a_w}}{G_w(a_w)} 
                \sigma_{a_w}
        \right)
            \times 
        \left( 
            \ind{P_{a_w}}{G_w(a_w)} 
                \sigma^\vee_{a_w}
        \right) 
    \to \CC \\
    \la 
        \varphi, 
        \varphi^\vee 
    \ra_{a_w} 
    &= 
    \int_{K_{a_w}} 
        ( 
            \varphi(k), 
            \varphi^\vee(k) 
        )_{a_w} 
    dk
\end{align*}
is the perfect $G_w(a_w)$-invariant pairing that identify the above pair as contragredient representations. A similar logic applies for $(\cdot, \cdot)_{b_w} = \otimes_{i=t_w+1}^{r_w} \la \cdot, \cdot \ra_{\sigma_{w,j}}$ and
\begin{align*}
    \la \cdot, \cdot \ra_{b_w} &: 
        \left( 
            \ind{P^{\opp}_{b_w}}{G_w(b_w)} 
                \sigma_{b_w}
        \right)
            \times 
        \left( 
            \ind{P^{\opp}_{b_w}}{G_w(b_w)} 
                \sigma^\vee_{b_w}
        \right) 
    \to \CC \\
    \la 
        \varphi, 
        \varphi^\vee 
    \ra_{b_w} 
    &= 
    \int_{K_{b_w}} 
        ( 
            \varphi(k), 
            \varphi^\vee(k) 
        )_{b_w} 
    dk \ .
\end{align*}

Taking the dual of the surjections in Equation \eqref{def pi aw and pi bw} yields injections
\begin{equation} \label{def pi aw and pi bw dual}
    \pi^\vee_{a_w} \hookrightarrow 
        \ind{P_{a_w}}{G(a_w)} 
            \sigma^\vee_{a_w} 
    \ \ \ \text{ and } \ \ \ 
    \pi^\vee_{b_w} \hookrightarrow
        \ind{P_{b_w}^{\opp}}{G(b_w)} 
            \sigma^\vee_{b_w} 
\end{equation}
and restricting the second argument of $\brkt{\cdot}{\cdot}_{a_w}$ to $\pi^\vee_{a_w}$ makes the first argument of the pairing factor through $\pi_{a_w}$. It is identified with the tautological pairing $\brkt{\cdot}{\cdot}_{\pi_{a_w}} : \pi_{a_w} \times \pi^\vee_{a_w} \to \CC$. Again, a similar logic applies for $\brkt{\cdot}{\cdot}_{\pi_{b_w}} : \pi_{b_w} \times \pi^\vee_{b_w} \to \CC$.

Let $(\cdot, \cdot)_w = \la \cdot, \cdot \ra_{\pi_{a_w}} \otimes \la \cdot, \cdot \ra_{\pi_{b_w}}$. As above, it determines a pairing
\[
    \la \cdot, \cdot \ra_w : 
        \ind{P_{a_w, b_w}}{G_w}  
            \left(
                \pi_{a_w, b_w}
            \right)
            \times 
        \ind{P_{a_w, b_w}}{G_w} 
            \left( 
                \pi^\vee_{a_w, b_w}
            \right)
    \to \CC
\]
as well as a pairing $\brkt{\cdot}{\cdot}_w : \pi_w \times \pi^\vee_w \to \CC$, using the dual 
$
    \pi^\vee_w \hookrightarrow 
    \ind{P_{a_w, b_w}}{G_w} 
        \left( 
            \pi^\vee_{a_w, b_w}
        \right)
$
induced from Equation \eqref{pi aw x pi bw to pi w}.

\begin{remark}
    One may normalize these pairings so that $\brktdotdot_{\pi_p} = \prod_{w \in \Sigma_p} \brktdotdot_{\pi_w}$.
\end{remark}

For any $\phi \in \pi_w$, $\phi^\vee \in \pi^\vee_w$, if $\varphi$ is a lift of $\phi$ and $\varphi^\vee$ is the image of $\phi^\vee$, then
\begin{equation} \label{inner product pi w integral}
    \brkt{\phi}{\phi^\vee}_{\pi_w} =
    \int_{\GL_n(\OO_w)}
        \left(
            \varphi(k),
            \varphi^\vee(k)
        \right)_w
    dk
\end{equation}

\subsubsection{Compatibility of test vectors.} \label{subsubsec:comp of test vectors}
For each $1 \leq j \leq r_w$, let $\tau_{w,j}$ be a smooth (finite-dimensional) irreducible representation of $K_{w,j}$. We assume that $r$ is large enough so that $\tau_{w,j}$ factors through $\GL_{n_{w,j}}(\OO_w/\p_w^r\OO_w)$. Assume there exists an embedding $\alpha_{w,j}$ of $\tau_{w,j}$ in the restriction of $\sigma_{w,j}$ as a representation of $K_{w,j}$. 

Let $\alpha_{a_w} : \tau_{a_w} \to \sigma_{a_w}$ and $\alpha_{b_w} : \tau_{b_w} \to \sigma_{b_w}$ be the corresponding embeddings over $K_{a_w}$ and $K_{b_w}$ respectively, where
\[
    \tau_{a_w} = 
        \boxtimes_{j=1}^{t_w}
            \tau_{w,j} \ \ \ ; \ \ \
    \tau_{b_w} = 
        \boxtimes_{j=t_w+1}^{r_w}
            \tau_{w,j} \ .
\]

\begin{remark} \label{tau BK type sigma}
    Implicitly, we think of $\tau_{a_w}$ as the SZ-type of $\sigma_{a_w}$, in the sense Section \ref{subsubsec:SZ types}. In that case, there exists a unique such embedding $\alpha_{w,j}$ (up to scalar) and Theorem \ref{thm:canonical Pwaord vec of type tau w for G1} is concerned about constructing a canonical lift of $\alpha_{a_w}$ to an embedding of $\tau_{a_w}$ into $\pi_{a_w}^{(\Pwaord, r)}$. For now, Theorem \ref{thm:canonical Pwaord vec of type tau w for G1} only deals with $\sigma_{a_w}$ supercuspidal. However, in the following we proceed as if this theorem held for arbitrary admissible $\sigma_{a_w}$. In other words, we conjecture that we can omit the supercuspidal aHypothesis \ref{hyp:Qw equals Pw} and proceed without comments. Note that similar statements can be made about $\tau_{b_w}$ and $\tau_w := \tau_{a_w} \boxtimes \tau_{b_w}$.
\end{remark}

For each $j = 1, \ldots, r_w$, fix a vector $\phi_{w,j}$ in the image of $\alpha_{w,j}$ and consider
\begin{equation} \label{eq:def phi a w 0 and phi b w 0}
    \phi_{a_w}^0 := 
        \bigotimes \limits_{j=1}^{t_w}
            \phi_{w,j}
            \ \ \ ; \ \ \
    \phi_{b_w}^0 := 
        \bigotimes \limits_{j=t_w+1}^{r_w}
            \phi_{w,j}
\end{equation}
as vectors in the image of $\alpha_{a_w}$ and $\alpha_{b_w}$ respectively.

\begin{remark} \label{rmk:twists of local vectors in local types}
    In Section \ref{sec:Pord Eisenstein measure}, given local representations $\tau_{w,j}$ and $\sigma_{w,j}$ as above, we work with such local vectors with respect $\tau_{w,j} \otimes \psi_{w,j}$ and $\sigma_{w,j} \otimes \psi_{w,j}$, where $\psi_{w,j}$ is a finite-order character of $K_{w,j}$ (viewing $\tau_{w,j}$ as fixed and $\psi_{w,j}$ as varying). We always assume that the corresponding test vectors in the image of $\alpha_{w,j} \otimes \mathrm{id}$ are $\phi_{w,j} \otimes 1$, i.e. essentially the ``same'' local vectors. See Remark \ref{rmk:SZ types of twists and contragredient}.
\end{remark}

Let $\varphi_{a_w} \in \ind{P_{a_w}}{G_w(a_w)} \sigma_{a_w}$ be the unique function fixed by $I_{a_w, r}$ that has support $P_{a_w} I_{a_w, r}$ and
\begin{equation} \label{def varphi aw}
    \varphi_{a_w}(\gamma) = \sigma_{a_w}(\gamma) \phi_{a_w}^0 = \tau_{a_w}(\gamma) \phi_{a_w}^0 \ ,
\end{equation}
for all $\gamma \in I_{a_w, r}^0$. Denote its image in $\pi_{a_w}$ by $\phi_{a_w}$. 

\begin{remark} \label{covers of types to P-Iwahori}
    Here, we implicitly identify $\tau_{a_w}$ with its image in $\sigma_{a_w}$ and as a representation of $I_{a_w, r}^0$ that factors through $I_{a_w, r}^0/I_{a_w, r} \cong L_{a_w}(\OO_w/\p_w^r\OO_w)$. In what follows, we similarly identify $\tau_{b_w}$ (resp. $\tau^\vee_{a_w}$, $\tau^\vee_{b_w}$) with its cover as a representation of $\tp{I}_{b_w, r}^0$ (resp. $\tp{I}_{a_w, r}^0$, ${I}_{b_w, r}^0$) contained in $\sigma_{b_w}$ (resp. $\sigma^\vee_{a_w}$, $\sigma^\vee_{b_w}$).
\end{remark}

Let $\varphi_{b_w} \in \ind{P^{\opp}_{b_w}}{G_w(b_w)} \sigma_{b_w}$ be the unique function whose support is $P_{b_w}^{\opp} \tp{I}_{b_w, r}$ such that
\begin{equation} \label{def varphi bw}
    \varphi_{b_w}(\gamma) = \tau_{b_w}(\gamma) \phi_{b_w}^0 \ ,
\end{equation}
for all $\gamma \in \tp{I}_{b_w, r}^0$. Let $\phi_{b_w}$ denote its image in $\pi_{b_w}$. 

Lastly, consider the unique function $\varphi_w \in \ind{P_w}{G_w} \sigma_w$ fixed by $I_{w,r}$ whose support is $P_wI_{w,r}$ and
\begin{equation}\label{def varphi w}
    \varphi_w(\gamma) = \tau_w(\gamma) (\phi_{a_w}^0 \otimes \phi_{b_w}^0) \ ,
\end{equation} 
for all $\gamma \in I_{w,r}^0$, where $\tau_w = \tau_{a_w} \boxtimes \tau_{b_w}$. Here, we view $\tau_w$ as a $I_{w,r}^0$-subrepresentation of $\sigma_w$, see Remark \ref{covers of types to P-Iwahori}.

For our purposes, it is more convenient to work with the vector corresponding to $\varphi_w$ via the map 
$
    \ind{P_w}{G_w} \sigma_w
        \to 
    \ind{P_{a_w,b_w}}{G_w} \pi_{a_w, b_w}
$ induced by the maps in \eqref{def pi aw and pi bw} and \eqref{iso induced rep}. We denote this image by $\varphi_w$ again, which should not cause any confusion since we will only ever work with $\varphi_w$ in $\ind{P_{a_w,b_w}}{G_w} \pi_{a_w, b_w}$ from now on. 

One easily checks that the support of $\varphi_w$ is $P_{a_w,b_w}I_{w,r}$ and
\[
    \varphi_w(\gamma) = \tau_w(\gamma) (\phi_{a_w} \otimes \phi_{b_w}) \ ,
\]
for all $\gamma \in I_{w,r}^0$. Let $\phi_w$ be the image of $\varphi_w$ in $\pi_w$.

\begin{remark} \label{If pi Paord}
    If $\sigma_w$ is supercuspidal, for each $w \in \Sigma_p$, then $\phi_{a_w}$ (resp. $\phi_{b_w}$, $\phi_w$) is a $P_{a_w}$-anti-ordinary (resp. $\tp{P}_{b_w}$-anti-ordinary, $P_w$-anti-ordinary) vector of level $r$ and type $\tau_{a_w}$ (resp. $\tau_{b_w}$, $\tau_w$) as in \ref{thm:canonical Pwaord vec of type tau w for G1}.
\end{remark}

We now proceed similarly by constructing explicit vectors related to the contragredient representations. Since $\sigma_{w,j}$ is admissible, for $j = 1, \ldots, r_w$, we also have an embedding $\alpha^\vee_{w,j} : \tau^\vee_{w,j} \to \sigma^\vee_{w,j}$ of $K_{w,j}$-representations. We identify the natural contragredient pairing on $\tau_{w,j} \times \tau^\vee_{w,j}$ with the restriction of $\brkt{\cdot}{\cdot}_{\sigma_{w,j}}$ via their fixed embedding in $\sigma_{w,j} \times \sigma^\vee_{w,j}$. 

\begin{remark} \label{cg tau BK type cg sigma}
    If $\tau_{w,j}$ is the SZ-type of $\sigma_{w,j}$ as in Remark \ref{tau BK type sigma}, then $\tau^\vee_{w,j}$ is also the SZ-type of $\sigma^\vee_{w,j}$. In that case, such maps $\alpha^\vee_{w,j}$ again exist and are unique up to scalar.
\end{remark}

Fix a vector $\phi^\vee_{w,j} \in \sigma^\vee_{w,j}$ in the image of $\alpha^\vee_{w,j}$ such that $\brkt{\phi_{w,j}}{\phi^\vee_{w,j}}_{\sigma_{w,j}} = 1$ and define
\begin{equation} \label{eq:def phi a w vee 0 and phi b w vee 0}
    \phi^{\vee,0}_{a_w} := 
        \bigotimes \limits_{j=1}^{t_w}
            \phi^\vee_{w,j}
            \ \ \ ; \ \ \
    \phi^{\vee,0}_{b_w} := 
        \bigotimes \limits_{j=t_w+1}^{r_w}
            \phi^\vee_{w,j}
\end{equation}
as vectors in $\sigma^\vee_{a_w}$ and $\sigma^\vee_{b_w}$ respectively.

\begin{remark} \label{rmk:twists of local vectors in local types - contragredients}
    As in Remark \ref{rmk:twists of local vectors in local types}, if we replace $\tau_{w,j}$ by $\tau_{w,j} \otimes \psi$, for some finite-order character $\psi_{w,j}$ of $K_{w,j}$, then we always assume that the corresponding choice of local vector in the image of $\alpha_{w,j}^\vee \otimes \mathrm{id}$ is $\phi_{w,j}^\vee \otimes 1$. Once again, see Remark \ref{rmk:SZ types of twists and contragredient} for further details.
\end{remark}

Assume there exists a vector $\phi^\vee_{a_w}$ in $\pi^\vee_{a_w}$ fixed by $\tp{I}_{a_w,r}$ such that the support of its image $\varphi^\vee_{a_w}$ in $\ind{P_{a_w}}{G_w(a_w)} \sigma^\vee_{a_w}$ contains $P_{a_w} \tp{I}_{a_w, r}$ and that
\begin{equation} \label{def cg varphi aw}
    \varphi^\vee_{a_w}(\gamma) 
    = 
        \tau^\vee_{a_w}(\gamma) \phi^{\vee,0}_{a_w} \ , \ \forall \gamma \in \tp{I}^0_{a_w,r} \ .
\end{equation}

Similarly, assume there exists a vector $\phi^\vee_{b_w}$ in $\pi^\vee_{b_w}$ fixed by $I_{b_w,r}$ such that the support of its image $\varphi^\vee_{b_w}$ in $\ind{P_{b_w}}{G_w(b_w)} \sigma^\vee_{b_w}$ contains $P_{b_w} I_{b_w, r}$ and that
\begin{equation} \label{def cg varphi bw}
    \varphi^\vee_{b_w}(\gamma) 
        = 
    \tau^\vee_{b_w}(\gamma) 
    \phi^{\vee,0}_{b_w} \,,\,
    \forall \gamma \in I^0_{b_w,r} \ .
\end{equation}

Lastly, assume there exists a vector $\phi^\vee_w$ in $\pi^\vee_w$ fixed by $\tp{I}_{w,r}$ such that the support of its image $\varphi^\vee_w$ in $\ind{P_{a_w,b_w}}{G_w} \pi^\vee_{a_w, b_w}$ contains $P_w \tp{I}_{w, r}$ and that
\begin{equation} \label{def cg varphi w}
    \varphi^\vee_w(\gamma) 
    = 
        \tau^\vee_w(\gamma) (\phi^\vee_{a_w} \otimes \phi^\vee_{b_w}) \ , \ \forall \gamma \in \tp{I}^0_{w,r} \ .
\end{equation}

\begin{remark} \label{If cg pi Paord}
    As in Remark \ref{If pi Paord}, assume that $\sigma_w$ is supercuspidal for each $w \in \Sigma_p$. In that case, Lemma \ref{lma:canonical Pwaord vec of type tau w for G2} proves the existence of the vectors $\phi^\vee_{a_w}$, $\phi^\vee_{b_w}$ and $\phi^\vee_w$. In the last case, we are implicitly using the isomorphism \eqref{iso induced rep} to compare \emph{loc. cit.} with our notation here.

    In particular, in that case $\phi^\vee_{a_w}$ (resp. $\phi^\vee_{b_w}$, $\phi^\vee_w$) is $\tp{P}_{a_w}$-anti-ordinary (resp. $P_{b_w}$-anti-ordinary, $\tp{P}_w$-anti-ordinary) of type $\tau^\vee_{a_w}$ (resp. $\tau^\vee_{b_w}$, $\tau^\vee_w$) in the sense of Section \ref{subsec:Paord theory on G2}.
\end{remark}

\subsubsection{Choice of $P$-anti-ordinary test vectors (and twists).} \label{subsubseq:choice of Paord test vectors}
Our choice of test vectors at $p$ is
\begin{equation} \label{eq:def test vectors varphi p and varphi vee p}
    \varphi_p 
        = 
    1 
        \otimes 
    \left(    
        \bigotimes_{w \in \Sigma_p}
            \phi_w
    \right)
        \in
    \pi_p
        \;\;\;\text{and}\;\;\;
    \varphi^\vee_p 
        = 
    1 
        \otimes 
    \left(    
        \bigotimes_{w \in \Sigma_p}
            \phi^\vee_w
    \right)
        \in
    \pi^\vee_p\,,
\end{equation}
via \eqref{eq:pi p iso mu p otimes mu w} and \eqref{eq:pi p flat iso mu p flat otimes mu w flat}. By definition of $\pi^\flat$, $\varphi_p^\vee$ naturally corresponds to some $\varphi_p^\flat \in \pi_p^\vee$.

Observe that for each $w \in \Sigma_p$, the construction of $\phi_w$ not only depends on the choice of SZ-type $\tau_{w,j}$ of $\sigma_{w,j}$ but also on the choice of nonzero vectors $v_{w,j} \in \tau_{w,v}$, see \eqref{eq:def phi a w 0 and phi b w 0}.

Let $\pi'$ be some other anti-holomorphic $P$-anti-ordinary automorphic representation of $G_1$, with SZ-type $\tau'$ at $p$. Let $\sigma'_{w,j}$ and $\tau'_{w,j}$ be the analogues for $\pi'$ of and $\sigma_{w,j}$ and $\tau_{w,j}$ for $\pi$, as in Section \ref{subsubsec:comp of local representations}.

If $\tau' = \tau \otimes \psi$ for some character $\psi$ of $L(\ZZ_p)$, for instance if $\sigma_{w,j} = \sigma'_{w,j} \otimes \psi_{w,j}$ for some unramified character $\psi_{w,j}$ of $\GL_{n_{w,j}}(K_w)$ (see conventions set in Section \ref{subsubsec:SZ types}), then the vectors spaces for $\tau_{w,j}$ and $\tau'_{w,j}$ are canonically identified. 

We always assume that the vector $\varphi_p'$ for $\pi_p'$ is obtained from the same choices of vectors $v_{w,j}$ in this situation. We impose a similar convention for the dual vectors $\varphi_p^\vee \in \pi_p^\vee$ and $\varphi_p'^{,\vee} \in \pi_p'^{,\vee}$.

\subsubsection{Inner products between test vectors.} \label{subsubsec:inner product between test vectors}
Observe that the intersection of the support of $\varphi_w$ with $\GL_n(\OO_w)$ is $P_{a_w,b_w}I_{w,r} \cap \GL_n(\OO_w) = I^0_{a_w,b_w, r}$. Therefore,
\begin{align*}
    \la \phi_w, \phi^\vee_w \ra_{\pi_w} 
    = 
    \int_{I^0_{a_w, b_w, r}} 
        (
            \varphi_w(k),
            \varphi^\vee_w(k)
        )_{a_w,b_w} 
    d^\times k \ ,
\end{align*}

Write any $k \in I^0_{a_w, b_w, r}$ as
\[
    k = 
    \begin{pmatrix}
	1 & B \\
	0 & 1
    \end{pmatrix}
    \begin{pmatrix}
	A & 0 \\
	0 & D
    \end{pmatrix}
    \begin{pmatrix}
	1 & 0 \\
	C & 1
    \end{pmatrix} 
\]
where $A \in \GL_{a_w}(\OO_w)$, $D \in \GL_{b_w}(\OO_w)$, $B \in M_{a_w \times b_w}(\OO_w)$ and $C \in \p_w^rM_{b_w \times a_w}(\OO_w)$. Since
$
    \begin{pmatrix}
        1 & B \\ 0 & 1
    \end{pmatrix}
$ 
is in $P_{a_w, b_w}$ and 
$
    \begin{pmatrix}
        1 & 0 \\ C & 1
    \end{pmatrix}
$
is in both $I_{w,r}$ and $\tp{I}_{w,r}$, we see that
\[
    \varphi_w(k) = 
        \varphi_w
        \left( 
		\begin{pmatrix}
			A & 0 \\
			0 & D
		\end{pmatrix} 
	\right)
	= 
        \pi_{a_w}(A) \phi_{a_w} 
        \otimes \pi_{b_w}(D) \phi_{b_w}
\]
and
\[
    \varphi^\vee_w(k) = 
	\varphi^\vee_w
        \left(
		\begin{pmatrix}
			A & 0 \\
			0 & D
		\end{pmatrix}
	\right)
	= 
        \pi^\vee_{a_w}(A) \phi^\vee_{a_w} 
        \otimes \pi^\vee_{b_w}(D) \phi^\vee_{b_w} \ ,
\]
so we obtain
\begin{equation} \label{eq:size phi w}
    \la 
        \phi_w, 
        \phi^\vee_w 
    \ra_{\pi_w} 
        = 
    \vol(I^0_{a_w,b_w,r}) 
    \la 
            \phi_{a_w}, 
            \phi^\vee_{a_w}
        \ra_{\pi_{a_w}} 
    \la 
            \phi_{b_w}, 
            \phi^\vee_{b_w}
        \ra_{\pi_{b_w}} \ .
\end{equation}

Similar arguments yield
\begin{equation} \label{eq:size phi a w}
    \la 
        \phi_{a_w}, 
        \phi^\vee_{a_w} 
    \ra_{\pi_{a_w}} = 
	\vol(I^0_{P_{a_w}, r})
	( 
            \phi^0_{a_w}, 
            \phi^{\vee,0}_{a_w}
        )_{{a_w}}= 
	\vol(I^0_{P_{a_w}, r})
\end{equation}
and
\begin{equation} \label{eq:size phi b w}
    \la 
        \phi_{b_w}, 
        \phi^\vee_{b_w} 
    \ra_{\pi_{b_w}} = 
	\vol(I^0_{P_{b_w}, r})
	( 
            \phi^0_{b_w},
            \phi^{\vee,0}_{b_w} 
        )_{{b_w}} = 
	\vol(I^0_{P_{b_w}, r}) \ ,
\end{equation}
using the fact that $(\phi^0_{w, j}, \phi^{\vee,0}_{w, j}) = 1$ for each $1 \leq j \leq r_w$. Ultimately, we obtain
\begin{equation} \label{eq:inner prod Pwaord vectors at w}
    \la 
        \phi_w, 
        \phi^\vee_w 
    \ra_{\pi_w} 
    = 
        \vol(I^0_{a_w,b_w,r}) 
        \vol(I^0_{P_{a_w}, r})
        \vol(I^0_{P_{b_w}, r})
    =
        \vol(I^0_{w,r}) \ ,
\end{equation}
which in particular is nonzero.

\subsection{Local test vectors at $\infty$}
\label{subsec:Local test vectors at oo}
In this section, we choose local test vectors for $\pi_\infty$ and $\pi_\infty^\vee$. This material is well-establish in the literature. The author redirects the reader to \cite[Section 4.4]{EHLS} for ample details. 

\subsubsection{Anti-holomorphic modules for $G_1$.}
First consider $G^* = R_{\KK/\QQ} \GU^+(V, \brktdotdot)$, where $\GU^+(V, \brktdotdot)$ is the full unitary group associated to $\PP = \PP_1$. We have 
\[
    G^*(\RR) 
        = 
    \prod_{\sigma \in \Sigma}
        G_\sigma\,,
\]
where $G_\sigma = \GU^+(V)_{\KK_\sigma} \simeq \GU^+(a_\sigma, b_\sigma)$. Here, we implicitly use the identification between $\Sigma_{\KK^+}$ and $\Sigma$. Note that $G(\RR)$ consists of the subgroup of elements $(g_\sigma)_{\sigma \in \Sigma}$ for which the similitude factors $v(g_\sigma)$ are independent of $\sigma \in \Sigma$.

We view the map $h$ introduced in Section \ref{subsec:unitary pel datum} associated to $\PP$ as a homomorphism
\[
    h 
        = 
    \prod_{\sigma \in \Sigma} h_\sigma
        : 
    R_{\CC/\RR}(\Gm_{/\CC}) 
        \to 
    G^*_{\RR}
\]
whose image is contained in $G_{\RR}$. Note that $\g = \Lie(G(\RR))_\CC = \Lie(G^*(\RR))_\CC = \oplus_\sigma \g_\sigma$, where $\g_\sigma = \Lie(G_\sigma)$.

Let $U_\infty = C(\RR) \subset G(\RR)$ be the stabilizer of $h$ via conjugation, as in Section \ref{subsubsec:complex structure}. Then, $\pi_\infty$ and $\pi_\infty^\vee$ are both irreducible $(\g, U_\infty)$-modules.

For each $\sigma \in \Sigma_{\KK^+}$, let $U_\sigma = U_\infty \cap G_\sigma$ and let $K_\sigma^\circ \subset U_\sigma$ be its maximal compact subgroup. One readily checks that $K_\sigma$ is isomorphic to $U(a_\sigma) \times U(b_\sigma)$. 

As in Section \ref{subsubsec:Lie algebra cohomology}, the Harish-Chandra decomposition for $\g_\sigma$ is
\[
    \g_\sigma = \p_\sigma^- \oplus \k_\sigma \oplus \p_\sigma^+\,,
\]
where $\p_{\sigma}^\pm$ is the $(\pm 1)$-eigenspace of $\ad h_\sigma(\sqrt{-1})$ on $\g_\sigma$ and $\k_\sigma$ is the $0$-eigenspace. In particular, $\k_\sigma = \Lie(U_\sigma) = \mathfrak{z}_\sigma \oplus \Lie(K_\sigma^\circ)$, where $\mathfrak{z}_\sigma$ is the $\RR$-split center of $\g_\sigma$.

Recall that we assume that $h$ is \emph{standard}, see Hypothesis \ref{hyp:standard hypothesis}. This implies the above decomposition is rational over $\sigma(\KK) \subset \CC$. 

Furthermore, the fact that $h$ is standard is equivalent to the existence of a specific maximal rational torus $T$ of $G$ such that $h$ factors through $T \hookrightarrow G$. See \cite[Section 2.3.2]{EHLS} for further details and the exact construction of $T$ (denoted $J_0^{(n)}$). In particular, $(T, h)$ is a Shimura datum.

We decompose $\pi_\infty$ and $\pi_\infty^\vee$ as
\[
    \pi_\infty 
        = 
    \bigotimes_{\sigma \in \Sigma}
        \pi_\sigma
        \;\;\;\text{and}\;\;\;
    \pi_\infty^\vee
        =
    \bigotimes_{\sigma \in \Sigma}
        \pi_\sigma^\vee\,,
\]
for contragredient pairs of irreducible $(\g_\sigma, U_\sigma)$-modules $\pi_\sigma$ and $\pi_\sigma^\vee$.

The fact that $\pi_\infty$ is anti-holomorphic (for $G_1$) of weight $\kappa = (\kappa_0, (\kappa_\sigma))$ implies that for each $\sigma \in \Sigma$,
\begin{equation}\label{eq:pi sigma iso anti holomorphic module}
    \pi_\sigma 
        \cong
    U(\g_\sigma) 
        \bigotimes_{
            U(\k_\sigma \oplus \p_\sigma^+)
        }
    W_{\kappa_\sigma} 
        =: 
    \DD_c(\kappa_\sigma)\,,
\end{equation}
where $U(-)$ is the universal enveloping algebra functor, and $W_{\kappa_\sigma}$ is the irreducible representation of $U_\sigma$ of highest weight $\kappa_\sigma$. 

\subsubsection{Anti-holomorphic modules for $G_2$.}
If we consider $\pi^\flat$ as a representation of $G_2$ instead of $\pi$ as a representation of $G_1$, all of the theory above remains the same. However the roles of $\p_\sigma^+$ and $\p_\sigma^-$ are reversed.

One therefore obtains the isomorphism
\begin{equation}\label{eq:pi sigma flat iso holomorphic module}
    \pi_\sigma^\flat
        \cong
    U(\g_\sigma) 
        \bigotimes_{
            U(\k_\sigma \oplus \p_\sigma^-)
        }
    W_{\kappa_\sigma^\flat}
        =
    \DD_c(\kappa_\sigma^\flat)\,,
\end{equation}
and it follows from our discussion in Section \ref{subsubsec:the involution Foo} that $\DD(\kappa_\sigma) := \DD_c(\kappa_\sigma)^\vee \cong \DD_c(\kappa_\sigma^\flat)$ as representations of $U(a_\sigma, b_\sigma)$.

Furthermore, $\DD_c(\kappa_\sigma)$ is isomorphic to the complex conjugate of $\DD_c(\kappa_\sigma^\flat)$ with respect to the $\RR$-structure on $\g_\sigma$.

\begin{remark} \label{rmk:discrete series for G sigma}
    It is well-known that when the weight $\kappa$ satisfies Inequality \eqref{eq:ineq kappa sigma}, i.e. $\kappa$ is \emph{strongly positive}, then the modules $\DD_c(\kappa_\sigma)$ and $\DD(\kappa_\sigma)$ are the \emph{anti-holomorphic} and \emph{holomorphic discrete series} representations of $G_\sigma$ respectively. See \cite[Section 4.4.1]{EHLS}.
\end{remark}

\subsubsection{Choice of anti-holomorphic test vectors.} \label{subsubsec:choice of a-holo test vectors}
One refers to the subspaces $1 \otimes W_{\kappa_\sigma}$ and $1 \otimes W_{\kappa_\sigma^\flat}$ as the \emph{minimal $U_\sigma$-type} of $\DD_c(\kappa_\sigma)$ and $\DD_c(\kappa_\sigma^\flat)$ respectively. 

For each $\sigma$, let $\varphi_{\kappa_\sigma, -} \in \pi_\sigma$ be a lowest-weight vector in the minimal $U_\sigma$-type of $\DD_c(\kappa_\sigma)$ and $\varphi^\flat_{\kappa_\sigma^\flat, -} \in \pi^\flat_\sigma$ be a lowest-weight vector in the minimal $U_\sigma$-type of $\DD_c(\kappa_\sigma^\flat)$, both unique up to scalar. We normalize them so that $\brkt{\varphi_{\kappa_\sigma, -}}{\varphi^\flat_{\kappa^\flat_\sigma, -}}_{\sigma} = 1$.

Recall that we assume that the homorphism $h$ associated to the PEL data $\PP_1$ is \emph{standard}. In particular, $\varphi_{\kappa_\sigma, -}$ (resp. $\varphi^\flat_{\kappa_\sigma^\flat, -}$) is an eigenvector for $T_\sigma$ of weight $-\kappa$ (resp. $-\kappa_\sigma^\flat$). Here, $T_\sigma \subset G_\sigma$ is the $\sigma$-component of $T(\RR)$. 

Our choice of local test vectors $\varphi_\infty$ and $\varphi_{\infty}^\flat$ are
\begin{equation} \label{eq:def test vectors varphi oo and varphi vee oo}
    \varphi_\infty = \otimes_{\sigma} \varphi_{\kappa_\sigma, -}
        \;\;\;\text{and}\;\;\;
    \varphi^\flat_\infty = \otimes_{\sigma} \varphi^\flat_{\kappa_\sigma^\flat, -}\,.
\end{equation}


\section{$P$-(anti-)ordinary Hida families.} \label{sec:P-(anti-)ord Hida families}
\subsection{Hecke algebras for modular forms with respect to $P$.} \label{subsec:Hecke alg for mod forms wrt P}

\subsubsection{(Anti-)holomorphic Hecke algebras.} \label{subsubsec:(Anti-)holomorphic Hecke alg}
We now construct the Hecke algebra of level $K_r = I_rK^p$ generated by Hecke operators at unramified places and at $p$. Let $S = S(K^p)$ as in Section \ref{subsubsec:ramified places away from p}.

Let $R$ be a $p$-adic algebra over $S_0 = \OO_{\KK', (\p')}$, as in Section \ref{subsec:scalar valued padic mod forms with respect to P}. Let $\bTT_{K_r, \kappa, R}$ denote the $R$-subalgebra of $\End_\CC(S_\kappa(K_r; \CC))$ generated by the operators 
\begin{enumerate}
    \item $T(g) = T_r(g)$, for all $g \in G(\AA_f^{S,p})$, 
    \item $u_{w, D_w(j)} = u_{w, D_w(j), \kappa}$, for all $w \in \Sigma_p$, $1 \leq j \leq r_w$, and
    \item $u_p(t) = u_{p, \kappa}(t)$, for all $t \in Z_P$.
\end{enumerate}

In particular, $\bTT_{K_r, \kappa, R}$ is an algebra over $R[Z_P]$, where $Z_P$ is the center of $L_P$. In fact, setting $Z_{P,r} = Z_P/(1+p^rZ_P)$, then it is equivalently an algebra over $R[Z_{P, r}]$.

If $R$ is also an $S_0[\tau]$-algebra, we define $\bTT_{K_r, \kappa, \tau, R}$ as the quotient algebra obtained by restricting each operator to an endomorphism of $S_\kappa(K_r, \tau; \CC)$. Finally, if $R$ is also an $S_r[\tau]$-algebra, we define $\bTT_{K_r, \kappa, [\tau], R}$ as the quotient algebra obtained upon restriction to $S_\kappa(K_r, [\tau]; \CC)$, where we recall that $[\tau] = [\tau]_r$ denotes the equivalence class of $\tau$ as a $P$-nebentypus of level $r$.

If $R = S_0$, $S_0[\tau]$ or $S_r[\tau]$, we omit $R$ from the notation. Moreover, if $r$ is clear from the context or does not affect the argument, we omit $K_r$ from the notation and simply write $\bTT_{\kappa, \tau}$ or $\bTT_{\kappa, [\tau]}$.

Similarly, we define the Hecke algebra $\bTT^d_{K_r, \kappa, R}$ as we constructed $\bTT_{K_r, \kappa, R}$ above but we replace $S_\kappa(K_r; \CC)$ by $\wh{S}_\kappa(K_r; \CC)$, and each $u_{w, D_w(j)}$ by $u^-_{w, D_w(j)}$, for all $w \in \Sigma_p$ and $1 \leq j \leq r_w$. Lastly, we define $\bTT^d_{K_r, \kappa, \tau, R}$ (resp. $\bTT^d_{K_r, \kappa, [\tau], R}$) analogously as a subalgebra of $\End_\CC(\wh{S}_\kappa(K_r, \tau; \CC))$ (resp. $\End_\CC(\wh{S}_\kappa(K_r, [\tau]; \CC))$).

For each of these Hecke algebras $\bTT^?_\bullet$, we write $\bTT_\bullet^{?, p}$ for the subalgebra generated by the operators $T_r(g)$, $g \in G(\AA_f^S)$, i.e. by omitting the Hecke operators at $p$.

\begin{remark}
    Implicitly, all of the above is stated for $G = G_1$. The definitions for $G = G_2$ are identical, considering the conventions set in Section \ref{subsec:comp between G1 and G2}. If we want to distinguish the two situations, we write $T_{V, \bullet}^?$ for $G_1$ and $T_{-V, \bullet}^?$ for $G_2$.
\end{remark}

\subsubsection{Hecke equivariance} \label{subsubsec:Hecke equivariance}
Let $\varphi \in H^0_!(\level{K_r}{\Sh(V)}, \w_\kappa)$ and $\varphi' \in H^d_!(\level{K_r}{\Sh(V)}, \w_{\kappa^D})$. By definition of \eqref{eq:def pairing kappa Kr}, one readily checks that
\[
    \brkt{
        T(g) \varphi
    }{
         \varphi'
    }_{\kappa, K_r}
        =
    \brkt{
        \varphi
    }{
         T(g)^d \varphi'
    }_{\kappa, K_r}
\]
and
\[
    \brkt{
        u_{w, D_w(j), \kappa} \varphi
    }{
        \varphi'
    }_{\kappa, K_r}
        =
    \brkt{
        \varphi
    }{
        u_{w, D_w(j), \kappa^D}^- \varphi'
    }_{\kappa, K_r}\,,
\]
for all $g \in G(\AA_f^{S,p})$ and $w \in \Sigma_p$, $1 \leq j \leq r_w$, where $T(g)^d := ||\nu(g)||^{a(\kappa)}T(g^{-1})$. 

Similarly, using notation from Section \ref{subsubsec:the involution dagger} and the isomorphism in Remark \ref{rmk:iso cohomology -V kappa flat and -V kappa dagger}, if $\varphi^\flat = F^\dagger(\varphi) \in S_{\kappa^\flat}(-V, K^\flat_r; R)$, we have
\[
    T(g)^\flat \varphi^\flat 
        = 
    F^\dagger(T(g) \varphi)
        \; ; \;
    u_{w, D_w(j), \kappa}^\flat \varphi^\flat 
        = 
    F^\dagger(u_{w, D_w(j), \kappa} \varphi)\,,
\]
where $T(g)^\flat := T(g^\dagger) = T(\ol{g})$ and $u_{w, D_w(j), \kappa}^\flat := u_{w, n, \kappa^\flat}^{-1} u_{w, n - D_w(j), \kappa^\flat}$.

We obtain the next result as a consequence.

\begin{lemma} \label{lma:iso holo aholo flat Hecke alg}
    Let $R \subset \CC$ be any subring. 

    \begin{enumerate}
        \item The map $\bTT_{K_r, \kappa, R} \to \bTT^d_{K_r, \kappa^D, R}$ induced by
        \[
            T(g) 
                \mapsto 
            T(g)^d 
                \ \ \ \text{and} \ \ \ 
            u_{w, D_w(j), \kappa} 
                \mapsto 
            u_{w, D_w(j), \kappa^D}^-\,,
        \]
        is an isomorphism.
        \item The map in (i) induces an isomorphism $\bTT_{K_r,\kappa, \tau, R} \xrightarrow{\sim} \bTT^d_{K_r, \kappa^D, \tau^\vee, R}$.
        \item The map $\bTT_{V, K_r, \kappa, R} \to \bTT_{-V, K_r^\flat, \kappa^\flat, R}$ induced by
        \[
            T(g) 
                \mapsto 
            T(g)^\flat 
                \ \ \ \text{and} \ \ \ 
            u_{w, D_w(j), \kappa} 
                \mapsto 
            u_{w, D_w(j), \kappa}^\flat
        \]
        is an isomorphism.
        \item The map in (iii) induces an isomorphism $\bTT_{V, K_r,\kappa, \tau, R} \xrightarrow{\sim} \bTT^d_{-V, K_r^\flat, \kappa^\flat, \tau^\flat, R}$.
    \end{enumerate}
\end{lemma}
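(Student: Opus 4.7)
The four parts share a common template, so I describe them together. The essential preliminary observation is that every Hecke algebra appearing in the statement is commutative, because its generating operators $T(g)$ (for $g \in G(\AA_f^{S,p})$), $u_{w, D_w(j)}$, and $u_p(t)$ pairwise commute. This commutativity upgrades \emph{a priori} anti-homomorphisms coming from adjunction into genuine $R$-algebra homomorphisms.

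For (i), the normalized Serre pairing $\brktdotdot_{\kappa, K_r}$ of \eqref{eq:def pairing kappa Kr} is a perfect $R$-bilinear pairing. The adjunction identities recalled in Section \ref{subsubsec:Hecke equivariance} assign to each generator $T$ of $\bTT_{K_r, \kappa, R}$ an adjoint $T^*$, which is exactly the operator prescribed by the statement (namely $T(g)^d$ and $u^-_{w, D_w(j), \kappa^D}$). The assignment $T \mapsto T^*$ extends to an $R$-algebra homomorphism $\Phi : \bTT_{K_r, \kappa, R} \to \bTT^d_{K_r, \kappa^D, R}$; commutativity of the target converts the intrinsic anti-multiplicativity of adjunction into multiplicativity. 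Non-degeneracy of the pairing yields injectivity, and surjectivity is immediate since $\Phi$ maps the generating family to the generating family. Part (ii) follows by the identical argument, after restricting $\brktdotdot_{\kappa, K_r}$ to the perfect pairing $\brktdotdot_{\kappa, \tau}^{\Ser}$ of \eqref{eq:def pairing Serre kappa tau} between the $\tau$-- and $\tau^\vee$--isotypic components.

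For (iii), combining the involution $F^\dagger$ of \eqref{eq:F dagger map level K r} with the twist identification of Remark \ref{rmk:iso cohomology -V kappa flat and -V kappa dagger} yields an $R$-module isomorphism $S_\kappa(V, K_r; R) \xrightarrow{\sim} S_{\kappa^\flat}(-V, K_r^\flat; R)$. Conjugation by $F^\dagger$ is an $R$-algebra isomorphism of the corresponding endomorphism rings, and the Hecke equivariance formulas from Section \ref{subsubsec:Hecke equivariance} identify it on generators with the map of the statement. One verifies that $u^\flat_{w, D_w(j), \kappa} = u^{-1}_{w, n, \kappa^\flat}\, u_{w, n - D_w(j), \kappa^\flat}$ does lie in $\bTT_{-V, K_r^\flat, \kappa^\flat, R}$: the factor $u_{w, n, \kappa^\flat}$ is of the form $u_p(t)$ for a suitable $t \in Z_P$, whose inverse $u_p(t^{-1})$ is also among the generators. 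Hence the conjugation sends generators to generators and restricts to the claimed isomorphism. For (iv), the $\dagger$-equivariance of $F^\dagger$ for the $L_H(\ZZ_p)$-action, together with the convention $\tau^\dagger = \tau^\vee = \tau^\flat$ from Section \ref{subsubsec:the involution dagger}, ensures that $F^\dagger$ carries the $\tau$-isotypic subspace to the $\tau^\flat$-isotypic one, so the isomorphism of (iii) descends to the corresponding type-quotients of the Hecke algebras; further composing with the holomorphic-to-antiholomorphic adjunction of (i) applied on $G_2$ accounts for the ``$d$'' in the target. The principal subtlety throughout, though conceptually not deep, is the careful tracking of weights, levels, and types under each of these involutions, all of which is encoded in the formulas of Section \ref{subsubsec:Hecke equivariance}.
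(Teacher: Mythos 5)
Your proposal is correct and follows exactly the route the paper intends: the paper offers no written proof, stating the lemma as an immediate consequence of the Hecke-equivariance formulas of Section \ref{subsubsec:Hecke equivariance}, i.e.\ adjunction of the generators under the perfect normalized Serre pairing $\brktdotdot_{\kappa, K_r}$ (resp.\ $\brktdotdot_{\kappa, K_r, \tau}$) for (i)--(ii) and transport of structure along $F^\dagger$ combined with the twist of Remark \ref{rmk:iso cohomology -V kappa flat and -V kappa dagger} for (iii)--(iv), with commutativity of the algebras disposing of the anti-multiplicativity of adjoints just as you say. The only imprecision is your claim that $u_{w,n,\kappa^\flat}$ is of the form $u_p(t)$ with $t \in Z_P$ — the matrix $p1_n$ lies in $Z_{L_P}(\QQ_p)$ but not in $L_P(\ZZ_p)$ — though the substantive point stands, since $[K_r^\flat\, p1_n\, K_r^\flat]$ is a single coset and hence an invertible operator whose inverse the paper implicitly admits into $\bTT_{-V, K_r^\flat, \kappa^\flat, R}$ in order for $u^\flat_{w, D_w(j), \kappa}$ to be defined at all.
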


We use the isomorphisms of Lemma \ref{lma:iso holo aholo flat Hecke alg} to view $\wh{S}_\kappa(K_r; R)$ and $S_{\kappa^\flat}(-V, K_r^\flat; R)$ (resp. $\wh{S}_\kappa(K_r, \tau; R)$ and $S_{\kappa^\flat}(-V, K_r^\flat, \tau^\flat; R)$) as modules over $\bTT_{K_r, \kappa, R}$ (resp. $\bTT_{K_r, \kappa, \tau, R}$).

\subsubsection{$P$-(anti-)ordinary Hecke algebras.} \label{subsubsec:P-(anti-)ordinary Hecke alg}

Let $\bTT^{\Pord}_{K_r, \kappa, R} := e_\kappa \bTT_{K_r, \kappa, R}$ and $\bTT^{\Paord}_{K_r, \kappa, R} := e^-_\kappa \bTT_{K_r, \kappa, R}$. We define $\bTT^{\Pord}_{K_r, \kappa, \tau, R}$, $\bTT^{\Pord}_{K_r, \kappa, [\tau], R}$, $\bTT^{\Paord}_{K_r, \kappa, \tau, R}$ and $\bTT^{\Paord}_{K_r, \kappa, [\tau], R}$ similarly.

\begin{lemma} \label{lma:comp Pord and Paord hecke alg}
    The isomorphism of Lemma \ref{lma:iso holo aholo flat Hecke alg} (i) induces an isomorphism
    \[
        \bTT^{\Pord}_{K_r, \kappa, R} 
            \xrightarrow{\sim}
        \bTT^{d, \Paord}_{K_r, \kappa^D, R}\,.
    \]

    Similarly, The isomorphism of Lemma \ref{lma:iso holo aholo flat Hecke alg} (iii) induces an isomorphism
    \[
        \bTT^{\Pord}_{V, K_r, \kappa, R} 
            \xrightarrow{\sim}
        \bTT^{\Pord}_{-V, K_r^\flat, \kappa^\flat, R}\,.
    \]
\end{lemma}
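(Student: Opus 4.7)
The proof will be a direct consequence of Lemma \ref{lma:iso holo aholo flat Hecke alg} once we track what happens to the relevant idempotent projectors. Since $\bTT^{\Pord}_{K_r,\kappa,R}=e_{P,\kappa}\bTT_{K_r,\kappa,R}$ and the ``$P$-ordinary/anti-ordinary'' labels simply record which idempotent cuts out the subalgebra, it suffices to show that $e_{P,\kappa}$ is carried to the appropriate idempotent by each isomorphism. Because both idempotents are limits of powers of $u_{P,p,\bullet}$, this reduces to computing the image of the single operator $u_{P,p,\kappa}=\prod_{w\in\Sigma_p}\prod_{j=1}^{r_w}u_{w,D_w(j),\kappa}$.

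For the first isomorphism, the plan is immediate: by the definition in Lemma \ref{lma:iso holo aholo flat Hecke alg} (i), each factor $u_{w,D_w(j),\kappa}$ is sent to $u^-_{w,D_w(j),\kappa^D}$, so taking the product over $w$ and $j$ shows that $u_{P,p,\kappa}\mapsto u^-_{P,p,\kappa^D}$. Passing to the limit of $n!$-th powers, $e_{P,\kappa}\mapsto e^-_{P,\kappa^D}$, and multiplying both sides of the isomorphism $\bTT_{K_r,\kappa,R}\xrightarrow{\sim}\bTT^d_{K_r,\kappa^D,R}$ by this idempotent identifies $\bTT^{\Pord}_{K_r,\kappa,R}$ with $\bTT^{d,\Paord}_{K_r,\kappa^D,R}$.

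For the second isomorphism, the argument is the same in spirit, but it requires unwinding the conventions comparing $G_1$ and $G_2$ set in Sections \ref{subsubsec:theory for G1 and G2} and \ref{subsec:Pord theory on G2}. The image of $u_{w,D_w(j),\kappa}$ is $u^{\flat}_{w,D_w(j),\kappa}=u^{-1}_{w,n,\kappa^\flat}u_{w,n-D_w(j),\kappa^\flat}$, in which the symbols $u_{w,i,\kappa^\flat}$ are the normalised Hecke operators on $G_2$ at $w$. Under the relabelling that interchanges the roles of $\d_w$ and $\d_{\bar w}$ forced by the swap $L_{-V,w}^\pm=L_{V,w}^\mp$, the partial sums $n-D_w(j)$ (as $j$ runs from $1$ to $r_w-1$) are precisely the partial sums $D^{\flat}_w(j)$ used to define $u_{P,p,\kappa^\flat}$ on $G_2$, and the $j=r_w$ factor contributes the central (and invertible) term $u_{w,n,\kappa^\flat}$. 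Taking the product over $w,j$, one obtains $u_{P,p,\kappa}\mapsto c\cdot u_{P,p,\kappa^\flat}$ for an invertible central operator $c$. Since the ordinary idempotent is insensitive to such invertible factors, the limit of $n!$-th powers again produces $e_{P,\kappa}\mapsto e_{P,\kappa^\flat}$, and applying this to the isomorphism of Lemma \ref{lma:iso holo aholo flat Hecke alg} (iii) yields the second claimed isomorphism.

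The only nontrivial step is the bookkeeping in the second part: making sure that the reversal induced by transposition (and the resulting reindexing of the partitions $\tilde{\d}_w$) genuinely turns the product $\prod_j u^{\flat}_{w,D_w(j),\kappa}$ into $u_{P,p,\kappa^\flat}$ up to an invertible central factor. This is essentially the same calculation that underlies the compatibility of the ordinary projectors across $\pi$ and $\pi^\flat$ used throughout Section \ref{subsec:comp between G1 and G2}; I expect no genuine obstruction, only careful unwinding of notational conventions.
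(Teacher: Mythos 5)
Your proof is correct and follows the route the paper intends: the lemma is stated without proof precisely because it reduces to tracking the idempotents $e_{P,\kappa}$ through the isomorphisms of Lemma \ref{lma:iso holo aholo flat Hecke alg}, which is what you do. One step in the second part deserves slightly more than the phrase ``the ordinary idempotent is insensitive to such invertible factors'': an invertible central factor can in general change which generalized eigenspaces carry unit eigenvalues. Here it cannot, because the $j=r_w$ factor of the transported product is $u_{w,n,\kappa^\flat}^{-1}u_{w,n-D_w(r_w),\kappa^\flat}=u_{w,n,\kappa^\flat}^{-1}$ itself (since $t_{w,0}=1_n$), and each transported factor has $p$-integral eigenvalues; so the product has unit eigenvalue on a generalized eigenspace if and only if $u_{w,n,\kappa^\flat}$ has unit eigenvalue there and hence if and only if every $u_{w,D_w^\flat(j'),\kappa^\flat}$ does, which is exactly the condition cut out by $e_{P,\kappa^\flat}$.
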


\begin{remark}
    Similar isomorphisms exists for Hecke algebras associated to a (class of) type but we omit the explicit statement.
\end{remark}

Consequently, $\wh{S}_\kappa^{\Paord}(K_r; R)$ (resp. $\wh{S}_\kappa^{\Paord}(K_r, \tau; R)$, $\wh{S}_\kappa^{\Paord}(K_r, [\tau]; R)$) has a natural structure as a module over $\bTT^{\Pord}_{K_r, \kappa, R}$ (resp. $\bTT^{\Pord}_{K_r, \kappa, \tau, R}$, $\bTT^{\Pord}_{K_r, \kappa, [\tau], R}$).

\subsection{Lattices of holomorphic $P$-ordinary forms.} \label{subsec:lattices of Pord holo forms}
Let $\pi$ be a holomorphic cuspidal automorphic representation on $G$ of weight $\kappa$ and level $K = K_r = K_{P,r} = I_{P,r} K^p$, for some $r \geq 0$. In what follows, we use the notation of Section \ref{subsec:(A-)holo auto reps} without comments.

In particular, we identify $(\pi^{p,S})^{K^{p, S}}$ as a 1-dimensional $\CC$-vector space with a natural $E(\pi)$-rational structure, see Remark \ref{rmk:E pi rational spherical vectors}.

Furthermore, we fix a choice of a highest weight vector $\varphi_\infty$ in $\pi_\infty$. By definition of the weight of $\pi$, this is equivalent to the choice a nonzero vector in the 1-dimensional $\CC$-vector space
\[
    H^0(\P_h, K_h; \pi_\infty \otimes W_\kappa)\,.
\]

Using the above and \eqref{eq:Lie alg coh for S kappa Kr CC}, we obtain an embedding
\[
    \pi_p^{I_{P, r}} \otimes \pi_S^{K_S} \hookrightarrow S_\kappa(K_r; \CC)\,,
\]
over $\CC$, which is equivariant for the action of $\bTT^p_{K_r, \kappa}$. 

Let $\lambda_\pi^p$ be the character via which $\bTT^p_{K_r, \kappa}$ acts on $\pi^{K_r}$, namely its action on $(\pi^{p,S})^{K^{p, S}}$. Then the embedding above factors through
\[
    j_\pi : \pi_p^{I_{P,r}} \otimes \pi_S^{K_s} \hookrightarrow S_\kappa(K_r; \CC)(\pi)\,,
\]
where $S_\kappa(K_r; \CC)(\pi)$ denotes the $\lambda^p_\pi$-isotypic component of $S_\kappa(K_r; \CC)$.

For the remainder of this article, we assume the following :
\begin{hypothesis}[Multiplicity one for $\pi$] \label{hyp:mult one hyp (I)}
    For any holomorphic cuspidal automorphic representation $\pi'$ of weight $\kappa$ such that $(\pi_f')^{K_r} \neq 0$, if $\pi' \neq \pi$, then $\lambda^p_{\pi'} \neq \lambda^p_{\pi}$.
\end{hypothesis}

\begin{remark} \label{rmk:mult one hyp and limitations}
    This is the same multiplicity one hypothesis as \cite[Hypothesis 6.6.4]{EHLS}. See the comments below \emph{loc. cit} to see the limitations of this hypothesis and the cases where it is known to hold.
\end{remark}

\begin{lemma} \label{lma:jpi iso (I)}
    Let $\pi$, $\kappa$ and $K_r = K_{P,r}$ be as above. Assume that $\pi$ satisfies Hypothesis \ref{hyp:mult one hyp (I)}. Then, the embedding $j_\pi$ is an isomorphism.
\end{lemma}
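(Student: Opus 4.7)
The plan is to deduce this from the Matsushima-type decomposition of cuspidal cohomology combined with Hypothesis \ref{hyp:mult one hyp (I)}. Since $j_\pi$ is injective by construction (the source embeds into $\pi_f^{K_r}$, which injects into $S_\kappa(K_r;\CC)$ via the choice of $\varphi_\infty$ and \eqref{eq:Lie alg coh for S kappa Kr CC}), it suffices to verify that the image of $j_\pi$ exhausts the $\lambda_\pi^p$-isotypic component $S_\kappa(K_r;\CC)(\pi)$.

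First I would decompose $\Ab_0(G)$ into its irreducible $(\g, K_h) \times G(\AA_f)$-subrepresentations and apply \eqref{eq:Lie alg coh for S kappa Kr CC} to obtain
\[
    S_\kappa(K_r; \CC)
        \cong
    \bigoplus_{\pi'}
        m(\pi') \cdot
        H^0(\P_h, K_h; \pi'_\infty \otimes W_\kappa)
            \otimes_\CC
        (\pi'_f)^{K_r},
\]
where $\pi'$ runs over the irreducible cuspidal automorphic representations of $G$ and $m(\pi')$ denotes the multiplicity of $\pi'$ in $\Ab_0(G)$. Only those $\pi'$ that are holomorphic of weight $\kappa$ and admit a nonzero $K_r$-fixed vector contribute, and for each such $\pi'$ the archimedean factor $H^0(\P_h, K_h; \pi'_\infty \otimes W_\kappa)$ is $1$-dimensional.

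Next, the Hecke algebra $\bTT^p_{K_r, \kappa}$ acts on each summand through the character $\lambda_{\pi'}^p$ by which it acts on the $1$-dimensional spherical line $(\pi'^{p, S})^{K^{p, S}}$. Restricting to the $\lambda_\pi^p$-isotypic component and invoking Hypothesis \ref{hyp:mult one hyp (I)} forces $\pi' = \pi$, so that
\[
    S_\kappa(K_r; \CC)(\pi)
        \cong
    H^0(\P_h, K_h; \pi_\infty \otimes W_\kappa)
        \otimes_\CC
    \pi_p^{I_{P, r}}
        \otimes_\CC
    \pi_S^{K_S}
        \otimes_\CC
    (\pi^{p, S})^{K^{p, S}},
\]
where the assumed multiplicity one for $\pi$ (built into the hypothesis, so that $m(\pi) = 1$) has been used. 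The chosen highest weight vector $\varphi_\infty$ trivializes the archimedean factor, the chosen $E(\pi)$-rational spherical vectors trivialize $(\pi^{p, S})^{K^{p, S}}$, and these are precisely the data entering the construction of $j_\pi$. The map $j_\pi$ therefore identifies $\pi_p^{I_{P, r}} \otimes \pi_S^{K_S}$ with $S_\kappa(K_r;\CC)(\pi)$.

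The main conceptual obstacle is the multiplicity one step, separating the $\lambda_\pi^p$-contribution of $\pi$ from every other automorphic representation sharing the same unramified Hecke data; everything else is a direct unwinding of the archimedean Lie algebra cohomology computation and the factorization of $\pi_f$ into local components. Since this uniqueness is exactly what Hypothesis \ref{hyp:mult one hyp (I)} asserts, the argument is essentially a bookkeeping of the above decomposition.
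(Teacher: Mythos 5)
Your argument is correct and is exactly the standard one the paper implicitly relies on (it states the lemma without proof, following the analogous statement in EHLS): injectivity is built into the construction of $j_\pi$, and surjectivity onto $S_\kappa(K_r;\CC)(\pi)$ follows from the decomposition of cuspidal cohomology over automorphic representations via \eqref{eq:Lie alg coh for S kappa Kr CC}, the one-dimensionality of the archimedean and spherical factors, and Hypothesis \ref{hyp:mult one hyp (I)}, which (as the paper's Remark \ref{rmk:mult one hyp and limitations} indicates) also encodes $m(\pi)=1$. Nothing further is needed.
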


To study this isomorphism further, assume that $\pi$ is $P$-ordinary. Let $(\tau, \MMM_\tau)$ is the SZ-type of $\pi_p$, as in Section \ref{subsubsec:strucutre thm for Pord rep of G1}. We assume $r$ is large enough so that $\tau$ is a $P$-nebentypus of level $r$.

It follows from Remark \ref{rmk:eigenvalues of Pword representations} that the Hecke operator $u_{w, D_w(j)} = u_{w, D_w(j), \kappa}$, for $w \in \Sigma_p$ and $1 \leq j \leq r_w$, acts as a scalar on $\pi_p^{(\Pord, r)}[\tau]$, independent of $r \gg 0$ and our choice of SZ-type from Section \ref{subsubsec:SZ types}. Hence, the character $\lambda_\pi^p$ extends uniquely to a character $\lambda_\pi$ of $\bTT_{K_r, \kappa}$ corresponding to its action on $\pi_p^{(\Pord, r)} \otimes \pi_S^{K_S}$, and $\lambda_\pi$ factors through $\bTT^{\Pord}_{K_r, \kappa, \tau, R}$.

Let $E(\lambda_\pi)$ denote the smallest extension of $E(\pi)$ also containing the values of $\lambda_\pi$. Let $R(\lambda_\pi)$ denote the localization of the ring of integers of $E(\lambda_\pi)$ at the maximal ideal determined by $\incl_p$. One readily sees that $\lambda_\pi$ is $R(\lambda_\pi)$-valued.

We always denote the residue field of $R(\lambda_\pi)$ by $k(\pi)$, the reduction of $\lambda_{\pi}$ in $k(\pi)$ by $\ol{\lambda}_\pi$, and the $p$-adic completion of $R(\lambda_\pi)$ by $\OO_\pi$. In particular, we view $\ol{\lambda}_\pi$ as being valued in a fixed algebraic closure of $\mathbb{F}_p = \ZZ/p\ZZ$.

Let
\[
    \varphi^\circ 
        = 
    \left( 
        \bigotimes_{l \notin S} \varphi_{l,0} 
    \right) 
        \otimes
    \varphi^\circ_S 
        \otimes
    \varphi_\infty 
        \otimes 
    \varphi_p 
        \in 
    H^0(
        \P_h, K_h;
        \pi^{K_r}
            \otimes 
        W_\kappa
    )\,,
\]
where each local factor is a test vector for $\pi$ chosen as in Section \ref{sec:explicit choice of P-(anti-)ord vectors}. In particular, $\varphi_p = \varphi_{p, \iota, v} := \iota(v)$ depends on the choice of an $\LL_r$-embedding $\iota : \tau \hookrightarrow \pi_p^{(\Pord, r)}$ and a nonzero vector $v \in \MMM_\tau$.

From \eqref{eq:Lie alg coh for H!i Sh V kappa}, one readily sees that the (canonical) choice of test vectors away from $S \cup \{p\}$ induces a map
\begin{equation} \label{eq: map pi p Ir x pi S K S to S kappa K r}
    \pi_p^{I_r} \otimes \pi_S^{K_S}
        \to
    S_\kappa(K_r; \CC)\,,
\end{equation}
that is equivariant under the action of $\bTT^p_{K_r, \kappa, \tau}$

The above can be improved via \eqref{eq:Lie alg coh for H!i Sh V kappa r tau} to incorporate our choice of $\varphi_p$ as follows. Following our discussion from Section \ref{subsubsec:complex modular forms}, we can tensor this map by $\MMM_\tau^\vee$ (and apply $P$-ordinary projections), to obtain a map
\begin{equation} \label{eq: map pi p tau Ir x pi S K S to S kappa K r  tau}
    \hom_{\LL_r}(\tau, \pi_p^{(\Pord, r)}) \otimes \pi_S^{K_S}
        \hookrightarrow
    S_\kappa^{\Pord}(K_r, \tau; \CC)
\end{equation}
that is equivariant under the action of $\bTT_{K_r, \kappa, \tau}$. Let $f^\circ$ and $F^\circ$ denote the image of $\varphi_p \otimes \varphi_S^\circ$ and $\iota \otimes \varphi_S^\circ$ via \eqref{eq: map pi p Ir x pi S K S to S kappa K r} and \eqref{eq: map pi p tau Ir x pi S K S to S kappa K r  tau} respectively. Then, one readily sees that $F^\circ(v) = f^\circ$. 

By definition of $\lambda_\pi$, we in fact have a $\bTT_{K_r, \kappa, \tau}$-equivariant embedding
\[
    j_\pi
        :
    \hom_{\LL_r}(\tau, \pi_p^{(\Pord, r)}) 
        \otimes 
    \pi_S^{K_S}
        \hookrightarrow
    S_\kappa^{\Pord}(
        K_r, \tau; E(\lambda_\pi)
    )[\lambda_\pi] 
        \otimes_{E(\lambda_\pi)}
    \CC\,,
\]
where $[\lambda_\pi]$ indicates the $\lambda_\pi$-isotypic component.

By Theorem \ref{thm:Pord type tau structure thm}, the space $\hom_{\LL_r}(\tau, \pi_p^{(\Pord, r)})$ is 1-dimensional and $\iota$ corresponds to a basis element. Therefore, the following is an immediate consequence of the above together with Lemma \ref{lma:jpi iso (I)}.

\begin{proposition} \label{prop:jpi iso (II)}
    Let $\pi$, $\kappa$ and $K_r$ be as in Lemma \ref{lma:jpi iso (I)}. Let $\tau$ and $\iota$ be as above. Suppose that $\pi$ satisfies Hypothesis \ref{hyp:mult one hyp (I)}.
    
    Let $R \subset \CC$ be the localization of a finite extension of $R(\lambda_\pi)$ at the prime determined by $\incl_p$ or the $p$-adic completion of such a ring. Let $E = R[1/p]$. Then, $j_\pi$ induces an isomorphism between
    \[
        \hom_{\LL_r}(\tau, \pi_p^{(\Pord, r)}) 
            \otimes 
        \pi_S^{K_S} 
            = 
        \pi_S^{K_S}\,.
    \]
    and $S_\kappa^{\Pord}(K_r, \tau; E)[\lambda_\pi] \otimes_E \CC$.

    Furthermore, let $\m_\pi \subset \bTT_{K_r, \kappa, \tau, R}$ be the kernel of the reduction of $\lambda_\pi$ modulo the maximal ideal of $R$. Let $S_\kappa^{\Pord}(K_r, \tau; R)_\pi$ denote the localization of $S_\kappa^{\Pord}(K_r, \tau; R)$ at the maximal ideal $\m_\pi$, and set
    \begin{align*}
        S_\kappa^{\Pord}(K_r, \tau; R)[\pi] 
            :=\
        &S_\kappa^{\Pord}(K_r, [\tau]; R)_\pi 
            \cap 
        S_\kappa^{\Pord}(K_r, \tau; E)[\lambda_\pi]
            \\ =\
        &S_\kappa^{\Pord}(K_r, \tau; R)_\pi 
            \cap 
        S_\kappa^{\Pord}(K_r, \tau; E)[\lambda_\pi]\,.
    \end{align*}

    Then, $j_\pi$ identifies $S_\kappa^{\Pord}(K_r, \tau; R)[\pi]$ with an $R$-lattice in $\pi_S^{K^S}$.
\end{proposition}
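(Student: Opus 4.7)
The plan is to proceed in two stages.

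\textbf{Stage 1: the first isomorphism.} I would begin with the conclusion of Lemma \ref{lma:jpi iso (I)}, which under Hypothesis \ref{hyp:mult one hyp (I)} identifies
$$j_\pi \colon \pi_p^{I_{P,r}} \otimes \pi_S^{K_S} \xrightarrow{\sim} S_\kappa(K_r;\CC)(\pi).$$
Applying the $P$-ordinary projector $e_{P,\kappa}$ (which commutes with the prime-to-$p$ Hecke action, hence preserves the $\lambda^p_\pi$-isotypic subspace on the right) restricts this to an isomorphism
$$\pi_p^{(\Pord,r)} \otimes \pi_S^{K_S} \xrightarrow{\sim} e_{P,\kappa}\bigl(S_\kappa(K_r;\CC)(\pi)\bigr).$$
Next I would take $\tau$-isotypic parts for the action of $\LL_r$ on the first factor. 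Using \eqref{eq:Lie alg coh for H!i Sh V kappa r tau} together with the $K_r$-equivariance of \eqref{eq:Lie alg coh for S kappa Kr tau CC}, this yields
$$\hom_{\LL_r}(\tau,\pi_p^{(\Pord,r)}) \otimes \pi_S^{K_S} \xrightarrow{\sim} e_{P,\kappa}\bigl(S_\kappa(K_r,\tau;\CC)(\pi)\bigr).$$
Theorem \ref{thm:Pord type tau structure thm} tells me the first factor is one-dimensional, so once the generator $\iota$ is fixed the left-hand side is canonically $\pi_S^{K_S}$. To match the right-hand side with $S_\kappa^{\Pord}(K_r,\tau;E)[\lambda_\pi]\otimes_E\CC$ I would invoke Remark \ref{rmk:eigenvalues of Pword representations}: the operators $u_{w,D_w(j)}$ and $u_p(t)$ act by scalars in $R(\lambda_\pi)\subset E$ on $\pi_p^{(\Pord,r)}[\tau]$, so $\lambda^p_\pi$ extends uniquely to a character $\lambda_\pi$ of $\bTT_{K_r,\kappa,\tau}$, and the $\lambda_\pi$-isotypic subspace over $E$ base-changes to the $(\pi)$-subspace over $\CC$ after applying $e_{P,\kappa}$.

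\textbf{Stage 2: the lattice statement.} The key input is that $S_\kappa^{\Pord}(K_r,\tau;R)$ is finitely generated over $R$; this follows from the finite generation of $H^0(\level{K_r}{\ol{\Sh}},\w_{\kappa,r,\tau})$ on the proper toroidal model constructed in Section \ref{subsec:P nbtp thy of mod forms}, together with stability under the idempotent $e_{P,\kappa}$. Localization at the maximal ideal $\m_\pi$ preserves finite generation, so
$$M := S_\kappa^{\Pord}(K_r,\tau;R)_\pi \,\cap\, S_\kappa^{\Pord}(K_r,\tau;E)[\lambda_\pi]$$
is a finitely generated, torsion-free $R$-submodule of the target $E$-vector space. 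Showing that $M\otimes_R E$ fills out the full isotypic subspace $S_\kappa^{\Pord}(K_r,\tau;E)[\lambda_\pi]$ then exhibits $M$ as an $R$-lattice, and transport along the isomorphism of Stage 1 realizes it as an $R$-lattice inside $\pi_S^{K_S}$. The equality between the $[\tau]$-localization and the $\tau$-localization asserted in the definition of $S_\kappa^{\Pord}(K_r,\tau;R)[\pi]$ is automatic, since the intersection is forced to lie inside $S_\kappa^{\Pord}(K_r,\tau;E)\subset S_\kappa^{\Pord}(K_r,[\tau];E)$.

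\textbf{Main obstacle.} The subtle point is the equality $M\otimes_R E = S_\kappa^{\Pord}(K_r,\tau;E)[\lambda_\pi]$, i.e.\ that localizing at $\m_\pi$ does not shrink the generic $\lambda_\pi$-isotypic part. The difficulty is that other Hecke eigensystems $\lambda_{\pi'}$ congruent to $\lambda_\pi$ modulo the maximal ideal of $R$ also contribute to $S_\kappa^{\Pord}(K_r,\tau;R)_\pi$, and their isotypic components over $E$ must be kept disjoint from that of $\lambda_\pi$. This is a standard separation-of-eigensystems argument parallel to the one in \cite[Section 6.6]{EHLS}: Hypothesis \ref{hyp:mult one hyp (I)} rules out genuine multiplicity for $\pi$, and the SZ-type refinement provided by Theorem \ref{thm:Pord type tau structure thm} reduces matters to semisimplicity of $\bTT_{K_r,\kappa,\tau,R}\otimes_R E$ on the prime-to-$p$ part, after which the decomposition into a product of fields forces $M\otimes_R E$ to hit the whole $\lambda_\pi$-isotypic component.
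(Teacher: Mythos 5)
Your proposal is correct and follows essentially the same route as the paper: the paper derives the first isomorphism exactly as in your Stage 1 (the Hecke-equivariant embedding $j_\pi$ into $S_\kappa^{\Pord}(K_r,\tau;E(\lambda_\pi))[\lambda_\pi]\otimes\CC$ built from \eqref{eq:Lie alg coh for H!i Sh V kappa r tau}, the one-dimensionality of $\hom_{\LL_r}(\tau,\pi_p^{(\Pord,r)})$ from Theorem \ref{thm:Pord type tau structure thm}, and surjectivity from Lemma \ref{lma:jpi iso (I)}), and it treats the lattice claim as an immediate consequence. Your Stage 2 fills in that last point; note only that the "main obstacle" you isolate is resolved by the simpler clearing-denominators observation (any $v$ in the $\lambda_\pi$-isotypic $E$-space has $cv$ in the integral module, hence in the intersection), so no separation-of-eigensystems argument beyond Hypothesis \ref{hyp:mult one hyp (I)} is actually required.
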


To finish this section, we also identify $S_\kappa^{\Pord}(K_r, [\tau]; R)_\pi$ as a lattice in a space of automorphic forms. To do so, we consider congruence between automorphic forms modulo $p$.

Namely, define the set $\SSS(\pi, \kappa, K_r, [\tau])$ as the collection of $P$-ordinary holomorphic cuspidal automorphic representation $\pi'$ of $P$-WLT $(\kappa, K_r, \tau')$ such that $[\tau]_r = [\tau']_r$ and $\ol{\lambda}_\pi = \ol{\lambda}_{\pi'}$. Here, $\tau'$ is again chosen to be the SZ-type of $\pi'$.

In particular, for any $\pi' \in \SSS(\pi, \kappa, K_r, \tau)$, both $\lambda_\pi$ and $\lambda_\pi'$ both factor through characters of $\bTT_{K_r, \kappa, [\tau], R}$ (for some sufficiently large ring $R$), and $\m_\pi = \m_{\pi'}$. 

\begin{proposition}\label{prop:jpi iso (III)}
    With notation as in Proposition \ref{prop:jpi iso (II)}, $S_\kappa^{\Pord}(K_r, [\tau]; R)_\pi$ is identified with an $R$-lattice in
    \[
        \bigoplus_{
            \pi^\prime 
                \in 
            \SSS(\pi, \kappa, K_r, [\tau])
        } 
            \hom_{\LL_r}(\tau', \pi_p^{(\Pord, r)})
                \oplus
            (\pi^\prime_S)^{K_S}
            =
        \bigoplus_{\pi^\prime \in \SSS(\pi, \kappa, K_r, [\tau])} (\pi^\prime_S)^{K_S}
    \]
    via the map $\oplus_{\pi^\prime} j_{\pi^\prime}$.
\end{proposition}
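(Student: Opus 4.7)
My plan is to reduce Proposition \ref{prop:jpi iso (III)} to Proposition \ref{prop:jpi iso (II)} applied to each $\pi' \in \SSS(\pi, \kappa, K_r, [\tau])$ individually, using the direct sum decomposition \eqref{eq:direct sum of twists of a type} as the starting point.

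First, by the definition \eqref{eq:direct sum of twists of a type} of the space of modular forms of $P$-type class $[\tau]$, and the commutation of the $P$-ordinary projector $e_{P, \kappa}$ with the direct sum over $\tau' \in [\tau]_r$, I have the $\bTT_{K_r,\kappa,[\tau],R}$-equivariant decomposition
\[
    S_\kappa^{\Pord}(K_r, [\tau]; R)
        =
    \bigoplus_{\tau' \in [\tau]_r}
        S_\kappa^{\Pord}(K_r, \tau'; R)\,.
\]
Localization at $\m_\pi$ preserves this decomposition, so I have
\[
    S_\kappa^{\Pord}(K_r, [\tau]; R)_\pi
        =
    \bigoplus_{\tau' \in [\tau]_r}
        S_\kappa^{\Pord}(K_r, \tau'; R)_\pi\,.
\]

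Next, I would pass to the generic fiber $E = R[1/p]$ and invoke the semisimple decomposition of the cuspidal cohomology under the action of the Hecke algebra $\bTT^p_{K_r, \kappa, \tau', E}$ away from $p$. Combined with the isomorphism \eqref{eq:Lie alg coh for S kappa Kr tau CC} and Hypothesis \ref{hyp:mult one hyp (I)} applied to each $\pi'$ of $P$-WLT $(\kappa, K_r, \tau')$, this yields
\[
    S_\kappa^{\Pord}(K_r, \tau'; E)
        =
    \bigoplus_{\pi'}
        S_\kappa^{\Pord}(K_r, \tau'; E)[\lambda_{\pi'}]\,,
\]
where the sum runs over the (finitely many) holomorphic $P$-ordinary cuspidal $\pi'$ of $P$-WLT $(\kappa, K_r, \tau')$, and each eigenspace is identified via $j_{\pi'}$ with $(\pi'_S)^{K_S} \otimes_E \CC$ by Proposition \ref{prop:jpi iso (II)}. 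A character $\lambda_{\pi'}$ survives localization at $\m_\pi$ precisely when $\ol{\lambda}_{\pi'} = \ol{\lambda}_\pi$ modulo the maximal ideal of $R$; in tandem with the constraint $\tau' \in [\tau]_r$, this cuts out exactly the set $\SSS(\pi, \kappa, K_r, [\tau])$. Therefore, after inverting $p$,
\[
    S_\kappa^{\Pord}(K_r, [\tau]; R)_\pi
        \otimes_R E
        =
    \bigoplus_{\pi' \in \SSS(\pi, \kappa, K_r, [\tau])}
        S_\kappa^{\Pord}(K_r, \tau'; E)[\lambda_{\pi'}]\,,
\]
which, via $\oplus_{\pi'} j_{\pi'}$, is identified with $\bigoplus_{\pi' \in \SSS(\pi, \kappa, K_r, [\tau])} (\pi'_S)^{K_S} \otimes_E \CC$.

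For the integrality statement, I would argue that $S_\kappa^{\Pord}(K_r, [\tau]; R)_\pi$ is a finitely generated torsion-free $R$-module spanning the $E$-vector space computed above; since $R$ is either a discrete valuation ring or the $p$-adic completion of one, this is exactly an $R$-lattice. Concretely, for each $\pi' \in \SSS(\pi, \kappa, K_r, [\tau])$, the intersection
\[
    S_\kappa^{\Pord}(K_r, [\tau]; R)_\pi
        \cap
    S_\kappa^{\Pord}(K_r, \tau'; E)[\lambda_{\pi'}]
\]
coincides with $S_\kappa^{\Pord}(K_r, \tau'; R)[\pi']$ from Proposition \ref{prop:jpi iso (II)} (using that $\m_{\pi'} = \m_\pi$ since $\ol{\lambda}_{\pi'} = \ol{\lambda}_\pi$), and therefore is an $R$-lattice in $(\pi'_S)^{K_S}$.

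The main technical point is verifying that localization at $\m_\pi$ really does decouple the contributions of distinct $\pi' \in \SSS(\pi, \kappa, K_r, [\tau])$ over $R$ and not merely over $E$; equivalently, that the $R$-lattice $S_\kappa^{\Pord}(K_r, [\tau]; R)_\pi$ is compatible with the rational decomposition into $\lambda_{\pi'}$-eigenspaces. This requires the absence of any $\pi'' \notin \SSS(\pi, \kappa, K_r, [\tau])$ whose localization contributes to $\m_\pi$, which is precisely guaranteed by the definition of $\SSS(\pi, \kappa, K_r, [\tau])$ combined with Hypothesis \ref{hyp:mult one hyp (I)}. Once this is in place, the direct sum of the $R$-lattices furnished by Proposition \ref{prop:jpi iso (II)} exhibits $S_\kappa^{\Pord}(K_r, [\tau]; R)_\pi$ as an $R$-lattice in $\bigoplus_{\pi' \in \SSS(\pi, \kappa, K_r, [\tau])} (\pi'_S)^{K_S}$, as desired.
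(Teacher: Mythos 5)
Your proposal is correct, and since the paper states Proposition \ref{prop:jpi iso (III)} without proof, your argument — combining the defining decomposition \eqref{eq:direct sum of twists of a type}, localization at $\m_\pi$, multiplicity one, and Proposition \ref{prop:jpi iso (II)} applied to each $\pi'$ — is exactly the intended one. One caution: your closing sentence should not be read as asserting that $S_\kappa^{\Pord}(K_r, [\tau]; R)_\pi$ \emph{equals} the direct sum of the individual lattices $S_\kappa^{\Pord}(K_r, \tau'; R)[\pi']$ (congruences between distinct $\pi'$, as in \eqref{eq:detecting congruences}, can make the localization strictly larger than that direct sum); the lattice property instead follows from your earlier, correct observation that the localization is a finitely generated torsion-free $R$-module spanning $\bigoplus_{\pi'} (\pi'_S)^{K_S}$ after inverting $p$.
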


\subsection{Lattices of anti-holomorphic $P$-anti-ordinary forms.}
\label{subsec:lattices of aholo Paord forms}
We now adjust the theory above in the anti-holomorphic case for $\pi^\flat$ on $G$, where $\pi$ is as in the previous section. We keep our assumption that $\pi$ satisfies Hypothesis \ref{hyp:mult one hyp (I)}, hence $\pi^\flat = \ol{\pi}$ as subspaces of $\Ab_0(G)$, see Remark \ref{rmk:pi flat instead of pi bar}.

\subsubsection{Lattices in $\pi^\flat$.} \label{subsubsec:lattices in pi flat}

Let
\[
    \varphi^{\flat,\circ} 
        = 
    \left( 
        \bigotimes_{l \notin S} \varphi^\flat_{l,0} 
    \right) 
        \otimes
    \varphi^{\flat, \circ}_S 
        \otimes
    \varphi^\flat_\infty 
        \otimes 
    \varphi^\flat_p 
        \in 
    H^d(
        \P_h, K_h;
        \pi^{\flat, K_r}
            \otimes 
        W_{\kappa^D}
    )\,,
\]
where each local factor is a test vector for $\pi^\flat$ chosen as in Section \ref{sec:explicit choice of P-(anti-)ord vectors}. Again, $\varphi^\flat_p = \iota^\vee(v^\vee)$ depends on the choice of an $\LL_r$-embedding $\iota : \tau^\vee \hookrightarrow (\pi_p^\flat)^{(\Paord, r)}$ and a nonzero vector $v^\vee \in \MMM_\tau^\vee$.

Similar to the holomorphic case, after fixing a basis of the 1-dimensional complex vector space $H^d(\P_h, K_h; \pi^\flat \otimes W_{\kappa^D})$ and unramified local vectors, we obtain an embedding
\[
    \pi_p^{\flat, I_r} 
        \otimes 
    \pi_S^{\flat, K_S} 
        \hookrightarrow
    H^d_{\kappa^D}(K_r, \CC)
        =
    H^d_!(\level{K_r}{\Sh(V)}, \w_{\kappa^D})\,,
\]
via the identification \eqref{eq:Lie alg coh for H!i Sh V kappa}.

Assume $\pi$ is $P$-ordinary with SZ-type $\tau$, or equivalently, that $\pi^\flat$ is $P$-anti-ordinary with SZ-type $\tau^\flat$. Then, $\bTT^d_{K_r, \kappa^D, \tau^\flat}$ acts on $(\pi_p^\flat)^{(\Paord, r)}[\tau^\flat] \otimes \pi_S^{\flat, K_S}$ via some character $\lambda_\pi^\flat$. We use Lemma \ref{lma:iso holo aholo flat Hecke alg} to view $\lambda_\pi^\flat$ as a character of $\bTT_{K_r, \kappa, \tau}$. 

\begin{remark} \label{rmk:lambda pi flat and m pi flat}
    It follows from the definition of the isomorphism in Lemma \ref{lma:iso holo aholo flat Hecke alg} (ii) that $\lambda_{\pi}^\flat = \lambda_\pi$ as characters of $\bTT_{K_r, \kappa, \tau}$. In particular, the ring $R(\lambda_\pi)$ and its $p$-adic completion $\OO_\pi$ defined in the previous section are the same when working with $\pi$ or $\pi^\flat$. Furthermore, the kernel of $\lambda_\pi^\flat$ is again the maximal $\m_\pi$ of $\bTT_{K_r, \kappa, \tau}$.
\end{remark}

Again, the map above further induces an embedding
\[
    j_{\pi^\flat}
        :
    \hom_{\LL_r}(\tau^\flat, \pi_p^{\flat, (\Paord, r)})
        \otimes
    \pi_S^{\flat, K_S}
        \hookrightarrow
    \wh{S}_\kappa
    (
        K_r, \tau; E(\lambda_\pi)
    )[\lambda_\pi]
        \otimes_{E(\lambda_\pi)}
    \CC\,,
\]
using the identification \eqref{eq:Lie alg coh for H!i Sh V kappa r tau}, that is $\bTT_{K_r, \kappa, \tau}$-equivariant. From Corollary \ref{cor:canonical Paord vec of type tau for G1}, we know $\hom_{\LL_r}(\tau^\flat, \pi_p^{\flat, (\Paord, r)})$ is 1-dimensional and $\iota^\vee$ corresponds to a basis element.

Let $R \subset \CC$ be any ring as in Prop \ref{prop:jpi iso (II)}, and let $E = R[1/p]$. Given any $\bTT_{K_r, \kappa, \tau, R}$-module $M$, we again denote its $\lambda_\pi$-isotypic (or equivalently, $\lambda_\pi^\flat$-isotypic) component by $M[\lambda_\pi]$ and its localization at $\m_\pi$ by $M_\pi$. Moreover, we define
\begin{align*}
    \wh{S}_{\kappa}^{\Paord}(K_r, \tau; R)[\pi] 
        :=\
    &\wh{S}_{\kappa}^{\Paord}(K_r, [\tau]; R)_{\pi}
        \cap
    \wh{S}_{\kappa}^{\Paord}(K_r, \tau; E)[\lambda_\pi]
        \\ =\
    &\wh{S}_{\kappa}^{\Paord}(K_r, \tau; R)_{\pi}
        \cap
    \wh{S}_{\kappa}^{\Pord}(K_r, \tau; E)[\lambda_\pi]
\end{align*}

\begin{lemma} \label{lma:jpi flat iso}
    Let $\pi$ be as above, $R$, and $E$ be as above. Assume $\pi$ satisfies Hypothesis \ref{hyp:mult one hyp (I)}. Then,
    \begin{enumerate}
        \item The embedding $j_\pi^\flat$ induces an isomorphism
        \[
            \pi_S^{\flat, K_S}
                \xrightarrow{\sim}
            \wh{S}_\kappa
            (
                K_r, \tau; E
            )[\lambda_\pi]
                \otimes_{E}
            \CC\,.
        \]
        
        \item The isomorphism from part (i) identifies $\wh{S}_{\kappa}^{\Paord}(K_r, \tau; R)[\pi]$ with an $R$-lattice in $\pi_S^{\flat, K_S}$. Similarly, $\wh{S}_{\kappa}^{\Paord}(K_r, [\tau]; R)_{\pi}$ with an $R$-lattices in
        \[
            \bigoplus_{
                \pi^\prime \in \SSS(\pi, \kappa, K_r, [\tau])
            }
                (\pi_S^{\prime, \flat})^{ K_S}
        \]
        via the map $\oplus_{\pi^\prime} j_{\pi^\prime}^\flat$.

        \item The pairing $\brktdotdot_{\kappa, K_r, \tau}$ induces perfect $\bTT_{K_r, \kappa, \tau, R}^{\Pord}$-equivariant pairings
        \[
            S_\kappa^{\Pord}(K_r, \tau; R)[\pi]
                \otimes
            \wh{S}_{\kappa}^{\Paord}(K_r, \tau; R)[\pi] \to R
        \]
        and the pairing $\brktdotdot_{\kappa, K_r, [\tau]}$ induces perfect $\bTT_{K_r, \kappa, [\tau], R}^{\Pord}$-equivariant pairings
        \[
            S_\kappa^{\Pord}(K_r, [\tau]; R)_\pi
                \otimes
            \wh{S}_{\kappa}^{\Paord}(K_r, [\tau]; R)_{\pi} \to R
        \]
    \end{enumerate}
\end{lemma}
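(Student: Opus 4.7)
The strategy is to mirror the arguments of Propositions \ref{prop:jpi iso (II)} and \ref{prop:jpi iso (III)} using the duality machinery of Section \ref{subsec:duality and integrality} and the compatibility results of Section \ref{subsec:lattices of Pord holo forms}.

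For part (i), I will first observe that by Hypothesis \ref{hyp:mult one hyp (I)}, together with the fact that $\pi \mapsto \pi^\flat$ is a bijection on cuspidal automorphic representations preserving weights, levels, and (by Remark \ref{rmk:lambda pi flat and m pi flat}) Hecke eigensystems $\lambda_\pi = \lambda_\pi^\flat$, the representation $\pi^\flat$ also satisfies the appropriate multiplicity one property among anti-holomorphic cuspidal representations of weight $\kappa$. Applying the identification \eqref{eq:Lie alg coh for H!i Sh V kappa r tau} in degree $d$, the $\lambda_\pi$-isotypic subspace of $\wh{S}_\kappa(K_r,\tau; E) \otimes_E \CC$ coincides with the image of
\[
    \hom_{\LL_r}(\tau^\flat, \pi_p^{\flat, (\Paord, r)}) \otimes \pi_S^{\flat, K_S}
\]
tensored with the spherical line away from $S \cup \{p\}$. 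The first factor is 1-dimensional by Corollary \ref{cor:canonical Paord vec of type tau for G1}, and $\iota^\vee$ is a basis. This yields the claimed isomorphism.

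For part (ii), the lattice statements follow by combining (i) with the definitions. For $\wh{S}_{\kappa}^{\Paord}(K_r, \tau; R)[\pi]$, the integral structure on $\wh{S}_{\kappa}(K_r, \tau; R)$ from Section \ref{subsubsec:integral struct on aholo mod forms} is defined as the $R$-dual of $S_{\kappa}(K_r, \tau; R)$ via the normalized Serre pairing $\brktdotdot_{\kappa, K_r, \tau}$. Under this duality, the localization at $\m_\pi$ (which by Remark \ref{rmk:lambda pi flat and m pi flat} is the same ideal as on the holomorphic side) corresponds on the anti-holomorphic side to the analogous localization. The $R$-lattice claim then follows directly from Proposition \ref{prop:jpi iso (II)} applied to $\pi^\flat$-theory, and the statement about $\wh{S}_{\kappa}^{\Paord}(K_r, [\tau]; R)_\pi$ follows similarly from Proposition \ref{prop:jpi iso (III)}, using that $\SSS(\pi, \kappa, K_r, [\tau])$ is preserved under $(-)^\flat$.

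For part (iii), the Hecke equivariance of the pairing follows from Section \ref{subsubsec:Hecke equivariance} combined with Lemmas \ref{lma:iso holo aholo flat Hecke alg} and \ref{lma:comp Pord and Paord hecke alg}: the identification $T(g) \mapsto T(g)^d$, $u_{w, D_w(j), \kappa} \mapsto u^-_{w, D_w(j), \kappa^D}$ is precisely the adjunction under $\brktdotdot_{\kappa, K_r}$, and it restricts to $P$-ordinary vs.\ $P$-anti-ordinary Hecke algebras. Perfectness is then checked as follows: the global Serre pairing $\brktdotdot_{\kappa, K_r, \tau}$ is perfect, and decomposing both sides into generalized Hecke eigensystems (over $\CC$, or equivalently over the total ring of fractions of $R$), the $\lambda_{\pi'}$-isotypic block on the left pairs trivially with the $\lambda_{\pi''}^\flat$-isotypic block on the right unless the two eigensystems agree. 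The restriction to the $\m_\pi$-localization (or the single $\lambda_\pi$-isotypic component) therefore remains a perfect pairing of free $R$-modules of equal (finite) rank by part (ii). The $[\tau]$-version follows identically using $\brktdotdot_{\kappa, K_r, [\tau]}$ and Proposition \ref{prop:jpi iso (III)}.

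The main obstacle is ensuring that the lattice statements in (ii) transport correctly under $F^\dagger$ or $c_B$ from the holomorphic side established in Section \ref{subsec:lattices of Pord holo forms}, particularly that the $R$-structures defined by (anti-holomorphic) Serre duality and by localization at $\m_\pi$ agree on the $\pi$-component. This uses that $\brktdotdot_{\kappa, K_r, \tau}$ restricted to the $\pi$-isotypic piece is already perfect over $R$, which reduces, after part (i), to a standard linear-algebra statement on free modules over the localization $R$.
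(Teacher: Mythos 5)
The paper states this lemma without proof, treating it as the anti-holomorphic dual of Lemma \ref{lma:jpi iso (I)} and Propositions \ref{prop:jpi iso (II)}--\ref{prop:jpi iso (III)}, and your proposal carries out exactly that intended dualization: multiplicity one plus $\lambda_\pi = \lambda_\pi^\flat$ for (i), the definition of $\wh{S}_\kappa(K_r,\tau;R)$ as the Serre-dual lattice for (ii), and Hecke adjointness plus block decomposition at $\m_\pi$ for (iii). Your sketch is correct and matches the paper's (implicit) approach.
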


\subsection{Big Hecke algebra and $P$-anti-ordinary Hida families.} \label{subsec:big Hecke alg and Paord Hida families}

\subsubsection{Independence of weights.} \label{subsubsec:indep of weights}
Let $R$, $\kappa$, $\tau$ and $K_r$ be as above. Consider the algebra
\[
    \varprojlim_{r}
        \bTT^{\Pord}_{K_r, \kappa, [\tau], R}
\]
over $\Lambda_R := R[[Z_P]]$. It follows from the discussion at the end of Section \ref{subsec:Hecke ops on padic mod forms} that this algebra can be viewed as a subquotient of $\End_R(\VV^\Pord(K^p, [\kappa_p, \tau], R)$.

\begin{conjecture} \label{conj:indep of weight of padic Hecke alg}
    Let $\kappa_1$ and $\kappa_2$ be two dominant characters such that $[\kappa_1] = [\kappa_2]$. There is a canonical isomorphism
    \[
        \varprojlim_{r}
            \bTT^{\Pord}_{K_r, \kappa_1, [\tau], R}
                \xrightarrow{\sim}
        \varprojlim_{r} 
            \bTT^{\Pord}_{K_r, \kappa_2, [\tau], R}\,.
    \]
\end{conjecture}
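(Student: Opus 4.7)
The plan is to realize both Hecke algebras as subalgebras of the endomorphism algebra of a common space of vector-valued $p$-adic modular forms, and then to check that they are generated by the same operators. Writing $\kappa_2 = \kappa_1 + \beta$ for some $P$-parallel weight $\beta$, the discussion at the end of Section \ref{subsubsec:P parallel weights} (and its $p$-adic version in Section \ref{subsubsec:padic weights}) identifies $V_{\kappa_1}$ and $V_{\kappa_2}$ with a common $R$-module $V_{[\kappa]}$, on which $\rho_{\kappa_{1,p}}$ and $\rho_{\kappa_{2,p}}$ act as $L_H(\ZZ_p)$-representations differing by the character $\beta_p$. Consequently the $R$-modules $\VV(K^p, [\kappa_{1,p}, \tau]; R)$ and $\VV(K^p, [\kappa_{2,p}, \tau]; R)$ are literally equal, and likewise for their $P$-ordinary subspaces after applying $e_P$.

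The next step is to compare the generators of the two Hecke algebras as endomorphisms of this common module. The prime-to-$p$ operators $T(g)$ depend only on $K^p$ and act identically on the $p$-adic module, independently of $\kappa$. For the operators at $p$, the observation recorded in Section \ref{subsubsec:Pord padic mod forms locally algebraic case} is crucial: the normalized operators $u_{w, D_w(j), \kappa}$ and $u_{p, \kappa}(t)$ on the space of (vector-valued) $p$-adic modular forms depend on $\kappa$ only through the $P$-parallel lattice $[\kappa]$, because the normalization factor $\kappa'(t_{w, D_w(j)})$ cancels against the action of $\kappa_p$ versus $\kappa_p + \beta_p$ on $V_{[\kappa]}$. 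Hence, as $\Lambda_R$-subalgebras of $\End_R \VV^{\Pord}(K^p, [\kappa_p, \tau]; R)$, the images of $\varprojlim_r \bTT^{\Pord}_{K_r, \kappa_1, [\tau], R}$ and $\varprojlim_r \bTT^{\Pord}_{K_r, \kappa_2, [\tau], R}$ coincide.

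To promote this equality of images to a canonical isomorphism between the inverse limits themselves, one must check that for each $\kappa_i$ the natural map from $\varprojlim_r \bTT^{\Pord}_{K_r, \kappa_i, [\tau], R}$ to this $p$-adic endomorphism algebra is injective. This amounts to saying that a compatible family of Hecke operators in the classical tower which acts trivially on all $P$-ordinary $p$-adic forms of $p$-adic weight $\kappa_{i,p}$ already acts trivially on $S^{\Pord}_{\kappa_i}(K_r, [\tau]; R)$ for every $r$. Granting Conjecture \ref{conj:classicality vector padic forms}, so that $P$-ordinary classical forms exhaust the corresponding $p$-adic subspace at very regular weights, one can translate between the two $\kappa_i$ by applying a sufficiently dominant $P$-parallel shift $\beta'$ so that both $\kappa_i + \beta'$ are very regular, reducing the comparison at arbitrary levels to a comparison of classical Hecke algebras whose actions on classical forms can be read off from the common $p$-adic action.

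The main obstacle is therefore this faithfulness statement, which is a density-type result for classical $P$-ordinary forms inside the $p$-adic ambient space. In the $P = B$ case this is standard Hida theory, but for general $P$ it rests squarely on the $P$-ordinary classicality conjecture, and further requires that the $P$-parallel shifts used to access very regular weights are compatible across levels $r$ — compatibility that I expect to follow formally from the construction of $u_{P,p,\kappa}$ and the fact that the normalization $\kappa \mapsto \kappa'$ is additive in $P$-parallel directions. Once these density and compatibility issues are settled, the isomorphism in the statement is canonical because it is induced by the identity on the common $p$-adic endomorphism algebra.
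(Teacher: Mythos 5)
You should first note that the paper does not prove this statement: it is explicitly labelled a \emph{conjecture}, assumed to hold "without comments" thereafter, and the remark following it records that the result is known only in the case $P = B$ (by Hida). The author states in the introduction and in Section \ref{sec:Pord padic mod forms} that the standard results of $P$-ordinary Hida theory are deliberately left as conjectures to be established in later work. So there is no proof in the paper against which your argument can be checked; what you have written is an attempt at new mathematics, and has to be judged on its own.

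Judged that way, your outline correctly isolates the right mechanism — the identification of $V_{\kappa_1}$ and $V_{\kappa_2}$ with a common module $V_{[\kappa]}$, and the observation from Section \ref{subsubsec:Pord padic mod forms locally algebraic case} that the normalized operators $u_{w,D_w(j),\kappa}$ and $u_{p,\kappa}(t)$ on $p$-adic forms depend on $\kappa$ only through $[\kappa]$ — but it is not a proof. The two load-bearing steps are exactly the ones you defer. First, injectivity of $\varprojlim_r \bTT^{\Pord}_{K_r,\kappa_i,[\tau],R} \to \End_R\bigl(\VV^{\Pord}(K^p,[\kappa_p,\tau];R)\bigr)$: the paper itself only claims this algebra is a \emph{subquotient} of the endomorphism algebra, and your reduction of injectivity to Conjecture \ref{conj:classicality vector padic forms} plus an unverified compatibility of $P$-parallel shifts across levels $r$ replaces one conjecture by others. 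Second, even granting injectivity for both weights, equality of the two images does not follow formally from the generators having the same image: each $\bTT^{\Pord}_{K_r,\kappa_i,[\tau],R}$ is defined by its faithful action on the classical space $S^{\Pord}_{\kappa_i}(K_r,[\tau];\CC)$, these spaces are genuinely different at finite level $r$, and the inverse limit is a completion that may contain elements which are limits of, but not equal to, polynomials in the generators; identifying the two completions requires a control theorem (in the $P=B$ case this is precisely where Hida's vertical control theorem enters), which here is Conjecture \ref{conj:VCT} — itself unproven for general $P$. In short, your argument is a reasonable reduction of Conjecture \ref{conj:indep of weight of padic Hecke alg} to Conjectures \ref{conj:classicality vector padic forms} and (implicitly) \ref{conj:VCT} together with a faithfulness statement, but all of these are open in the $P$-ordinary setting, so the conjecture remains unproven.
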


From now on, we assume that this conjecture holds without comments. Furthermore, we write $\bTT^{\Pord}_{K^p, [\kappa, \tau], R}$ instead of 
$
    \varprojlim_{r}
        \bTT^{\Pord}_{K_r, \kappa, [\tau], R}
$
to emphasize the fact that this algebra (conjecturally) only depends on the set $[\kappa]$ of dominant weights obtained as $P$-parallel shifts of $\kappa$.

\begin{remark} \label{rmk:Hida independence of weight of Hecke alg}
    When $P = B$ as in Remark \ref{rmk:trivial partition}, this result holds and is due to Hida, see \cite[Theorem 7.1.1]{EHLS}.
\end{remark}

Recall that the normalized Serre pairing is stable under the trace map, see \eqref{eq:norm Serre pairing trace map stable} and \eqref{eq:norm Serre pairing trace map stable with types}. In particular, for $\tau$ of level $r' > r \gg 0$, we have a map
\[
    \tr_{K_r/K_{r'}}
        :
    \wh{S}_\kappa(K_{r'}, [\tau]; R)
        \to
    \wh{S}_\kappa(K_r, [\tau]; R)\,.
\]

Therefore, the above induces natural maps
\[
    \bTT^{d, \Paord}_{K_{r'}, \kappa^D, [\tau^\vee], R}
        \to
    \bTT^{d, \Paord}_{K_r, \kappa^D, [\tau^\vee], R}
\]
that are compatible with the isomorphisms of Lemma \ref{lma:comp Pord and Paord hecke alg} and the maps 
\[
    \bTT^{\Pord}_{K_{r'}, \kappa, [\tau], R} \to \bTT^{\Pord}_{K_r, \kappa, [\tau], R}\,.
\]

In other words, the algebra
\[
    \bTT^{d, \Paord}_{K^p, [\kappa^D, \tau^\vee], R} 
        :=
    \varprojlim_{r} \bTT^{d, \Paord}_{K_r, \kappa^D, [\tau^\vee], R}\,.
\]
is well-defined and isomorphic to $\bTT^{\Pord}_{K^p, [\kappa, \tau], R}$ via Lemma \ref{lma:comp Pord and Paord hecke alg}. In particular, Conjecture \ref{conj:indep of weight of padic Hecke alg} implies a similar independence of weight for $\bTT^{d, \Paord}_{K^p, [\kappa^D, \tau^\vee], R}$.

\subsubsection{Classical points of $P$-anti-ordinary families.} \label{subsubsec:classical points of P-anti-ordinary families}
Let $Z_P^\circ$ denote the maximal pro-$p$-subgroup of $Z_P$. There exists a finite group $\Delta_P \subset Z_P$ of order prime-to-$p$ such that $Z_P = \Delta_P \times Z_P^\circ$.

For a $p$-adic ring $R$ as in the previous section, let $\Lambda^\circ_R \subset \Lambda_R$ be the complete group algebra associated to $Z_P^\circ$ over $R$. We refer to $\WWW = \spec \Lambda_R^\circ$ as the \emph{weight space} over $R$ (associated to the parabolic $P$). The \emph{weight map} is the structure homomorphism $\Omega : \Lambda_R^0 \to \bTT_{K^p, [\kappa, \tau], R}^\Pord$ sending $t \mapsto u_p(t)$ for all $t \in Z_P^\circ$.

Let $\kappa$, $K_r$ and $\tau$ be as in the previous sections. Let $\kappa_p$ be the $p$-adic weight corresponding to $\kappa$, viewed as an algebraic character of $T_H(\ZZ_p)$. Denote its restriction to a character of $Z_P$ by $\kappa_p$ again. Furthermore, let $\w_\tau$ denote the central character of $\tau$, a finite order character of $Z_P$.

\begin{definition} \label{def:arith homomorphism of Lambda}
    We say that a homomorphism $\Lambda_R^\circ \to R$ is \emph{arithmetic} if it is induced by an $R$-valued character of $Z_P$ of the form $\kappa_p \cdot \w_\tau$ for some $\kappa$ and $\tau$ as above. We sometimes say that $\kappa_p \cdot \w_\tau$ is an \emph{arithmetic character} of $Z_P$.
\end{definition}

\begin{definition} \label{def:arith homomorphism of bTT}
    Let $\lambda : \bTT_{K^p, [\kappa_p, \tau], R}^{\Pord} \to R^\times$ be a continuous character. We say that $\lambda$ is \emph{arithmetic} if its composition $\lambda \circ \Omega : \Lambda_R^\circ \to R$ with the weight map is arithmetic.
\end{definition}

If we fix ``base points'' $\kappa$ and $\tau$ of $[\kappa]$ and $[\tau]$ respectively, note that any arithmetic character of $\Lambda_R^\circ$ corresponds to a product of an algebraic character $(\kappa + \theta)_p$ and a finite-order character $\w_{\tau \otimes \psi}$, for some $P$-parallel weight $\theta$ and some finite-order character $\psi$ of $L_H(\ZZ_p)$. Recall that we use additive notation for the binary operation on the set of algebraic weights.

Furthermore, one readily sees that for all $P$-anti-ordinary automorphic representation $\pi$, the associated character $\lambda_\pi$ constructed in Section \ref{subsubsec:lattices in pi flat} is arithmetic, valued in $\OO_\pi$ and factors through $(\bTT^{\Pord}_{K^p, [\kappa, \tau], \OO_\pi})_{\m_\pi}$.

\begin{definition} \label{def:classical arithmetic caracters}
    We say that an arithmetic character $\lambda$ is \emph{classical} if it arises as $\lambda = \lambda_\pi$ for some $\pi$ as above. If $\pi$ is of $P$-anti-WLT $(\kappa, K_r, \tau)$, we say $\lambda$ has weight $\kappa$, level $r \gg 0$ and $P$-nebentypus $\tau$.
\end{definition}

For any tame character $\epsilon$ of $Z_P$, we write $\Lambda_{R, \epsilon}$ (resp. $\Lambda^\circ_{R, \epsilon}$) for the localization of $\Lambda_R$ (resp. $\Lambda^\circ_R$) at the maximal ideal of $\Lambda_R$ (resp. $\Lambda^\circ_R$) defined by $\epsilon$. Note that the quotient map $\Lambda_\pi \to \Lambda_\pi/(\m_\pi \cap \Lambda_\pi)$ is the homomorphism induced by some tame character of $Z_P$. 

\begin{conjecture} \label{conj:VCT}
    Let $R$, $\kappa$, $\tau$ and $K_r$ be as above, and assume Conjecture \ref{conj:indep of weight of padic Hecke alg}.
    \begin{enumerate}
        \item For each tame character $\epsilon$, the localization $\bTT^{\Pord}_{K^p, [\kappa, \tau], R, \epsilon}$ of the Hecke algebra $\bTT^{\Pord}_{K^p, [\kappa, \tau], R}$ at the maximal ideal defined by $\epsilon$ is finite free over $\Lambda_{R, \epsilon}^\circ$.
        \item Let $\kappa$ be a $P$-very regular weight and let $\kappa_p$ be the corresponding $p$-adic weight of $T_H(\ZZ_p)$, as in \eqref{eq:relation kappa kappa p}. Let $I_\kappa$ be the kernel of the homomorphism $\Lambda_P^\circ \to R \subset \CC_p$ induced by the restriction of $\kappa_p$ to $Z_P^\circ$. Then, the natural homomorphism
        \[
            \bTT_{K_p, [\kappa, \tau], R}^\Pord
                \otimes
            \Lambda_R^\circ / I_\kappa
                \to
            \bTT_{K_r, \kappa, [\tau], R}^\Pord
        \]
        is an isomorphism.
    \end{enumerate}
\end{conjecture}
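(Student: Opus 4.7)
The plan is to imitate Hida's classical vertical control theorem (i.e.\ the case $P=B$, cf.\ \cite[Theorem 7.1.1]{EHLS}), passing through the geometry of the Igusa tower and the vector bundles $\ul{[\kappa_p,\tau]}$ introduced in Section~\ref{subsubsec:locally algebraic coeff rings}. The author has already flagged (after Conjecture~\ref{conj:classicality scalar padic forms}) that one should adapt the approach of Pilloni to unitary groups, and this is exactly the context for the proof.

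The first step will be to realise the conjectural Hecke algebra $\bTT^{\Pord}_{K^p,[\kappa,\tau],R}$ as acting faithfully on the space $\VV^{\Pord,\cusp}(K^p,[\kappa_p,\tau];R)$ of $P$-ordinary $p$-adic cuspidal forms. Given Conjecture~\ref{conj:indep of weight of padic Hecke alg}, this is essentially automatic: the operators $T(g)$, $u_{w,D_w(j)}$, and $u_p(t)$ act naturally on $\VV(K^p,[\kappa_p,\tau];R)$ as recalled in Section~\ref{subsec:Hecke ops on padic mod forms}, and the projector $e_P$ cuts out a $\Lambda_R^\circ$-stable submodule by construction. One then checks that the centre $Z_P^\circ$ acts on this submodule through the weight map $\Omega$.

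The core of the proof, and the main obstacle, will be to show that
\[
    e_P \VV^{\cusp}(K^p,[\kappa_p,\tau];R)_\epsilon
\]
is finite free over the localisation $\Lambda_{R,\epsilon}^\circ$ for each tame character $\epsilon$. I would prove this by first interpreting the space of $p$-adic cusp forms as (the invariants of $P_H^u(\ZZ_p)$ in) $H^0$ of a coherent sheaf on $\varprojlim_r \varinjlim_m \TTT_{r,m}$ with a compact-support twist by the boundary, then using the exactness of $e_P$ on the Igusa tower together with cohomological vanishing in the ordinary locus to reduce freeness to the corresponding statement for a single mod-$p^m$ Igusa variety. Here the obstacle is that, unlike in the $B$-case, the pro-$p$ $P$-Iwahori quotient $L_P(\ZZ_p/p^r\ZZ_p)$ is a product of general linear groups rather than a torus; to trivialise it over $\Lambda_R^\circ$ one must split off the $Z_P^\circ$-action from the $SL_P$-action on the type $\MMM_\tau$ and argue that the remaining finite group acts trivially after localising at $\epsilon$. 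This is the place where the author's conjectural classification of $P$-ordinary vectors by Schneider--Zink types (Theorem~\ref{thm:Pord type tau structure thm}) should guarantee that each classical specialisation is of the expected rank, giving a Nakayama-style argument for freeness of the limit.

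Once part~(i) is in hand, part~(ii) follows the classical pattern. The strategy is to combine Conjecture~\ref{conj:classicality vector padic forms}, which identifies $\VV^{\Pord,\cusp}_{\kappa_p}(K^p,\tau;R)$ with $S_\kappa^{\Pord}(K_r,\tau;R)$ in very regular weights, with the direct sum decomposition \eqref{eq:direct sum of twists of a type} across the finite class $[\tau]_r$. Concretely, specialisation at $I_\kappa$ gives a surjection
\[
    \bTT^{\Pord}_{K^p,[\kappa,\tau],R}
        \otimes_{\Lambda_R^\circ}
    \Lambda_R^\circ / I_\kappa
        \twoheadrightarrow
    \bTT^{\Pord}_{K_r,\kappa,[\tau],R},
\]
and one must check injectivity. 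By part~(i) both sides are finite free/finite dimensional of the same rank over the appropriate base, so it suffices to compare ranks after base-change to $\CC_p$. Here one uses that each classical arithmetic character $\lambda_{\pi'}$ of weight $\kappa$ in the sense of Definition~\ref{def:classical arithmetic caracters} contributes the same multiplicity on both sides, a fact which ultimately follows from Proposition~\ref{prop:jpi iso (III)} under the multiplicity-one Hypothesis~\ref{hyp:mult one hyp (I)}, and from the very-regularity of $\kappa$ (which guarantees there are no extra non-classical $P$-ordinary specialisations via Conjecture~\ref{conj:classicality vector padic forms}). The hardest point in this last step is ruling out the appearance of non-classical components in the specialisation, which is precisely the content of classicality in very regular weight that the author has isolated as a separate conjecture.
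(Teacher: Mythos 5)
The statement you are trying to prove is Conjecture \ref{conj:VCT}: the paper does not prove it. It is deliberately left open as one of the ``standard conjectures of $P$-ordinary Hida theory,'' with only the remark that the case $P=B$ is due to Hida (\cite[Theorem 7.2.1]{EHLS}) and the author's stated intention, in the introduction and after Conjecture \ref{conj:classicality scalar padic forms}, to adapt the arguments of \cite{Pil12} to unitary groups in a subsequent paper. So there is no proof in the paper to compare yours against; what you have written can only be judged as a standalone attempt.

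As such, your text is a strategy outline rather than a proof, and the two places you yourself flag as ``obstacles'' are exactly where the content of the conjecture lives. For part (i), the assertion that $e_P\VV^{\cusp}(K^p,[\kappa_p,\tau];R)_\epsilon$ is finite free over $\Lambda^\circ_{R,\epsilon}$ is the theorem; reducing it to ``the corresponding statement for a single mod-$p^m$ Igusa variety'' plus ``a Nakayama-style argument'' does not supply the base-change/control property of the projector $e_P$ across the tower, which in the $B$-case is the hard analytic input (Hida's contraction property of $u_{P,p}$ on the Igusa tower) and which here is genuinely harder because $I_r^0/I_r\cong L_P(\ZZ_p/p^r\ZZ_p)$ is a product of general linear groups and the coefficient bundle $\ul{[\kappa_p,\tau]}$ is nontrivial. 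Splitting the $Z_P^\circ$-action from the $SL_P$-action on $\MMM_\tau$ and invoking Theorem \ref{thm:Pord type tau structure thm} is plausible but that theorem concerns local representations under Hypothesis \ref{hyp:Qw equals Pw}, not the geometry of the Igusa tower, so it cannot by itself deliver uniform boundedness of ranks. For part (ii), your argument is circular relative to the paper's logical structure: you invoke Conjecture \ref{conj:classicality vector padic forms} (itself unproven) to rule out non-classical components, and the rank comparison via Proposition \ref{prop:jpi iso (III)} requires Hypothesis \ref{hyp:mult one hyp (I)}, which is a hypothesis imposed on a single fixed $\pi$, not something known for every classical specialisation appearing in the fibre over $I_\kappa$. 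In short, the roadmap is consistent with what the author says must be done, but none of the genuinely difficult steps is carried out.
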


\begin{remark} \label{rmk:Hida VCT}
    Again, when $P = B$ as in Remark \ref{rmk:trivial partition}, this result holds and is due to Hida, see \cite[Theorem 7.2.1]{EHLS}.
\end{remark}

Now, let $\pi$ be cuspidal automorphic representation of $G = G_1$. Assume that $\pi$ is anti-holomorphic and $P$-anti-ordinary of anti-$P$-WLT $(\kappa, K_r, \tau)$. In particular, $\pi^\flat$ is holomorphic $P$-ordinary on $G_1$, or equivalently, anti-holomorphic $P$-anti-ordinary on $G_2$. In what follows, we work with $R = \OO_\pi$ and set $\Lambda_\pi := \Lambda_{\OO_\pi} = \OO_\pi[[Z_P]]$, $\Lambda^\circ_\pi := \Lambda^\circ_{\OO_\pi} = \OO_\pi[[Z_P^\circ]]$.

Let $\lambda_\pi$ be the classical character, see Definition \ref{def:classical arithmetic caracters}, of the $\Lambda_\pi$-algebra $\bTT_{K^p, [\kappa, \tau], \OO_\pi}$ associated to $\pi$ as in Section \ref{subsec:lattices of aholo Paord forms}.

Denote the localization of $\bTT^{\Pord}_{K^p, [\kappa, \tau], \OO_\pi}$ at $\m_\pi$ by $\TT = \TT_\pi$. Similarly, denote the localization of $\bTT^{\Pord}_{K_r, \kappa, [\tau], \OO_\pi}$ at $\m_\pi$ by $\TT_{K_r, \kappa, [\tau], \OO_\pi}$. We do not include the superscript ``$\Pord$'' in the notation of the localized Hecke algebras $\TT_?$ as we do not ever consider such localization of ``non-$P$-ordinary'' Hecke algebras in what follows.

\begin{proposition} \label{prop:VCT for big Hecke alg}
    Assume Conjectures \ref{conj:indep of weight of padic Hecke alg} and \ref{conj:VCT}. Then,
    \begin{enumerate}
        \item The Hecke algebra $\TT$ is finite, free over $\Lambda_\pi^\circ$.
        \item Let $\kappa$ be a very regular weight and let $\kappa_p$ be the corresponding $p$-adic weight of $T_H(\ZZ_p)$, as in \eqref{eq:relation kappa kappa p}. Let $I_\kappa$ be the kernel of the homomorphism $\Lambda_P^\circ \to \OO_\pi \subset \CC_p$ induced by the restriction of $\kappa_p$ to $Z_P^\circ$. Then, the natural homomorphism
        \[
            \TT \otimes \Lambda_R^\circ/I_\kappa
                \to
            \TT_{K_r, \kappa, [\tau], \OO_\pi}
        \]
        is an isomorphism.
    \end{enumerate}
\end{proposition}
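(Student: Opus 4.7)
The plan is to deduce both parts directly from Conjecture \ref{conj:VCT}, with the bulk of the work being careful bookkeeping of the interaction between localization at $\m_\pi$ and the tame-character decomposition of $\Lambda_\pi$. First I would observe that the splitting $Z_P = \Delta_P \times Z_P^\circ$ induces a tensor decomposition $\Lambda_\pi \cong \OO_\pi[\Delta_P] \otimes_{\OO_\pi} \Lambda_\pi^\circ$. Since $|\Delta_P|$ is prime to $p$, after enlarging $\OO_\pi$ to contain the $|\Delta_P|$-th roots of unity (a harmless step, as $\OO_\pi$ may be replaced by any finite extension without changing the problem), the finite group algebra $\OO_\pi[\Delta_P]$ decomposes as a finite product of complete local rings indexed by characters of $\Delta_P$. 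This in turn yields a decomposition
\[
    \bTT^{\Pord}_{K^p,[\kappa,\tau],\OO_\pi} \;=\; \prod_\epsilon \bTT^{\Pord}_{K^p,[\kappa,\tau],\OO_\pi,\epsilon},
\]
and the tame character $\epsilon_\pi$ arising from the restriction $\lambda_\pi|_{\Delta_P}$ singles out the unique factor containing $\m_\pi$.

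For part (i), Conjecture \ref{conj:VCT}(i) applied at $R = \OO_\pi$ and $\epsilon = \epsilon_\pi$ gives that $\bTT^{\Pord}_{K^p,[\kappa,\tau],\OO_\pi,\epsilon_\pi}$ is finite free over $\Lambda^\circ_{\OO_\pi,\epsilon_\pi}$, which under the chosen splitting is canonically identified with $\Lambda_\pi^\circ$. Since $\Lambda_\pi^\circ$ is complete local Noetherian and the $\epsilon_\pi$-factor is finite (hence semi-local) over it, the factor decomposes as a finite product of complete local $\Lambda_\pi^\circ$-algebras, one of which is precisely $\TT = \TT_\pi$. As a direct summand of a finite free $\Lambda_\pi^\circ$-module it inherits finite freeness, which gives (i).

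For part (ii), Conjecture \ref{conj:VCT}(ii) provides the isomorphism
\[
    \bTT^{\Pord}_{K^p,[\kappa,\tau],\OO_\pi} \otimes_{\Lambda_\pi^\circ} \Lambda_\pi^\circ/I_\kappa \;\xrightarrow{\sim}\; \bTT^{\Pord}_{K_r,\kappa,[\tau],\OO_\pi}.
\]
The map is induced by restricting Hecke actions from $p$-adic modular forms to classical weight-$\kappa$ modular forms, hence identifies the character $\lambda_\pi$ on the source with its classical counterpart on the target; consequently $\m_\pi$ on the left matches $\m_\pi$ on the right under this isomorphism. Localizing both sides at $\m_\pi$ preserves the isomorphism; the left-hand side becomes $\TT \otimes_{\Lambda_\pi^\circ} \Lambda_\pi^\circ/I_\kappa$ since localization commutes with base change, and the right-hand side is $\TT_{K_r,\kappa,[\tau],\OO_\pi}$ by definition.

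The main obstacle is not any single step of the argument, which is essentially routine once the conjectures are granted, but rather verifying that all the identifications (tame-character decomposition, compatibility of $\m_\pi$ across weights, matching of the structure rings $\Lambda^\circ_{\OO_\pi,\epsilon_\pi}$ and $\Lambda_\pi^\circ$) are truly canonical and do not depend on choices. In particular, one must check that the $\lambda_{\pi'}$-characters arising in the classical points $\pi' \in \SSS(\pi,\kappa,K_r,[\tau])$ all share the same tame character $\epsilon_\pi$, which is immediate from $\overline{\lambda}_\pi = \overline{\lambda}_{\pi'}$; this ensures that the $\m_\pi$-localization on the classical side lands entirely inside the $\epsilon_\pi$-component and cleanly recovers $\TT_{K_r,\kappa,[\tau],\OO_\pi}$.
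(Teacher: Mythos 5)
Your argument is correct and is precisely the deduction the paper leaves implicit: the proposition is stated without proof, being treated as an immediate consequence of localizing Conjecture \ref{conj:VCT} at $\m_\pi$ via the tame-character (equivalently, idempotent) decomposition of $\OO_\pi[\Delta_P]$, exactly as you spell out. Your verification that the classical points in $\SSS(\pi,\kappa,K_r,[\tau])$ share the tame character $\epsilon_\pi$, and that $\m_\pi$ is matched across the isomorphism of Conjecture \ref{conj:VCT}(ii), is the right bookkeeping and introduces no gap.
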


\begin{definition} \label{def:P-anti-ord Hida family of pi}
    The representation $\pi$, or more precisely the homomorphism $\lambda_\pi$, corresponds to an $\OO_\pi$-valued point $\spec \TT_\pi$. We refer to $\TT_\pi$ as a $P$-anti-ordinary Hida family associated to $\pi$.
\end{definition}

\begin{remark}
    Note that this Hida family is not an irreducible component of $\TT_\pi$. The $p$-adic $L$-function constructed in Section \ref{sec:pairing} is well-defined on all of $\spec \TT_\pi$. This (connected) space is implicitly a branch corresponding to our choice of SZ-type $\tau$ associated to $\pi$. However, the choice of $\tau$ does not affect the $p$-adic interpolation formula of the $p$-adic $L$-function constructed in this paper. Namely, the reader should note that expression at the end of Theorem \ref{thm:main thm} does not depend on $\tau$. 
\end{remark}

Let $\pi'$ be an anti-holomorphic, $P$-anti-ordinary cuspidal automorphic representation of $G = G_1$ of anti-$P$-WLT $(\kappa', K_{r'}, \tau')$. Assume that $[\kappa'] = [\kappa]$ and $[\tau'] = [\tau]$. 

The canonical isomorphism provided by Conjecture \ref{conj:indep of weight of padic Hecke alg} identifies the maximal ideal $\m_{\pi'}$ associated to $\pi'$ as a maximal ideal of $\TT_{K^p, [\kappa, \tau], R}$, for some $\OO_\pi$-algebra $R$. This allows us to generalize the set $\SSS(K_r, \kappa, [\tau], \pi)$ defined at the end of Section \ref{subsec:lattices of Pord holo forms} for other levels and weights, i.e. let 
\begin{equation} \label{eq:def SSS K r' kappa' tau pi}
    \SSS(K_{r'}, \kappa', [\tau], \pi)
        :=
    \{
        \pi' \text{ as above such that } \m_{\pi'} = \m_{\pi}
    \}\,.
\end{equation}

Similarly, let
\begin{equation} \label{eq:def SSS Kp pi}
    \SSS(K^p, \pi) 
        = 
    \SSS(K^p, [\kappa], [\tau], \pi)
        :=
    \bigcup_{r \geq 1} \bigcup_{\kappa' \in [\kappa]}
        \SSS(K_{r'}, \kappa', [\tau], \pi)\,,
\end{equation}
hence a classical character corresponds to a point $\lambda = \lambda_{\pi'}$ of $\spec \TT_\pi$, for some representation $\pi' \in \SSS(K^p, \pi)$.

The image of $\pi' \in \SSS(K^p, \pi)$ in $\WWW$ is $\w_{\tau \otimes \psi} \cdot (\kappa_p + \theta_p)$. Implicitly, in what follows, we view $\pi$ as a choice of ``base point'' and $\pi'$ as a ``shift'' from $\pi$ by $\psi \cdot \theta_p$. 

\begin{remark} \label{rmk:base point pi and shift by psi theta}
    Naturally, our constructions in the following sections do not depend on the choice of a base point. However, this perspective of ``shifting'' (or ``twisting'') $\pi$ by $\psi \cdot \theta_p$ is useful to understand the construction of the $P$-ordinary Eisenstein measure, see Proposition \ref{prop:existence of Eis measure on V3}.
\end{remark}

\subsubsection{$P$-anti-ordinary vectors and minimal ramification.} \label{subsubsec:Paord vectors and minimal ramification}
In what follows, we view 
\[
    \wh{S}_\kappa^{\Paord}(K_r, [\tau]; \OO_\pi)_\pi 
        := 
    \hom_{\OO_\pi}(S^\Paord_\kappa(K_r, [\tau]; \OO_\pi), \OO_\pi)_{\m_\pi}
\]
as a module over $\TT_{K_r, \kappa, [\tau], \OO_\pi}$. Similarly, we view
\[
    \wh{S}_\kappa^{\Paord}(K^p, [\tau]; \OO_\pi)_\pi
        := 
    \varprojlim_r 
        \wh{S}_\kappa^{\Pord}(K_r, [\tau]; \OO_\pi)_\pi
\]
as a $\TT$-module. 

\begin{hypothesis}[Gorenstein Hypothesis] \label{hyp:Gorenstein hyp}
    Let $\wh{\TT}$ denote the $\Lambda_\pi^\circ$-dual of $\TT$.
    \begin{enumerate}
        \item The $\TT$-module $\wh{\TT}$ is free of rank one. Fix an isomorphism $G_\pi : \TT \xrightarrow{\sim} \wh{\TT}$ of $\TT$-modules.
        \item The $\TT$-module $\wh{S}_\kappa^{\Paord}(K^p, [\tau]; \OO_\pi)_\pi$ is finite, free.
    \end{enumerate}
\end{hypothesis}

From now on, we always assume that the Gorenstein hypothesis above holds. We fix any $\TT$-basis of $\wh{S}_\kappa^{\Paord}(K^p, [\tau]; \OO_\pi)_\pi$ and let $\wh{I}_\pi$ denote the $\OO_\pi$-lattice spanned by this basis. In particular, we have an isomorphism
\[
    \TT \otimes_{\OO_\pi} \wh{I}_\pi
        \xrightarrow{\sim}
    \wh{S}_\kappa^{\Paord}(K^p, [\tau]; \OO_\pi)_\pi\,.
\]

Assume that the weight $\kappa$ of $\pi$ is very regular. Then, the vertical control theorem Proposition \ref{prop:VCT for big Hecke alg} (ii) implies that taking tensor with $\Lambda_\pi^\circ / I_{\kappa}$ on both sides yields an isomorphism
\begin{equation} \label{eq:basis wh I pi iso}
    \TT_{K_r, \kappa, [\tau], \OO_\pi}
        \otimes
    \wh{I}_\pi
        \xrightarrow{\sim}
    \wh{S}_\kappa^{\Paord}(K_r, [\tau]; \OO_\pi)_\pi\,.
\end{equation}

Similarly, the $\lambda_\pi$-isotypic component $\TT_{K_r, \kappa, [\tau], \OO_\pi}[\lambda_\pi] = \bTT^\Pord_{K_r, \kappa, \tau, \OO_\pi}[\lambda_\pi]$ is free of rank 1 over $\OO_\pi$, by the multiplicity one hypothesis \ref{hyp:mult one hyp (I)}. Hence, the identification \eqref{eq:basis wh I pi iso} also induces an isomorphism
\[
    \wh{I}_\pi 
        \xrightarrow{\sim}
    (
        \TT_{K_r, \kappa, [\tau], \OO_\pi}
            \otimes
        \wh{I}_\pi
    )[\lambda_\pi]
        \xrightarrow{\sim}
    \wh{S}_\kappa^{\Paord}(K_r, [\tau]; \OO_\pi)[\lambda_\pi] \,,
\]
and note that the last term is equal to $\wh{S}_\kappa^{\Paord}(K_r, \tau; \OO_\pi)[\lambda_\pi]$.

Therefore, the isomorphism $j_\pi^\flat$ from Lemma \ref{lma:jpi flat iso} (i) induces an embedding
\begin{equation} \label{eq:lattice wh I pi identification}
    \wh{I}_\pi
        \xrightarrow{\sim}
    \wh{S}_\kappa^{\Paord}(K_r, \tau; \OO_\pi)[\lambda_\pi]
        \xhookrightarrow{(j_\pi^\flat)^{-1}}
    \hom_{\LL_r}(\tau^\flat, \pi_p^{\flat, (\Paord, r)})
        \otimes 
    \pi_S^{\flat, K_S}
        =
    \pi_S^{\flat, K_S}\,.
\end{equation}

For the dual picture, we map both sides of \eqref{eq:basis wh I pi iso} to their quotients modulo $\ker(\lambda_\pi)$ and obtain
\[
    \wh{I}_\pi
        \xrightarrow{\sim}
    (
        \TT_{K_r, \kappa, [\tau], \OO_\pi}/\ker(\lambda_\pi)
    )
        \otimes
    \wh{I}_\pi
        \xrightarrow{\sim}
    \hom_{\OO_\pi}(
        S_\kappa^{\Pord}(
            K_r, \tau; \OO_\pi
        )[\lambda_\pi],
        \OO_\pi
    )
    \,.
\]

Define $I_\pi$ as the $\OO_\pi$-dual of $\wh{I}_\pi$. Then the above, together with the isomorphism $j_\pi$ from Lemma \ref{lma:jpi iso (I)} (i), induces an embedding
\begin{equation} \label{eq:lattice I pi identification}
    I_\pi
        \xrightarrow{\sim}
    S_\kappa^{\Pord}(
        K_r, \tau; \OO_\pi
    )[\lambda_\pi]
        \xhookrightarrow{j_\pi^{-1}}
    \hom_{\LL_r}(\tau, \pi_p^{(\Pord, r)})
        \otimes 
    \pi_S^{K_S}
        =
    \pi_S^{K_S} \,.
\end{equation}

\begin{remark} \label{rmk:I pi and wh I pi as pi varies}
    Note that $I_\pi$ and $\wh{I}_\pi$ only depend on $\m_\pi$. Therefore, the embedding \eqref{eq:lattice wh I pi identification} (resp. \eqref{eq:lattice I pi identification}) identifies a lattice of $P$-anti-ordinary (resp. $P$-ordinary) anti-holomorphic (resp. holomorphic) automorphic forms shared by all $(\pi')^\flat$ (resp. $\pi'$) such that $\m_\pi = \m_{\pi'}$ as a maximal ideal of $\bTT^\Pord_{K^p, [\kappa, \tau], \OO_{\pi}}$. 
    
    In other words, the embedding \eqref{eq:lattice wh I pi identification} (resp. \eqref{eq:lattice I pi identification}) obtained by assuming the Gorenstein Hypothesis \ref{hyp:Gorenstein hyp} implies that the $\CC$-dimension of local representations at ramified places of all $(\pi')^\flat$ (resp. $\pi'$) as above is constant. This can therefore be viewed as a certain \emph{minimality hypothesis} on the behavior over the ramified places of the $P$-ordinary Hida family associated to $\pi$.
\end{remark}

The discussion above, together with Remark \ref{rmk:I pi and wh I pi as pi varies}, proves the following proposition (the analogue of \cite[Proposition 7.3.5]{EHLS} in the context of $P$-ordinary representations).

\begin{proposition} \label{prop:minimality hypothesis}
    Let $\pi$, $r$, $\kappa$ and $\tau$ be as above. Let $\pi' \in \SSS(\pi, \kappa', K_{r'}, [\tau])$, for some $r' \geq 1$ and some very regular weight $\kappa'$ such that $[\kappa] = [\kappa']$.

    There is an isomorphism of $\TT_{K_{r'}, \kappa', [\tau], \OO_{\pi}}$-module
    \begin{equation} \label{basis wh I pi' iso}
        \TT_{K_{r'}, \kappa', [\tau], \OO_{\pi}} \otimes \wh{I}_\pi
            \xrightarrow{\sim}
        \wh{S}_{\kappa'}^{\Paord}(
            K_{r'}, [\tau]; \OO_\pi
        )_\pi
    \end{equation}
    such that for $r'' \geq r'$, the ``change-of-level'' diagram
    \[
    \begin{tikzcd}
        \TT_{K_{r''}, \kappa', [\tau], \OO_{\pi}} \otimes \wh{I}_\pi
            \arrow[r, "\sim"]
            \arrow[d] &
        \wh{S}_{\kappa'}^{\Paord}(
            K_{r''}, [\tau]; \OO_\pi
        )_\pi
            \arrow[d] \\
        \TT_{K_{r'}, \kappa', [\tau], \OO_{\pi}} \otimes \wh{I}_\pi
            \arrow[r, "\sim"] &
        \wh{S}_{\kappa'}^{\Paord}(
            K_{r'}, [\tau]; \OO_\pi
        )_\pi
    \end{tikzcd}
    \]
    commutes. 
    
    Furthermore, tensoring \eqref{basis wh I pi' iso} with $\TT_{K_{r'}, \kappa', [\tau], \OO_\pi}/\ker(\lambda_{\pi'})$ over $\TT_{K_{r'}, \kappa', [\tau], \OO_\pi}$, i.e. specializing this isomorphism at the $\OO_\pi$-valued point $\lambda_{\pi'}$ of $\TT_{K_{r'}, \kappa', [\tau], \OO_\pi}$ corresponding to $\pi'$, yields the commutative diagram
    \[
    \begin{tikzcd}
        (
            \TT_{K_{r'}, \kappa', [\tau], \OO_\pi}/\ker(\lambda_{\pi'})
        )
            \otimes
        \wh{I}_\pi
            \arrow[r, "\sim"]
            \arrow[d, "="] 
                &
        \hom_{\OO_\pi}(
            S_{\kappa'}^{\Pord}(
                K_{r'}, \tau; \OO_\pi
            )[\lambda_{\pi'}],
            \OO_\pi
        ) 
             \arrow[d] 
                \\
        \OO_\pi \otimes_{\OO_\pi} \wh{I}_\pi
            \arrow[r, "\cong"] 
                &
        \hom_{\OO_\pi}(I_\pi, \OO_\pi)
    \end{tikzcd}\,,
    \]
    where the bottom map is the tautological identification of $\wh{I}_\pi$ as the $\OO_\pi$-dual of $I_\pi$.
\end{proposition}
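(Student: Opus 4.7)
The starting point is the Gorenstein isomorphism $G_\pi : \TT \otimes_{\OO_\pi} \wh{I}_\pi \xrightarrow{\sim} \wh{S}_\kappa^{\Paord}(K^p, [\tau]; \OO_\pi)_\pi$ from Hypothesis \ref{hyp:Gorenstein hyp}(ii). Recalling that $\wh{S}_\kappa^{\Paord}(K^p, [\tau]; \OO_\pi)_\pi = \varprojlim_r \wh{S}_\kappa^{\Paord}(K_r, [\tau]; \OO_\pi)_\pi$, and invoking the independence-of-weight isomorphism (Conjecture \ref{conj:indep of weight of padic Hecke alg}) to view $\TT$ as acting on the analogous ``big'' module in any weight $\kappa' \in [\kappa]$, the plan is to specialize $G_\pi$ at $\kappa'$ to produce the desired isomorphism at finite level $K_{r'}$.

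Concretely, I would first establish a ``vertical control theorem for modules'': namely, for every very regular $\kappa' \in [\kappa]$ and $r'$ large enough that $\tau$ factors through level $r'$, the natural map
\[
    \wh{S}_\kappa^{\Paord}(K^p, [\tau]; \OO_\pi)_\pi
        \otimes_{\Lambda_\pi^\circ}
    \Lambda_\pi^\circ/I_{\kappa'}
        \xrightarrow{\sim}
    \wh{S}_{\kappa'}^{\Paord}(K_{r'}, [\tau]; \OO_\pi)_\pi
\]
is an isomorphism. This is the dual counterpart of Proposition \ref{prop:VCT for big Hecke alg}(ii); it follows by dualizing the corresponding statement for $S^{\Pord}_{\kappa'}(K_{r'}, [\tau]; \OO_\pi)$ through the perfect normalized Serre pairing of Lemma \ref{lma:jpi flat iso}(iii), using that the trace maps \eqref{eq:norm Serre pairing trace map stable with [types]} are compatible with Hecke operators at $p$ (Section \ref{subsubsec:Hecke equivariance}) and with the action of $\Lambda_\pi^\circ$ via the central characters $u_p(t)$. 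Tensoring $G_\pi$ with $\Lambda_\pi^\circ/I_{\kappa'}$ then yields \eqref{basis wh I pi' iso}, since $\TT \otimes_{\Lambda_\pi^\circ} \Lambda_\pi^\circ/I_{\kappa'} \xrightarrow{\sim} \TT_{K_{r'}, \kappa', [\tau], \OO_\pi}$ by the VCT.

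The compatibility with change of level from $r''$ to $r'$ is then automatic: both the trace maps and the Gorenstein module $\wh{S}_\kappa^{\Paord}(K^p, [\tau]; \OO_\pi)_\pi$ are defined as projective limits in $r$, so the transition maps at finite level are induced from the identity on the ``big'' module; tensoring the identity with $\Lambda_\pi^\circ/I_{\kappa'}$ at different truncation levels gives the required commutative square. For the specialization diagram, I would tensor \eqref{basis wh I pi' iso} with $\TT_{K_{r'}, \kappa', [\tau], \OO_\pi}/\ker(\lambda_{\pi'})$, noting that by the multiplicity one Hypothesis \ref{hyp:mult one hyp (I)} this quotient is free of rank one over $\OO_\pi$ (generated by the image of $1$). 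The right-hand side then collapses to the $\lambda_{\pi'}$-isotypic component $\wh{S}_{\kappa'}^{\Paord}(K_{r'}, [\tau]; \OO_\pi)[\lambda_{\pi'}]$, which via Lemma \ref{lma:jpi flat iso}(iii) is canonically the $\OO_\pi$-dual of $S_{\kappa'}^{\Pord}(K_{r'}, \tau; \OO_\pi)[\lambda_{\pi'}]$; by definition of $I_\pi$ as the $\OO_\pi$-dual of $\wh{I}_\pi$, this matches the bottom map of the diagram and commutes tautologically.

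The main obstacle is the first step: turning the Hecke-algebra VCT (Proposition \ref{prop:VCT for big Hecke alg}(ii)) into a module-level control statement compatibly with the Gorenstein trivialization. The delicate points are (a) verifying that the pairing $\brktdotdot_{\kappa', K_{r'}, [\tau]}$ on finite level glues across levels into the duality between $\TT$ and $\wh{S}_\kappa^{\Paord}(K^p, [\tau]; \OO_\pi)_\pi$ used in Hypothesis \ref{hyp:Gorenstein hyp}(i), and (b) ensuring that the specialization $\otimes_{\Lambda_\pi^\circ} \Lambda_\pi^\circ/I_{\kappa'}$ commutes with this duality, which reduces to the freeness assertion of Proposition \ref{prop:VCT for big Hecke alg}(i) so that $\mathrm{Tor}$-terms vanish. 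Once these are established, the rest of the argument is formal.
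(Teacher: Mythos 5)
Your proposal is correct and follows essentially the same route as the paper, which derives the proposition from the preceding discussion: the Gorenstein trivialization $\TT \otimes_{\OO_\pi} \wh{I}_\pi \xrightarrow{\sim} \wh{S}_\kappa^{\Paord}(K^p, [\tau]; \OO_\pi)_\pi$, specialized at $\kappa'$ and finite level via the vertical control theorem (Proposition \ref{prop:VCT for big Hecke alg}), combined with multiplicity one and the Serre-duality identification of $I_\pi$ with the $\OO_\pi$-dual of $\wh{I}_\pi$. The only difference is that you make explicit the module-level control statement and its compatibility with the duality pairing, which the paper leaves implicit in its appeal to the Gorenstein Hypothesis and Remark \ref{rmk:I pi and wh I pi as pi varies}.
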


Observe that all of this section can be rephrased for $G_2$. Namely, we can rewrite all of the above for $\pi^\flat$ as an anti-holomorphic $P$-anti-ordinary automorphic representation on $G_2$. Then, considering the analogue of Hypothesis \ref{hyp:Gorenstein hyp}, we similarly obtain an isomorphism
\begin{equation}  \label{eq:basis wh I pi flat iso}
    \TT_{\pi^\flat} \otimes_{\OO_\pi} \wh{I}_{\pi^\flat}
        \xrightarrow{\sim}
    \wh{S}_{\kappa^\flat, -V}^{\Paord}(K^{\flat, p}, [\tau^\flat]; \OO_\pi)_{\pi^\flat}\,,
\end{equation}
of finite free $\TT_{\pi^\flat}$-modules. Considering \eqref{eq:iso holo flat on G2 with anti holo on G1} and the isomorphism $\TT_{\pi^\flat} \cong \TT_{\pi}$ induced from the second part of Lemma \eqref{lma:comp Pord and Paord hecke alg}, we naturally identify $\wh{I}_{\pi^\flat}$ with $I_\pi$.

\subsubsection{$I_\pi$ and test vectors} \label{subsubsec:I pi and test vectors}
Fix any $\varphi_S \in \pi_S^{K_S}$ and $\varphi_S^\flat \in \pi_S^{\flat, K_S}$ such that $j_\pi(\varphi_S) \in I_\pi$ and $j_\pi^\flat(\varphi_S^\flat) \in \wh{I}_\pi$. Furthermore, let $\varphi_{l, 0} \in \pi_l$ and $\varphi^\flat_{l, 0} \in \pi^\flat_l$ be local test vectors at $l$ for all finite places $l \notin S \cup \{p\}$ of $\QQ$ as well as $\varphi_\infty \in \pi_\infty$ and $\varphi_\infty^\flat$ be local test vector at $\infty$, as in Section \ref{sec:explicit choice of P-(anti-)ord vectors}.

Fix a basis $\iota$ of $\hom_{L_P}(\tau, \pi_p^{(\Paord, r)})$ and a basis $\iota^\flat$ of $\hom_{L_P}(\tau^\flat, \pi_p^{\flat, (\Paord, r)})$. For any $v \in \tau$ and $v^\flat \in \tau^\flat$, let $\varphi_{p, v} = \iota(v)$ and $\varphi_{p, v^\flat}^\flat = \iota^\flat(v^\flat)$. By definition,
\begin{equation} \label{eq:test vector for pi determined by I pi and iota}
    \varphi 
        = 
    \left( 
        \bigotimes_{l \notin S \cup \{p\}} \varphi_{l, 0} 
    \right)
        \otimes
    \varphi_{p,v}
        \otimes
    \varphi_\infty
        \otimes
    \varphi_S
\end{equation}
and
\begin{equation} \label{eq:test vector for pi flat determined by I pi flat and iota flat}
    \varphi^\flat
        = 
    \left( 
        \bigotimes_{l \notin S \cup \{p\}} \varphi_{l, 0}^\flat 
    \right)
        \otimes
    \varphi_{p,v}^\flat
        \otimes
    \varphi_\infty^\flat
        \otimes
    \varphi_S^\flat
\end{equation}
are test vectors of $\pi$ and $\pi^\flat$ respectively. By construction and \eqref{eq:inner prod Pwaord vectors at w}, the inner product between $\varphi$ and $\varphi^\flat$ only depend on the choice of $\varphi_S$ and $\varphi_S^\flat$, i.e.
\begin{equation} \label{eq:inner product global test vectors}
    \brkt{\varphi}{\varphi^\flat}
        =
    C \cdot \vol(I_{P,r}^0) \cdot \brkt{\varphi_S}{\varphi^\flat_S}_S\,,
\end{equation}
where $C$ is the constant from \eqref{eq:facto inner product pi} and $\brktdotdot_S = \bigotimes_{l \in S} \brktdotdot_{\pi_l}$.

By abuse of terminology, we still refer to $j_\pi(\varphi_S)$ and $j_{\pi}^\flat(\varphi_S^\flat)$ as ``test vectors'', leaving the choice of basis of $\hom_\LL(\tau, \pi_p^{(\Paord,r)}$ and $\hom_\LL(\tau^\flat, \pi_p^{\flat, (\Paord,r)}$ implicit.

Let $\pi' \in \SSS(K_r, \kappa, [\tau], \pi)$ be a $P$-anti-ordinary automorphic representation of $P$-anti-WLT $(\kappa, K_r, \tau')$. Using Remarks \ref{rmk:SZ types of twists and contragredient} and \ref{rmk:I pi and wh I pi as pi varies}, one readily sees that $\varphi_v$ and $\varphi_v^\flat$ similarly determine test vectors of $\pi'$ and $\pi'^{,\flat}$, which we again denote $\varphi_v$ and $\varphi_v^\flat$. 

This yields embeddings
\[
    I_{\pi^\flat} = \wh{I}_\pi \hookrightarrow (\pi'_S)^{K_S}
        \;\;\;\text{and}\;\;\;
    \wh{I}_{\pi^\flat} = I_\pi \hookrightarrow (\pi'_S)^{\flat, K_S}
\]
into the subspaces of test vectors. Therefore, using Proposition \ref{prop:minimality hypothesis}, we identify 
\[
    \wh{I}_\pi \otimes I_\pi
        =
    \ehom_{\OO_\pi}(\wh{I}_\pi)
        =
    \ehom_{\OO_\pi}(I_{\pi^\flat})
\]
as the space of test vectors in $\pi_S' \otimes \pi_S'^{,\flat}$, for all $\pi' \in \SSS(K^p, \pi)$.


\part{$P$-ordinary family of Siegel Eisenstein series}\label{part:Pord family of Eis series}

\section{Siegel Eisenstein series for the doubling method.} \label{sec:Sgl Eis series for dbl method}
Given any number field $F/\QQ$, we write $\absv{\cdot}_F$ for the standard absolute value on $\AA^\times_F$ (instead of $\absv{\cdot}_{\AA_F}$). For $F = \QQ$, we keep writing $\AA$ for $\AA_{\QQ}$.

\subsection{Siegel Eisenstein series.} \label{subsec:Sgl Eis series}
\subsubsection{Siegel parabolic.} \label{subsubsec:def sgl parabolic}
Let $W = V \oplus V$, equipped with $\brkt{\cdot}{\cdot}_W := \brkt{\cdot}{\cdot}_V \oplus (- \brkt{\cdot}{\cdot}_V)$, be the Hermitian vector space associated to $G_4$. We work with $G_4$ for most of what follows, hence we set $G := G_4$ in all of Section \ref{sec:Sgl Eis series for dbl method}.

Consider the subspaces $V^d = \{(x,x) \in W : x \in V\}$ and $V_d = \{(x,-x) \in W : x \in V\}$. We identify both with $V$ via projection on their first factor. The direct sum $W = V_d \oplus V^d$ is a polarization of $\brkt{\cdot}{\cdot}_W$.

Let $P_\sgl \subset G$ denote the stabilizer of $V^d$ under the right-action of $G$, a maximal $\QQ$-parabolic subgroup. Let $M \subset P_\sgl$ denote the Levi subgroup that also stabilizes $V_d$. The unipotent radical of $P_\sgl$ is the subgroup $N$ that fixes both $V^d$ and $W/V^d$ and clearly, $P_{\sgl}/N \cong M$. Furthermore, there is a canonical identification $M \xrightarrow{\sim} \GL_{\KK}(V) \times \Gm$ via $m \mapsto (\Delta(m), \nu(m))$, where $\Delta$ is the projection
\[
     P_\sgl \to \GL_\KK(V^d) = \GL_\KK(V) \ ,
\]
whose inverse is given by 
$
    (A, \lambda) 
        \mapsto 
    \diag(\lambda (A^*)^{-1}, A)
$, where $A^* = \tp{A}^c$. 

\subsubsection{Induced Representations.} \label{subsubsec:Induced representations}
Let $\chi : \KK^\times \backslash \AA_{\KK}^\times \to \CC^\times$ be a unitary Hecke character. It factors as $\chi = \bigotimes \limits_{w} \chi_w$, where $w$ runs over all places of $\KK$. In later section, we assume that $\chi$ is of type $A_0$, i.e. we impose certain conditions on $\chi_\infty = \bigotimes \limits_{w \mid \infty} \chi_w$.

For convenience, define the character $\nabla$ of $P_\sgl(\AA)$ as
\[
    \nabla(-) = 
    \absv{ 
        \nm_{\KK/\KK^+} \circ 
        \det \circ 
        \Delta(-)
    }_{\KK^+} \cdot
    \absv{
        \nu(-)
    }_{\KK^+}^{-n} = 
    \absv{ 
        \det \circ 
        \Delta(-)
    }_\KK \cdot
    \absv{
        \nu(-)
    }_\KK^{-n/2} \ ,
\]
where $\nm_{\KK/E}$ is the usual norm homomorphism $\AA_{\KK} \to \AA_E$. One readily checks that $G_1(\AA)$, via its natural diagonal inclusion in $G_4(\AA)$, is in the kernel of $\nabla$. Moreover, the modulus character $\delta_{\sgl}$ of $P_\sgl(\AA_\QQ)$ equals $\nabla^{n}$.

Let $s \in \CC$, and define the smooth and normalized induction
\begin{equation} \label{eq:definition I chi s normalized}
    I(\chi, s) 
    = 
    \ind{P_\sgl(\AA)}{G(\AA)}
        \left(
            \chi
            \left(
                \det \circ \Delta(-)
            \right)
                \cdot
            \nabla(-)^{-s}
        \right)\,.
\end{equation}

This degenerate principal series is identical to the one in \cite[Section 4.1.2]{EHLS}. It is also equal to the smooth, unnormalized parabolic induction
\begin{equation} \label{eq:definition I chi s unnormalized}
    I(\chi, s) =
    \Ind_{P_\sgl(\AA)}^{G(\AA)}
        \left(
            \chi
            \left(
                \det \circ \Delta(-)
            \right) 
                \cdot
            \nabla(-)^{-s - \frac{n}{2}}
        \right)\,,
\end{equation}
and factors as a restricted tensor product of local induced representations
\[
    I(\chi, s) = \bigotimes_{v} I_v(\chi_v, s)\,,
\]
where $v$ runs over all places of $\QQ$ and $\chi_v = \bigotimes \limits_{w | v} \chi_w$. The definition of $I_v(\chi_v, s)$ is given by the obvious local analogue of \eqref{eq:definition I chi s unnormalized} at $v$.

\begin{remark} \label{rmk:comp s sE and sEL}
    To compare with results in \cite{Eis15} and \cite{EisLiu20}, let us write $s_E$ and $s_{EL}$ for the variable $s$ appearing in the unnormalized parabolic induction functor for these articles respectively. Then, the relations with our variable $s$ are $s_E = s + \frac{n}{2}$ and $s_{EL} = -s$.
\end{remark}

\subsubsection{Siegel-Weil sections and Eisenstein series.} \label{subsubsec:SW sections and Eis series}
Given a Siegel-Weil section $f = f_{\chi, s}$ of $I(\chi, s)$, one constructs the \emph{standard (nonnormalized)} Eisenstein series
\begin{equation} \label{def Eis series}
    E_f(g) = 
        \sum_{
            \gamma \in P(\QQ) \backslash G(\QQ)
        } 
        f(\gamma g)
\end{equation}
as a function on $G(\AA)$. It converges on the half-plane Re$(s) > n/2$ and if $f$ is right-$K$-finite, for some maximal compact open subgroup $K \subset G$, it admits a meromorphic continuation on $\CC$.

\begin{remark} \label{rmk:local SW sections and comparison to EHLS}
    In the following sections, we choose explicit $f_v \in I_v(\chi_v, s)$ for each place $v$ of $\QQ$. Our choices are parallel to the ones in \cite[Section 4]{EHLS} and are standard in the literature, especially at finite unramified places and at archimedean places.
    
    However, our choice of section at $p$ requires several adjustments to construct a Siegel Eisenstein series that interpolates properly $p$-adically along a $P$-ordinary family. 

    The main difference is that the locally constant function $\mu$ in \cite[Section 4.3.1]{EHLS}, which is essentially the nebentypus of an ordinary cuspidal automorphic representation, is replaced by a (matrix coefficient of a) type of a $P$-ordinary cuspidal automorphic representations.
\end{remark}

\subsubsection{Zeta integrals.} \label{subsubsec:zeta integrals}
Let $f = f_{\chi, s} \in I(\chi, s)$. Let $\pi$ be \emph{any} cuspidal automorphic representation for $G_1$ and let $\varphi \in \pi$ and $\varphi^\vee \in \pi^\vee$ be any vectors. The doubling method consists of relating the Rankin-Selberg integral
\[
    I(\varphi, \varphi^\vee, f; \chi, s) := 
    \int_{
        Z_3(\AA)G_3(\QQ) \backslash G_3(\AA)
    } 
        E_f(g_1, g_2)
        \varphi(g_1)
        \varphi^\vee(g_2)
        \chi^{-1}(\det g_2)
    d(g_1,g_2)
\]
and relate it to the $L$-function associated to $\pi$ and $\chi$. In Section \ref{sec:pairing}, we reinterpret this integral algebraically as a pairing between a holomorphic modular form on $G_3$ and a anti-holomorphic cusp form on $G_3$. To do so, as explained in \cite{GPSR87}, we use that for Re$(s)$ large enough,
\[
    I(\varphi, \cg{\varphi}, f; \chi, s) 
        = 
    \int_{U_1(\AA)} 
        f_{\chi, s}(u, 1)
        \brkt{
            \pi(u) \varphi
        }{
            \varphi^\vee
        }_\pi
    du\,.
\]

In the following sections, we choose some $f$ for which this can be done. More precisely, we construct $f$ as a pure tensor $f = \bigotimes_l f_l$ over all places $l$ of $\QQ$. Assuming that $\pi$ is $P$-anti-ordinary of $P$-anti-WLT $(\kappa, K_r, \tau)$, we construct these local Siegel-Weil sections so that $f_p$ depends on $\chi_p$ and $\tau$, $f_\infty$ depends on $\chi_\infty$ and $\kappa$, and for all finite prime $l$ away from $p$, $f_l$ depends on $K_r^p$.

Assume $\varphi$ and $\varphi^\vee$ are ``pure tensors'', i.e. $\varphi = \bigotimes_l \varphi_l$ and $\varphi^\vee = \bigotimes_l \varphi^\vee_l$ according to the factorization \eqref{eq:facto pi f}, e.g. $\varphi$ and $\varphi^\vee$ are test vectors as in Section \ref{sec:explicit choice of P-(anti-)ord vectors}. 

Then
\[  
    I(\varphi, \varphi^\vee, f; \chi, s) 
        = 
    \prod_l
        I_l(\varphi_l, \varphi^\vee_l, f_l; \chi_l, s) \cdot  
        \brkt{\varphi}{\varphi^\vee}\,,
\]
where
\begin{equation} \label{eq:def local zeta integrals}
    I_l(\varphi_l, \varphi^\vee_l, f_l; \chi_l, s) 
        =
    \frac{
        \int_{U_{1,l}} 
            f_{\chi, s, l}(u, 1)
            \brkt{
                \pi_l(u) \varphi_l
            }{
                \varphi^\vee_l
            }_{\pi_l}
        du
    }{
        \brkt{\varphi_l}{\varphi^\vee_l}_{\pi_l}
    }\,,
\end{equation}
for any place $l$ of $\QQ$. Let $Z_l$ denote the numerator of the fraction on the right-hand side of \eqref{eq:def local zeta integrals}. We compute each zeta integral $Z_l$ individually (by factoring it over places of $\KK^+$ above $l$), in Section \ref{sec:doubling method integral}.

\subsection{Local Siegel-Weil section at $p$.} \label{subsec:Local SW section at p}
For each places $w \in \Sigma_p$ of $\KK$, fix an isomorphism $\KK_w = \KK_{\overline{w}}$. Then, the identification \eqref{eq:prod G over Zp} for $G_4$ induces an identification of $P_\sgl(\QQ_p)$ with $\QQ_p^\times \times \prod_{w \in \Sigma_p} P_n(\KK_w)$, where $P_n \subset \GL_{\KK}(W)$ is the parabolic subgroup stabilizing $V^d$.

Let $\chi_p = \otimes_{w \mid p} \chi_w$ and, given $s \in \CC$, view $\chi_p \cdot \absv{-}_p^{-s}$ as a character of $P_\sgl(\QQ_p)$. One readily checks that its restriction to $\prod_{w \in \Sigma_p} P_n(\KK_w)$ corresponds to the product over $w \in \Sigma_p$ of the characters $\psi_{w,s} : P_n(\KK_w) \to \CC^\times$ defined as
\[
    \psi_{w,s}
    \left( 
        \begin{pmatrix}
            A & B \\
            0 & D
        \end{pmatrix}
    \right)
        =
    \chi_w(\det D) \chi_{\ol{w}}(\det A^{-1}) \cdot \absv{\det A^{-1}D }_w^{-s} \ ,
\]
by writing element of $P_n$ according to the direct sum decomposition $W = V_d \oplus V^d$.

Let $W_w = W \otimes_{\KK} \KK_w$ and choose any $f_{w,s} \in \ind{P_n(\KK_w)}{\GL_{\KK_w}(W_w)} \psi_{w,s}$, for each $w \in \Sigma$. Then, it is clear that the section
\begin{equation} \label{eq:def SW f p}
    f_p(g) = f_{p, \chi, s}(g) := 
        \absv{\nu}_p^{(s + \frac{n}{2})\frac{n}{2}} 
        \prod_{w \in \Sigma_p} f_{w,s}(g_w) \,, \;\;\ g = (\nu, (g_w)_w) \in G(\QQ_p)\,,
\end{equation}
is in $I_p(\chi_p, s)$.

\begin{remark} \label{rmk:f w s from Schwartz functions}
    The strategy below is to construct such $f_{w,s} = f_{w,s}^{\Phi_w}$, and hence $f_p = f_p$, from a specific Schwartz function $\Phi_w = \Phi_w^{\tau_w}$ (that depends on the type $\tau_w$), see \eqref{eq:def f phi w}. This approach is already used in \cite[Section 2.2.8]{Eis15} and \cite[Section 4.3.1]{EHLS}. In fact, our argument owes a great deal to their work and the details they carefully provide. 
    
    The novelty here is that we associate Schwartz functions to finite dimensional representations (namely the SZ types from Section \ref{subsubsec:SZ types}), instead of characters.
\end{remark}

\subsubsection{Locally Constant Matrix Coefficients.} \label{subsubsec:Locally constant matrix coeff}
In what follows, we use the notation of Section \ref{subsec:Local test vectors at p} freely. Let $\chi_{w,1} := \chi_w$ and $\chi_{w,2} := \chi_{\ol{w}}^{-1}$. Increasing the level $r$ of the SZ-types $\tau$ and $\tau^\vee$ at $p$ if necessary, we assume that the following inequality holds :
\begin{equation} \label{eq:large enough r}
	r \geq 
	\max
	(
		1, 
		\ord_w(\cond(\chi_{w,1})) , 
		\ord_w(\cond(\chi_{w,2}))
	)\,,
\end{equation}
for each $w \in \Sigma_p$. In what follows, we consider $\chi_{w,1}$ and $\chi_{w,2}$ as characters of general linear groups of any rank via composition with the determinant without comment.

Let $\mu'_{w,j} : K_{w,j} \to \CC$ be the matrix coefficient defined as
\[
    \mu'_{w,j}(X) = 
    \begin{cases}
        \langle 
            \phi_{w,j},
            \tau^\vee_{w,j}(X)
            \phi^\vee_{w,j} 
        \rangle_{\sigma_{w,j}}, & 
        \text{if $j=1, \ldots, t_w$}, \\
        \langle 
            \tau_{w,j}(X)
            \phi_{w,j},
            \phi^\vee_{w,j} 
        \rangle_{\sigma_{w,j}}, & 
        \text{if $j=t_w+1, \ldots, r_w$}.
    \end{cases}
\]

\begin{remark} \label{rmk:dependence of matrix coefficients on local vectors}
    We do not make the choice of $\phi_{w,j}$ and $\phi_{w,j}^\vee$ (see \eqref{eq:def phi a w 0 and phi b w 0} and \eqref{eq:def phi a w vee 0 and phi b w vee 0}) explicit in our notation for $\mu_{w,j}'$. See Remark \ref{rmk:psi w locally constant} below for further details.
\end{remark}

The products $\mu'_{a_w} = \bigotimes_{j=1}^{t_w} \mu'_{w,j}$ and $\mu'_{b_w} = \bigotimes_{j=t_w+1}^{r_w} \mu'_{w,j}$ on $K_{a_w}$ and $K_{b_w}$, respectively, are the matrix coefficients
\[
    \mu'_{a_w}(X) = 
    (
        \phi_{a_w}^0, 
        \tau^\vee_{a_w}(X) 
        \phi_{a_w}^{\vee, 0} 
    )_{a_w}
        \ \ \ ; \ \ \
    \mu'_{b_w}(X) = 
    ( 
        \tau_{b_w}(X)
        \phi_{b_w}^0, 
        \phi_{b_w}^{\vee, 0}
    )_{b_w}
\]
of $\cg{\tau}_{a_w}$ and $\tau_{b_w}$ respectively. 

We now consider $\mu'_{a_w}$ as a locally constant function on $M_{a_w}(\KK_w)$ supported on $\X_w^{(1)} := \tp{I}_{a_w, r}^0 I_{a_w, r}^0$. More precisely, one readily verifies that given $X \in \X_w^{(1)}$ and any $\tp{\gamma}_1, \gamma_2 \in I_{a_w, r}^0$ such that $X = \gamma_1\gamma_2$, then
\begin{equation} \label{eq:def extn mu aw}
    \mu'_{a_w}(X) := 
    ( 
        \tau_{a_w}(\gamma_1^{-1})
        \phi_{a_w}^0,
        \tau^\vee_{a_w}(\gamma_2)
        \phi_{a_w}^{\vee, 0} 
    )_{a_w}
\end{equation}
is well-defined. Indeed, given $\tp{\gamma}_1', \gamma_2' \in I^0_{a_w, r}$ such that $X = \gamma_1\gamma_2 = \gamma_1' \gamma_2'$, we have
\begin{align*}
    ( 
        \tau_{a_w}(\gamma_1^{-1})
        \phi_{a_w}^0,
        \tau^\vee_{a_w}(\gamma_2)
        \phi_{a_w}^{\vee, 0}
    )_{a_w}
        &=
    ( 
        \tau_{a_w}(\gamma_2 (\gamma_2')^{-1})
        \tau_{a_w}((\gamma_1')^{-1})
        \phi_{a_w}^0,
        \tau^\vee_{a_w}(\gamma_2)
        \phi_{a_w}^{\vee, 0}
    )_{a_w} 
        \\ &=
    (
        \tau_{a_w}((\gamma_1')^{-1})
        \phi_{a_w}^0,
        \tau^\vee_{a_w}(\gamma_2' \gamma_2^{-1})
        \tau^\vee_{a_w}(\gamma_2)
        \phi_{a_w}^{\vee, 0}
    )_{a_w} 
        \\ &=
    (
        \tau_{a_w}((\gamma_1')^{-1})
        \phi_{a_w}^0,
        \tau^\vee_{a_w}(\gamma_2')
        \phi_{a_w}^{\vee, 0} 
    )_{a_w} 
\end{align*}
where the first and second equality holds since $\gamma_1^{-1} \gamma_1' = \gamma_2 (\gamma_2')^{-1}$ modulo $\p_w^r$ lies in $L_{a_w}(\OO_w/\p_w^r\OO_w)$.

Similarly, we extend $\mu'_{b_w}$ to a locally constant function on $M_{b_w}(\KK_w)$ supported on $\X_w^{(4)} := \tp{I}_{b_w, r}^0 I_{b_w, r}^0$ via
\begin{equation} \label{eq:def extn mu bw}
    \mu'_{b_w}(X) := 
    ( 
        \tau_{b_w}(\gamma_2)
        \phi_{b_w}^0,
        \tau^\vee_{b_w}(\gamma_1^{-1})
        \phi_{b_w}^{\vee, 0} 
    )_{b_w},
\end{equation}
where $X \in \X_w^{(4)}$ and $\tp{\gamma}_1, \gamma_2 \in I_{b_w, r}^0$ are any elements such that $X = \gamma_1\gamma_2$.

Let $\mu_{a_w}(A) := \chi_{2,w}^{-1}\mu'_{a_w}$ and $\mu_{b_w} := \chi_{1,w}\mu'_{b_w}$. Let $\X_w \subset M_n(\OO_w)$ be the set of matrices 
$
    \begin{pmatrix}
        A & B \\
        C & D
    \end{pmatrix}
$
such that $A \in \X_w^{(1)}$, $B \in M_{a_w \times b_w}(\OO_w)$, $C \in M_{b_w \times a_w}(\OO_w)$ and $D \in \X_w^{(4)}$. 

We define a locally constant function $\mu_w$ on $M_{n}(\KK_w)$ supported on $\X$ via
\begin{equation} \label{eq:def ext mu w}
    \mu_w\left(
        \begin{pmatrix}
            A & B \\
            C & D
        \end{pmatrix}
    \right)
    = \mu_{a_w}(A)\mu_{b_w}(D) \ ,
\end{equation}
for all 
$
    \begin{pmatrix}
        A & B \\
        C & D
    \end{pmatrix}
    \in \X_w
$.

Observe that the set $\X_w$ contains the subgroup $\GG_w = \GG_w(r) \subset \GL_n(\OO_w)$ consisting of matrices whose terms below the $(n_{w,j} \times n_{w,j})$-blocks along the diagonal are in $\p_w^r$ and such that the upper right $(a_w \times b_w)$-block is also in $\p_w^r$. Similarly, let $\GG_{l,w} = \GG_{l,w}(r)$ (resp. $\GG_{u,w} = \GG_{u,w}(r)$) be the largest subgroup  of $\GL_n(\OO_w)$ such that $\GG_{l,w} \cap P_{\d_w}^u = 1$ (resp. $\GG_{l,w} \cap P_{\d_w}^{u, \opp} = 1$).

In particular, we have the natural decomposition $\GG_w = \GG_l (I^0_{a_w,r} \times I^0_{b_w,r}) \GG_u$. By abuse of notation, given $B \in M_{a_w \times b_w}(\KK_w)$ or $C \in M_{b_w \times a_w}(\KK_w)$, we sometimes write $B \in \mathfrak{G}_{u,w}$ or $C \in \mathfrak{G}_{l,w}$ when we mean
\[
	\begin{pmatrix}
		1 & B \\
		0 & 1
	\end{pmatrix} \in \mathfrak{G}_{u,w} \ \ \ \ \ \text{or} \ \ \ \ \ 
	\begin{pmatrix}
		1 & 0 \\
		C & 1
	\end{pmatrix} \in \mathfrak{G}_{l,w} \ .
\]

\subsubsection{Choice of Schwartz functions.} \label{subsubsec:Choice of Schwartz functions}
Let $\Phi_{1,w}: M_n(\KK_w) \to \CC$ be the locally constant function supported on $\GG_w$ such that 
\begin{equation} \label{eq:def Phi 1 w}
    \Phi_{1,w}(X) = 
    \mu_w(X)
\end{equation}
for all $X \in \GG_w$. Furthermore, define the locally constant functions
\begin{equation} \label{eq:def nu w and phi nu w}
    \nu_{\bullet}(z) = 
        \chi_{w,1}^{-1} \chi_{w,2} 
        \mu_{\bullet}(z)
    \ \ \ ; \ \ \
    \phi_{\nu_{\bullet}}(z) = 
        \nu_{\bullet}(-z) \ , 
\end{equation}
where $\bullet$ denotes $a_w$, $b_w$ or $w$, and $z$ is in the appropriate domain.

Let $\Phi_{2,w} : M_n(\QQ_p) \to \CC$ be 
\begin{equation} \label{eq:def Phi 2 w}
	\Phi_{2,w}(x) = (\nu_w)^\wedge(x) = \int_{M_n(\KK_w)} \phi_{\nu_w}(y) e_w(\tr(yx)) dy
\end{equation}

\begin{remark} \label{rmk:telescoping product}
    The definition of $\mu_w$ and its twist $\nu_w$ on $\X_w$ allows us to generalize the function denoted $\phi_{\nu_v}$ in \cite[Section 4.3.1]{EHLS}. In \emph{loc. cit.}, the SZ-types are all characters, in which case $\tau_\bullet$ is equal to $\mu'_\bullet$ (and there is no need to talk about types). The ``telescoping product'' in the definition of $\phi_{\nu_v}$ is simply a formula that expresses the extension of these characters to $I_{\bullet}^0$ and $\tp{I}_{\bullet}^0$ simultaneously. Our alternative is to use extensions of (matrix coefficients of) SZ-types such as in Equations \eqref{eq:def extn mu aw},  \eqref{eq:def extn mu bw} and \eqref{eq:def ext mu w}.
\end{remark}

\begin{remark} \label{rmk:Fourier transform conv for Phi 2 w}
	This Fourier transform in the definition of $\Phi_{2,w}$ is slightly different than the one in \cite[Section 2.2.8]{Eis15} and \cite[Section 4.3.1]{EHLS}. It is the same as the one involved in the Godement-Jacquet functional equation \cite{Jac79}.
\end{remark}

\begin{lemma} \label{lma:Phi 2 w support}
    Given 
    $X = 
        \begin{pmatrix} 
            A & B \\ 
            C & D 
        \end{pmatrix}$ 
    with $A \in M_{a_w \times a_w}(\KK_w)$, $B, \tp{C} \in M_{a_w \times b_w}(\KK_w)$ and $D \in M_{b_w \times b_w}(\KK_w)$, one can write
    \[
	\Phi_{2,w}(X) = 
            \Phi_w^{(1)}(A)
            \Phi_w^{(2)}(B)
            \Phi_w^{(3)}(C)
            \Phi_w^{(4)}(D)
    \]
    with
    \begin{align*}
	\Phi_w^{(2)} = 
            \Char_{M_{a_w \times b_w}(\OO_w)},
            &\hspace*{1cm} 
        \Phi_w^{(3)} = 
            \Char_{M_{b_w \times a_w}(\OO_w)}, \\
        \supp(\Phi_w^{(1)}) \subset 
            \p_w^{-r}M_{a_w \times a_w}(\OO_w), 
            &\hspace*{1cm} 
        \supp(\Phi_w^{(4)}) \subset 
            \p_w^{-r}M_{b_w \times b_w}(\OO_w),
    \end{align*}
    where $r$ is as in Inequality \eqref{eq:large enough r}.
\end{lemma}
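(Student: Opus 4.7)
The plan is to show that the factorization of $\Phi_{2,w}$ reduces, via Fubini, to a factorization of its preimage $\phi_{\nu_w}$ under the Fourier transform, and then to analyze the four resulting one-block integrals separately. Write $Y = \begin{pmatrix} A' & B' \\ C' & D' \end{pmatrix}$ with the block sizes matching those of $X$. Using the definitions \eqref{eq:def ext mu w} and \eqref{eq:def nu w and phi nu w}, together with the fact that $\chi_{w,1}^{-1}\chi_{w,2}$ is a character of $\KK_w^\times$ applied via the determinant (so it is multiplicative in block-diagonal matrices), one checks directly that
\[
    \phi_{\nu_w}(Y)
        =
    \phi_{\nu_{a_w}}(A')\,
    \Char_{M_{a_w\times b_w}(\OO_w)}(B')\,
    \Char_{M_{b_w\times a_w}(\OO_w)}(C')\,
    \phi_{\nu_{b_w}}(D')\,,
\]
where $\phi_{\nu_{a_w}}$ (resp. $\phi_{\nu_{b_w}}$) is as in \eqref{eq:def nu w and phi nu w} for the $a_w$-part (resp. $b_w$-part). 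Indeed, membership of $Y$ in $\X_w$ exactly forces $B' \in M_{a_w\times b_w}(\OO_w)$ and $C' \in M_{b_w\times a_w}(\OO_w)$, while the matrix coefficient $\mu_w$ splits on this set as $\mu_{a_w}(A')\mu_{b_w}(D')$.

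Second, I will compute $\tr(YX) = \tr(A'A) + \tr(B'C) + \tr(C'B) + \tr(D'D)$ and apply Fubini to split $\Phi_{2,w}(X)$ into a product of four one-block Fourier transforms. The two middle factors $\Phi_w^{(2)}(B)$ and $\Phi_w^{(3)}(C)$ are Fourier transforms of $\Char_{M_{a_w\times b_w}(\OO_w)}$ and $\Char_{M_{b_w\times a_w}(\OO_w)}$ respectively, which, under the standard self-dual normalization for $dy$ and the conductor-$\OO_w$ additive character $e_w$, are themselves the characteristic functions of the same sets; this gives the required identities for $\Phi_w^{(2)}$ and $\Phi_w^{(3)}$.

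Third, and this is the step that needs a genuine input (though still routine), I will establish the support statements for the outer factors $\Phi_w^{(1)}$ and $\Phi_w^{(4)}$. The key claim is that $\phi_{\nu_{a_w}}$ is invariant under translation by elements of $\p_w^{r}M_{a_w}(\OO_w)$. Assuming this, a change-of-variables argument yields $\Phi_w^{(1)}(A) = e_w(\tr(ZA))\Phi_w^{(1)}(A)$ for every $Z \in \p_w^r M_{a_w}(\OO_w)$, forcing $\supp(\Phi_w^{(1)}) \subset \p_w^{-r}M_{a_w}(\OO_w)$ since $e_w$ has conductor $\OO_w$. The invariance itself comes from the definitions: for $A' \in \X_w^{(1)}$, write $A' = \gamma_1\gamma_2$ with $\tp\gamma_1, \gamma_2 \in I^0_{a_w,r}$ as in \eqref{eq:def extn mu aw}; for $Z \in \p_w^r M_{a_w}(\OO_w)$, the decomposition $A'+Z = \gamma_1(\gamma_2 + \gamma_1^{-1}Z)$ has second factor congruent to $\gamma_2$ modulo $\p_w^r$, so it is still in $I^0_{a_w,r}$ and has the same image in $L_{a_w}(\OO_w/\p_w^r\OO_w)$; since $\tau_{a_w}$ factors through this finite quotient, $\mu'_{a_w}$ is unchanged, and by the bound \eqref{eq:large enough r} on the conductor of $\chi_{w,1}$ the factor $\chi_{w,1}^{-1}\circ\det$ is also unchanged. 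The argument for $\Phi_w^{(4)}$ is symmetric, exchanging $I^0_{a_w,r}$ with $\tp I^0_{b_w,r}$.

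The only mild obstacle is verifying carefully that $\X_w^{(1)}$ (and $\X_w^{(4)}$) is stable under translation by $\p_w^r M_{a_w}(\OO_w)$ (resp. $\p_w^r M_{b_w}(\OO_w)$), and that the decomposition used in \eqref{eq:def extn mu aw} is compatible with such a translation; this is exactly the point established in the displayed computation following \eqref{eq:def extn mu aw}, which shows that the value of $\mu'_{a_w}$ depends only on the image in $L_{a_w}(\OO_w/\p_w^r\OO_w)$ and so extends uniquely and consistently after translation. Once this invariance is recorded, the lemma follows formally.
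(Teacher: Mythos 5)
Your proposal is correct and follows essentially the same route as the paper: factor $\phi_{\nu_w}$ blockwise over $\X_w$, apply Fubini, identify the middle factors as (self-dual) Fourier transforms of characteristic functions of integral matrices, and use the local constancy of $\phi_{\nu_{a_w}}$, $\phi_{\nu_{b_w}}$ modulo $\p_w^r$ to bound the support of the outer factors. The only cosmetic difference is that you extract the support condition via translation invariance, i.e. $\Phi_w^{(1)}(A) = e_w(\tr(ZA))\Phi_w^{(1)}(A)$ for $Z \in \p_w^r M_{a_w}(\OO_w)$, whereas the paper writes the same integral as a finite sum over residue classes modulo $\p_w^r$ multiplied by $\Char_{\p_w^{-r}M_{a_w}(\OO_w)}$; these are equivalent.
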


\begin{proof}
    The definitions of $\phi_{\nu_{a_w}}$, $\phi_{\nu_{b_w}}$ and $\phi_{\nu_w}$ immediately imply
    \begin{align*}
        \Phi_{2,w}(X) 
        &= 
        \int_{\X_w} 
            \phi_{\nu_w}
            \left(
                \begin{pmatrix}
			     \alpha & \beta \\
			     \gamma & \delta
            \end{pmatrix}
            \right)
            e_w(
                \tr(
                    \alpha A + \beta B + \gamma C + \delta D
                )
            )
        d\alpha d\beta d\gamma d\delta \\
        &= 
        \int_{\X_w^{(1)}}
		\phi_{\nu_{a_w}}
            \left(
                \alpha
            \right)
            e_w(\tr(\alpha A))
        d\alpha 
        \int_{\X_w^{(4)}}
            \phi_{\nu_{b_w}}
            (
                \delta
            )
            e_w(\tr(\delta D))
        d\delta \\
        &\times 
        \Char_{M_{a_w \times b_w}(\OO_w)}(B) \Char_{M_{b_w \times a_w}(\OO_w)}(C) \\ 
    \end{align*}
	
    Then, we may conclude as in the proof of \cite[Lemma 4.3.2 (ii)]{EHLS} by observing
    \begin{align*}
	&\Phi_w^{(1)}(A) 
        := 
        \int_{\X_w^{(1)}}
		\phi_{\nu_{a_w}}
            \left(
                \alpha
            \right)
		e_w(\tr(\alpha A))
	d\alpha \\
	&= 
        \vol(\p_w^rM_{a_w}(\OO_w))
	\sum_{
            \alpha \in \X_w^{(1)} \text{ mod } \p_w^r
        }
    	\phi_{\nu_{a_w}}
                \left(
                    \alpha
                \right)
    	e_w(\tr{\alpha A}) 
    	\Char_{\p_w^{-r}M_{a_w}(\OO_w)}(A) \ ,
    \end{align*}
    and
    \begin{align*}
	&\Phi_w^{(4)}(D) 
        := 
        \int_{\X_w^{(4)}}
		\phi_{\nu_{b_w}}
            \left(
                \delta
            \right)
		e_w(\tr(\delta D))
	d\delta \\
	&= 
        \vol(\p_w^r M_{b_w}(\OO_w))
	\sum_{
            \delta \in \X_w^{(4)} \text{ mod } p^r
        }
    	\phi_{\nu_{b_w}}
            \left(
                \delta
            \right)
    	e_w(\tr{\delta D}) 
	   \Char_{\p_w^{-r}M_{b_w}(\OO_w)}(D) \ .
    \end{align*}
\end{proof}

Define the Schwartz function $\Phi_w : M_{n \times 2n}(\QQ_p) \to \CC$ as
\begin{equation} \label{eq:def Schwartz Phi w}
	\Phi_w(X) = \Phi_w(X_1, X_2) = \frac{\dim \tau_w}{\vol(\GG_w)} \Phi_{1,w}(-X_1) \Phi_{2,w}(X_2) \ .
\end{equation}

\begin{remark} \label{rmk:mu tau nu tau and Phi tau}
    In this section, the local type $\tau_w$ is fixed, hence we do not include it in our notation. However, in Section \ref{sec:Pord Eisenstein measure}, the type varies along a $P$-ordinary Hida family. Therefore, we write $\mu_w^{\tau_w}$, $\nu_w^{\tau_w}$, $\Phi_w^{\tau_w}$ and so on to emphasize the role of $\tau_w$.
\end{remark}

\subsubsection{Construction of $f_{w,s}$ for $w \in \Sigma_p$} \label{subsubsec:construction of f w s for w in Sigma p}
For each $w \in \Sigma_p$, write $V_w = V \otimes_{\KK} \KK_w$ and use similar notation for $V_{d, w}$ and $V^d_w$. Consider the decomposition
\[
    \hom_{\KK_w}(V_w, W_w) = \hom_{\KK_w}(V_w, V_{w,d}) \oplus \hom_{\KK_w}(V_w, V_w^d), \ \ \ X = (X_1, X_2) 
\]
and its subspace
\[
    \mathbf{X} 
        := 
    \{
        X \in \hom_{\KK_w}(V_w, W_w) \mid X(V_w) = V_w^d
    \}
        =
    \{
        (0,X) \mid X : V_w \xrightarrow{\sim} V_w^d
    \}.
\]

In fact, any $X \in \mathbf{X}$ can be viewed as an automorphism of $V_w$ (by composing with the identification of $V^d$ with $V$) and hence, we identify $\mathbf{X}$ with $\GL_{\KK_w}(V_w)$. Let $d^\times X$ be the Haar measure on the latter.

Furthermore, recall that we fixed an $\OO_w$-basis of $L_{1,w}$ in Section \ref{subsubsec:comp to gen linear groups}. This provides a $\KK_w$-basis of $V_w$ and, via their identification to $V$, a $\KK_w$-basis of $V_{d, w}$ and of $V^d_w$. Hence, it also induces a $\KK_w$-basis of $W_w = V_{d, w} \oplus V^d_w$.

It identifies $\isom(V_w^d, V_w)$ with $\isom(V_{w,d}, V_w)$, $\GL_{\KK_w}(V_w)$ with $\GL_n(\KK_w)$, $\GL_{\KK_w}(W_w)$ with $\GL_{2n}(\KK_w)$, $P_n(\KK_w)$ with the subgroup of $\GL_{2n}(\KK_w)$ consisting of upper-triangular $n \times n$-block matrices, and $\hom_{\KK_w}(V_w, W_w)$ with $M_{n \times 2n}(\QQ_p)$

Therefore, we now view the Schwartz function $\Phi_w : M_{n \times 2n}(\QQ_p) \to \CC$ constructed above as a function on $\hom_{\KK_w}(V_w, W_w)$. We define $f_{w,s} = f_{w,s}^{\Phi_w} = f^{\Phi_w}$, an element of $\ind{P_n(\KK_w)}{\GL_{2n}(\KK_w)} \psi_{w,s}$, as
\begin{equation} \label{eq:def f phi w}
    f^{\Phi_w}(g) = 
        \chi_{2,w}(g)
        \absv{\det g}_w^{\frac{n}{2} + s}
        \int_{\mathbf{X}}
            \Phi_w(Xg)
            \chi_{w,1}^{-1}
            \chi_{w,2}(X)
            |\det X|_w^{n+2s}
        d^\times X
    \ ,    
\end{equation}
as in \cite[Equation (55)]{EHLS}. To emphasize the role of $\chi_p$ and $\tau$ in the construction of the local Siegel-Weil section $f_p = f_{p, \chi, s}$ via \eqref{eq:def SW f p} obtained from $f^{\Phi_w}$ in \eqref{eq:def f phi w}, for each $w \in \Sigma_p$, we sometimes denote it by
\begin{equation} \label{eq:def SW f p tau chi p}
    f_p(\bullet) = f_p(\bullet; \tau, \chi_p, s)\,.
\end{equation}

\begin{remark} \label{rmk:psi w locally constant}
    In Section \ref{sec:Pord Eisenstein measure}, we consider the section $f_p(\bullet; \tau \otimes \psi, \chi_p, s)$ as $\tau$ remains fixed and $\psi = \otimes \psi_w$ varies over finite-order characters of $L_P(\ZZ_p)$. Note that this section depends on the choice of local vectors $\phi_{w,j}$ and $\phi_{w,j}^\vee$, for $w \in \Sigma_p$ and $1 \leq j \leq r_w$, with respect to $\tau \otimes \psi$ instead of $\tau$, see \eqref{eq:def phi a w 0 and phi b w 0} and \eqref{eq:def phi a w vee 0 and phi b w vee 0}.
    
    However, we do not make this dependence explicit in our notation. This is because the choice of such local vectors $\phi_{w,j}^?$ are fixed given any $\tau$, and our conventions ensure that the corresponding local vector for $\tau \otimes \psi$ are the local vectors $\phi_{w,j}^? \otimes 1$, i.e. essentially the ``same'' local vectors, see Remarks \ref{rmk:SZ types of twists and contragredient}, \ref{rmk:twists of local vectors in local types}, \ref{rmk:twists of local vectors in local types - contragredients}, and \ref{rmk:I pi and wh I pi as pi varies}. 
\end{remark}

\subsection{Local Siegel-Weil section at $\infty$.} \label{subsec:Local SW section at oo}

In what follows, we continue with the notation of Section \ref{subsec:Local test vectors at oo} (for $G = G_4$ instead of $G_1$). In particular, we have $G = G_4 \subset G^*$ and $G^*(\RR) = \prod_{\sigma} G_\sigma$, where $G_\sigma \cong \GU^+(n,n)$, identifying $\sigma \in \Sigma$ with its restriction in $\Sigma_{\KK^+} = \hom(\KK^+, \RR)$. Similarly, the homomorphism $h = h_4$ from the PEL datum $\PP_4$, valued in $G^*_{/\RR}$, factors as $h = \prod_\sigma h_\sigma$.

By fixing a basis of of $L_1$, we naturally obtain bases for $V_d$ and $V^d$ (via their identification with $V = L_1 \otimes \RR$) and $W = V_d \oplus V^d$. We use these to view $G_\sigma$ as a subgroup of $\GL_{2n}(\CC)$ and write 
$ 
    g_\sigma 
        = 
    \begin{pmatrix}
        a_\sigma & b_\sigma \\
        c_\sigma & d_\sigma
    \end{pmatrix}
$
where $a_\sigma$, $b_\sigma$, $c_\sigma$ and $d_\sigma$ are all $n \times n$-matrices. We always use this convention of symbols without comments. Furthermore, this choice of basis induces an identification between $\p_\sigma^+ = \p_{4, \sigma}^+$ and $M_n(\CC)$.

One readily checks that the $G^*(\RR)$-conjugacy class of $h$ is again equal to the $G(\RR)$-conjugacy class $X$ of $h$. Therefore, $X = \prod_{\sigma \in \Sigma} X_\sigma$, where $X_\sigma$ is the $G_\sigma$-conjugacy class of $h_\sigma$. Let $X_\sigma^+ \subset X_\sigma$ be the connected component containing $h_\sigma$. 

It is well-known that the space $X^+_\sigma$ is holomorphically isomorphic to a tube domain in $\p_\sigma^+ \simeq M_n(\CC)$, see \cite[(5.3.2)]{Har86}, \cite[Section 2.1]{Eis15} or \cite[Section 4.4.2]{EHLS}. Namely, let $\gimel_\sigma \in M_n(\CC)$ be the fixed point of $U_\sigma = U_\infty \cap G_\sigma$. Without loss of generality, we may assume that $\gimel_\sigma$ is a diagonal matrix whose entries are trace-zero elements of $\sigma(\KK)$. Then, $X_\sigma^+$ is naturally identified with
\[
    X_{n,n} 
        := 
    \{
        z \in M_n(\CC)
            \mid
        \gimel_\sigma(\tp{\ol{z}} - z) \text{ is positive-definite}
    \}\,.
\]

The action of $g_\sigma \in G_\sigma$ on $z \in X_{n,n}$ is given by
\[
    g_\sigma(z) = (a_\sigma z + b_\sigma) \cdot (c_\sigma z + d_\sigma)^{-1}\,.
\]

\subsubsection{Unitary Hecke characters of type $A_0$.} \label{subsubsec:Unitary Hecke char of type A0}
Let $\chi = \otimes_w \chi_w$ be the unitary Hecke character introduced in Section \ref{subsubsec:Induced representations}. Let $\chi_\infty = \otimes_{\sigma \in \Sigma} \chi_\sigma$. We assume that for each $\sigma \in \Sigma$, there exists integer $k_\sigma \in \ZZ_{\geq 0}$ and $\nu_\sigma \in \ZZ$ such that
\begin{equation} \label{eq:unitary Hecke char of type A0 k nu}
    \chi_\infty(z) 
        =
    \prod_{\sigma \in \Sigma}
        z_\sigma^{-(k_\sigma + 2\nu_\sigma)}
        (z_\sigma \ol{z_\sigma})^{\frac{k_\sigma}{2} + \nu_\sigma}
        =
    \prod_{\sigma \in \Sigma}
        \left(
            \frac{
                \absv{z_\sigma}_\sigma
            }{
                z_\sigma
            }
        \right)^{k_\sigma + 2\nu_\sigma}
    \,,
\end{equation}
for all $z = (z_\sigma)_\sigma \in \AA_{\KK, \infty}^\times = \prod_{\sigma \in \Sigma} \CC$.

\begin{remark} \label{rmk:comp m a b k nu with EHLS}
    To compare with the notation with \cite[Section 4.4.2]{EHLS}, consider the Hecke character $\chi_\infty(\bullet)|\bullet|_\infty^{-s-\frac{n}{2}}$. Assume there exists some integer $k$ such that $k = k_\sigma$ for all $\sigma \in \Sigma$. In that case, let $s = \frac{k-n}{2}$ so that
    \begin{equation} \label{eq:relation m k nu}
        |z|_\infty^{-\frac{k}{2}}\chi_\infty(z)
            =
        \prod_{\sigma \in \Sigma}
            z_\sigma^{-(k+\nu_\sigma)}
            \ol{z_\sigma}^{\nu_\sigma}\,,
    \end{equation}
    for all $z = (z_\sigma)_\sigma \in \AA_{\KK, \infty}^\times$. 
    
    For the expression \eqref{eq:relation m k nu} to be in the same form as the character denoted $|\bullet|^m\chi_0$ from \cite[p.72]{EHLS}, there is a lot of freedom on the integers $m$, $a(\chi_\sigma)$ and $b(\chi_\sigma)$ introduced in \emph{loc.cit}. For instance, we can pick $m$ arbitrarily and let $a(\chi_\sigma) = m + k + \nu_\sigma$ and $b(\chi_\sigma) = m - \nu_\sigma$.    
\end{remark}

\begin{remark} \label{rmk k nu vs m a b}
    The relations of the previous remark are the ones unstated in the explanations of \cite[Section 5.3]{EHLS}. We prefer to work with the notation of $k$, $\nu_\sigma$ and a unitary character $\chi$ as it is easier to compare with the computations of \cite{Eis15} and \cite{EisLiu20} and the work of Shimura more generally. However, for applications towards motivic conjectures, the notation with $m$, $a(\chi_\sigma)$ and $b(\chi_\sigma)$ is often more appropriate. 
\end{remark}

\subsubsection{Canonical automorphy factors for $\GU(n,n)$.} \label{subsubsec:canonical automorphy factors for GU n n}
For $\sigma \in \Sigma$, $z \in X_{n,n}$ and $g_\sigma \in G_\sigma$, let
\[
    J_\sigma(g_\sigma, z) 
        = 
    c_\sigma z + d_\sigma
        \ \ \ \text{and} \ \ \
    j_\sigma(g_\sigma, z) 
        =
    \det(J_\sigma(g_\sigma, z))\,.
\]

Similarly, for $z = (z_\sigma) \in X = \prod_\sigma X_\sigma$ and $g = (g_\sigma) \in G^*(\RR) = \prod_\sigma G_\sigma$, let
\[
    J(g, z) 
        = 
    \prod_\sigma 
        J_\sigma(g_\sigma, z_\sigma)
        \ \ \ \text{and} \ \ \
    J'(g, z)
        = 
    \prod_\sigma 
        J'_\sigma(g_\sigma, z_\sigma)\,.
\]

The functions
\[
    J_\sigma(g_\sigma) = J_\sigma(g_\sigma, \gimel_\sigma) 
        \ \ \ \text{and} \ \ \
    J'_\sigma(g_\sigma) = J'_\sigma(g_\sigma, \gimel_\sigma) 
\]
are $C^\infty$-functions on $G_\sigma$ valued in $\GL_n(\CC)$, and so the functions
\[
    j_\sigma(g_\sigma) = \det(J_\sigma(g_\sigma)) 
        \ \ \ \text{and} \ \ \
    j'_\sigma(g_\sigma) = \det(J'_\sigma(g_\sigma)) 
\]
are $C^\infty$-functions on $G_\sigma$ valued in $\CC^\times$.

Using the integers $k_\sigma$ and $\nu_\sigma$ from above, let
\[
    j_{\chi_\sigma}(g_\sigma, z)
        :=
    \det(g_\sigma, z)^{-\nu_\sigma}
    j_\sigma(g_\sigma, z)^{-k_\sigma}
\]
and given $s \in \CC$, define $f_\sigma(g_\sigma; \gimel_\sigma, \chi_\sigma, s)$ as
\[
    j_{\chi_\sigma}(g_\sigma, \gimel_\sigma)
        \cdot
    \absv{
        j_\sigma(g_\sigma, \gimel_\sigma)
    }_{\sigma}^{s - \frac{k_\sigma - n}{2}}
        \cdot
    \absv{
        \nu(g_\sigma)
    }_\sigma^{\frac{n}{2}\left(s + \frac{n}{2} \right)}
    \,,
\]
a function on $G_\sigma$. 

From this point on, assume $\chi$ satisfies the following hypothesis :

\begin{hypothesis} \label{hyp:hyp integer k}
    There exists some integer $k \geq 0$ such that $k_\sigma = k$ for all $\sigma \in \Sigma$.
\end{hypothesis}

\begin{remark} \label{rmk:prod j chi sigma holomorphic}
    This hypothesis is exactly the necessary condition to ensure that the function 
    $
        \prod_{\sigma \in \Sigma}
            j_{\chi_\sigma}(g_\sigma, z)
                \cdot
            \absv{
                j_\sigma(g_\sigma, z)
            }_{\sigma}^{s - \frac{k_\sigma - n}{2}}
    $
    is holomorphic as a function of $z$ at $s = \frac{k-n}{2}$.
\end{remark}

Let $U(\g_\sigma)$ denote the universal enveloping algebra of $\g_\sigma$. Consider the $U(\g_\sigma)$-submodule $C_{\chi_\sigma}(G_\sigma)$ generated by $f_\sigma(g_\sigma; \gimel_\sigma, \chi_\sigma, \frac{n-k}{2})$ of $C^\infty(G_\sigma)$. It naturally carries the structure of a $(U(\g_\sigma), U_\sigma)$-module and as explained in \cite[Section 4.4.2]{EHLS}, it is isomorphic to the holomorphic $(U(\g_\sigma), U_\sigma)$-module $\DD^2(\chi_\sigma)$ with highest $U_\sigma$-type
\[
    \Lambda(\chi_\sigma)
        :=
    (
        \bullet;
        \nu_\sigma, 
        \ldots, 
        \nu_\sigma;
        k + \nu_\sigma,
        \ldots,
        k+\nu_\sigma
    )\,,
\]
where $\bullet$ denotes some character of the $\RR$-split center of $U_\sigma$ (whose exact description is irrelevant for our purpose), the next $n$ entries are identical, and the last $n$-entries are also identical.

One readily checks that
\begin{equation} \label{eq:def SW f oo gimel}
    f_\infty(g)
        =
    f_\infty(g; \gimel, \chi_\infty, s)
        :=
    \prod_{\sigma \in \Sigma}
        f_\sigma(g_\sigma; \gimel_\sigma, \chi_\sigma, s)
        \in
    I_\infty(\chi_\infty, s)
        \,,
\end{equation}
where $\gimel = (\gimel_\sigma)_\sigma \in X$ and $g = (g_\sigma)_\sigma \in G_4(\RR) \subset G^*(\RR) = \prod_\sigma G_\sigma$.

More generally, replacing $\gimel$ by any $z = (z_\sigma)_\sigma \in X$, we define $f_\sigma(g_\sigma; z_\sigma, \chi_\sigma, s)$ and $f_\infty(g; z, \chi_\infty, s)$ similarly.

\subsubsection{$C^\infty$-differential operators.} \label{subsubsec:Coo diff operators}

\begin{remark}
    The Eisenstein measure involved in the construction of our $p$-adic $L$-functions uses the Siegel section $f_\infty$ constructed above at $s = \frac{k-n}{2}$. The idea is to view the corresponding Siegel Eisenstein series as a $p$-adic modular form, using the theory of Section \ref{sec:Pord padic mod forms}, and apply $p$-adic differential operators.
    
    However, to relate this measure to standard $L$-functions, we compare this $p$-adic Eisenstein series to a smooth (non-holomorphic) Eisenstein series obtained by replacing the $p$-adic differential operators with more familiar $C^\infty$ differential operators. We obtain special values of $L$-functions by applying the doubling method to the $C^\infty$ Eisenstein series.
    
    As opposed to our choice of Siegel-Weil sections at $p$ from Section \ref{subsec:Local SW section at p} (and the corresponding computation of Zeta integrals in Section \ref{subsec:Local zeta integral at p}), the objects and calculus needed here are already explained thoroughly in the literature, see \cite{Har97, Har08}, \cite[Sections 4.4-4.5]{EHLS} or \cite{EisLiu20}. Therefore, we simply recall the material and results upon which we rely.
\end{remark}

Let $\kappa$ be a dominant character of $T_{H_0}$ as in Section \ref{subsubsec:alg weights}. We modify our notation slightly in this section by identifying $\kappa$ as a tuple in $\ZZ \times \prod_{\sigma \in \Sigma} \ZZ^{a_\sigma} \times \ZZ^{b_\sigma}$. Therefore, we momentarily write
\[
    \kappa_\sigma
        =
    (
        \kappa_{\sigma, 1},
        \ldots,
        \kappa_{\sigma, a_{\sigma}};
        \kappa^c_{\sigma, 1},
        \ldots,
        \kappa^c_{\sigma, b_{\sigma}}
    )
        \in
    \ZZ^{a_\sigma} \times \ZZ^{b_\sigma}
\]
and $\kappa = (\kappa_0, (\kappa_\sigma)_{\sigma \in \Sigma})$.

\begin{definition} \label{def:critical pairs}
    We say a pair $(\kappa, \chi)$ is \emph{critical} if $\kappa_\sigma \in C_3(\chi_\sigma)$ for all $\sigma \in \Sigma$, where $C_3(\chi_\sigma) \subset \ZZ^{a_\sigma} \times \ZZ^{b_\sigma}$ is defined as
    \begin{center}
    $\biggl\{$
        \stackanchor{
            $(
                - \nu_\sigma - r_{a_\sigma},
                \ldots,
                - \nu_\sigma - r_1;$
        }{
            $- k - \nu_\sigma + s_1,
                \ldots,
                - k - \nu_\sigma + s_{b_\sigma}
            )$
        }
                $\biggm|$
        \stackanchor{
            $r_1 \geq \ldots \geq r_{a_\sigma} \geq 0;$
        }{
            $s_1 \geq \ldots \geq s_{b_\sigma} \geq 0$
        }
    $\biggr\}$
    \end{center}
\end{definition}

Given a critical pair $(\kappa, \chi)$, we set
\begin{equation} \label{eq:def rho sigma and rho sigma v shifts}
    \rho_\sigma 
        := 
    (
        -r_{a_\sigma}, 
        \ldots,
        -r_1;
        s_1,
        \ldots,
        s_{b_\sigma}
    )
        \ \ \ \text{and} \ \ \
    \rho^v_\sigma 
        := 
    (
        r_1, 
        \ldots,
        r_{a_\sigma};
        s_1,
        \ldots,
        s_{b_\sigma}
    )
\end{equation}
and write $\rho = (\rho_\sigma)_{\sigma \in \Sigma}$, $\rho^v = (\rho^v_\sigma)_{\sigma \in \Sigma}$. Obviously, given a fixed unitary Hecke character $\chi$ of type $A_0$, the tuples $\kappa_\sigma$ and $\rho_\sigma$ determine one another, however we do not make this relation explicit in our notation. Informally, we think of $\rho_\sigma$ as the ``shift'' from $\Lambda(\chi_\sigma)$ to $\kappa_\sigma$ (but note the change of signs). 

\begin{remark}
    See \cite[Remark 4.4.6]{EHLS} and the discussion that precedes it for more information about the relevance of the set $C_3(\chi_\sigma)$.
\end{remark}

Fix any $\kappa_\sigma \in C_3(\chi_\sigma)$. Following the discussion in Section \ref{subsec:Local test vectors at oo}, note that $\DD(\kappa_\sigma) \otimes \DD(\kappa_\sigma^\flat \otimes \chi_\sigma)$ is a holomorphic discrete series for $U_{3, \sigma} := U(a_\sigma, b_\sigma) \times U(b_\sigma, a_\sigma)$, see Remark \ref{rmk:discrete series for G sigma}, for all $\kappa_\sigma \in C_3(\chi_\sigma)$. Furthermore, for $k$ sufficiently large, the restriction of $\DD^2(\chi_\sigma)$ to $U_{3, \sigma}$ is isomorphic to
\[
    \bigoplus_{\kappa_\sigma \in C_3(\chi_\sigma)} 
        \DD(\kappa_\sigma) \otimes \DD(\kappa_\sigma^\flat \otimes \chi_\sigma)\,.
\]

Observe that $\DD(\kappa_\sigma) \otimes \DD(\kappa_\sigma^\flat \otimes \chi_\sigma)$ is defined over the field of definition $E(\kappa_\sigma, \chi_\sigma)$ of $\kappa_\sigma \boxtimes (\kappa_\sigma^\flat \otimes \chi_\sigma)$. Let $v_{\kappa_\sigma} \otimes v_{\kappa_\sigma \otimes \chi_\sigma}$ be a highest weight vector in the minimal $U_{3,\sigma}$-type, rational over $E(\kappa_\sigma, \chi_\sigma)$. Note that $v_{\kappa_\sigma}$ and $v_{\kappa_\sigma \otimes \chi_\sigma}$ are dual to the choice of anti-holomorphic test vectors from Section \ref{subsubsec:choice of a-holo test vectors}.

Similarly, let $v_{\chi_\sigma}$ be the tautological generator of the $\Lambda(\chi_\sigma)$-isotypic subspace of $\DD^2(\chi_\sigma)$. We fix a map $\iota_{\chi_\infty} : \DD^2(\chi_\sigma) \to C^\infty(G_\sigma)$ mapping $v_\sigma$ to $f_\sigma(\bullet; \gimel_\sigma, \chi_\sigma, \frac{k-n}{2})$.

For each $\kappa_\sigma$, there is a natural projection
\[
    \pr_{\kappa_\sigma}
        :
    \DD^2(\chi_\sigma)
        \to
    \DD(\kappa_\sigma) 
        \otimes 
    \DD(\kappa_\sigma^\flat \otimes \chi_\sigma)
\]
and its composition with the orthogonal projection onto the highest weight component also yields
\begin{equation} \label{eq:def pr hol kappa sigma}
    \pr^{\hol}_{\kappa_\sigma}
        :
    \DD^2(\chi_\sigma)
        \to
    \Span(v_{\kappa_\sigma} \otimes v_{\kappa_\sigma^\flat \otimes \chi_\sigma})\,.
\end{equation}

Then, as explained in \cite[Section 4.4.7]{EHLS}, there exists a differential operator $D(\rho_\sigma^v) \in U(\p_{4, \sigma}^+)$ such that :
\[
    \pr_{\kappa_\sigma}^{\hol}( D(\rho_\sigma^v) v_{\chi_\sigma} )
        =
    P_{\kappa_\sigma, \chi_\sigma}
        \cdot
    v_{\kappa_\sigma} \otimes v_{\kappa_\sigma^\flat \otimes \chi_\sigma}
    \,,
\]
for some $P_{\kappa_\sigma, \chi_\sigma} \in E(\kappa_\sigma, \chi_\sigma)^\times$. Furthermore, let
\[
    D(\rho^v) 
        = 
    \prod_{\sigma} D(\rho^v_\sigma)
        \;\;\;;\;\;\;
    D^\hol(\rho^v_\sigma) 
        = 
    \pr_{\kappa_\sigma}^\hol D(\rho^v_\sigma)
        \;\;\;;\;\;\;
    D^\hol(\rho^v) 
        = 
    \prod_{\sigma} D^\hol(\rho^v_\sigma)\,.
\]

Then, for any other dominant weight $\kappa^\dagger \leq \kappa$ of $T_{H_0}$ (in particular, $(\kappa^\dagger, \chi)$ is again critical), there exists a differential operator $\delta(\kappa, \kappa^\dagger) \in U(\p_3^+)$ such that
\begin{equation} \label{eq:decomposition D kappa chi with D hol kappa dagger chi}
    D(\kappa, \chi)
        =
    \sum_{\kappa^\dagger \leq \kappa}
        \delta(\kappa, \kappa^\dagger)
            \circ
        D^\hol(\kappa^\dagger, \chi)\,,
\end{equation}
and $\delta(\kappa, \kappa) = \prod_\sigma P_{\kappa_\sigma, \chi_\sigma}$. See \cite[Corollary 4.4.9]{EHLS} for further details.

\begin{remark}
    We sometimes denote $D(\rho_\sigma^v)$ and $D(\rho)$ by $D(\chi_\sigma, \kappa_\sigma)$ and $D(\chi, \kappa)$ respectively. We have similar conventions for the holomorphic differential operators.
\end{remark}

Lastly, if $\kappa_\sigma \in C_3(\chi_\sigma)$, let
\[  
    f_{\sigma, \kappa_\sigma}(
        g_\sigma; 
        \gimel_\sigma, 
        \chi_\sigma, 
        s
    )
        := 
    D(\chi_\sigma, \kappa_\sigma)
    f_\sigma(
        g_\sigma; 
        \gimel_\sigma, 
        \chi_\sigma, 
        s
    )
\]
where $s = \frac{k-n}{2}$ as previously set. For each critical pair $(\kappa, \chi)$, our choice of Siegel-Weil section at $\infty$ is
\begin{equation} \label{eq:def SW f oo kappa gimel}
    f_{\infty, \kappa}(g)
        =
    f_{\infty, \kappa}(
        g; 
        \gimel, 
        \chi_\infty, 
        s
    )
        :=
    \prod_{\sigma \in \Sigma}
        f_{\sigma, \kappa_\sigma}(
            g; 
            \gimel_\sigma,
            \chi_\sigma, 
            s
        )
    \,, \ \ \ g = (g_\sigma)_\sigma\,,
\end{equation}
which lies in $I_\infty(\chi_\infty, \frac{k-n}{2})$ by \cite[Lemma 4.5.2]{EHLS}.

\subsection{Local Siegel-Weil section away from $p$ and $\infty$.} \label{subsec:Local SW section away from p and oo}

Our choice of Siegel-Weil section away from $p$ and $\infty$ is
\[
    f^{p, \infty}(\bullet) 
        := 
    \bigotimes_{l \neq p, \infty} f_l
        \in
    \bigotimes_{l \neq p, \infty}
        I_l(\chi_l, s)\,,
\]
where $f_l \in I_l(\chi_l, s)$ is defined as in the following two section, for each finite prime $l \neq p$ of $\QQ$, .

\subsubsection{Local Siegel-Weil section at finite unramified places.} \label{subsubsec:Local SW section at finite unramd places}

Fix a prime $l \notin S \cup \{p\}$. Then, $G_4(\ZZ_l)$ is a hyperspecial maximal compact subgroup of $G_4(\QQ_l)$. Furthermore, $G_4(\QQ_l)$ factors as $\prod_{v \mid l} G_{4,v}$ and so does $I_l(\chi_l, s) = \bigotimes_{v \mid l} I_v(\chi_v, s)$. Our choice of local Siegel-Weil section at $l$ is the unique $G_4(\ZZ_l)$-invariant function 
\begin{equation} \label{eq:def SW f l at unramd places}
    f_l = \bigotimes_{v \mid l} f_v
\end{equation}
such that $f_l(G_4(\ZZ_l)) = 1$.

\subsubsection{Local Siegel-Weil section at finite ramified places.} \label{subsubsec:Local SW section at finite ramd places}
For a prime $l \in S$, we choose the same section $f_l$ as in \cite[Section 4.2.2]{EHLS} and \cite[Section 2.2.9]{Eis15} (with some minor adjustments). 

Consider $P_{U, \sgl} = P_\sgl \cap U$, where $U = U_4$ is the unitary subgroup of $G = G_4$. Then, $P_\sgl = \Gm \ltimes P_{U, \sgl}$, where $\Gm$ is the similitude factor.

We have $U_4(\QQ_l) = \prod_{v \mid l} U_{4,v}$, where the products run over all primes $v$ of $\KK^+$ above $l$. There are similar decompositions $U_i(\QQ_l) = \prod_{v \mid l} U_{i, v}$ for $i$ = 1, 2 and 3, with the obvious inclusions (see Section \ref{sec:comp and comp bw PEL data} for more details). Similarly, we have $P_{U,\sgl}(\QQ_l) = \prod_{v \mid l} P_{\sgl, v}$. Fix any place $v \mid l$ of $\KK^+$ and write $P_v := P_{\sgl, v}$. Let $N_v$ be the unipotent radical of $P_v$. 

As explained in \cite[Section 4.2.2]{EHLS}, we know $A = P_v \cdot (U_{1, v} \times 1_n) \cap P_v w P_v$ is open in $U_{4,v}$. Furthermore,
\[
    P_v w = P_v \cdot (-1_n, 1_n) \subset P_v \cdot U_{3,v}
        \;\;\;\text{and}\;\;\;
    P_v \cap (U_{1,v} \times 1_n) = (1_n, 1_n) \in U_{3,v}\,,
\]
hence $A$ is an open neighborhood of $w$ in $P_v w P_v$ and can be written as $A = P_v w \mathfrak{U}$ for some open subgroup $\mathfrak{U}$ of $N_v$.

Let $\varphi_v \in \ul{\pi}_v$ be any nonzero vector (i.e. a local test vector at $v$ as in Section \ref{subsec:Local test vectors at places away from p and oo}). Let $K_v \subset U_{1,v}$ be an open compact subgroup that fixes $\varphi_v$ and fix any lattice $L_v$ sufficiently small so that
\[
    N(L_v)
        :=
    \left\{
        \begin{pmatrix}
            1_n & L_v \\
            0 & 1_n
        \end{pmatrix}
    \right\}
        \subset
    \mathfrak{U}\,,
\]
and
\[
    P_v w N(L_v)
        \subset
    P_v \cdot ( (-1 \cdot K_v) \times 1_n ) 
        \subset 
    P_v \cdot U_{3,v}\,.
\]

For such $L_v$, we have $P_v w N(L_v) = P_v \cdot (\mathcal{U}_v \times 1_n) \subset -1 \cdot K_v$, for some open neighborhood $\mathcal{U}_v$ of $-1_n$. Since $-1 \cdot K_n$ is open in $U_{1,v}$, so is $\mathcal{U}_v$.

Let $I_{U_4, v}(\chi_v, s)$ be the principal series defined as in \eqref{eq:definition I chi s normalized} (and its local version) for $P_v$ and $U_{4,v}$. Then, there exists some $f_{L_v} \in I_v(\chi_v, s)$ supported on $P_v w P_v$ such that
\[
    f_{L_v}(w x) = \delta_{L_v}(x)\,,
\]
for all $x \in N_v$, where $\delta_{L_v}$ is the characteristic function of $N(L_v)$, see \cite[p. 449-450]{HLS}.

Let 
\begin{equation} \label{eq:def SW f l at ramd places}
    f_v = f_{L_v}^-\,,
\end{equation}
where $f_{L_v}^-(g) = f_{L_v}(g \cdot (-1_n, 1_n))$ for all $g \in U_{4,v}$, and define $f_U = \otimes f_v$ on $U(\QQ_l) = \prod_{v \mid l} U_v$.

Our choice of local Siegel-Weil section at $l$ is \emph{any} $f_l \in I_l(\chi_l, s)$ whose restriction from $G_4(\QQ_l)$ to $U_4(\QQ_l)$ equals $f_U$. This section depends on many choices which we do not make explicit in our notation.

\begin{remark}
    One easily checks that any element of $\bigotimes_{v \mid l} I_v(\chi_v, s)$ can be extended to a function in $I_l(\chi_l, s)$. Furthermore, the choice of extension $f_l$ of $f_U$ is irrelevant for our purpose as all of our later constructions (Fourier coefficients and local zeta integrals) only depends on the restriction of $f_l$ to $U_4(\QQ_l)$. In fact, the Siegel-Weil section at $l$ in \cite[Section 4.2.2.]{EHLS} is only described on $U_4(\QQ_l)$ as its full description on $G_4(\QQ_l)$ is unnecessary.
\end{remark}

\subsubsection{Comparison to other choices in the literature.} \label{subsubsec:comp to other choices in the literature}

For each finite place $v$ of $\KK^+$ away from $p$, the local section $f_v$ at $v$ is constructed as in \cite[Section 4.2]{EHLS}. However, they differ slightly from \cite[Section 18]{Shi97}, \cite[Section (3.3.1)-(3.3.2)]{HLS}, \cite[Section 2.2.9]{Eis15}, see \cite[Section 4.2.2]{EHLS}.

Namely, given an ideal $\bb$ of $\OO_{\KK^+}$, let $f_v^\bb = f_v^\bb(\bullet; \chi_v, s)$ be the local Siegel-Weil section in \cite[Section 2.2.9]{EHLS}. Then, one can choose $\bb$ prime to $p$ (depending on the lattice $L_v$ for each ramified $v$) such that
\[
    f_v^\bb
        =
    \begin{cases}
        f_v, & v \text{ is unramified,} \\
        f_{L_v}, & v \text{ is ramified.}
    \end{cases}
\]

Since $f_v^\bb$ is at most a translation of $f_v$, the Fourier coefficients associated to each of them are equal, see Section \ref{subsubsec:Local coeff at places away from p and oo}. For more details, see \cite[Remark 12]{Eis15}.

\subsection{Siegel Eisenstein series as $C^\infty$-modular forms.} \label{subsec:Sgl Eis series as Coo mod forms}
Let $\pi$ be a $P$-anti-ordinary automorphic representation of $P$-anti-WLT $(\kappa, K_r, \tau)$. Furthermore, fix a unitary Hecke character $\chi$ as in Section \ref{subsubsec:Unitary Hecke char of type A0} that satisfies Hypothesis \ref{hyp:hyp integer k} for some integer $k \geq 0$.

To $K_r$, $\tau$ and $\chi$, we associate the Siegel-Weil section
\begin{equation} \label{eq:def f tau chi}
    f^\tau_\chi 
        = 
    f^\tau_{\chi}(\bullet; s)
        := 
    f_p(\bullet; \tau; \chi_p, s)
        \otimes
    f_\infty(\bullet; i 1_n, \chi_\infty, s)
        \otimes
    f^{p, \infty}(\bullet)\,,
\end{equation}
using notation from \eqref{eq:def SW f p tau chi p}, \eqref{eq:def SW f oo gimel}, \eqref{eq:def SW f l at unramd places} and \eqref{eq:def SW f l at ramd places}.

Similarly, if $(\kappa, \chi)$ is critical, we also define
\begin{equation} \label{eq:def f tau kappa chi}
    f^{\tau, \kappa}_\chi 
        = 
    f^\tau_\chi(\bullet; s)
        := 
    f^{p, \infty}(\bullet) 
        \otimes 
    f_p(\bullet; \tau, \chi_p, s)
        \otimes
    f_{\infty, \kappa}(\bullet; i 1_n, \chi_\infty, s)\,,
\end{equation}
using \eqref{eq:def SW f oo kappa gimel}.

Let $\psi$ be any finite order character of $L_P(\ZZ_p)$. When considering $\tau \otimes \psi$ instead of $\tau$ (thinking of $\tau$ as fixed and $\psi$ as varying), we set
\begin{equation} \label{eq:def f tau kappa chi psi}
    f^{\tau}_{\chi, \psi} := f^{\tau \otimes \psi}_\chi 
        \;\;\;\text{and}\;\;\;
    f^{\tau, \kappa}_{\chi, \psi} := f^{\tau \otimes \psi, \kappa}_\chi\,.
\end{equation}

Set $E_{f_{\chi, \psi}^\tau} = E_{f_{\chi, \psi}^\tau}(\bullet; \frac{k-n}{2})$ for the Eisenstein series associated to $f_{\chi, \psi}^\tau(\bullet; s)$ at $s = \frac{k-n}{2}$, for $k$ as in Hypothesis \ref{hyp:hyp integer k}. 

Let $L(\chi)$ be the 1-dimensional vector space on which $U_\infty$ acts via $\Lambda(\chi)$, and $\LL(\chi)$ denote the automorphic line bundle on $\Sh(G_4)$ determined by the dual of $\Lambda(\chi)$. Its fiber at the fixed point $h_4$ of $U_\infty$ is isomorphic to $L(\chi)$. Let $\LL(\chi)^\can$ denote its canonical extension to the toroidal compactification $\Sh(G_4)^\tor$. 

\begin{remark}
    Although the notation is different, the automorphic line bundle $\LL(\chi)$ is an automorphic bundle associated to a highest weight representation as in Section \ref{subsubsec:canonical bundle} (replacing $G_1$ with $G_4$).
\end{remark}

Then, $E_{f_{\chi, \psi}^\tau}$ corresponds to an Eisenstein modular form
\begin{equation} \label{eq:def Eis mod form E tau chi psi}
    E_{\chi, \psi}^\tau \in H^0(\Sh(G_4)^\tor, \LL(\chi)^\can)
\end{equation}
via complex uniformization and pullback to functions on $G_4(\AA)$. It descends to a modular form of level $K_4 = K_{4,r} = I_{4,r} K_4^p \subset G_4(\AA_f)$, for $r$ as in \eqref{eq:large enough r} and where $K_4^p$ contains the maximal subgroup $G_4(\ZZ_l)$ for each $l \notin S \cup \{p\}$ and depends on $K_v = K_{1,v}$ for each $v \mid l \in S$ (see Section \ref{subsubsec:Local SW section at finite unramd places}). 

Most importantly, if $K_r = K_{1,r}$ is the level of the anti-holomorphic $P$-anti-ordinary representation $\pi$ on $G_1$ fixed in Section \ref{subsubsec:zeta integrals}, and $K_{2,r} = K_{1,r}^\flat$, then $K_{4,r} \cap G_3(\AA_f) \supset (K_{1,r} \times K_{2,r}) \cap G_3(\AA_f)$.

The differential operators $D(\kappa, \chi)$ used to define $f_{\infty, \kappa}$ can be interpreted at the level of modular forms as well. Set
\[
    d 
        = 
    \sum_{\sigma \in \Sigma}
        r_{1, \sigma} + s_{1, \sigma}
        = 
    \sum_{\sigma \in \Sigma}
        k - \kappa_{\sigma, a_\sigma} + \kappa_{1, \sigma}^c\,,
\]
where $r_1 = r_{1, \sigma}$ and $s_1 = s_{1, \sigma}$ are the integers appearing in the definition of $\rho^v = (\rho^v_\sigma)_\sigma$ in \eqref{eq:def rho sigma and rho sigma v shifts}. Consider the $C^\infty$-differential operator
\[
    \delta^d(\kappa, \chi)
        =
    \delta_\chi^d(\rho^v)
        :
    H^0(\Sh(V_4)^\tor, \LL(\chi)^\can)
        \to
    \Ab(G, \LL(\chi)_h)
\]
defined \cite[Equation (105)]{EHLS}.

Furthermore, let $K_1 = K_{1,r}$, $K_2 = K_{2,r}$ and $K_4 = K_{4,r}$ be the  level subgroups above, and set $K_3 = K_{3,r} := (K_{1,r} \times K_{2,r}) \cap G_3(\AA_f)$. If we compose the above with restriction of functions from $G_4(\AA)$ to $G_3(\AA)$, we obtain
\[
    \res_3 \circ \delta^d(\kappa, \chi)
        :
    H^0(\level{K_4}{\Sh(V_4)^\tor}, \LL(\chi)^\can)
        \to
    H^0(\level{K_3}{\Sh(3_4)^\tor}, i_3^*\LL(\chi)^\can)\,,
\]
where $i_3$ is as in \eqref{eq:def embeddings i 3 and i 1 2}. Then, \cite[Proposition 4.4.11]{EHLS} show that the above is the pullback to functions on $G_4(\AA)$ of a differential operator
\[
    D(\kappa, \chi)
        :
    H^0(\level{K_4}{\Sh(V_4)^\tor}, \LL(\chi)^\can)
        \to
    M_{\kappa, V}^\infty
    (
        K_1,  \CC
    )
        \otimes
    M_{\kappa^\flat, -V}^\infty
    (
        K_2,  \CC
    )
        \otimes
    (\chi \circ \det)
\]
where the superscript $\infty$ stands for smooth modular forms (as opposed to holomorphic ones). The above respect cuspidal forms, namely we also have
\[
    D(\kappa, \chi)
        :
    H^0_!(\level{K_4}{\Sh(V_4)^\tor}, \LL(\chi)^\can)
        \to
    S_{\kappa, V}^\infty
    (
        K_1,  \CC
    )
        \otimes
    S_{\kappa^\flat, -V}^\infty
    (
        K_2,  \CC
    )
        \otimes
    (\chi \circ \det)\,.
\]

Lastly, one can compose with the holomorphic projections from \eqref{eq:def pr hol kappa sigma} to obtain
\[
    D^\hol(\kappa, \chi)
        :
    H^0(\level{K_4}{\Sh(V_4)^\tor}, \LL(\chi)^\can)
        \to
    M_{\kappa, V}
    (
        K_1,  \CC
    )
        \otimes
    M_{\kappa^\flat, -V}
    (
        K_2,  \CC
    )
        \otimes
    (\chi \circ \det)
\]
and
\[
    D^\hol(\kappa, \chi)
        :
    H^0_!(\level{K_4}{\Sh(V_4)^\tor}, \LL(\chi)^\can)
        \to
    S_{\kappa, V}
    (
        K_1,  \CC
    )
        \otimes
    S_{\kappa^\flat, -V}
    (
        K_2,  \CC
    )
        \otimes
    (\chi \circ \det)
\]

Therefore,
\[
    E_{\chi, \psi}^{\tau, \kappa} 
        := 
    D(\kappa, \chi)
    E_{\chi, \psi}^{\tau}
\]
is (the restriction to $G_3$) of the Eisenstein $C^\infty$-modular form associated to $f^{\tau, \kappa}_{\chi, \psi}$.

\subsubsection{Family of Siegel Eisenstein series} \label{subsubsec:family of Siegel Eisenstein series}
Fix two dominant weights $\kappa$ and $\kappa'$ such that $[\kappa] = [\kappa']$, say $\kappa' = \kappa + \theta$ for some $P$-parallel weight $\theta$. Assume that both $(\kappa, \chi)$ and $(\kappa', \chi)$ are critical. In later section, it is convenient to think of $\kappa$ as fixed and vary $\theta$, see Remark \ref{rmk:base point pi and shift by psi theta}, hence we sometimes write $D(\kappa, \theta, \chi)$ (resp. $D^\hol(\kappa, \theta, \chi)$) instead of $D(\kappa', \chi)$ (resp. $D^\hol(\kappa', \chi)$). Similarly, we set
\[
    E_{\chi, \psi, \theta}^{\tau, \kappa}
        :=
    E_{\chi, \psi}^{\tau, \kappa'} 
        = 
    D(\kappa', \chi) E_{\chi, \psi}^\tau 
        = 
    D(\kappa, \theta, \chi) E_{\chi, \psi}^\tau\,,
\]
and
\[
    E_{\chi, \psi, \theta}^{\tau, \kappa, \hol}
        =
    E_{\chi, \psi}^{\tau, \kappa', \hol} 
        := 
    D^\hol(\kappa', \chi) E_{\chi, \psi}^\tau 
        = 
    D^\hol(\kappa, \theta, \chi) E_{\chi, \psi}^\tau\,.
\]


\section{Zeta integrals and the doubling method.} \label{sec:doubling method integral}

In this section, we compute the zeta integral $Z_l$ introduced in Section \ref{subsubsec:zeta integrals}, for each place $l$ of $\QQ$. The most technical case is objectively for $l = p$. Our method adapts the calculations of \cite[Section 4.3.6]{EHLS}, where we resolve various issues arising when working with a $P$-nebentypus (i.e. finite-dimensional representations) instead of an ordinary nebentypus (i.e. a character). The calculations of the other integrals are more common in the literature, and we recall the necessary results for our purposes.

\subsection{Local zeta integrals at $p$.} \label{subsec:Local zeta integral at p}

\subsubsection{Construction of $f_{w,s}^+$.} \label{subsubsec:construction f w s +}
Let $f_p$ be the corresponding local Siegel-Weil section at $p$, as in Equation \eqref{eq:def SW f p}. Ahead of our computations in the next section, we write down an explicit expression for $f_p(u,1)$ for any $u \in U_1(\QQ_p)$. 

Firstly, the isomorphism \eqref{eq:prod G over Zp}, restricted to $U_1$, yields an identification $U_1(\QQ_p) = \prod_{w \in \Sigma_p} U_{1,w}$, where $U_{1,w} = \GL_{\KK_w}(V_w) = \GL_n(\KK_w)$. We write $u = (u_w)_{w \in \Sigma_p}$ accordingly and wish to evaluate \eqref{eq:def f phi w} at $g = (u, 1)$, where this notation is with respect to the embedding $G_1 \times G_2 \hookrightarrow G_3$. To simplify the expression, it is therefore more convenient to replace the decomposition $W_w = V_{w, d} \oplus V_w^d$ with $W_w = V_w \oplus V_w$. In that case, an element $X \in \GL_n(\KK_w) = \GL_{\KK_w}(V_w)$ corresponds to an element $(X,X)$ in $\mathbf{X}$ instead of $(0,X)$.

Secondly, using the decomposition $W_w = V_w \oplus V_w$ again and the corresponding identification $W_w = V_w \oplus V_w$, consider the element
\[
    S_w = 
    \begin{pmatrix}
        1_{a_w} & 0 & 0 & 0 \\
        0 & 0 & 0 & 1_{b_w} \\
        0 & 0 & 1_{a_w} & 0 \\
        0 & 1_{b_w} & 0 & 0
    \end{pmatrix} .
\]

\begin{remark} \label{rmk:inclusion of Igusa towers and matrix S w}
    As explained in \cite[Section 2.1.11]{HLS} and \cite[Remark 3.1.4]{EHLS}, the natural inclusion of Shimura varieties associated to $G_3$ and $G_4$ does not induce the natural inclusion on Igusa tower. In fact, one needs to twist the former by the matrix $S_w$ to obtain the latter, see Section \ref{subsubsec:embeddings of Igusa towers} (where we wrote $\gamma_w$ for $S_w$).
\end{remark}

Lastly, replace each $f_{w,s}$ by its translation $f_{w,s}^+$ via $g \mapsto gS_w$, and let $f_p^+$ be the corresponding local Siegel-Weil section at $p$ defined by Equation \eqref{eq:def SW f p}. In that case, for $g=(u,1)$, we obtain that $f^+_p(u,1)$ is equal to a product over $w \in \Sigma_p$ of
\[
    \chi_{2,w}(\det u_w)
    \absv{\det u_w}_w^{\frac{n}{2} + s}
    \int_{\GL_n(\KK_w)}
        \Phi_w((Xu_w,X) S_w)
        \chi_{w,1}^{-1}
        \chi_{w,2}(\det X)
        |\det X|_w^{n+2s}  
    d^\times X
\]
and we denote the above expression by $f_{w,s}^+(u_w,1) = f_w^+(u_w,1)$, as a function of $u_w \in \GL_n(\KK_w)$.

In the following two subsections, we prove the following formula :
\begin{equation}
    I_p(\varphi_p, \varphi^\vee_p, f_p^+, \chi_p, s) 
        =
    E_p\left(
        s+\frac{1}{2}, \Pord, \pi_p, \chi_p
    \right)
        \cdot
    (\dim \tau_p)
        \cdot
    \frac{\vol(I^0_{P, r})\vol(\tp{I}^0_{P, r})}{\vol(I^0_{P, r} \cap \tp{I}_{P,r}^0)}
\end{equation}

\subsubsection{Local integrals at places above $p$.} \label{subsubsec:Local integrals at places above p}
We proceed with the same notation as in Sections \ref{subsec:Local test vectors at p} and \ref{subsec:Local SW section at p}. In particular, for each $w \in \Sigma_p$, consider the local test vectors $\phi_w \in \pi_w$ and $\phi^\vee_w \in \cg{\pi}_w$ at $w$ defined in Section \ref{subsubsec:comp of test vectors}. We now compute the $p$-adic local zeta integral $Z_p$ defined in \eqref{eq:def local zeta integrals} for the local section $f_p^+$ constructed above and the test vectors
\[
    \varphi_p = 1 \otimes \left( \bigotimes_{w \in \Sigma_p} \phi_w \right)
        \ \ \ ; \ \ \ 
    \varphi^\vee_p = 1 \otimes \left( \bigotimes_{w \in \Sigma_p} \phi^\vee_w \right)
\]
as in \eqref{eq:def test vectors varphi p and varphi vee p}.

Then by definition, for
\begin{align*}
    Z_w :=
        &\int_{\GL_n(\KK_w)} 
            f_{w,s}^+(g,1) \la \pi_w(g) \phi_w, \cg{\phi}_w \ra_{\pi_w}
        d^\times g \\
    =
        &\int_{\GL_n(\KK_w)} 
            \chi_{2,w}(\det g)
            \absv{\det g}_w^{\frac{n}{2} + s}
            \int_{\GL_n(\KK_w)}
                \Phi_w((Xg, X) S_w) \\
        &\times
                \chi_{w,1}^{-1}
                \chi_{w,2}(\det X)
                |\det X|_w^{n+2s}  
            \la \pi_w(g) \phi_w, \cg{\phi}_w \ra_{\pi_w}
            d^\times X 
        d^\times g\,,
\end{align*}
we have $Z_p = \prod_{w \in \Sigma_p} Z_w$.

According to the decomposition $M_{n \times n}(\KK_w) = M_{n \times a_w}(\KK_w) \times M_{n \times b_w}$, write $Z_1 := Xg = [Z_1', Z_1'']$ and $Z_2 := X = [Z_2', Z_2'']$, where $Z_1'$ and $Z_2'$ (resp. $Z_1''$ and $Z_2''$) are $n \times a$-matrices (resp. $n \times b$-matrices). Then,
\[
    (Xg, X) S_w = ([Z_1', Z_2''], [Z_2', Z_1''])
\]
and 
\[
    \la \pi_w(g) \phi_w, \cg{\phi}_w \ra_{\pi_w} 
        = 
    \la \pi_w(Xg) \phi_w, \cg{\pi}_w(X) \cg{\phi}_w \ra_{\pi_w}
        = 
    \la \pi_w(Z_1) \phi_w, \cg{\pi}_w(Z_2) \cg{\phi}_w \ra_{\pi_w}.
\]

Therefore, using \eqref{eq:def Schwartz Phi w}, we obtain
\begin{align*}
    Z_w &= 
    \frac{\dim \tau_w}{\vol(\GG_w)}
    \int_{\GL_n(\KK_w)} 
        \chi_{w,2}(Z_1) \chi_{w,1}(Z_2)^{-1} 
        \absv{ \det Z_1Z_2 }_w^{s+\frac{n}{2}} 
        \\ &\times 
        \Phi_{1,w}(Z_1', Z_2'') 
        \Phi_{2,w}(Z_2', Z_1'') 
        \la 
            \pi_w(Z_1) \phi_w, 
            \cg{\pi}_w(Z_2) \cg{\phi}_w
        \ra_{\pi_w} 
    d^\times Z_1 d^\times Z_2 \ .
\end{align*}

We take the integrals over the following open subsets of full measure. We take the integral in $Z_1$ over
\begin{align*}
    \left\{ 
	\begin{pmatrix}
		1 & 0 \\ 
            C_1 & 1
	\end{pmatrix}
	\begin{pmatrix}
		A_1 & 0 \\ 
            0 & D_1
	\end{pmatrix}
	\begin{pmatrix}
		1 & B_1 \\ 
            0 & 1
	\end{pmatrix} \mid
            B_1, \tp{C}_1 \in 
                M_{a_w \times b_w}(\KK_w), 
            A_1 \in 
                \GL_{a_w}(\KK_w), 
            D_1 \in 
                \GL_{b_w}(\KK_w) 
    \right\},
\end{align*}
with the measure
\[
    \absv{ 
        \det A_1^{b_w} 
        \det D_1^{-a_w}
    }_w 
    dC_1 
    d^\times A_1 
    d^\times D_1 
    dB_1 \ .
\]

Similarly, we take the integral in $Z_2$ over
\begin{align*}
    \left\{ 
	\begin{pmatrix}
		1 & B_2 \\ 0 & 1
	\end{pmatrix}
	\begin{pmatrix}
		A_2 & 0 \\ 0 & D_2
	\end{pmatrix}
	\begin{pmatrix}
		1 & 0 \\ C_2 & 1
	\end{pmatrix} \mid 
            B_2, \tp{C}_2 \in 
                M_{a_w \times b_w}(\KK_w), 
            A_2 \in 
                \GL_{a_w}(\KK_w), 
            D_2 \in 
                \GL_{b_w}(\KK_w) 
    \right\},
\end{align*}
with the measure
\[
    \absv{ 
        \det A_2^{b_w} 
        \det D_2^{-a_w} 
    }_w 
    dC_2 
    d^\times A_2 
    d^\times D_2 
    dB_2 \ .
\]

Therefore, one has
\begin{align*}
	\Phi_{1,w}(Z_1', Z_2'') &= 
	\Phi_{1,w}\left(
		\begin{pmatrix}
			A_1 & B_2D_2 \\
			C_1A_1 & D_2
		\end{pmatrix}
	\right)	
	\\
	\Phi_{2,w}(Z_2', Z_1'') &=
	\Phi_{2,w}\left(
		\begin{pmatrix}
			A_2 + B_2D_2C_2 & A_1B_1 \\
			D_2C_2 & C_1A_1B_1 + D_1
		\end{pmatrix}
	\right)	
\end{align*}
and both can be simplified by considering their support.
\begin{lemma} \label{lma:Z_i conditions for Phi 1 Phi 2 nonzero}
    The product 
    \[
        \Phi_{1,w}
        \left(
		\begin{pmatrix}
			A_1 & B_2D_2 \\
			C_1A_1 & D_2
		\end{pmatrix}
	\right)	
	\Phi_{2,w}
        \left(
		\begin{pmatrix}
			A_2 + B_2D_2C_2 & A_1B_1 \\
			D_2C_2 & C_1A_1B_1 + D_1
		\end{pmatrix}
	\right)	
    \]
    is zero unless all of the following conditions are met:
    \begin{align*}
        A_1 \in 
            I_{a_w, r}^0 \ \ \ ; \ \ \
        D_2 \in 
            I_{b_w, r}^0 \ \ \ &; \ \ \
        C_1 \in 
            \GG_l\ \ \ ; \ \ \
        B_2 \in 
            \GG_u \\
        B_1 \in 
            M_{a_w \times b_w}(\OO_w) \ \ \ &; \ \ \
        C_2 \in 
            M_{b_w \times a_w}(\OO_w) \\
        A_2 \in 
            \p_w^{-r} M_{a_w \times a_w}(\OO_w) \ \ \ &; \ \ \
        D_1 \in 
            \p_w^{-r} M_{b_w \times b_w}(\OO_w)
    \end{align*}
	
	Moreover, in this case, the product is equal to 
    $
        \mu_{a_w}(A_1) 
        \mu_{b_w}(D_2) 
        \Phi_{w}^{(1)}(A_2)
        \Phi_{w}^{(4)}(D_1)
    $.
\end{lemma}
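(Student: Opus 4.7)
The strategy is to analyze the supports of $\Phi_{1,w}$ and $\Phi_{2,w}$ separately on the two matrices in question, then extract the exact values on those supports.

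First, I would determine when
\[
    M_1 := \begin{pmatrix} A_1 & B_2 D_2 \\ C_1 A_1 & D_2 \end{pmatrix}
\]
lies in $\GG_w = \GG_{l,w}(I_{a_w,r}^0 \times I_{b_w,r}^0)\GG_{u,w}$, the support of $\Phi_{1,w}$. A direct computation of the block $(n^-, l, n^+)$-factorization of $M_1$ with respect to the partition $(a_w, b_w)$ gives
\[
    M_1 = \begin{pmatrix} 1 & 0 \\ C_1 & 1 \end{pmatrix}
    \begin{pmatrix} A_1 & 0 \\ 0 & (1_{b_w} - C_1 B_2) D_2 \end{pmatrix}
    \begin{pmatrix} 1 & A_1^{-1} B_2 D_2 \\ 0 & 1 \end{pmatrix}.
\]
Requiring each factor to lie in the corresponding piece of $\GG_w$ yields the conditions $A_1 \in I_{a_w,r}^0$, $D_2 \in I_{b_w,r}^0$, $C_1 \in \GG_{l,w}$ and $B_2 \in \GG_{u,w}$; the correction factor $1_{b_w} - C_1 B_2$ then automatically lies in $I_{b_w,r}$ since $C_1 B_2 \in \p_w^r M_{b_w}(\OO_w)$. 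On this support, the block-diagonal definition \eqref{eq:def ext mu w} of $\mu_w$ gives $\Phi_{1,w}(M_1) = \mu_w(M_1) = \mu_{a_w}(A_1)\mu_{b_w}(D_2)$.

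Next, I would apply the factorization of $\Phi_{2,w}$ from Lemma \ref{lma:Phi 2 w support} to the second matrix
\[
    M_2 := \begin{pmatrix} A_2 + B_2 D_2 C_2 & A_1 B_1 \\ D_2 C_2 & C_1 A_1 B_1 + D_1 \end{pmatrix}.
\]
The support conditions on the middle blocks force $A_1 B_1 \in M_{a_w \times b_w}(\OO_w)$ and $D_2 C_2 \in M_{b_w \times a_w}(\OO_w)$; since $A_1, D_2$ are already invertible over $\OO_w$ from the previous paragraph, these simplify to $B_1 \in M_{a_w \times b_w}(\OO_w)$ and $C_2 \in M_{b_w \times a_w}(\OO_w)$. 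The support conditions on $\Phi_w^{(1)}$ and $\Phi_w^{(4)}$ demand $A_2 + B_2 D_2 C_2 \in \p_w^{-r} M_{a_w}(\OO_w)$ and $D_1 + C_1 A_1 B_1 \in \p_w^{-r} M_{b_w}(\OO_w)$, and the ``cross terms'' $B_2 D_2 C_2$ and $C_1 A_1 B_1$ both already lie in $M(\OO_w)$, so these reduce to $A_2 \in \p_w^{-r} M_{a_w}(\OO_w)$ and $D_1 \in \p_w^{-r} M_{b_w}(\OO_w)$.

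The subtlest point, and essentially the only non-bookkeeping step, is to verify that not only the supports but the actual values match: $\Phi_w^{(1)}(A_2 + B_2 D_2 C_2) = \Phi_w^{(1)}(A_2)$ and $\Phi_w^{(4)}(D_1 + C_1 A_1 B_1) = \Phi_w^{(4)}(D_1)$. Using the explicit Fourier formulae for $\Phi_w^{(1)}$ and $\Phi_w^{(4)}$ derived in the proof of Lemma \ref{lma:Phi 2 w support}, each such translation multiplies the summand indexed by $\alpha \in \X_w^{(1)} \bmod \p_w^r$ (resp.\ $\delta \in \X_w^{(4)} \bmod \p_w^r$) by $e_w(\tr(\alpha B_2 D_2 C_2))$ (resp.\ $e_w(\tr(\delta C_1 A_1 B_1))$). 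Since $\alpha, \delta$ have entries in $\OO_w$ and the cross terms have entries in $\OO_w$ (in fact $B_2 D_2 C_2 \in \p_w^r M$), the trace arguments land in $\OO_w$, which is the kernel of $e_w$ under the standard normalization. Hence both exponentials equal $1$, translation leaves $\Phi_w^{(1)}$ and $\Phi_w^{(4)}$ unchanged, and multiplying the two values gives $\Phi_{2,w}(M_2) = \Phi_w^{(1)}(A_2)\Phi_w^{(4)}(D_1)$. Combining this with the result of the second paragraph yields the stated identity.
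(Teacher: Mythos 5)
Your proof is correct and follows the same route as the paper's: the paper likewise reads off the support and value of $\Phi_{1,w}$ from the definition of $\mu_w$ on $\GG_w$ and invokes the factorization and Fourier-sum formulas from Lemma \ref{lma:Phi 2 w support} for the second factor. You have simply written out the block LDU decomposition and the translation-invariance of $\Phi_w^{(1)}$, $\Phi_w^{(4)}$ under integral shifts, which the paper leaves implicit.
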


\begin{proof}
	Using Lemma \ref{lma:Phi 2 w support} and the definition of $\Phi_{w,1}$, it is clear that the product above is nonzero if and only if the conditions above are satisfied. Moreover, if they are satisfied, one has
    \[
        \Phi_{1,w}
        \left(
	    \begin{pmatrix}
                A_1 & B_2D_2 \\
                C_1A_1 & D_2
            \end{pmatrix}
        \right)	
	   = 
        \mu_{a_w}(A_1) 
        \mu_{b_w}(D_2) 
\]
    by definition of $\mu_w$. One also obtains
    \[	
	\Phi_{2,w}
        \left(
            \begin{pmatrix}
                A_2 + B_2D_2C_2 & A_1B_1 \\
                D_2C_2 & C_1A_1B_1 + D_1
            \end{pmatrix}
        \right)
        =
        \Phi_w^{(1)}(A_2)
        \Phi_w^{(4)}(D_1)
    \]
	as in the proof of Lemma \ref{lma:Phi 2 w support}.
\end{proof}

\begin{lemma}
Under the conditions of Lemma \ref{lma:Z_i conditions for Phi 1 Phi 2 nonzero}, one has
\[
    \la 
        \pi_w(Z_1) \phi_w, 
        \pi^\vee_w(Z_2) \phi^\vee_w 
    \ra_{\pi_w}
    =
    \la 
        \pi_w
        \left(
            \begin{pmatrix}
                A_1 & 0 \\ 0 & 1
            \end{pmatrix}
        \right) 
        \phi_w,
        \pi^\vee_w
        \left(
            \begin{pmatrix}
                A_2 & 0 \\ 0 & D_1^{-1}D_2
            \end{pmatrix}
        \right)
        \phi^\vee_w
    \ra_{\pi_w}
\]
\end{lemma}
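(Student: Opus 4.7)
The strategy is to exploit the $G_w$-equivariance of the pairing $\langle \cdot, \cdot \rangle_{\pi_w}$ together with the invariance of $\phi_w$ under $\pi_w(I_{w,r})$ and of $\phi_w^\vee$ under $\pi_w^\vee(\tp{I}_{w,r})$, so as to successively eliminate the four unipotent factors $U_1, L_1, U_2, L_2$ of the Iwasawa-type decompositions of $Z_1$ and $Z_2$ and transfer the diagonal piece $\mathrm{diag}(1,D_1)$ from the first slot to the second.

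First I would check that $U_1 = \begin{pmatrix} 1 & B_1 \\ 0 & 1 \end{pmatrix}$ lies in $I_{w,r}$: since $B_1 \in M_{a_w \times b_w}(\OO_w)$, its reduction mod $\p_w^r$ has trivial $A$- and $D$-blocks, zero lower-left block, and arbitrary upper-right block, which matches the shape of $P_w^u(\OO_w/\p_w^r\OO_w)$. Hence $\pi_w(U_1)\phi_w = \phi_w$, and by the analogous verification $L_2 \in \tp{I}_{w,r}$ gives $\pi_w^\vee(L_2)\phi_w^\vee = \phi_w^\vee$. This reduces the pairing to $\langle \pi_w(L_1 d_1)\phi_w, \pi_w^\vee(U_2 d_2)\phi_w^\vee\rangle$.

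Next, I would conjugate past the diagonal pieces: writing $L_1 d_1 = d_1 \cdot (d_1^{-1} L_1 d_1)$ and $U_2 d_2 = d_2 \cdot (d_2^{-1} U_2 d_2)$ yields
\[
d_1^{-1}L_1 d_1 = \begin{pmatrix} 1 & 0 \\ D_1^{-1}C_1 A_1 & 1 \end{pmatrix}, \qquad d_2^{-1} U_2 d_2 = \begin{pmatrix} 1 & A_2^{-1}B_2 D_2 \\ 0 & 1 \end{pmatrix}.
\]
The key technical assertion is that the constraints defining $\mathfrak{G}_l$ and $\mathfrak{G}_u$ (in particular $B_2 \in \p_w^r M_{a_w \times b_w}(\OO_w)$) are engineered precisely so that, together with the support conditions $A_1 \in I_{a_w, r}^0$, $D_2 \in I_{b_w, r}^0$, $D_1 \in \p_w^{-r}M_{b_w}(\OO_w)$, $A_2 \in \p_w^{-r}M_{a_w}(\OO_w)$ of Lemma 6.1.2, the $\p_w^r$-integrality compensates the potentially negative $p$-adic valuations of $D_1^{-1}$ and $A_2^{-1}$, placing $d_1^{-1}L_1 d_1$ in $I_{w,r}$ and $d_2^{-1}U_2 d_2$ in $\tp{I}_{w,r}$. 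Invoking once more the $I_{w,r}$-invariance of $\phi_w$ and the $\tp{I}_{w,r}$-invariance of $\phi_w^\vee$ reduces the pairing to $\langle \pi_w(d_1)\phi_w, \pi_w^\vee(d_2)\phi_w^\vee\rangle$.

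Finally, decomposing $d_1 = \mathrm{diag}(A_1,1)\cdot\mathrm{diag}(1,D_1)$ and using $G_w$-invariance of the pairing to move the commuting factor $\mathrm{diag}(1,D_1)$ from the first to the second slot (where it acts through $\pi_w^\vee$ as its inverse $\mathrm{diag}(1,D_1^{-1})$) produces
\[
\langle \pi_w(d_1)\phi_w, \pi_w^\vee(d_2)\phi_w^\vee\rangle = \langle \pi_w(\mathrm{diag}(A_1,1))\phi_w, \pi_w^\vee(\mathrm{diag}(A_2, D_1^{-1}D_2))\phi_w^\vee\rangle,
\]
which is the claimed identity. The main obstacle will be the second step: carefully verifying that the block-by-block structure of $\mathfrak{G}_l$ and $\mathfrak{G}_u$ (with respect to the partitions $\d_w$ and $\d_{\ol w}$) is arranged exactly so that the conjugates $D_1^{-1}C_1 A_1$ and $A_2^{-1}B_2 D_2$ land in $\p_w^r M(\OO_w)$, tracking the $p$-adic valuations through the potentially singular factors $D_1^{-1}$ and $A_2^{-1}$.
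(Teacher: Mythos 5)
Your first step (absorbing $B_1$ and $C_2$) and your endgame (transporting the block-diagonal factor $\diag(1,D_1)$ across the pairing) are fine, but the central step --- conjugating the unipotent factors past the \emph{full} diagonal parts --- fails, and it fails exactly at the point you flag as ``the main obstacle''. The support conditions of Lemma \ref{lma:Z_i conditions for Phi 1 Phi 2 nonzero} control $D_1$ and $A_2$ only from below: $D_1 \in \p_w^{-r}M_{b_w}(\OO_w)$ and $A_2 \in \p_w^{-r}M_{a_w}(\OO_w)$, with no bound on the valuations of $\det D_1$ or $\det A_2$ (indeed, in Corollary \ref{cor:Corollary Zw Ia Ib} and Theorem \ref{thm:main thm - comp of Zw above p} the integrals over $D_1$ and $A_2$ are Godement--Jacquet integrals over all of $\GL(\KK_w)$, so arbitrarily large determinant valuations genuinely occur). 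Hence $D_1^{-1}$ and $A_2^{-1}$ can have entries of arbitrarily negative valuation, and the conjugates
\[
d_1^{-1}L_1 d_1 = \begin{pmatrix} 1 & 0 \\ D_1^{-1}C_1A_1 & 1\end{pmatrix},
\qquad
d_2^{-1}U_2 d_2 = \begin{pmatrix} 1 & A_2^{-1}B_2D_2 \\ 0 & 1 \end{pmatrix}
\]
need not even be integral, let alone lie in $I_{w,r}$ or $\tp{I}_{w,r}$ (take $D_1 = \diag(\varpi^N, \varpi^{-r}, \ldots)$ with $N \gg r$). No engineering of $\GG_{l,w}$ and $\GG_{u,w}$ can repair this, because the problematic factor is an \emph{inverse} of an element that is only bounded below.

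The paper's proof never inverts $D_1$ or $A_2$ inside a conjugation. It keeps $D_1$ attached to the lower-triangular factor, writing the non-$B_1$ part of $Z_1$ as $\begin{pmatrix}1&0\\C_1&D_1\end{pmatrix}\diag(A_1,1)$, moves $U_2$ across to the first slot as $\begin{pmatrix}1&-B_2\\0&1\end{pmatrix}$, and performs a fresh Iwasawa-type decomposition of the product $\begin{pmatrix}1&-B_2\\0&1\end{pmatrix}\begin{pmatrix}1&0\\C_1&D_1\end{pmatrix}$. The only products that arise there are $B_2C_1 \in \p_w^{2r}M(\OO_w)$ and $B_2D_1 \in M(\OO_w)$: the $\p_w^r$ coming from $B_2 \in \GG_{u,w}$ cancels the $\p_w^{-r}$ from $D_1$ \emph{directly}, not through an inverse. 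This isolates a clean factor $\diag(1,D_1)$ to be moved to the second slot, together with unipotent pieces that reduce into $P_w^u$ (resp.\ $\tp{P}_w^u$) mod $\p_w^r$; the residual lower-unipotent piece, whose entries lie in $\p_w^rM(\OO_w)$, is then absorbed by conjugating past $\diag(A_2,D_2)$, where integrality again comes from a product of the shape $(\p_w^r M(\OO_w))\cdot A_2 \subset M(\OO_w)$. Your middle step needs to be restructured along these lines.
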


\begin{proof}
    We write
    \[
        Z_1 = \begin{pmatrix}
            1 & 0 \\ 
            C_1 & 1
        \end{pmatrix}
        \begin{pmatrix}
            A_1 & 0 \\ 
            0 & D_1
        \end{pmatrix}
        \begin{pmatrix}
            1 & B_1 \\ 
            0 & 1
        \end{pmatrix} 
        \ \ \ \text{and} \ \ \
        Z_2 = \begin{pmatrix}
            1 & B_2 \\ 
            0 & 1
        \end{pmatrix}
        \begin{pmatrix}
            A_2 & 0 \\ 
            0 & D_2
        \end{pmatrix}
        \begin{pmatrix}
            1 & 0 \\ 
            C_2 & 1
        \end{pmatrix}
    \]
    under the conditions of Lemma \ref{lma:Z_i conditions for Phi 1 Phi 2 nonzero}. As 
    $
        \begin{pmatrix}
            1 & B_1 \\
            0 & 1
        \end{pmatrix} 
        \in I_{w,r}
    $ 
    and
    $
        \begin{pmatrix}
            1 & 0 \\ 
            C_2 & 1
        \end{pmatrix} 
        \in \tp{I}_{w,r}
    $
    fix $\phi_w$ and $\phi^\vee_w$ respectively, the pairing
    \[
        \la 
            \pi_w(Z_1)
            \phi_w, 
            \pi^\vee_w(Z_2) 
            \phi^\vee_w 
        \ra_{\pi_w}
    \]
    is equal to
    \begin{align*}
        \la
            \pi_w
            \left(
                \begin{pmatrix}
                    1 & -B_2 \\
                    0 & 1
                \end{pmatrix}
                \begin{pmatrix}
                    1 & 0 \\
                    C_1 & D_1
                \end{pmatrix}
            \right)
            \pi_w
            \left(
                \begin{pmatrix}
                    A_1	& 0 \\
                    0 & 1
                \end{pmatrix}
            \right)
            \phi_w,
            \pi^\vee_w
           \left(
                \begin{pmatrix}
                    A_2 & 0 \\
                    0 & D_2
                \end{pmatrix}
            \right)
            \phi^\vee_w
        \ra_{\pi_w}
    \end{align*}
	
    Furthermore, write
    \[
        \begin{pmatrix}
            1 & -B_2 \\
            0 & 1
        \end{pmatrix}
        \begin{pmatrix}
            1 & 0 \\
            C_1 & D_1
        \end{pmatrix} 
        =
        \begin{pmatrix}
            1 & 0 \\
            C & 1
        \end{pmatrix}
        \begin{pmatrix}
            A & 0 \\
            0 & D
        \end{pmatrix}
        \begin{pmatrix}
            1 & B \\
            0 & 1
        \end{pmatrix}
    \]
    where 
    \begin{align*}
        A &= 
            1 - B_2C_1 \in 
                1 + \p_w^{2r}M_{a_w}(\OO_w), \\
        CA &= 
            C_1 \in 
                \p_w^rM_{b_w \times a_w}(\OO_w), \\
        AB &= 
            -B_2D_1 \in 
                M_{a_w \times b_w}(\OO_w) \\
        D_1 &= 
            D + CAB \in 
                \p_w^{-r}M_{b_w}(\OO_w) \ .
    \end{align*}
    
    Note that $1 = A^{-1} + B_2C_1A^{-1}$, so
    \begin{align*}
        &A^{-1} = 
            1 - B_2C \in 
                1 + \p_w^{2r}M_{a_w}(\OO_w), \\
        &C \in 
            \p_w^rM_{b_w \times a_w}(\OO_w), \ \ \ \ \ 
        B \in 
            M_{a_w \times b_w}(\OO_w), \\
        &D = 
            (1+CB_2)D_1 \in 
                (1+\p_w^{2r})M_{b_w}(\OO_w)D_1 \ .
    \end{align*}
    
    Therefore,
    \begin{align*}
        \begin{pmatrix}
            1 & -B_2 \\
            0 & 1
        \end{pmatrix}
        \begin{pmatrix}
            1 & 0 \\
            C_1 & D_1
        \end{pmatrix} =
        \begin{pmatrix}
            1 & 0 \\
            C & 1 + CB_2
        \end{pmatrix}
        \begin{pmatrix}
            1 & 0 \\
            0 & D_1
        \end{pmatrix}
        \begin{pmatrix}
            A & AB \\
            0 & 1
        \end{pmatrix}
    \end{align*}
	
    Setting
    \begin{align*}	
        \gamma_0 = 
        \begin{pmatrix}
            A & AB \\
            0 & 1
        \end{pmatrix} 
        \ \ \ \text{ and } \ \ \
        \cg{\gamma}_0 =
        \begin{pmatrix}
            1 & 0 \\
            C & 1 + CB_2
        \end{pmatrix} \ ,
    \end{align*}
    one obtains
    \begin{align*}
        \la 
            &\pi_w(Z_1)\phi_w, 
            \pi^\vee_w(Z_2) \phi^\vee_w 
        \ra_{\pi_w} \\
            &= 
        \la 
            \pi_w
            \left(
                \gamma_0
                \begin{pmatrix}
                    A_1 & 0 \\ 
                    0 & 1
                \end{pmatrix}
            \right) 
            \phi_w,
            \pi^\vee_w
            \left(
                \begin{pmatrix}
                    1 & 0 \\ 
                    0 & D_1^{-1}
                \end{pmatrix}
                \cg{\gamma}_0
                \begin{pmatrix}
                    A_2 & 0 \\ 
                    0 & D_2
                \end{pmatrix}
            \right) 
            \phi^\vee_w
        \ra_{\pi_w}
    \end{align*}
	
    The desired result follows since $\gamma_0, \tp{\cg{\gamma}_0} \in I_{w,r}$.
\end{proof}

\begin{proposition} \label{prop:pairing pi phi}
    Under the conditions of Lemma \ref{lma:Z_i conditions for Phi 1 Phi 2 nonzero}, we have 
    \[
        \Phi_{w,1}(Z_1', Z_2'') 
        \Phi_{w,2}(Z_2', Z_1'') 
        \la 
            \pi_w(Z_1) 
            \phi_w, 
            \pi^\vee_w(Z_2) 
            \phi^\vee_w
        \ra_{\pi_w} =
        \vol(I^0_{a_w,b_w, r}) 
        \cdot J_{a_w} 
        \cdot J_{b_w}
    \]
    where
    \begin{align*}
        J_{a_w} 
        &= 
            \mu_{a_w}(A_1) 
            \Phi_w^{(1)}(A_2)
            \absv{
                \det A_2
            }_w^{b_w/2}
            \la 
                \pi_{a_w}(A_1) 
                \phi_{a_w}, 
                \pi^\vee_{a_w}(A_2) 
                \phi^\vee_{a_w}
            \ra_{\pi_{a_w}}, 
        \\
        J_{b_w} 
        &= 
            \mu_{b_w}(D_2) 
            \Phi_w^{(4)}(D_1)
            \absv{
                \det D_1
            }_w^{a_w/2}
            \la 
                \pi_{b_w}(D_1) 
                \phi_{b_w},
                \pi^\vee_{b_w}(D_2)
                \phi^\vee_{b_w}
            \ra_{\pi_{b_w}}
    \end{align*}
\end{proposition}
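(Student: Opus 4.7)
The plan is to combine Lemma \ref{lma:Z_i conditions for Phi 1 Phi 2 nonzero} (which has already factored the Schwartz part as $\mu_{a_w}(A_1)\mu_{b_w}(D_2)\Phi_w^{(1)}(A_2)\Phi_w^{(4)}(D_1)$) with a direct evaluation of the shifted pairing in $\pi_w$. By the lemma immediately preceding Proposition \ref{prop:pairing pi phi}, it suffices to show that
\[
    \la \pi_w(g_1)\phi_w, \pi_w^\vee(g_2)\phi_w^\vee\ra_{\pi_w}
    =
    \vol(I^0_{a_w,b_w,r})\cdot\delta_{P_{a_w,b_w}}^{1/2}(g_2)
    \cdot \la \pi_{a_w}(A_1)\phi_{a_w},\pi^\vee_{a_w}(A_2)\phi^\vee_{a_w}\ra_{\pi_{a_w}}
    \cdot \la \pi_{b_w}(D_1)\phi_{b_w},\pi^\vee_{b_w}(D_2)\phi^\vee_{b_w}\ra_{\pi_{b_w}},
\]
for $g_1=\diag(A_1,1)$ and $g_2=\diag(A_2,D_1^{-1}D_2)$. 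I would expand the left-hand side via \eqref{inner product pi w integral} as an integral over $k\in\GL_n(\OO_w)$ of $(\varphi_w(kg_1),\varphi_w^\vee(kg_2))_{\pi_{a_w,b_w}}$, using the realization of $\varphi_w$ (resp.\ $\varphi_w^\vee$) in $\ind_{P_{a_w,b_w}}^{G_w}\pi_{a_w,b_w}$ (resp.\ $\ind_{P_{a_w,b_w}}^{G_w}\pi_{a_w,b_w}^\vee$) from Section \ref{subsubsec:comp of test vectors}.

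Next, I would analyze the support of the integrand. Parametrize $k=u_+\ell n_-$ in its Iwahori decomposition with $u_+\in P^u_{a_w,b_w}(\OO_w)$, $\ell=\diag(A,D)\in L_{a_w,b_w}(\OO_w)$, and $n_-=\left(\begin{smallmatrix} 1 & 0 \\ C & 1\end{smallmatrix}\right)$ with $C\in\p_w^r M_{b_w\times a_w}(\OO_w)$. Since $g_1\in L_{a_w,b_w}(\OO_w)$ normalizes $I_{w,r}$, the condition $kg_1\in\mathrm{supp}(\varphi_w)=P_{a_w,b_w}I_{w,r}$ is equivalent to $k\in I^0_{a_w,b_w,r}$. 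Writing $kg_2=(u_+\ell g_2)\cdot (g_2^{-1}n_-g_2)$ with the second factor equal to $\left(\begin{smallmatrix} 1 & 0 \\ D_2^{-1}D_1CA_2 & 1\end{smallmatrix}\right)$, the condition $kg_2\in\mathrm{supp}(\varphi_w^\vee)=P_{a_w,b_w}\tp{I}_{w,r}$ reduces to $D_2^{-1}D_1CA_2\in M_{b_w\times a_w}(\OO_w)$.

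Third, I would apply the normalized covariance of $\varphi_w$ and $\varphi_w^\vee$ under $P_{a_w,b_w}$, together with right invariance under $I_{w,r}$ (resp.\ $\tp{I}_{w,r}$), to obtain $\varphi_w(kg_1)=\pi_{a_w}(AA_1)\phi_{a_w}\otimes\pi_{b_w}(D)\phi_{b_w}$ and $\varphi_w^\vee(kg_2)=\delta_P^{1/2}(g_2)\,\pi^\vee_{a_w}(AA_2)\phi^\vee_{a_w}\otimes\pi^\vee_{b_w}(DD_1^{-1}D_2)\phi^\vee_{b_w}$, where the factor $\delta_P^{1/2}(g_2)=|\det A_2|_w^{b_w/2}|\det D_1|_w^{a_w/2}$ arises from moving $g_2$ to the left. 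Factoring the pairing on $\pi_{a_w,b_w}$ as $\la\cdot,\cdot\ra_{\pi_{a_w}}\otimes\la\cdot,\cdot\ra_{\pi_{b_w}}$ and using invariance of each to cancel the $A$- and $D$-dependence, the integrand becomes independent of $u_+,\ell$, producing the product $\delta_P^{1/2}(g_2)\cdot\la\pi_{a_w}(A_1)\phi_{a_w},\pi^\vee_{a_w}(A_2)\phi^\vee_{a_w}\ra\cdot\la\pi_{b_w}(D_1)\phi_{b_w},\pi^\vee_{b_w}(D_2)\phi^\vee_{b_w}\ra$.

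Finally, I would integrate the constant integrand over the admissible subset of $k$'s. The main obstacle lies in this volume bookkeeping: the $C$-integration is constrained both to $\p_w^r M(\OO_w)$ and to the shifted condition $D_2^{-1}D_1CA_2\in M(\OO_w)$, so the key identity is that the change of variables $\tilde C=D_2^{-1}D_1CA_2$ has Jacobian $|\det D_1|_w^{a_w}|\det A_2|_w^{b_w}=\delta_P(g_2)$, producing the correct volume factor when combined with $\delta_P^{1/2}(g_2)$ from Step 3 so that the total contribution assembles exactly to $\vol(I^0_{a_w,b_w,r})\cdot\delta_P^{1/2}(g_2)$. Multiplying by the Schwartz factor from Lemma \ref{lma:Z_i conditions for Phi 1 Phi 2 nonzero} then yields the claimed identity $\vol(I^0_{a_w,b_w,r})\cdot J_{a_w}\cdot J_{b_w}$. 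This final bookkeeping, analogous to the unshifted case computed in \eqref{eq:size phi w}, is the only delicate point; the rest is an organized application of Iwahori-decomposition arguments already employed in Section \ref{subsubsec:inner product between test vectors}.
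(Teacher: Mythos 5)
Your steps (1)--(3) track the paper's argument closely: reduce to the shifted pairing via the preceding lemma, expand by \eqref{inner product pi w integral}, cut the domain to $I^0_{a_w,b_w,r}$ using the support of $\varphi_w$, and extract $\delta_{P_{a_w,b_w}}^{1/2}(g_2)=\absv{\det A_2}_w^{b_w/2}\absv{\det D_1}_w^{a_w/2}$ from the covariance. The divergence --- and the gap --- is in how you treat the conjugated unipotent $u_{C'}$ with $C'=D_2^{-1}D_1CA_2$. You impose $C'\in M_{b_w\times a_w}(\OO_w)$ as a support condition on $\varphi_w^\vee$ and shrink the $C$-domain accordingly. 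First, this is not available: the paper only assumes the support of $\varphi_w^\vee$ \emph{contains} $P_{a_w,b_w}\tp{I}_{w,r}$ (it is the image of $\phi_w^\vee$ under the inclusion into the induced representation, not a compactly supported lift), so you cannot conclude that the integrand vanishes when $C'\notin M_{b_w\times a_w}(\OO_w)$. Second, even granting that vanishing, the volume bookkeeping does not close: you are integrating a constant over the restricted set $S=\{C\in\p_w^rM_{b_w\times a_w}(\OO_w)\,:\,D_2^{-1}D_1CA_2\in M_{b_w\times a_w}(\OO_w)\}$, and no change of variables alters the fact that this contributes $\vol(S)$; moreover $\vol(S)$ is governed by the elementary divisors of $D_1$ and $A_2$, not just their determinants. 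For instance $D_1=\diag(\varpi^{-r},1,\dots,1)$, $A_2=1$, $D_2=1$ gives $\vol(S)=\vol(\p_w^rM_{b_w\times a_w}(\OO_w))$ even though $\delta_{P_{a_w,b_w}}(g_2)=\absv{\det D_1}_w^{a_w}\neq 1$, while $D_1=\varpi^{-r}1$, $A_2=\varpi^{-r}1$ gives $\vol(S)=\vol(\p_w^{2r}M_{b_w\times a_w}(\OO_w))$. So the deduction ``Jacobian $=\delta_P(g_2)$ combined with $\delta_P^{1/2}(g_2)$ assembles to $\vol(I^0_{a_w,b_w,r})\cdot\delta_P^{1/2}(g_2)$'' is not valid, and the stated constant $\vol(I^0_{a_w,b_w,r})$ is not produced by your route.

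The paper never restricts the $C$-domain: it keeps $k$ ranging over all of $I^0_{a_w,b_w,r}$ and evaluates $\varphi_w^\vee(kg_2)$ by pushing $g_2$ to the left and absorbing $u_{C'}$ into the right-$\tp{I}_{w,r}$-invariance of $\varphi_w^\vee$, so the integrand is constant on the whole of $I^0_{a_w,b_w,r}$ and the volume factor is exactly $\vol(I^0_{a_w,b_w,r})$, with no Jacobian correction. (The delicate point is the same in both routes --- whether $u_{C'}$ lies in $\tp{I}_{w,r}$, i.e. whether $D_2^{-1}D_1CA_2\in M_{b_w\times a_w}(\OO_w)$ --- but the paper resolves it by invariance rather than by a vanishing-and-volume argument.) To repair your proof you would either need to verify that $C'\in M_{b_w\times a_w}(\OO_w)$ for every $C\in\p_w^rM_{b_w\times a_w}(\OO_w)$ under the constraints of Lemma \ref{lma:Z_i conditions for Phi 1 Phi 2 nonzero}, or abandon the restriction of the domain and argue via invariance on all of $I^0_{a_w,b_w,r}$ as the paper does.
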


\begin{proof}
    Using the conditions on $Z_1$ and $Z_2$, we have
    \begin{align*}	
        &\la 
            \pi_w(Z_1)
            \phi_w, 
            \pi^\vee_w(Z_2) 
            \phi^\vee_w 
        \ra_{\pi_w} \\
        &= 
        \la 
            \pi_w
            \left(
                \begin{pmatrix}
                    A_1 & 0 \\ 
                    0 & 1
                \end{pmatrix}
            \right) 
            \phi_w,
            \pi^\vee_w
            \left(
                \begin{pmatrix}
                    A_2 & 0 \\ 
                    0 & D_1^{-1}D_2
                \end{pmatrix}
            \right) 
            \phi^\vee_w
        \ra_{\pi_w} \\
        &= 
        \int_{\GL_n(\OO_w)} 
            (
                \varphi_w
                \left(
                    k
                    \begin{pmatrix}
                        A_1 & 0 \\ 
                        0 & 1
                    \end{pmatrix}
                \right),
                \varphi^\vee_w
                \left(
                    k
                    \begin{pmatrix}
                        A_2 & 0 \\ 
                        0 & D_1^{-1}D_2
                   \end{pmatrix}
                \right)
            )_{w} 
        d^\times k \ ,
    \end{align*}
    using Equation \eqref{inner product pi w integral}.
	
    As the support of $\varphi_w$ is $P_{a_w,b_w} I_{w,r}$ and its intersection with $\GL_n(\OO_w)$ is equal to $I^0_{a_w,b_w,r}$, the integrand above is nonzero if and only if $k \in I^0_{a_w,b_w,r}$. Write such a $k \in I^0_{a_w,b_w,r}$ as
    \[
        k = 
        \begin{pmatrix}
            1 & B \\ 0 & 1
        \end{pmatrix}
        \begin{pmatrix}
            A & 0 \\ 0 & D
        \end{pmatrix}
        \begin{pmatrix}
            1 & 0 \\ C & 1
        \end{pmatrix}
    \]
    with 
    $
        A \in \GL_{a_w}(\OO_w)
    $, 
    $
        D \in \GL_{b_w}(\OO_w)
    $, 
    $
        B \in M_{a_w \times b_w}(\OO_w)
    $ and 
    $
        C \in \p_w^r M_{b_w \times a_w}(\OO_w)
    $.

    An short computation shows that
    \begin{align*}
        \varphi_w
        \left(
            k
            \begin{pmatrix}
                A_1 & 0 \\ 
                0 & 1
            \end{pmatrix}
        \right)
        &= 
        \varphi_w \left( 
        \begin{pmatrix}
            AA_1 & 0 \\ 0 & D
        \end{pmatrix}
        \right),\text{ and} \\
        \varphi^\vee_w
        \left(
            k
            \begin{pmatrix}
                A_2 & 0 \\ 
                0 & D_1^{-1}D_2
            \end{pmatrix}
        \right) 
        &=
        \varphi^\vee_w
        \left(
            \begin{pmatrix}
                AA_2 & 0 \\ 
                0 & DD_1^{-1}D_2
            \end{pmatrix}
        \right).
    \end{align*}
    
    Observe that the determinant of the matrices $A$, $D$, $A_1$ and $D_2$ are all integral $p$-adic units. Therefore, using the definition of $\varphi_w$ (resp. $\varphi^\vee_w$) and its relation to $\phi_{a_w} \otimes \phi_{b_w}$ (resp. $\cg{\phi}_{a_w} \otimes \cg{\phi}_{b_w})$, the integrand above is equal to
    \begin{align*}
        &\absv{
            \det A_2
        }_w^{b_w/2} 
        \absv{
            \det D_1^{-1}
        }_w^{-a_w/2} 
            \\ &\times
        \la 
            \pi_{a_w}(AA_1) \phi_{a_w}
            \otimes 
            \pi_{b_w}(D) \phi_{b_w},
            \pi^\vee_{a_w}(AA_2) \phi^\vee_{a_w} 
            \otimes 
            \pi^\vee_{b_w}(DD_1^{-1}D_2) \phi^\vee_{b_w}
        \ra_{a_w,b_w} 
            \\ =\,
        &\absv{
            \det A_2
        }_w^{b_w/2} 
        \absv{
            \det D_1
        }_w^{a_w/2}
            \\ &\times
        \la 
            \pi_{a_w}(A_1) \phi_{a_w}, 
            \pi^\vee_{a_w}(A_2) \phi^\vee_{a_w}
        \ra_{\pi_{a_w}}
        \la 
            \pi_{b_w}(D_1) \phi_{b_w},
            \pi^\vee_{b_w}(D_2) \phi^\vee_{b_w}
        \ra_{\pi_{b_w}} \ ,
    \end{align*}
    which does not depend on $k \in I_{a_w, b_w, r}^0$. The result follows by using the second part of Lemma \ref{lma:Z_i conditions for Phi 1 Phi 2 nonzero}.
\end{proof}

\begin{corollary} \label{cor:Corollary Zw Ia Ib}
    The zeta integral $Z_w$ is equal to
    \[
        \dim \tau_w
            \cdot
        \frac{
             \vol(I^0_{a_w, b_w, r})
        }{
            \vol(I_{a_w, r}^0)\vol(I_{b_w, r}^0)
        }
            \cdot 
        \mathcal{I}_{a_w}
            \cdot 
        \mathcal{I}_{b_w}
    \]
    where
    \begin{align*}
        \mathcal{I}_{a_w} = 
        \int_{I_{a_w, r}^0} 
            &\int_{\GL_{a_w}(\KK_w)} 
                \mu_{a_w}(A_1) 
                \chi_{w,2}(A_1) 
                \chi_{w,1}^{-1}(A_2) 
                \\
                &\times
                \Phi_w^{(1)}(A_2) 
                \absv{
                    \det A_2
                }_w^{s + \frac{a_w}{2}} 
                \la 
                    \pi_{a_w}(A_1) 
                    \phi_{a_w}, 
                    \pi^\vee_{a_w}(A_2)
                    \phi^\vee_{a_w} 
                \ra_{\pi_{a_w}} 
            d^\times A_2 
        d^\times A_1 \\
        \mathcal{I}_{b_w} = 
        \int_{I_{b_w, r}^0} 
            &\int_{\GL_{b_w}(\KK_w)} 
                \mu_{b_w}(D_2) 
                \chi_{w,2}(D_1) 
                \chi_{w,1}^{-1}(D_2)
                \\
                &\times
                \Phi_w^{(4)}(D_1) 
                \absv{
                    \det D_1
                }_w^{s + \frac{b_w}{2}} 
                \la 
                    \pi_{b_w}(D_1) 
                    \phi_{b_w}, 
                    \pi^\vee_{b_w}(D_2) 
                    \phi^\vee_{b_w} 
                \ra_{\pi_{b_w}} 
            d^\times D_1 
        d^\times D_2
    \end{align*}
\end{corollary}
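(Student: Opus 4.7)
The proof of this corollary is essentially a careful repackaging of the preceding results. The plan is to substitute the parametrizations of $Z_1$ and $Z_2$ into the expression for $Z_w$ derived above, apply Lemma~\ref{lma:Z_i conditions for Phi 1 Phi 2 nonzero} and Proposition~\ref{prop:pairing pi phi} to simplify the integrand, and then separate the resulting double integral into a product over the $a_w$-block and $b_w$-block variables.

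First, I would split the remaining ``non-Schwartz'' factors in the integrand block-by-block, writing $\chi_{w,2}(\det Z_1) = \chi_{w,2}(\det A_1)\chi_{w,2}(\det D_1)$ and $\chi_{w,1}^{-1}(\det Z_2) = \chi_{w,1}^{-1}(\det A_2)\chi_{w,1}^{-1}(\det D_2)$, and breaking $|\det Z_1 Z_2|_w^{s+n/2}$ into the four pieces $|\det A_1|_w^{s+n/2}|\det D_1|_w^{s+n/2}|\det A_2|_w^{s+n/2}|\det D_2|_w^{s+n/2}$. Lemma~\ref{lma:Z_i conditions for Phi 1 Phi 2 nonzero} localizes the integration to the supports $A_1 \in I_{a_w,r}^0$, $D_2 \in I_{b_w,r}^0$, $C_1 \in \GG_{l,w}$, $B_2 \in \GG_{u,w}$, $B_1 \in M_{a_w\times b_w}(\OO_w)$, $C_2 \in M_{b_w \times a_w}(\OO_w)$ (with $A_2, D_1$ further restricted by the supports of $\Phi^{(1)}_w, \Phi^{(4)}_w$), and in particular trivializes the determinants of $A_1, D_2$.

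Next, Proposition~\ref{prop:pairing pi phi} replaces the product $\Phi_{1,w}(Z_1',Z_2'')\Phi_{2,w}(Z_2',Z_1'')\la \pi_w(Z_1)\phi_w, \pi_w^\vee(Z_2)\phi_w^\vee\ra_{\pi_w}$ by $\vol(I_{a_w,b_w,r}^0)\cdot J_{a_w}\cdot J_{b_w}$. Since $J_{a_w}$ depends only on $(A_1, A_2)$ and $J_{b_w}$ only on $(D_1, D_2)$, and the $(B_1, C_2)$-integrations contribute $\vol(M_{a_w\times b_w}(\OO_w))\vol(M_{b_w\times a_w}(\OO_w)) = 1$ while the $(B_2, C_1)$-integrations contribute $\vol(\GG_{u,w})\vol(\GG_{l,w})$, the integrand factors as a product and one obtains $Z_w$ as a constant times $\mathcal{I}_{a_w}\cdot \mathcal{I}_{b_w}$.

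Finally, I would use the product decomposition $\GG_w = \GG_{l,w}(I_{a_w,r}^0 \times I_{b_w,r}^0)\GG_{u,w}$, which yields $\vol(\GG_w) = \vol(\GG_{l,w})\vol(I_{a_w,r}^0)\vol(I_{b_w,r}^0)\vol(\GG_{u,w})$, to cancel the volume factors of $\GG_{l,w}$ and $\GG_{u,w}$ against the prefactor $\dim\tau_w/\vol(\GG_w)$ and arrive at the announced coefficient $\dim\tau_w\cdot \vol(I_{a_w,b_w,r}^0)/(\vol(I_{a_w,r}^0)\vol(I_{b_w,r}^0))$.

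The main bookkeeping obstacle is tracking the powers of $|\det A_2|_w$ and $|\det D_1|_w$ so that they reduce to the exponents $s + a_w/2$ and $s + b_w/2$ announced in $\mathcal{I}_{a_w}$ and $\mathcal{I}_{b_w}$. This is where the half-integer contributions $b_w/2$ and $a_w/2$ from the normalized parabolic induction (absorbed in Proposition~\ref{prop:pairing pi phi}) must combine correctly with the integer shifts coming from $|\det Z_1 Z_2|_w^{s+n/2}$ and from the Bruhat-type decompositions of the multiplicative Haar measures $d^\times Z_1$ and $d^\times Z_2$; the relation $n = a_w+b_w$ is what makes the final exponents come out right.
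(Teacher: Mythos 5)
Your proposal is correct and follows essentially the same route as the paper's proof: substitute the Bruhat-type parametrizations, invoke Lemma \ref{lma:Z_i conditions for Phi 1 Phi 2 nonzero} and Proposition \ref{prop:pairing pi phi} to localize and factor the integrand, integrate out the unipotent variables as volume factors, and cancel them against $\vol(\GG_w)$ via the decomposition $\GG_w = \GG_{l,w}(I_{a_w,r}^0 \times I_{b_w,r}^0)\GG_{u,w}$. Your exponent bookkeeping (combining $|\det Z_1Z_2|_w^{s+n/2}$, the Jacobians of the measures, the half-integer shifts from Proposition \ref{prop:pairing pi phi}, and $n = a_w + b_w$) is exactly how the paper arrives at the exponents $s + \frac{a_w}{2}$ and $s + \frac{b_w}{2}$.
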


\begin{proof}
    Using Lemma \ref{lma:Z_i conditions for Phi 1 Phi 2 nonzero} and Proposition \ref{prop:pairing pi phi},
    \begin{align*}
        Z_w 
        &= 
            \frac{\dim \tau_w}{\vol(\GG_w)} \\
            &\times 
            \int_{
                A_1, B_1, C_1, A_2, B_2, D_1, C_2, D_2
            } 
                \chi_{w,2}(A_1)
                \chi_{w,2}(D_1) 
                \chi_{w,1}^{-1}(A_2)
                \chi_{w,1}^{-1}(D_2) \\
                &\times 
                \absv{
                    \det A_1 
                    \det D_1 
                    \det A_2 
                    \det D_2
                }_w^{s+\frac{n}{2}} \\
                &\times 
                \vol(
                    I^0_{a_w, b_w, r}
                )
                J_{a_w} J_{b_w} \\
            &\times 
            \absv{
                \det A_1^{b_w} 
                \det D_1^{-a_w}
            } 
            d^\times A_1 
            dB_1 
            dC_1 
            d^\times D_1 \\
            &\times 
            \absv{
                \det A_2^{-b_w}
                \det D_2^{a_w}
            } 
            d^\times A_2 
            dB_2 
            dC_2 
            d^\times D_2
    \end{align*}
    where the domain of integration for the matrices $A_i$, $B_i$, $C_i$ and $D_i$ ($i=1,2$) is given by the conditions of Lemma \ref{lma:Z_i conditions for Phi 1 Phi 2 nonzero}.
	
    Note that the integrand is independent of $B_1 \in M_{a_w \times b_w}(\OO_w)$, $B_2 \in \GG_{u,w}$, $C_1 \in \GG_{l,w}$ and $C_2 \in M_{b_w \times a_w}(\OO_w)$. Moreover, the determinants of the matrices $A_1$ and $D_2$ are both $p$-adic units. Therefore, the above simplifies to
    \begin{align*}	
        Z_w 
        &= 
            \frac{\dim \tau_w}{\vol(\GG_w)} 
            \vol(I^0_{a_w, b_w, r})
            \vol(M_{a_w \times b_w}(\OO_w))^2 \vol(\GG_{l,w}) 
            \vol(\GG_{u,w}) \\
            &\times 
            \int_{I_{a_w, r}^0} 
            \int_{I_{b_w, r}^0} 
            \int_{\GL_{a_w}(\KK_w)} 
            \int_{\GL_{b_w}(\KK_w)}
                \chi_{w,2}(A_1)
                \chi_{w,2}(D_1) 
                \chi_{w,1}^{-1}(A_2)
                \chi_{w,1}^{-1}(D_2) \\
                &\times 
                \mu_{a_w}(A_1) 
                \Phi_w^{(1)}(A_2)
                \absv{
                    \det A_2
                }_w^{s+\frac{a_w}{2}}
                \la 
                    \pi_{a_w}(A_1) 
                    \phi_{a_w}, 
                    \pi^\vee_{a_w}(A_2) 
                    \phi^\vee_{a_w}
                \ra_{\pi_{a_w}} \\
                &\times
                \mu_{b_w}(D_2) 
                \Phi_w^{(4)}(D_1)
                \absv{
                    \det D_1
                }_w^{s+\frac{b_w}{2}}
                \la 
                    \pi_{b_w}(D_1)
                    \phi_{b_w},
                    \pi^\vee_{b_w}(D_2) 
                    \phi^\vee_{b_w}
                \ra_{\pi_{b_w}} \\
            &\times 
            d^\times D_1 
            d^\times A_2 
            d^\times D_2 
            d^\times A_1 \ ,
    \end{align*}	
    and using the decomposition $\GG_w = \GG_{l,w} (I_{a_w, r}^0 \times I_{b_w, r}^0) \GG_{u,w}$, the result follows immediately.
\end{proof}

\begin{theorem} \label{thm:main thm - comp of Zw above p}
    The integrals $\mathcal{I}_{a_w}$ and $\mathcal{I}_{b_w}$ are equal to
    \begin{align*}
        \mathcal{I}_{a_w} &= 
        \displaystyle{
            \frac{
                \epsilon(
                    -s + \frac{1}{2}, 
                    \pi_{a_w} 
                    \otimes 
                    \chi_{w,1}
                ) 
                L(
                    s + \frac{1}{2}, 
                    \pi^\vee_{a_w} 
                    \otimes 
                    \chi_{w,1}^{-1}
                )
            }{
                L(
                    -s+\frac{1}{2},
                    \pi_{a_w} 
                    \otimes 
                    \chi_{w,1}
                )
            } \cdot 
            \frac{
                \vol(\X^{(1)}) \vol(I_{a_w, r}^0)
            }
            {
                (\dim \tau_{a_w})^2
            } 
            \la 
                \phi_{a_w}, 
                \phi^\vee_{a_w} 
            \ra_{\pi_{a_w}}
        } \\
        \mathcal{I}_{b_w} &= 
        \displaystyle{
            \frac{
                L(
                    s+\frac{1}{2},
                    \pi_{b_w} 
                        \otimes 
                    \chi_{w,2})
            }{
                \epsilon(
                    s + \frac{1}{2},
                    \pi_{b_w} 
                        \otimes 
                    \chi_{w,2}) 
                L(
                    -s + \frac{1}{2},
                    \pi^\vee_{b_w} 
                        \otimes 
                    \chi_{w,2}^{-1})
            } \cdot 			
            \frac{
                \vol(\X^{(4)}) \vol(I_{b_w, r}^0)
            }
            {
                (\dim \tau_{b_w})^2
            } \la 
                \phi_{b_w}, 
                \phi^\vee_{b_w} 
            \ra_{\pi_{b_w}}
        }
    \end{align*}
    Therefore, by setting 
    \begin{align*}
        &L\left(
            s + \frac{1}{2},
            \ord,
            \pi_w, 
            \chi_w
        \right) 
            \\&\;\;\;\;:=\,
        \displaystyle{
            \frac{
                \epsilon(
                    -s + \frac{1}{2},
                    \pi_{a_w}
                        \otimes 
                    \chi_{w,1}) 
                L(
                    s + \frac{1}{2},
                    \pi^\vee_{a_w}
                        \otimes 
                    \chi_{w,1}^{-1})
                L(
                    s+\frac{1}{2}, 
                    \pi_{b_w} 
                        \otimes 
                    \chi_{w,2})
            }{
                L(
                    -s+\frac{1}{2}, 
                    \pi_{a_w} 
                    \otimes 
                    \chi_{w,1})
                \epsilon(
                    s + \frac{1}{2},
                    \pi_{b_w} 
                    \otimes 
                    \chi_{w,2}
                ) 
                L(
                    -s + \frac{1}{2},
                    \cg{\pi}_{b_w} 
                    \otimes 
                    \chi_{w,2}^{-1}
                )
            }
        }
    \end{align*}
    one has
    \[
        Z_w = 
        \frac{1}{\dim \tau_w}
        L\left(
            s + \frac{1}{2},
            \ord, 
            \pi_w, 
            \chi_w
        \right) 
            \cdot 
        \frac{
            \vol(I_{w, r}^0)
            \vol(\tp{I}_{w, r}^0)
        }{
            \vol(I_{w, r}^0 \cap 
            \tp{I}_{w, r}^0)
        }
            \cdot 
        \la 
            \varphi_w, 
            \varphi^\vee_w
        \ra_{\pi_w}
    \]
\end{theorem}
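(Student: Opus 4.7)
The proof computes the two integrals $\mathcal{I}_{a_w}$ and $\mathcal{I}_{b_w}$ as normalized Godement--Jacquet zeta integrals and assembles the result via Corollary \ref{cor:Corollary Zw Ia Ib}. I will focus on $\mathcal{I}_{a_w}$; the treatment of $\mathcal{I}_{b_w}$ is entirely parallel, with the roles of the ``test vector'' and the ``Fourier-transformed'' variables swapped, which accounts for the reversed positions of $L$- and $\epsilon$-factors in the two formulas.

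The first step is to exploit the fact that $\phi_{a_w}$ arises from the embedding $\tau_{a_w} \hookrightarrow \sigma_{a_w} \twoheadrightarrow \pi_{a_w}$ (Theorem \ref{thm:canonical Pwaord vec of type tau w for G1}(iii) together with \eqref{def varphi aw}) to interpret the inner $A_1$-integral. Writing the matrix coefficient $\mu_{a_w}(A_1)$ via its extension \eqref{eq:def extn mu aw}, the integral against $A_1 \in I_{a_w,r}^0$ collapses via Schur orthogonality on the finite quotient $L_{a_w}(\OO_w/\p_w^r\OO_w)$. More precisely, by Theorem \ref{thm:canonical Pwaord vec of type tau w for G1}(iii) the subspace $\pi_{a_w}^{(\Pwaord,r)}[\tau_{a_w}]$ is isomorphic to $\tau_{a_w}$ as an $L_{a_w}(\OO_w)$-module, and the $A_1$-integration effectively averages $\pi_{a_w}(A_1)\phi_{a_w}$ against this type. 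This produces a factor of $\vol(I_{a_w,r}^0)/\dim\tau_{a_w}$ and leaves behind a Godement--Jacquet-type integral over $\GL_{a_w}(\KK_w)$.

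The second step is to apply the Godement--Jacquet local functional equation. The resulting integral has the form
\[
    \int_{\GL_{a_w}(\KK_w)}
        \chi_{w,1}^{-1}(A_2)\,
        \Phi_w^{(1)}(A_2)\,
        \absv{\det A_2}_w^{s+a_w/2}\,
        \la \phi_{a_w}, \pi_{a_w}^\vee(A_2)\phi_{a_w}^\vee\ra_{\pi_{a_w}}\,
    d^\times A_2\,,
\]
which, up to the central twist, is a Godement--Jacquet zeta integral for $\pi_{a_w}^\vee\otimes\chi_{w,1}^{-1}$ evaluated at the Schwartz function $\Phi_w^{(1)}$. By Lemma \ref{lma:Phi 2 w support}, $\Phi_w^{(1)}$ is (essentially) the Fourier transform of a multiple of $\phi_{\nu_{a_w}}$, so the Godement--Jacquet functional equation converts the integral into one over $\X_w^{(1)}$ that can be evaluated explicitly by unwinding the definition of $\mu_{a_w}$ and the pairing $\la \phi_{a_w}, \phi_{a_w}^\vee\ra_{\pi_{a_w}}$. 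This step yields the ratio
$
    \epsilon(-s+\tfrac{1}{2},\pi_{a_w}\otimes\chi_{w,1})\,
    L(s+\tfrac{1}{2},\pi_{a_w}^\vee\otimes\chi_{w,1}^{-1})/L(-s+\tfrac{1}{2},\pi_{a_w}\otimes\chi_{w,1})
$
together with the volume factor $\vol(\X_w^{(1)})\vol(I_{a_w,r}^0)/(\dim\tau_{a_w})^2$ as stated.

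The assembly of $Z_w$ is then a matter of combining the formulas for $\mathcal{I}_{a_w}$ and $\mathcal{I}_{b_w}$ with the prefactor from Corollary \ref{cor:Corollary Zw Ia Ib}, using the identity $\la \phi_{a_w},\phi_{a_w}^\vee\ra_{\pi_{a_w}} \la \phi_{b_w},\phi_{b_w}^\vee\ra_{\pi_{b_w}} = \la \phi_w,\phi_w^\vee\ra_{\pi_w} / \vol(I_{a_w,b_w,r}^0)$ from \eqref{eq:size phi w}--\eqref{eq:inner prod Pwaord vectors at w}, the factorization $\dim\tau_w = \dim\tau_{a_w}\cdot\dim\tau_{b_w}$, and the volume identity $\vol(\X_w^{(1)})\vol(\X_w^{(4)}) \vol(I_{a_w,b_w,r}^0) = \vol(I_{w,r}^0)\vol(\tp{I}_{w,r}^0)/\vol(I_{w,r}^0 \cap \tp{I}_{w,r}^0)$ coming from the decomposition of $\X_w^{(1)}$ and $\X_w^{(4)}$ as unions of Iwahoric double cosets. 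The main obstacle will be the first step: carefully tracking the Schur-orthogonality reduction for the higher-dimensional type $\tau_{a_w}$, which generalizes the character-orthogonality argument of \cite[Section 4.3.6]{EHLS} and is the source of the $\dim\tau_{a_w}$ factors absent in \emph{loc. cit.} Verifying that the correct projector onto $\pi_{a_w}^{(\Pwaord,r)}[\tau_{a_w}]$ emerges crucially relies on the canonicity statement of Theorem \ref{thm:canonical Pwaord vec of type tau w for G1}(ii).
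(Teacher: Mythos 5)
Your proposal follows essentially the same route as the paper: the Godement--Jacquet local functional equation supplies the ratio of $\epsilon$- and $L$-factors, and Schur orthogonality for matrix coefficients of the type $\tau_{a_w}$ (applied twice, once per copy of $\dim\tau_{a_w}$ in the denominator) handles the compact integrations; the only structural difference is the order of operations. The paper applies the functional equation to the inner $A_2$-integral first, treating $A_1$ as a parameter, and only afterwards performs the two orthogonality integrations (over $k_2\in K_{a_w}$ in the transformed integral, and over $A_1\in I^0_{a_w,r}$); you instead collapse the $A_1$-integral by orthogonality first, which decouples the variables, and then apply the functional equation. Both orders yield the same result, and your identification of where the two factors of $\dim\tau_{a_w}$ come from is correct.

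One concrete point to recheck: your closing volume identity reads $\vol(\X_w^{(1)})\vol(\X_w^{(4)})\vol(I^0_{a_w,b_w,r}) = \vol(I^0_{w,r})\vol(\tp{I}^0_{w,r})/\vol(I^0_{w,r}\cap\tp{I}^0_{w,r})$, whereas the identity actually needed (and the one the paper uses) is $\vol(\X_w^{(1)})\vol(\X_w^{(4)}) = \vol(I^0_{w,r})\vol(\tp{I}^0_{w,r})/\vol(I^0_{w,r}\cap\tp{I}^0_{w,r})$, with no factor of $\vol(I^0_{a_w,b_w,r})$. Since you correctly use $\la\phi_{a_w},\phi^\vee_{a_w}\ra_{\pi_{a_w}}\la\phi_{b_w},\phi^\vee_{b_w}\ra_{\pi_{b_w}} = \la\phi_w,\phi^\vee_w\ra_{\pi_w}/\vol(I^0_{a_w,b_w,r})$, the factor $\vol(I^0_{a_w,b_w,r})$ already cancels against the prefactor of Corollary \ref{cor:Corollary Zw Ia Ib}; carrying your version of the volume identity through the assembly would leave a spurious $\vol(I^0_{a_w,b_w,r})^{-1}$ in the final formula for $Z_w$.
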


\begin{proof}
    This proof is inspired by the argument of \cite[Theorem 4.3.10]{EHLS}. First, write
    \begin{align*}
        \mathcal{I}_{a_w} = 
        \int_{I_{a_w, r}^0}
            \mu_{a_w}(A_1) 
            \chi_{w,2}(A_1)
             \mathcal{I}_{a_w, 2}(A_1)
        d^\times A_1 \ ,
    \end{align*}
    where $\mathcal{I}_{a_w, 2} = \mathcal{I}_{a_w, 2}(A_1)$ is defined as
    \begin{align*}
        \int_{\GL_{a_w}(\KK_w)} 
                \Phi_w^{(1)}(A_2) 
                \absv{
                    \det A_2
                }_w^{s + \frac{a_w}{2}}
                \la
                    \pi_{a_w}(A_1)
                    \phi_{a_w}, 
                    (
                        \chi_{w,1}^{-1} 
                            \otimes 
                        \pi^\vee_{a_w}
                    )(A_2)
                    \phi^\vee_{a_w}
                \ra_{\pi_{a_w}} 
            d^\times A_2 \ .
    \end{align*}
    
    The above is a ``Godement-Jacquet'' integral, as defined in \cite[Equation (1.1.3)]{Jac79}. Therefore, we use its functional equation to obtain
    \begin{align*}
        \mathcal{I}_{a_w,2} 
            &= 
        \frac{
            \epsilon(
                -s + \frac{1}{2},
                \pi_{a_w} 
                \otimes 
                \chi_{w,1}
            ) 
            L(
                s + \frac{1}{2},
                \pi^\vee_{a_w} 
                \otimes 
                \chi_{w,1}^{-1}
            )
        }{
            L(
                -s+\frac{1}{2},
                \pi_{a_w} 
                \otimes 
                \chi_{w,1}
            )
        } \\
        &\times
        \int_{\GL_{a_w}(\KK_w)}
            \left( 
                \Phi_w^{(1)} 
            \right)^{\wedge}(A_2)
            \absv{
                \det A_2
            }_w^{-s + \frac{a_w}{2}}
        \\&\times\,
            \chi_{w,1}(A_2) 
            \la 
                \pi_{a_w}(A_1) 
                \phi_{a_w},
                \pi^\vee_{a_w}(A_2^{-1})
                \phi^\vee_{a_w}
            \ra_{\pi_{a_w}}
        d^\times A_2
    \end{align*}
    
    Let $L_{a_w, \ord}$ denote the quotient of $L$-factors and $\epsilon$-factors leading the expression above. Recall that 
    $
        \left( 
            \Phi_w^{(1)} 
        \right)^{\wedge}(A_2)
    $ is supported on $\X^{(1)}$. Furthermore, for $A_2 \in \X^{(1)}$, we have 
    $
        \left( 
            \Phi_w^{(1)} 
        \right)^{\wedge}(A_2)
            =
        \nu_{a_w}(A_2)
    $
    and 
    $
        \absv{
            \det A_2
        }_w = 1
    $. Then, $\mathcal{I}_{a_w,2}$ is equal to
    \begin{align*}
        L_{a_w, \ord} \times	
        \int_{\X^{(1)}}
            \chi_{w,1}(A_2)
            \nu_{a_w}(A_2)
            \la 
                \pi_{a_w}(A_1) 
                \phi_{a_w},
                \pi^\vee_{a_w}(A_2^{-1}) 
                \phi^\vee_{a_w}
            \ra_{\pi_{a_w}}
        d^\times A_2
    \end{align*}

    By definition of $\X^{(1)}$, we can write $A_2 = \gamma_1k_2\gamma_2$ uniquely for some $k_2 \in K_{a_w}$, $\gamma_1 \in \X^{(1)}_l := \tp{I}^0_{a_w, r} \cap \tp{P}_{a_w}^u$, and $\gamma_2 \in \X^{(1)}_u := I_{a_w, r}^0 \cap P_{a_w}^u$. It follows that
    \begin{align*}
        \la 
            \pi_{a_w}(A_1) 
            \phi_{a_w},
            \pi^\vee_{a_w}(A_2^{-1}) 
            \phi^\vee_{a_w}
        \ra_{\pi_{a_w}} 
            &=
        \la 
            \pi_{a_w}(k_2 \gamma_2 A_1) 
            \phi_{a_w},
            \pi^\vee_{a_w}(\gamma_1^{-1}) 
            \phi^\vee_{a_w}
        \ra_{\pi_{a_w}}
            \\ &=
        \la 
            \pi_{a_w}(k_2 A_1) 
            \phi_{a_w},
            \phi^\vee_{a_w}
        \ra_{\pi_{a_w}}
            \\ &=
        \int_{\GL_{a_w}(\OO_w)}
            (
                \varphi_{a_w} (k k_2 A_1),
                \varphi^\vee_{a_w}(k)
            )_{a_w}
        d^\times k
    \end{align*}
    
    The support of $\varphi_{a_w}$ is $P_{a_w} I_{a_w, r} = P_{a_w} I^0_{a_w, r}$. Since $k_2 A_1 \in I^0_{a_w, r}$, the integrand vanishes unless $k \in P_{a_w} I_{a_w, r} \cap \GL_{a_w}(\OO_w) = I^0_{a_w, r}$. Using the fact that such $k$ is in $P_{a_w}$ as well as Equation \eqref{def varphi aw}, we obtain
    \[
        (
            \varphi_{a_w} (k k_2 A_1),
            \varphi^\vee_a(k)
        )_{a_w}
        =
        (
            \varphi_{a_w} (k_2 A_1),
            \varphi^\vee_{a_w}(1)
        )_{a_w}
        =
        (
            \tau_{a_w}(k_2 A_1) 
            \phi^0_{a_w},
            \phi^{\vee,0}_{a_w}
        )_{a_w} .
    \]
    
    Then, using the above, Equation \eqref{eq:def extn mu aw}, the definition of $\nu_{a_w}$, and orthogonality relations of matrix coefficients, we obtain
    \begin{align*}
        \mathcal{I}_{a_w, 2}
            &=
        L_{a_w, \ord} \times
        \int_{\X^{(1)}}
            \mu'_{a_w}(A_2)
            \la 
                \pi_{a_w}(A_1) 
                \phi_{a_w},
                \pi^\vee_{a_w}(A_2^{-1}) 
                \phi^\vee_{a_w}
            \ra_{\pi_{a_w}}
        d^\times A_2 \\
            &=
        L_{a_w, \ord}
        \vol(I_{a_w, r}^0)
        \vol(\X^{(1)}_l) 
        \vol(\X^{(1)}_u) \\
            &\times
        \int_{K_{a_w}}
            ( 
                \phi^0_{a_w},
                \tau^\vee_{a_w}(k_2)
                \phi^{\vee,0}_{a_w}
            )_{a_w}
            (
                \tau_{a_w}(k_2)
                \tau_{a_w}(A_1) 
                \phi^0_{a_w},
                \phi^{\vee,0}_{a_w}
            )_{a_w}
        d^\times k_2 \\
            &=
        L_{a_w, \ord}
        \vol(I_{a_w, r}^0)
        \frac{
            \vol(\X^{(1)})
        }{
            \dim \tau_{a_w}
        }
        ( 
            \phi^0_{a_w},
            \phi^{\vee,0}_{a_w}
        )_{a_w}
        (
            \tau_{a_w}(A_1) 
            \phi^0_{a_w},
            \phi^{\vee,0}_{a_w}
        )_{a_w}
    \end{align*}
    
    Using Equation \eqref{eq:size phi a w}, orthogonality relations of matrix coefficients once more, and the normalization $(\phi^0_{a_w}, \phi^{\vee,0}_{a_w})_{a_w} = 1$, we ultimately obtain that $\mathcal{I}_{a_w}$ is equal to
    \begin{align*}
        &L_{a_w, \ord}
        \la
            \phi_{a_w}, 
            \phi^\vee_{a_w} 
        \ra_{\pi_{a_w}}
        \frac{
            \vol(\X^{(1)})
        }
        {
            \dim \tau_{a_w}
        } 
        \int_{I_{a_w, r}^0} 
            \mu'_{a_w}(A_1)
            (
                \tau_{a_w}(A_1) 
                \phi^0_{a_w},
                \phi^{\vee,0}_{a_w}
            )_{a_w}
        d^\times A_1 \\
        = 
        &L_{a, \ord}
        \frac{
            \vol(\X^{(1)}) 
            \vol(I_{a_w, r}^0)
        }
        {
            (\dim \tau_{a_w})^2
        } 
        \la 
            \phi_{a_w}, 
            \phi^\vee_{a_w} 
        \ra_{\pi_{a_w}}
    \end{align*}
    
    A similar argument yields
    \begin{align*}
        \mathcal{I}_{b_w}
        &=
        L_{b_w, \ord}
        \frac{
            \vol(\X^{(4)})
            \vol(I_{b_w, r}^0)
        }
        {
            (\dim \tau_{b_w})^2
        }
        \la 
            \phi_{b_w}, 
            \phi^\vee_{b_w} 
        \ra_{\pi_{b_w}} \ ,
    \end{align*}
    where
    \begin{align*}
        L_{b_w, \ord} =
        \frac{
            L(
                s+\frac{1}{2},
                \pi_{b_w} 
                \otimes 
                \chi_{w,2}
            )
        }{
            \epsilon(
                s + \frac{1}{2},
                \pi_{b_w} 
                \otimes 
                \chi_{w,2}
            ) 
            L(
                -s + \frac{1}{2},
                \pi^\vee_{b_w} 
                \otimes 
                \chi_{w,2}^{-1}
            ) \ .
        }
    \end{align*}
    
    Therefore, the result follows using Corollary \ref{cor:Corollary Zw Ia Ib}, Equation \eqref{eq:inner prod Pwaord vectors at w}, and the identity
    \[
        \vol(\X^{(1)})\vol(\X^{(4)}) 
        = 
            \frac{
                \vol(I_{a_w, r}^0)
                \vol(\tp{I}_{a_w, r}^0)
                \vol(I_{b_w, r}^0)
                \vol(\tp{I}_{b_w, r}^0)
            }{
                \vol(I_{a_w, r}^0 \cap 
                \tp{I}_{a_w, r}^0)
                \vol(I_{b_w, r}^0 \cap 
                \tp{I}_{b_w, r}^0)
            } 
        =
            \frac{
                \vol(I_{w, r}^0)
                \vol(\tp{I}_{w, r}^0)
            }{
                \vol(I_{w, r}^0 \cap 
                \tp{I}_{w, r}^0)
            }    
    \]
\end{proof}

\subsubsection{Main Local Theorem.} \label{subsubsec:Main local thm}
Keeping with the notation of Theorem \ref{thm:main thm - comp of Zw above p}, define
\[
    I_p\left( 
        s+\frac{1}{2}, \Pord, \pi, \chi
    \right)
        := 
    \prod_{w \in \Sigma_p} 
        L\left( 
            s+\frac{1}{2}, \Pord, \pi_w, \chi_w
        \right).
\]

Then, from Theorem \ref{thm:main thm - comp of Zw above p} and \eqref{eq:def local zeta integrals}, we immediately obtain our main result.

\begin{theorem} \label{thm:main local thm}
    Let $\chi$ be a unitary Hecke character of $\KK$, $\chi_p = \otimes_{w \mid p} \chi_w$, and let $s \in \CC$. Let $f_p = f_p(\bullet; \tau, \chi_p, s) \in I_p(\chi_p, s)$ be the local Siegel-Weil section at $p$ in \eqref{eq:def SW f p tau chi p}. Let $\varphi_p \in \pi_p$ and $\varphi^\vee_p \in \pi^\vee_p$ be the test vectors defined in \eqref{eq:def test vectors varphi p and varphi vee p}.
    
    Then, the $p$-adic local zeta integral $I_p(\varphi_p, \varphi^\vee_p, f_p; \chi_p, s)$ from \eqref{eq:def local zeta integrals} is equal to
    \begin{equation} \label{eq:def I p}
        \frac{1}{\dim \tau}
        I_p\left( 
            s+\frac{1}{2}, \Pord, \pi, \chi
        \right)
            \cdot
        \frac{
            \vol(I_{P, r}^0)
            \vol(\tp{I}_{P, r}^0)
        }{
            \vol(I_{P, r}^0 \cap 
            \tp{I}_{P, r}^0)
        }
    \end{equation}
\end{theorem}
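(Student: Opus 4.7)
The plan is to show that Theorem \ref{thm:main local thm} follows immediately from the factorization of all relevant objects over $w \in \Sigma_p$ combined with the per-place computation already carried out in Theorem \ref{thm:main thm - comp of Zw above p}. First, I would invoke the decomposition $U_1(\QQ_p) = \prod_{w \in \Sigma_p} \GL_n(\KK_w)$ coming from isomorphism \eqref{eq:prod G over Zp} so that the Haar measure $du$ on $U_1(\QQ_p)$ is, by our conventions in Section \ref{subsubsec:conv on measures}, the product of the local Haar measures $du_w$. Combined with the factorizations $\pi_p \cong \mu_p \otimes \bigotimes_{w \in \Sigma_p} \pi_w$ from \eqref{eq:pi p iso mu p otimes mu w}, $\varphi_p = 1 \otimes \bigotimes_w \phi_w$ and $\varphi_p^\vee = 1 \otimes \bigotimes_w \phi_w^\vee$ from \eqref{eq:def test vectors varphi p and varphi vee p}, together with the defining formula \eqref{eq:def SW f p} for $f_p$ as a product over $w$, the integral in the numerator of $I_p(\varphi_p, \varphi_p^\vee, f_p; \chi_p, s)$ (cf.\ \eqref{eq:def local zeta integrals}) splits as the product $\prod_{w \in \Sigma_p} Z_w$ of the local integrals defined in Section \ref{subsubsec:Local integrals at places above p}.

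Next, I would apply Theorem \ref{thm:main thm - comp of Zw above p} at each $w \in \Sigma_p$ to obtain
\[
    Z_w
        =
    \frac{1}{\dim \tau_w}
    L\left(s + \tfrac{1}{2}, \ord, \pi_w, \chi_w\right)
        \cdot
    \frac{\vol(I_{w, r}^0)\vol(\tp{I}_{w, r}^0)}{\vol(I_{w, r}^0 \cap \tp{I}_{w, r}^0)}
        \cdot
    \brkt{\phi_w}{\phi_w^\vee}_{\pi_w}\,,
\]
and then take the product over $w \in \Sigma_p$. By definition $\dim \tau = \prod_w \dim \tau_w$, $I_p(s + \tfrac{1}{2}, \Pord, \pi, \chi) = \prod_w L(s + \tfrac{1}{2}, \ord, \pi_w, \chi_w)$, and the identifications $I_{P, r}^0 = \ZZ_p^\times \times \prod_w I_{w, r}^0$ (likewise for $\tp{I}_{P, r}^0$) from \eqref{eq:facto PIwahori over Sigma p} give $\vol(I_{P, r}^0) = \prod_w \vol(I_{w, r}^0)$ and the analogous identity for $\tp{I}_{P, r}^0$ and their intersection.

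Finally, dividing by $\brkt{\varphi_p}{\varphi_p^\vee}_{\pi_p} = \prod_{w \in \Sigma_p} \brkt{\phi_w}{\phi_w^\vee}_{\pi_w}$ (using the normalization of local pairings at $p$ set in Section \ref{subsubsec:Conventions on local pairings above p} together with the fact that the unramified character $\mu_p$ contributes a factor of $1$) cancels the inner-product factors on the right-hand side, yielding the claimed expression \eqref{eq:def I p}. The only mild subtlety worth flagging is the bookkeeping around the twist by $S_w$ distinguishing $f_p$ from the $f_p^+$ used in Section \ref{subsubsec:Local integrals at places above p}; since the computation of Theorem \ref{thm:main thm - comp of Zw above p} was carried out for the same translated section that enters \eqref{eq:def SW f p tau chi p} (via the conventions in Section \ref{subsubsec:construction f w s +}), no extra correction is needed, and the result follows.
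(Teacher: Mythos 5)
Your proposal is correct and is essentially the paper's own argument: the paper's proof of Theorem \ref{thm:main local thm} consists precisely of taking the product over $w \in \Sigma_p$ of the formula for $Z_w$ in Theorem \ref{thm:main thm - comp of Zw above p} and dividing by $\brkt{\varphi_p}{\varphi_p^\vee}_{\pi_p} = \prod_w \brkt{\phi_w}{\phi_w^\vee}_{\pi_w}$ as dictated by \eqref{eq:def local zeta integrals}, exactly as you do. The $f_p$ versus $f_p^+$ discrepancy you flag is an inconsistency in the paper itself (the computation of Section \ref{subsubsec:Local integrals at places above p} is for the $S_w$-translated section), and your resolution matches the paper's implicit convention.
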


\begin{remark}
    Using the same minor manipulation explained in \cite[Remark 4.3.11]{EHLS}, we see that the $p$-Euler factor $I_p(s+\frac{1}{2}, \Pord, \pi_p, \chi_p)$ takes the form of a modified Euler factor at $p$ as predicted in \cite[Section 2, Equation (18b)]{Coa89} for the conjectures of Coates and Perrin-Riou on $p$-adic $L$-functions.
\end{remark}

\subsection{Local zeta integrals at $\infty$.} \label{subsec:Local zeta integrals at oo}
Assume the unitary character $\chi$ satisfies Hypothesis \ref{hyp:hyp integer k} for some $k \geq 0$. Let $f_{\infty, \kappa} = f_{\infty, \kappa}(\bullet; \gimel, \chi_\infty, s)$ be the Siegel-Weil section in \eqref{eq:def SW f oo kappa gimel}. Let $\varphi_\infty \in \pi_\infty$ and  $\varphi_\infty^\vee \in \pi_\infty^\vee$ be test vectors as in \eqref{eq:def test vectors varphi oo and varphi vee oo}.

If $k \geq n$ and $(\kappa, \chi)$ is critical, then \cite[Theorem 1.3.1]{EisLiu20} yields
\begin{align*}
    &Z_\infty(
        \varphi_\infty,
        \varphi_\infty^\flat,
        f_{\infty, \kappa};
        \chi_\infty,
        s
    )|_{s = \frac{k-n}{2}}
       \\ &=
    \frac{
        A(\pi_\infty, \chi_\infty)
    }{
        \left(
            2^{(n-1)n}
            (-2\pi i)^{-n k} 
            \pi^{\frac{n(n-1)}{2}}
            \prod \limits_{j=0}^{n-1}
                \Gamma(k - j)
        \right)^{[\KK^+:\QQ]}
    }
    E_\infty\left(
        \frac{k-n+1}{2},
        \pi,
        \chi
    \right)\,
\end{align*}
where $A(\pi_\infty, \chi_\infty)$ is some algebraic number depending on $\pi_\infty$ and $\chi_\infty$, and $E_\infty$ is the modified archimedean Euler factor (both introduced in \cite[Section 1.3]{EisLiu20}). Let $D_\infty(\pi_\infty, \chi_\infty)$ denote the fraction on the right-hand side.

\begin{remark}
    The denominator of the leading term on the right-hand side of the equation above also appears in the archimedean Fourier coefficients of the Siegel Eisenstein series associated to $f_\infty = f_\infty(\bullet; \gimel, \chi_\infty, \frac{n-k}{2})$ (the holomorphic Siegel-Weil section introduced in Section \ref{subsubsec:canonical automorphy factors for GU n n}), see \eqref{eq:archimedean fourier coeff expression}. 
    
    We later normalize this Siegel-Weil section so that this terms does not appear in either the local archimedean zeta integral nor Fourier coefficient.
\end{remark}

We set
\begin{equation} \label{eq:def I infty}
    I_\infty\left(
        \frac{k-n+1}{2},
        \pi,
        \chi
    \right)
        :=
    A(\pi_\infty, \chi_\infty)
    E_\infty\left(
        \frac{k-n+1}{2},
        \pi,
        \chi
    \right)\,.
\end{equation}

\subsection{Local zeta integrals away from $p$ and $\infty$.} \label{subsec:Local zeta integral away from p and oo}

\subsubsection{Local zeta integrals at finite unramified places.} \label{subsubsec:Local zeta integrals at finite unramd places}

For each $l \notin S \cup \{p\}$, let $\varphi_l$ and $\varphi_l^\vee$ be local test vectors at $l$ as in Section \ref{subsubsec:Local test vectors at finite unramd places}. Similarly, let $f_l \in I_l(\chi_l, s)$ be as in Section \ref{subsubsec:Local SW section at finite unramd places}.

It follows from \cite{Jac79}, \cite[Section 6]{GPSR87} and \cite[Section 3]{Li92} that
\[
    d^{S,p}(s + \frac{1}{2}, \chi) 
    \prod_{l \notin S \cup \{p\}}
        I_l(\varphi_l, \varphi_l^\vee, f_l, s)
        =
    L^{S, p}(s+\frac{1}{2}, \pi, \chi)\,,
\]
where $d^{S,p}(s, \chi) = \prod_{l \notin S \cup \{p\}} \prod_{v \mid l} d_v(s, \chi)$ and
\begin{equation} \label{eq:def d v s chi}
    d_v(s, \chi)
        =
    \prod_{r=1}^{n}
        L_v(
            2s + n - r,
            \chi^+
                \cdot
            \eta^r
        )\,,
\end{equation}
where $\chi^+$ is the restriction of $\chi$ to $\AA_{\KK^+}$, and $\eta = \eta_{\KK/\KK^+}$ is the quadratic character of $\AA_{\KK^+}$ associated to the extension $\KK/\KK^+$. For more details, see \cite[Section 4.2.1]{EHLS}.

\subsubsection{Local zeta integrals at finite ramified places.} \label{subsubsec:Local zeta integrals at finite ramd places}

For each $l \in S$, let $\varphi_l = \bigotimes_{v \mid l} \varphi_v$ and $\varphi_l^\vee = \bigotimes_{v \mid l} \varphi_v^\vee$ be local test vectors at $l$ as in Section \ref{subsubsec:Local test vectors at finite ramd places}. Similarly, let $f_l = \bigotimes_{v \mid l} f_v \in I_l(\chi_l, s)$ be as in Section \ref{subsubsec:Local SW section at finite ramd places}.

For each place $v \mid l$ of $\KK^+$, let $\mathcal{U}_v$ be the open neighborhood of $-1$ in $K_v$ as in Section \ref{subsubsec:Local SW section at finite ramd places}. It follows from \cite[Lemma 4.2.3]{EHLS} that
\[
    I_l(\varphi_l, \varphi_l^\vee, f_l, \chi)
        =
    \prod_{v \mid l}
        \vol(\mathcal{U}_v)\,,
\]
where the volume is respect to the local Haar measure discussed in Section \ref{subsubsec:conv on measures}. We normalize the product over all primes in $S$ as
\begin{equation} \label{eq:def I S}
    I_S 
        = 
    \prod_{l \in S}
    \prod_{v \mid l}
        d_v(s+\frac{1}{2}, \chi)
        \vol(\mathcal{U}_v)\,,
\end{equation}
where $d_v(s, \chi)$ is defined as in \eqref{eq:def d v s chi}. Note that $I_S$ is independent of $\pi$, see Remark \ref{rmk:test vectors, SW section and volume}.

\subsection{Global formula.} \label{subsec:Global formula}

Let $D^{p, \infty}(\chi) = \prod_{v \nmid p \infty} d_v(s+\frac{1}{2}, \chi)$, where the product runs over all finite places $v$ of $\KK^+$ away from $p$, and $d_v(s, \chi)$ is again as in \eqref{eq:def d v s chi}.

\begin{theorem} \label{thm:main formula global zeta integral}
    Let $\pi$ be a $P$-anti-ordinary, anti-holomorphic cuspidal automorphic representation for $G_1$ of $P$-anti-WLT $(\kappa, K_r, \tau)$. Let $S = S(K^p)$ as in Section \ref{subsubsec:ramified places away from p} and let $\chi$ be a unitary Hecke character of type $A_0$ satisfying Hypothesis \ref{hyp:hyp integer k} for some integer $k \geq n$. Assume that $(\kappa, \chi)$ is critical.
    
    Let $\varphi \in \pi$ and $\varphi^\vee \in \pi^\vee$ be test vectors as in Section \ref{sec:explicit choice of P-(anti-)ord vectors}.  Let $f = f_{\chi}^{\tau, \kappa} \in I(\chi, s)$ be as in \eqref{eq:def f tau kappa chi} for $s = \frac{k-n}{2}$. Then,
    \begin{align*}
        &D^{p, \infty}(\chi) 
        I\left(
            \varphi, \varphi^\vee, f; \chi, s
        \right)
            \\ =\,
        &\frac{
            \brkt{\varphi}{\varphi^\vee}
        }
        {
            \dim \tau
        }
            \cdot
        \frac{
            \vol(I_{P,r}^0) \vol(\tp{I}^0_{P,r})
        }
        {
            \vol(I_{P,r}^0 \cap \tp{I}^0_{P, r})
        }
            \cdot
        I_p\left(
            s + \frac{1}{2},
            \Pord,
            \pi, \chi
        \right)
            \\ \times\,
        &D_\infty(\pi_\infty, \chi_\infty) 
        E_\infty\left(
            s + \frac{1}{2};
            \pi, \chi
        \right)
        I_S
        L^S\left(
            s + \frac{1}{2};\pi, \chi_u
        \right)
    \end{align*}
    at $s = \frac{k-n}{2}$.
\end{theorem}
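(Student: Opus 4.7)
The strategy is to assemble the local computations from Sections \ref{subsec:Local zeta integral at p}, \ref{subsec:Local zeta integrals at oo}, and \ref{subsec:Local zeta integral away from p and oo} via the factorization of the global zeta integral recalled in Section \ref{subsubsec:zeta integrals}. Since $\varphi$, $\varphi^\vee$, and $f_{\chi}^{\tau,\kappa}$ are pure tensors by construction, and convergence at $s = \frac{k-n}{2}$ is guaranteed by $k \geq n$ together with the cuspidality of $\pi$, we have
\[
I(\varphi, \varphi^\vee, f; \chi, s) = \brkt{\varphi}{\varphi^\vee} \cdot \prod_l I_l(\varphi_l, \varphi^\vee_l, f_l; \chi_l, s)\,.
\]
The remainder of the argument is a careful substitution of each local value.

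First, I would substitute the $p$-adic factor using Theorem \ref{thm:main local thm}, which contributes the $p$-adic Euler factor $I_p(s+\tfrac{1}{2},\Pord, \pi, \chi)$, the dimensional correction $1/\dim\tau$, and the volume ratio involving $I_{P,r}^0$ and $\tp{I}_{P,r}^0$. Next, I would substitute the archimedean factor from Section \ref{subsec:Local zeta integrals at oo}, yielding $D_\infty(\pi_\infty, \chi_\infty) \cdot E_\infty(\tfrac{k-n+1}{2}; \pi, \chi)$. At the unramified finite places $l \notin S \cup \{p\}$, Section \ref{subsubsec:Local zeta integrals at finite unramd places} assembles the product into $L^{S,p}(s+\tfrac{1}{2}, \pi, \chi) / d^{S,p}(s+\tfrac{1}{2}, \chi)$, and at each ramified place $l \in S$ the choice of Siegel--Weil sections in Section \ref{subsubsec:Local zeta integrals at finite ramd places} yields $I_l = \prod_{v \mid l} \vol(\mathcal{U}_v)$.

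The final step is bookkeeping: multiply both sides by
\[
D^{p, \infty}(\chi) = d^{S,p}(s+\tfrac{1}{2},\chi) \cdot \prod_{l \in S}\prod_{v\mid l} d_v(s+\tfrac{1}{2},\chi)\,.
\]
The first factor cancels the denominator from the unramified contribution, leaving the $L$-function with factors at $S \cup \{p\}$ removed (which is what the statement denotes $L^S$, the $p$-adic Euler factor having been separately packaged into $I_p$). The second factor combines with $\prod_{v \mid l,\, l \in S} \vol(\mathcal{U}_v)$ to reconstitute the constant $I_S$ defined in \eqref{eq:def I S}. Assembling these with the $p$-adic and archimedean terms yields precisely the formula in the statement.

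No single step presents a genuine obstacle: Theorem \ref{thm:main local thm} already encapsulates the core technical work, namely the local $p$-adic integral, whose analysis via matrix coefficients of SZ-types constitutes the main novelty of this section. The only subtle point is checking that the global and local measure normalizations fixed in Section \ref{subsubsec:conv on measures} are compatible with those implicit in each of the local results being quoted; this compatibility is precisely what those normalizations were designed to guarantee, so one only needs to track volume factors and ensure they combine as claimed.
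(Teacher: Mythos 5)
Your proposal is correct and follows exactly the route the paper intends: the theorem is stated as the direct assembly of the factorization from Section \ref{subsubsec:zeta integrals} with the local computations of Theorem \ref{thm:main local thm}, Section \ref{subsec:Local zeta integrals at oo}, and Sections \ref{subsubsec:Local zeta integrals at finite unramd places}--\ref{subsubsec:Local zeta integrals at finite ramd places}, with $D^{p,\infty}(\chi)$ cancelling the $d_v$-denominators at unramified places and reconstituting $I_S$ at ramified ones. The paper gives no separate written proof, so your bookkeeping is precisely the argument.
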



\section{$P$-ordinary Eisenstein measure.} \label{sec:Pord Eisenstein measure}

We now construct an Eisenstein measure, in the sense of \cite[Section 5]{EHLS}, by $p$-adically interpolating the (holomorphic) Eisenstein series associated to the Siegel-Weil sections chosen in the previous section. To do so, we follow the approach of \cite{Eis15} and in particular use several results of \cite{Shi97}. Therefore, it is convenient to work with a specific choice of basis for the Hermitian vector space associated to $G_4$.

Namely, let $(V, \brkt{\cdot}{\cdot}_V)$ and $(W, \brkt{\cdot}{\cdot}_W)$ be the Hermitian vector spaces associated to $G_{1_{/\RR}}$ and $G_{4_{/\RR}}$ respectively. Let $\mathcal{B}_1 := \{e_1, \ldots, e_n\}$ be any orthogonal basis of $V$ and let $\phi$ be the corresponding diagonal matrix for $\brkt{\cdot}{\cdot}_V$. Let $\mathcal{B}_4 := \{(e_1, 0), \ldots, (e_n, 0), (0, e_1), \ldots, (0, e_n)\}$ be the corresponding basis of $W$. Then, we momentarily identify $G_1(\RR)$ with the group of matrices (written with respect to $\mathcal{B}_1$) preserving some form $\phi$ up to scalar, and $G_4(\RR)$ with the group of matrices preserving
\[
    \begin{pmatrix}
        \phi & 0 \\
        0 & -\phi
    \end{pmatrix} .
\]

Let $\alpha \in K$ be any totally imaginary element, define
\[
    S = \begin{pmatrix}
        1_n & -\frac{\alpha}{2}\phi \\
       -1_n & -\frac{\alpha}{2}\phi
    \end{pmatrix}
\]
and consider the basis $\mathcal{B}_4' := S\mathcal{B}_4$ of $W$. In this section, and only in this section, we identify $G_{4_{/\RR}}$ with the group of matrices preserving the matrix
\[
    \eta := \begin{pmatrix}
        0 & -1_n \\
        1_n & 0
    \end{pmatrix}
\]

This way, the unitary group $U_4(\RR)$ is equal to the group denoted $G(\eta)$ in \cite{Eis15}. The results of Shimura to compute Fourier coefficients of Eisenstein series are stated with respect to this $G(\eta)$, motivating our change in notation. Our choice of local Siegel-Weil sections in Section \ref{subsec:Local SW section at p}, \ref{subsec:Local SW section at oo}, and \ref{subsec:Local SW section away from p and oo} make no mention of an explicit global basis for $V$ or $W$, hence this does not introduce any unintentional technicalities.

Observe that $U_4$ is the restriction of scalar to $\QQ$ of an algebraic group $U := U_{\KK^+}$ on $\KK^+$. In what follows, it is more convenient to work with $U_4(\AA_{\QQ}) = U(\AA_{\KK^+})$. 

Fix a unitary Hecke character $\chi$ that satisfies Hypothesis \ref{hyp:hyp integer k} for some integer $k \geq 0$, a $P$-nebentypus $\tau = \bigotimes_{w \in \Sigma_p} \tau_w$ of level $r \gg 0$ and a finite-order character $\psi = \bigotimes_{w \in \Sigma_p} \psi_w$ of $L_P(\ZZ_p)$. Let $f = f_{\chi, \psi}^\tau \in I(\chi, s)$ be the associated Siegel-Weil section in \eqref{eq:def f tau kappa chi psi}. By construction, its restriction $f_U$ to $U$ factors as
\[
    f_U = \bigotimes_v f_v
\]
where the tensor product runs over all the places $v$ of $\KK^+$.

Let $P_{U, \sgl} \subset U_4$ be the maximal $\QQ$-parabolic subgroup that stabilizes $V^d$, i.e. $P_{U, \sgl} = P_\sgl \cap U$. Its Levi subgroup $M_U$ is identified with $\GL_\KK(V)$ via $\Delta$ or equivalently, with $\GL_n(\KK)$ using the basis $\mathcal{B}_1$. Its unipotent radical $N_U$ is identified with the group of matrices $
    \begin{pmatrix}
        1 & m \\
        0 & 1
    \end{pmatrix}
$, where $m \in \Her_n(\KK)$.

With this notation, we can adapt the content of Section \ref{subsec:Sgl Eis series} by replacing $G_4$ with $U_4$. Let $E_{f, U}$ be the Eisenstein series associated to $f_U$ on $U_4$ or equivalently, the restriction of $E_f$ from $G_4(\AA)$ to $U_4(\AA) = U(\AA_{\KK^+})$.

\subsection{Fourier coefficients of Eisenstein series.} \label{subsec:Fourier coefficients of Eisenstein series}
The Siegel-Weil section $f_U$ satisfies Conditions 4 and 5 of \cite[Section 2.2.3]{Eis15}. Therefore by \cite[Proposition 18.3]{Shi97}, $E_{f, U}$ admits a Fourier expansion: For all $m \in \Her_n(\AA_\KK)$, $h \in \GL_n(\AA_\KK)$, we have
\begin{equation}
    E_{f, U}\left(
        \begin{pmatrix}
            1 & m \\
            0 & 1
        \end{pmatrix}
        \begin{pmatrix}
            \tp{\ol{h}}^{-1} & 0 \\
            0 & h
        \end{pmatrix}
    \right)
        =
    \sum_{\beta \in \Her_n(\KK)}
        c(\beta, h; f_U)
        e_{\AA_{\KK^+}}(\tr \beta m)\,,
\end{equation}
where $c(\beta, h; f_U)$ is a complex number that depends on $f_U$, the Hermitian matrix $\beta$, and $h$.

Furthermore, by \cite[Sections 18.9, 18.10]{Shi97}, for each non-degenerate matrix $\beta$, the Fourier coefficient $c(\beta, h; f)$ factors over the places $v$ of $\KK^+$. More precisely, write $\beta = (\beta_v)_v$ and $h = (h_v)_v$ as $v$ runs overs the places $v$ of $\KK^+$, and define $c(\beta_v, h_v; f_v)$ as
\[
    \int_{\Her(\KK \otimes_{\KK^+} \KK^+_v)}
        f_v
        \left(
            \begin{pmatrix}
               0 & -1 \\
               1 & 0
            \end{pmatrix} 
            \begin{pmatrix}
               1 & N_v \\
               0 & 1
            \end{pmatrix} 
            \begin{pmatrix}
                \tp{\ol{h}}_v^{-1} & 0 \\
                0 & h_v
            \end{pmatrix} 
        \right)
        e_v(-\tr \beta_v N_v)
    d N_v\,.
\]

Then, we have
\[
    c(\beta, h; f) = C(n, \KK) \prod_v c(\beta_v, h_v; f_v) ,
\]
where 
\begin{equation} \label{eq:def C n K}
    C(n, \KK)
        =
    2^{n(n-1)[\KK^+:\QQ]/2}
    \absv{D_{\KK^+}}^{-n/2}
    \absv{D_{\KK}}^{-n(n-1)/4}\,,
\end{equation}
and $dN_v$ denotes the Haar measure on $\Her(\KK \otimes_{\KK^+} \KK^+_v)$ such that
\[
    \int_{\Her_n(\OO_{\KK} \otimes_{\OO_{\KK^+}} \OO_{\KK^+_v})}
        dN_v 
    = 1\,, \text{ for each finite place $v$}\,,
\]
and
\[
    dN_v 
        :=
    \absv{
        \bigwedge_{j=1}^n
            dN_{jj}
        \bigwedge_{j < k}
            2^{-1}
            dN_{jk}
                \wedge
            d\ol{N}_{kj}
    }\,, \text{ for each archimedean place $v$}\,,
\]
where $N_{jk}$ is the $(j,k)$-th entry of the matrix $N_v$.

In the following sections, we generalize the approach of \cite{Eis15} to compute the local Fourier coefficients at $p$ corresponding to the local Siegel-Weil sections associated to types constructed in Section \ref{subsec:Local SW section at p}. Then, we rely on known formulas obtained by Shimura in \cite{Shi97} and extended by Eischen in \cite{Eis15} for the local coefficients at places away from $p$. We later combine these results with the discussion above to $p$-adically interpolate the Eisenstein series $E_{f, U}$.

\subsection{Calculations of local Fourier coefficients} \label{subsec:Calculation of local Fourier coef}
In this section, for each place $v$ of $\KK^+$, we compute $c(\beta_v, h_v; f_v)$. It is more convenient to compute these coefficients for $h_v = 1$. One can use \cite[Lemma 9]{Eis15} to relate $c(\beta, h; f_U)$ to $c(\beta, 1; f_U)$ for arbitrary $h \in \GL_n(\AA_{\KK})$.

\subsubsection{Local coefficients at $p$.} \label{subsubsec:Local coeff at p}
Assume $v \mid p$ and identify $v$ with the unique place $w \mid v$ in $\Sigma_p$. Let $f_v = f_w = f^{\Phi_w}$ be as in \eqref{eq:def f phi w}, for $\Phi_w = \Phi_w^{\tau_w \otimes \psi_w}$, see Remark \ref{rmk:mu tau nu tau and Phi tau}. Then, the local coefficient for $\beta_v = \beta_w$ is
\begin{align*}
    c(\beta_w, 1; f_w) 
        &= 
    \int_{M_n(\KK_w)}
        f_w\left(
            \begin{pmatrix}
                0 & -1 \\
                1 & 0
            \end{pmatrix}
            \begin{pmatrix}
                1 & N \\
                0 & 1
            \end{pmatrix}
        \right)
        e_p(-\tr \beta_w N)
    dN
        \\ &=
    \int_{\GL_n(\KK_w)}
        \chi_{w,1}^{-1}\chi_{w,2}(X)
        \absv{\det X}_w^{n+2s} 
        \\ &\hspace*{0.5cm} \times
        \int_{M_n(\KK_w)}
            \Phi_w
            \left(
                (0, X)
                \begin{pmatrix}
                    0 & -1 \\
                    1 & N
                \end{pmatrix}
            \right)
            e_p(-\tr \beta_w N)
        dN
    d^\times X
\end{align*}

From \eqref{eq:def Schwartz Phi w}, we have
\[
    \Phi_w
    \left(
        (0, X)
        \begin{pmatrix}
            0 & -1 \\
            1 & N
        \end{pmatrix}
    \right)
        =
    \frac{\dim \tau_w}{\vol(\GG_w)}
    \Phi_{1,w}(X)
    \Phi_{2,w}(XN)
\]
which is nonzero if and only if $X \in \GG_w$. It follows that $c(\beta_w, 1; f_w)$ is equal to
\begin{align*}
    &\frac{\dim \tau_w}{\vol(\GG_w)}
    \int_{\GG_w}
        \chi_{w,1}^{-1}\chi_{w,2}(X)
        \Phi_{1,w}(X)
        \int_{M_n(\KK_w)}
            \Phi_{2,w}(XN)
            e_p(-\tr \beta_w N)
        dN
    d^\times X \\
        &=
    \frac{\dim \tau_w}{\vol(\GG_w)}
    \int_{\GG_w}
        \chi_{w,1}^{-1}\chi_{w,2}(X)
        \Phi_{1,w}(X)
        \int_{M_n(\KK_w)}
            \Phi_{2,w}(N)
            e_p(\tr (-\beta_w X^{-1} N))
        dN
    d^\times X \\
        &=
    \frac{\dim \tau_w}{\vol(\GG_w)}
    \int_{\GG_w}
        \chi_{w,1}^{-1}\chi_{w,2}(X)
        \Phi_{1,w}(X)
        (\Phi_{2,w})^\wedge(-\beta_w X^{-1})
    d^\times X \\
        &=
    \frac{\dim \tau_w}{\vol(\GG_w)}
    \int_{\GG_w}
        \chi_{w,1}^{-1}\chi_{w,2}(X)
        \Phi_{1,w}(X)
        \nu_w^{\tau_w \otimes \psi_w}(-\beta_w X^{-1})
    d^\times X\,,
\end{align*}
using \eqref{eq:def Phi 2 w} in the last line and notation as in Remark \ref{rmk:mu tau nu tau and Phi tau} for $\nu_w = \nu_w^{\tau_w \otimes \psi_w}$.

Now, write
\[
    X = 
    \begin{pmatrix}
        A & B \\
        C & D
    \end{pmatrix}
\]
for some $B, \tp{C} \in M_{a_w \times b_w}(\ZZ_p)$, $A \in I_{a_w, r}^0$ and $D \in I_{b_w, r}^0$, so that the above equals
\begin{align*}
    \frac{\dim \tau_w}{\vol(\GG_w)}
    \int_{\GG_w}
        \chi_{w,1}^{-1}\chi_{w,2}(X)
        \mu_{a_w}(A) \mu_{b_w}(D)
        \nu_w(-\beta_w X^{-1})
    d^\times X\,,
\end{align*}
using \eqref{eq:def ext mu w} and \eqref{eq:def Phi 1 w}.

In particular, the above is zero unless $-\beta_w X^{-1} \in \X_w$. Thus, $\beta_w \in M_n(\OO_w)$ and we can write
\[
    \beta_w =
    \begin{pmatrix}
        \beta_1 & \beta_2 \\
        \beta_3 & \beta_4
    \end{pmatrix}
\]
with $\beta_1 \in M_{a_w}(\OO_w)$, $\beta_2, \tp{\beta}_3 \in M_{a_w \times b_w}(\OO_w)$, and $\beta_4 \in M_{b_w}(\OO_w)$. 

In particular,
\[
    -\beta_w X^{-1} 
        \equiv
    \begin{pmatrix}
        -\beta_1 A^{-1} & * \\
        * & -\beta_4 D^{-1}
    \end{pmatrix}
\]
modulo $\p_w^r$, where the precise description of the bottom-left and upper-right corners is irrelevant in what follows.

Using \eqref{eq:def nu w and phi nu w} and the definitions of both $\mu_{a_w}$ and $\mu_{b_w}$, we obtain
\begin{align*}
    c(\beta_w, 1; f_w)
        &= 
    \frac{\dim \tau_w}{\vol(\GG_w)} 
    \chi_{w,1}^{-1}\chi_{w,2}(-\beta_w) 
    \chi_2^{-1}(-\beta_1)
    \chi_1(-\beta_4) \\
        &\times
    \int_{\GG_w}
        \chi_{w,1}(A)\chi_{w,2}^{-1}(D)
        \mu'_{a_w}(A) \mu'_{b_w}(D)
        \mu'_{a_w}(-\beta_1 A^{-1})
        \mu'_{b_w}(-\beta_4 D^{-1})
    d^\times X \,.
\end{align*}

Using orthogonality relations between matrix coefficients, as in the end of the proof of Theorem \ref{thm:main thm - comp of Zw above p}, it follows that
\begin{align*}
    c(\beta, 1; f)
        &= 
    \chi_{w,1}^{-1}\chi_{w,2}(-\beta_w) 
    \chi_2^{-1}(-\beta_1)
    \chi_1(-\beta_4)
    \mu'_{a_w}(-\beta_1)
    \mu'_{b_w}(-\beta_4)\,,
\end{align*}
and using \eqref{eq:def nu w and phi nu w} once more, we ultimately obtain
\begin{align*}
    c(\beta_w, 1; f_w)
        &=
    \nu_w(-\beta_w) \,.
\end{align*}

From now on, we write $\nu_w(\bullet; \tau_w, \psi_w)$ for $\nu_w(\bullet) = \nu_w^{\tau_w \otimes \psi_w}(\bullet)$.

\subsubsection{Local coefficients at $\infty$.} \label{subsubsec:Local coeff at oo}

Assume Hypothesis \ref{hyp:hyp integer k} and consider the Siegel-Weil section $f_{\infty, \gimel} = f_{\infty}(\bullet; \gimel, \chi_\infty, s)$ defined at the end of Section \ref{subsubsec:canonical automorphy factors for GU n n}. 

Let $g_0 \in U_4(\RR)$ be any element such that $g_0 \gimel = i 1_n$. Then, we have
\begin{equation}
    f_{\infty}(g; \gimel, \chi_\infty, s)
        =
    f_{\infty}(gg_0^{-1}; i1_n, \chi_\infty, s)
    f_{\infty}(g_0^{-1}; i1_n, \chi_\infty, s)^{-1}\,,
\end{equation}
where $f_{\infty, i1_n} = f_{\infty}(\bullet; i 1_n, \chi_\infty, s)$ is defined by replacing $\gimel$ with $i 1_n$ in \eqref{eq:def SW f oo gimel}. In particular, $f_{\infty, \gimel}$ and $f_{\infty, i1_n}$ only differ by nonzero constant.

\begin{remark}
    In what follows, we use $f_{\infty}(\bullet; i1_n, \chi_\infty, s)$ instead of $f_{\infty}(\bullet; \gimel, \chi_\infty, s)$ to state the results of Shimura directly. However, the Eisenstein series appearing in the previous section is still the one associated to $f_{\infty}(\bullet; \gimel, \chi_\infty, s)$. 
    
    As we are currently trying to $p$-adically interpolate its Fourier coefficients, this change is not an issue as the two sections are related the Fourier coefficients of each are related by a nonzero constant.
\end{remark}

Let $f_{\infty, U} = \prod_{\sigma \in \Sigma} f_\sigma$ be the restriction of $f_{\infty, i1_n}$ to $U_4(\RR) = \prod_{\sigma \in \Sigma} U(\RR)$. It follows from \cite[Equation (7.12)]{Shi83} (see \cite[Section 2.2.6]{Eis15} as well) that at $s = \frac{k-n}{2}$, the archimedean Fourier coefficients at $\beta = \beta_\sigma$ is
\[
    c(\beta_\sigma, 1; f_\sigma)
        =
    \left(
        2^{(n-1)n}
        (2\pi i)^{n k} 
        \pi^{\frac{n(n-1)}{2}}
        \prod \limits_{j=0}^{n-1}
            \Gamma(k - j)
    \right)^{-1}
    \sigma(\det \beta)^{k-n}
    e^{i \tr(\sigma(\beta))}\,.
\]

Let $\beta_\infty = (\beta_\sigma)_{\sigma \in \Sigma}$. The product $c(\beta_\infty, 1; f_{\infty, U}) = \prod_{\sigma \in \Sigma} c(\beta_\sigma, 1; f_\sigma)$ is equal to
\begin{equation} \label{eq:archimedean fourier coeff expression}
    \left(
        2^{(n-1)n}
        (-2\pi i)^{-n k} 
        \pi^{\frac{n(n-1)}{2}}
        \prod \limits_{j=0}^{n-1}
            \Gamma(k - j)
    \right)^{-[\KK^+ : \QQ]}
    \prod_{\sigma}
        \det(\beta_\sigma)^{k-n}
        e^{i \tr(\beta_\sigma)}
    \,.
\end{equation}

Using \cite[Lemma 9]{Eis15}, we see that given any $h_\infty = (h_\sigma)_{\sigma \in \Sigma} \in \GL_n(\AA_{\KK, \infty})$, if $k > n$, then $c(\beta_\infty, h_\infty, f_\infty) \neq 0$ if and only if $\det \beta_\infty \neq 0$. In particular, the Fourier coefficients are nonzero only if $\beta$ is non-degenerate.

\subsubsection{Local coefficients at places away from $p$ and $\infty$.} \label{subsubsec:Local coeff at places away from p and oo}
Assume $v$ is a finite place of $\KK^+$ away from $p$. Let $f_v$ be the local Siegel-Weil section at $v$ constructed in Section \ref{subsubsec:Local SW section at finite unramd places} and \ref{subsubsec:Local SW section at finite ramd places}, for $v$ unramified and ramified respectively. As explained in Section \ref{subsubsec:comp to other choices in the literature}, see \cite[Section 4.2.2]{EHLS} as well, we have
\[
    c(\beta_v, 1; f_v) = c(\beta_v, 1; f_v^\bb)\,,
\]
for some ideal $\bb$ of $\OO_{\KK^+}$ prime to $p$.

As explained in \cite{Eis15}, it follows from \cite[Proposition 19.2]{Shi97} that
\begin{align*}
    \prod_{v \nmid p \infty}
        c(\beta_v, 1; f_v^\bb)
            &=
    \nm^{\KK^+}_{\QQ}(\bb)^{-n^2}
    \prod_{i=0}^{n-1}
        L^p(
            2s + n - i, (\chi^+)^{-1} \eta^i
        )^{-1}
        \\ &\times
    \prod_{v \nmid p \infty}
        P_{\beta_v, \bb}(
            \chi^+(\varpi_v)^{-1}
            |\varpi_v|^{2s + n}
        )\,, 
\end{align*}
where
\begin{enumerate}
    \item $\chi^+$ is the restriction of the unitary Hecke character $\chi$ from $\AA_\KK$ to $\AA_{\KK^+}$;
    \item $\eta$ is the quadratic character of $\AA_{\KK^+}$ associated to the extension $\KK / \KK^+$;
    \item $\varpi_v$ is a uniformizer of $\OO_{\KK^+, v}$, viewed as an element of $\KK^\times$ prime to $p$. In what follows, we identify $\varpi_v$ with its image in $(\OO_\KK \otimes \ZZ_p)^\times$; 
    \item $P_{\beta_v, \bb}$ is a polynomial, depending on $\beta_v$ and $\bb$, with $\ZZ$-coefficients and constant term 1, which is identically 1 for all but finitely many $v$; and
    \item $L^p(r, \chi^+ \eta^r) = \prod_{v \nmid p \infty \mathrm{cond} \eta} d_v(s + \frac{1}{2}, \chi)$, where $d_v(s, \chi)$ is as in \eqref{eq:def d v s chi}.
\end{enumerate}

Furthermore, note that only 
\[
    \alpha(\beta; \chi, s)
        =
    \alpha_{\bb}(\beta; \chi, s)
        :=
    \prod_{v \nmid p \infty}
        P_{\beta_v, \bb}(
            \chi^+(\varpi_v)^{-1}
            |\varpi_v|^{2s + n}
        )
\]
depends on $\beta_v$ in the expression on the right-hand side above. For future reference, we set $\alpha(\beta; \chi) = \alpha_{\bb}(\beta; \chi) := \alpha(\beta; \chi, \frac{k-n}{2})$ for $k$ as in Hypothesis \ref{hyp:hyp integer k}.

As explained in \cite[Section 2.2.10]{Eis15}, $\alpha(\beta; \chi)$ is a (finite) $\ZZ$-linear combination of terms of the form 
\[
    \prod_{v \nmid p \infty}
        \chi_v(\varpi)^{-1}
        |\varpi|_v^k\,,
\]
where $\varpi$ is a $p$-integral element of the integer ring of $\KK^+$. Furthermore, using \eqref{eq:unitary Hecke char of type A0 k nu}, we have
\begin{equation}
    \prod_{v \nmid p \infty}
        \chi_v(\varpi)^{-1}
        |\varpi|_v^k
            =
    \chi_1\chi_2^{-1}(\varpi) 
    \prod_{\sigma \in \Sigma} 
        \sigma(\varpi)^{-k}\,,
\end{equation}
where $\chi_i = \bigotimes_{w \in \Sigma_p} \chi_{i, w}$ for $i$ = 1 and 2, see \cite[Equation (28)]{Eis15}. In particular, from the definition of $\chi_{w,1}$ and $\chi_{w,2}$ in Section \ref{subsubsec:Locally constant matrix coeff}, we have $\chi_p = \otimes_{w \mid p} \chi_w = \chi_1 \otimes \chi_2^{-1}$. We can thus rewrite $\alpha(\beta; \chi)$ as
\begin{equation} \label{eq:alpha beta chi in terms of chi p nm}
    \alpha(\beta; \chi)
            =
    \prod_{v \nmid p\infty}
        P_{\beta_v, \bb}
        (
            \chi_p(\varpi_v) 
            \nm^{\KK^+}_{\QQ}(\varpi_v)^{-k}
        )
\end{equation}

\subsubsection{Global Fourier coefficients.} \label{subsubsec:Global Fourier coeff}
Assume Hypothesis \ref{hyp:hyp integer k}. Using the same notation as in the previous sections, let
\begin{align*}
    D(n, \KK, \bb, p, k)
        &=
    C(n, \KK) \nm^{\KK^+}_{\QQ}(\bb)
        \\ &\times
    \prod_{\sigma \in \Sigma}
    \left(
        2^{(n-1)n}
        (-2\pi i)^{- n k} 
        \pi^{\frac{n(n-1)}{2}}
        \prod \limits_{j=0}^{n-1}
            \Gamma(k - j)
    \right)^{[\KK^+:\QQ]}
        \\ &\times
    \prod_{i=0}^{n-1}
        L^p(
            k - i, (\chi^+)^{-1} \eta^i
        )^{-1}\,.
\end{align*}

\begin{proposition} \label{prop:global Fourier coeff formula}
    Assume $k > n$ and let $f_{\chi, \psi}^\tau$ be the Siegel-Weil section $f_{\chi, \tau}^\tau(\bullet; \frac{k-n}{2})$ as in \eqref{eq:def f tau kappa chi psi}. For $\beta \in \Her_n(\KK)$, all the nonzero Fourier coefficients $c(\beta, 1; f_\chi^\tau)$ are given by
    \begin{equation} \label{eq:Fourier coeff of E tau chi psi}
        D(n, \KK, \bb, p, k)
        \alpha(\beta)
        \nu_p(-\beta_p; \tau, \psi)
        \prod_{\sigma \in \Sigma}
            (\det \beta_\sigma)^{k-n}
        e^{i \tr^{\KK^+}_{\QQ}(\beta)}\,,
    \end{equation}
    where 
    \[
        \nu_p(-\beta_p; \tau, \psi)
            :=
        \prod_{w \in \Sigma_p}
            \nu_w(-\beta_w; \tau_w, \psi_w)\,.
    \]

    Furthermore,
    \[
        \nu_w(\bullet; \tau_w, \psi_w) 
            = 
        \nu_w^{\tau_w \otimes \psi_w}
            =
        \chi_{w,1}^{-1}\chi_{w,2}\mu_w(\bullet; \tau_w, \psi_w)
    \]
     is as in \eqref{eq:def nu w and phi nu w}, where $\mu_w(\bullet; \tau_w, \psi_w)$ is the product of matrix coefficients constructed in \eqref{eq:def ext mu w} with respect to $\tau_w \otimes \psi_w$.
\end{proposition}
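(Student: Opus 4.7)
My plan is to prove the statement by assembling the local Fourier coefficient calculations from Section \ref{subsec:Calculation of local Fourier coef} via the product formula established in Section \ref{subsec:Fourier coefficients of Eisenstein series}. Specialized at $h = 1$, this factorization reads
\[
    c(\beta, 1; f_\chi^\tau) = C(n, \KK) \prod_v c(\beta_v, 1; f_v),
\]
where $v$ runs over all places of $\KK^+$ and $C(n, \KK)$ is the constant from \eqref{eq:def C n K}. Since $f_\chi^\tau = f^{\tau \otimes \psi}_\chi$ is by construction a pure tensor of the local Siegel-Weil sections fixed in Sections \ref{subsec:Local SW section at p}, \ref{subsec:Local SW section at oo} and \ref{subsec:Local SW section away from p and oo}, each local factor $c(\beta_v, 1; f_v)$ has been computed separately earlier in Section \ref{subsec:Calculation of local Fourier coef} and the proof reduces to multiplying them out.

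First, I would assemble the archimedean contribution: by Section \ref{subsubsec:Local coeff at oo}, the product over $\sigma \in \Sigma$ yields the formula \eqref{eq:archimedean fourier coeff expression}, providing both the Gamma factors and $\pi$-powers that are absorbed into $D(n, \KK, \bb, p, k)$ as well as the factor $\prod_{\sigma} \det(\beta_\sigma)^{k-n} e^{i \tr(\beta_\sigma)}$, which recombines as $\prod_{\sigma} (\det \beta_\sigma)^{k-n} e^{i \tr^{\KK^+}_{\QQ}(\beta)}$. Next, the product over $w \in \Sigma_p$ gives precisely $\nu_p(-\beta_p; \tau, \psi)$ by the core local calculation at $p$ carried out in Section \ref{subsubsec:Local coeff at p}, which is where the matrix coefficients of the SZ-types $\tau \otimes \psi$ enter the Fourier coefficient formula. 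Finally, the product at finite places away from $p$ and $\infty$ yields $\nm_\QQ^{\KK^+}(\bb)^{-n^2} \prod_{i=0}^{n-1} L^p(k-i, (\chi^+)^{-1} \eta^i)^{-1} \alpha(\beta; \chi)$ by Section \ref{subsubsec:Local coeff at places away from p and oo}, which also absorbs into $D(n, \KK, \bb, p, k)$ except for the $\alpha(\beta;\chi) = \alpha(\beta)$ factor.

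Combining these three contributions with the global constant $C(n, \KK)$ yields exactly the right-hand side of \eqref{eq:Fourier coeff of E tau chi psi}. The ``nonzero'' qualification in the statement is explained as follows: the discussion at the end of Section \ref{subsubsec:Local coeff at oo} (invoking \cite[Lemma 9]{Eis15} to reduce from general $h$ to $h = 1$) shows that when $k > n$, $c(\beta_\infty, 1; f_{\infty, U})$ vanishes unless $\beta$ is non-degenerate; so the formula describes all possibly nonzero coefficients, as claimed.

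The work is almost entirely bookkeeping rather than conceptual, since the substantive local calculations have all been performed. The main potential obstacle is reconciling normalization conventions: in particular, matching the power of $\nm_\QQ^{\KK^+}(\bb)$ appearing inside $D(n, \KK, \bb, p, k)$ against the $\nm_\QQ^{\KK^+}(\bb)^{-n^2}$ produced by the away-from-$p$ computation, as well as confirming that the local Haar measures fixed in Section \ref{subsubsec:conv on measures} match those implicit in the results of Shimura and Eischen used in Section \ref{subsubsec:Local coeff at places away from p and oo}. Once these bookkeeping points are settled, the identity \eqref{eq:Fourier coeff of E tau chi psi} follows directly.
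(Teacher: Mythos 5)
Your proposal is correct and follows exactly the route the paper intends: the paper gives no separate proof of this proposition, since it is the direct assembly, via the factorization $c(\beta,1;f)=C(n,\KK)\prod_v c(\beta_v,1;f_v)$ of Section \ref{subsec:Fourier coefficients of Eisenstein series}, of the local computations at $p$, at $\infty$, and away from $p\infty$ carried out immediately before the statement, together with the non-degeneracy criterion for $k>n$ from the archimedean discussion. Your flagged bookkeeping concerns (the power of $\nm_\QQ^{\KK^+}(\bb)$ and the exponent of the archimedean constant inside $D(n,\KK,\bb,p,k)$ versus \eqref{eq:archimedean fourier coeff expression}) are legitimate normalization discrepancies in the text rather than gaps in your argument.
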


Let $E^\tau_{\chi, \psi}$ be the Eisenstein modular form in \eqref{eq:def Eis mod form E tau chi psi}. It follows from Proposition \ref{prop:global Fourier coeff formula} that the algebraic $q$-expansion of
\begin{equation} \label{eq:def G tau chi psi}
    G^\tau_{\chi, \psi} 
        := 
    D(n, \KK, \bb, p, k)^{-1} E^\tau_{\chi, \psi}
\end{equation}
at a cusp $L$ is
\begin{equation} \label{eq:fourier coeff of G tau chi psi q}
    G^\tau_{\chi, \psi}(q)
        =
    \sum_{\beta \in L}
    \left(
        \alpha(\beta, \chi)
        \nu_p(-\beta_p; \tau, \psi)
        \prod_{\sigma \in \Sigma}
            \det(\beta_\sigma)^{k-n}
    \right) q^\beta\,,
\end{equation}
for $k > n$, see \cite[Section 2.2.11]{Eis15}. In particular, the coefficients are algebraic.

\begin{remark} \label{rmk:Fourier coeff for psi locally constant}
    Recall that $\psi$ is a finite-order character of $L(\ZZ_p)$. From now on, we identify $\psi$ as a character of the center $Z_P$ of $L_P$, see Remark \ref{rmk:char of Z and P/SP} and the comments that follow. 

    Note that all of the above, especially the content of Section \ref{subsubsec:Local coeff at p}, remains valid if $\psi$ is only a locally constant function on $Z_P$, and not necessarily a character. See Remark \ref{rmk:psi w locally constant}.
\end{remark}

\subsubsection{$p$-adic shifts of Hecke characters.} \label{subsubsec:padic shifts of Hecke char}
In this section, we recall the notion of the ``$p$-adic shift'' of $\chi$, see \cite[Section 2.2.13]{Eis15} and \cite[Section 8.2]{EHLS}.

Assume the conductor of $\chi$ divides $p^m N_0$ for some $m \geq 0$ and integer $N_0$ prime to $p$. Let
\[
    U_{m, N_0}
        =
    (1 + N_0\OO \otimes \wh{\ZZ}^p)^\times
        \times
    (1 + p^m\OO \otimes \ZZ_p)
        \subset
    (\KK \otimes \wh{\ZZ})^\times\,,
\]
where $\OO = \OO_\KK$, and consider
\[
    X_p = X_{p, N_0}
        :=
    \varprojlim_m
        \KK^\times \backslash (\KK \otimes \wh{\ZZ})^\times / U_{m, N_0}\,,
\]
the ray class group of $\KK$ of conductor $p^\infty N_0$. The $p$-adic shift of $\chi$ will be a character of $X_p$. We often decompose a character $\alpha$ of $X_p$ as $\alpha = \bigotimes_{w} \alpha_w$, where the tensor product runs over all the finite places of $\KK$.

Now, assume as usual that $\chi$ satisfies Hypothesis \ref{hyp:hyp integer k} for some integer $k \geq 0$. Let $\chi_0 = \chi|\cdot|^{-k/2}_{\AA_\KK}$ and write $\chi_0 = \prod_{w} \chi_{0,w}$, as the product runs over all the places of $\KK$. Similarly, for any place $v$ of $\QQ$, let $\chi_{0,v} = \prod_{w \mid v} \chi_{0,w}$, so that
\[
    \chi_{0, \infty}(a)
        =
    \prod_{\sigma \in \Sigma}
        \sigma(a)^{-k - \nu_\sigma}
        \ol{\sigma}(a)^{\nu_\sigma}\,,
\]
for all $a \in \KK$, where $\ol{\sigma} = \sigma c$ and $\nu = (\nu_\sigma)_{\sigma}$ is as in \eqref{eq:unitary Hecke char of type A0 k nu} (this sequence of integers $\nu$ should not be confused with the locally constant function $\nu_p(\bullet; \tau, \psi)$). We say that the $\infty$-type $\chi_0$ is
\[
    \Psi_{k, \nu}
        =
    \prod_{\sigma \in \Sigma}
        \sigma^{-k}
        \left(
            \frac{\ol{\sigma}}{\sigma}
        \right)^{\nu_\sigma}\,,
\]
viewed as a function of $(\OO_\KK \otimes \ZZ_p)^\times$. Note that for all $\varpi \in \OO_{\KK^+}^\times$, we have
\begin{equation} \label{eq:Psi k nu on O KK +}
    \Psi_{k, \nu}(\varpi)
        =
    \nm^{\KK^+}_\QQ(\varpi)^{-k}\,.
\end{equation}

Let $\wt{\chi}_{0, \infty} : (\KK \otimes \ZZ_p)^\times \to \bQQ_p^\times$ be the unique $p$-adically continuous character such that
\[
    \wt{\chi}_{0, \infty}(a)
        =
    \incl_p \circ \chi_{0, \infty}(a)\,,
\]
for all $a \in \KK^\times$. In particular, $\wt{\chi}_{0, \infty}(a) \in \OO_{\CC_p}^\times$ for all $a \in (\OO_\KK \otimes \ZZ_p)^\times$.

The $p$-adic shift of $\chi$ is defined as the $p$-adic character $\wt{\chi}_0 : X_p \to \OO_{\CC_p}^\times$ for which
\[
    \wt{\chi}_0(a)
        =
    \wt{\chi}_{0, \infty}((a_w)_{w \mid p})
    \prod_{w \nmid \infty}
        \chi_{0,w}(a_w)\,,
\]
for all $a = (a_w)_{w \nmid \infty} \in X_p$.

We now express the Fourier coefficients of $G^\tau_{\chi, \psi}$ in terms of $\wt{\chi}_0$. In the process, we also use the definition of $\nu_w(\bullet; \tau, \psi)$ in \eqref{eq:def nu w and phi nu w} to express these coefficients in terms of
\[
    \mu_p(\bullet; \tau, \psi)
        :=
    \prod_{w \in \Sigma_p}
        \mu_w(\bullet; \tau_w, \psi_w)\,,
\]
where $\mu_w(\bullet; \tau_w, \psi_w) = \mu_w^{\tau_w \otimes \psi_w}$ is as in \eqref{eq:def ext mu w} and Remark \ref{rmk:mu tau nu tau and Phi tau}.

Firstly, observe that using \eqref{eq:alpha beta chi in terms of chi p nm} and \eqref{eq:Psi k nu on O KK +}, we have
\[
    \alpha(\beta, \chi)
        =
    \prod_{v \nmid p\infty}
        P_{\beta_v, \bb}
        (\wt{\chi}_{0, p}(\varpi_v))\,,
\]
where
\[
    \wt{\chi}_{0, p}   
        := 
    \prod_{w \mid p}
        \wt{\chi}_{0, w}\,.
\]

Similarly, we have
\begin{align*}
    \nu_p(-\beta; \tau, \psi)
    \nm(\det \beta)^k
        &=
    \chi_p(-\beta^{-1})
    \mu_p(-\beta; \tau, \psi)
    \nm(\det \beta)^k
        \\ &=
    \chi_p(-1)
    \wt{\chi}_{0,p}(\beta^{-1})
    \mu_p(-\beta; \tau, \psi)\,.
\end{align*}

Therefore, the $\beta$-th coefficient of the $q$-expansion of $G^{\tau}_{\chi, \psi}$ at a cusp $L$ can be rewritten as a finite $\ZZ$-linear combinations of terms of the form
\begin{equation} \label{eq:Fourier coeff in terms of wt chi and mu}
    \wt{\chi}_{0,p}(\varpi)
    \wt{\chi}_{0,p}(\beta^{-1})
    \mu_p(-\beta; \tau, \psi)
    \nm(\det \beta)^{-n}\,,
\end{equation}
where the linear combination is over a finite set (which depends on $\beta$ and $L$) of $p$-adic units $\varpi \in \KK^\times$.

Note that if $\psi$ and $\psi'$ are two finite-order characters of $Z_P$ such that $\psi \equiv \psi'$ modulo $p^r$, then the $\mu_p(\bullet; \tau, \psi) \equiv \mu_p(\bullet; \tau, \psi')$ modulo $p^r$.

\begin{remark}
    The Fourier coefficients in \eqref{eq:Fourier coeff in terms of wt chi and mu} can be compared to the ones of the Eisenstein series constructed in \cite[Theorem 2]{Eis14}. The main difference is the level at $p$ of the Eisenstein series considered.
\end{remark}

From \eqref{eq:Fourier coeff in terms of wt chi and mu} and the $q$-expansion principle, it is clear that $G^\tau_{\chi, \psi}$ is a modular form on $G_4$ over the $p$-adic ring $\OO_\pi$ introduced in Section \ref{subsec:lattices of Pord holo forms}. We identify it with its image in the space of $p$-adic modular forms.

\subsection{$p$-adic differential operators} \label{subsec:padic diff operators}
In this section, we discuss the $p$-adic differential operators constructed in \cite[Section 5]{EFMV18} and their relevant properties for our purpose. The goal is to obtain a family of $p$-adic modular forms related to $E_{\chi, \psi, \theta}^{\tau, \kappa}$, see \eqref{eq:def f tau kappa chi psi}, using these $p$-adic differential operators (depending on $p$-adic weights $\kappa$ and $\tau$) and $G^\tau_{\chi, \psi}$.

Let $R = \OO_\pi$ and let $K = K_4 \subset G_4(\AA_f)$ be any neat open compact subgroup. Consider the space
\[
    \VV := \VV(G_4, K^p; R)
\]
of (scalar-valued) $p$-adic modular forms on $G_4$ (with respect to the parabolic $P_4 \subset H_4$, see Section \ref{subsubsec:comp of lvl structures} and \eqref{eq:partition of n n for G4}), as in \eqref{eq:def of scalar valued padic mod forms}.

Now, let $\kappa$ be an $R$-valued dominant weight for $G_1$, as in Section \ref{subsubsec:alg weights}. Then, $\wt{\kappa} = (\kappa, \kappa^\flat)$ is a dominant weight of $G_3$. Let $\wt{\kappa}_p$ be the corresponding $p$-adic weight of $T_{H_3}(\ZZ_p) = T_{H_4}(\ZZ_p)$.

Assume that $(\kappa, \chi)$ is critical, as in Definition \ref{def:critical pairs}, and let $\rho = (\rho_\sigma)_{\sigma \in \Sigma}$ and $\rho^v = (\rho_{\sigma}^v)_{\sigma \in \Sigma}$ be as in \eqref{eq:def rho sigma and rho sigma v shifts}. As above, let $\wt{\rho} = (\rho, \rho^\flat)$, $\wt{\rho}^v = (\rho^v, \rho^{v, \flat})$, and denote the corresponding $p$-adic weights as $\wt{\rho}_p$ and $\wt{\rho}^v_p$.

\begin{proposition} \label{prop:existence of padic differential operators}
    Assume Conjecture \ref{conj:density conj scalar padic forms}. Keeping the same notation as above, there exists a $p$-adic differential operator
    \[
        \theta_\chi^d(\rho^v)
            :
        \VV_\chi(G_4, K^p; R) \to \VV(G_4, K^p; R)\,,
    \]
    compatible with change of level subgroups, such that 
    \begin{equation} \label{eq:comparison at ordinary CM points}
        \Omega_{\wh{\kappa}, r, J_0', h_0}
            \circ
        \res_{J_0', h_0}
            \circ
        \delta_\chi^d(\rho^v)(f) 
            = 
        \res_{p, J_0', h_0}
            \circ
        \theta_\chi^d(\rho^v)
            \circ 
        \Omega_{\wt{\kappa}, r, G_4, X_4}(f)\,
    \end{equation}
    for any $f \in M_{\chi}(G_4, K_{4,r}; R)$ and any ordinary CM pair $(J_0', h_0)$, using the notation from Section \ref{subsubsec:evaluation at ordinary points}. Here, $\delta_{\chi}^d(\rho^v)$ is as in Section \ref{subsec:Sgl Eis series as Coo mod forms}.
\end{proposition}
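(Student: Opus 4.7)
Proof plan. My strategy is to construct $\theta_\chi^d(\rho^v)$ as a composition of elementary $p$-adic Maass--Shimura operators, obtained from the Gauss--Manin connection and the unit-root splitting of the Hodge filtration over the Igusa tower, and then to verify \eqref{eq:comparison at ordinary CM points} first on the classical dense subspace (via the density hypothesis, Conjecture \ref{conj:density conj scalar padic forms}), where it reduces to an already-known comparison of classical and $p$-adic Maass--Shimura operators at ordinary CM points.

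First I would set up the unit root splitting. Over the Igusa tower $\Ig_4$ constructed in Section \ref{subsec:Igusa tower}, the Hodge bundle $\w = \w_{\Ab^\vee}$ is trivialized by the universal isomorphism $\phi$, and the $p$-adic de Rham bundle $H^1_{\mathrm{dR}}(\Ab)$ admits a canonical Frobenius-stable splitting (the unit-root splitting) $H^1_{\mathrm{dR}}(\Ab) = \w_{\Ab^\vee} \oplus U$ which refines the Hodge filtration. This splitting is defined over the ordinary locus but does \emph{not} arise from a global algebraic splitting away from CM points; its existence is what powers the passage from classical ($C^\infty$) to $p$-adic differential calculus. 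Using this splitting, one obtains a $p$-adic analogue $\nabla^{\mathrm{ur}}$ of the Gauss--Manin connection acting on sections, and iterating it against the generator of the $\Gm$-factor of $H_0$ produces a family of elementary operators indexed by weights, exactly as in \cite[Section 5]{EFMV18}.

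Next I would define $\theta_\chi^d(\rho^v)$. The classical operator $\delta_\chi^d(\rho^v)$ is a polynomial in the archimedean Maass--Shimura operators indexed by the tuple $\rho^v = (\rho_\sigma^v)_{\sigma \in \Sigma}$, see \eqref{eq:def rho sigma and rho sigma v shifts}; one transcribes the same polynomial combination, replacing each archimedean factor by its $p$-adic counterpart built from $\nabla^{\mathrm{ur}}$. Because the output of $\delta_\chi^d(\rho^v)$ (before holomorphic projection) is a $C^\infty$ section of the scalar-valued trivial bundle on the compact dual of $X_3$ when pulled back to $G_3$ via the construction in Section \ref{subsec:Sgl Eis series as Coo mod forms}, the analogous $p$-adic polynomial combination lands in the trivial line bundle on $\Ig_4$, hence defines a section of $\VV(G_4, K^p; R)$ rather than a vector-valued $p$-adic form. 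One checks that the construction is $L_H^u(\ZZ_p)$-equivariant (since $\nabla^{\mathrm{ur}}$ is Hecke-equivariant and the unit-root splitting is canonical) and stable under change of level.

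For the comparison identity \eqref{eq:comparison at ordinary CM points}, I would reduce to the case of test objects $(\ul{A}, \phi)$ coming from an ordinary CM pair $(J_0', h_0)$. At such a point the Hodge filtration of $A$ coincides with the unit-root filtration, so the classical Maass--Shimura operator $\delta_\chi^d(\rho^v)$ and the $p$-adic Maass--Shimura operator $\theta_\chi^d(\rho^v)$ produce the same section, evaluated via the same trivialization $\Lie(\phi)^\vee$. This is precisely the content of the Katz/Eischen-type CM comparison, and gives \eqref{eq:comparison at ordinary CM points} on classical forms $f \in M_\chi(G_4, K_{4,r}; R)$. The density Conjecture \ref{conj:density conj scalar padic forms} then forces continuous extension to all of $\VV_\chi$, and because both sides are $p$-adically continuous in $f$ by construction, the identity propagates.

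The main obstacle I foresee is verifying that the naive $p$-adic transcription of $\delta_\chi^d(\rho^v)$ preserves the $P_H^u(\ZZ_p)$-invariance condition built into \eqref{eq:def of scalar valued padic mod forms}: the differential operator acts via the Levi $L_H$ direction, but the definition of scalar-valued $p$-adic modular forms quotients only by the unipotent radical of $P_H$, not of a Borel. Concretely, one must show that the Frobenius-stable unit-root splitting interacts trivially with the unipotent $P_H^u(\ZZ_p)$-action on the Igusa tower, so that applying $\nabla^{\mathrm{ur}}$ preserves the $P_H^u(\ZZ_p)$-invariant subspace. Once this compatibility is confirmed, and the density conjecture is granted, the rest of the construction is a routine adaptation of \cite[Section 5]{EFMV18} from the Borel setting to the parabolic setting $P$.
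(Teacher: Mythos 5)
Your proposal is correct and follows essentially the same route as the paper: the paper's proof simply cites \cite[Theorem 5.1.3]{EFMV18} for the construction of $\theta_\chi^d(\rho^v)$ (which is exactly the Gauss--Manin/unit-root construction you describe), invokes Conjecture \ref{conj:density conj scalar padic forms} to extend the operators from the Borel-unipotent-invariant space $V^N$ of \emph{loc.\ cit.}\ to the larger $P_H^u(\ZZ_p)$-invariant space $\VV$ (the very point you flag as the main obstacle), and cites \cite[Theorem 8.1.1(a)]{EHLS} for the CM comparison \eqref{eq:comparison at ordinary CM points}. The only minor quibble is that \eqref{eq:comparison at ordinary CM points} is asserted only for classical $f\in M_\chi(G_4,K_{4,r};R)$, so density is needed to extend the \emph{operator} to $\VV_\chi$, not to propagate the identity itself.
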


\begin{proof}
    The differential operators $\theta_\chi^d(\rho^v)$ are exactly the operators denoted $\Theta^{\wt{\kappa}}$ in \cite[Theorem 5.1.3]{EFMV18}. 
    
    However, one need to show that these extend to the space of $p$-adic modular form $\VV$ considered here, which is larger in general than the space ``$V^N$'' in \emph{ibid}. This follows immediately if we assume Conjecture \ref{conj:density conj scalar padic forms} (which replaces the use of  \cite[Theorem 2.6.1]{EFMV18}).

    Lastly, \eqref{eq:comparison at ordinary CM points} is exactly \cite[Theorem 8.1.1 (a)]{EHLS}.
\end{proof}

\begin{proposition} \label{prop:def theta and theta hol kappa chi}
    In the setting of Proposition \ref{prop:existence of padic differential operators}, we have
    \begin{enumerate}
        \item Fix any neat open compact subgroups $K_1 \subset G_1(\AA_f)$ and $K_2 \subset G_2(\AA_f)$ such that $K_1 \times K_2 \subset G_3(\AA_f) \cap K_4$. Then,
        \[
            \theta(\kappa, \chi) := \res_3 \circ \theta_\chi^d(\rho^v)
        \]
        defines a differential operator
        \[
            \VV_\chi(G_4, K_4^p; R)
                \to
            \VV_\kappa(
                G_1, K_1^p; R
            )
                \otimes
            \VV_{\kappa^\flat}(
                G_2, K_2^p; R
            )
                \otimes
            (\chi \circ \det)\,,
        \]
        where $\res_3$ is the pullback of the first embedding $\gamma_p \circ \iota_3$ in \eqref{eq:embeddings of Igusa towers}.

    \item There is a differential operator
    \[
        \theta^\hol(\kappa, \chi)
            :
        \VV_\chi(G_4, K^p; R)
            \to
        \VV(G_4, K^p; R)
    \]
    whose composition with $\res_3$ coincides with $D^{\hol}(\kappa, \chi)$ from Section \ref{subsec:Sgl Eis series as Coo mod forms}, via pullback to functions on $G_4(\AA)$, restrictions to functions on $G_3(\AA)$ and \eqref{eq:def p adification map of scalar valued forms} for $G_3$.

    \item For all $\kappa^\dagger \leq \kappa$, there exists an operator 
    \[
        \theta(\kappa, \kappa^\dagger)
            :
        \VV(G_4, K^p; R) \to \VV(G_4, K^p; R)
    \]
    such that
    \[
        \theta(\kappa, \chi)
            =
        \sum_{\kappa^\dagger \leq \kappa}
            \res_3
                \circ
            \theta(\kappa, \kappa^\dagger)
                \circ
            \theta^\hol(\kappa, \chi)\,.
    \]
    \end{enumerate}
\end{proposition}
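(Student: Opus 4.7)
The plan is to deduce all three assertions from Proposition \ref{prop:existence of padic differential operators} together with the classical decomposition \eqref{eq:decomposition D kappa chi with D hol kappa dagger chi}, by exploiting density of ordinary CM points in the Igusa tower and Conjecture \ref{conj:density conj scalar padic forms}.

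For part (i), the operator $\theta_\chi^d(\rho^v)$ of Proposition \ref{prop:existence of padic differential operators} takes values in $\VV(G_4, K_4^p; R)$; composing with the restriction map $\res_3$ from Section \ref{subsubsec:restrictions of padic forms} induced by $\gamma_p \circ \iota_3$ lands in $\VV(G_3, K_3^p; R)$, which by the K\"unneth-type decomposition of \eqref{eq:incl Ig 3 into Ig 1, 2} embeds into $\VV(G_1, K_1^p; R) \wh{\otimes} \VV(G_2, K_2^p; R)$. To pin down the weight data, I would apply the compatibility \eqref{eq:comparison at ordinary CM points} between $\theta_\chi^d(\rho^v)$ and the $C^\infty$ operator $\delta_\chi^d(\rho^v)$: at every ordinary CM point, $\delta_\chi^d(\rho^v) f$ restricts to a $C^\infty$ form whose weight is known to be $(\kappa, \kappa^\flat)$ twisted by $\chi \circ \det$, because $\res_3 \circ \delta_\chi^d(\rho^v) = D(\kappa,\chi)$ by the discussion preceding \eqref{eq:decomposition D kappa chi with D hol kappa dagger chi}. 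Conjecture \ref{conj:density conj scalar padic forms} together with density of ordinary CM points then forces the $p$-adic image to sit in the claimed subspace $\VV_\kappa(G_1, K_1^p; R) \otimes \VV_{\kappa^\flat}(G_2, K_2^p; R) \otimes (\chi \circ \det)$.

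For part (ii), I would construct $\theta^\hol(\kappa, \chi)$ by transporting the $C^\infty$ formula $D^\hol(\kappa,\chi) = \pr^\hol_\kappa \circ D(\kappa,\chi)$ to the $p$-adic side. The key input is that the $p$-adic differential operators of \cite{EFMV18} are designed to interpolate the \emph{holomorphic projections} of the corresponding Maass--Shimura operators rather than the operators themselves; applying this construction to the specific element of $U(\p_4^+)$ underlying $D^\hol(\kappa, \chi)$ produces a $p$-adic operator $\theta^\hol(\kappa, \chi)$ whose values on classical forms match $D^\hol(\kappa,\chi)$ at every ordinary CM point via an analog of \eqref{eq:comparison at ordinary CM points}. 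Conjecture \ref{conj:density conj scalar padic forms} then guarantees that this specification extends uniquely to $\VV_\chi(G_4, K^p; R)$, and the compatibility $\res_3 \circ \theta^\hol(\kappa, \chi) = D^\hol(\kappa, \chi)$ follows after composing with $\Omega$ and using injectivity.

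For part (iii), each weight-shifting operator $\delta(\kappa, \kappa^\dagger) \in U(\p_3^+)$ admits a $p$-adic avatar $\theta(\kappa, \kappa^\dagger)$ on $\VV(G_4, K^p; R)$ obtained by the same construction as in \cite{EFMV18}. Using parts (i)-(ii) together with \eqref{eq:comparison at ordinary CM points}, both sides of the claimed identity
\[
\theta(\kappa, \chi) = \sum_{\kappa^\dagger \leq \kappa} \res_3 \circ \theta(\kappa, \kappa^\dagger) \circ \theta^\hol(\kappa, \chi)
\]
agree, after evaluation at each ordinary CM point, with the specialization of the classical identity \eqref{eq:decomposition D kappa chi with D hol kappa dagger chi} at that point. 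Density of ordinary CM points in the Igusa tower of $G_3$ then upgrades this pointwise agreement to equality of $p$-adic modular forms.

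\emph{The main obstacle} will be the construction in part (ii): one must verify that the $p$-adic analog $\theta^\hol(\kappa, \chi)$ exists as a genuine operator on all of $\VV_\chi(G_4, K^p; R)$, not merely on the image of classical forms. This is where the dependence on Conjecture \ref{conj:density conj scalar padic forms} is essential, as it allows us to extend an a priori classically-defined object to the full space of $p$-adic modular forms. Once this extension is secured, parts (i) and (iii) follow largely mechanically by unwinding the classical identities at ordinary CM points.
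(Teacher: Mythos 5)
Your overall strategy is the one the paper itself relies on: the paper's proof is a one-line citation to \cite[Proposition 8.1.1 (b), (d)]{EHLS} and \cite[Corollary 8.1.2]{EHLS}, saying the argument carries over once the operators of Proposition \ref{prop:existence of padic differential operators} exist, and your parts (i)--(iii) are a reasonable unpacking of that argument. There is, however, one step in your write-up that is not licensed by anything stated in the paper and that you lean on in all three parts: you repeatedly "upgrade pointwise agreement at ordinary CM points to equality of $p$-adic modular forms" by invoking density of ordinary CM points. The only CM-detection statement available in the paper (part (ii) of the unnumbered proposition in Section \ref{subsubsec:evaluation at ordinary points}, following \cite[Proposition 3.2.5]{EHLS}) applies only to forms in $\VV^{\Pord}$; it does \emph{not} say that a general $p$-adic modular form vanishing at all ordinary CM points is zero. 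Neither side of the identity in (iii), nor the forms $\theta_\chi^d(\rho^v)f$ and $\theta^{\hol}(\kappa,\chi)f$ in (i) and (ii), is a priori $P$-ordinary, so your final step in each part does not go through as written. This is precisely why the very next result, Proposition \ref{prop:padic and Coo diff op on Pord cusp forms}, inserts $e_\kappa^{\Pord}$ before comparing the $p$-adic and $C^\infty$ operators.

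The repair is to obtain (i)--(iii) structurally rather than by CM evaluation: the $p$-adic operators of \cite{EFMV18} are built from the same elements of the enveloping algebra as $D(\kappa,\chi)$, $D^{\hol}(\kappa,\chi)$ and $\delta(\kappa,\kappa^\dagger)$, so the target weight in (i), the existence of $\theta^{\hol}(\kappa,\chi)$ in (ii), and the decomposition in (iii) are inherited directly from \eqref{eq:decomposition D kappa chi with D hol kappa dagger chi} by construction (equivalently, checked on $q$-expansions, where the effect of these operators is the explicit polynomial $\phi^\kappa$ of Section \ref{subsec:padic diff operators}, and the $q$-expansion principle replaces CM-point density). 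Ordinary CM points and \eqref{eq:comparison at ordinary CM points} should be reserved for the comparison with classical $C^\infty$ forms, and only after applying $e_P$. One further small imprecision: in (i) the map $\res_{1,2}$ of Section \ref{subsubsec:restrictions of padic forms} goes from $\VV(G_1)\wh{\otimes}\VV(G_2)$ \emph{to} $\VV(G_3)$, so to land in the tensor product you need the hypothesis $K_3 = (K_1\times K_2)\cap G_3(\AA_f)$ making this map an isomorphism, not merely an embedding in the direction you assert.
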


\begin{proof}
    This is simply \cite[Proposition 8.1.1 (b), (d)]{EHLS} and \cite[Corollary 8.1.2]{EHLS} in our settings. The proof remains the same using the existence of the differential operators in Proposition \ref{prop:existence of padic differential operators}.
\end{proof}

For any dominant weight $\kappa$ as above, using these differential operators, we define
\[
    G^{\tau, \kappa}_{\chi, \psi}
        :=
    \theta(\kappa, \chi)
        G^\tau_{\chi, \psi}\,.
\]
and let $K_3 = K_{3,r} \subset G_3(\AA_f)$ be its level, see \eqref{eq:def G tau chi psi} and the comments below \eqref{eq:def Eis mod form E tau chi psi}.

Furthermore, let $\theta$ be any $P$-parallel weight. Then, following the same logic as in Section \ref{subsec:Sgl Eis series as Coo mod forms}, in what follows we set $\theta(\kappa, \theta, \chi) := \theta(\kappa + \theta, \chi)$ and
\[
    G^{\tau, \kappa}_{\chi, \psi, \theta}
        :=
    G^{\tau, \kappa + \theta}_{\chi, \psi} 
        =
    \theta(\kappa, \theta, \chi) G^\tau_{\chi, \psi}\,.
\]

The action of $\theta(\kappa, \chi)$ on $p$-adic $q$-expansion is described in \cite[Corollary 5.2.10]{EFMV18}. Their work considers $p$-adic modular forms in
\[
    \VV_{\infty, \infty}
        =
    \varprojlim_m
    \varinjlim_r
        \VV_{r,m}(\OO_{\CC_p})^{B_H^u(\ZZ_p)}\,,
\]
for the Borel $B_H$ associated to the trivial partition, see Remark \ref{rmk:trivial partition}, but their computations hold for all $f \in \VV(G_4, K^p; R)$ if one assumes Conjecture \ref{conj:density conj scalar padic forms} (which we do in this paper).

Namely, there exists a polynomial $\phi^\kappa$ (on $n\times n$-matrices) such that for each $\beta \in L$, the $\beta$-th coefficient of $G^{\tau, \kappa}_{\chi, \psi}$ is equal to $\phi^\kappa(\beta)$ times the $\beta$-th coefficient of $G^\tau_{\chi, \psi}$, see \cite[Theorem 5.1.3 (1)]{EFMV18} and \cite[Section 5.2.2]{EFMV18}. 

\begin{remark}
    In our notation, the polynomial in \cite[Corollary 5.2.10]{EFMV18} should be written $\phi_{\wt{\kappa}}$ for $\wt{\kappa} = (\kappa, \kappa^\flat)$. However, we only consider polynomials associated to such characters, hence we only emphasize their dependence on $\kappa$.
\end{remark}

As above, considering $\kappa$ as fixed and considering any $\kappa' = \kappa + \theta$ in the $P$-parallel lattice $[\kappa]$, we set $\phi^\kappa_\theta := \phi^{\kappa + \theta}$. Then, it follows from \cite[Remark 5.2.11]{EFMV18} that if $\theta$ and $\theta'$ are two $P$-parallel weights such that $\theta \equiv \theta'$ modulo $p^r(p-1)$, then $\phi^\kappa_{\theta} \equiv \phi^\kappa_{\theta'}$ modulo $p^{r+1}$.

Using the above and \eqref{eq:Fourier coeff in terms of wt chi and mu}, one therefore readily checks that the $\beta$-th coefficient of $G^{\tau, \kappa}_{\chi, \psi, \theta}$ satisfy the ``usual'' Kummer congruences (\cite[(4.0.8)]{Kat78}) as $(\wt{\chi}_0, \psi \cdot \theta)$ vary $p$-adically as characters of $X_p \times Z_P$. See \cite[Section 5]{EHLS} for further details.

We obtain the following as a consequence of \cite[Proposition (4.1.2)]{Kat78} and using the same logic as in the construction of the analogous Eisenstein measures of \cite{Kat78, Eis15, EFMV18}.

\begin{proposition} \label{prop:existence of Eis measure on V3}
    Assume conjecture \ref{conj:classicality scalar padic forms}. Fix a $p$-adic weight $\kappa$ of $T_{H_1}(\ZZ_p)$ and a $P$-nebentypus $\tau$ with central character $\w_\tau$. There is a $\VV_3(K_3^p; R)$-valued measure $d\Eis^{[\kappa, \tau]}$ on $X_p \times Z_P$ such that
    \begin{equation} \label{eq:functional equation of d Eis kappa tau}
        \int_{X_p \times Z_P}
            (
                \wt{\chi}_0, 
                (\w_\tau \psi) \cdot \rho_{\kappa, \theta}^v
            )
        d\Eis^{[\kappa, \tau]}
            =
        G^{\tau, \kappa}_{\chi, \psi, \theta}\,,
    \end{equation}
    for any $p$-adic shift $\wt{\chi}_0$ of a Hecke character $\chi$, as in Section \ref{subsubsec:padic shifts of Hecke char}, and for any arithmetic characters on $Z_P$ whose finite-order part is $\w_\tau \psi$ and algebraic part is $(\kappa_p + \theta_p)|_{Z_P}$ for some critical pair $(\kappa + \theta, \chi)$. Here, $\rho_{\kappa, \theta}^v$ is the ``shift'' associated to $\kappa+\theta$ and $\chi$ as in \eqref{eq:def rho sigma and rho sigma v shifts}. 
    
    Both sides of \eqref{eq:functional equation of d Eis kappa tau} are independent of the choice of decompositions $\kappa' = \kappa + \theta \in [\kappa]$ and $\w_{\tau'} = \w_\tau \psi$ for the central character of some $\tau' \in [\tau]$.
\end{proposition}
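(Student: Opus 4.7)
The plan is to construct $d\Eis^{[\kappa,\tau]}$ via Katz's criterion for measures valued in a $p$-adic Banach module, as in \cite[Proposition (4.1.2)]{Kat78}. Concretely, since $\VV_3(K_3^p; R)$ is a $p$-adically complete and separated $R$-module with the $q$-expansion principle, it suffices to exhibit a continuous, $\VV_3(K_3^p;R)$-valued function on a dense set of ``arithmetic'' pairs $(\wt{\chi}_0, \w_\tau\psi \cdot \rho^v_{\kappa,\theta}) \in \hom_{\cont}(X_p \times Z_P, \OO_{\CC_p}^\times)$ whose values satisfy the Kummer congruences. First I would take as the candidate value on such a pair the form $G^{\tau,\kappa}_{\chi,\psi,\theta}$, which by construction lies in $\VV_3(K_3^p; R)$ (after possibly enlarging $R$, harmless in the limit); independence of the decompositions $\kappa' = \kappa + \theta$ and $\w_{\tau'} = \w_\tau\psi$ follows from the fact that only the total characters $\wt{\chi}_0$ on $X_p$ and $\w_\tau\psi \cdot (\kappa + \theta)_p$ on $Z_P$ appear in the $q$-expansion formula, not the chosen splittings.

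Next, I would verify the Kummer congruences. By the $q$-expansion principle for $p$-adic modular forms on $G_3$ (following the embedding $\iota_{1,2}$ from \eqref{eq:incl Ig 3 into Ig 1, 2} composed with $q$-expansions on $G_1$ and $G_2$, or directly on $G_3$ through its own cusps), it is enough to check the congruences coefficient-by-coefficient. From \eqref{eq:Fourier coeff in terms of wt chi and mu}, the $\beta$-th coefficient of $G^\tau_{\chi,\psi}$ is a finite $\ZZ$-linear combination of terms of the form
\[
    \wt{\chi}_{0,p}(\varpi)\,\wt{\chi}_{0,p}(\beta^{-1})\,\mu_p(-\beta;\tau,\psi)\,\nm(\det\beta)^{-n},
\]
where each factor depends $p$-adically continuously on $\wt{\chi}_0$ (by definition of the $p$-adic shift) and on $\psi$ (via the matrix coefficients $\mu_w(\bullet; \tau_w, \psi_w)$, which are locally constant on $L_P(\ZZ_p)$ and hence on $Z_P$, and satisfy $\mu_p(\bullet;\tau,\psi) \equiv \mu_p(\bullet;\tau,\psi')$ mod $p^r$ when $\psi \equiv \psi'$ mod $p^r$). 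Combining with the remark following the definition of $\phi^\kappa_\theta$ — that $\phi^\kappa_\theta \equiv \phi^\kappa_{\theta'}$ mod $p^{r+1}$ whenever $\theta \equiv \theta'$ mod $p^r(p-1)$ — one concludes that the $\beta$-th coefficient of $G^{\tau,\kappa}_{\chi,\psi,\theta}$ varies $p$-adically continuously in $(\wt{\chi}_0, \w_\tau\psi \cdot \rho^v_{\kappa,\theta})$, and any $\ZZ$-linear relation among such characters which holds modulo $p^N$ implies the corresponding relation among the associated coefficients modulo $p^N$.

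Once the Kummer congruences are in hand, Katz's criterion produces the desired measure $d\Eis^{[\kappa,\tau]}$ with interpolation property \eqref{eq:functional equation of d Eis kappa tau} on the Zariski-dense set of arithmetic characters coming from critical pairs $(\kappa+\theta, \chi)$ with sufficiently large $k$, which is enough by continuity. The dependence on $\kappa$ and $\tau$ only through the $P$-parallel lattice $[\kappa]$ and the equivalence class $[\tau]$ is visible from the interpolation property itself, since a change of base point within $[\kappa]$ (resp.\ $[\tau]$) is absorbed into the varying shift $\theta$ (resp.\ finite-order twist $\psi$).

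The main obstacle I anticipate is not the congruences themselves, which reduce to the continuity statements above together with the already-recorded behavior of $\phi^\kappa_\theta$, but rather the careful bookkeeping needed to ensure that the $p$-adic differential operator $\theta(\kappa, \theta, \chi) = \res_3 \circ \theta_\chi^d(\rho^v)$ is genuinely well-defined on $\VV(G_4, K_4^p; R)$ (as opposed to on the smaller space $V^N$ of \cite{EFMV18}) and commutes with the limiting process defining the measure. This is where Conjecture \ref{conj:density conj scalar padic forms} enters essentially, through Proposition \ref{prop:existence of padic differential operators}: density allows us to extend $\theta_\chi^d(\rho^v)$ from classical $q$-expansions (where its action via $\phi^\kappa_\theta$ is well-defined on a coefficient-by-coefficient basis) to the full space of $p$-adic modular forms, and this extension is automatically continuous in the $p$-adic topology, making the construction of the measure compatible with the construction of $G^{\tau,\kappa}_{\chi,\psi,\theta}$ as the result of applying a $p$-adic differential operator to $G^\tau_{\chi,\psi}$.
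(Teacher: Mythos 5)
Your proposal is correct and follows essentially the same route as the paper: the measure is produced by verifying the Kummer congruences for the $q$-expansion coefficients of $G^{\tau,\kappa}_{\chi,\psi,\theta}$ — using the coefficient formula \eqref{eq:Fourier coeff in terms of wt chi and mu}, the congruence $\mu_p(\bullet;\tau,\psi)\equiv\mu_p(\bullet;\tau,\psi')\bmod p^r$, and the congruence for the polynomials $\phi^\kappa_\theta$ — and then invoking \cite[Proposition (4.1.2)]{Kat78}, exactly as in the paper (which in fact leaves these verifications to the reader with a citation to the analogous constructions in \cite{Kat78, Eis15, EFMV18}).
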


\begin{remark}
    When $P=B$ as in Remark \ref{rmk:trivial partition}, the above agrees with the measure in \cite[Theorem 8.2.2]{EHLS}.
\end{remark}

\subsubsection{Comparison to classical Eisenstein series.} \label{subsubsec:comp to classical Eis series}

We first compare $\theta(\kappa, \chi)$ to the $C^\infty$-differential operators from Section \ref{subsubsec:Coo diff operators}.

\begin{proposition} \label{prop:padic and Coo diff op on Pord cusp forms}
    Assume Conjecture \ref{conj:classicality scalar padic forms}. With the same setting as in Proposition \ref{prop:def theta and theta hol kappa chi}, let $\theta(\kappa, \chi)^\cusp$ denote the restriction of $\theta(\kappa, \chi)$ to $\VV_\chi^\cusp(G_4, K_4; R)$. Then, 
    \[
        e_\kappa^{\Pord} \circ \theta(\kappa, \chi)^\cusp 
            = 
        e_\kappa^{\Pord} \circ \delta_\chi^d(\rho^v)
    \]
    as operators
    \[
        \VV^\cusp_\chi(G_4, K_4; R)
            \to
        S_{\kappa}(G_1, K_1; R)
            \otimes
        S_{\kappa^\flat}(G_2, K_2; R)
            \otimes
        (\chi \circ \det)\,.
    \]

    Furthermore, for any cuspidal $F \in H^0_!(\Sh(V_4), \LL(\chi))$, we have
    \[
        e_\kappa^{\Pord} 
            \circ 
        \theta(\kappa, \chi)(F)
            =
        e_\kappa^{\Pord} 
            \circ 
        D^\hol(\kappa, \chi)(F)
    \]
\end{proposition}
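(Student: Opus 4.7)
The plan is to follow the strategy of \cite[Proposition 8.1.3]{EHLS} adapted to the $P$-ordinary setting. The three key inputs are: the comparison \eqref{eq:comparison at ordinary CM points} at ordinary CM points between the $p$-adic differential operator $\theta_\chi^d(\rho^v)$ and the $C^\infty$ operator $\delta_\chi^d(\rho^v)$ provided by Proposition \ref{prop:existence of padic differential operators}; the injectivity of evaluation at ordinary CM points for $P$-ordinary $p$-adic cusp forms, namely part (ii) of the Proposition stated at the end of Section \ref{subsubsec:evaluation at ordinary points}; and the density/classicality Conjectures \ref{conj:density conj scalar padic forms} and \ref{conj:classicality scalar padic forms}, which bridge the classical and $p$-adic worlds.

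For the first equality, I would first invoke Conjecture \ref{conj:density conj scalar padic forms} to reduce to the case $f = \Omega_{\chi, r}(F)$ for a classical cusp form $F \in S_\chi(G_4, K_{4,r}; R)$; this reduction is legitimate because, by Conjecture \ref{conj:classicality scalar padic forms}, both $e_\kappa^\Pord \circ \theta(\kappa,\chi)^\cusp$ and $e_\kappa^\Pord \circ \res_3 \circ \delta_\chi^d(\rho^v)$ land in the finite-dimensional space of classical $P$-ordinary cusp forms on $G_3$, on which continuity is automatic. For such a classical $F$, equation \eqref{eq:comparison at ordinary CM points} asserts that $\theta_\chi^d(\rho^v)(\Omega_{\chi, r}(F))$ and the $p$-adification of $\delta_\chi^d(\rho^v)(F)$ agree at every ordinary CM pair $(J_0', h_0)$. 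Since the CM group $J_0'$ is a torus, the Hecke operators $u_{w, D_w(j), \kappa}$ act by characters on forms on $(J_0', h_0)$, and the $P$-ordinary projector $e_\kappa^\Pord$ reduces there to an idempotent on the character-decomposition that commutes with $\res_{p, J_0', h_0}$. Hence the equality of restrictions at all ordinary CM pairs is preserved under $e_\kappa^\Pord$, and part (ii) of the Proposition in Section \ref{subsubsec:evaluation at ordinary points} forces the difference of the two $P$-ordinary $p$-adic cusp forms to vanish.

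For the second equality, I would substitute the decomposition \eqref{eq:decomposition D kappa chi with D hol kappa dagger chi},
\[
    D(\kappa, \chi)
        =
    \textstyle\prod_{\sigma} P_{\kappa_\sigma, \chi_\sigma} \cdot D^\hol(\kappa, \chi)
        +
    \sum_{\kappa^\dagger < \kappa}
        \delta(\kappa, \kappa^\dagger) \circ D^\hol(\kappa^\dagger, \chi),
\]
and combine with the first equality, reading $\res_3 \circ \delta_\chi^d(\rho^v)$ applied to $F$ as $D(\kappa,\chi)(F)$ via the pullback identification from Section \ref{subsubsec:Coo diff operators} (with the leading scalar $\prod_\sigma P_{\kappa_\sigma, \chi_\sigma}$ absorbed into the normalization of $D^\hol(\kappa,\chi)$ implicit in the statement). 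The identity then reduces to showing that $e_\kappa^\Pord \circ \delta(\kappa, \kappa^\dagger) \circ D^\hol(\kappa^\dagger, \chi)(F) = 0$ for each $\kappa^\dagger < \kappa$. This should follow from the interaction of the Maass--Shimura raising operator $\delta(\kappa, \kappa^\dagger)$ with the $p$-adically normalized operators $u_{w, D_w(j), \kappa}$ of \eqref{definition u w j}: on the image of $\delta(\kappa, \kappa^\dagger)$, the eigenvalues for $u_{w, D_w(j), \kappa}$ are shifted from the weight-$\kappa^\dagger$ eigenvalues by the weight-ratio factor $\kappa'(t^+_{w, D_w(j)})/(\kappa^\dagger)'(t^+_{w, D_w(j)})$, which has strictly positive $p$-adic valuation because $\kappa^\dagger < \kappa$ strictly in at least one coordinate; consequently the $u_{P,p}$-eigenvalue on the image lies in $p\OO_\pi$, and the projector $e_{P, \kappa} = \varinjlim u_{P,p}^{n!}$ annihilates it.

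The main obstacle is precisely this vanishing of the raising-operator contributions in the second part. In the $B$-ordinary case (Remark \ref{rmk:trivial partition}), it is classical and due to Hida, but for a general parabolic $P$ one must carefully verify the commutation relations between $\delta(\kappa, \kappa^\dagger)$ and each Hecke operator $u_{w, D_w(j), \kappa}$, tracking how the normalization $\kappa'(t^+_{w, D_w(j)})$ from \eqref{definition u w j} interacts with the weight shift in $\delta(\kappa, \kappa^\dagger)$. A parallel analysis to Section \ref{subsubsec:explicit coset representatives} should give this. A secondary subtlety is the continuity of $e_\kappa^\Pord \circ \res_3 \circ \delta_\chi^d(\rho^v)$ on $p$-adically completed spaces used in the density reduction of the first part; this hinges on landing in the finite-dimensional $P$-ordinary classical subspace via Conjecture \ref{conj:classicality scalar padic forms}, which we have assumed throughout.
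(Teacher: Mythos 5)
Your proposal is correct and follows essentially the same route as the paper: the first equality is the $P$-ordinary analogue of \cite[Theorem 8.1.1 (c)]{EHLS} (comparison at ordinary CM points plus injectivity of CM evaluation and the density/classicality conjectures), and the second follows from the decomposition \eqref{eq:decomposition D kappa chi with D hol kappa dagger chi} together with the observation that for $\kappa^\dagger < \kappa$ the normalization ratio $\kappa'\cdot(\kappa^{\dagger,\prime})^{-1}$ evaluated on the $t_{w,D_w(j)}$ is a nontrivial power of $p$, so $e_\kappa^{\Pord}$ annihilates the lower-weight contributions. The "main obstacle" you flag is exactly the short computation the paper carries out, so nothing is missing.
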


\begin{proof}
    The first part is exactly \cite[Theorem 8.1.1 (c)]{EHLS} with the obvious modifications to our setting. For the second part, we follow the same logic as in the proof of \cite[Proposition 8.1.3]{EHLS}.

    All one needs is the decompositions from \eqref{eq:decomposition D kappa chi with D hol kappa dagger chi} and Proposition \ref{prop:def theta and theta hol kappa chi} (iii), the first part of the above and the fact that for any $\kappa^\dagger < \kappa$,
    \[
        e_\kappa^{\Pord}
            :=
        \varinjlim_N
            \left(
                \prod_{w \in \Sigma_p}
                \prod_{j=1}^{r_w}
                    u_{w, D_w(j), \kappa}
            \right)^{N!}
    \]
    converges absolutely to $0$ on $S_{\kappa^\dagger}(K_r; R)$. This last claims follows from the fact that for $\kappa^\dagger < \kappa$, we have
    \[
        u_{w, D_w(j), \kappa}
            =
        \kappa'(t_{w,D_w(j)})
        U_{w, D_w(j)}
            =
        (
            \kappa' 
                \cdot 
            (\kappa^{\dagger, \prime})^{-1}
        )(t_{w,D_w(j)})
        u_{w, D_w(j), \kappa^\dagger}
    \]
    and, using the definition of $\kappa'$ as in \eqref{definition u w j}, that
    \[
        \prod_{w \in \Sigma_p}
        \prod_{j=1}^{r_w}
            (
            \kappa' 
                \cdot 
            (\kappa^{\dagger, \prime})^{-1}
        )(t_{w,D_w(j)})
            =
        \prod_{w \in \Sigma_p}
        \prod_{j=1}^{r_w}
            (
            \kappa_p
                \cdot 
            \kappa^{\dagger, -1}_p
        )(t_{w,D_w(j)})
            =
        p^m
    \]
    for some strictly negative integer $m$. This is clear from the definition of each $t_{w, D_w(j)}$ and the relation \eqref{eq:relation kappa kappa p}.
\end{proof}

We now wish to apply the previous proposition to an ordinary cusp form closely related to $G_{\chi, \psi, \theta}^{\tau, \kappa}$, using the notation of Proposition \ref{prop:existence of Eis measure on V3}.

Let $\pi$ be a $P$-anti-ordinary anti-holomorphic cuspidal automorphic form of $P$-anti-WLT $(\kappa, K_r, \tau)$. Let $\m = \m_\pi$ be the non-Eisenstein maximal ideal of $\bTT^{\Pord}_{K^p, [\kappa, \tau], \OO_\pi}$ associated to $\pi$ as in Remark \ref{rmk:lambda pi flat and m pi flat}.

Assuming Conjecture \ref{conj:classicality vector padic forms}, it follows from Proposition \ref{prop:padic and Coo diff op on Pord cusp forms} and \eqref{eq:def G tau chi psi} that for $\kappa$ very regular, after localization at $\m$, both
\begin{equation} \label{eq:comp Pord padic and classical Eis series}
    e_\kappa^\Pord G^{\tau, \kappa}_{\chi, \psi}
        \;\;\;\text{and}\;\;\;
    D(n, \KK, \bb, p, k)^{-1}
    e_\kappa^\Pord D^\hol(\kappa, \chi) E^{\tau}_{\chi, \psi}\,
\end{equation}
lie in 
\[
    S_{\kappa, V}(K_{1,r}, \OO_{\CC_p})_\m \otimes S_{\kappa^\flat, -V}(K_{2,r}, \OO_{\CC_p})_\m \otimes (\chi \circ \det)\,,
\]
and are equal.

In particular, using \eqref{eq:def pairing kappa Kr} on $G_3$, we can view
\[
    \brkt{
        e_\kappa^\Pord G^{\tau, \kappa}_{\chi, \psi}
    }{
        \bullet
    }_{\wt{\kappa}, K_{3,r}}
\]
as an element in the $\OO_\pi$-dual of
\[
    \wh{S}_{\kappa, V}(K_{1,r}, \OO_\pi)_\m
        \otimes
    \wh{S}_{\kappa^\flat, -V}(K_{2,r}, \OO_\pi)_\m 
        \otimes 
    (\chi^{-1} \circ \det)
\]
and the above is closely related to the integral involved in the doubling method.

Observe that together with the tautological pairing $\MMM_\tau \otimes \MMM_{\tau^\flat} = \MMM_\tau \otimes \MMM_\tau^\vee \to \OO_\pi$, the above can in fact be identified as an element in the dual of
\begin{equation} \label{eq:Pord Eis measure in dual of Paord aholo modular form}
    \wh{S}_{\kappa, V}(K_{1,r}, [\tau]; \OO_\pi)_\m
        \otimes
    \wh{S}_{\kappa^\flat, -V}(K_{2,r}, [\tau^\flat]; \OO_\pi)_\m 
        \otimes 
    (\chi^{-1} \circ \det)\,.
\end{equation} 

Now, fix any $F \in I_\pi$ and $F^\flat \in I_{\pi^\flat}$, using the notation from Section \ref{subsubsec:I pi and test vectors}. Fix non-zero elements $\iota \in \hom_{L_P}(\tau, \pi_p^{(\Paord, r)})$ and $\iota^\flat \in \hom_{L_P}(\tau^\flat, \pi_p^{\flat, (\Paord, r)})$, i.e. a basis for each of these two 1-dimensional spaces. 

Fix a basis $\mathcal{B}_\tau = \{v_1, \ldots, v_r\}$ of $\tau$ (where $r = \dim \tau$) and a dual basis $\mathcal{B}_\tau^\flat = \{v_1^\flat, \ldots, v_r^\flat\}$ of $\tau^\flat$. For each $1 \leq i \leq r$, let $\varphi_i$ (resp. $\varphi_i^\flat$) be the (anti-holomorphic $P$-anti-ordinary) test vector of $\pi$ (resp. $\pi^\flat$) determined by $F$, $\iota$ (resp. $\iota^\flat$) and $v_i$ (resp. $v_i^\flat$), as in \eqref{eq:test vector for pi determined by I pi and iota} (resp. \eqref{eq:test vector for pi flat determined by I pi flat and iota flat}). As explained at the end of Section \ref{subsubsec:I pi and test vectors}, we have
\[
    \brkt{\varphi}{\varphi^\flat}_\pi
        =
    \brkt{\varphi_i}{\varphi^\flat_i}_\pi
\]
for all $1 \leq i \leq \dim \tau$, where $\varphi := \varphi_1$ and $\varphi^\flat := \varphi_1^\flat$.

It follows from \eqref{eq:Serre pairing aut forms}, using the identifications \eqref{eq:lattice wh I pi identification} and \eqref{eq:lattice I pi identification}, that pairing the element in the dual of \eqref{eq:Pord Eis measure in dual of Paord aholo modular form} corresponding to $e_\kappa^{\Pord} G_{\chi, \psi}^{\tau, \kappa}$ with $F \otimes F^\flat$ is equal to
\begin{align*}
    \frac{1}{\vol(I_{r, V}^0)\vol(I_{r, -V}^0)}
    \sum_{i=1}^{\dim \tau}
    \int_{[G_3]}
        &D(n, \KK, \bb, p, k)^{-1} 
        E_{\chi, \psi}^{\tau, \kappa, \hol}\left(g_1, g_2; s+\frac{1}{2}\right)
    \\ &\times\,
        \varphi_i(g_1)\varphi_i^\flat(g_2)
        \chi^{-1}(\det(g_2))
        ||\nu(g_2)||^{a(\kappa)}
    dg_1dg_2\,,
\end{align*}
where $s = \frac{k-n}{2}$ and $[G_3] = G_3(\QQ)Z_{G_3}(\RR) \backslash G_3(\AA)$. By definition, this is equal to
\[
    \frac{1}{\vol(I_{r, V}^0) \vol(I_{r, -V}^0) D(n, \KK, \bb, p, k)}
    \sum_{i=1}^{\dim \tau}
        I\left(
            \varphi_i,
            \varphi_i^\flat ||\nu(g_2)||^{a(\kappa)},
            f_{\chi, \psi}^{\tau, \kappa, \hol},
            s+\frac{1}{2}
        \right)\,,
\]
for $s = \frac{k-n}{2}$. Lastly, by Theorem \ref{thm:main formula global zeta integral}, it is equal to
\begin{equation} \label{eq:pairing eis measure with global test vectors}
    \frac{\brkt{\varphi}{\varphi^\flat}_\pi}{\vol(I_{r, V}^0 \cap I_{r, -V}^0)}
    I_p\left(s+\frac{1}{2}, \Pord, \pi, \chi\right)
    I_\infty\left(s+\frac{1}{2}; \pi, \chi\right)
    I_S
    L^S\left(s+\frac{1}{2}; \pi, \chi\right)\,,
\end{equation}
at $s = \frac{k-n}{2}$, where $I_p$, $I_\infty$ and $I_S$ are as in \eqref{eq:def I p}, \eqref{eq:def I infty} and \eqref{eq:def I S} respectively.


\part{$p$-adic $L$-functions for $P$-ordinary families.}\label{part:padic Lfct on Pord families}

\section{Pairing, periods and main result.} \label{sec:pairing}

\subsection{Eisenstein measures and $p$-adic $L$-functions} \label{subsec:Eis measures and padic L fct}
In this section, we adapt the material of \cite[Section 7.4]{EHLS} to the $P$-ordinary setting. The goal is to obtain an analogue of \cite[Proposition 7.4.10]{EHLS} in the $P$-ordinary setting and interpret the Eisenstein measure $d\Eis^{[\kappa, \tau]}$ of Proposition \ref{prop:existence of Eis measure on V3} as an element of the Hecke algebra $\TT$ from Section \ref{sec:P-(anti-)ord Hida families}.

The idea is to view $d\Eis^{[\kappa, \tau]}$  as a collection of linear transformations on locally constant functions compatible with the projective limit structure of $\TT$ over Hecke algebras of finite level.

\subsubsection{Equivariance and the Garrett map} \label{subsubsec:equivariance and the Garrett map}
Throughout this section, we fix a neat open compact subgroup $K_1^p \subset G(\AA_f^p)$ and set $K_{1,r} := K_1^p I_{P,r}$ for all $r \gg 0$. We let $K_{2,r} := K_{1,r}^\flat$ and set $K_{3,r} := (K_{1,r} \times K_{2,r}) \cap G_3(\AA_f)$. We often write $K_r$ for $K_{3,r}$. Furthermore, we set
\[
    \VV_3 := \VV^{\Pord, \cusp}(G_3, K^p; \OO)\,.
\]

Consider the center $Z = Z_P$ of $L_P(\ZZ_P)$ and for each $r \geq 1$, let $Z^r = 1 + p^r Z$. In particular, $Z_{P,r} = Z / Z^r$ as in Section \ref{subsubsec:(Anti-)holomorphic Hecke alg}.

Let $\Lambda = \Lambda_\pi = \OO_\pi[[Z_P]]$ be the Iwasawa algebra in Section \ref{subsubsec:indep of weights} for $R = \OO_\pi$. Since the ring $\OO = \OO_\KK$ does not appear in this section, we set $\OO = \OO_\pi$ in what follows. 

As usual, one identifies $\Lambda$ as the algebra of distributions on $Z$ with $\OO$-coefficients, equipped with a canonical perfect pairing $\Lambda \otimes C(Z,\OO) \to \OO$, where $C(Z, \OO)$ denotes the module of continuous $\OO$-valued functions on $\TT$.

Let $\II_r \subset \Lambda$ be the augmentation ideal associated to $Z^r$, and set $\Lambda_r = \Lambda / \II_r$. Furthermore, define
\[
    C_r(Z, \OO)
        =
    C(Z/Z^r, \OO)
        :=
    \{
        \text{continuous $Z^r$-invariant functions on $Z$}
    \}\,,
\]
a free $\OO$-module of locally constant functions on $Z$. Let $\eta_r : C_r(Z, \OO) \hookrightarrow C_{r+1}(Z, \OO)$ be the natural inclusion.

The restriction of the perfect pairing above to $\Lambda \otimes C_r(Z, \OO) \to \OO$ factors through a perfect pairing
\[
    \Lambda_r \otimes C_r(Z, \OO) \to \OO\,,
\]
identifying $\Lambda_r$ with the algebra of distributions $\hom_\OO(C_r(Z, \OO), \OO)$.

Now, fix some critical pair $(\kappa, \chi)$. Set
\begin{equation} \label{eq:def phi phi chi wrt dEis}
    \phi 
        = 
    \phi_\chi
        :=
    e_P \circ \int_{Z_P}
        (\wt{\chi}_0, \bullet)
    d\Eis^{[\kappa, \tau]}
\end{equation}
as linear functional on $C(Z, \OO)$ valued in $\VV_3^{\Pord}$. 

Let $\rho = (\rho_\sigma)_\sigma$ and $\rho^v = (\rho^v_\sigma)_\sigma$ be as in \eqref{eq:def rho sigma and rho sigma v shifts}. We identify $\rho$ and $\rho^v$ with $p$-adic weights of $T_{H_1}(\ZZ_p)$, as in Section \ref{subsubsec:padic weights}. In fact, in this section, we are mostly concerned with the restriction of $\rho$ and $\rho^v$ to $Z$, which we still denote $\rho$ and $\rho^v$ respectively by abuse of notation.

For any $r \geq 0$, consider the subset $C_r(Z, \OO) \cdot \rho^v \subset C(Z, \OO)$. By \cite[Lemma 7.4.2]{EHLS}, the measure $\phi = \phi_\chi$ on $Z$ is equivalent to a collection $\phi_{\chi, \rho} = \phi_\rho = (\phi_{\rho, r})_{r \geq 0}$, such that
\[
    \phi_{\rho, r}
        \in 
    \hom_\Lambda(C_r(Z, \OO) \cdot \rho^v, \VV_3^{\Pord})
        \;\;\;\text{and}\;\;\;
    \eta_r^*(\phi_{\rho, r+1}) 
        = 
    \phi_{\rho, r}\,,
\]
where the equivalence is given by $\phi(\psi) = \phi_{r, \rho}(\psi \cdot \rho^v)$ for all $\psi \in C_r(Z, \OO)$. For $\chi$ fixed, $\rho$ and $\kappa$ determine one another, hence we sometimes write $\phi_{\chi, \rho}$ by $\phi_{\chi, \kappa}$.

Let $\II_{\rho, r} \subset \Lambda$ be the annihilator of $C_r(Z, \OO) \cdot \rho^v$ with respect to the pairing $\Lambda \otimes C(Z, \OO) \to \OO$, and let $\Lambda_{\rho, r} = \Lambda / \II_{\rho, r}$. By definition, $\Lambda_{\rho, r}$ is identified with $\hom_\OO(C_r(Z, \OO) \cdot \rho^v, \OO)$.

As explained at the end of Section \ref{subsubsec:comp to classical Eis series}, we see that for all $\kappa$ very regular and $\psi \in C_r(Z, \OO)$, we have
\[
    \phi_{\rho, r}(\psi)
        \in
    \hom_\OO(
        \wh{S}^{\Pord}_{\kappa, V}(
            K_{1,r}, [\tau]; \OO
        )_\m,
        S^{\Pord}_{\kappa^\flat, -V}(
            K_{2,r}, [\tau^\flat]; \OO
        )_\m
            \otimes
        (\chi \circ \det)
    )\,,
\]
where $\m = \m_\pi$ is as in Remark \ref{rmk:lambda pi flat and m pi flat}. 

In fact, it follows from the work of \cite{Gar84, GPSR87} on the Garrett map, see \cite[Theorem 9.1.3--Corollary 9.1.4]{EHLS}, that the measure $\phi_{\rho, r}$ satisfies a stronger equivariance property with respect to the appropriate Hecke algebra, namely
\[
    \phi_{\rho, r}(\psi)
        \in
    \hom_{\TT_{K_r, \kappa, [\tau], \OO}}(
        \wh{S}^{\Pord}_{\kappa, V}(
            K_{1,r}, [\tau]; \OO
        )_\m,
        S^{\Pord}_{\kappa^\flat, -V}(
            K_{2,r}, [\tau^\flat]; \OO
        )_\m
            \otimes
        (\chi \circ \det)
    )\,,
\]
where $\TT_{K_r, \kappa, [\tau], \OO}$ is as in Proposition \ref{prop:VCT for big Hecke alg}, for all $\kappa$ very regular and $\psi \in C_r(Z, \OO)$.

To lighten notation, we omit $\kappa$ and $[\tau]$ from our notation momentarily (as they do not vary in this section), and write
\[
    \wh{S}^{\Pord}_{r, V, \pi}
        :=
    \wh{S}^{\Pord}_{\kappa, V}(
        K_{1,r}, [\tau]; \OO
    )_\m
        \;\;\;\text{and}\;\;\;
    S^{\Pord}_{r, -V, \pi^\flat}
        :=
    S^{\Pord}_{\kappa^\flat, -V}(
        K_{2,r}, [\tau^\flat]; \OO
    )_\m\,,
\]
both modules over $\TT_r := \TT_{K_r, \kappa, [\tau], \OO}$ using Lemma \ref{lma:comp Pord and Paord hecke alg}.

By definition of the finite free $\OO$-modules $\wh{I}_\pi = I_{\pi^\flat}$, we have isomorphisms
\[
    \TT_r
        \otimes
    \wh{I}_\pi
        \xrightarrow{\sim}
    \wh{S}^{\Pord}_{r, V, \pi}
        \;\;\;\text{and}\;\;\;
    \wh{\TT}_r
        \otimes
    I_{\pi^\flat}
        \xrightarrow{\sim}
    S^{\Pord}_{r, -V, \pi^\flat}\,,
\]
see \eqref{eq:basis wh I pi iso} and (the $\OO$-dual) of \eqref{eq:basis wh I pi flat iso}, where $\wh{\TT}_r$ denotes the $\OO$-dual of $\TT_r$. Therefore, tensoring with $(\chi^{-1} \circ \det)$, we obtain
\begin{equation} \label{eq:iso wh S and S flat with TT}
    \hom_{\TT_r}(
        \wh{S}^{\Pord}_{\kappa, V, \pi},
        S^{\Pord}_{\kappa^\flat, -V, \pi^\flat}
            \otimes
        (\chi \circ \det)
    )
        \xlongrightarrow{\sim}
    \hom_{\TT_r}(
        \TT_r \otimes \wh{I}_\pi,
        \wh{\TT}_r \otimes I_{\pi^\flat}
    )\,,
\end{equation}
and setting $C_r = C_r(Z, \OO)$, we can then identify $\phi_{\rho, r}$ as an element of
\begin{align*}
    \hom_{\TT_r}(
        C_r \otimes_\Lambda \wh{S}^{\Pord}_{\kappa, V, \pi},
        S^{\Pord}_{\kappa^\flat, -V, \pi^\flat}
    )
        \xlongrightarrow{\sim}
    &\hom_{\TT_r}(
        C_r \otimes_\Lambda \TT_r \otimes_\OO \wh{I}_\pi,
        \wh{\TT}_r \otimes_\OO I_{\pi^\flat}
    ) \\
        =\,\,
    &\hom_{\TT_r}(
        C_r \otimes_\Lambda \TT_r,
        \wh{\TT}_r
    )
        \otimes_\OO
    \ehom_\OO(I_{\pi^\flat}) \\
        \xlongrightarrow{\sim}
    &\wh{\TT}_r
        \otimes
    \ehom_\OO(I_{\pi^\flat}) \,,
\end{align*}
and the next step is to study the compatibility on both sides as $r \gg 0$ varies.

To understand the left-hand side of the above ar $r$ varites, consider the inclusions $\eta_r : C_r \hookrightarrow C_{r+1}$ and $\iota_r : S_{r, V}^{\Pord} \hookrightarrow S_{r+1, V}^{\Pord}$, as well as the dual maps $\iota_r^*$ and $\eta_r^*$ respectively, for all $r \gg 0$. Then, as explained in \cite[Fact 7.4.7]{EHLS}, we have
\begin{equation} \label{eq:compatibility phi rho r+1 and rho r}
    (\eta_r^* \otimes \mathrm{id}_{r+1})(\phi_{\rho, r+1})
        =
    \iota_r \circ \phi_{\rho, r} \circ (\mathrm{id}_{C_r} \otimes \iota_r^*)\,.
\end{equation}

Furthermore, it follows from our work in Sections \ref{subsubsec:unnorm and norm Serre duality}--\ref{subsubsec:integral struct on aholo mod forms} that the map $\iota_r^* : \wh{S}^{\Pord}_{\kappa, V}(K_{r+1}, \OO_\pi) \to \wh{S}^{\Pord}_{\kappa, V}(K_{r+1}, \OO_\pi)$ is given by the trace map
\[
    t_r(h)
        =
    \frac{\#(I_{P, r}^0/I_{P,r})}{\#(I_{P, r+1}^0/I_{P,r+1})}
    \sum_{\gamma \in K_{P, r} / K_{P, r+1}}
        \gamma \cdot h
    \,,
\]
for all $h \in \wh{S}^{\Pord}_{\kappa, V}(K_{r+1}, \OO_\pi)$. Comparing this with first commutative diagram in Proposition \ref{prop:minimality hypothesis}, we obtain the following result.

\begin{proposition} \label{prop:existence of L(phi chi kappa)}
    Let $(\kappa, \chi)$ be a critical pair such that $\kappa$ is a very regular weight and assume Conjectures \ref{conj:indep of weight of padic Hecke alg} and \ref{conj:VCT}. Let $\rho$ be the weight determined by $(\kappa, \chi)$ in \eqref{eq:def rho sigma and rho sigma v shifts}. With respect to the identification
    \[
        \hom_{\TT_r}(
            C_r \otimes_\Lambda \wh{S}^{\Pord}_{\kappa, V, \pi},
            S^{\Pord}_{\kappa^\flat, -V, \pi^\flat}
        )
            \xrightarrow{\sim}
        \wh{\TT}_r
            \otimes
        \ehom_\OO(I_{\pi^\flat})\,
    \]
    the identity \eqref{eq:compatibility phi rho r+1 and rho r}, the isomorphism $G_r : \wh{\TT}_r \xrightarrow{\sim} \TT_r$ provided by Hypothesis \ref{hyp:Gorenstein hyp}, the collection $\phi_{\chi, \kappa} = \phi_\rho = (\phi_{\rho, r})_r$ defines an element
    \[
        L(\phi_{\chi, \kappa})
            =
        L(\phi_{\rho})
            \in
        \varprojlim_r 
        \TT_r
            \otimes
        \ehom_\OO(I_{\pi^\flat})
            \xrightarrow{\sim}
        \TT
            \otimes
        \ehom_\OO(I_{\pi^\flat})\,.
    \]

    Moreover, if $\kappa'$ is another very regular weight in the same $P$-parallel lattice as $\kappa$, i.e. $[\kappa] = [\kappa']$, then $L(\phi_{\chi, \kappa}) = L(\phi_{\chi, \kappa'})$ as elements of 
    \[
        \TT 
            =
        \TT_{K^p, [\kappa, \tau], \OO}
            \xrightarrow{\sim}
        \varprojlim_r
            \TT_{K_r, \kappa, [\tau], \OO}
            \xrightarrow{\sim}
        \varprojlim_r
            \TT_{K_r, \kappa', [\tau], \OO}\,.
    \]

    Therefore, $(\phi_{\chi, \kappa, r})_r$ and $(\phi_{\chi, \kappa', r})_r$ define the same element $L(\phi_{\chi, [\kappa]}) \in \TT \otimes \ehom_\OO(I_{\pi^\flat})$. Conversely, any $L \in \TT \otimes \ehom_\OO(I_{\pi^\flat})$ is induced as above from a collection $(\phi_{\chi, \kappa, r})_r$ associated to some very regular $\kappa$.
\end{proposition}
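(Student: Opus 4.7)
The plan is to establish the proposition in three stages: first construct $L(\phi_{\chi,\kappa})$ at each finite level $r$, then verify the compatibility as $r$ varies to obtain an element of the projective limit, and finally establish the independence under $P$-parallel shifts together with the converse.

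For stage one, at each $r \geq 1$, I unwind the chain of identifications displayed immediately before the proposition. The Gorenstein Hypothesis \ref{hyp:Gorenstein hyp} together with the minimality result of Proposition \ref{prop:minimality hypothesis} yields isomorphisms $\TT_r \otimes \wh{I}_\pi \xrightarrow{\sim} \wh{S}^{\Pord}_{r,V,\pi}$ and $\wh{\TT}_r \otimes I_{\pi^\flat} \xrightarrow{\sim} S^{\Pord}_{r,-V,\pi^\flat}$. Applying the finite-level Gorenstein isomorphism $G_r : \wh{\TT}_r \xrightarrow{\sim} \TT_r$ (obtained by dualizing $G_\pi$ modulo the appropriate augmentation ideal) produces an element $L_{\rho,r} \in \TT_r \otimes \ehom_\OO(I_{\pi^\flat})$ corresponding to $\phi_{\rho,r}$.

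Stage two is the technical heart: showing that under the natural projection $\TT_{r+1} \twoheadrightarrow \TT_r$, the element $L_{\rho,r+1}$ maps to $L_{\rho,r}$. The key input is the compatibility \eqref{eq:compatibility phi rho r+1 and rho r}, which factors $\phi_{\rho,r+1}$ (restricted to $C_r \cdot \rho^v$) through the trace map $\iota_r^* = t_r$. Under the Gorenstein identifications, I must check that $t_r$ on the $\wh{S}$-side corresponds precisely to the natural quotient map on the Hecke algebra side: this is exactly the content of the second (``change-of-level'') commutative diagram in Proposition \ref{prop:minimality hypothesis}, combined with the fact that the isomorphisms $G_r$ and $G_{r+1}$ are compatible (coming from a single $G_\pi$). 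Once level compatibility is established, Conjecture \ref{conj:VCT}(i) provides $\TT \xrightarrow{\sim} \varprojlim_r \TT_r$ after localization, so the compatible system $(L_{\rho,r})_r$ assembles into $L(\phi_{\chi,\kappa}) \in \TT \otimes \ehom_\OO(I_{\pi^\flat})$.

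For stage three, independence of $L(\phi_{\chi,\kappa})$ along the $P$-parallel lattice $[\kappa]$ follows from Conjecture \ref{conj:indep of weight of padic Hecke alg}, which provides canonical isomorphisms $\varprojlim_r \TT_{K_r,\kappa,[\tau],\OO} \xrightarrow{\sim} \varprojlim_r \TT_{K_r,\kappa',[\tau],\OO}$ for $[\kappa]=[\kappa']$. The Eisenstein measure $d\Eis^{[\kappa,\tau]}$ was defined in Proposition \ref{prop:existence of Eis measure on V3} so that its value on an arithmetic character $(\wt\chi_0, (\w_\tau\psi)\cdot\rho^v_{\kappa,\theta})$ depends only on $[\kappa]$ and $[\tau]$; combined with the fact that the $p$-adic differential operators $\theta(\kappa,\theta,\chi)$ for varying $\theta$ assemble into a measure-theoretic object (via the $q$-expansion formula and the polynomials $\phi^\kappa_\theta$), the collections $(\phi_{\rho,r})_r$ arising from $\kappa$ and $\kappa'$ determine the same projective system. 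The converse assertion follows from non-degeneracy: the identification \eqref{eq:iso wh S and S flat with TT} is an isomorphism at each finite level, so every element of $\TT \otimes \ehom_\OO(I_{\pi^\flat})$ arises from a unique compatible family $(\phi_{\rho,r})_r$, which by the equivalence above \eqref{eq:def phi phi chi wrt dEis} corresponds to a $\VV_3^{\Pord}$-valued distribution on $Z$.

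The main obstacle is stage two, specifically the bookkeeping of normalization factors. The trace map $t_r$ carries the ratio $\#(I^0_{P,r}/I_{P,r})/\#(I^0_{P,r+1}/I_{P,r+1})$, and one must verify that this factor is exactly cancelled by the analogous normalizations implicit in the Gorenstein isomorphisms $G_r$; this is what makes the diagram in Proposition \ref{prop:minimality hypothesis} commute without extraneous scalars, and ensures that $L_{\rho,r+1} \mapsto L_{\rho,r}$ under the unnormalized natural projection $\TT_{r+1} \to \TT_r$.
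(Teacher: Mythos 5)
Your proposal is correct and follows the same route the paper takes: the paper's own justification is a one-line remark that the proposition is the $P$-ordinary analogue of \cite[Proposition 7.4.10]{EHLS} and "is proved the exact same way, namely unfolding definitions using the identifications introduced in this section," which is precisely what your three stages carry out (the Gorenstein/minimality identifications at finite level, the trace-map/change-of-level compatibility via Proposition \ref{prop:minimality hypothesis} and \eqref{eq:compatibility phi rho r+1 and rho r}, and weight-independence via Conjecture \ref{conj:indep of weight of padic Hecke alg}). Your attention to the normalization factor in $t_r$ is well placed; it is absorbed exactly because the normalized Serre pairing is trace-stable by \eqref{eq:norm Serre pairing trace map stable}.
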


\begin{remark}
    This is the $P$-ordinary analogue of \cite[Proposition 7.4.10]{EHLS} and is proved the exact same way, namely unfolding definitions using the identifications introduced in this section.
\end{remark}

Lastly, to consider the variation of $\chi$ as a character of $X_p$, consider the Iwasawa algebra $\Lambda_{X_p} = \ZZ_p[[X_p]]$. Then, it follows from \cite[Proposition 7.4.13]{EHLS} and Proposition \ref{prop:existence of L(phi chi kappa)} that the $\VV_3$-valued measure $d\Eis^{[\kappa, \tau]} = \phi = \phi_\bullet$ on $X_p \times Z_p$ corresponds to an element
\begin{equation} \label{eq:def L d Eis kappa tau}
    L(\Eis^{[\kappa, \tau]}) 
        \in 
    \Lambda_{X_p} \wh{\otimes} \TT \otimes \ehom_{\OO}(I_{\pi^\flat})\,.
\end{equation}

\subsubsection{Evaluation at classical points} \label{subsubsec:evaluation at classical points}
Let $\pi$ be a anti-holomorphic $P$-anti-ordinary automorphic representation $\pi$ for $G_1$ of $P$-anti-WLT $(\kappa, K_r, \tau)$. Let $\lambda_\pi : \TT \to \OO_\pi$ be its associated Hecke character, see Section \ref{subsubsec:lattices in pi flat}, and let $\m_\pi$ be the kernel of $\lambda_\pi$. Consider the set of classical points $\SSS(K^p, \pi)$ defined in \eqref{eq:def SSS Kp pi}. 

Note that for any $\OO$-algebra $R$,
\[
    \ehom_R(I_{\pi^\flat}) 
        =
    \hom(\wh{I}_{\pi}, I_{\pi^\flat})
        \cong
    \hom(\wh{I}_{\pi} \otimes \wh{I}_{\pi^\flat}, R)
\]
so given any test vectors $\varphi \in \wh{I}_\pi$ and $\varphi^\flat \in \wh{I}_{\pi^\flat}$ as in Section \ref{subsubsec:I pi and test vectors}, we can define
\[
    L(\Eis^{[\kappa, \tau]}; \varphi, \varphi^\flat)
        :=
    [L(\Eis^{[\kappa, \tau]}), \varphi \otimes \varphi^\flat]_{\mathrm{loc}}
        \in
    \Lambda_{X_p} \wh{\otimes} \TT\,,
\]
and
\[
    L(\Eis^{[\kappa, \tau]}, \chi, \kappa; \varphi, \varphi^\flat)
        :=
    [L(\phi_{\chi, [\kappa]}), \varphi \otimes \varphi^\flat]_{\mathrm{loc}}
        \in
    \TT\,,
\]
where $[\bullet, \bullet]_{\mathrm{loc}}$ is induced from the tautological pairing in both cases (abusing notation), and we recall that the relation between $\dEis^{[\kappa, \tau]}$ and $\phi_{\chi, [\kappa]}$ is given by \eqref{eq:def phi phi chi wrt dEis} and Proposition \ref{prop:existence of L(phi chi kappa)}.

Given $R$-valued character $\wh{\chi}_0 : X_p \to R$ and any classical $\pi' \in \SSS(K^p, \pi)$ of $P$-anti-WLT $(\kappa', K_{r'}, \tau')$ such that $(\kappa', \chi)$ is critical and $\lambda_{\pi'}$ is $R$-valued, the image of $L(\Eis^{[\kappa, \tau]}; \varphi, \varphi^\flat)$ under the homomorphism $\wh{\chi}_0 \otimes \lambda_{\pi'} : \Lambda_{X_p, R} \otimes \TT_{\pi, R} \to R$ induced by $(\wt{\chi}_0, \lambda_{\pi'})$ is equal to
\[
    \lambda_{\pi'}(
        L(\Eis^{[\kappa, \tau]}, \chi, \kappa; \varphi, \varphi^\flat)
    )
        \in 
    R
\]
and our computations at the end of Section \ref{subsubsec:comp to classical Eis series} show that the latter is equal to the expression in \eqref{eq:pairing eis measure with global test vectors}.


\subsection{Normalized periods and congruence ideals.} \label{subsec:periods and cong ideals}
We are now ready to state our main theorem to summarize the construction of the $p$-adic $L$-function in \eqref{eq:def L d Eis kappa tau}. However, we first adjust the definitions of certain periods studied in \cite[Section 6.7]{EHLS} to generalize the theory to $P$-anti-ordinary representation.

Fix an anti-holomorphic $P$-anti-ordinary automorphic representation $\pi$ on $G = G_1$ with $P$-anti-WLT $(\kappa, K_r, \tau)$. In what follows, we use the notation of Sections \ref{subsec:lattices of Pord holo forms}-\ref{subsec:lattices of aholo Paord forms} freely.

Consider the orthogonal complement
\[
    \wh{S}_{\kappa, V}^{\Paord}(
        K_r, \tau; R
    )[\pi]^\perp
        \subset
    \wh{S}_{\kappa^\flat, -V}^{\Paord}(
        K^\flat_r, \tau^\flat; R
    )_{\pi^\flat}
\]
of $\wh{S}_{\kappa, V}^{\Paord}(K_r, \tau; R)[\pi]$ with respect to $\frac{1}{\vol(I^0_{V, r} \cap I^0_{-V, r})}\brktdotdot_{\kappa, \tau}^\Ser$, see Lemma \ref{lma:jpi flat iso} and Section \ref{subsubsec:the involution flat}

\begin{definition}
    The congruence ideal $C(\pi) \subset R$ associated to $\pi$ is the annihilator of
    \[
        \wh{S}_{\kappa^\flat, -V}^{\Paord}(
            K^\flat_r, \tau^\flat; R
        )_{\pi^\flat}
            /
        \left(
            \wh{S}_{\kappa, V}^{\Paord}(
                K_r, \tau; R
            )[\pi]^\perp
                +
            \wh{S}_{\kappa^\flat, -V}^{\Paord}(
                K^\flat_r, \tau^\flat; R
            )[\pi^\flat]
        \right)
    \]
\end{definition}

\begin{lemma}
    Let $R \subset \CC$ be a ring as in Proposition \ref{prop:jpi iso (II)}, then
    \begin{align*}
        L[\pi] &:=
            \frac{1}{\vol(I^0_{V, r} \cap I^0_{-V, r})}
            \brkt
            {
                \wh{S}_{\kappa, V}^{\Paord}
                (
                    K_r, \tau; R
                )[\pi]
            }
            {
                \wh{S}_{\kappa^\flat, -V}^{\Paord}
                (
                    K_r^\flat, \tau^\flat; R
                )[\pi^\flat]
            }_{\kappa, \tau}^{\Ser}\,, \text{ and }
        \\
        L_\pi  &:=
            \frac{1}{\vol(I^0_{V, r} \cap I^0_{-V, r})}
            \brkt
            {
                \wh{S}_{\kappa, V}^{\Paord}
                (
                    K_r, \tau; R
                )[\pi]
            }
            {
                \wh{S}_{\kappa^\flat, -V}^{\Paord}
                (
                    K_r^\flat, \tau^\flat; R
                )_{\pi^\flat}
            }_{\kappa, \tau}^{\Ser}
    \end{align*}
    are rank one $R$-submodules of $\CC$, generated by positive real numbers $Q[\pi]$ and $Q_\pi$, respectively. Any $c(\pi) \in R$ such that $c(\pi) Q_\pi = Q[\pi]$ generates the congruence ideal $C(\pi)$.
\end{lemma}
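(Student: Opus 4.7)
The plan is to reduce the two statements about rank and positivity to the perfect duality of Lemma~\ref{lma:jpi flat iso}(iii), and then extract the congruence ideal identification from a standard snake-style diagram chase on the quotient $L_\pi/L[\pi]$.

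First, I would use the involution $F^\dagger$ from Section~\ref{subsubsec:the involution dagger} (together with multiplication by $\|\nu(-)\|^{a(\kappa)}$ as in Remark~\ref{rmk:iso cohomology -V kappa flat and -V kappa dagger}) to identify $\wh{S}_{\kappa^\flat, -V}^{\Paord}(K_r^\flat, \tau^\flat; R)$ with $S_{\kappa, V}^{\Pord}(K_r, \tau; R)$ in a Hecke-equivariant manner, carrying the $\lambda_{\pi^\flat}$-decomposition to the $\lambda_\pi$-decomposition via Remark~\ref{rmk:lambda pi flat and m pi flat}. Under this identification and the normalization $\frac{1}{\vol(I^0_{V,r} \cap I^0_{-V,r})}$, the pairing becomes (a unit multiple of) the normalized Serre pairing $\brktdotdot_{\kappa, K_r, \tau}$ between $S_{\kappa, V}^{\Pord}$ and $\wh{S}_{\kappa, V}^{\Paord}$ from Lemma~\ref{lma:jpi flat iso}(iii). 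This is the main bookkeeping step, and where I would need to be most careful with the volume normalizations.

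Next, for the rank-one claim for $L[\pi]$: by Proposition~\ref{prop:jpi iso (II)} and Lemma~\ref{lma:jpi flat iso}(ii), both $S_{\kappa, V}^{\Pord}(K_r, \tau; R)[\pi]$ and $\wh{S}_{\kappa, V}^{\Paord}(K_r, \tau; R)[\pi]$ are free $R$-modules of rank one (each identified with $I_\pi$ or $\wh{I}_\pi$ under multiplicity one, Hypothesis~\ref{hyp:mult one hyp (I)}). The restriction of $\brktdotdot_{\kappa, K_r, \tau}$ to the $\pi$-isotypic components is perfect by Lemma~\ref{lma:jpi flat iso}(iii), so the image $L[\pi] \subset \CC$ is a rank-one $R$-submodule. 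For $L_\pi$: after fixing a generator $\varphi$ of $\wh{S}_{\kappa, V}^{\Paord}(K_r, \tau; R)[\pi]$, the pairing with $\varphi$ gives an $R$-linear map on the finite free module $\wh{S}_{\kappa^\flat, -V}^{\Paord}(K_r^\flat, \tau^\flat; R)_{\pi^\flat}$ (finite and free by Hypothesis~\ref{hyp:Gorenstein hyp}(ii) localized at $\m_\pi$) with image $L_\pi$. Since the inclusion $\wh{S}_{-V}^{\Paord}[\pi^\flat] \hookrightarrow \wh{S}_{-V, \pi^\flat}^{\Paord}$ has $R$-torsion cokernel (both have rank one, as both specialize to a one-dimensional space at the $\lambda_\pi$-point), the quotient $L_\pi/L[\pi]$ is $R$-torsion, so $L_\pi$ has rank one. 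Positivity of $Q[\pi]$ and $Q_\pi$ follows from the fact that, after the identification above, the pairing becomes a positive multiple of the Petersson inner product on the cuspidal representation $\pi \otimes \pi^\flat \cong \pi \otimes \ol{\pi}$ (using Remark~\ref{rmk:pi flat instead of pi bar}), which is well known to yield strictly positive numbers on generators of the $\pi$-isotypic lattice.

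Finally, the congruence ideal identification: let $c(\pi) \in R$ with $c(\pi)Q_\pi = Q[\pi]$, so that $L[\pi] = c(\pi) L_\pi$ as rank-one $R$-submodules of $\CC$. Pairing with the fixed generator $\varphi$ induces an $R$-linear isomorphism
\[
    \wh{S}_{\kappa^\flat,-V}^{\Paord}(K_r^\flat, \tau^\flat; R)_{\pi^\flat}\, \big/\, \wh{S}_{\kappa, V}^{\Paord}(K_r, \tau; R)[\pi]^\perp \;\xrightarrow{\sim}\; L_\pi,
\]
which carries the image of $\wh{S}_{\kappa^\flat,-V}^{\Paord}(K_r^\flat, \tau^\flat; R)[\pi^\flat]$ onto $L[\pi] = c(\pi) L_\pi$. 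Taking the quotient yields
\[
    \wh{S}_{-V, \pi^\flat}^{\Paord} \big/ \left(\wh{S}_{V}^{\Paord}[\pi]^\perp + \wh{S}_{-V}^{\Paord}[\pi^\flat]\right) \;\cong\; L_\pi / L[\pi] \;\cong\; R/c(\pi)R,
\]
whose annihilator is $c(\pi)R$, so $c(\pi)$ generates $C(\pi)$ by definition. The main obstacle, as noted, is not the diagram chase itself but rather verifying that the various identifications (especially $F^\dagger$ combined with the $\|\nu\|^{a(\kappa)}$ twist, and the passage between $\pi^\flat$ and $\overline{\pi}$) respect both the $R$-integral structures and the specific volume normalization used in the definitions of $L[\pi]$ and $L_\pi$.
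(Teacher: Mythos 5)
There is a genuine gap, and it sits exactly where you flagged your own uncertainty. Your second paragraph asserts that $S_{\kappa,V}^{\Pord}(K_r,\tau;R)[\pi]$ and $\wh{S}_{\kappa,V}^{\Paord}(K_r,\tau;R)[\pi]$ are free of rank one over $R$, "each identified with $I_\pi$ or $\wh{I}_\pi$ under multiplicity one." That is false in general: Proposition \ref{prop:jpi iso (II)} and Lemma \ref{lma:jpi flat iso}(ii) identify these spaces with $R$-lattices in $\pi_S^{K_S}$ and $\pi_S^{\flat,K_S}$, whose dimensions are governed by the local representations at the ramified places $S$ and need not be $1$ (this is the whole reason the paper carries the lattices $I_\pi$, $\wh I_\pi$ and the Gorenstein/minimality hypotheses around). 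The rank-one claim of the lemma is not about the modules being paired but about the \emph{image} of the pairing, and the mechanism that forces that image into a single $E$-line is multiplicity one together with the factorization \eqref{eq:facto inner product pi}: restricted to the $\lambda_\pi$-isotypic $E$-rational subspaces, the pairing equals one complex period times an $E$-valued bilinear form, so the image of the $R$-lattices is a finitely generated $R$-submodule of a one-dimensional $E$-subspace of $\CC$, hence free of rank one since $R$ is a discrete valuation ring. Your argument never supplies this step, and your closing isomorphism $\wh{S}_{-V,\pi^\flat}^{\Paord}/\wh{S}_V^{\Paord}[\pi]^\perp \xrightarrow{\sim} L_\pi$ obtained by pairing against a single generator $\varphi$ also collapses once $\wh{S}_V^{\Paord}[\pi]$ has rank greater than one.

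The second, more serious problem is the first reduction. If the composite of $F^\dagger$ (or $F_\infty$) with the $\|\nu\|^{a(\kappa)}$ twist carried the $R$-integral structure of $\wh{S}_{\kappa^\flat,-V}^{\Paord}(K_r^\flat,\tau^\flat;R)$ onto that of $S_{\kappa,V}^{\Pord}(K_r,\tau;R)$, then by Lemma \ref{lma:jpi flat iso}(iii) the pairing in the lemma would be a perfect $R$-\emph{valued} pairing of free $R$-modules, so $L[\pi]=L_\pi=R$, $Q[\pi]$ and $Q_\pi$ would be units, and $C(\pi)=R$ identically --- the lemma would be vacuous and no congruences could ever be detected. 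The point is that the identification of anti-holomorphic forms on $-V$ with holomorphic forms on $V$ goes through $F_\infty$ (equivalently $c_B$), which is built from the Harish-Chandra decomposition and complex conjugation and is only defined over $\CC$; the integral structure on $\wh{S}_{\kappa^\flat,-V}^{\Paord}$ is the Serre-dual of the geometric one on $S_{\kappa^\flat,-V}^{\Pord}$, and it differs from the geometric structure on $S_{\kappa,V}^{\Pord}$ by exactly the transcendental period $Q[\pi]$ the lemma is constructing. So the identification cannot be "a unit multiple of" the integral pairing of Lemma \ref{lma:jpi flat iso}(iii); it is a $\CC^\times$-multiple, and pinning down that multiple as a positive real generator (via the relation of $\brktdotdot_\pi$ to the Petersson norm and $\pi^\flat=\ol{\pi}$) is the actual content. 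Your final quotient computation identifying $L_\pi/L[\pi]$ with the module defining $C(\pi)$ is the right idea, but it has to be run with the full lattices (not single generators), using Hecke-equivariance to kill the pairing against the $\lambda_{\pi'}$-components for $\pi'\neq\pi$ and the perfectness statements of Lemma \ref{lma:jpi flat iso}(iii) on the $E$-rational structures.
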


Obviously, $Q[\pi]$, $Q_\pi$ and $c(\pi)$ are only well-defined up to units in $R$. However, our $p$-adic $L$-function does not depend on those choices. Furthermore, given a Hecke character $\chi$, one has analogues $Q[\pi, \chi]$, $Q_{\pi, \chi}$, $C(\pi, \chi)$ and $c(\pi, \chi)$ upon twisting by $\chi^{-1} \circ \det$ as explained in \cite[Section 6.7.6]{EHLS}.

\begin{proposition}
    Given anti-holomorphic $P$-anti-ordinary test vectors $\varphi \in \wh{I}_\pi$ and $\varphi^\flat \in \wh{I}_{\pi^\flat}$ as in Section \ref{subsubsec:I pi and test vectors}, the period
    \[
        \Omega_{\pi, \chi}(\varphi, \varphi^\flat)
            =
        \frac{
            \dim \tau \cdot
            \brkt{\varphi}{\varphi^\flat_\chi}_\chi}
        {
            \vol(I^0_{r,V} \cap I^0_{r, -V}) \cdot
            Q[\pi, \chi]
        }
    \]
    is independent of $r$ and is $p$-integral. It is a $p$-adic unit for an appropriate choice of $\varphi$ and $\varphi^\flat$.
\end{proposition}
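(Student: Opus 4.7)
The plan is to establish the three claims (independence of $r$, $p$-integrality, and existence of test vectors making $\Omega_{\pi,\chi}$ a $p$-adic unit) in sequence, using the factorization of the global pairing together with the trace-compatibility of both the test vectors and the normalized Serre pairing.

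First I would decompose the numerator. By \eqref{eq:inner product global test vectors},
\[
\brkt{\varphi}{\varphi^\flat_\chi}_\chi = C \cdot \vol(I^0_{P,r}) \cdot \brkt{\varphi_S}{\varphi^\flat_{S,\chi}}_S,
\]
and at the archimedean and unramified finite places the local factors are constants independent of $r$ (by the explicit choice of test vectors in Sections \ref{subsec:Local test vectors at places away from p and oo} and \ref{subsec:Local test vectors at oo}). The dependence on $r$ thus enters only through $\vol(I^0_{P,r})$. On the denominator side, I would use the identification from Proposition \ref{prop:minimality hypothesis}: the lattices $\wh{S}^{\Paord}_{\kappa,V}(K_r,\tau;R)[\pi]$ and $\wh{S}^{\Paord}_{\kappa^\flat,-V}(K^\flat_r,\tau^\flat;R)[\pi^\flat]$ are identified with $\wh{I}_\pi$ and $\wh{I}_{\pi^\flat}$ respectively, compatibly under change of level. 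Combining Theorem \ref{thm:canonical Pwaord vec of type tau w for G1}(iii) with the trace-invariance of the normalized Serre pairing \eqref{eq:norm Serre pairing trace map stable with types}, the normalized family $\{\brktdotdot_{\kappa,\tau}^\Ser / \vol(I^0_{V,r}\cap I^0_{-V,r})\}_r$ descends to a single pairing on $\wh{I}_\pi \otimes \wh{I}_{\pi^\flat}$. Consequently $L[\pi,\chi]/\vol(I^0_{V,r}\cap I^0_{-V,r})$ is independent of $r$ and so is $Q[\pi,\chi]/\vol(I^0_{V,r}\cap I^0_{-V,r})$ up to units, and a careful comparison of $\vol(I^0_{P,r})$ against $\vol(I^0_{V,r}\cap I^0_{-V,r})$ at each $w \in \Sigma_p$ will yield the precise cancellation needed.

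For $p$-integrality, the argument is essentially tautological once the identification above is in place: since $\varphi \in \wh{I}_\pi$ and $\varphi^\flat \in \wh{I}_{\pi^\flat}$, the numerator (without $\dim\tau$) is a value of the normalized Serre pairing on elements of the $\pi$- and $\pi^\flat$-isotypic integral lattices, hence lies in $L[\pi,\chi]$, which is by definition the $R$-module generated by $Q[\pi,\chi]$. The quotient lands in $R = \OO_\pi$, and multiplication by the integer $\dim\tau$ preserves integrality. For the unit claim, I would invoke Lemma \ref{lma:jpi flat iso}(iii), which ensures that the normalized Serre pairing restricts to a perfect $R$-valued duality between $\wh{I}_\pi$ and $\wh{I}_{\pi^\flat}$; thus the images of pure tensors $\varphi \otimes \varphi^\flat$ generate $L[\pi,\chi]$ as an $R$-module, and one can select $\varphi, \varphi^\flat$ realizing a unit multiple of $Q[\pi,\chi]$. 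Under the standing assumption that $p$ is coprime to $\dim\tau$ (which is a mild restriction on the residue characteristic given the control on $\tau$), $\Omega_{\pi,\chi}(\varphi,\varphi^\flat)$ becomes a $p$-adic unit.

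The main obstacle is Step 1: precisely matching the volume factor $\vol(I^0_{P,r})$ coming from the test-vector inner product \eqref{eq:inner prod Pwaord vectors at w} with the normalizing factor $\vol(I^0_{V,r}\cap I^0_{-V,r})$ built into the definition of $Q[\pi,\chi]$, while simultaneously tracking the scaling factors $\#(I^0_r/I_r)/\#(I^0_{r+1}/I_{r+1})$ produced by the trace maps \eqref{eq:def of trace map from level r' to r}. The expectation is that these volumes match in light of the product factorization \eqref{eq:facto PIwahori over Sigma p} over $\Sigma_p$ combined with the compatibility relations \eqref{eq:trace of canonical Pwaord vectors for G1}, but executing this bookkeeping carefully is the delicate point of the argument.
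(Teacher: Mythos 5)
Your treatment of the independence of $r$ matches the paper's: both rest on the trace-compatibility of the normalized Serre pairing \eqref{eq:norm Serre pairing trace map stable with types} and of the chosen test vectors (via Proposition \ref{prop:minimality hypothesis} and \eqref{eq:trace of canonical Pwaord vectors for G1}); the paper disposes of this in one sentence, so your more explicit volume bookkeeping is fine even if not fully executed.

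There is, however, a genuine gap in your integrality/unit argument, and it is precisely the point the paper's proof is making. You assert that $\brkt{\varphi}{\varphi^\flat_\chi}_\chi$ (without the $\dim\tau$) is already a value of the normalized Serre pairing on elements of the integral lattices, hence lies in $L[\pi,\chi]=Q[\pi,\chi]\cdot R$, and you then treat the extra factor $\dim\tau$ as a harmless integer multiplier — forcing you to assume $p\nmid\dim\tau$ to recover the unit claim. This misplaces the factor. The elements of $\wh{I}_\pi$ and $\wh{I}_{\pi^\flat}$ are $\MMM_\tau^\vee$-valued (resp.\ $\MMM_{\tau^\flat}^\vee$-valued) forms, and the Serre pairing $\brktdotdot^{\Ser}_{\kappa,\tau}$ contracts them through the tautological pairing on $\MMM_\tau\otimes\MMM_\tau^\vee$; unwinding this against the factorization of the automorphic inner product produces a sum over a basis of $\MMM_\tau$, i.e.
\[
\sum_{i=1}^{\dim\tau}\brkt{\varphi_i}{\varphi_i^\flat}
= \dim\tau\cdot\brkt{\varphi}{\varphi^\flat},
\]
exactly as in the doubling computation at the end of Section \ref{subsubsec:comp to classical Eis series}. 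So it is $\dim\tau\cdot\brkt{\varphi}{\varphi^\flat_\chi}_\chi$, not $\brkt{\varphi}{\varphi^\flat_\chi}_\chi$ itself, that equals the value of the normalized pairing on the corresponding lattice elements; this is what lies in $\vol(I^0_{r,V}\cap I^0_{r,-V})\cdot Q[\pi,\chi]\cdot R$ and hence gives integrality, and the unit claim then follows from the fact that the pure-tensor values generate $L[\pi,\chi]$ over the local ring $R$. In particular no hypothesis that $p$ be coprime to $\dim\tau$ is needed — and such a hypothesis would be a real restriction, since $\dim\tau$ can be divisible by $p$ for positive-depth types. The paper's phrase about ``the factor $\dim\tau$ cancelling with the factor $\dim\tau$ in the definition of $Q[\pi,\chi]$'' is exactly this accounting, which your proposal does not carry out.
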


\begin{proof}
    The independence on $r$ follows from the properties of the Serre pairing and $\varphi^\flat$ under the trace map as $r$ increases. 
    
    Furthermore, the fact that it is $p$-integral (resp. a $p$-adic unit) follows from the fact that the factor $\dim \tau$ in the above expression cancels with the factor $\dim \tau$ in the definition of $Q[\pi, \chi]$.
\end{proof}

\subsection{Statement of the main theorem} \label{subsec:statement of main thm}
Our main theorem is simply a summary of the properties of the $p$-adic $L$-function constructed in \eqref{eq:def L d Eis kappa tau} and incorporate the periods introduced above.

In the following statement, we refer to the Conjectures \ref{conj:density conj scalar padic forms}, \ref{conj:classicality scalar padic forms}, \ref{conj:classicality vector padic forms}, \ref{conj:indep of weight of padic Hecke alg} and \ref{conj:VCT} as the ``standard conjectures of $P$-ordinary Hida theory''.

\begin{theorem} \label{thm:main thm}
    Let $\pi$ be an anti-holomorphic, $P$-anti-ordinary cuspidal automorphic form $G_1(\AA)$ whose $P$-anti-WLT is $(\kappa, K_r, \tau)$, where $\tau$ is the SZ-type of $\pi$. Assume that the standard conjectures of $P$-ordinary Hida theory hold. Assume that $\pi$ satisfy Hypothesis \ref{hyp:Qw equals Pw}, Hypothesis \ref{hyp:mult one hyp (I)}, Hypothesis \ref{hyp:Gorenstein hyp}, and Proposition-Hypothesis \ref{prop:minimality hypothesis}.
    
    Let $\w_\tau$ denote the central character of $\tau$. Let $\m_\pi$ denote the maximal ideal of the $P$-ordinary Hecke algebra $\bTT^{\Paord}_{K_r, \kappa, \tau}$ corresponding to $\pi$ and let $\TT_\pi$ be the localization of $\bTT^{\Paord}_{K_r, \kappa, \tau}$ at $\m_\pi$.

    Given test vectors $\varphi \in \wh{I}_\pi$, $\varphi^\flat \in \wh{I}_{\pi^\flat}$ as in Section \ref{subsubsec:I pi and test vectors}, there exists a unique element
    \[
        L(\Eis^{[\kappa, \tau]}, \Pord; \varphi \otimes \varphi^\flat) 
            \in 
        \Lambda_{X_p, R} \wh{\otimes} \TT_{\pi}
    \]
    satisfying the following property :

    Let $\chi = ||\cdot||^{\frac{n-k}{2}}\chi_u : X_p \to R^\times$ be the $p$-adic shift of a Hecke character as in Section \ref{subsubsec:padic shifts of Hecke char}. Let $\pi' \in \SSS(K^p, \pi)$ be a classical point of the $P$-ordinary Hida family $\TT_\pi$.

    Then, $L(\Eis^{[\kappa, \tau]}, \Pord; \varphi \otimes \varphi^\flat)$ is mapped under the character $\chi \otimes \lambda_{\pi'}$ to
    \begin{align*}
        c(\pi',\chi)
        \Omega_{\pi', \chi}&(\varphi, \varphi^\flat)
        L_p
        \left(
            \frac{k-n+1}{2}, 
            \Pord,
            \pi',
            \chi_u
        \right) 
            \\ \times\,\,\,
        &L_\infty\left(
            \frac{k-n+1}{2};
            \chi_u, \kappa'
        \right)
        I_S
        \frac{
            L^S(
                \frac{k-n+1}{2}, 
                \pi', 
                \chi_u
            )
        }{
            P_{\pi', \chi}
        }\,,
    \end{align*}
    where $P_{\pi', \chi} = Q_{\pi', \chi}^{-1}$.
\end{theorem}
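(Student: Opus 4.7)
\medskip

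\textbf{Proof proposal.}
The plan is to assemble the constituent pieces that have already been set up, so the argument is mostly bookkeeping, and the serious content lives in the results already established. I would proceed as follows.

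First, I would invoke Proposition~\ref{prop:existence of Eis measure on V3} to produce the $\VV_3$-valued Eisenstein measure $d\Eis^{[\kappa,\tau]}$ on $X_p \times Z_P$. I would then translate this measure into a Hecke-algebra element by running the procedure of Section~\ref{subsec:Eis measures and padic L fct}: fix an auxiliary very regular weight $\kappa$, set $\phi_{\chi,\kappa,r} \in \hom_{\TT_r}(C_r \otimes_\Lambda \wh S^{\Pord}_{r,V,\pi}, S^{\Pord}_{r,-V,\pi^\flat})$, and use the Gorenstein Hypothesis~\ref{hyp:Gorenstein hyp} together with the change-of-level compatibility \eqref{eq:compatibility phi rho r+1 and rho r} to pass to the inverse limit. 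This is exactly Proposition~\ref{prop:existence of L(phi chi kappa)}, whose conclusion gives an element $L(\phi_{\chi,[\kappa]}) \in \TT \otimes \ehom_\OO(I_{\pi^\flat})$ independent of the choice of representative in $[\kappa]$. Letting $\chi$ vary over $X_p$ and tensoring with $\Lambda_{X_p}$ produces $L(\Eis^{[\kappa,\tau]}) \in \Lambda_{X_p} \wh\otimes \TT \otimes \ehom_\OO(I_{\pi^\flat})$ as in \eqref{eq:def L d Eis kappa tau}. Pairing against $\varphi \otimes \varphi^\flat \in \wh I_\pi \otimes \wh I_{\pi^\flat}$ via the tautological identification $\ehom_\OO(I_{\pi^\flat}) = \hom_\OO(\wh I_\pi \otimes \wh I_{\pi^\flat}, \OO)$ and localizing at $\m_\pi$ yields the desired $L(\Eis^{[\kappa,\tau]}, \Pord; \varphi \otimes \varphi^\flat) \in \Lambda_{X_p,R} \wh\otimes \TT_\pi$.

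Uniqueness follows from the density of classical points in $\Lambda_{X_p,R} \wh\otimes \TT_\pi$: any two elements satisfying the same interpolation formula on the (Zariski-dense) set of arithmetic characters $\chi \otimes \lambda_{\pi'}$ must coincide. This density is a consequence of the vertical control theorem in Proposition~\ref{prop:VCT for big Hecke alg} together with standard density statements for Iwasawa algebras attached to $X_p$.

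For the interpolation formula, I would specialize $L(\Eis^{[\kappa,\tau]}, \Pord; \varphi \otimes \varphi^\flat)$ at $\chi \otimes \lambda_{\pi'}$ and unwind the construction. By the discussion in Section~\ref{subsubsec:evaluation at classical points}, this specialization is the image under $\lambda_{\pi'}$ of the pairing of $\phi_{\chi,\kappa',r'}$ with $\varphi \otimes \varphi^\flat$, where $\kappa'$ and $r'$ are the weight and level of $\pi'$. The computation at the end of Section~\ref{subsubsec:comp to classical Eis series} (combining Proposition~\ref{prop:padic and Coo diff op on Pord cusp forms} with the Serre-duality formula \eqref{eq:Serre pairing aut forms}) identifies this pairing with the global zeta integral for $f_{\chi,\psi}^{\tau,\kappa',\hol}$ against $\varphi$ and $\varphi^\flat$, up to the volume factor $\vol(I^0_{r,V} \cap I^0_{r,-V})^{-1}$ and the normalization constant $D(n,\KK,\bb,p,k)^{-1}$. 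Theorem~\ref{thm:main formula global zeta integral} then rewrites this as the product $I_p \cdot I_\infty \cdot I_S \cdot L^S(\tfrac{k-n+1}{2},\pi',\chi_u)$ with the standard Euler factors at the appropriate places. The last step is to reabsorb the volumes and the factor $\brkt{\varphi}{\varphi^\flat}$ appearing in \eqref{eq:pairing eis measure with global test vectors} into the period $\Omega_{\pi',\chi}(\varphi,\varphi^\flat)$ and the congruence factor $c(\pi',\chi) = Q[\pi',\chi]/Q_{\pi',\chi}$ defined in Section~\ref{subsec:periods and cong ideals}; this is precisely the purpose of the normalization of these periods by $\dim \tau$ and the volume factor.

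The main obstacles are bookkeeping rather than conceptual: first, ensuring that all the compatibilities between change-of-level maps, trace maps, and the isomorphism $G_\pi : \TT \xrightarrow{\sim}\wh\TT$ assemble correctly in the inverse limit (this requires careful use of Proposition~\ref{prop:minimality hypothesis}); and second, verifying that the congruence factor $c(\pi',\chi)$ produced by the integrality of $L(\phi_{\chi,[\kappa]})$ in $\TT$ coincides (up to $p$-adic units) with the $c(\pi',\chi)$ defined via the period ratio $Q[\pi',\chi]/Q_{\pi',\chi}$, which is implicit in how $\wh I_\pi$ is embedded inside $(\pi'_S)^{K_S}$ via \eqref{eq:lattice wh I pi identification}. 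Both are handled by the parallel arguments in \cite[Section~7.4]{EHLS}, adapted here via the type-theoretic test vectors fixed in Section~\ref{subsubsec:I pi and test vectors}.
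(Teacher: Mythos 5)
Your proposal is correct and follows essentially the same route as the paper: the paper itself treats Theorem \ref{thm:main thm} as a summary, obtained by combining Proposition \ref{prop:existence of Eis measure on V3}, Proposition \ref{prop:existence of L(phi chi kappa)} and \eqref{eq:def L d Eis kappa tau} for the construction, the evaluation at classical points in Section \ref{subsubsec:evaluation at classical points} together with \eqref{eq:pairing eis measure with global test vectors} and Theorem \ref{thm:main formula global zeta integral} for the interpolation formula, and the normalizations of Section \ref{subsec:periods and cong ideals} to absorb the volume and inner-product factors into $c(\pi',\chi)\Omega_{\pi',\chi}(\varphi,\varphi^\flat)$. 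Your added remark on uniqueness via density of arithmetic points is a standard supplement the paper leaves implicit, and your flagged "obstacles" are precisely the compatibilities the paper handles in Proposition \ref{prop:minimality hypothesis} and Section \ref{subsec:Eis measures and padic L fct}.
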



\bibliography{references.bib}
\bibliographystyle{amsalpha}

\end{document}